\documentclass{amsart}
\usepackage{amssymb}
\usepackage{mathrsfs}
\usepackage{stmaryrd}
\usepackage{imakeidx}
\usepackage[colorlinks = true]{hyperref}
\usepackage{enumitem}
\usepackage{comment}
\input xy \xyoption {all}
\usepackage{tikz}
\usetikzlibrary{cd}

\usepackage{dsfont}

\usepackage{rotating}
\usepackage{lscape}


\mathchardef\mhyphen="2D

\DeclareMathOperator{\sEnd}{\mathscr{E}\hspace{-1pt}\mathit{nd}}

\newcommand{\Z}{\mathbb{Z}}

\newcommand{\F}{\mathbb{F}}

\newcommand{\bk}{\Bbbk}

\newcommand{\Gm}{\mathbb{G}_{\mathrm{m}}}

\newcommand{\Bimod}{\mathsf{Bimod}}
\newcommand{\Mod}{\mathsf{Mod}}

\newcommand{\Modqc}{\mathsf{Mod}_{\mathrm{qc}}}
\newcommand{\Modc}{\mathsf{Mod}_{\mathrm{c}}}
\newcommand{\Modfg}{\mathsf{Mod}_{\mathrm{fg}}}


\newcommand{\Kb}{K^{\mathrm{b}}}

\newcommand{\Db}{D^{\mathrm{b}}}

\newcommand{\fh}{\mathfrak{h}}

\newcommand{\uw}{{\underline{w}}}


\newcommand{\Tilt}{\mathsf{Tilt}}

\newcommand{\hatstar}{\mathbin{\widehat{\star}}}


\newcommand{\cL}{\mathcal{L}}

\newcommand{\cN}{\mathcal{N}}

\newcommand{\cZ}{\mathcal{Z}}

\newcommand{\Av}{\mathsf{Av}}
\newcommand{\For}{\mathsf{For}}
\newcommand{\IC}{\mathscr{I}\hspace{-1pt}\mathscr{C}}


\newcommand{\id}{\mathrm{id}}

\newcommand{\simto}{\xrightarrow{\sim}}

\DeclareMathOperator{\Hom}{Hom}
\DeclareMathOperator{\Ext}{Ext}
\DeclareMathOperator{\End}{End}



\newcommand{\Iw}{\mathrm{I}}
\newcommand{\Iwu}{\mathrm{I}_{\mathrm{u}}}
\newcommand{\Fl}{\mathrm{Fl}}
\newcommand{\tFl}{\widetilde{\mathrm{Fl}}}
\newcommand{\Gr}{\mathrm{Gr}}
\newcommand{\IW}{\mathcal{IW}}

\newcommand{\bG}{\mathbf{G}}
\newcommand{\bH}{\mathbf{H}}
\newcommand{\bK}{\mathbf{K}}

\newcommand{\bB}{\mathbf{B}}
\newcommand{\bP}{\mathbf{P}}
\newcommand{\bL}{\mathbf{L}}

\newcommand{\bU}{\mathbf{U}}
\newcommand{\bT}{\mathbf{T}}
\newcommand{\bZ}{\mathbf{Z}}

\newcommand{\bW}{\mathbf{W}}
\newcommand{\bWf}{\mathbf{W}_{\mathrm{f}}}
\newcommand{\Wf}{W_{\mathrm{f}}}
\newcommand{\bS}{\mathbf{S}}

\newcommand{\bSf}{\mathbf{S}_{\mathrm{f}}}
\newcommand{\Sf}{S_{\mathrm{f}}}

\newcommand{\aff}{\mathrm{aff}}
\newcommand{\bWaff}{\bW_{\aff}}
\newcommand{\bSaff}{\mathbf{S}_{\mathrm{aff}}}
\newcommand{\Saff}{S_{\mathrm{aff}}}
\newcommand{\bu}{\mathbf{u}}
\newcommand{\bg}{\mathbf{g}}
\newcommand{\bb}{\mathbf{b}}
\newcommand{\bt}{\mathbf{t}}
\newcommand{\bp}{\mathbf{p}}
\newcommand{\bl}{\mathbf{l}}




%

\newcommand{\hatotimes}{\mathbin{\widehat{\otimes}}}

\makeatletter
\def\lotimes{\@ifnextchar_{\@lotimessub}{\@lotimesnosub}}
\def\@lotimessub_#1{\mathchoice{\mathbin{\mathop{\otimes}^L}_{#1}}%
  {\otimes^L_{#1}}{\otimes^L_{#1}}{\otimes^L_{#1}}}
\def\@lotimesnosub{\mathbin{\mathop{\otimes}^L}}
\makeatother

\makeatletter
\def\rtimes{\@ifnextchar_{\@rtimessub}{\@rtimesnosub}}
\def\@rtimessub_#1{\mathchoice{\mathbin{\mathop{\times}^R}_{#1}}%
  {\times^R_{#1}}{\times^R_{#1}}{\times^R_{#1}}}
\def\@rtimesnosub{\mathbin{\mathop{\times}^R}}
\makeatother


\makeatletter
\def\lboxtimes{\@ifnextchar_{\@lboxtimessub}{\@lboxtimesnosub}}
\def\@lboxtimessub_#1{\mathchoice{\mathbin{\mathop{\boxtimes}^L}_{#1}}%
  {\boxtimes^L_{#1}}{\boxtimes^L_{#1}}{\boxtimes^L_{#1}}}
\def\@lboxtimesnosub{\mathbin{\mathop{\boxtimes}^L}}
\makeatother

 \makeatletter
 \def\hatotimes{\@ifnextchar_{\@hatotimessub}{\@hatotimesnosub}}
 \def\@hatotimessub_#1{\mathchoice{\mathbin{\mathop{\widehat{\otimes}}}_{#1}}{\widehat{\otimes}_{#1}}{\widehat{\otimes}_{#1}}{\widehat{\otimes}_{#1}}}
 \def\@lotimesnosub{\mathbin{\mathop{\widehat{\otimes}}}}
 \makeatother

\newcommand{\scO}{\mathscr{O}}

\newcommand{\pH}{{}^{\mathrm{p}} \hspace{-1.5pt} \mathscr{H}}
\newcommand{\pcH}{{}^{\mathrm{pc}} \hspace{-1.5pt} \mathscr{H}}
\newcommand{\Rep}{\mathsf{Rep}}


\newcommand{\Coh}{\mathsf{Coh}}
\newcommand{\QCoh}{\mathsf{QCoh}}
\newcommand{\BSCoh}{\mathsf{BSCoh}}
\newcommand{\SCoh}{\mathsf{SCoh}}


\newcommand{\st}{\mathsf{t}}

\newcommand{\Wak}{\mathscr{W}}

\newcommand{\fm}{\mathfrak{m}}
\newcommand{\fn}{\mathfrak{n}}

\newcommand{\sfC}{\mathsf{C}}
\newcommand{\sfD}{\mathsf{D}}

\newcommand{\sfN}{\mathsf{N}}
\newcommand{\sfP}{\mathsf{P}}
\newcommand{\sfR}{\mathsf{R}}
\newcommand{\sfL}{\mathsf{L}}
\newcommand{\sfT}{\mathsf{T}}

\newcommand{\scA}{\mathscr{A}}
\newcommand{\scB}{\mathscr{B}}
\newcommand{\scC}{\mathscr{C}}

\newcommand{\scF}{\mathscr{F}}
\newcommand{\scG}{\mathscr{G}}
\newcommand{\scH}{\mathscr{H}}
\newcommand{\scI}{\mathscr{I}}
\newcommand{\scJ}{\mathscr{J}}

\newcommand{\scL}{\mathscr{L}}
\newcommand{\scM}{\mathscr{M}}

\newcommand{\scR}{\mathscr{R}}
\newcommand{\scS}{\mathscr{S}}

\newcommand{\scX}{\mathscr{X}}

\newcommand{\scZ}{\mathscr{Z}}

\newcommand{\St}{\mathbf{St}}
\newcommand{\Stm}{\mathbf{St}_{\mathrm{m}}}

\newcommand{\cU}{\mathcal{U}}

\newcommand{\SRep}{\mathsf{SRep}}

\newcommand{\Spec}{\mathrm{Spec}}
\newcommand{\bbJ}{\mathbb{J}}
\newcommand{\bbI}{\mathbb{I}}

\newcommand{\bbM}{\mathbb{M}}

\newcommand{\op}{\mathrm{op}}


\newcommand{\ZFr}{Z_{\mathrm{Fr}}}
\newcommand{\ZHC}{Z_{\mathrm{HC}}}

\newcommand{\scD}{\mathscr{D}}
\newcommand{\cD}{\mathcal{D}}
\newcommand{\tD}{\widetilde{\mathcal{D}}}
\newcommand{\oD}{\overline{\mathcal{D}}}
\newcommand{\tsD}{\widetilde{\mathscr{D}}}
\newcommand{\osD}{\overline{\mathscr{D}}}

\newcommand{\fR}{\mathfrak{R}}
\newcommand{\ola}{\overline{\lambda}}
\newcommand{\tla}{\widetilde{\lambda}}
\newcommand{\hla}{\widehat{\lambda}}
\newcommand{\omu}{\overline{\mu}}
\newcommand{\tmu}{\widetilde{\mu}}
\newcommand{\hmu}{\widehat{\mu}}
\newcommand{\onu}{\overline{\nu}}
\newcommand{\tnu}{\widetilde{\nu}}
\newcommand{\hnu}{\widehat{\nu}}

\newcommand{\tbg}{\widetilde{\bg}}
\newcommand{\tbG}{\widetilde{\bG}}
\newcommand{\fRs}{\fR_{\mathrm{s}}}
\newcommand{\tcN}{\widetilde{\mathcal{N}}}

\newcommand{\FN}{\mathrm{FN}}
\newcommand{\Fr}{\mathrm{Fr}}
\newcommand{\BSHC}{\mathsf{BSHC}}
\newcommand{\SHC}{\mathsf{SHC}}
\newcommand{\HC}{\mathsf{HC}}
\newcommand{\Dist}{\mathrm{Dist}}

\newcommand{\sfU}{\mathsf{U}}
\newcommand{\nil}{\mathrm{nil}}
\newcommand{\Sat}{\mathsf{Sat}}
\newcommand{\Br}{\mathrm{Br}}
\newcommand{\Inv}{\mathrm{Inv}}

\newcommand{\Loop}{\mathrm{L}}

\numberwithin{equation}{section}
\numberwithin{figure}{section}
\newtheorem{thm}{Theorem}[section]
\newtheorem{lem}[thm]{Lemma}
\newtheorem{prop}[thm]{Proposition}
\newtheorem{cor}[thm]{Corollary}

\theoremstyle{definition}

\theoremstyle{remark}
\newtheorem{rmk}[thm]{Remark}

\title[On two modular geometric realizations of a Hecke algebra]{On two modular geometric realizations of an affine Hecke algebra}

\author[R. Bezrukavnikov]{Roman Bezrukavnikov}
\address{Department of Mathematics \\ Massachusetts Institute of Technology \\ Cambridge, MA \\ 02139 \\ USA.}
\email{bezrukav@math.mit.edu}

 \author[S.~Riche]{Simon Riche}
 \address{Universit\'e Clermont Auvergne, CNRS, LMBP, F-63000 Clermont-Ferrand, France.}
 \email{simon.riche@uca.fr}

\begin{document}

\begin{abstract}
In this paper we construct equivalences of monoidal categories relating three geometric or representation-theoretic categorical incarnations of the affine Hecke algebra of a connected reductive algebraic group $\bG$ over a field of positive characteristic:
\begin{itemize}
\item a category of Harish-Chandra bimodules for the Lie algebra of $\bG$;
\item the derived category of equivariant coherent sheaves on (a completed version of) the Steinberg variety of the Frobenius twist $\bG^{(1)}$ of $\bG$;
\item a derived category of constructible sheaves on the affine flag variety of the reductive group which is Langlands dual to $\bG^{(1)}$.
\end{itemize} 
These constructions build on the localization theory developed in~\cite{bmr,bmr2} and previous work (partly joint with L. Rider) in~\cite{brr-pt1,br-pt2}, and provide an analogue for positive-characteristic coefficients of the main result of~\cite{be}. As an application, we prove a conjecture by Finkelberg--Mirkovi{\'c} giving a geometric realization of the principal block of algebraic representations of $\bG$.
\end{abstract}

\maketitle

\setcounter{tocdepth}{1}
\tableofcontents

\section{Introduction}
\label{sec:intro}

\subsection{Two geometric realizations of an affine Hecke algebra}

This paper is the third (and last) part of a project started in~\cite{brr-pt1} (joint with L. Rider) and pursued in~\cite{br-pt2}, whose goal was to build an equivalence of monoidal categories between two geometric realizations (or ``categorifications'') of the affine Hecke algebra\footnote{Technically, the categories we consider are rather incarnations of the group algebra of the affine Weyl group.}
attached to a connected reductive algebraic group, in the case of positive-characteristic coefficients, thereby providing a ``modular'' counterpart to the main result of~\cite{be}.

Namely, consider a connected reductive algebraic group $\bG$ with simply-connected derived subgroup over an algebraically closed field $\bk$ of very good characteristic $\ell>0$.
 Fix also a Borel subgroup $\bB \subset \bG$ and a maximal torus $\bT \subset \bB$. Then the \emph{extended affine Weyl group} $\bW$ is the semi-direct product
\[
\bW := \bWf \ltimes X^*(\bT)
\]
where $\bWf$ is the (finite) Weyl group of $(\bG,\bT)$ and $X^*(\bT)$ is the character lattice of $\bT$ (a free $\Z$-module of finite type). The choice of $\bB$ determines on this group a quasi-Coxeter group structure (i.e.~$\bW$ is a semi-direct product of a Coxeter group by an abelian group acting by Coxeter group automorphisms), and in particular a length function $\ell : \bW \to \Z_{\geq 0}$ and an associated Hecke algebra $\mathcal{H}_{\bW}$. Here $\mathcal{H}_{\bW}$ is a $\Z[v,v^{-1}]$-algebra endowed with a basis $(H_w : w \in \bW)$, where multiplication satisfies
\[
H_y \cdot H_w = H_{yw} \quad \text{if $y,w \in \bW$ satisfy $\ell(yw)=\ell(y)+\ell(w)$}
\]
and the elements associated with simple reflections satisfy the usual quadratic relation.

This algebra appears naturally in two different contexts. First, consider a finite field $\F_0$ of cardinality $q$ which is prime with $\ell$, the split connected reductive algebraic group $G_0$ over $\F_0$ which is Langlangs dual to 
the Frobenius twist $\bG^{(1)}$, 
and the reductive group $G_{0, \F_0( \hspace{-1pt} (z) \hspace{-1pt} )}$ over the local field $\F_0( \hspace{-1pt} (z) \hspace{-1pt} )$ obtained by base change.\footnote{Instead of working over a local field of equal characteristic one can work over any local field with residue field $\mathbb{F}_0$; the same Hecke algebra plays the same role in this version. The geometric picture we will consider below is modeled on the case of equal characteristic, which explains our restriction here. A geometric picture closer to the mixed characteristic setting has been developed more recently by Zhu and Scholze (among others), and some specialists expect that an equivalence similar to ours can be developed in this setting; see~\cite{alwy} for work in this direction. We will not consider this here.} An important result of Iwahori--Matsumoto~\cite{im} states that the Hecke algebra associated with this group and an Iwahori subgroup identifies with the specialization of $\mathcal{H}_{\bW}$ to $v=q^{-\frac{1}{2}}$. (The importance of this algebra stems from later results of Borel~\cite{borel-Haff}, showing that it ``controls'' the category of smooth complex representations of $G_{0}(\F_0( \hspace{-1pt} (z) \hspace{-1pt} ))$ generated by their Iwahori-invariant vectors.) In view of Grothendieck's ``faisceau-fonctions'' dictionary, this suggests that, denoting by $\F$ an algebraic closure of $\F_0$ and by $G$ the reductive group obtained from $G_0$ by base change to $\F$, the category of Iwahori-equivariant constructible sheaves on the affine flag variety of $G$ (over $\F$) should be considered a categorical incarnation of $\mathcal{H}_{\bW}$.

On the other hand, consider the (Grothendieck--)Steinberg variety $\St$ of $\bG$. More explicitly we have
\[
\St = \tbg \times_{\bg^*} \tbg
\]
where $\tbg$ is the Grothendieck resolution associated with $\bG$, parametrizing pairs $(\xi,\bB')$ where $\bB'$ is a Borel subgroup of $\bG$ and $\xi$ is a linear form on the Lie algebra $\bg$ of $\bG$ which vanishes on the Lie algebra of the unipotent radical of $\bB'$, and the morphism $\tbg \to \bg^*$ is the obvious one.
 This variety admits a canonical action of $\bG$, and an action of the multiplicative group $\Gm$ by dilation along the fibers of the projection to $\bG/\bB \times \bG/\bB$. Taking Frobenius twists we obtain an action of $\bG^{(1)} \times (\Gm)^{(1)}$ on $\St^{(1)}$, so that we can consider the equivariant $K$-theory $\mathsf{K}^{\bG^{(1)} \times (\Gm)^{(1)}}(\St^{(1)})$. This abelian group has a natural structure of $\Z[v,v^{-1}]$-algebra, and a result of Kazhdan--Lusztig~\cite{kl} and Ginzburg~\cite{cg} (see also~\cite{lusztig-bases}) states that there exists a $\Z[v,v^{-1}]$-algebra isomorphism
\[
\mathsf{K}^{\bG^{(1)} \times (\Gm)^{(1)}}(\St^{(1)}) \cong \mathcal{H}_{\bW}.
\]
The bounded derived category of $\bG^{(1)} \times (\Gm)^{(1)}$-equivariant coherent sheaves on $\St^{(1)}$ can therefore also be considered a categorical incarnation of $\mathcal{H}_{\bW}$.

The main result of this paper is an equivalence of monoidal categories relating (appropriate versions of) these two categorical incarnations, which adapts a similar equivalence for characteristic-$0$ coefficients due to the first author~\cite{be}.

\subsection{The equivalence}
\label{ss:intro-equiv}

We continue with our algebraically closed field $\bk$, the connected reductive algebraic group $\bG$ over $\bk$ (which we no longer assume to have simply-connected derived subgroup), its subgroups $\bB$ and $\bT$, and the Weyl group $\bWf$. Let also $\bt$ be the Lie algebra of $\bT$. Then there exists a canonical morphism $\St \to \bt^* \times_{\bt^*/\bWf} \bt^*$, and we denote by $\St^\wedge$ the fiber product of $\St$ with the spectrum of the completion of $\scO(\bt^* \times_{\bt^*/\bWf} \bt^*)$ with respect to the maximal ideal corresponding to the closed point $(0,0)$. This scheme is noetherian, and admits a canonical (algebraic) action of $\bG$, so that we can consider the bounded derived category
\[
\Db \Coh^{\bG^{(1)}}(\St^{\wedge(1)})
\]
of $\bG^{(1)}$-equivariant coherent sheaves on $\St^{\wedge(1)}$. Standard considerations allow to construct a monoidal product on this category given by convolution.

\begin{rmk}
We insist that $\St^\wedge$ is a plain scheme with an algebraic action of $\bG$, and not a formal scheme.
\end{rmk}

On the other hand, consider an algebraically closed field $\F$ of characteristic $p \neq \ell$, and the connected reductive algebraic group $G$ whose Langlands dual over $\bk$ is $\bG^{(1)}$. Let also $T$ be a maximal torus in $G$ with a fixed identification $X_*(T)=X^*(\bT^{(1)})$, and denote by $B$ the Borel subgroup containing $T$ whose roots are the coroots of $\bB^{(1)}$. Then we consider the loop group $\Loop G$ of $G$, the arc group $\Loop^+ G \subset \Loop G$, the Iwahori subgroup $\Iw \subset \Loop^+ G$ determined by $B$, and its pro-unipotent radical $\Iwu \subset \Iw$. We have the ``extended affine flag variety'' $\tFl_G := \Loop G/\Iwu$, a $T$-torsor over the usual affine flag variety $\Fl_G := \Loop G/\Iw$, and we consider the ``completed derived category'' $\sfD^\wedge_{\Iwu,\Iwu}$ with coefficients in $\bk$ in the sense of Yun~\cite[Appendix~A]{by} associated with this ind-scheme, the action of $\Iwu$, 
and the stratification given by $\Iw$-orbits. This category also has a natural structure of monoidal category.

With this notation introduced one can state the main result of the paper. For this we assume that, for any indecomposable constituent in the root system of $(\bG,\bT)$, $\ell$ is strictly larger than the corresponding bound in Figure~\ref{fig:bounds}.
Under this assumption, we construct an equivalence of monoidal triangulated categories
\begin{equation}
\label{eqn:main-equiv-intro}
\Db \Coh^{\bG^{(1)}}(\St^{\wedge(1)}) \cong \sfD^\wedge_{\Iwu,\Iwu};
\end{equation}
see Theorem~\ref{thm:equivalences-comp}.
This equivalence satisfies various favorable properties, and in particular a compatibility with the geometric Satake equivalence for the group $G$ (see~\S\ref{ss:intro-FM} below), where perverse sheaves on the affine Grassmannian are ``lifted'' to objects of $\sfD^\wedge_{\Iwu,\Iwu}$ via (a variant of) Gaitsgory's central functor~\cite{gaitsgory}.


\begin{figure}
 \begin{tabular}{|c|c|c|c|c|c|c|c|c|}
  \hline
  $\mathbf{A}_n$ ($n \geq 1$) & $\mathbf{B}_n$ ($n \geq 2$) & $\mathbf{C}_n$ ($n \geq 3$) & $\mathbf{D}_n$ ($n \geq 4$) & $\mathbf{E}_6$ & $\mathbf{E}_7$ & $\mathbf{E}_8$ & $\mathbf{F}_4$ & $\mathbf{G}_2$ \\
  \hline
  $n+1$ & $2n$ & $2n$ & $2n-2$ & $12$ & $19$ & $31$ & $12$ & $6$\\
  \hline
 \end{tabular}
\caption{Bounds on $\ell$}
\label{fig:bounds}
\end{figure}

\begin{rmk}
\phantomsection
\label{rmk:main-equiv-intro}
\begin{enumerate}
\item
Both of the categories considered above are in fact attached to the Frobenius twist $\bG^{(1)}$ of $\bG$. The reason why we introduce the group $\bG$, and state the result in this form, is that we will also consider a third incarnation of this category, which really involves $\bG$ itself. See~\S\ref{ss:intro-HC-bim} below for details.
\item
As the reader will notice, for all types except $\mathbf{E}_7$ and $\mathbf{E}_8$ the bound in Figure~\ref{fig:bounds} is the Coxeter number $h$ of the root system; in the latter cases the bound is $h+1$. In the body of the paper we work under slightly different assumptions, that allow e.g.~the group $\mathrm{GL}_\ell(\bk)$ in characteristic $\ell$. 
See~\S\ref{ss:intro-nilp-unip} below for comments on our assumptions, and~\S\ref{ss:assumptions} for a discussion of the difference between the assumptions in the body of the paper and those considered here.
\item
\label{it:main-equiv-intro-class}
The equivalence~\eqref{eqn:main-equiv-intro} involves on the ``coherent'' side the scheme $\St^{\wedge(1)}$, which is not of finite type over $\bk$, and on the constructible side the ``completed'' category $\sfD^\wedge_{\Iwu,\Iwu}$. Once this equivalence is established, one can deduce a version which involves only a scheme of finite type and a ``usual'' category of constructible sheaves. Namely, consider the bounded derived category $\Db\Coh^{\bG^{(1)}}_{\cN}(\St^{(1)})$ of coherent sheaves on $\St^{(1)}$ which are supported set-theoretically on the preimage of $(0,0) \in \bt^{*(1)} \times_{\bt^{*(1)}/\bWf} \bt^{*(1)}$. On the other hand, consider the full triangulated subcategory $\sfD_{\Iwu,\Iwu}$ of the $\Iwu$-equivariant bounded derived category of sheaves on $\tFl_G$ generated by objects obtained by pullback from $\Iwu$-equivariant complexes on $\Fl_G$. Then there exist fully faithful functors
\[
\Db\Coh^{\bG^{(1)}}_{\cN}(\St^{(1)}) \to \Db\Coh^{\bG^{(1)}}(\St^{\wedge(1)}), \quad \sfD_{\Iwu,\Iwu} \to \sfD_{\Iwu,\Iwu}^{\wedge}
\]
whose essential images are ideals for the monoidal products, and the equivalence~\eqref{eqn:main-equiv-intro} restricts to an equivalence of monoidal categories
\begin{equation}
\label{eqn:main-equiv-intro-class}
\Db\Coh^{\bG^{(1)}}_{\cN}(\St^{(1)}) \cong \sfD_{\Iwu,\Iwu}.
\end{equation}
See Theorem~\ref{thm:equivalences} for details.
\item
\label{it:intro-fixed-character}
Let $\St'$ be the preimage of $\bt^* \times_{\bt^*/\bWf} \{0\} \subset \bt^* \times_{\bt^*/\bWf} \bt^*$ in $\St$, and consider the $\Iwu$-equivariant bounded derived category $\sfD_{\Iwu,\Iw}$ of $\bk$-sheaves on the affine flag variety $\Fl_G$. We will also establish a variant of~\eqref{eqn:main-equiv-intro}, in the form of an equivalence of triangulated categories
\[
\Db \Coh^{\bG^{(1)}}(\St^{\prime(1)}) \cong \sfD_{\Iwu,\Iw}
\]
which intertwines the left actions of the categories in~\eqref{eqn:main-equiv-intro} on both sides, see Theorem~\ref{thm:equivalences-fixed}. This equivalence also has its counterpart in~\cite{be}. The latter paper contains a second variant of~\eqref{eqn:main-equiv-intro}, describing the $\Iw$-equivariant bounded derived category of sheaves on $\Fl_G$ in terms of equivariant coherent sheaves on a certain dg-scheme. We will not consider any version of this equivalence here. (It is likely that the two variants discussed here, and many more variants associated with choices of parabolic subgroups, can be deduced formally from~\eqref{eqn:main-equiv-intro} once an appropriate $\infty$-categorical framework has been developed, following the work done in~\cite{chen-dhillon} for characteristic-$0$ coefficients. This will be the subject of future work.)
\item
The construction of the characteristic-$0$ version of~\eqref{eqn:main-equiv-intro} involves as a preliminary step the construction (in~\cite{ab}) of an equivalence between the derived category of equivariant coherent sheaves on the Springer resolution of $\bG^{(1)}$ and an appropriate derived category of ``Iwahori--Whittaker sheaves'' on $\Fl_G$. The strategy followed here is completely different, but we establish in~\S\ref{ss:antispherical} a modular version of the equivalence of~\cite{ab}, which we obtain as a \emph{consequence} of the equivalence~\eqref{eqn:main-equiv-intro}.
\end{enumerate}
\end{rmk}

\subsection{Harish-Chandra bimodule}
\label{ss:intro-HC-bim}

As explained above, the equivalence~\eqref{eqn:main-equiv-intro} is a ``modular variant'' of an equivalence for characteristic-$0$ coefficients proved in~\cite{be}. Technical obstacles prevent us from using the same strategy as in~\cite{be} (based on the earlier work~\cite{ab} with Arkhipov) in the modular world, so our approach will be different. It is based on the use of a third monoidal category, which serves as a kind of bridge between $\Db \Coh^{\bG^{(1)}}(\St^{\wedge(1)})$ and $\sfD^\wedge_{\Iwu,\Iwu}$.

Namely, consider the algebra
\[
\sfU := \cU\bg \otimes_{\ZFr} \cU\bg^{\op}
\]
where (as above) $\bg$ is the Lie algebra of $\bG$, $\cU\bg$ is its universal enveloping algebra, and $\ZFr$ is the Frobenius center of $\cU\bg$. Consider also the Harish-Chandra center $\ZHC$ of $\cU\bg$; then $\sfU$ admits a central subalgebra isomorphic to
\[
\ZHC \otimes_{\ZFr \cap \ZHC} \ZHC \cong \scO(\bt^*/(\bWf, \bullet) \times_{\bt^{*(1)}/\bWf} \bt^*/(\bWf, \bullet)).
\]
Here the isomorphism is induced by the Harish-Chandra isomorphism
\[
\ZHC \cong \scO(\bt^*/(\bWf, \bullet))
\]
(where $\bullet$ is the usual ``dot-action'' of $\bWf$ on $\bt^*$)
and the morphism $\bt^*/(\bWf, \bullet) \to \bt^{*(1)}/\bWf$ is induced by the Artin--Schreier morphism $\bt^* \to \bt^{*(1)}$.
We will denote by
$\sfU^{\widehat{0}, \widehat{0}}$
the tensor product of $\sfU$ with the completion of $\ZHC \otimes_{\ZFr \cap \ZHC} \ZHC$ with respect to the ideal corresponding to the closed point
\[
(0,0) \in \bt^*/(\bWf, \bullet) \times_{\bt^{*(1)}/\bWf} \bt^*/(\bWf, \bullet).
\]
Then $\sfU^{\widehat{0}, \widehat{0}}$ is a noetherian ring endowed with a structure of algebraic $\bG$-module compatible with multiplication, and we can consider the category
\[
\HC^{\widehat{0}, \widehat{0}}
\]
of Harish-Chandra modules for this algebra, i.e.~finitely generated $\bG$-equivariant $\sfU^{\widehat{0}, \widehat{0}}$-modules such that the differential of the action of $\bG$ coincides with the restriction of the action of $\sfU^{\widehat{0}, \widehat{0}}$ along the morphism $\cU\bg \to \sfU^{\widehat{0}, \widehat{0}}$ induced by the assignment $x \mapsto x \otimes 1 - 1 \otimes x$ ($x \in \bg$). Here again, the bounded derived category $\Db \HC^{\widehat{0}, \widehat{0}}$ admits a natural monoidal structure.

\begin{rmk}
We insist that $\sfU^{\widehat{0}, \widehat{0}}$ is not defined as a completion of $\sfU$, but rather as a tensor product with a completion of a central subalgebra. In this way, and since $\bG$ acts trivially on this subalgebra, $\sfU^{\widehat{0}, \widehat{0}}$ inherits from $\sfU$ an \emph{algebraic} action of $\bG$.
\end{rmk}

On the way to establishing the equivalence~\eqref{eqn:main-equiv-intro}, we will also establish an equivalence of monoidal categories
\begin{equation}
\label{eqn:main-equiv-2-intro}
\Db \Coh^{\bG^{(1)}}(\St^{\wedge(1)}) \cong \Db \HC^{\widehat{0}, \widehat{0}}.
\end{equation}
This equivalence will be obtained using a variant of the localization theory developed by the first author with Mirkovi{\'c} and Rumynin in~\cite{bmr,bmr2,bm-loc}, which can be considered a ``modular'' counterpart to the Be{\u\i}linson--Bernstein localization theory for complex semisimple Lie algebras. Notice, however, that the statement of~\eqref{eqn:main-equiv-2-intro}
is simpler than the main result of~\cite{bmr} in that the latter involves modules over a nonsplit
Azumaya algebra, while the former does not due to cancellation of gerbe classes coming
from the two factors in the fiber product $\St^{(1)}$.

We believe that the equivalence~\eqref{eqn:main-equiv-2-intro} is interesting in its own right, in that it provides a third incarnation of the ``affine Hecke category.'' But technically, its role in our construction of~\eqref{eqn:main-equiv-intro} is that it allows us to check properties of a certain family of objects in $\Db \Coh^{\bG^{(1)}}(\St^{\wedge(1)})$ by translating the question in $\Db \HC^{\widehat{0}, \widehat{0}}$, where it becomes easy; see~\S\ref{ss:intro-Soergel-bim} below for more details. 

\begin{rmk}
\phantomsection
\label{rmk:intro-HC}
\begin{enumerate}
\item
The category $\HC^{\widehat{0}, \widehat{0}}$ has already appeared in our earlier paper~\cite{br-Hecke}, where we established a version of the equivalence~\eqref{eqn:main-equiv-2-intro} ``over the Kostant section.'' Some of our arguments are based on the results of~\cite{br-Hecke}, but the construction of the full equivalence requires further work.
\item
During the final stages of preparation of this manuscript a version of equivalence~\eqref{eqn:main-equiv-2-intro} was also
obtained independently by Losev~\cite{losev}. Our construction is partly similar although some of the
proofs are different.
\item
As in Remark~\ref{rmk:main-equiv-intro}\eqref{it:main-equiv-intro-class}, once the equivalence~\eqref{eqn:main-equiv-2-intro} is proved one can deduce a version not involving any completion: if we denote by $\HC_{\mathrm{nil}}^{\widehat{0}, \widehat{0}}$ the category of finitely generated Harish-Chandra $\sfU$-modules such that the ideal of $\ZHC \otimes_{\ZFr \cap \ZHC} \ZHC$ corresponding to the point $(0,0)$ acts nilpotently, then the equivalence restricts to an equivalence of monoidal categories
\begin{equation}
\label{eqn:main-equiv-2-intro-class}
\Db \Coh_{\cN}^{\bG^{(1)}}(\St^{(1)}) \cong \Db \HC_{\mathrm{nil}}^{\widehat{0}, \widehat{0}}.
\end{equation}
\item
\label{it:intro-HC-character}
As in Remark~\ref{rmk:main-equiv-intro}\eqref{it:intro-fixed-character}, the results we have presented above have a variant where one imposes that the action of the Harish-Chandra center on the right factors through the trivial character; this involves a category of Harish-Chandra bimodules denoted $\HC^{\widehat{0},0}$.
\end{enumerate}
\end{rmk}

\subsection{Soergel bimodules}
\label{ss:intro-Soergel-bim}

Our construction of the equivalence~\eqref{eqn:main-equiv-intro} follows a familiar pattern originating from the work of Soergel~\cite{soergel}: we identify on both sides a monoidal additive subcategory $\mathsf{A}$ such that the whole category can be reconstructed as the bounded homotopy category of $\mathsf{A}$, and then we identify these additive categories by comparing both of them to a category of ``Soergel bimodules'' for the group $\bW$. On the constructible side, this construction is the main result of~\cite{br-pt2}. In this case the additive category we consider is the subcategory of tilting perverse sheaves in $\sfD^\wedge_{\Iwu,\Iwu}$, and the category of ``Soergel bimodules'' we use is in fact a category of representations of a smooth affine group scheme over an affine scheme constructed out of the restriction of the universal centralizer to a Steinberg section. (A variant of this category is considered in~\cite{br-Hecke}; this variant can be related to a ``true'' category of Soergel bimodules via the work of Abe~\cite{abe-Hecke}.)

On the side of the category $\Db \Coh^{\bG^{(1)}}(\St^{\wedge(1)})$, we do not a priori have a ``nice'' t-structure such that we can consider tilting objects in the heart. We use the ``localization'' equivalence~\eqref{eqn:main-equiv-2-intro} to construct a subcategory ``$\mathsf{A}$'' which ``behaves like a category of tilting objects,'' i.e.~such that 
\[
\Hom_{\Db \Coh^{\bG^{(1)}}(\St^{\wedge(1)})}(M,N[n])=0
\]
for any $M,N \in \mathsf{A}$ and $n \in \Z_{\neq 0}$.
More specifically, the corresponding subcategory of $\Db \HC^{\widehat{0}, \widehat{0}}$ is generated (under the monoidal product, directs sums and direct summands) by the ``wall crossing bimodules'' studied in~\cite{br-Hecke}, i.e.~the objects which realize wall-crossing functors on the principal block of the category $\Rep(\bG)$ of finite-dimensional algebraic $\bG$-modules. The relation with Soergel bimodules is obtained via a variant of the methods developed in~\cite{br-Hecke}.

\subsection{Nilpotent cone versus unipotent cone}
\label{ss:intro-nilp-unip}

There exists a variant of the Steinberg variety which is obtained by replacing $\tbg$ by the ``multiplicative'' Grothendieck resolution $\tbG$ parametrizing pairs $(g,\bB')$ where $\bB'$ is a Borel subgroup of $\bG$ and $g \in \bB'$, and $\bg^*$ by $\bG$. This scheme, denoted $\Stm$, admits a natural morphism to $\bT \times_{\bT/\bWf} \bT$, and we denote by $\Stm^\wedge$ the fiber product of $\Stm$ with the spectrum of the completion of $\scO(\bT \times_{\bT/\bWf} \bT)$ with respect to the ideal corresponding to the point $(e,e) \in \bT \times_{\bT/\bWf} \bT$ (where $e \in \bT$ is the unit element). As part of our constructions we prove that under our assumptions there exists a $\bG$-equivariant isomorphism of schemes $\Stm^\wedge \cong \St^\wedge$, which induces an equivalence of monoidal categories
\begin{equation}
\label{eqn:main-equiv-3-intro}
\Db \Coh^{\bG^{(1)}}(\Stm^{\wedge(1)}) \cong \Db \Coh^{\bG^{(1)}}(\St^{\wedge(1)}).
\end{equation}
Combining this equivalence with~\eqref{eqn:main-equiv-intro} we deduce an equivalence of monoidal categories
\[
\Db \Coh^{\bG^{(1)}}(\Stm^{\wedge(1)}) \cong \sfD^\wedge_{\Iwu,\Iwu}.
\]

We expect this version of our equivalence to be more ``canonical,'' and in fact to hold under much weaker assumptions on $\ell$ (e.g.~for $\ell$ good). In fact, the scheme that appears naturally from the study of $\sfD^\wedge_{\Iwu,\Iwu}$ is $\Stm^\wedge$. Some aspects of the proofs in~\cite{brr-pt1,br-pt2}, which we expect to be artificial, had already forced us to impose stronger assumptions. As explained above the constructions of the present paper rely on the localization theory of~\cite{bmr,bmr2,bm-loc}, which involves the scheme $\tbg$ and not $\tbG$, and moreover requires the condition that $\ell$ is larger than the Coxeter number $h$ (so that the weight $0 \in \bt^*$ is regular). This explains the appearance of $\St^\wedge$ here, and the further restrictions on $\ell$ that we impose. A proof in larger generality would most likely require a direct and better understanding of the structure of the category $\Db \Coh^{\bG^{(1)}}(\Stm^{\wedge(1)})$.


\subsection{Application to the Finkelberg--Mirkovi{\'c} conjecture}
\label{ss:intro-FM}

Our desire to construct the equivalence~\eqref{eqn:main-equiv-intro} comes from expected applications to representation theory of reductive algebraic groups in positive characteristic. In fact the characteristic-$0$ version of this equivalence from~\cite{be}, and its preliminary step obtained in~\cite{ab}, have already found important applications to the representation theory of quantum groups at roots of unity~\cite{bez-tilting} and of representations of Lie algebras of reductive groups in large characteristic~\cite{bm-loc,blo1,blo2}. We expect its modular version to allow new applications where representations of quantum groups are replaced by algebraic representations of reductive groups, and to extend the results on representations of Lie algebras to general characteristics.

\begin{rmk}
The equivalences of~\cite{ab,be} have also found important applications in another direction, namely the Geometric Langlands Program, see e.g.~\cite{bchn,hz} for two examples. It is likely that its modular counterpart will lead to refinements of these results for positive-characteristic coefficients, but no concrete result in this direction will be discussed here.
\end{rmk}

As an illustration of this idea, here we establish an equivalence of categories conjectured by Finkelberg--Mirkovi{\'c}~\cite{fm}. Namely, recall that the \emph{geometric Satake equivalence} provides an equivalence of monoidal categories between the category $\Rep(\bG^{(1)})$ of finite-dimensional algebraic representations of $\bG^{(1)}$ (endowed with the tensor product of representations) and the category $\sfP_{\Loop^+ G,\Loop^+ G}$ of $\Loop^+ G$-equivariant $\bk$-perverse sheaves on the affine Grassmannian $\Gr_G := \Loop G / \Loop^+ G$. 

One can ``enlarge'' the categories on both sides of this equivalence in the following way. 
Let $\Rep_{\langle 0 \rangle}(\bG)$ be the ``extended principal block'' in $\Rep(\bG)$, i.e.~the Serre subcategory generated by the simple objects whose highest weights belong to the $\bW$-orbit of $0$ (for the ``dot'' action of $\bW$ on $X^*(\bT)$). Then there exists a canonical action of the category $\Rep(\bG^{(1)})$ on $\Rep_{\langle 0 \rangle}(\bG)$ given by tensor product with pullbacks under the Frobenius morphism.
On the other hand, consider the ``opposite'' affine Grassmannian $\Gr'_G=\Loop^+ G \backslash \Loop G$ with the natural action of $\Iwu$ (induced by multiplication on the right on $\Loop G$), and denote by $\sfP_{\Loop^+ G, \Iwu}$ the category of $\Iwu$-equivariant $\bk$-perverse sheaves on $\Gr'_G$. Then there exists a canonical action of $\sfP_{\Loop^+ G,\Loop^+ G}$ on $\sfP_{\Loop^+ G,\Iwu}$ defined by convolution on the left.

The conjecture of Finkelberg--Mirkovi{\'c} that we establish here (under the assumptions considered in~\S\ref{ss:intro-equiv}) is that there exists an equivalence of abelian categories
\[
\Rep_{\langle 0 \rangle}(\bG) \cong \sfP_{\Loop^+ G, \Iwu}
\]
intertwining the actions of $\Rep(\bG^{(1)})$ and $\sfP_{\Loop^+ G,\Loop^+ G}$ (identified via the geometric Satake equivalence); see Theorem~\ref{thm:FM}. This equivalence induces the obvious bijection between the sets of isomorphism classes of simple objects on both sides. One of the motivations for this conjecture is that it makes Lusztig's character formula~\cite{lusztig-pbs} for simple representations in $\Rep_{\langle 0 \rangle}(\bG)$ in large characteristics completely transparent, since it translates it to the standard observation that the cohomology of fibers of any given intersection cohomology sheaf in $\sfP_{\Loop^+ G, \Iwu}$ has the same dimension in large characteristic and in characteristic $0$.

A crucial step in the proof of this theorem is the identification of the image of the perverse t-structure under the equivalence
\[
\sfD_{\Iwu,\Iwu} \cong \Db \HC_{\mathrm{nil}}^{\widehat{0},\widehat{0}}
\]
obtained by combining~\eqref{eqn:main-equiv-intro-class} and~\eqref{eqn:main-equiv-2-intro-class}; the answer is given in terms of the \emph{perverse coherent t-structure} constructed and studied in~\cite{arinkin-bezrukavnikov}. Here again, a closely related result appears in~\cite{losev} (with other applications in mind), but our proof is different; in fact it follows ideas that already played a crucial role in~\cite{bm-loc}.

In a different direction,
notice that by~\cite[Proposition~6.3]{losev} there is a derived equivalence between the category $\HC^{\widehat{0},0}$ of Remark~\ref{rmk:intro-HC}\eqref{it:intro-HC-character}
and an appropriately defined modular category $\mathscr{O}$. Thus our main result is related to the modular
version of Gaitsgory's conjectural extension of the Kazhdan--Lusztig equivalence~\cite{gaitsgory-2}
recently proved in~\cite{chen-fu} (see also~\cite{losev2,situ} for alternative proofs of closely related results). In fact, a study of this modular category $\mathscr{O}$ using constructible sheaves based (among other things) on the results of the present paper is the subject of a joint project of the second author with P.~Achar and G.~Dhillon.

\subsection{Contents}

In Section~\ref{sec:HCBim} we prove preliminary results on Harish-Chandra bimodules. In Section~\ref{sec:compl-loc} we establish a ``completed'' version of the results of~\cite{bmr,bmr2} adapted to the schemes and rings we want to consider. In Section~\ref{sec:loc} we construct equivalences relating our categories of Harish-Chandra bimodules to some categories of $\mathscr{D}$-modules on (partial) flag varieties. In Section~\ref{sec:splitting} we deduce a construction of the equivalence~\eqref{eqn:main-equiv-2-intro}. In Section~\ref{sec:monoidality} we discuss the compatibility of our constructions with the natural convolution products on the categories involved.

In Section~\ref{sec:braid-gp} we study a family of objects in $\Db \Coh^{\bG^{(1)}}(\St^{\wedge(1)})$ and $\Db \HC^{\widehat{0}, \widehat{0}}$ associated with elements in the braid group of $\bW$. (These elements form a categorical incarnation of the ``standard basis'' of the Hecke algebra $\mathcal{H}_\bW$.) In Section~\ref{sec:Steinberg-section} we introduce and study a functor of ``restriction to a Steinberg section'' that will allow us to apply the strategy outlined in~\S\ref{ss:intro-Soergel-bim} in the category $\Db \Coh^{\bG^{(1)}}(\St^{\wedge(1)})$. (This functor will play the role of ``functor $\mathbb{V}$'' from Soergel's point of view.) In Section~\ref{sec:equiv} we prove the equivalence~\eqref{eqn:main-equiv-intro}, together with the various variants discussed above.

In Section~\ref{sec:t-structures} we define the ``perverse coherent t-structure'' on the category $\Db \HC_{\mathrm{nil}}^{\widehat{0}, \widehat{0}}$, and show that it is the image of the perverse t-structure on $\sfD_{\Iwu,\Iwu}$. In Section~\ref{sec:FM} we apply this to the proof of the Finkelberg--Mirkovi{\'c} conjecture.

The paper finishes with two appendices treating some technical constructions used in the body of the paper.

\subsection{Some notations}

If $\bk$ is a perfect field of characteristic $\ell>0$, given a $\bk$-scheme $X$, we will denote by $X^{(1)}$ the associated Frobenius twist defined as the fiber product
\[
X^{(1)}:=X \times_{\Spec(\bk)} \Spec(\bk),
\]
where the morphism $\Spec(\bk) \to \Spec(\bk)$ is associated with the map $x \mapsto x^\ell$. (The projection $X^{(1)} \to X$ is an isomorphism of abstract schemes, but not of $\bk$-schemes.) The Frobenius morphism then defines a morphism of $\bk$-schemes $\Fr_X : X \to X^{(1)}$.

If $X=\Spec(A)$ is an affine scheme and $Y \subset X$ is a closed subscheme, defined by an ideal $I \subset A$, we will denote by $\FN_X(Y)$ the spectrum of the completion of $A$ with respect to $I$. (Here, ``$\FN$'' stands for ``formal neighborhood.'') In this setting we have a canonical morphism of schemes $\FN_X(Y) \to X$.

If $A$ is a ring, resp.~a left noetherian ring, we will denote by $\Mod(A)$, resp.~by $\Modfg(A)$, the category of left $A$-modules, resp.~of finitely generated left $A$-modules. If $k$ is a commutative ring and $H$ is a flat affine group scheme over $k$, we will denote by $\Rep^\infty(H)$ the category of algebraic $H$-modules, i.e.~of $\scO(H)$-comodules, and, in case $k$ is noetherian, by $\Rep(H)$ the subcategory of representations whose underlying $k$-module is finitely generated. If $A$ is a $k$-algebra endowed with an action of $H$ we will denote by $\Mod^H(A)$ the category of $H$-equivariant left $A$-modules and, in case $A$ is left noetherian, by $\Modfg^H(A)$ the subcategory of modules which are finitely generated over $A$.

\subsection{Acknowledgements}

This paper is the third step of a project we initially started as a joint work with L. Rider. We thank her for her early contributions to this program. The second author also thanks P. Achar and G. Dhillon for stimulating discussions on subjects related to this project (in particular in relation with Section~\ref{sec:monoidality}), and I. Losev for his help to complete some proofs in~\S\ref{ss:comp-loc-end-proof} and Section~\ref{sec:monoidality}.

R.B.~was supported by NSF Grant No.~DMS-2101507.
This project has received funding from the European Research Council (ERC) under the European Union's Horizon 2020 research and innovation programme (S.R., grant agreement No.~101002592).

\section{Harish-Chandra bimodules}
\label{sec:HCBim}

\subsection{Notation}
\label{ss:HC-notation}

We let $\bk$ be an algebraically closed field of characteristic $\ell>0$, and $\bG$ be a connected reductive algebraic group over $\bk$. We also fix a Borel subgroup $\bB \subset \bG$ and a maximal torus $\bT \subset \bB$, and denote by $\bU$ the unipotent radical of $\bB$. 
We will also denote by $\bg$, $\bb$, $\bt$, $\bu$ the Lie algebras of $\bG$, $\bB$, $\bT$, $\bU$ respectively.

We will denote by $\fR \subset X^*(\bT)$ the root system of $(\bG,\bT)$. The choice of $\bB$ determines a system of positive roots $\fR_+ \subset \fR$ (chosen as the $\bT$-weights in $\bg/\bb$), and we denote by $\fRs$ the associated system of simple roots. For any $\alpha \in \fR$, we will denote by $\alpha^\vee \in X_*(\bT)$ the corresponding coroot. We will also denote by $\bWf$ the Weyl group of $(\bG,\bT)$; the reflection associated with a root $\alpha$ will be denoted $s_\alpha$. The set $\bSf = \{s_\alpha : \alpha \in \fRs\}$ is a subset of Coxeter generators of $\bWf$; the longest element in this group will (as usual) be denoted $w_\circ$.
Given a weight $\lambda \in X^*(\bT)$, we will denote by $\ola \in \bt^*$ its differential.

We will make the following (standard) assumptions:
\begin{enumerate}
\item 
\label{it:ass-1}
the derived subgroup $\mathscr{D}\bG$ of $\bG$ is simply connected;
\item $\ell$ is odd and good for $\bG$;
\item 
\label{it:ass-3}
$\bg$ admits a nondegenerate $\bG$-invariant bilinear form;
\item 
\label{it:ass-4}
$X^*(\bT)/\Z\fR$ has no $\ell$-torsion.
\end{enumerate}
In particular, assumption~\eqref{it:ass-1} ensures that there exists $\varsigma \in X^*(\bT)$ which satisfies $\langle \varsigma, \alpha^\vee \rangle = 1$ for any $\alpha \in \fRs$. We fix such a weight once and for all. (None of our constructions below will depend on this choice) In view of~\eqref{it:ass-3} we can (and will) fix a $\bG$-equivariant isomorphism $\bg \simto \bg^*$. It is clear that these assumptions are stable under passage to a Levi subgroup.

\begin{rmk}
\phantomsection
\label{rmk:assumptions-group}
\begin{enumerate}
\item
Assumptions~\eqref{it:ass-3} and~\eqref{it:ass-4} are automatic in case $\ell$ is very good for $\bG$; see~\cite[Proposition~2.5.12]{letellier} for~\eqref{it:ass-3}.
\item
In~\cite{br-Hecke} we imposed the further assumption that $\bG$ is semisimple. However, all the results of that paper hold under the present assumptions.
\end{enumerate}
\end{rmk}

Chevalley's theorem implies that the isomorphism classes of simple objects in $\Rep(\bG)$ are in a canonical bijection (via the theory of highest weights) with the subset $X^*(\bT)^+ \subset X^*(\bT)$ of dominant weights; the simple module of highest weight $\lambda \in X^*(\bT)^+$ will be denoted $\sfL(\lambda)$. For $\lambda \in X^*(\bT)$ we will denote by $\Rep_{\langle \lambda \rangle}(\bG)$ the Serre subcategory of $\Rep(\bG)$ generated by the simple objects whose highest weights belong to
\[
\{w(\lambda+\varsigma)-\varsigma + \ell \mu : w \in \bWf, \, \mu \in X^*(\bT)\} \cap X^*(\bT)^+.
\] 
The linkage principle implies that this subcategory is a direct summand in $\Rep(\bG)$. The category $\Rep(\bG)$ admits a canonical highest weight structure with weight poset $X^*(\bT)^+$, and with standard, resp.~costandard, objects the Weyl, resp.~induced, modules. This structure induces a highest weight structure on each subcategory $\Rep_{\langle \lambda \rangle}(\bG)$.

We will denote by $\Tilt(\bG)$ the full subcategory of $\Rep(\bG)$ consisting of tilting modules.
It is a standard fact that the set of isomorphism classes of indecomposable objects in $\Tilt(\bG)$ is in a canonical bijection with $X^*(\bT)^+$. For any $\lambda \in X^*(\bT)^+$ we fix an object $\sfT(\lambda)$ in the class corresponding to $\lambda$;
 it is well known that for any $\lambda \in X^*(\bT)^+$ we have $\sfT(\lambda)^* \cong \sfT(-w_\circ(\lambda))$.

\subsection{Center of the universal enveloping algebra}
\label{ss:center-Ug}

We consider the universal enveloping algebra $\cU\bg$ of $\bg$. This algebra admits a central subalgebra $\ZFr$, called the ``Frobenius center,'' and defined as the subalgebra generated by the elements of the form $x^\ell - x^{[\ell]}$ for $x \in \bg$. (Here, $(-)^{[\ell]} : \bg \to \bg$ is the restricted $\ell$-th power operation.) We have a canonical algebra isomorphism
\begin{equation}
\label{eqn:ZFr}
 \ZFr \simto \scO(\bg^{*(1)})
\end{equation}
induced by the map $x \mapsto x^\ell-x^{[\ell]}$.

Consider now the adjoint action of $\bG$ on $\bg$, and the induced action on $\cU\bg$.
It is easily seen that the subalgebra $\ZHC:=(\cU\bg)^{\bG} \subset \cU\bg$ (called the ``Harish-Chandra center'') is also central.
Under our present assumptions, by~\cite[\S 9]{jantzen-prime} we have a ``Harish-Chandra isomorphism''
\begin{equation}
\label{eqn:HC-isom}
(\cU\bg)^{\bG} \simto \mathrm{S}(\bt)^{(\bWf,\bullet)}=\scO(\bt^*/(\bWf,\bullet)),
\end{equation}
where $\bWf$ acts on $\bt^*$ via the ``dot action'' defined by
\[
w \bullet \xi = w(\xi + \overline{\varsigma})-\overline{\varsigma}.
\]
Defining a $\bk$-algebra morphism $\ZHC \to \bk$ is therefore equivalent to fixing a $\bk$-point in $\bt^*/(\bWf,\bullet)$, i.e.~a $(\bWf, \bullet)$-orbit in $\bt^*$. The characters we will mostly be interested in are those coming from characters of $\bT$; for $\lambda \in X^*(\bT)$, we will denote by $\tla$ the image of $\ola$ in $\bt^*/(\bWf,\bullet)$, and by $\fm^\lambda \subset \ZHC$ the maximal ideal defined by $\tla$.

Note that, since the isomorphism~\eqref{eqn:ZFr} is $\bG$-equivariant (for the action on $\bg^{*(1)}$ obtained from the adjoint action of $\bG^{(1)}$ via the Frobenius morphism $\Fr_\bG$), we have
\[
\ZHC \cap \ZFr = (\ZFr)^\bG \cong \scO(\bg^{*(1)}/\bG^{(1)}).
\]
Under our assumptions we have the Chevalley isomorphism
\[
\bg^{*(1)}/\bG^{(1)} \simto \bt^{*(1)}/\bWf
\]
(induced by restriction of linear forms), see~\cite[\S 4.1]{bc}, and under this isomorphism the embedding
\[
\ZHC \cap \ZFr \hookrightarrow \ZHC
\]
corresponds to the morphism $\bt^*/(\bWf,\bullet) \to \bt^{*(1)}/\bWf$ induced by the Artin--Schreier morphism
\begin{equation}
\label{eqn:Artin-Schreier}
\bt^* \to \bt^{*(1)},
\end{equation}
which itself corresponds to the map $\scO(\bt^{*(1)}) \to \scO(\bt^*)$ given by $x \mapsto x^\ell - x^{[\ell]}$ for $x \in \bt$. We therefore obtain an isomorphism
\[
\ZFr \otimes_{\ZFr \cap \ZHC} \ZHC \cong \scO \bigl( \bg^{*(1)} \times_{\bt^{*(1)}/\bWf} \bt^*/(\bWf,\bullet) \bigr),
\]
hence a canonical algebra morphism
\begin{equation}
\label{eqn:morph-center}
\scO \bigl( \bg^{*(1)} \times_{\bt^{*(1)}/\bWf} \bt^*/(\bWf,\bullet) \bigr) \to \cU \bg
\end{equation}
with central image. In fact it is well known that under our assumptions this morphism identifies the left-hand side with the center $\cZ$ of $\cU\bg$; see e.g.~\cite[Theorem~3.5(5)]{brown-gordon}. Using this morphism, one can view $\cU\bg$ as a coherent sheaf of $\scO_{\bg^{*(1)} \times_{\bt^{*(1)}/\bWf} \bt^*/(\bWf,\bullet)}$-algebras on the affine scheme $\bg^{*(1)} \times_{\bt^{*(1)}/\bWf} \bt^*/(\bWf,\bullet)$. (We will use the same notation for this sheaf of rings.)

Below we will be interested in some ``completions''\footnote{Here we use this term since it is suggestive of the basic idea of this construction; but we insist that these algebras are not completions of $\cU\bg$ in the formal sense (of e.g.~\cite[\href{https://stacks.math.columbia.edu/tag/00M9}{Tag 00M9}]{stacks-project}).} of $\cU\bg$ defined as follows. Given $\lambda \in X^*(\bT)$, we will denote by $(\cU \bg)^{\widehat{\lambda}}$ the pullback of $\cU\bg$ under the natural morphism
\[
\bg^{*(1)} \times_{\bt^{*(1)}/\bWf} \FN_{\bt^*/(\bWf,\bullet)}(\{\tla\}) \to \bg^{*(1)} \times_{\bt^{*(1)}/\bWf} \bt^*/(\bWf,\bullet).
\]
Since the left-hand side is an affine scheme, we can (and will) consider $(\cU \bg)^{\widehat{\lambda}}$ as a plain $\bk$-algebra.
Note that $(\cU \bg)^{\widehat{\lambda}}$ is left and right noetherian, since it is finite over the commutative ring $\scO(\bg^{*(1)} \times_{\bt^{*(1)}/\bWf} \FN_{\bt^*/(\bWf,\bullet)}(\{\tla\}))$, which itself is noetherian because it is of finite type over the noetherian ring $\scO(\FN_{\bt^*/(\bWf,\bullet)}(\{\tla\}))$.

Below we will use the following properties.

\begin{prop}
\label{prop:flatness-Ug}
The algebra $\cU\bg$ is free over $\ZFr$, and flat over $\ZHC$.
\end{prop}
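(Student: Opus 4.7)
The proof splits into two independent assertions; I treat them in turn.

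\textbf{Freeness over $\ZFr$.} The plan is to invoke the restricted PBW theorem. Fix an ordered basis $x_1,\ldots,x_n$ of $\bg$, and consider the usual PBW basis $\{x_1^{a_1}\cdots x_n^{a_n} : a_i \geq 0\}$ of $\cU\bg$ over $\bk$. The central elements $z_i := x_i^\ell - x_i^{[\ell]}$ generate $\ZFr$ and satisfy $x_i^\ell = z_i + x_i^{[\ell]}$, so an induction on total PBW degree shows that the ``small'' monomials $\{x_1^{r_1}\cdots x_n^{r_n} : 0 \leq r_i < \ell\}$ form a free basis of $\cU\bg$ over $\ZFr$, of rank $\ell^{\dim \bg}$ (consistent with identifying the quotient of $\cU\bg$ by the augmentation ideal of $\ZFr$ with the restricted enveloping algebra $\bu(\bg)$).

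\textbf{Flatness over $\ZHC$.} I plan to invoke Hironaka's miracle flatness criterion for the morphism $f \colon \Spec(\cU\bg) \to \Spec(\ZHC)$. Four verifications are needed. First, $f$ is of finite type since $\cU\bg$ is finitely generated over $\bk$. Second, $\ZHC \cong \scO(\bt^*/(\bWf,\bullet))$ by~\eqref{eqn:HC-isom}; under our assumptions on $\ell$ the Chevalley--Shephard--Todd theorem applies to the reflection action of $\bWf$ on $\bt^*$, so $\ZHC$ is a polynomial algebra and $\Spec(\ZHC)$ is regular. Third, the first part of the proposition gives that $\cU\bg$ is a finite free module over the polynomial ring $\ZFr \cong \scO(\bg^{*(1)})$, hence $\cU\bg$ itself is Cohen--Macaulay. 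Fourth, the fibers of $f$ are equidimensional of dimension $\dim \bg - \dim \bt$: since $\cU\bg$ is a finite module over the central subalgebra $\cZ \cong \scO(\bg^{*(1)} \times_{\bt^{*(1)}/\bWf} \bt^*/(\bWf,\bullet))$, the fiber dimensions of $f$ agree with those of $\Spec(\cZ) \to \Spec(\ZHC)$, and by base change these in turn agree with the fiber dimensions of the adjoint quotient $\bg^{*(1)} \to \bt^{*(1)}/\bWf$. Once these four items are in hand, miracle flatness delivers flatness of $f$.

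The main obstacle is the fourth verification: constancy of fiber dimension for the adjoint quotient $\bg^* \to \bt^*/\bWf$ in characteristic $\ell$, which is the positive-characteristic analogue of a classical theorem of Kostant. Its validity relies crucially on the existence of a nondegenerate $\bG$-invariant form on $\bg$ (one of our standing assumptions in~\S\ref{ss:HC-notation}) together with the remaining hypotheses on $\ell$; with these in force it is due to Veldkamp. Granted this ingredient, the two steps combine to show the stronger statement that $\cU\bg$ is in fact free over $\ZHC$, which immediately implies flatness.
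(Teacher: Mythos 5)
Your freeness argument over $\ZFr$ is the same classical PBW argument the paper has in mind, just written out in detail. For flatness over $\ZHC$, however, you take a genuinely different route. The paper proceeds by equipping $\cU\bg$ with the PBW filtration and $\ZHC \cong \scO(\bt^*/(\bWf,\bullet))$ with the degree filtration, identifying the associated graded situation with $\scO(\bt^*/\bWf) \cong \scO(\bg^*/\bG) \to \scO(\bg^*)$, invoking flatness of the (co)adjoint quotient from [BC, Proposition~4.2.6], and then lifting flatness along a filtered/graded comparison lemma from Bj{\"o}rk. This sidesteps any need to reason scheme-theoretically about the noncommutative ring $\cU\bg$; the only geometric input is the one commutative flatness statement. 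Your miracle-flatness argument is sound in spirit, and it is worth noting that both approaches ultimately hinge on the same geometric input (equidimensionality of the fibers of $\bg^* \to \bt^*/\bWf$ is what flatness of that morphism buys you), but the paper's filtration argument is more economical: it uses that input once and for all, rather than re-deriving flatness by assembling three separate hypotheses.

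A few points in your write-up should be tightened. First, $\Spec(\cU\bg)$ is not literally a scheme since $\cU\bg$ is noncommutative; the correct formulation is to regard $\cU\bg$ as a coherent sheaf of algebras (hence a finite module) on $\Spec(\cZ)$, where $\cZ$ is the (commutative) center, and run miracle flatness in the module form for $\cU\bg$ over $\cZ$ relative to $\ZHC$, checking the dimension equality at primes of $\cZ$. That your Cohen--Macaulayness claim survives this reformulation is fine because $\cZ$ is finite over $\ZFr$, so Cohen--Macaulayness passes between them. Second, you should source equidimensionality of fibers from the reference the paper actually uses ([BC, Proposition~4.2.6]), since the standing hypotheses in \S\ref{ss:HC-notation} where this proposition lives do not include $\ell \geq h$; Veldkamp's theorem is usually stated under $p>h$, so citing it here risks a gap in generality. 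Third, your closing claim that $\cU\bg$ is actually \emph{free} over $\ZHC$ does not follow from what you have shown: $\cU\bg$ is not finitely generated as a $\ZHC$-module, and miracle flatness (or local flatness more generally) does not upgrade to freeness for infinitely generated modules. The proposition only asserts flatness, so this overreach does not invalidate your argument, but the sentence should be deleted.
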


\begin{proof}
The freeness claim is classical, and follows from the Poincar\'e--Birkhoff--Witt theorem. To prove the flatness statement, we consider the Poincar\'e--Birkhoff--Witt filtration of $\cU\bg$, and the filtration on $\ZHC \cong \scO(\bt^* / (\bWf, \bullet))$ induced by the degree filtration on $\scO(\bt^*)$. Then $\cU\bg$ is a filtered module over the filtered ring $\ZHC$, and the associated graded is $\scO(\bg^*)$ seen as a module over $\scO(\bt^* / \bWf) \cong \scO(\bg^*/\bG)$. Now the quotient morphism $\bg \to \bg/\bG$ is flat (see~\cite[Proposition~4.2.6]{bc}; in our present setup all torsion primes are bad, so that this result applies), hence the same claim holds for $\bg^*$, and then the desired claim follows from~\cite[Chap.~2, Proposition~3.12]{bjork}.
\end{proof}

\subsection{Harish-Chandra bimodules}
\label{ss:def-HC}

We will denote by $\widetilde{\HC}$ the category of ``Harish-Chandra bimodules'' for $\bG$, i.e.~$\cU\bg$-bimodules $M$ endowed with an (algebraic) action of $\bG$ such that
\begin{enumerate}
 \item the action morphisms $\cU\bg \otimes M \to M$ and $M \otimes \cU\bg \to M$ are $\bG$-equivariant (with respect to the diagonal $\bG$-actions on $\cU\bg \otimes M$ and $M \otimes \cU\bg$);
 \item the $\bg$-action obtained by differentiating the $\bG$-action is given by $(x,v) \mapsto x \cdot v - v \cdot x$.
\end{enumerate}
The morphisms in this category are morphisms of bimodules which commute with the $\bG$-actions. 
As explained in~\cite[\S 3.4]{br-Hecke}, the action of $\cU\bg \otimes_\bk \cU\bg^\op$ on any Harish-Chandra bimodule factors through an action of the algebra
\[
\sfU := \cU \bg \otimes_{\ZFr} \cU\bg^\op,
\]
which is finite as an $\scO(\ZFr)$-module; hence a Harish-Chandra bimodule is finitely generated as a $\cU\bg \otimes_\bk \cU\bg^\op$-module if and only if it is finitely generated as a left $\cU\bg$-module, if and only if it is finitely generated as a right $\cU\bg$-module, if and only if it is finitely generated as a $\ZFr$-module (either for the left or for the right action). The full abelian subcategory of $\widetilde{\HC}$ whose objects are those satisfying these conditions will be denoted $\HC$.

We will consider the category $\Mod^{\bG}(\sfU)$ of $\bG$-equivariant $\sfU$-modules, and the full subcategory $\Modfg^{\bG}(\sfU)$ of finitely generated modules. Then
we have canonical fully faithful exact functors
\[
 \HC \to \Modfg^{\bG}(\sfU), \qquad \widetilde{\HC} \to \Mod^{\bG}(\sfU).
\]
The tensor product of $\cU\bg$-bimodules provides a monoidal product
\[
 (-) \otimes_{\cU\bg} (-) : \Mod^{\bG}(\sfU) \times \Mod^{\bG}(\sfU) \to \Mod^{\bG}(\sfU)
\]
which stabilizes the subcategories $\widetilde{\HC}$, $\Modfg^{\bG}(\sfU)$ and $\HC$.

It is easily seen that forgetting the right action of $\cU\bg$ defines an equivalence of categories
\begin{equation}
 \label{eqn:HC-modules}
 \HC \simto \Modfg^{\bG}(\cU\bg)
\end{equation}
where the right-hand side is the category of $\bG$-equivariant finitely generated $\cU\bg$-modules; see~\cite[\S 3.4]{br-Hecke} for details. (Of course we also have an equivalence $\HC \simto \Modfg^{\bG}(\cU\bg^\op)$ given by forgetting the \emph{left} action.)


One can produce objects of $\widetilde{\HC}$ out of objects of $\Rep^\infty(\bG)$ by ``diagonal induction," see~\cite[\S 3.4]{br-Hecke}. Namely, given $V \in \Rep^\infty(\bG)$ one can consider the object $V \otimes \cU\bg$ where $\bG$ acts diagonally, the left copy of $\cU\bg$ acts diagonally (via the differential of the $\bG$-action on $V$ and left multiplication on $\cU\bg$) and the right copy acts by right multiplication on $\cU\bg$. In fact, this object is isomorphic to the object $\cU\bg \otimes V$ where $\bG$ acts diagonally,
the left copy of $\cU\bg$ acts by left multiplication on $\cU\bg$ and the right copy acts diagonally, via (the opposite of) the differential of the $\bG$-action on $V$ and right multiplication on $\cU\bg$.


\subsection{Central characters and completions}
\label{ss:central-characters}

We set
\[
 \bZ := \bt^*/(\bWf,\bullet) \times_{\bt^{*(1)}/\bWf} \bt^*/(\bWf,\bullet) \cong \Spec(\ZHC \otimes_{\ZHC \cap \ZFr} \ZHC)
\]
(see~\S\ref{ss:center-Ug}); then $\scO(\bZ)$ is a central subalgebra in $\sfU$.
For $\lambda,\mu \in X^*(\bT)$, we also set
\[
 \bZ^{\hla,\hmu} = \FN_{\bZ}( \{ (\tla,\tmu) \} ), \quad
 \sfU^{\widehat{\lambda},\widehat{\mu}} := 
 \sfU \otimes_{\scO(\bZ)} \scO (\bZ^{\hla,\hmu}).
\]
The (diagonal) $\bG$-action on $\sfU$
induces an (algebraic) action on $\sfU^{\hla,\hmu}$. Moreover $\sfU^{\hla,\hmu}$ is left and right noetherian, see~\cite[\S 3.5]{br-Hecke}. We can therefore consider the abelian category
\[
 \Mod^{\bG}(\sfU^{\hla,\hmu})
\]
of $\bG$-equivariant modules over this algebra, its full subcategory $\Modfg^{\bG}(\sfU^{\hla,\hmu})$ of finitely generated modules, and the full abelian subcategories
\[
\HC^{\hla,\hmu} \subset \Modfg^{\bG}(\sfU^{\hla,\hmu}), \qquad \widetilde{\HC}^{\hla,\hmu} \subset \Mod^{\bG}(\sfU^{\hla,\hmu})
\]
of ``Harish-Chandra bimodules,'' i.e.~modules on which the differential of the $\bG$-action coincides with the action given by $(x,m) \mapsto x \cdot m - m \cdot x$. 

If $M$ is a $\bG$-equivariant $\sfU^{\hla,\hmu}$-module, the assignment $(x,m) \mapsto x \cdot m - m \cdot x$ defines an action of $\cU\bg$ which factors through an action of the distribution algebra $\Dist(\bG_1)$ of the Frobenius kernel $\bG_1$, i.e.~the (scheme-theoretic) kernel of $\Fr_{\bG}$. (Here we use the canonical identification $\Dist(\bG_1)=\cU\bg \otimes_{\ZFr} \bk$ where $\bk$ is the trivial $\ZFr$-module.) The module $M$ acquires in this way a structure of $\bG_1$-module which, combined with the action of $\bG$, provides an action of $\bG \ltimes \bG_1$. Playing with the definition one sees that the objects of $\widetilde{\HC}^{\hla,\hmu}$ are the objects of $\Mod^{\bG}(\sfU^{\hla,\hmu})$ such that this action of $\bG \ltimes \bG_1$ factors through the multiplication morphism $\bG \ltimes \bG_1 \to \bG$ or, in other words, on which the action of the antidiagonal copy of $\bG_1$ is trivial. 



The following statement follows from standard arguments based on the fact that $\sfU^{\hla,\hmu}$ is a noetherian ring (see e.g.~\cite[Proof of Corollary~2.11]{arinkin-bezrukavnikov}).

\begin{lem}
\label{lem:HC-tHC}
The obvious functor
\[
\Db \HC^{\hla,\hmu} \to \Db \widetilde{\HC}^{\hla,\hmu}
\]
is fully faithful, and identifies the left-hand side with the full subcategory of the right-hand side whose objects are the complexes all of whose cohomology objects belong to $\HC^{\hla,\hmu}$.
\end{lem}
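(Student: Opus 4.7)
The plan is to deduce this from a standard criterion for when the derived category of a Serre subcategory embeds fully faithfully, and identifies with the complexes with cohomology in that subcategory. Precisely, the criterion (see for instance~\cite[Exp.~II, Prop.~2.2.1]{SGA4} in spirit, or the version proved in Arinkin--Bezrukavnikov) requires two things: $(a)$ that $\HC^{\hla,\hmu}$ is a Serre subcategory of $\widetilde{\HC}^{\hla,\hmu}$, and $(b)$ that for every object $N$ of $\widetilde{\HC}^{\hla,\hmu}$ and every epimorphism $N \twoheadrightarrow M$ with $M \in \HC^{\hla,\hmu}$, there exists an object $N' \in \HC^{\hla,\hmu}$ and a morphism $N' \to N$ in $\widetilde{\HC}^{\hla,\hmu}$ whose composition with $N \twoheadrightarrow M$ is surjective. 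Once $(a)$ and $(b)$ are established, a standard induction on the amplitude of a complex $C^\bullet \in \Db \widetilde{\HC}^{\hla,\hmu}$ with cohomology in $\HC^{\hla,\hmu}$ produces a quasi-isomorphic complex with terms in $\HC^{\hla,\hmu}$, and the same induction shows full faithfulness at the level of morphisms.

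For step $(a)$, I would argue as follows. The ring $\sfU^{\hla,\hmu}$ is left and right noetherian by the observation in~\S\ref{ss:central-characters}. Consequently, if $M \in \HC^{\hla,\hmu}$ and $M' \subset M$ is a subobject in $\widetilde{\HC}^{\hla,\hmu}$, then $M'$ is finitely generated as an $\sfU^{\hla,\hmu}$-module (since $M$ is), and it is automatically $\bG$-equivariant; similarly for quotients and for extensions (the latter using that a surjection of modules from a f.g.~one has finitely generated kernel, hence the extension is f.g.). The Harish-Chandra compatibility of the $\bG$-action is preserved under subobjects, quotients and extensions since it is a condition formulated pointwise. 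Hence $\HC^{\hla,\hmu}$ is closed under subobjects, quotients and extensions inside $\widetilde{\HC}^{\hla,\hmu}$, i.e.~is a Serre subcategory.

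For step $(b)$, the key input is the local finiteness of algebraic $\bG$-representations. Given a surjection $N \twoheadrightarrow M$ in $\widetilde{\HC}^{\hla,\hmu}$ with $M$ finitely generated, choose a finite set of elements $m_1, \dots, m_r \in M$ generating $M$ as an $\sfU^{\hla,\hmu}$-module. Lift each $m_i$ to some $n_i \in N$, and consider the finite-dimensional $\bG$-subrepresentation $V \subset N$ generated by the (finitely many) $\bG$-translates of the $n_i$, which exists because $N$ is algebraic. Let $N' := \sfU^{\hla,\hmu} \cdot V \subset N$. Then $N'$ is $\bG$-stable, finitely generated over $\sfU^{\hla,\hmu}$, and inherits the Harish-Chandra condition from $N$; hence $N' \in \HC^{\hla,\hmu}$. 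By construction the image of $N' \to M$ contains $m_1, \dots, m_r$, so it surjects onto $M$.

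The main obstacle I expect is purely a bookkeeping one: one must verify carefully that the induction in the general derived-category criterion goes through, i.e.~that $(a)$ and $(b)$ suffice. The cleanest way is to invoke the criterion directly, referring to the proof of~\cite[Corollary~2.11]{arinkin-bezrukavnikov} (as the statement already suggests), since that proof is formulated in exactly this generality. No new ingredient is needed beyond the noetherianity of $\sfU^{\hla,\hmu}$ and the local finiteness of algebraic $\bG$-modules.
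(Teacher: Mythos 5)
Your proof is correct and follows exactly the route the paper has in mind: the paper gives no argument of its own, referring the reader to the proof of~\cite[Corollary~2.11]{arinkin-bezrukavnikov}, and your proposal simply unwinds that reference. You correctly identify the two hypotheses — that $\HC^{\hla,\hmu}$ is a Serre subcategory of $\widetilde{\HC}^{\hla,\hmu}$ (using noetherianity of $\sfU^{\hla,\hmu}$ for closure under subobjects) and the approximation condition on epimorphisms (using local finiteness of algebraic $\bG$-modules to produce a finitely generated equivariant submodule of $N$ that still surjects onto $M$) — and both verifications are sound.

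One small imprecision worth fixing in your step $(a)$: closure under extensions of finitely generated modules is the statement that a module $M$ sitting in a short exact sequence between two finitely generated modules is itself finitely generated; this holds over any ring by lifting generators of the quotient. The parenthetical you offer ("a surjection from a f.g.~module has f.g.~kernel") is a different statement, needing noetherianity, and is not the one you actually use here. It is noetherianity that is genuinely needed for closure under \emph{sub}objects, which you state correctly.
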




We have a natural exact functor
\[
 \sfC^{\hla,\hmu} : \Mod^{\bG}(\sfU) \to \Mod^{\bG}(\sfU^{\hla,\hmu})
\]
given by $\scO (\bZ^{\hla,\hmu}) \otimes_{\scO(\bZ)} (-)$, which restricts to functors 
\[
\Modfg^{\bG}(\sfU) \to \Modfg^{\bG}(\sfU^{\hla,\hmu}), \quad \widetilde{\HC} \to \widetilde{\HC}^{\hla,\hmu} \quad \text{and} \quad
\HC \to \HC^{\hla,\hmu}.
\] 
Recall from~\S\ref{ss:def-HC}
that for $V \in \Rep^\infty(\bG)$ we have an object $V \otimes \cU\bg \in \widetilde{\HC}$.
Below we will be particularly interested in the modules of the form $\sfC^{\hla,\hmu}(V \otimes \cU\bg)$ where $V$ is a finite-dimensional tilting module. We will denote by
\[
 \HC^{\hla,\hmu}_{\mathrm{diag}}
\]
the full idempotent-complete additive subcategory of $\HC^{\hla,\hmu}$ whose objects are the direct sums of direct summands of objects of the form $\sfC^{\hla,\hmu}(V \otimes \cU\bg)$ with $V \in \Rep(\bG)$, and by
\[
 \HC^{\hla,\hmu}_{\mathrm{diag},\mathrm{tilt}}
\]
the full idempotent-complete additive subcategory defined in the same way with the further condition that $V$ is tilting.

For some constructions and proofs below it will be convenient to treat all values of $\lambda$ and $\mu$ at once, in the following way. 
Consider the extended affine Weyl group
\[
 \bW := \bWf \ltimes X^*(\bT).
\]
For any $\nu \in X^*(\bT)$, the image of $\nu$ in $\bW$ will be denoted $\st(\nu)$.
We have a ``dot action'' of $\bW$ on $X^*(\bT)$ given by 
\begin{equation}
\label{eqn:dot-action}
(\st(\nu) w) \bullet \eta = w(\eta+\varsigma)-\varsigma+\ell\nu
\end{equation}
for $w \in \bWf$ and $\nu,\eta \in X^*(\bT)$. Then the differentiation morphism $X^*(\bT) \to \bt^*$ is $\bW$-equivariant, where $\bW$ acts on $\bt^*$ via the dot-action of its quotient $\bWf$ considered in~\S\ref{ss:center-Ug}. As a consequence,
the point $\tla$, resp.~$\tmu$, only depends on the image of $\lambda$, resp.~$\mu$, in $X^*(\bT)/(\bW,\bullet)$. We can therefore consider the notation $\sfC^{\hla,\hmu}$, $\HC^{\hla,\hmu}$, $\bZ^{\hla,\hmu}$, etc. for $\lambda,\mu \in X^*(\bT)/(\bW,\bullet)$. By abuse we will also denote by $0$ the image of $0 \in \bt^{*(1)}$ in $\bt^{*(1)}/\bWf$, and set
\[
 \bZ^\wedge := \bZ \times_{\bt^{*(1)}/\bWf} \FN_{\bt^{*(1)}/\bWf}(\{0\}).
\]
We will denote by $\fn \subset \scO(\bt^{*(1)}/\bWf)$ the maximal ideal corresponding to $0$; then, since the morphism $\bZ \to \bt^{*(1)}/\bWf$ is finite, $\scO(\bZ^\wedge)$ identifies with the completion of $\scO(\bZ)$ with respect to the ideal generated by $\fn$, see~\cite[Equation~(3.8)]{br-Hecke}.

If we set
\[
 \sfU^\wedge = \sfU \otimes_{\scO(\bZ)} \scO(\bZ^\wedge),
\]
then $\sfU^\wedge$ is a $\bG$-equivariant $\scO(\bZ^\wedge)$-algebra, so that
we can as above consider the category
\[
\Mod^{\bG}(\sfU^\wedge)
\]
of $\bG$-equivariant $\sfU^\wedge$-modules, the full subcategory $\Modfg^{\bG}(\sfU^\wedge)$ of finitely generated modules, and the full subcategories 
\[
\widetilde{\HC}^\wedge \subset \Mod^{\bG}(\sfU^\wedge), \qquad
\HC^\wedge \subset \Modfg^{\bG}(\sfU^\wedge)
\]
of Harish-Chandra bimodules. In fact, setting
 \[
  (\cU\bg)^\wedge := \cU\bg \otimes_{\scO(\bt^{*(1)}/\bWf)} \scO(\FN_{\bt^{*(1)}/\bWf}(\{0\})),
 \]
we have
\begin{equation}
\label{eqn:Uwedge-tensor-product}
 \sfU^\wedge \cong (\cU\bg)^\wedge
 \otimes_{\ZFr \otimes_{\scO(\bt^{*(1)}/\bWf)} \scO(\FN_{\bt^{*(1)}/\bWf}(\{0\}))} (\cU\bg)^{\wedge, \op},
\end{equation}
so that a $\sfU^\wedge$-module is nothing but a $(\cU\bg)^\wedge$-bimodule on which the left and right actions of the central subalgebra
\[
\ZFr \otimes_{\scO(\bt^{*(1)}/\bWf)} \scO(\FN_{\bt^{*(1)}/\bWf}(\{0\}))
\]
coincide. From this point of view it is clear that this category has a natural monoidal structure, defined by a bifunctor
\begin{equation}
\label{eqn:def-hatotimes-2}
 (-) \hatotimes_{\cU\bg} (-) : \Mod^{\bG}(\sfU^\wedge) \times \Mod^{\bG}(\sfU^\wedge) \to \Mod^{\bG}(\sfU^\wedge),
\end{equation}
which stabilizes the subcategories of finitely generated modules and Harish-Chandra bimodules in the obvious sense.
We also have a canonical monoidal exact functor
\[
 \sfC^\wedge : \Mod^{\bG}(\sfU) \to \Mod^{\bG}(\sfU^\wedge)
\]
given by
\[
\scO(\bZ^\wedge) \otimes_{\scO(\bZ)} (-) \cong \scO(\FN_{\bt^{*(1)}/\bWf}(\{0\})) \otimes_{\scO(\bt^{*(1)}/\bWf)} (-),
\]
which sends Harish-Chandra bimodules to Harish-Chandra bimodules.

\begin{rmk}
\label{rmk:completion-diag-induced}
Let $V \in \Rep^\infty(\bG)$, and recall the Harish-Chandra bimodule $V \otimes \cU\bg$ considered in~\S\ref{ss:def-HC}. Since $\ZFr$ acts trivially on $V$, we have a canonical identification
\[
\sfC^\wedge(V \otimes \cU\bg) \cong V \otimes (\cU\bg)^\wedge,
\]
for the natural actions on the right-hand side.
\end{rmk}

By~\cite[Lemma~3.4]{br-Hecke} the natural morphism
\begin{equation}
\label{eqn:decomp-Zwedge}
 \scO(\bZ^\wedge) \to \prod_{\lambda,\mu \in X^*(\bT)/(\bW,\bullet)} \scO(\bZ^{\hla,\hmu})
\end{equation}
is an isomorphism. (Note that the indexing set on the right-hand side is finite, so that the product is a direct sum.) It follows that
there are canonical decompositions
\begin{align}
 \Mod^{\bG}(\sfU^\wedge) &= \bigoplus_{\lambda,\mu \in X^*(\bT)/(\bW,\bullet)} \Mod^{\bG}(\sfU^{\hla,\hmu}), \\
 \label{eqn:decomp-Zwedge-cat}
 \widetilde{\HC}^\wedge &= \bigoplus_{\lambda,\mu \in X^*(\bT)/(\bW,\bullet)} \widetilde{\HC}^{\hla,\hmu}
\end{align}
which restrict to subcategories of finitely generated modules,
and for any $M \in \Mod^{\bG}(\sfU)$ we have a canonical isomorphism
\begin{equation}
\label{eqn:decomp-Zwedge-C}
 \sfC^\wedge(M) \cong \bigoplus_{\lambda,\mu \in X^*(\bT)/(\bW,\bullet)} \sfC^{\hla,\hmu}(M).
\end{equation}
Moreover the ring 
 $(\cU\bg)^\wedge$
is left and right noetherian (by the same arguments as for $(\cU\bg)^{\hla}$ or $\sfU^{\hla,\hmu}$) and, as for~\eqref{eqn:HC-modules}, forgetting the right action of $(\cU\bg)^\wedge$ defines equivalences of categories
\begin{equation}
 \label{eqn:HC-modules-wedge}
 \widetilde{\HC}^\wedge \simto \Mod^{\bG} \bigl( (\cU\bg)^\wedge \bigr), \quad \HC^\wedge \simto \Modfg^{\bG} \bigl( (\cU\bg)^\wedge \bigr).
\end{equation}
Of course, similar comments apply to forgetting the left action instead.

As explained in~\cite[\S 3.7]{br-Hecke}, for any $\lambda,\mu,\nu \in X^*(\bT)$ the bifunctor~\eqref{eqn:def-hatotimes-2} restricts to a bifunctor
\begin{equation}
\label{eqn:def-hatotimes}
 \Mod^{\bG}(\sfU^{\hla,\hmu}) \times \Mod^{\bG}(\sfU^{\hmu,\hnu}) \to \Mod^{\bG}(\sfU^{\hla,\hnu})
\end{equation}
which stabilizes the subcategories of finitely-generated modules and Harish-Chandra bimodules in the obvious sense. These bifunctors are associative and unital in the appropriate sense; in particular, in case $\lambda=\mu=\nu$, we obtain a monoidal structure on the category $\Mod^{\bG}(\sfU^{\hla,\hla})$.

Let us note the following other consequence of these considerations.

\begin{lem}
\label{lem:surjection-diag-ind-comp}
Let $\lambda,\mu \in X^*(\bT)$. Then for any $M$ in $\widetilde{\HC}^{\widehat{\lambda},\widehat{\mu}}$, resp.~in $\HC^{\widehat{\lambda},\widehat{\mu}}$, there exists $V$ in $\Rep^\infty(\bG)$, resp.~in $\Rep(\bG)$, and a surjection $\sfC^{\widehat{\lambda},\widehat{\mu}}(V \otimes \cU\bg) \twoheadrightarrow M$, where the structure on $V \otimes \cU\bg$ is as in~\S\ref{ss:def-HC}.
\end{lem}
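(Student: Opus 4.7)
The plan is to interpret $\sfC^{\widehat{\lambda},\widehat{\mu}}(V \otimes \cU\bg)$ as a free object in an appropriate adjunction, and then to produce $V$ as a large enough $\bG$-subrepresentation of $M$.

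First I would use the equivalence~\eqref{eqn:HC-modules-wedge}, combined with the decomposition~\eqref{eqn:decomp-Zwedge-cat}, to regard $M$ as a $\bG$-equivariant left $(\cU\bg)^{\widehat{\lambda}}$-module sitting in the $(\widehat{\lambda},\widehat{\mu})$-summand. Under this equivalence, the canonical isomorphism $\sfC^\wedge(V \otimes \cU\bg) \cong V \otimes (\cU\bg)^\wedge$ of Remark~\ref{rmk:completion-diag-induced}, together with the standard identification of $V \otimes \cU\bg$ (with its diagonal left action) with the free left $\cU\bg$-module $\cU\bg \otimes V$ recalled in~\S\ref{ss:def-HC}, shows that $\sfC^{\widehat{\lambda},\widehat{\mu}}(V \otimes \cU\bg)$ is the image, in the $(\widehat{\lambda},\widehat{\mu})$-summand, of the free $\bG$-equivariant $(\cU\bg)^\wedge$-module $(\cU\bg)^\wedge \otimes V$ on $V$. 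In particular, for any object $N$ in the $(\widehat{\lambda},\widehat{\mu})$-summand of $\Mod^{\bG}((\cU\bg)^\wedge)$ one has a natural adjunction bijection between $\Hom(\sfC^{\widehat{\lambda},\widehat{\mu}}(V \otimes \cU\bg), N)$ and $\Hom_{\Rep^\infty(\bG)}(V, N)$, and the counit sends $u \otimes v$ to $u \cdot v$.

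Next I would exhibit $V$. In the case of $\widetilde{\HC}^{\widehat{\lambda},\widehat{\mu}}$ one may simply take $V = M$, viewed as an algebraic $\bG$-module; the adjoint of the identity $M \to M$ is then a map $\sfC^{\widehat{\lambda},\widehat{\mu}}(M \otimes \cU\bg) \to M$ whose image contains $V = M$ and is therefore all of $M$. In the case of $\HC^{\widehat{\lambda},\widehat{\mu}}$, one uses that $M$ is finitely generated as a left $(\cU\bg)^{\widehat{\lambda}}$-module (via the forgetful equivalence together with the fact that $(\cU\bg)^{\widehat{\lambda}}$, like $\sfU^{\widehat{\lambda},\widehat{\mu}}$, is noetherian, as observed in~\S\ref{ss:center-Ug} and~\S\ref{ss:central-characters}); choose finitely many generators, and, using that $M$ is an algebraic, hence locally finite, $\bG$-module, enclose them in a finite-dimensional $\bG$-stable subspace $V \subseteq M$. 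The adjoint of the inclusion $V \hookrightarrow M$ provides the required surjection $\sfC^{\widehat{\lambda},\widehat{\mu}}(V \otimes \cU\bg) \twoheadrightarrow M$ with $V \in \Rep(\bG)$.

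I do not expect any real obstacle; the argument is the standard free/forgetful adjunction, and the only ``new'' ingredient is the explicit identification of $\sfC^{\widehat{\lambda},\widehat{\mu}}(V \otimes \cU\bg)$ with the free object on $V$, which is already packaged in Remark~\ref{rmk:completion-diag-induced} and the description of the equivalences~\eqref{eqn:HC-modules-wedge}.
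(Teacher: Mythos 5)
Your proof is correct and follows essentially the same strategy as the paper: realize $\sfC^{\widehat{\lambda},\widehat{\mu}}(V \otimes \cU\bg)$ as a direct summand of a free equivariant $(\cU\bg)^\wedge$-module via~\eqref{eqn:HC-modules-wedge} and Remark~\ref{rmk:completion-diag-induced}, then choose a $\bG$-stable generating subspace $V \subset M$ and use the free/forgetful adjunction. The only stylistic difference is that the paper forgets the right $(\cU\bg)^\wedge$-action (so the free object is $V \otimes (\cU\bg)^\wedge$ directly, matching Remark~\ref{rmk:completion-diag-induced} with no further identification), whereas you forget the left action and therefore need to additionally invoke the isomorphism $V \otimes \cU\bg \cong \cU\bg \otimes V$ from~\S\ref{ss:def-HC}; both routes are fine. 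One small imprecision: finite generation of $M$ over $(\cU\bg)^{\widehat{\lambda}}$ comes immediately from the definition of $\HC^{\widehat{\lambda},\widehat{\mu}}$ and the equivalence~\eqref{eqn:HC-modules-wedge}, not from noetherianity of the ring, though noetherianity does guarantee the category is abelian.
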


\begin{proof}
We treat the case of $\widetilde{\HC}^{\widehat{\lambda},\widehat{\mu}}$; the other one is similar.
Consider $M$ as an object of $\widetilde{\HC}^\wedge$ via the decomposition in~\eqref{eqn:decomp-Zwedge-cat}. 
If $V \subset M$ is a 
$\bG$-stable subspace which generates $M$ as a right $(\cU\bg)^\wedge$-module, then we have a surjection of $\bG$-equivariant right $(\cU\bg)^\wedge$-modules
\begin{equation}
\label{eqn:surjection-diag-ind-comp}
 V \otimes (\cU\bg)^\wedge \twoheadrightarrow M
\end{equation}
where $\bG$ acts diagonally on the left-hand side and $(\cU\bg)^\wedge$ acts via right multiplication on the second factor. By the analogue for right modules of~\eqref{eqn:HC-modules-wedge}, the left-hand side can be canonically ``lifted'' to $\widetilde{\HC}^\wedge$ so that this morphism becomes a morphism in $\widetilde{\HC}^\wedge$; moreover, with this structure we have $V \otimes (\cU\bg)^\wedge = \sfC^\wedge(V \otimes \cU\bg)$, see Remark~\ref{rmk:completion-diag-induced}. Now by~\eqref{eqn:decomp-Zwedge-C} we have a canonical decomposition
\[
\sfC^\wedge(V \otimes \cU\bg) = \bigoplus_{\nu,\eta \in X^*(\bT)/(\bW,\bullet)} \sfC^{\widehat{\nu},\widehat{\eta}}(V \otimes \cU\bg).
\]
Since the action of $\scO(\bZ^\wedge)$ on $M$ factors through an action of $\scO(\bZ^{\widehat{\lambda},\widehat{\mu}})$, our surjection~\eqref{eqn:surjection-diag-ind-comp} must factor through a surjection $\sfC^{\widehat{\lambda},\widehat{\mu}}(V \otimes \cU\bg) \twoheadrightarrow M$.
\end{proof}

\subsection{\texorpdfstring{$\Ext$}{Ext}-vanishing}

This subsection is devoted to the proof of the following claim.

\begin{prop}
\label{prop:Ext-vanishing}
For any $\lambda,\mu \in X^*(\bT)$ and any $V,V' \in \Tilt(\bG)$, we have
 \[
  \Ext^n_{\HC^{\hla,\hmu}} \bigl( \sfC^{\hla,\hmu}(V \otimes \cU\bg), \sfC^{\hla,\hmu}(V' \otimes \cU\bg) \bigr)=0
 \]
 for any $n \in \Z_{>0}$.
\end{prop}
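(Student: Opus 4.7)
The strategy is to reduce the Ext computation to one in $\Rep^\infty(\bG)$ by combining two adjunctions, and then conclude using the tilting property of $V$ together with a good filtration on $\sfC^{\hla,\hmu}(V' \otimes \cU\bg)$. By Lemma~\ref{lem:HC-tHC} we may equivalently compute Ext in the larger category $\widetilde{\HC}^{\hla,\hmu}$.

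First, $\sfC^{\hla,\hmu} \cong \sfU^{\hla,\hmu} \otimes_{\sfU} (-)$ is left adjoint to the restriction of scalars along $\sfU \to \sfU^{\hla,\hmu}$, and this restriction is exact (because $\scO(\bZ^{\hla,\hmu})$ is flat over $\scO(\bZ)$, being the completion of a noetherian ring at a maximal ideal). Secondly, a direct verification using the Harish--Chandra condition (the differential of the $\bG$-action on any $M \in \widetilde{\HC}$ coincides with $m \mapsto x \cdot m - m \cdot x$) shows that diagonal induction $V \mapsto V \otimes \cU\bg$ provides an adjunction
\[
\Hom_{\widetilde{\HC}}(V \otimes \cU\bg, M) \cong \Hom_{\Rep^{\infty}(\bG)}(V, M),
\]
whose right adjoint is the exact forgetful functor $\widetilde{\HC} \to \Rep^\infty(\bG)$. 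Composing these two adjunctions and noting that both right adjoints preserve injectives (each being right adjoint to an exact functor), one obtains for any $N \in \widetilde{\HC}^{\hla,\hmu}$ a natural isomorphism
\[
\Ext^{n}_{\widetilde{\HC}^{\hla,\hmu}}\bigl(\sfC^{\hla,\hmu}(V \otimes \cU\bg),\, N\bigr) \cong \Ext^{n}_{\Rep^{\infty}(\bG)}(V, N),
\]
where on the right-hand side $N$ is viewed as an algebraic $\bG$-module.

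Applying this with $N = \sfC^{\hla,\hmu}(V' \otimes \cU\bg)$, it remains to show that $\Ext^{n}_{\Rep^{\infty}(\bG)}(V, \sfC^{\hla,\hmu}(V' \otimes \cU\bg)) = 0$ for $n > 0$. Since $V$ is tilting, it admits a Weyl filtration, so it suffices to prove that $\sfC^{\hla,\hmu}(V' \otimes \cU\bg)$ carries a good (costandard) filtration as an algebraic $\bG$-module. The PBW filtration on $\cU\bg$ is $\bG$-stable with associated graded $\Sym(\bg) \cong \scO(\bg^*)$, and under our assumptions on $\ell$ this symmetric algebra has a good filtration (a Kostant-type result available in good characteristic); hence $\cU\bg$ has a good filtration. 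Because $V'$ is tilting, Donkin's (Mathieu's) theorem on tensor products then yields a good filtration on $V' \otimes \cU\bg$. Finally, $\sfC^{\hla,\hmu}(V' \otimes \cU\bg)$ is obtained from $V' \otimes \cU\bg$ by the flat base change $\scO(\bZ) \to \scO(\bZ^{\hla,\hmu})$; since $\bG$ acts trivially on both of these central rings, this base change commutes with $\Ext^\bullet_{\Rep^\infty(\bG)}(\Delta, -)$ for each Weyl module $\Delta$, and therefore preserves the good-filtration property.

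The main obstacle is the last step: establishing the good filtration on $\Sym(\bg)$ under the precise assumptions of \S\ref{ss:HC-notation}, and verifying that flat base change over a trivially $\bG$-acted central ring preserves good filtrations on possibly infinite-dimensional rational $\bG$-modules. Both are classical in sufficiently good characteristic but warrant careful justification: the former amounts to an $\ell$-analogue of Kostant's theorem on $\scO(\bg^*)$, while the latter rests on flat base change for rational group cohomology applied to Weyl modules.
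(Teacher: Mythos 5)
Your proposal is correct and follows essentially the same strategy as the paper: reduce via adjunctions to a rational-cohomology vanishing for $\bG$, then conclude using the good filtration of $\Sym(\bg)$ (the paper's Lemma~\ref{lem:Sg-good-filtration}, which handles the $\bg \cong \bg^*$ identification via assumption~\eqref{it:ass-3}) together with the Donkin--Mathieu theorem on tensor products and flat base change for the Hochschild complex. The only cosmetic differences are that the paper first reduces $V$ to the trivial module via the tilting adjunction and works in the $\wedge$-completed category (all central characters at once), whereas you keep $V$ and dispose of it at the end via its Weyl filtration; and the paper states the conclusion as vanishing of $\mathsf{H}^n(\bG, V \otimes (\cU\bg)^\wedge)$ rather than as a good-filtration property of the right-hand argument, which avoids invoking the converse of the cohomological criterion for good filtrations.
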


Before explaining the proof of this proposition we note its most important consequence (in fact, an equivalent formulation).

\begin{cor}
\label{cor:Kb-Db-HC}
 For any $\lambda,\mu \in X^*(\bT)$, the natural functor
 \[
  \Kb \HC^{\hla,\hmu}_{\mathrm{diag},\mathrm{tilt}} \to \Db \HC^{\hla,\hmu}
 \]
is fully faithful.
\end{cor}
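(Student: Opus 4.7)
The plan is to deduce this corollary formally from Proposition~\ref{prop:Ext-vanishing} by invoking the standard principle that if $\mathcal{A}$ is a full additive subcategory of an abelian category $\mathcal{B}$ such that $\Ext^n_{\mathcal{B}}(A,A')=0$ for all $A,A' \in \mathcal{A}$ and $n>0$, then the canonical functor $\Kb\mathcal{A} \to \Db\mathcal{B}$ is fully faithful. The work consists of two steps: extending the Ext-vanishing from the generators to the full subcategory, and then running the standard homological algebra argument.

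First, I would extend Proposition~\ref{prop:Ext-vanishing} from the generators $\sfC^{\hla,\hmu}(V \otimes \cU\bg)$ (with $V \in \Tilt(\bG)$) to arbitrary objects of $\HC^{\hla,\hmu}_{\mathrm{diag},\mathrm{tilt}}$. Since $\Ext^n_{\HC^{\hla,\hmu}}(-,-)$ is biadditive, vanishing is preserved by finite direct sums; and since an Ext group of a direct summand is a direct summand of the Ext group of the ambient object, it is preserved by passage to direct summands as well. By the very definition of $\HC^{\hla,\hmu}_{\mathrm{diag},\mathrm{tilt}}$ as the idempotent-complete additive subcategory generated by the objects $\sfC^{\hla,\hmu}(V \otimes \cU\bg)$ with $V$ tilting, this yields $\Ext^n_{\HC^{\hla,\hmu}}(A,A')=0$ for all $n>0$ and all $A,A' \in \HC^{\hla,\hmu}_{\mathrm{diag},\mathrm{tilt}}$.

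Next, I would invoke the classical argument. Given bounded complexes $X,Y$ in $\HC^{\hla,\hmu}_{\mathrm{diag},\mathrm{tilt}}$, the hyper-Ext spectral sequence
\[
E_2^{p,q} = \prod_{i \in \Z} \Ext^p_{\HC^{\hla,\hmu}}(X^{-i}, Y^{q-i}) \Rightarrow \Hom_{\Db \HC^{\hla,\hmu}}(X, Y[p+q])
\]
collapses onto the row $p=0$ by the vanishing established above, and its $E_2^{0,q}$ term computes $\Hom_{\Kb \HC^{\hla,\hmu}_{\mathrm{diag},\mathrm{tilt}}}(X, Y[q])$. The resulting identification of morphism spaces gives the claim. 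Equivalently, one can argue by induction on the total length of $X$ and $Y$ using the stupid truncation triangles $\sigma^{\leq n}(-) \to (-) \to \sigma^{>n}(-)$ and the associated long exact $\Hom$ sequences, reducing to the case of complexes concentrated in a single degree where the statement is the Ext-vanishing itself.

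There is no real obstacle here: the entire content of the corollary is Proposition~\ref{prop:Ext-vanishing}, and the passage from it to the fully faithfulness statement is purely formal. The only minor point that requires comment is the extension from the generators to the idempotent closure, which is handled by the additivity remark above.
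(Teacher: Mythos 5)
Your argument is exactly the paper's: the paper cites Be{\u\i}linson's lemma and Proposition~\ref{prop:Ext-vanishing}, and your two steps (extending the $\Ext$-vanishing from the generators $\sfC^{\hla,\hmu}(V \otimes \cU\bg)$ to all of $\HC^{\hla,\hmu}_{\mathrm{diag},\mathrm{tilt}}$ by additivity, then the standard d\'evissage) simply spell out what that lemma says and how it applies. Correct, and not a different route.
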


\begin{proof}
 The corollary follows from Proposition~\ref{prop:Ext-vanishing} and Be{\u\i}linson's lemma.
\end{proof}

The decompositions in~\eqref{eqn:decomp-Zwedge-cat} and~\eqref{eqn:decomp-Zwedge-C}
reduce Proposition~\ref{prop:Ext-vanishing} to the following claim.

\begin{prop}
\label{prop:Ext-vanishing-2}
For any $V,V' \in \Tilt(\bG)$ and any $n \in \Z_{>0}$ we have
 \[
  \Ext^n_{\HC^\wedge} \bigl( \sfC^\wedge(V \otimes \cU\bg), \sfC^\wedge(V' \otimes \cU\bg) \bigr)=0.
 \]
\end{prop}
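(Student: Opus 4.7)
The plan is to reduce the $\Ext$-computation first to a computation of group cohomology, via an adjunction of ``free Harish-Chandra bimodule'' type; then to remove the completion by flat base change along $A := \scO(\bt^{*(1)}/\bWf) \to \hat{A} := \scO(\FN_{\bt^{*(1)}/\bWf}(\{0\}))$; and finally to deduce the vanishing from the fact that $\cU\bg$ has a good filtration.

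First, observe that by Remark~\ref{rmk:completion-diag-induced} we have $\sfC^\wedge(V \otimes \cU\bg) \cong V \otimes (\cU\bg)^\wedge$. The assignment $\Phi : V \mapsto V \otimes (\cU\bg)^\wedge$ defines an exact functor $\Rep^\infty(\bG) \to \widetilde{\HC}^\wedge$ left adjoint to the forgetful functor $\Psi : \widetilde{\HC}^\wedge \to \Rep^\infty(\bG)$. (Any morphism $\Phi(V) \to M$ is determined by its restriction to $V \otimes 1$, which is a $\bG$-equivariant map to $\Psi(M)$; conversely any such map extends by right multiplication, and the Harish-Chandra condition ensures left $(\cU\bg)^\wedge$-linearity comes for free.) Since $\Phi$ is exact, $\Psi$ preserves injectives, and hence
\[
\Ext^n_{\widetilde{\HC}^\wedge}(\Phi(V), \Phi(V')) \cong \Ext^n_{\Rep^\infty(\bG)}(V, V' \otimes (\cU\bg)^\wedge) \cong H^n\bigl(\bG, V^* \otimes V' \otimes (\cU\bg)^\wedge\bigr),
\]
using that $V$ is finite-dimensional. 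Combined with the full faithfulness of $\Db \HC^\wedge \hookrightarrow \Db \widetilde{\HC}^\wedge$ (the global analogue of Lemma~\ref{lem:HC-tHC}, via the decomposition~\eqref{eqn:decomp-Zwedge-cat}), this identifies the $\Ext$ we wish to vanish.

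Next, set $T := V^* \otimes V'$, which is tilting because $V^* \cong \sfT(-w_\circ \lambda)$ for $V = \sfT(\lambda)$, and tensor products of tilting modules are tilting under our assumptions (Mathieu). We have $(\cU\bg)^\wedge = \cU\bg \otimes_A \hat{A}$, and $\bG$ acts trivially on $A$ and on $\hat{A}$. The Hochschild cochain complex $C^\bullet(\bG, -)$, being $\bk$-linear and functorial, satisfies
\[
C^\bullet\bigl(\bG, T \otimes \cU\bg \otimes_A \hat{A}\bigr) \cong C^\bullet(\bG, T \otimes \cU\bg) \otimes_A \hat{A}
\]
as complexes of $\hat{A}$-modules (since the $A$-action commutes with the $\bG$-action and with the Hochschild differentials). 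Using flatness of $\hat{A}$ over $A$, taking cohomology commutes with $(-) \otimes_A \hat{A}$, so
\[
H^n\bigl(\bG, T \otimes (\cU\bg)^\wedge\bigr) \cong H^n(\bG, T \otimes \cU\bg) \otimes_A \hat{A}.
\]
Hence it suffices to show $H^n(\bG, T \otimes \cU\bg) = 0$ for all tilting $T$ and all $n > 0$.

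For this, the key point is that $\cU\bg$ admits a good filtration as a $\bG$-module: the PBW filtration has associated graded $\Sym(\bg) = \scO(\bg^*)$, and under our hypotheses the symmetric algebra of the adjoint representation has a good filtration (using that $\bg$ has a good filtration together with Donkin-type results on symmetric algebras); passing from $\gr$ back to the filtered object by induction on the degree and then to the union, $\cU\bg$ acquires a good filtration. Tensoring a good-filtration module with a tilting module again yields a good-filtration module (Mathieu), hence $T \otimes \cU\bg$ has a good filtration, so $H^n(\bG, T \otimes \cU\bg) = 0$ for $n > 0$ by Kempf vanishing for each costandard subquotient. The main obstacle worth flagging is the good filtration of $\Sym(\bg)$, which is the step that genuinely uses the characteristic hypotheses in~\S\ref{ss:HC-notation}; everything else is a formal consequence of adjunction and flat base change.
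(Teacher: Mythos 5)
Your proposal is correct and follows essentially the same strategy as the paper: reduce the $\Ext$ to $H^n(\bG, T \otimes (\cU\bg)^\wedge)$ for a tilting module $T$ via a free/forgetful adjunction, remove the completion by flat base change on the Hochschild complex, and conclude from the good filtration of $\mathrm{S}(\bg)$ (Lemma~\ref{lem:Sg-good-filtration}). The only cosmetic difference is that the paper first reduces to $V=\bk$ by noting that $\sfC^\wedge(V\otimes\cU\bg)\hatotimes_{\cU\bg}(-)$ is adjoint to $\sfC^\wedge(V^*\otimes\cU\bg)\hatotimes_{\cU\bg}(-)$ and then uses the equivalence $\HC^\wedge\simto\Modfg^\bG((\cU\bg)^\wedge)$ before invoking $(\cU\bg)^\wedge\otimes(-)\dashv\For$, whereas you state the equivalent adjunction $\Phi\dashv\Psi$ directly in $\widetilde{\HC}^\wedge$ and use finite-dimensionality of $V$ to move $V^*$ across.
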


The proof of Proposition~\ref{prop:Ext-vanishing-2} will use the following property, where we denote by $\mathrm{S}(\bg)$ the symmetric algebra of $\bG$, which we endow with the obvious $\bG$-module structure induced by the adjoint action on $\bg$.

\begin{lem}
\label{lem:Sg-good-filtration}
The $\bG$-module $\mathrm{S}(\bg)$ admits a good filtration.
\end{lem}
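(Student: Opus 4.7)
The plan is to use the $\bG$-equivariant isomorphism $\bg \simto \bg^*$ provided by assumption~\eqref{it:ass-3} to identify $\mathrm{S}(\bg)$ with the coordinate ring $\scO(\bg^*)$ as $\bG$-modules, so it suffices to establish a good filtration on the latter. The overall strategy is to reduce to a statement about the adjoint representation $\bg$ alone, and then use that the class of modules with a good filtration is closed under the relevant operations.

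The argument I would propose has two steps. First, under our standing hypotheses (namely $\ell$ good for $\bG$ and $\mathscr{D}\bG$ simply connected), the adjoint representation $\bg$ itself admits a good filtration as a $\bG$-module. This is a classical fact: for each simple simply-connected factor of $\mathscr{D}\bG$ it follows from an explicit description of the adjoint module in good characteristic (which is either simple and equal to $H^0$ of the highest root, or has an evident two-step filtration whose subquotients are induced modules), while the abelian factor of $\bg$ coming from the center is a trivial $\bG$-module and so trivially has a good filtration. Second, I would invoke Mathieu's theorem (resolving Donkin's conjecture) that taking symmetric powers preserves the class of modules with a good filtration; applied to $\bg$, this yields a good filtration on each $\mathrm{S}^n(\bg)$, and since an arbitrary direct sum of modules with good filtration still has one, we conclude for $\mathrm{S}(\bg) = \bigoplus_{n \geq 0} \mathrm{S}^n(\bg)$.

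An alternative, more geometric route would combine the flatness of the Chevalley morphism $\bg^* \to \bg^*/\bG$ (already cited in the proof of Proposition~\ref{prop:flatness-Ug}) with the good filtration of $\scO(\cN)$ --- obtainable via Kempf vanishing applied to the Springer resolution $\bG \times^{\bB} \bu^* \to \cN$ --- and a Kostant-type $\bG$-equivariant decomposition $\scO(\bg^*) \cong \scO(\bg^*)^{\bG} \otimes_{\bk} \scO(\cN)$, valid in positive characteristic under our assumptions. In either approach, the main obstacle is bibliographical rather than conceptual: ensuring that the precise form of Mathieu's theorem (or of the Kostant-type decomposition) one invokes applies under the hypotheses of~\S\ref{ss:HC-notation}, in particular for our reductive rather than semisimple $\bG$. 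This is handled by the standard reduction via the $\bG$-module decomposition $\bg = \mathrm{Lie}(Z(\bG)^\circ) \oplus \mathrm{Lie}(\mathscr{D}\bG)$, in which the central summand is a trivial representation.
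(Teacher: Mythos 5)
The decomposition in your first step (splitting off the central part of $\bg$ and noting it is a trivial $\bG$-module) is essentially the reduction the paper makes, though the paper phrases it via the exact sequence $\bg' \hookrightarrow \bg \twoheadrightarrow \bg/\bg'$ and the induced filtration of $\mathrm{S}^m(\bg^*)$ with graded pieces $\mathrm{S}^i((\bg')^*) \otimes \mathrm{S}^j((\bg/\bg')^*)$. That part is fine.

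The gap is in your second step. Mathieu's theorem (Donkin's tensor product conjecture) asserts that a \emph{tensor product} of modules with good filtrations again has a good filtration; it says nothing about symmetric powers, and in positive characteristic $\mathrm{S}^n(V)$ is a non-split quotient of $V^{\otimes n}$, so the tensor product statement does not pass to it. There is no general theorem that symmetric powers preserve the good filtration property, and so ``$\bg$ has a good filtration, hence each $\mathrm{S}^n(\bg)$ does'' is not a valid inference. The actual input needed --- and what the paper invokes via~\cite[\S II.4.22, Eq.~(1)]{jantzen} --- is the much more specific Andersen--Jantzen type result that for a semisimple simply connected group in good characteristic the graded pieces $\mathrm{S}^n(\bg^{\prime *})$ of the coordinate ring of $\bg'$ all have good filtrations; this is then promoted from a $\mathscr{D}(\bG)$-filtration to a $\bG$-filtration using~\cite[Prop.~3.2.7]{donkin}. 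Your first step about $\bg$ itself having a good filtration, while true, is not actually the input this argument rests on.

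Your ``alternative, more geometric route'' (Springer resolution, Kempf vanishing, flatness of the adjoint quotient, Kostant-type factorization of $\scO(\bg^*)$) is indeed the flavor of how the Andersen--Jantzen result is proved, so you have the right picture there; but as you yourself note it is only a sketch and the Kostant factorization in positive characteristic needs care. The clean fix is simply to replace the appeal to Mathieu's theorem by the citation to the specific result on $\mathrm{S}(\bg')$ for the simply connected semisimple quotient.
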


\begin{proof}
This property follows from the classical claim in~\cite[\S II.4.22]{jantzen}. Namely,
 by assumption we have an isomorphism of $\bG$-modules $\bg \simto \bg^*$, so that our claim is equivalent to the claim that $\mathrm{S}(\bg^*)$ admits a good filtration. 
 If we denote by $\bg'$ the Lie algebra of $\mathscr{D}(\bG)$ then we have an exact sequence of $\bG$-modules $\bg' \hookrightarrow \bg \twoheadrightarrow \bg/\bg'$, and $\bG$ acts trivially on $\bg/\bg'$. We deduce an exact sequence of $\bG$-modules $(\bg/\bg')^* \hookrightarrow \bg^* \twoheadrightarrow (\bg')^*$, which implies that for any $m \geq 0$ the $\bG$-module $\mathrm{S}^m(\bg^*)$ admits a (finite) filtration with associated graded
 \[
  \bigoplus_{i+j=m} \mathrm{S}^i((\bg')^*) \otimes_\bk \mathrm{S}^j((\bg/\bg')^*),
 \]
 where the action on $\mathrm{S}^j((\bg/\bg')^*)$ is trivial. In view of our assumptions and~\cite[Equation~(1) in~\S II.4.22]{jantzen}, each $\mathrm{S}^i((\bg')^*)$ admits a good filtration as a $\mathscr{D}(\bG)$-module, hence as a $\bG$-module by~\cite[Proposition~3.2.7]{donkin}, which concludes the proof.
\end{proof}

\begin{proof}[Proof of Proposition~\ref{prop:Ext-vanishing-2}]
 As in the proof of~\cite[Lemma~3.6]{br-Hecke} one easily sees that the functor
 \[
  \sfC^\wedge(V \otimes \cU\bg) \hatotimes_{\cU\bg} (-) : \HC^\wedge \to \HC^{\wedge}
 \]
is left adjoint to
 \[
  \sfC^\wedge(V^* \otimes \cU\bg) \hatotimes_{\cU\bg} (-) : \HC^\wedge \to \HC^{\wedge}.
 \]
 Since tensor products of tilting $\bG$-modules are tilting (see~\cite[\S II.E.7]{jantzen}), this reduces the proof of the proposition to the case $V=\bk$ is the trivial module.
 We will therefore replace the notation $V'$ by $V$. 
 Next, by Remark~\ref{rmk:completion-diag-induced}, for any $V \in \Rep(\bG)$ we have
 \begin{equation}
 \label{eqn:completion-diag-induced}
  \sfC^\wedge(V \otimes \cU\bg) \cong V \otimes (\cU\bg)^\wedge
 \end{equation}
 with the obvious actions on the right-hand side.
Using the equivalence~\eqref{eqn:HC-modules-wedge}, these considerations show that to prove the proposition it suffices to prove that
 \[
  \Ext^n_{\Modfg^{\bG} ((\cU\bg)^\wedge)} \bigl( (\cU\bg)^\wedge, V \otimes (\cU\bg)^\wedge \bigr)=0 \quad \text{for any $V \in \Tilt(\bG)$ and $n>0$.}
 \]
 
 Since $(\cU\bg)^\wedge$ is left noetherian, as in Lemma~\ref{lem:HC-tHC},
 the canonical functor
 \[
  \Db \Modfg^{\bG} ((\cU\bg)^\wedge) \to \Db \Mod^{\bG} ((\cU\bg)^\wedge)
 \]
is fully faithful, so that what we have to prove is that
 \[
  \Ext^n_{\Mod^{\bG} ((\cU\bg)^\wedge)} \bigl( (\cU\bg)^\wedge, V \otimes (\cU\bg)^\wedge \bigr)=0 \quad \text{for any $V \in \Tilt(\bG)$ and $n>0$.}
 \]
The functor
 \[
 (\cU\bg)^\wedge \otimes (-) : \Rep^{\infty}(\bG) \to \Mod^{\bG} ((\cU\bg)^\wedge)
 \]
is left adjoint to the forgetful functor
\[
 \Mod^{\bG} ((\cU\bg)^\wedge) \to \Rep^{\infty}(\bG),
\]
which reduces the desired claim to the fact that
\[
 \Ext^n_{\Rep^{\infty}(\bG)} \bigl( \bk, V \otimes (\cU\bg)^\wedge \bigr)=0,
\]
i.e.
\[
 \mathsf{H}^n \bigl( \bG, V \otimes (\cU\bg)^\wedge \bigr)=0,
\]
 for any $V \in \Tilt(\bG)$ and any $n>0$. Now, using e.g.~the fact that the cohomology can be computed using the Hochschild complex (see~\cite[\S I.4.14]{jantzen}) and the flatness of $\scO(\FN_{\bt^{*(1)}/\bWf}(\{0\}))$ over $\scO(\bt^{*(1)}/\bWf)$, one sees that for any $V \in \Rep(\bG)$ and $n \in \Z$ we have a canonical isomorphism
 \[
 \mathsf{H}^n \bigl( \bG, V \otimes (\cU\bg)^\wedge \bigr)=\mathsf{H}^n ( \bG, V \otimes \cU\bg) \otimes_{\scO(\bt^{*(1)}/\bWf)} \scO(\FN_{\bt^{*(1)}/\bWf}(\{0\})).
\]
To conclude the proof it therefore suffices to prove that
\begin{equation}
\label{eqn:HC-vanishing-cohomology}
 \mathsf{H}^n ( \bG, V \otimes \cU\bg) = 0 \quad \text{for any $V \in \Tilt(\bG)$ and $n>0$,}
\end{equation}
where $\bG$ acts diagonally on $V \otimes \cU\bg$.

Consider the PBW filtration $((\cU_{\leq m} \bg) : m \geq 0)$ of $\cU\bg$. Then since cohomology commutes with filtrant direct limits (see~\cite[Lemma~I.4.17]{jantzen}), to prove~\eqref{eqn:HC-vanishing-cohomology} it suffices to prove that
\[
 \mathsf{H}^n ( \bG, V \otimes \cU_{\leq m}\bg ) = 0 \quad \text{for any $V \in \Tilt(\bG)$, $m \geq 0$ and $n>0$.}
\]
Arguing by induction on $m$, to prove this it suffices to prove that
\[
 \mathsf{H}^n ( \bG, V \otimes \mathrm{S}(\bg)) = 0 \quad \text{for any $V \in \Tilt(\bG)$ and $n>0$.}
\]
This fact follows from Lemma~\ref{lem:Sg-good-filtration} since
modules admitting a good filtration have no cohomology in positive degree (see~\cite[Proposition~II.4.16]{jantzen}), $V$ admits a good filtration (by assumption), and
tensor products of $\bG$-modules admitting a good filtration admit a good filtration (see~\cite[Proposition~II.4.21]{jantzen}).
%
%
\end{proof}

\subsection{Monoidal structure for derived categories}
\label{ss:monoidal-structure-DHC}

Recall that in view of~\eqref{eqn:Uwedge-tensor-product} a ($\bG$-equivariant) $\sfU^\wedge$-module is a ($\bG$-equivariant) $(\cU\bg)^\wedge$-bimodule on which the two actions of the central subalgebra $\ZFr \otimes_{\scO(\bt^{*(1)}/\bWf)} \scO(\FN_{\bt^{*(1)}/\bWf}(\{0\}))$ coincide. The category $\Bimod^\bG((\cU\bg)^\wedge)$ of $\bG$-equivariant $(\cU\bg)^\wedge$-bimodules admits a monoidal structure given by the tensor product $(-) \otimes_{(\cU\bg)^\wedge} (-)$, which stabilizes the subcategories $\Mod^\bG(\sfU^\wedge)$, $\Modfg^\bG(\sfU^\wedge)$, $\widetilde{\HC}^\wedge$ and $\HC^\wedge$; this is the monoidal structure considered in~\S\ref{ss:central-characters}. 
Passing to derived categories, for $M,N$ in $D^- \Bimod^\bG((\cU\bg)^\wedge)$ we set
\begin{equation}
\label{eqn:star-HC}
M \star N := M \lotimes_{(\cU\bg)^\wedge} N;
\end{equation}
here again this bifunctor defines a monoidal structure on $D^- \Bimod^\bG((\cU\bg)^\wedge)$, with unit object the ``diagonal'' bimodule $(\cU\bg)^\wedge$.


\begin{lem}
\label{lem:tensor-prod-HC-derived}
There exist canonical bifunctors
\[
(-) \star (-) : D^- \widetilde{\HC}^\wedge \times D^- \widetilde{\HC}^\wedge \to D^- \widetilde{\HC}^\wedge
\]
and
\[
(-) \star (-) : D^- \HC^\wedge \times D^- \HC^\wedge \to D^- \HC^\wedge
\]
which define monoidal structures on $D^- \widetilde{\HC}^\wedge$ and $D^- \HC^\wedge$ and such that the forgetful functors
\[
D^- \HC^\wedge \to D^- \widetilde{\HC}^\wedge \to D^- \Bimod^\bG((\cU\bg)^\wedge)
\]
are monoidal.
\end{lem}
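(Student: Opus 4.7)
The plan is to construct the bifunctor on $D^-\widetilde{\HC}^\wedge$ (and then on $D^-\HC^\wedge$) by computing the derived tensor product using resolutions by objects of the specific form $V\otimes(\cU\bg)^\wedge \cong \sfC^\wedge(V\otimes\cU\bg)$ with $V\in\Rep^\infty(\bG)$. These objects play the role of a class of ``adapted'' objects for $\star$, and their role is made possible by two observations: first, by Lemma~\ref{lem:surjection-diag-ind-comp} (applied iteratively), every object of $\widetilde{\HC}^\wedge$ admits a left resolution by objects of this form, so every bounded above complex does as well; second, such objects are free as right $(\cU\bg)^\wedge$-modules, hence the functor
\[
\bigl(V\otimes(\cU\bg)^\wedge\bigr)\otimes_{(\cU\bg)^\wedge}(-):\Bimod^\bG((\cU\bg)^\wedge)\to\Bimod^\bG((\cU\bg)^\wedge)
\]
is exact, and its value on $N$ is canonically identified with $V\otimes N$ endowed with the ``diagonal'' Harish-Chandra bimodule structure (diagonal $\bG$-action, left $(\cU\bg)^\wedge$-action via the derivative of the $\bG$-action on $V$ together with the left action on $N$, right $(\cU\bg)^\wedge$-action via the right action on $N$). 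A short direct verification using the Harish-Chandra condition on $N$ shows that $V\otimes N$ indeed lies in $\widetilde{\HC}^\wedge$.

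Concretely I would define $M\star N$ for $M,N\in D^-\widetilde{\HC}^\wedge$ by first choosing a resolution $P^\bullet\to M$ of $M$ inside $C^-(\widetilde{\HC}^\wedge)$ with each $P^i$ of the form $V^i\otimes(\cU\bg)^\wedge$ (possible by Lemma~\ref{lem:surjection-diag-ind-comp}), and setting
\[
M\star N := P^\bullet \otimes_{(\cU\bg)^\wedge} N \;=\; V^\bullet\otimes N,
\]
viewed in $C^-(\widetilde{\HC}^\wedge)$. Since the right-hand side is a bounded above complex of HC bimodules and each $P^i$ is right-flat over $(\cU\bg)^\wedge$, this complex represents the derived tensor product in $D^-\Bimod^\bG((\cU\bg)^\wedge)$. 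Independence of the choice of resolution up to canonical quasi-isomorphism follows by the standard double-complex argument: any two such resolutions of $M$ are related by a quasi-isomorphism $P^\bullet_1\to P^\bullet_2$, and tensoring termwise with any $N$ in $\widetilde{\HC}^\wedge$ preserves quasi-isomorphisms because each term is of the ``free'' form above. Functoriality in $N$ is immediate, and functoriality in $M$ follows by passing to the homotopy category of such resolutions.

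To obtain the monoidal structure, I would transfer associativity and unit constraints from the ambient category $D^-\Bimod^\bG((\cU\bg)^\wedge)$, where $\star$ is already monoidal with unit $(\cU\bg)^\wedge$ (in view of equation~\eqref{eqn:star-HC}). More precisely, given $M_1, M_2, M_3$ in $D^-\widetilde{\HC}^\wedge$, resolve $M_1$ and $M_2$ by complexes $V_1^\bullet\otimes(\cU\bg)^\wedge$ and $V_2^\bullet\otimes(\cU\bg)^\wedge$ as above; then
\[
(M_1\star M_2)\star M_3 \;=\; V_1^\bullet\otimes V_2^\bullet\otimes M_3 \;=\; M_1\star(M_2\star M_3),
\]
and the associativity isomorphism from $\Bimod^\bG((\cU\bg)^\wedge)$ restricts to this identification. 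The unitality is similar (using the tautological resolution $\bk\otimes(\cU\bg)^\wedge = (\cU\bg)^\wedge$ of the unit). Since by construction the forgetful functor sends our resolutions to right-flat resolutions in $\Bimod^\bG((\cU\bg)^\wedge)$, it intertwines $\star$ on the two sides and is monoidal.

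For the $\HC^\wedge$ case, one observes that Lemma~\ref{lem:surjection-diag-ind-comp} also allows one to resolve an object of $\HC^\wedge$ by objects $V^i\otimes(\cU\bg)^\wedge$ with $V^i\in\Rep(\bG)$ (finite-dimensional), and then $V^i\otimes N$ remains finitely generated whenever $N\in\HC^\wedge$, since $\HC^\wedge$ is stable under the tensor product described in~\S\ref{ss:central-characters}; so the same construction yields the bifunctor on $D^-\HC^\wedge$, compatible with the inclusion into $D^-\widetilde{\HC}^\wedge$. The only real technical point is the verification that $V\otimes N$ satisfies the Harish-Chandra differentiation condition and the independence from the choice of resolution within the subcategory; both are handled by the exactness of $V\otimes(-)$ noted above, which is what makes the argument clean. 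No $K$-flatness issues arise in the bounded-above setting.
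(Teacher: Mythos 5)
Your proof is correct and takes essentially the same route as the paper: both rest on Lemma~\ref{lem:surjection-diag-ind-comp} and Remark~\ref{rmk:completion-diag-induced} to supply resolutions of every object of $\widetilde{\HC}^\wedge$ (resp.~$\HC^\wedge$) by objects of the form $V\otimes(\cU\bg)^\wedge$, which are flat over $(\cU\bg)^\wedge$ and stay in the Harish-Chandra subcategory when convolved. The only cosmetic difference is that the paper records that these resolving objects are flat as both left and right $(\cU\bg)^\wedge$-modules (so one may resolve either factor), whereas you only invoke right-flatness and resolve the left factor; either variant suffices.
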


\begin{proof}
The lemma follows from the fact that any object of $\HC^\wedge$, resp.~$\widetilde{\HC}^\wedge$, admits a resolution by objects of $\HC^\wedge$, resp.~$\widetilde{\HC}^\wedge$, which are flat both as left and as right $(\cU\bg)^\wedge$-modules; see Lemma~\ref{lem:surjection-diag-ind-comp} and Remark~\ref{rmk:completion-diag-induced}.
\end{proof}

Now, recall the isomorphism~\eqref{eqn:decomp-Zwedge}. We similarly have a canonical isomorphism
\[
\bt^* / (\bWf,\bullet) \times_{\bt^{*(1)}/\bWf} \FN_{\bt^{*(1)}/\bWf}(\{0\}) \simto \prod_{\lambda \in X^*(\bT)/(\bW,\bullet)} \FN_{\bt^* / (\bWf,\bullet)}(\{\tla\}).
\]
We deduce a canonical decomposition
\begin{equation}
\label{eqn:Ug-wedge-product}
(\cU\bg)^\wedge = \prod_{\lambda \in X^*(\bT)/(\bW,\bullet)} (\cU\bg)^{\hla},
\end{equation}
where we use the notation of~\S\ref{ss:center-Ug}.
For $\lambda,\mu \in X^*(\bT)$ we have an identification
\begin{equation}
\label{eqn:U-hla-hmu-tensor-prod}
\sfU^{\hla,\hmu} = (\cU\bg)^{\hla} \otimes_{\ZFr \otimes_{\scO(\bt^{*(1)}/\bWf)} \scO(\FN_{\bt^{*(1)}/\bWf}(\{0\}))} (\cU\bg)^{\hmu,\op},
\end{equation}
and we deduce that a $\sfU^{\hla,\hmu}$-module is a $((\cU\bg)^{\hla},(\cU\bg)^{\hmu})$-bimodule on which the two actions of $\ZFr \otimes_{\scO(\bt^{*(1)}/\bWf)} \scO(\FN_{\bt^{*(1)}/\bWf}(\{0\}))$ coincide. 
For $\lambda,\mu \in X^*(\bT)$ we can consider the category $\Bimod^{\bG}((\cU\bg)^{\hla},(\cU\bg)^{\hmu})$ of $\bG$-equivariant $((\cU\bg)^{\hla},(\cU\bg)^{\hmu})$-bimodules, and for $\lambda,\mu,\nu \in X^*(\bT)$ the derived tensor product functor
\begin{multline*}
(-) \lotimes_{(\cU\bg)^{\hmu}} (-) : 
D^- \Bimod^{\bG}((\cU\bg)^{\hla},(\cU\bg)^{\hmu}) \times D^- \Bimod^{\bG}((\cU\bg)^{\hmu},(\cU\bg)^{\hnu}) \\
\to D^- \Bimod^{\bG}((\cU\bg)^{\hla},(\cU\bg)^{\hnu}).
\end{multline*}
Then we have
\[
D^- \Bimod^\bG((\cU\bg)^\wedge) = \prod_{\lambda,\mu \in X^*(\bT) / (\bW,\bullet)} D^- \Bimod^{\bG}((\cU\bg)^{\hla},(\cU\bg)^{\hmu}),
\]
and the bifunctor~\eqref{eqn:star-HC} is the product of the bifunctors $(-) \lotimes_{(\cU\bg)^{\hmu}} (-)$ (which will therefore also be denoted $\star$).
We also have
\[
\HC^\wedge = \prod_{\lambda,\mu \in X^*(\bT) / (\bW,\bullet)} \HC^{\hla,\hmu},
\]
and by Lemma~\ref{lem:tensor-prod-HC-derived}
the bifunctor $\star$ considered above restricts, for any $\lambda,\mu,\nu \in X^*(\bT)$, to a bifunctor
\begin{equation}
\label{eqn:convolution-HC-la-mu-nu}
D^- \HC^{\hla,\hmu} \times D^- \HC^{\hmu,\hnu} \to D^- \HC^{\hla,\hnu}.
\end{equation}
In particular, in case $\lambda=\mu=\nu$ this bifunctor defines a monoidal structure on $D^- \HC^{\hla,\hla}$, with unit object $(\cU\bg)^{\hla}$.

Recall the full subcategory $\HC^{\hla,\hmu}_{\mathrm{diag}} \subset \HC^{\hla,\hmu}$ defined in~\S\ref{ss:central-characters}. As explained in~\cite[\S 3.7]{br-Hecke}, for $\lambda,\mu,\nu \in X^*(\bT)$ and $V \in \Rep(\bG)$ we have a canonical isomorphism
\[
\sfC^{\hla,\hmu}(V \otimes V' \otimes \cU\bg) \cong
\bigoplus_{\nu \in X^*(\bT)/(\bW,\bullet)} \sfC^{\hla,\hnu}(V \otimes \cU\bg) \otimes_{(\cU\bg)^\wedge} \sfC^{\hnu,\hmu}(V' \otimes \cU\bg).
\]
Combined with the considerations in
the proof of Lemma~\ref{lem:tensor-prod-HC-derived}, this implies that the subcategories $\HC^{\hla,\hmu}_{\mathrm{diag}}$ are ``stable under convolution'' in the sense that for $\lambda,\mu,\nu \in X^*(\bT)$ and any objects $M \in \HC^{\hla,\hmu}_{\mathrm{diag}}$ and $N \in \HC^{\hmu,\hnu}_{\mathrm{diag}}$ we have
\[
M \star N \in \HC^{\hla,\hnu}_{\mathrm{diag}}.
\]
Similar comments apply to the subcategories
$\HC^{\hla,\hmu}_{\mathrm{diag},\mathrm{tilt}}$. In particular, when $\lambda=\mu$ we obtain a monoidal structure on the category $\HC^{\hla,\hla}_{\mathrm{diag},\mathrm{tilt}}$, hence an induced monoidal structure on its bounded homotopy category, and it is clear that the composition of the functor of
Corollary~\ref{cor:Kb-Db-HC} with the obvious functor $\Db \HC^{\hla,\hmu} \to D^- \HC^{\hla,\hmu}$ is monoidal.

\subsection{Nilpotent Harish-Chandra bimodules}
\label{ss:nilpotent-HC}

For $\lambda,\mu$ in $X^*(\bT)$ (or in the quotient $X^*(\bT)/(\bW,\bullet)$),
we will denote by 
\[
\HC_\nil^{\widehat{\lambda},\widehat{\mu}}
\]
the full subcategory of $\HC$ whose objects are the bimodules such that the left, resp.~right, action of $\ZHC$ vanishes on a power of $\fm^\lambda$, resp.~$\fm^\mu$. (See~\S\ref{ss:center-Ug} for the notation.)
It is clear from classical properties of completions of rings (see~\cite[\href{https://stacks.math.columbia.edu/tag/05GG}{Tag 05GG}]{stacks-project}) that $\HC_\nil^{\widehat{\lambda},\widehat{\mu}}$ identifies naturally with the full subcategory of $\HC^{\hla,\hmu}$ consisting of modules such that the left, resp.~right, action of $\ZHC$ vanishes on a power of $\fm^\lambda$, resp.~$\fm^\mu$. In fact, 
each of these conditions is sufficient to ensure that an object belongs to $\HC_\nil^{\widehat{\lambda},\widehat{\mu}}$, as explained in the following lemma.

\begin{lem}
\label{lem:nilp-bimodules}
Let $\lambda,\mu \in X^*(\bT)$. For an object $M \in \HC^{\hla,\hmu}$ the following conditions are equivalent:
\begin{enumerate}
\item
$M$ belongs to $\HC_\nil^{\widehat{\lambda},\widehat{\mu}}$;
\item
the left action of $\ZHC$ on $M$ vanishes on a power of $\fm^\lambda$;
\item
the right action of $\ZHC$ on $M$ vanishes on a power of $\fm^\mu$.
\end{enumerate}
\end{lem}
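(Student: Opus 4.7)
The plan is to view $\scO(\bZ^{\hla,\hmu})$ as a central subalgebra of $\sfU^{\hla,\hmu}$ and to reduce all three conditions to assertions about the action of this central subalgebra; the equivalence then becomes a purely commutative-algebra statement. Any $M \in \HC^{\hla,\hmu}$ is a finitely generated $\sfU^{\hla,\hmu}$-module, and since $\sfU^{\hla,\hmu}$ is finite over $\scO(\bZ^{\hla,\hmu})$ (as $\cU\bg$ is finite over its center), $M$ is in particular finitely generated over the complete local Noetherian ring $\scO(\bZ^{\hla,\hmu})$. Let $\mathfrak{M}$ denote its maximal ideal, and let $I_\lambda, I_\mu \subset \scO(\bZ^{\hla,\hmu})$ denote the ideals generated by the images of $\fm^\lambda$ under the left embedding and $\fm^\mu$ under the right embedding, respectively. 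The lemma will follow once I show that $I_\lambda$ and $I_\mu$ are both $\mathfrak{M}$-primary.

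The core observation is that $\mathfrak{M} = I_\lambda + I_\mu$. Indeed, both $\tla$ and $\tmu$ map to $0 \in \bt^{*(1)}/\bWf$, so the computation
\[
\scO(\bZ) / (\fm^\lambda \otimes 1 + 1 \otimes \fm^\mu) \cong (\ZHC/\fm^\lambda) \otimes_{\ZFr \cap \ZHC} (\ZHC/\fm^\mu) \cong \bk \otimes_\bk \bk \cong \bk
\]
shows that $\fm^\lambda \otimes 1 + 1 \otimes \fm^\mu$ equals the maximal ideal of $\scO(\bZ)$ corresponding to $(\tla, \tmu)$, and passing to the completion at this point gives the equality $\mathfrak{M} = I_\lambda + I_\mu$ in $\scO(\bZ^{\hla,\hmu})$. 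To see that $I_\lambda$ is $\mathfrak{M}$-primary, I use that the Artin--Schreier-type morphism $\bt^*/(\bWf,\bullet) \to \bt^{*(1)}/\bWf$ is finite, so that the projection $\bZ \to \bt^*/(\bWf,\bullet)$ obtained from it by base change is finite as well. Consequently $\scO(\bZ^{\hla,\hmu})$ is a finite module over $\scO(\FN_{\bt^*/(\bWf,\bullet)}(\{\tla\}))$ via the left embedding, so the quotient $\scO(\bZ^{\hla,\hmu})/I_\lambda$ is a finite-dimensional $\bk$-algebra, hence an Artinian local ring with nilpotent maximal ideal. This maximal ideal is the image of $\mathfrak{M} = I_\lambda + I_\mu$, which coincides with the image of $I_\mu$; therefore $I_\mu^{N'} \subseteq I_\lambda$ for some $N' \geq 1$. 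The symmetric inclusion $I_\lambda^{N''} \subseteq I_\mu$ for some $N''$ holds by the same argument with the roles of $\lambda$ and $\mu$ exchanged.

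The equivalences then follow immediately. The implications $(1) \Rightarrow (2)$ and $(1) \Rightarrow (3)$ are tautological. For $(2) \Rightarrow (1)$, if $I_\lambda^N \cdot M = 0$, then $I_\mu^{NN'} \cdot M \subseteq I_\lambda^N \cdot M = 0$, so the right action of $\fm^\mu$ on $M$ is nilpotent; the argument for $(3) \Rightarrow (1)$ is symmetric. The only potentially delicate point in the plan is the verification that $\bZ \to \bt^*/(\bWf,\bullet)$ is finite and behaves well under completion at a single maximal ideal, but this is routine given the finiteness of the Artin--Schreier morphism and the compatibility of completion with finite ring extensions.
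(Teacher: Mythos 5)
Your proof is correct and follows essentially the same strategy as the paper's: both reduce to the commutative-algebra fact that $\scO(\bZ^{\hla,\hmu})$ is finite over each of $\scO(\FN_{\bt^*/(\bWf,\bullet)}(\{\tla\}))$ and $\scO(\FN_{\bt^*/(\bWf,\bullet)}(\{\tmu\}))$, so that the quotient by $I_\lambda$ (resp.\ $I_\mu$) is an Artinian local $\bk$-algebra with nilpotent maximal ideal. Your formulation in terms of the identity $\mathfrak{M} = I_\lambda + I_\mu$ and the observation that each of $I_\lambda, I_\mu$ is $\mathfrak{M}$-primary is a slightly more streamlined packaging of the same argument that the paper runs through the central ideal $\fn$.
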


\begin{proof}
We prove the equivalence between the first two properties; the equivalence between the first and third properties can be established similarly. By definition an object which satisfies the first property satisfies the second one, so what we have to prove is that if $M \in \HC^{\hla,\hmu}$ and if the left action of $(\fm^\lambda)^N$ vanishes for some $N$, then the right action of a power of $\fm^\mu$ vanishes. For this it suffices to prove that the image of $\fm^\mu$ in
\[
\scO(\bZ^{\hla,\hmu}) / (\fm^\lambda)^N \cdot \scO(\bZ^{\hla,\hmu})
\]
vanishes. Now we have
\[
\bZ^{\hla,\hmu} \cong \FN_{\bt^*/(\bWf,\bullet)}(\{\tla\}) \times_{\FN_{\bt^{*(1)}/\bWf}(\{0\})} \FN_{\bt^*/(\bWf,\bullet)}(\{\tmu\}),
\]
see~\cite[Equation~(3.11)]{br-Hecke}, hence to conclude it suffices to prove that the image of $\fm^\mu$ in
\[
\scO(\FN_{\bt^*/(\bWf,\bullet)}(\{\tmu\})) / \fn^N \cdot \scO(\FN_{\bt^*/(\bWf,\bullet)}(\{\tmu\}))
\]
is nilpotent. (See~\S\ref{ss:central-characters} for the definition of $\fn$.) By exactness of completion, this quotient identifies with the completion of the quotient
\[
\scO(\bt^*/(\bWf,\bullet)) / \fn^N \cdot \scO(\bt^*/(\bWf,\bullet))
\]
with respect to $\fm^\mu$. By the general theory of artinian rings, and as in the proof of~\cite[Lemma~3.4]{br-Hecke}, the quotient
\[
\scO(\bt^*/(\bWf,\bullet)) / (\fn^N \cdot \scO(\bt^*/(\bWf,\bullet)) + (\fm^\mu)^{N'})
\]
does not depend on $N'$ for $N' \gg 0$, hence identifies with this completion, which finishes the proof.
\end{proof}


\begin{lem}
\label{lem:HC-central-char}
Let $\lambda,\mu \in X^*(\bT)$.
The functor
\[
\Db \HC_\nil^{\widehat{\lambda},\widehat{\mu}} \to \Db \HC^{\hla,\hmu}
\]
induced by the embedding $\HC_\nil^{\widehat{\lambda},\widehat{\mu}} \to \HC^{\hla,\hmu}$ is fully faithful. Its essential image is the full subcategory whose objects are the complexes $M$ which satisfy one of the following equivalent conditions:
\begin{enumerate}
\item
\label{it:HC-central-char-cond-1}
for any $n \in \Z$ the object $\mathsf{H}^n(M)$ belongs to $\HC_\nil^{\widehat{\lambda},\widehat{\mu}}$;
\item
\label{it:HC-central-char-cond-2}
the action morphism $\scO(\bZ^{\hla,\hmu}) \to \End(M)$ vanishes on a power of the unique maximal ideal;
\item
\label{it:HC-central-char-cond-3}
there exists $n \geq 0$ such that the morphism $(\fm^\lambda)^n \to \End(M)$ defined by the left action of $\ZHC$ vanishes;
\item
\label{it:HC-central-char-cond-4}
there exists $n \geq 0$ such that the morphism $(\fm^\mu)^n \to \End(M)$ defined by the right action of $\ZHC$ vanishes.
\end{enumerate}
\end{lem}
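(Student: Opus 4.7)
The plan is to prove the lemma in three steps: show that $\HC_\nil^{\hla,\hmu}$ is a Serre subcategory of $\HC^{\hla,\hmu}$; deduce the full faithfulness statement (together with the description of the essential image via condition~\eqref{it:HC-central-char-cond-1}) from a standard derived-embedding criterion combined with the Artin--Rees lemma; and finally verify the equivalence of conditions~\eqref{it:HC-central-char-cond-1}--\eqref{it:HC-central-char-cond-4} using Lemma~\ref{lem:nilp-bimodules} and truncation-triangle arguments. The first step is immediate from Lemma~\ref{lem:nilp-bimodules}, which characterizes membership in $\HC_\nil^{\hla,\hmu}$ by vanishing of a power of $\fm^\lambda$ on the left, a property clearly closed under subobjects, quotients, and extensions.

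The second step is the main content. I would invoke the standard criterion that for a Serre subcategory $\mathcal{B}$ of an abelian category $\mathcal{A}$, the natural functor $\Db \mathcal{B} \to \Db \mathcal{A}$ is fully faithful with essential image the subcategory of complexes with cohomology in $\mathcal{B}$, provided that for every monomorphism $B \hookrightarrow A$ in $\mathcal{A}$ with $B \in \mathcal{B}$ there exists a morphism $A \to C$ in $\mathcal{A}$ with $C \in \mathcal{B}$ such that the composition $B \to C$ is still a monomorphism. To verify this in our setting, let $\fm \subset \scO(\bZ^{\hla,\hmu})$ be the unique maximal ideal, which is central in $\sfU^{\hla,\hmu}$. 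Given $B \in \HC_\nil^{\hla,\hmu}$ embedded in some $A \in \HC^{\hla,\hmu}$, Lemma~\ref{lem:nilp-bimodules} supplies $k$ with $\fm^k B = 0$, and the (noncommutative) Artin--Rees lemma applied to the two-sided central ideal $\fm \sfU^{\hla,\hmu}$ in the noetherian ring $\sfU^{\hla,\hmu}$ yields $N \geq k$ with $B \cap \fm^N A = 0$. Setting $C := A / \fm^N A$, we obtain a $\bG$-equivariant finitely generated quotient annihilated by $\fm^N$, hence an object of $\HC_\nil^{\hla,\hmu}$, into which $B$ embeds via the natural quotient map.

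For the third step, condition~\eqref{it:HC-central-char-cond-2} trivially implies~\eqref{it:HC-central-char-cond-3} and~\eqref{it:HC-central-char-cond-4} since the images of $\fm^\lambda$ and $\fm^\mu$ are both contained in $\fm$. Conversely, each of~\eqref{it:HC-central-char-cond-3} and~\eqref{it:HC-central-char-cond-4} implies~\eqref{it:HC-central-char-cond-1}: if, say, $(\fm^\lambda)^N$ annihilates $\End(M)$, then it annihilates each cohomology object $\coH^j(M)$, which Lemma~\ref{lem:nilp-bimodules} then places in $\HC_\nil^{\hla,\hmu}$. The remaining implication~\eqref{it:HC-central-char-cond-1}~$\Rightarrow$~\eqref{it:HC-central-char-cond-2} is obtained by induction on the number of nonzero cohomology objects of $M$, using the truncation triangles $\tau^{\leq k-1}M \to \tau^{\leq k}M \to \coH^k(M)[-k]$ together with the elementary observation (which crucially uses the centrality of $\fm$) that if some power of $\fm$ annihilates the endomorphism rings of the two outer terms of a distinguished triangle then a (higher) power annihilates that of the middle term. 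The only substantive difficulty throughout is the Artin--Rees input in Step 2, whose applicability is ensured by the centrality of $\fm$ in $\sfU^{\hla,\hmu}$.
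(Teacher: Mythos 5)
Your proof is correct, and it takes a genuinely different route from the paper's. The paper proves the stronger Lemma~\ref{lem:HC-wedge-nil} (treating all central characters at once) by reducing to Proposition~\ref{prop:sheaves-support} from Appendix~A, whose proof rests on Lemma~\ref{lem:support-modules-inj}: any torsion module embeds into a torsion module that is \emph{injective} in the ambient (quasi-coherent, equivariant) category, obtained as the maximal torsion submodule of an injective hull. You instead work directly with the fixed pair $(\lambda,\mu)$ and verify the ``mono'' criterion for a Serre subcategory by a different device: given $B$ torsion inside a finitely generated $A$, the Artin--Rees lemma for the centrally generated ideal $\fm \sfU^{\hla,\hmu}$ in the noetherian ring $\sfU^{\hla,\hmu}$ produces $N$ with $B\cap \fm^N A = 0$, and the finitely generated quotient $C:=A/\fm^N A$ is the torsion object through which $B$ still embeds. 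Both arguments address the same point -- cofinal approximation of torsion objects -- but yours stays within finitely generated modules and avoids injectives and the appendix machinery entirely, making it more self-contained; the paper's version, while heavier, is a general-purpose tool that gets reused for Lemma~\ref{lem:DbCoh-nil} and Lemma~\ref{lem:DbCoh-unip}. Your Step~3 matches the paper's except for $\eqref{it:HC-central-char-cond-1}\Rightarrow\eqref{it:HC-central-char-cond-2}$: the paper notes simply that a bounded complex of torsion objects is killed termwise by a fixed power of $\fm$, while you run an induction on cohomological length via truncation triangles. Both are correct; the paper's observation is a bit shorter, and your triangle argument does correctly use the centrality of $\fm$ to produce the factorization of multiplication by $y$ through the outer term.
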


In fact, it will be more convenient to prove a version of Lemma~\ref{lem:HC-central-char} which treats all choices of $\lambda,\mu$ simultaneously. Namely, recall the category $\HC^\wedge$ and the decomposition in~\eqref{eqn:decomp-Zwedge-cat}. If we denote by $\HC^\wedge_{\mathrm{nil}}$ the full subcategory of $\HC$ whose objects are the bimodules annihilated by a power of $\fn \subset \scO(\bt^{*(1)}/\bWf)$, then $\HC^\wedge_{\mathrm{nil}}$ identifies with a full subcategory in $\HC^{\wedge}$, and we similarly have a canonical decomposition
\begin{equation}
\label{eqn:decomp-HC-nil}
\HC^\wedge_{\mathrm{nil}} \cong \bigoplus_{\lambda,\mu \in X^*(\bT)/(\bW,\bullet)} \HC_\nil^{\widehat{\lambda},\widehat{\mu}}.
\end{equation}
Lemma~\ref{lem:HC-central-char} will be deduced below from the following claim.

\begin{lem}
\label{lem:HC-wedge-nil}
The functor
\[
\Db \HC^\wedge_{\mathrm{nil}} \to \Db \HC^\wedge
\]
induced by the embedding $\HC^\wedge_{\mathrm{nil}} \to \HC^\wedge$ is fully faithful. Its essential image is the full subcategory whose objects are the complexes $M$ which satisfy one of the following equivalent conditions:
\begin{enumerate}
\item
for any $n \in \Z$ the object $\mathsf{H}^n(M)$ belongs to $\HC^\wedge_{\mathrm{nil}}$;
\item
there exists $n \geq 0$ such that the morphism $\fn^n \to \End(M)$ vanishes.
\end{enumerate}
\end{lem}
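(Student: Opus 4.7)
The plan is to adapt the standard argument (cf.\ the proof of Lemma~\ref{lem:HC-tHC} and~\cite[Proof of Corollary~2.11]{arinkin-bezrukavnikov}) for identifying the derived category of a Serre subcategory with the full subcategory of the ambient derived category whose objects have cohomology in that subcategory.

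First I would verify that $\HC^\wedge_{\mathrm{nil}}$ is a Serre subcategory of $\HC^\wedge$: closure under subobjects and quotients is immediate from the definition, and closure under extensions is the observation that if $\fn^a M' = 0$ and $\fn^b M'' = 0$ in a short exact sequence $0 \to M' \to M \to M'' \to 0$, then $\fn^{a+b} M = 0$. The equivalence of the two conditions characterizing the image is then formal: using the truncation triangles $\tau^{<n}M \to M \to \tau^{\geq n}M \to$ and induction on the amplitude of $M$, one checks that a section of $\scO(\bt^{*(1)}/\bWf)$ acts nilpotently on $M$ via $\End$ if and only if it acts nilpotently on each $\mathsf{H}^n(M)$.

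For the fully-faithfulness, it suffices to prove that for $M,N \in \HC^\wedge_{\mathrm{nil}}$ and every $i \geq 0$, the natural map $\Ext^i_{\HC^\wedge_{\mathrm{nil}}}(M,N) \to \Ext^i_{\HC^\wedge}(M,N)$ is an isomorphism. The cases $i = 0, 1$ are immediate from closure under subobjects and under extensions respectively. For $i \geq 2$ I would argue by dimension shifting using the right adjoint $\Gamma_{\fn} \colon \HC^\wedge \to \HC^\wedge_{\mathrm{nil}}$ to the inclusion, which sends $M$ to the union of its $\fn^k$-torsion submodules; by noetherianness of $\sfU^\wedge$ this union is finitely generated, hence annihilated by a single power of $\fn$. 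Given a Yoneda $i$-extension $0 \to N \to X_1 \to \cdots \to X_i \to M \to 0$ representing a class in $\Ext^i_{\HC^\wedge}(M,N)$, the embedding $N \hookrightarrow X_1$ factors through $\Gamma_\fn(X_1)$ since $N$ is $\fn$-nilpotent, and more generally any morphism from an $\fn$-nilpotent module into $X_j$ factors through $\Gamma_\fn(X_j)$. Using this, one shows inductively that the extension is Yoneda-equivalent to one with all intermediate terms in $\HC^\wedge_{\mathrm{nil}}$; injectivity of the Ext comparison map is similar.

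Essential surjectivity onto the full subcategory described in the statement then follows by induction on the cohomological amplitude: a complex $M \in \Db \HC^\wedge$ with cohomology in $\HC^\wedge_{\mathrm{nil}}$ is built by iterated cones of its shifted cohomology objects, and since the Ext-groups governing these cones agree between the two categories by the fully-faithfulness just established, $M$ admits a model in $\Db \HC^\wedge_{\mathrm{nil}}$. The main technical obstacle will be the Yoneda dimension-shift step in the proof of fully-faithfulness: making precise the replacement of intermediate $X_j$'s by nilpotent subobjects compatibly with the whole Yoneda sequence. The cleanest formulation uses the derived functor $R\Gamma_\fn$ together with the fact that objects of $\HC^\wedge_{\mathrm{nil}}$ are acyclic for $\Gamma_\fn$, which reduces the comparison of Ext-groups to an adjunction.
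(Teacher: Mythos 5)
The broad strategy (identify $\HC^\wedge_{\mathrm{nil}}$ as a Serre subcategory and compare Ext groups) is sound, and the easy parts of your argument — the Serre property, the equivalence of the two conditions via truncation, essential surjectivity by induction on amplitude once full faithfulness is known — are all fine. The gap is in the central step: the Yoneda dimension-shift via $\Gamma_\fn$. If you try to replace the intermediate terms $X_j$ of a Yoneda $i$-extension $0 \to N \to X_1 \to \cdots \to X_i \to M \to 0$ by $\Gamma_\fn(X_j)$, the resulting complex $0 \to N \to \Gamma_\fn(X_1) \to \cdots \to \Gamma_\fn(X_i) \to M \to 0$ is generally \emph{not} exact: exactness at $\Gamma_\fn(X_j)$ would require that every $\fn$-nilpotent element of $X_j$ lying in $\mathrm{im}(X_{j-1}\to X_j)$ lift to an $\fn$-nilpotent element of $X_{j-1}$, which is false in general; and $\Gamma_\fn(X_i) \to M$ need not be surjective. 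The correct replacement goes in the opposite direction: given a subobject $N \subset X_1$ with $N$ nilpotent, the Artin--Rees lemma produces $m$ with $N \cap \fn^m X_1 = 0$, so $N$ embeds into the \emph{quotient} $X_1/\fn^m X_1 \in \HC^\wedge_{\mathrm{nil}}$; one then pushes out the remainder of the sequence along $X_1 \twoheadrightarrow X_1/\fn^m X_1$ and iterates. Your appeal to $R\Gamma_\fn$ and the acyclicity of nilpotent modules is the right idea in spirit, but that acyclicity is not free — it is exactly the nontrivial input that $\fn$-power torsion submodules of injectives are injective over a noetherian ring, and to use it one must first pass to a larger category with enough injectives.

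The paper's own proof is much shorter and bypasses all of this explicitly: it uses the equivalence~\eqref{eqn:HC-modules-wedge} to translate the statement into one about finitely generated $\bG$-equivariant $(\cU\bg)^\wedge$-modules, and then cites the general result of Proposition~\ref{prop:sheaves-support} (through the discussion of~\S\ref{ss:app-special-case}), whose proof is precisely the enough-injectives argument in the quasi-coherent setting (Lemmas~\ref{lem:support-modules-inj} and~\ref{lem:support-modules-inj-equiv}), followed by restriction to coherent objects. So the two approaches converge on the same injective machinery; the difference is that the paper packages it once and for all in the appendix, while your sketch would need to rebuild that machinery and, as written, has a genuine exactness gap in the Yoneda step.
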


\begin{proof}
%
By~\eqref{eqn:HC-modules-wedge} we have an equivalence
$\HC^\wedge \simto \Modfg^{\bG} \bigl( (\cU\bg)^\wedge \bigr)$, under which the full subcategory $\HC^\wedge_{\mathrm{nil}}$ identifies with the full subcategory of 
modules annihilated by a power of $\fn$. The lemma is therefore an application of the considerations
of~\S\ref{ss:app-special-case}
for the noetherian commutative ring $\scO(\bt^{*(1)}/\bWf)$, its ideal $\fn$, the (affine) scheme $\Spec(\ZFr)$ over $\bt^{*(1)}/\bWf$, the sheaf of algebras corresponding to $\cU\bg$ and the group scheme obtained from $\bG$ by base change.
%
\end{proof}

\begin{proof}[Proof of Lemma~\ref{lem:HC-central-char}]
Fully faithfulness of our functor directly follows from Lem\-ma~\ref{lem:HC-wedge-nil}, since it is a direct summand of the functor considered in the latter lemma. Once this is established, we know that the essential image of this functor coincides with the full triangulated subcategory of $\Db \HC^{\hla,\hmu}$ generated by $\HC_\nil^{\widehat{\lambda},\widehat{\mu}}$, i.e.~that its objects are the complexes which satisfy~\eqref{it:HC-central-char-cond-1}. Among these conditions, it is clear that we have the implications 
$\eqref{it:HC-central-char-cond-2} \Rightarrow \eqref{it:HC-central-char-cond-3}$ and $\eqref{it:HC-central-char-cond-2} \Rightarrow \eqref{it:HC-central-char-cond-4}$,
and it follows from Lemma~\ref{lem:nilp-bimodules} that both~\eqref{it:HC-central-char-cond-3} and~\eqref{it:HC-central-char-cond-4} imply~\eqref{it:HC-central-char-cond-1}. On the other hand, any bounded complex of objects in $\HC_\nil^{\widehat{\lambda},\widehat{\mu}}$ satisfies~\eqref{it:HC-central-char-cond-2}, so that this condition is implied by~\eqref{it:HC-central-char-cond-1}, which finishes the proof.
\end{proof}

\begin{lem}
For any $\lambda,\mu,\nu \in X^*(\bT)$, the bifunctor~\eqref{eqn:convolution-HC-la-mu-nu} restricts to bifunctors
\[
\Db \HC^{\hla,\hmu}_{\mathrm{nil}} \times \Db \HC^{\hmu,\hnu} \to \Db \HC^{\hla,\hnu}_{\mathrm{nil}}, \quad
\Db \HC^{\hla,\hmu} \times \Db \HC^{\hmu,\hnu}_{\mathrm{nil}} \to \Db \HC^{\hla,\hnu}_{\mathrm{nil}}.
\]
\end{lem}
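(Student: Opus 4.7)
The strategy is to characterize membership in the subcategories $\Db \HC^{\hla,\hnu}_{\mathrm{nil}}$ using Lemma~\ref{lem:HC-central-char}, and then to show that the equivalent conditions~\eqref{it:HC-central-char-cond-3} and~\eqref{it:HC-central-char-cond-4} are preserved under convolution in the appropriate variable. The central observation is that the $\ZHC$-action on $M \star N = M \lotimes_{(\cU\bg)^{\hmu}} N$ from the left factors through the left $\ZHC$-action on $M$ (because this left action commutes with the right $(\cU\bg)^{\hmu}$-action on $M$ being used in the tensor product), and symmetrically for the right $\ZHC$-action through $N$.

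For the first bifunctor, take $M \in \Db \HC^{\hla,\hmu}_{\mathrm{nil}}$ and $N \in \Db \HC^{\hmu,\hnu}$. By Lemma~\ref{lem:HC-central-char}\eqref{it:HC-central-char-cond-3} there exists $n \geq 0$ such that the morphism $(\fm^\lambda)^n \to \End_{\Db \HC^{\hla,\hmu}}(M)$ given by left multiplication by elements of $\ZHC$ is zero. Applying the triangulated functor $(-) \star N$ to this morphism and using the factorization described above, we deduce that the induced morphism $(\fm^\lambda)^n \to \End(M \star N)$ from the left $\ZHC$-action on $M \star N$ is also zero. Invoking Lemma~\ref{lem:HC-central-char}\eqref{it:HC-central-char-cond-3} in the other direction then places $M \star N$ in $\Db \HC^{\hla,\hnu}_{\mathrm{nil}}$. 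The second bifunctor is handled in exactly the same way, replacing condition~\eqref{it:HC-central-char-cond-3} by~\eqref{it:HC-central-char-cond-4} and working with the right $\ZHC$-action coming from $N$.

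The one point that requires slightly more care, and is the main potential obstacle, is verifying that $M \star N$ actually lies in $\Db$ rather than merely in $D^-$: the bifunctor $\star$ was originally constructed on $D^-$, so we need finite tor-amplitude of $(-) \lotimes_{(\cU\bg)^{\hmu}} (-)$ on the nilpotent piece. Under the nilpotency hypothesis, both $M$ and $N$ are (cohomologically) supported on $\bZ^{\hla,\hmu}$, resp.~$\bZ^{\hmu,\hnu}$, with their right, resp.~left, $\ZHC$-action killed by a power of $\fm^\mu$; one may therefore reduce the computation of the derived tensor product to one over a quotient of $(\cU\bg)^{\hmu}$ by a power of $\fm^\mu$, and exploit the flatness of $\cU\bg$ over $\ZHC$ given by Proposition~\ref{prop:flatness-Ug} to bound the tor-dimension uniformly. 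Once this finiteness is in hand, the nilpotency argument above delivers the claim.
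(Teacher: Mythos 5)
Your argument for the nilpotency of the cohomology of $M \star N$ is essentially the paper's: both come down to the observation that the left $\ZHC$-action on $M \star N$ factors through the left $\ZHC$-action on $M$, combined with Lemma~\ref{lem:nilp-bimodules}/\ref{lem:HC-central-char}. That part is fine.

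The boundedness step, however, contains a genuine gap. You propose to reduce the derived tensor product to one over a quotient of $(\cU\bg)^{\hmu}$ by a power of $\fm^\mu$, and to use flatness of $\cU\bg$ over $\ZHC$ to get a uniform bound on the tor-dimension. But the quotient $(\cU\bg)^{\hmu}/(\fm^\mu)^k(\cU\bg)^{\hmu}$ does \emph{not} have finite homological dimension: its central subring $\scO(\FN_{\bt^*/(\bWf,\bullet)}(\{\tmu\}))/(\fm^\mu)^k$ is an Artinian local ring which is not a field when $k>1$, hence of infinite global dimension, and flatness of $\cU\bg$ over $\ZHC$ does not repair this. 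Moreover, $N$ itself does not factor through such a quotient, so there is no obvious ``reduction of the tensor product to the quotient.'' What actually makes boundedness work in the paper is a different mechanism: since $M$ is nilpotent at $\tmu$, the completion morphism is already an isomorphism on $M$, i.e.\
\[
M \;\cong\; M \otimes_{\ZHC} \scO(\FN_{\bt^*/(\bWf,\bullet)}(\{\tmu\}))
\]
(for the right action). Because $\cU\bg \to (\cU\bg)^{\hmu}$ is flat, one can then take a \emph{bounded} flat resolution $M^\bullet \to M$ as a right $\cU\bg$-module (which exists since $\cU\bg$ has finite cohomological dimension) and base-change it along this flat morphism to obtain a bounded flat resolution of $M$ over $(\cU\bg)^{\hmu}$. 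This is the finiteness input you need; your current reduction points in the wrong direction (to a quotient, which has worse homological properties) instead of lifting back to $\cU\bg$ itself through the completion isomorphism.
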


\begin{proof}
We treat the first case; the second one is similar. What we have to prove is that if $M \in \HC^{\hla,\hmu}_{\mathrm{nil}}$ and $N \in \Db \HC^{\hmu,\hnu}$ then $M \star N$ is bounded, and has all of its cohomology objects in $\Db \HC^{\hla,\hnu}_{\mathrm{nil}}$. The second property is clear from Lemma~\ref{lem:nilp-bimodules}, computing the tensor product using a flat resolution of $N$. For the first property, we observe that we have
\[
M \cong M \otimes_{\ZHC} \scO(\FN_{\bt^*/(\bWf,\bullet)}(\{\tmu\}))
\]
(where we consider the action of $\ZHC$ via the \emph{right} action of $\cU\bg$). Hence if $M^\bullet \to M$ is a flat resolution of $M$ as a right $\cU\bg$-module then we obtain a flat resolution $M^\bullet \otimes_{\ZHC} \scO(\FN_{\bt^*/(\bWf,\bullet)}(\{\tmu\})) \to M$ as a right $(\cU\bg)^{\hmu}$-module, which can be used to compute $M \star N$. Since $\cU\bg$ has finite cohomological dimension, the complex $M^\bullet$ can be chosen bounded, so that $M \star N$ is indeed bounded.
\end{proof}

\subsection{Bimodules with a fixed (right) central character}
\label{ss:HC-fixed-character}

For $\mu \in X^*(\bT)$ we now set
\[
(\cU\bg)^\mu := \cU\bg / \fm^\mu \cdot \cU\bg = \cU\bg \otimes_{\ZHC} \bk_\mu,
\]
where $\bk_\mu$ is the $1$-dimensional module over $\ZHC \cong \scO(\bt^*/(\bWf,\bullet))$ corresponding to the closed point $\tmu$. 
The algebra
\[
\cU\bg \otimes_{\ZFr} (\cU\bg)^{\mu,\op}
\]
receives a morphism with central image from the
structure algebra of the finite affine scheme
\[
\bZ \times_{\bt^*/(\bWf,\bullet)} \{\tmu\} \cong \bt^*/ (\bWf,\bullet) \times_{\bt^{*(1)}/\bWf} \{0\},
\]
where the morphism $\bZ \to \bt^*/(\bWf,\bullet)$ is induced by projection on the second factor. For $\lambda \in X^*(\bT)$ we then set
\[
\bZ^{\hla,\mu} := \FN_{\bZ \times_{\bt^*/(\bWf,\bullet)} \{\tmu\}}( \{ (\tla,\tmu) \} )
\]
(a finite scheme)
and
\[
\sfU^{\widehat{\lambda},\mu} :=
\bigl( \cU\bg \otimes_{\ZFr} (\cU\bg)^{\mu,\op} \bigr) \otimes_{\scO(\bZ \times_{\bt^*/(\bWf,\bullet)} \{\tmu\})} \scO(\bZ^{\hla,\mu}).
\]
Note that by the structure theory of artinian rings (see e.g.~\cite[\href{https://stacks.math.columbia.edu/tag/00JB}{Tag 00JB}]{stacks-project}) the natural morphism
\[
\bigsqcup_{\lambda \in X^*(\bT)/(\bW,\bullet)} \bZ^{\hla,\mu} \to
\bZ \times_{\bt^*/(\bWf,\bullet)} \{\tmu\}
\]
is an isomorphism, so that we have
\[
\cU\bg \otimes_{\ZFr} (\cU\bg)^{\mu,\op} \cong \prod_{\lambda \in X^*(\bT)/(\bW,\bullet)} \sfU^{\widehat{\lambda},\mu}.
\]

A $\sfU^{\widehat{\lambda},\mu}$-module is exactly a $\cU\bg$-bimodule which satisfies the following properties with respect to the actions of the central subalgebras in each factor:
\begin{itemize}
\item
the left and right actions of $\ZFr$ coincide;
\item
the right action of the ideal $\fm^\mu \subset \ZHC$ vanishes;
\item
the left action of a power of the ideal $\fm^\lambda \subset \ZHC$ vanishes.
\end{itemize}
This algebra admits a canonical action of $\bG$, and we can consider the category $\Mod^\bG(\sfU^{\hla,\mu})$ of $\bG$-equivariant $\sfU^{\hla,\mu}$-modules, its full subcategory $\Modfg^\bG(\sfU^{\hla,\mu})$ of finitely generated modules, and the full subcategories
\[
\widetilde{\HC}^{\hla,\mu} \subset \Mod^\bG(\sfU^{\hla,\mu}), \quad \HC^{\hla,\mu} \subset \Modfg^\bG(\sfU^{\hla,\mu})
\]
of Harish-Chandra bimodules. We have a surjective algebra morphism $\sfU^{\hla,\hmu} \to \sfU^{\hla,\mu}$ which induces a fully faithful exact functor $\Mod^\bG(\sfU^{\hla,\mu}) \to \Mod^\bG(\sfU^{\hla,\hmu})$ which restricts to a fully faithful exact functor
\begin{equation}
\label{eqn:functor-HC-hla-mu-hmu}
\HC^{\hla,\mu} \to \HC^{\hla,\hmu}.
\end{equation}

This functor has a left adjoint functor
\begin{equation}
\label{eqn:specialization-ZHC-char}
\Mod^\bG(\sfU^{\hla,\hmu}) \to \Mod^\bG(\sfU^{\hla,\mu})
\end{equation}
given by $M \mapsto \sfU^{\hla,\mu} \otimes_{\sfU^{\hla,\hmu}} M = M / M \cdot \fm^\mu$, whose composition with the embedding $\widetilde{\HC}^{\hla,\hmu} \subset \Mod^\bG(\sfU^{\hla,\hmu})$ factor through a functor
\[
\widetilde{\HC}^{\hla,\hmu} \to \widetilde{\HC}^{\hla,\mu}.
\]
For $V \in \Rep(\bG)$, we will denote by $\sfC^{\hla,\mu}(V \otimes \cU\bg)$ the image of $\sfC^{\hla,\hmu}(V \otimes \cU\bg)$ under this functor.

\begin{lem}
\phantomsection
\label{lem:flatness}
\begin{enumerate}
\item
\label{it:flatness-1}
The algebra $\sfU^{\hla,\hmu}$ is flat over $\scO(\FN_{\bt^*/(\bWf,\bullet)}(\{\tmu\}))$ (for the action induced by multiplication on the right).
\item
\label{it:flatness-2}
For any $V \in \Rep(\bG)$, the object $\sfC^{\hla,\hmu}(V \otimes \cU\bg)$ is flat as a module over $\scO(\FN_{\bt^*/(\bWf,\bullet)}(\{\tmu\}))$ (for the action induced by multiplication on the right). 
\end{enumerate}
\end{lem}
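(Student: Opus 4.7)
The proof of (1) starts from the tensor product presentation~\eqref{eqn:U-hla-hmu-tensor-prod}:
\[
\sfU^{\hla,\hmu} \cong (\cU\bg)^{\hla} \otimes_R (\cU\bg)^{\hmu,\op},
\]
where $R := \ZFr \otimes_{\scO(\bt^{*(1)}/\bWf)} \scO(\FN_{\bt^{*(1)}/\bWf}(\{0\}))$. Abbreviating $S := \scO(\FN_{\bt^*/(\bWf,\bullet)}(\{\tmu\}))$, the right action of $S$ factors through the right tensor factor, so for any $S$-module $N$ we have a canonical identification
\[
\sfU^{\hla,\hmu} \otimes_S N \cong (\cU\bg)^{\hla} \otimes_R \bigl( (\cU\bg)^{\hmu,\op} \otimes_S N \bigr).
\]
It therefore suffices to check (a) that $(\cU\bg)^{\hmu}$ is flat over $S$ (via its right $\ZHC$-action), and (b) that $(\cU\bg)^{\hla}$ is flat over $R$.

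For (a), unwinding the definition in~\S\ref{ss:center-Ug} and using the isomorphism $\cZ \cong \ZFr \otimes_{\ZFr \cap \ZHC} \ZHC$ gives $(\cU\bg)^{\hmu} \cong \cU\bg \otimes_{\ZHC} S$; since $\cU\bg$ is flat over $\ZHC$ by Proposition~\ref{prop:flatness-Ug}, flat base change along $\ZHC \to S$ yields the desired flatness of $(\cU\bg)^{\hmu}$ over $S$. For (b), one similarly writes $(\cU\bg)^{\hla}$ as a base change of $\cU\bg$ along $\ZFr \to R$, and invokes the freeness of $\cU\bg$ over $\ZFr$ (again Proposition~\ref{prop:flatness-Ug}) to deduce that $(\cU\bg)^{\hla}$ is in fact free over $R$. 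This establishes (1).

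For (2), I would invoke Remark~\ref{rmk:completion-diag-induced}, which gives $\sfC^\wedge(V \otimes \cU\bg) \cong V \otimes (\cU\bg)^\wedge$. Combining the decomposition~\eqref{eqn:Ug-wedge-product} (applied to the right factor) with~\eqref{eqn:decomp-Zwedge-C} yields
\[
V \otimes (\cU\bg)^{\hmu} \cong \bigoplus_{\lambda \in X^*(\bT)/(\bW,\bullet)} \sfC^{\hla,\hmu}(V \otimes \cU\bg).
\]
The decomposition into $\lambda$-components comes from the \emph{left} $\ZHC$-action (which here is diagonal via the coproduct), and this commutes with the \emph{right} $\ZHC$-action; consequently $\sfC^{\hla,\hmu}(V \otimes \cU\bg)$ is a direct summand of $V \otimes (\cU\bg)^{\hmu}$ as a right $S$-module. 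Since $V$ is a finite-dimensional free $\bk$-module and $(\cU\bg)^{\hmu}$ is $S$-flat by part (a), the tensor product $V \otimes (\cU\bg)^{\hmu}$ is $S$-flat, and so is its direct summand $\sfC^{\hla,\hmu}(V \otimes \cU\bg)$.

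The only real obstacle is bookkeeping: one must carefully verify that the definitions of the completed algebras lead to the identifications $(\cU\bg)^{\hmu} \cong \cU\bg \otimes_{\ZHC} S$ and $(\cU\bg)^{\hla}$ as a base change of the PBW-free $\cU\bg$, and that in part (2) the correct $\ZHC$-action (the right one, not the diagonal left one) is what controls the $S$-module structure. Once these compatibilities are pinned down, the statement reduces entirely to Proposition~\ref{prop:flatness-Ug} combined with standard flat base change.
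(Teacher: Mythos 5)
Your proof takes essentially the same route as the paper's: reduce via the tensor-product presentation~\eqref{eqn:U-hla-hmu-tensor-prod}, use freeness of $\cU\bg$ over $\ZFr$ and flatness over $\ZHC$ (Proposition~\ref{prop:flatness-Ug}), and for part (2) use the isomorphism $\bigoplus_\lambda \sfC^{\hla,\hmu}(V \otimes \cU\bg) \cong V \otimes (\cU\bg)^{\hmu}$. The overall structure is correct and matches the paper.

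One inaccuracy worth fixing: in step (b) of part (1) you assert that $(\cU\bg)^{\hla}$ is a base change of $\cU\bg$ along $\ZFr \to R$ and is therefore \emph{free} over $R$. That identification is off: the module $\cU\bg \otimes_{\ZFr} R$ is $(\cU\bg)^\wedge$, not $(\cU\bg)^{\hla}$. By~\eqref{eqn:Ug-wedge-product}, $(\cU\bg)^{\hla}$ is a direct factor of $(\cU\bg)^\wedge$, so it is a direct summand of a free $R$-module, hence projective and therefore \emph{flat} over $R$ — which is all your argument actually uses — but freeness is not justified. The paper avoids this by working directly with $(\cU\bg)^\wedge$ (which \emph{is} free over $R$) and implicitly using that $\sfU^{\hla,\hmu}$ is a direct summand of $(\cU\bg)^\wedge \otimes_R (\cU\bg)^{\hmu,\op}$. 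If you keep your formulation, replace ``free'' by ``projective (as a direct summand of the free $R$-module $(\cU\bg)^\wedge$), hence flat,'' and the argument goes through.
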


\begin{proof}
\eqref{it:flatness-1}
Using the identifications~\eqref{eqn:Ug-wedge-product} and~\eqref{eqn:U-hla-hmu-tensor-prod} we see that to prove the lemma it suffices to prove that
\[
(\cU\bg)^\wedge \otimes_{\ZFr \otimes_{\scO(\bt^{*(1)}/\bWf)} \scO(\FN_{\bt^{*(1)}/\bWf}(\{0\}))} (\cU\bg)^{\hmu,\op}
\]
is flat over $\scO(\FN_{\bt^*/(\bWf,\bullet)}(\{\tmu\}))$. Now $\cU\bg$ is free over $\ZFr$ (see Proposition~\ref{prop:flatness-Ug}), hence $(\cU\bg)^\wedge$ is free over $\ZFr \otimes_{\scO(\bt^{*(1)}/\bWf)} \scO(\FN_{\bt^{*(1)}/\bWf}(\{0\}))$. The claim then follows from the fact that $(\cU\bg)^{\hmu,\op}$ is flat over $\scO(\FN_{\bt^*/(\bWf,\bullet)}(\{\tmu\}))$, which itself follows from the fact that $\cU\bg$ is flat over $\ZHC$, see Proposition~\ref{prop:flatness-Ug}.

\eqref{it:flatness-2}
We have
\[
\prod_{\lambda \in X^*(\bT)/(\bW,\bullet)} \sfC^{\hla,\hmu}(V \otimes \cU\bg) \cong V \otimes (\cU\bg)^{\hmu},
\]
so once again the claim follows from the fact that $\cU\bg$ is flat over $\ZHC$.
\end{proof}

Lemma~\ref{lem:flatness}\eqref{it:flatness-1} implies that the left derived functor of~\eqref{eqn:specialization-ZHC-char}, namely the functor
\[
D^- \Mod^\bG(\sfU^{\hla,\hmu}) \to D^- \Mod^\bG(\sfU^{\hla,\mu})
\]
given by $M \mapsto \sfU^{\hla,\mu} \lotimes_{\sfU^{\hla,\hmu}} M$, can also be expressed as $M \mapsto M \lotimes_{\scO(\FN_{\bt^*/(\bWf,\bullet)}(\{\tmu\}))} \bk_\mu$. In particular, since $\bt^*/(\bWf,\bullet)$ is isomorphic to an affine space by~\cite{demazure}, the ring $\scO(\FN_{\bt^*/(\bWf,\bullet)}(\{\tmu\}))$ has finite global dimension, so that this functor restricts to a functor from $\Db \Mod^\bG(\sfU^{\hla,\hmu})$ to $\Db \Mod^\bG(\sfU^{\hla,\mu})$. Since the ring $\sfU^{\hla,\hmu}$ is noetherian, this functor in turn induces a functor from $\Db \Modfg^\bG(\sfU^{\hla,\hmu})$ to $\Db \Modfg^\bG(\sfU^{\hla,\mu})$.

We claim that the composition of the latter functor with the natural functor $\Db \HC^{\hla,\hmu} \to \Db \Modfg^\bG(\sfU^{\hla,\hmu})$ factors canonically through a ``specialization'' functor
\begin{equation*}
\mathsf{Sp}_{\lambda,\mu} : \Db \HC^{\hla,\hmu} \to \Db \HC^{\hla,\mu}.
\end{equation*}
In fact, in view of the comments above, to prove this it suffices to prove that any object in $\HC^{\hla,\hmu}$ is a quotient of an object which is flat over $\scO(\FN_{\bt^*/(\bWf,\bullet)}(\{\tmu\}))$. This follows from Lemma~\ref{lem:surjection-diag-ind-comp} and Lemma~\ref{lem:flatness}\eqref{it:flatness-2}.

The following statement is an analogue in this setting of Proposition~\ref{prop:Ext-vanishing}.

\begin{prop}
\label{prop:Ext-vanishing-fixed-char}
For any $\lambda,\mu \in X^*(\bT)$ and any $V,V' \in \Tilt(\bG)$, the functor $\mathsf{Sp}_{\lambda,\mu}$
induces an isomorphism
\begin{multline*}
\bk_\mu \otimes_{\scO(\FN_{\bt^*/(\bWf,\bullet)}(\{\tmu\}))} \Hom_{\HC^{\hla,\hmu}} \bigl( \sfC^{\hla,\hmu}(V \otimes \cU\bg), \sfC^{\hla,\hmu}(V' \otimes \cU\bg) \bigr) \simto \\
\Hom_{\HC^{\hla,\mu}} \bigl( \sfC^{\hla,\mu}(V \otimes \cU\bg), \sfC^{\hla,\mu}(V' \otimes \cU\bg) \bigr),
\end{multline*}
and moreover we have
 \[
  \Ext^n_{\HC^{\hla,\mu}} \bigl( \sfC^{\hla,\mu}(V \otimes \cU\bg), \sfC^{\hla,\mu}(V' \otimes \cU\bg) \bigr)=0
 \]
 for any $n \in \Z_{>0}$.
\end{prop}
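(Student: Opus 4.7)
The plan is to Koszul-resolve $\bk_\mu$ over $R := \scO(\FN_{\bt^*/(\bWf,\bullet)}(\{\tmu\}))$ and then run the same kind of adjunction-plus-base-change argument as in Proposition~\ref{prop:Ext-vanishing}. Since $\bt^*/(\bWf,\bullet)$ is an affine space by~\cite{demazure}, $R$ is a regular local ring of Krull dimension $\dim\bt$, so $\bk_\mu$ admits a Koszul resolution $K^\bullet \to \bk_\mu$ by finitely generated free $R$-modules concentrated in degrees $-\dim\bt,\ldots,0$. Write $M = \sfC^{\hla,\hmu}(V \otimes \cU\bg)$ and $N = \sfC^{\hla,\hmu}(V' \otimes \cU\bg)$; by Lemma~\ref{lem:flatness}\eqref{it:flatness-2} both are flat over $R$, so $M_\mu := M \otimes_R \bk_\mu = \sfC^{\hla,\mu}(V \otimes \cU\bg)$ (and similarly $N_\mu$), and the complex $N \otimes_R K^\bullet \to N_\mu$ is a quasi-isomorphism in $\HC^{\hla,\hmu}$.

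Next, observe that the functor $\mathsf{Sp}_{\lambda,\mu}$ is the left derived functor of the abelian left adjoint $X \mapsto X \otimes_R \bk_\mu$ to the exact embedding $\Psi : \HC^{\hla,\mu} \hookrightarrow \HC^{\hla,\hmu}$ appearing in~\eqref{eqn:functor-HC-hla-mu-hmu}. This gives a derived adjunction $\mathsf{Sp}_{\lambda,\mu} \dashv \Psi$, and since $M$ is $R$-flat the derived functor evaluates to $\mathsf{Sp}_{\lambda,\mu}(M) = M_\mu$, so that
\[
R\Hom_{\HC^{\hla,\mu}}(M_\mu, N_\mu) \cong R\Hom_{\HC^{\hla,\hmu}}(M, N_\mu).
\]

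Now compute the right-hand side via the spectral sequence attached to the bounded complex $N \otimes_R K^\bullet$ resolving $N_\mu$: since each $K^p$ is finitely generated free over $R$,
\[
E_1^{p,q} = \Ext^q_{\HC^{\hla,\hmu}}(M, N \otimes_R K^p) \cong \Ext^q_{\HC^{\hla,\hmu}}(M, N) \otimes_R K^p,
\]
which vanishes for $q>0$ by Proposition~\ref{prop:Ext-vanishing}. The spectral sequence degenerates at $E_2$, yielding
\[
\Ext^n_{\HC^{\hla,\hmu}}(M, N_\mu) \cong H^n\bigl(\Hom_{\HC^{\hla,\hmu}}(M, N) \otimes_R K^\bullet\bigr) \cong \Tor^R_{-n}\bigl(\Hom_{\HC^{\hla,\hmu}}(M,N), \bk_\mu\bigr).
\]
For $n>0$ the right-hand side vanishes because $\Tor^R_i = 0$ for $i<0$, which is exactly the desired Ext-vanishing; for $n=0$ it equals $\bk_\mu \otimes_R \Hom_{\HC^{\hla,\hmu}}(M,N)$, and functoriality of $\mathsf{Sp}_{\lambda,\mu}$ identifies the resulting map with the one induced by the specialization functor.

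The main conceptual obstacle is the derived adjunction $\mathsf{Sp}_{\lambda,\mu} \dashv \Psi$ and the identification of the natural map in the statement; the clean structural reason the rest goes through is that Tor is concentrated in non-negative degrees, which automatically kills the positive-$\Ext$ contributions without any separate flatness argument for $\Hom_{\HC^{\hla,\hmu}}(M, N)$ (such flatness is then, incidentally, a consequence of the vanishing of $\Ext^{n}$ in negative degrees, which forces $\Tor^R_i(\Hom(M,N), \bk_\mu) = 0$ for $i>0$).
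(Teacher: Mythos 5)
Your proof is correct and rests on the same structural observation as the paper's: the derived Hom between the specialized bimodules is pinched between nonpositive cohomological degrees (it is a derived tensor product with $\bk_\mu$ over $R$ applied to the degree-$0$ complex $\Hom(M,N)$) and nonnegative ones (it is an $R\Hom$ between honest objects), forcing concentration in degree $0$. Where you diverge is in the packaging. The paper first forgets one of the two $\cU\bg$-actions via the equivalence of $\HC^{\hla,\hmu}$ with a category of one-sided equivariant modules over $(\cU\bg)^{\hmu,\op}$, reducing to a base-change statement in a single equivariant module category, and then invokes the abstract Lemma~\ref{lem:RHom-equiv-ext-scalars} from Appendix~B. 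You stay inside the Harish--Chandra categories throughout, move the specialization from source to target using the derived adjunction $\mathsf{Sp}_{\lambda,\mu}\dashv\Psi$, and carry out the base change by hand with a Koszul resolution of $\bk_\mu$ over the regular local ring $R$ and the resulting hypercohomology spectral sequence, which Proposition~\ref{prop:Ext-vanishing} immediately collapses. The benefit of your route is self-containedness (no appeal to Appendix~B); the one step you should spell out is the derived adjunction $\mathsf{Sp}_{\lambda,\mu}\dashv\Psi$, which does hold because $\Psi$ is exact and $\mathsf{Sp}_{\lambda,\mu}$ is computed on the $R$-flat resolutions supplied by Lemma~\ref{lem:surjection-diag-ind-comp} together with Lemma~\ref{lem:flatness}\eqref{it:flatness-2}, so the standard derived tensor/restriction adjunction applies. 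Both proofs, similarly, defer to the reader the verification that the resulting isomorphism is the one induced by $\mathsf{Sp}_{\lambda,\mu}$.
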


\begin{proof}
In view of Lemma~\ref{lem:flatness}\eqref{it:flatness-2},
using Proposition~\ref{prop:Ext-vanishing} and the same technique as in its proof (treating all $\lambda$'s at the same time), one reduces the proof to showing that the functor
\[
M \mapsto M \lotimes_{(\cU\bg)^{\hmu}} (\cU\bg)^\mu,
\]
i.e.~the functor $M \mapsto M \lotimes_{\scO(\FN_{\bt^*/(\bWf,\bullet)}(\{\tmu\}))} \bk_\mu$, satisfies the following property.
For any $M,N \in \Modfg^{\bG}((\cU\bg)^{\hmu,\op})$ which are flat over $\scO(\FN_{\bt^*/(\bWf,\bullet)}(\{\tmu\}))$ and satisfy
\[
\Ext^n_{\Modfg^{\bG}((\cU\bg)^{\hmu,\op})}(M,N)=0
\]
for $n >0$, the functor above induces an isomorphism
\begin{multline}
\label{eqn:Hom-fixed-char}
\bk_\mu \otimes_{\scO(\FN_{\bt^*/(\bWf,\bullet)}(\{\tmu\}))} \Hom_{\Modfg^{\bG}((\cU\bg)^{\hmu,\op})}(M,N) \\
\simto \Hom_{\Modfg^{\bG}((\cU\bg)^{\mu,\op})}(M / M \cdot \fm^\mu,N / N \cdot \fm^\mu),
\end{multline}
and moreover we have
\[
\Ext^n_{\Modfg^{\bG}((\cU\bg)^{\mu,\op})}(M / M \cdot \fm^\mu,N / N \cdot \fm^\mu)=0
\]
for $n>0$. Now by Lemma~\ref{lem:RHom-equiv-ext-scalars} we have an identification
\begin{multline*}
\bk_\mu \lotimes_{\scO(\FN_{\bt^*/(\bWf,\bullet)}(\{\tmu\}))} R\Hom_{\Modfg^{\bG}((\cU\bg)^{\hmu,\op})}(M,N) \\
\simto R\Hom_{\Modfg^{\bG}((\cU\bg)^{\mu,\op})}(M / M \cdot \fm^\mu,N / N \cdot \fm^\mu).
\end{multline*}
Our assumptions guarantee that the left-hand side is concentrated in nonpositive degrees, while the right-hand side is concentrated in nonnegative degrees. Hence they are both concentrated in degree $0$, which proves the desired vanishing result and provides an isomorphism as in~\eqref{eqn:Hom-fixed-char}. We leave it to the reader to check that this isomorphism is induced by the given functor.
\end{proof}

The same considerations as in~\S\ref{ss:monoidal-structure-DHC} lead to the construction, for any $\lambda,\mu,\nu \in X^*(\bT)$, of a convolution bifunctor
\begin{equation*}
D^- \HC^{\hla,\hmu} \times D^- \HC^{\hmu,\nu} \to D^- \HC^{\hla,\nu}
\end{equation*}
which is compatible with the bifunctors~\eqref{eqn:convolution-HC-la-mu-nu} in the natural sense. It particular, when $\lambda=\mu$, this bifunctor makes $D^- \HC^{\hmu,\nu}$ a module category for the monoidal category $D^- \HC^{\hmu,\hmu}$.

\subsection{Bimodules with trivial left action}
\label{ss:bimod-trivial-action}

There is a canonical exact fully faithful functor
\begin{equation}
\label{eqn:Rep-HC}
\Rep(\bG) \to \HC
\end{equation}
defined as follows. (This functor will usually be omitted from notation.) Given $V \in \Rep(\bG)$, we have a left action of $\cU\bg$ on $V$ obtained by taking the differential of the $\bG$-action. We associate to $V$ the object whose underlying vector space is $V$, endowed the given action of $\bG$, the \emph{trivial} left action of $\cU\bg$, and the right action of $\cU\bg$ deduced from the left action considered above by composition with the antiautomorphism of $\cU\bg$ sending each $x \in \bg$ to $-x$. It is easily seen that these structures define a Harish-Chandra bimodule, which provides the desired functor~\eqref{eqn:Rep-HC}. The essential image of this functor consists exactly of the Harish-Chandra bimodules on which the left action is trivial, i.e.~factors through the augmentation morphism $\cU\bg \to \bk$ sending each $x \in \bg$ to $0$.

Given $\lambda \in X^*(\bT)$, one can consider the full subcategory $\Rep_{\langle \lambda \rangle}(\bG)$ of $\Rep(\bG)$ defined in~\S\ref{ss:HC-notation}, i.e.~the full subcategory whose objects are the representations on which $\fm^\lambda$ acts nilpotently (for the action of $\cU\bg$ obtained by differentiation).
It is clear that the functor~\eqref{eqn:Rep-HC} restricts to an exact fully faithful functor
\begin{equation}
\label{eqn:Rep-HC-lambda}
\Rep_{\langle \lambda \rangle}(\bG) \to \HC_\nil^{\widehat{0},\widehat{-w_\circ \lambda}}.
\end{equation}

In particular, we can consider this construction in case $\lambda=0$, and with the object $\bk=\sfL(0)$. The resulting Harish-Chandra bimodule will be called the \emph{trivial Harish-Chandra bimodule}.

\begin{lem}
\label{lem:Rep-cohomology-convolution}
The essential image of the functor~\eqref{eqn:Rep-HC-lambda} consists of the objects of the form $H^n(\bk \star M)$ with $M \in \Db \HC^{\widehat{0},\widehat{-w_\circ \lambda}}$.
\end{lem}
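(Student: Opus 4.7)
The plan is to establish the two inclusions separately. For the statement that every object $V$ in the essential image of~\eqref{eqn:Rep-HC-lambda} has the form $H^n(\bk \star M)$, I would simply take $M$ to be $V$ itself, viewed as an object of $\HC_\nil^{\widehat{0},\widehat{-w_\circ\lambda}} \subset \HC^{\widehat{0},\widehat{-w_\circ\lambda}}$ via~\eqref{eqn:Rep-HC-lambda}, and check that $H^0(\bk \star V) \cong V$ canonically. By Lemma~\ref{lem:tensor-prod-HC-derived}, this convolution is computed as the derived bimodule tensor product $\bk \lotimes_{(\cU\bg)^{\widehat{0}}} V$, whose $0$-th cohomology is the underived tensor product $\bk \otimes_{(\cU\bg)^{\widehat{0}}} V$. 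Since both the right $(\cU\bg)^{\widehat{0}}$-action on $\bk$ and the left $(\cU\bg)^{\widehat{0}}$-action on $V$ factor through the augmentation $(\cU\bg)^{\widehat{0}} \twoheadrightarrow \bk$, the relations defining this tensor product become trivial and it identifies canonically with $V$; a direct verification shows that this identification respects the full HC bimodule structure (trivial left action, given $\bG$-action, right action via the antiautomorphism).

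For the reverse inclusion, the key observation is that the left $\cU\bg$-structure on $\bk$ is literally trivial (the element $x \in \bg$ acts by $0$), and not merely of nilpotent central character. Using Lemma~\ref{lem:surjection-diag-ind-comp} and Remark~\ref{rmk:completion-diag-induced} (as in the proof of Lemma~\ref{lem:tensor-prod-HC-derived}), I would pick a resolution $P^\bullet \to M$ by HC bimodules flat as right $(\cU\bg)^{\widehat{0}}$-modules; then $\bk \star M$ is represented by the complex $\bk \otimes_{(\cU\bg)^{\widehat{0}}} P^\bullet$. Each term inherits its left $\cU\bg$-structure from the trivial left action on $\bk$, so the entire complex, and hence every cohomology object $H^n(\bk \star M)$, has trivial left $\cU\bg$-action. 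Meanwhile the right central character of each $H^n(\bk \star M)$ lies in $\widehat{-w_\circ\lambda}$ by the target of the bifunctor~\eqref{eqn:convolution-HC-la-mu-nu}. These two properties together characterize the essential image of~\eqref{eqn:Rep-HC-lambda}, finishing the proof.

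The only auxiliary point, which is essentially bookkeeping, is to justify this characterization of the essential image. The HC bimodule condition forces, on any bimodule with trivial left $\cU\bg$-action, the right $\cU\bg$-action to be $v \cdot x = -d(x)v$, i.e.\ the $\bG$-module structure composed with the antiautomorphism $x \mapsto -x$ of $\cU\bg$; so a right central character at $\widehat{-w_\circ\lambda}$ corresponds under the induced involution of $\ZHC$ to a $\bG$-module central character at $\widehat{\lambda}$, which by the linkage principle amounts to membership in $\Rep_{\langle\lambda\rangle}(\bG)$. I do not anticipate any serious obstacle: the entire argument is a direct computation leveraging the triviality of the left structure on $\bk$.
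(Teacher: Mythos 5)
Your proof is correct and follows essentially the same strategy as the paper: the key observations in both are that $H^0(\bk \star V) \cong V$ for $V$ coming from $\Rep_{\langle\lambda\rangle}(\bG)$, and that because $\bk$ has literally trivial left $\cU\bg$-action, computing $\bk \star M$ via a flat resolution shows each cohomology object of $\bk \star M$ has trivial left action and hence lies in the essential image of~\eqref{eqn:Rep-HC-lambda}. The paper routes the last point through Lemma~\ref{lem:HC-central-char}, while you argue more directly that the trivial left action forces nilpotency on the left (so the object lands in $\HC_\nil^{\widehat{0},\widehat{-w_\circ\lambda}}$ by Lemma~\ref{lem:nilp-bimodules}), but these amount to the same thing. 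One small slip worth fixing: to compute $\bk \lotimes_{(\cU\bg)^{\widehat{0}}} M$ you should resolve $M$ by objects flat as \emph{left} $(\cU\bg)^{\widehat{0}}$-modules (or just invoke the two-sided-flat resolutions of Lemma~\ref{lem:surjection-diag-ind-comp} and Remark~\ref{rmk:completion-diag-induced}, as in the proof of Lemma~\ref{lem:tensor-prod-HC-derived}), not flat as right modules; this does not affect the rest of the argument.
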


\begin{proof}
Computing $\bk \star M$ using a flat resolution of $M$ and using Lemma~\ref{lem:HC-central-char} one sees that $H^n(\bk \star M)$ belongs to the essential image of~\eqref{eqn:Rep-HC-lambda} for any $n \in \Z$. On the other hand, if $V \in \Rep_{\langle \lambda \rangle}(\bG)$ we observe that
$H^0(\bk \star V) \cong V$,
so that any object in the essential image of our functor appears in this way.
\end{proof}

\subsection{Translation bimodules}
\label{ss:translation-bimod}

Recall the group $\bW$ introduced in~\S\ref{ss:central-characters}, and consider its subgroup
\[
\bWaff := \bWf \ltimes \Z \fR,
\]
called the affine Weyl group.
We have an action of $\bW$ on $\mathbb{R} \otimes_{\Z} X^*(\bT)$ extending the dot-action on $X^*(\bT)$ and defined by the same formula as in~\eqref{eqn:dot-action}, now for $\eta \in \mathbb{R} \otimes_{\Z} X^*(\bT)$.
The closure $\overline{A_0}$ of the fundamental alcove
\[
A_0 = \{v \in \mathbb{R} \otimes_{\Z} X^*(\bT) \mid \forall \alpha \in \fR_+, \, 0 < \langle v+\varsigma, \alpha^\vee \rangle < \ell\}
\]
constitutes a fundamental domain for the restriction of this action to $\bWaff$, see~\cite[\S 6.2]{jantzen}. 

The affine space $\mathbb{R} \otimes_{\Z} X^*(\bT)$ is divided into facets, see also~\cite[\S 6.2]{jantzen}. The codimension-$1$ facets are called walls, and to these walls one can naturally attach reflections in $\bWaff$, see~\cite[\S 6.3]{jantzen}.
If we denote by $\bSaff \subset \bWaff$ the subset consisting of reflections attached to walls contained in $\overline{A_0}$,
then $\bSaff$ is a set of Coxeter generators for $\bWaff$ which contains $\bSf$. If we set
\[
\mathbf{\Omega} := \{w \in \bW \mid w \bullet A_0 = A_0\},
\]
then $\mathbf{\Omega}$ is an abelian subgroup of $\bW$ such that $\omega s \omega^{-1} \in \bSaff$ for any $\omega \in \mathbf{\Omega}$ and $s \in \bSaff$, and the multiplication morphism
\[
\mathbf{\Omega} \ltimes \bWaff \to \bW
\]
is an isomorphism. We use this isomorphism to extend the length function $\ell$ of $\bWaff$ by setting $\ell(\omega w)=\ell(w)$ for $w \in \bWaff$ and $\omega \in \mathbf{\Omega}$. (We then also have $\ell(w \omega)=\ell(w)$ for any $w \in \bWaff$ and $\omega \in \mathbf{\Omega}$.)

Following the terminology used in~\cite{br-Hecke}, we will call \emph{lower closure of the fundamental alcove} the set
\[
\{v \in \mathbb{R} \otimes_{\Z} X^*(\bT) \mid \forall \alpha \in \fR_+, \, 0 \leq \langle \nu+\varsigma, \alpha^\vee \rangle < \ell\}.
\]
As explained in~\cite[\S 3.5]{br-Hecke} (see also~\cite[Lemma~3.1]{br-Hecke}), if $\nu \in X^*(\bT)$ belongs to the lower closure of the fundamental alcove then the stabilizer of $\onu$ for the $\bullet$-action of $\bWf$ is the parabolic subgroup generated by the subset of $\bSf$ consisting of the elements $s$ such that $s \bullet \nu = \nu$, which corresponds to the subset $\{ \alpha \in \fRs \mid \langle \nu+\varsigma, \alpha^\vee \rangle=0 \} \subset \fRs$ under the canonical bijection between $\fRs$ and $\bSf$. 

For any $\lambda,\mu \in X^*(\bT)$ we have a ``translation bimodule'' $\sfP^{\hla,\hmu}$ defined as follows. (For a justification of the terminology, see~\cite[Lemma~6.1]{br-Hecke}.) Let $\nu \in X^*(\bT)$ be the unique dominant weight in $\bWf(\lambda-\mu)$, and set
\[
\sfP^{\hla,\hmu} := \sfC^{\hla,\hmu}(\sfL(\nu) \otimes \cU\bg) \quad \in \HC^{\hla,\hmu}_{\mathrm{diag}}.
\]
The following lemma shows that this object belongs to $\HC^{\hla,\hmu}_{\mathrm{diag},\mathrm{tilt}}$ for appropriate $\lambda$ and $\mu$.

\begin{lem}
\label{lem:translation-bimod-simples}
If $\lambda,\mu$ belong to $\overline{A_0}$, with one of them at least in $A_0$, we have an isomorphism
\[
\sfP^{\hla,\hmu} \cong \sfC^{\hla,\hmu}(\sfT(\nu) \otimes \cU\bg).
\]
\end{lem}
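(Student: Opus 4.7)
The plan is to construct a canonical surjection $\sfT(\nu) \twoheadrightarrow \sfL(\nu)$ whose kernel $K$ satisfies $\sfC^{\hla,\hmu}(K\otimes\cU\bg)=0$; then the exact sequence $0\to K\to \sfT(\nu)\to \sfL(\nu)\to 0$ in $\Rep(\bG)$ yields, after applying the exact functor $V\mapsto \sfC^{\hla,\hmu}(V\otimes \cU\bg)$, an isomorphism
\[
\sfC^{\hla,\hmu}(\sfT(\nu)\otimes\cU\bg)\simto \sfC^{\hla,\hmu}(\sfL(\nu)\otimes\cU\bg)=\sfP^{\hla,\hmu}.
\]
The surjection is obtained from the Weyl filtration of the tilting module $\sfT(\nu)$: the ``top'' Weyl quotient is $\Delta(\nu)$, and composing with $\Delta(\nu)\twoheadrightarrow\sfL(\nu)$ gives the desired surjection. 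The composition factors of its kernel $K$ are by construction of the form $\sfL(\eta)$ with $\eta$ a dominant weight satisfying $\eta<\nu$ in the dominance order and $\eta\in \bWaff\bullet\nu$ (linkage principle), so by d\'evissage it suffices to prove that $\sfC^{\hla,\hmu}(\sfL(\eta)\otimes\cU\bg)=0$ for any such $\eta$.

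To analyze this vanishing, I would work in the alternative model $\cU\bg\otimes\sfL(\eta)$ described in~\S\ref{ss:def-HC}, where the left action of $\ZHC$ is simply left multiplication (so localizes cleanly at $\tla$) and the right $\ZHC$-action is diagonal, mixing right multiplication on $\cU\bg$ with the negative of the differential on $\sfL(\eta)$. Decomposing $\sfL(\eta)=\bigoplus_{\eta_0}\sfL(\eta)_{\eta_0}$ into $\bT$-weight spaces and using the Harish-Chandra projection (so that the right action of $\ZHC$ is ``twisted by $-\eta_0$'' on each $\cU\bg\otimes \sfL(\eta)_{\eta_0}$), a standard translation-principle calculation — parallel to~\cite[II.7.5--6]{jantzen} and in the bimodule setting to~\cite[Lemma~6.1]{br-Hecke} — shows that the support of $\sfL(\eta)\otimes\cU\bg$ as a module over $\scO(\bZ)$ is contained in
\[
\bigl\{(\widetilde{\sigma},\widetilde{\tau})\in\bZ : \sigma\in\bW\bullet(\tau+\eta_0)\text{ for some }\bT\text{-weight }\eta_0\text{ of }\sfL(\eta)\bigr\}.
\]

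The main obstacle is then a purely combinatorial check: under the assumption $\lambda,\mu\in\overline{A_0}$ with (WLOG) $\mu\in A_0$, show that $(\tla,\tmu)$ lies in this support only if $\sfL(\eta)$ has a weight whose dominant $\bWf$-conjugate is $\geq \nu$. Since $\mu\in A_0$, the $\bW$-dot-orbit of $\lambda$ meets $\mu+X^*(\bT)$ in such a way that any $w\in\bW$ with $w\bullet\mu=\lambda+\eta_0$ forces $\eta_0$ to have a $\bWf$-conjugate equal to $\nu$ (one uses that $\overline{A_0}$ is a fundamental domain for the $\bWaff$-dot-action and that $\mu$ is strictly interior, so only the translation classes yielding the ``minimal'' displacement $\nu$ can contribute). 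But weights of $\sfL(\eta)$ have dominant conjugates bounded above by $\eta$ in the dominance order, so if $\eta<\nu$ no such weight can exist, giving the vanishing. The verification that the ``minimal displacement'' is exactly $\nu$ (and not some larger dominant weight obtained by composing with a non-trivial $\bW$-element) is the delicate point, and is where the hypothesis ``one of $\lambda,\mu$ lies in $A_0$'' is used crucially.
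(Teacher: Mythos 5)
Your approach is genuinely different from the paper's, and it has two concrete problems.

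\textbf{The exact sequence does not exist.} There is in general no surjection $\sfT(\nu)\twoheadrightarrow\sfL(\nu)$. An indecomposable tilting module has $\Delta(\nu)$ as a \emph{submodule} and $\nabla(\nu)$ as a \emph{quotient}; it has neither $\sfL(\nu)$ in its head nor in its socle unless $\sfT(\nu)=\sfL(\nu)$. For instance for $\mathrm{SL}_2$ in characteristic $p$ the module $\sfT(p)$ has Loewy series $\sfL(p-2)\,|\,\sfL(p)\,|\,\sfL(p-2)$, so its unique simple quotient is $\sfL(p-2)$, not $\sfL(p)$; analogous modules arise for the $\nu$ relevant to this lemma in higher rank (e.g.\ $\mathrm{SL}_3$, $\ell=5$, $\lambda=(4,-1)$, $\mu=(0,0)$, $\nu=(3,1)\notin\overline{A_0}$). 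Your underlying idea can be salvaged: since $\sfC^{\hla,\hmu}((-)\otimes\cU\bg)$ is exact, it suffices to d\'evisser $\sfT(\nu)$ by its composition series and show that \emph{every} factor except the single copy of $\sfL(\nu)$ is killed. But the ``build a surjection with vanishing kernel'' step as written is wrong.

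\textbf{The vanishing claim is exactly where the real work is, and it is left unresolved.} Your support bound is the correct shape, but the condition that the point $(\tla,\tmu)\in\bZ$ lies in the support of $V\otimes\cU\bg$ translates to the existence of a weight $\eta_0$ of $V$ and an element $y$ of the \emph{extended} affine Weyl group $\bW$ with $\lambda=y\bullet(\mu+\eta_0)$, not merely $y\in\bWaff$. (Indeed $\ell X^*(\bT)$ acts trivially on $\bt^*$, and $\bWf \ltimes X^*(\bT)=\bW$.) The classical argument you appeal to (Jantzen II.7.5--7.6) uses that $\overline{A_0}$ is a fundamental domain for $\bWaff$; it is \emph{not} a fundamental domain for $\bW$, the failure being measured by $\mathbf{\Omega}$. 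Decomposing $y=z\omega$ with $z\in\bWaff$, $\omega\in\mathbf{\Omega}$ reduces the equation to $\lambda = z\bullet\bigl(\omega\bullet\mu + \omega'(\eta_0)\bigr)$ with $\omega\bullet\mu\in\overline{A_0}$, at which point Jantzen's fundamental-domain argument forces $\omega'(\eta_0)\in\Wf\cdot(\lambda-\omega\bullet\mu)^+$ --- but this dominant weight need not equal $\nu=(\lambda-\mu)^+$. So one must additionally rule out $(\lambda-\omega\bullet\mu)^+\leq\eta<\nu$ for those $\omega\in\mathbf{\Omega}$ with $\omega\bullet\mu-\mu\in\Z\fR$. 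You flag this as the ``delicate point'' but give no argument, and without it the vanishing claim is not established.

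\textbf{Comparison with the paper's route.} The paper sidesteps this combinatorics entirely: it uses the ``restriction to a Kostant slice'' functor of~\cite[\S 3.8--3.9]{br-Hecke}, which is fully faithful on $\HC^{\hla,\hmu}_{\mathrm{diag}}$ by~\cite[Proposition~3.7]{br-Hecke}, and then observes that the computation of~\cite[Lemma~5.5]{br-Hecke} of the image of $\sfC^{\hla,\hmu}(\sfL(\nu)\otimes\cU\bg)$ under this functor works verbatim with $\sfL(\nu)$ replaced by $\sfT(\nu)$ (the relevant input being that both have the same one-dimensional $\Wf\nu$-weight spaces). The underlying translation-principle idea is the same as yours, but carrying it out after restriction to a Kostant slice --- where stabilizers are regular and the module is determined by data on a single section --- eliminates the need to resolve the support combinatorics $\eta$-by-$\eta$ and $\omega$-by-$\omega$. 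If you want to pursue the direct d\'evissage, you would essentially be re-proving a sharp version of the block-selection part of the translation-functor calculus for the extended affine Weyl group, which is more work than the lemma itself and is what the Kostant-section machinery was built to avoid.
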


\begin{proof}
In~\cite[\S 3.8--3.9]{br-Hecke} we have constructed, for any $\mu,\nu \in X^*(\bT)$ a functor of ``restriction to a Kostant slice'' on the category $\HC^{\hmu,\hnu}$, which by~\cite[Proposition~3.7]{br-Hecke} is fully faithful on the subcategory $\HC^{\hmu,\hnu}_{\mathrm{diag}}$. To prove the lemma it suffices to prove that our two bimodules become isomorphic after application of this functor.
Now in the proof of~\cite[Lemma~5.5]{br-Hecke} one can replace the simple module $\sfL(\nu)$ by $\sfT(\nu)$ without altering any argument. This lemma therefore holds in the two cases, which implies the desired isomorphism.
\end{proof}

The same proof as in~\cite[Lemma~3.6]{br-Hecke} shows that for any $\lambda,\mu,\eta \in X^*(\bT)$ the functor
\[
\sfP^{\hla,\hmu} \star (-) : \Db \HC^{\hmu,\widehat{\eta}} \to \Db \HC^{\hla,\widehat{\eta}}
\]
is both left and right adjoint to the functor
\[
\sfP^{\hmu,\hla} \star (-) : \Db \HC^{\hla,\widehat{\eta}} \to \Db \HC^{\hmu,\widehat{\eta}},
\]
and similarly for convolution on the right.
(These adjunctions are not canonical; they depend on a choice of isomorphism $\sfL(\nu)^* \cong \sfL(-w_\circ(\nu))$ where $\nu$ is as above.)

\begin{rmk}
\label{rmk:P-action-W}
It is clear from definitions that for $\lambda, \mu \in X^*(\bT)$ and $y \in \bW$ we have $\sfP^{\hla,\hmu} = \sfP^{\widehat{y \bullet \lambda}, \widehat{y \bullet \mu}}$.
\end{rmk}

\subsection{Bott--Samelson and Soergel type Harish-Chandra bimodules}
\label{ss:BSHC}

In this subsection we
assume that $\ell \geq h$ where $h$ is the Coxeter number of $\bG$. In this case we have $A_0 \cap X^*(\bT) \neq \varnothing$, and 
each wall contained in $\overline{A_0}$ has nonempty intersection with $X^*(\bT)$, see~\cite[\S 6.3]{jantzen}.
We can therefore fix a weight $\lambda \in A_0 \cap X^*(\bT)$ and, for each $s \in \bSaff$, a weight $\mu_s \in X^*(\bT)$ on the wall attached to $s$ contained in $\overline{A_0}$.
For $s \in \bSaff$
we set
\[
\sfR_s := \sfP^{\hla,\widehat{\mu_s}} \star \sfP^{\widehat{\mu_s},\hla} \quad \in \HC^{\hla,\hla}_{\mathrm{diag},\mathrm{tilt}}.
\]
For $\omega \in \mathbf{\Omega}$ we also 
set
\[
\sfR_\omega := \sfP^{\hla,\widehat{\omega \bullet \lambda}} \quad \in \HC^{\hla,\hla}_{\mathrm{diag},\mathrm{tilt}}.
\]
(Note that here the images of $\lambda$ and $\omega \bullet \lambda$ in $\bt^*/(\bWf,\bullet)$ coincide.)

\begin{lem}
\phantomsection
\label{lem:sfP-properties}
\begin{enumerate}
\item
\label{it:sfP-properties-2}
For any $s \in \bSaff$ and $\omega \in \mathbf{\Omega}$ there exists an isomorphism
\[
\sfR_\omega \star \sfR_s \star \sfR_{\omega^{-1}} \cong \sfR_{\omega s \omega^{-1}}.
\]
\item
\label{it:sfP-properties-3}
For any $\omega,\omega' \in \mathbf{\Omega}$ there exists an isomorphism
\[
\sfR_\omega \star \sfR_{\omega'} \cong \sfR_{\omega \omega'}.
\]
\end{enumerate}
\end{lem}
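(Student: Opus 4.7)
The strategy is to use Remark~\ref{rmk:P-action-W} (which uniformly shifts the two indices by the $\bullet$-action of any $y \in \bW$) together with the convolution formula for diagonally-induced bimodules recalled at the end of~\S\ref{ss:monoidal-structure-DHC} (from~\cite[\S 3.7]{br-Hecke}), and then complete the identifications using the Kostant-slice restriction functor of~\cite[\S 3.8--3.9, Proposition~3.7]{br-Hecke}, which is fully faithful on the diagonal subcategory and was already exploited in the proof of Lemma~\ref{lem:translation-bimod-simples}.

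For~\eqref{it:sfP-properties-3}, Remark~\ref{rmk:P-action-W} applied with $y = \omega$ gives
\[
\sfR_{\omega'} = \sfP^{\hla,\widehat{\omega' \bullet \lambda}} = \sfP^{\widehat{\omega \bullet \lambda},\widehat{\omega \omega' \bullet \lambda}},
\]
so that $\sfR_\omega \star \sfR_{\omega'} \cong \sfP^{\hla,\widehat{\omega \bullet \lambda}} \star \sfP^{\widehat{\omega \bullet \lambda},\widehat{\omega \omega' \bullet \lambda}}$ has matching middle indices. Since the $\bullet$-action of $\omega$ is affine with linear part given by the $\bWf$-component $w_f^\omega$ of $\omega$, we have $\omega \bullet \lambda - \omega \omega' \bullet \lambda = w_f^\omega(\lambda - \omega' \bullet \lambda)$, so the dominant representative in the relevant $\bWf$-orbit is $\nu_{\omega'}$. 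The convolution formula then identifies $\sfR_\omega \star \sfR_{\omega'}$ with the middle-weight-$\widehat{\omega \bullet \lambda}$ summand of $\sfC^{\hla,\widehat{\omega \omega' \bullet \lambda}}(\sfL(\nu_\omega) \otimes \sfL(\nu_{\omega'}) \otimes \cU\bg)$. Both this summand and $\sfR_{\omega \omega'} = \sfC^{\hla,\widehat{\omega \omega' \bullet \lambda}}(\sfL(\nu_{\omega \omega'}) \otimes \cU\bg)$ lie in $\HC^{\hla,\widehat{\omega \omega' \bullet \lambda}}_{\mathrm{diag}}$, so it suffices to compare them after restriction to the Kostant slice, where the calculation becomes an elementary rank/weight-space check determined by the fact that weights in $A_0$ lying in a common $\mathbf{\Omega}$-orbit differ by elements of $\ell X^*(\bT)$ modulo the root lattice.

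For~\eqref{it:sfP-properties-2}, the same remark applied to each of $\sfP^{\hla,\widehat{\mu_s}}$, $\sfP^{\widehat{\mu_s},\hla}$, and $\sfR_{\omega^{-1}} = \sfP^{\hla,\widehat{\omega^{-1} \bullet \lambda}}$ yields
\[
\sfR_\omega \star \sfR_s \star \sfR_{\omega^{-1}} \cong \sfP^{\hla,\widehat{\omega \bullet \lambda}} \star \sfP^{\widehat{\omega \bullet \lambda},\widehat{\omega \bullet \mu_s}} \star \sfP^{\widehat{\omega \bullet \mu_s},\widehat{\omega \bullet \lambda}} \star \sfP^{\widehat{\omega \bullet \lambda},\hla}.
\]
Applying the same convolution-formula-plus-Kostant-slice analysis to collapse the two outer pairs (whose shared middle index $\widehat{\omega \bullet \lambda}$ is regular), we obtain
\[
\sfR_\omega \star \sfR_s \star \sfR_{\omega^{-1}} \cong \sfP^{\hla,\widehat{\omega \bullet \mu_s}} \star \sfP^{\widehat{\omega \bullet \mu_s},\hla}.
\]
Since $\omega \bullet \overline{A_0} = \overline{A_0}$ and $(\omega s \omega^{-1}) \bullet (\omega \bullet \mu_s) = \omega \bullet \mu_s$, the weight $\omega \bullet \mu_s$ lies on the wall attached to $\omega s \omega^{-1}$. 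To conclude, we must verify that the wall-crossing bimodule $\sfP^{\hla,\widehat{\eta}} \star \sfP^{\widehat{\eta},\hla}$ does not depend (up to isomorphism) on the choice of weight $\eta$ on a given wall of $\overline{A_0}$; this is another direct verification on the Kostant slice, using that two such choices yield line bundles on the completed slice with the same support and rank.

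\textbf{Main obstacle.} The principal difficulty in both parts is identifying the specific middle-central-character summand produced by the convolution formula with the claimed translation bimodule on the right-hand side, and, for~\eqref{it:sfP-properties-2}, establishing the independence of the wall-crossing bimodule from the choice of representative weight on the wall. All of these ultimately reduce to concrete computations after restriction to the Kostant slice, where our diagonally-induced bimodules correspond to locally free modules of small rank whose supports are governed by elementary linkage-class considerations.
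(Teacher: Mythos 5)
Your overall strategy is correct and essentially matches the paper's: both proofs work by applying the restriction-to-a-Kostant-slice functor of~\cite[\S 3.8--3.9]{br-Hecke}, whose full faithfulness on $\HC^{\hla,\hla}_{\mathrm{diag}}$ (from~\cite[Proposition~3.7]{br-Hecke}) reduces the claimed isomorphisms to identifications over the slice. The cosmetic difference is the order of operations: you apply Remark~\ref{rmk:P-action-W} to re-index the translation bimodules $\sfP^{\hla,\hmu}$ by the $\bullet$-action of $\omega$ \emph{before} restricting, whereas the paper first restricts and then applies the $\bW$-re-indexing on the slice side via~\cite[Lemma~5.5]{br-Hecke}; both are sound, and your re-indexing of $\sfR_\omega \star \sfR_s \star \sfR_{\omega^{-1}}$ into the four-fold product with matching middle central characters is correct.

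Where the proposal is weaker than the paper is in the concluding step: you describe the identifications on the Kostant slice as ``an elementary rank/weight-space check'' and a ``direct verification,'' but the paper relies on three non-trivial lemmas from~\cite{br-Hecke} that were established there with real work, namely Lemma~5.3 (transitivity, $\mathsf{Q}_{\mu,\nu} \hatotimes \mathsf{Q}_{\nu,\eta} \cong \mathsf{Q}_{\mu,\eta}$, which implements your ``collapse the two outer pairs''), Lemma~5.5 (the computation of the Kostant-slice images of $\sfR_s$ and $\sfR_\omega$ and the $\bW$-equivariance of the $\mathsf{Q}$'s), and Lemma~5.15 (independence of the wall-crossing object on the choice of wall weight, which is exactly the last verification you flag as needed). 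So you have correctly located all the places where content is required, but the ``elementary'' characterization understates it: these statements are inputs imported from~\cite{br-Hecke}, not quick weight-space observations, and to make the argument watertight you would need to either cite them or re-prove them. With those references substituted for the hand-waved checks, your argument is a legitimate proof taking essentially the same route as the paper's.
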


\begin{proof}
We explain the proof of~\eqref{it:sfP-properties-2}; that of~\eqref{it:sfP-properties-3} is similar. This proof will rely on the formalism of~\cite{br-Hecke}.


Recall, for any $\mu,\nu \in X^*(\bT)$, the fully faithful functor on $\HC^{\hmu,\hnu}$ considered in the proof of Lemma~\ref{lem:translation-bimod-simples}.
In~\cite[\S 5.2]{br-Hecke} we have defined an object $\mathsf{Q}_{\mu,\nu}$ in the target category of this functor.
 Then 
by~\cite[Lemma~5.5]{br-Hecke} the image of $\sfR_s$, resp.~$\sfR_\omega$, resp.~$\sfR_{\omega^{-1}}$, under this functor is
\[
\mathsf{Q}_{\lambda, \mu_s} \hatotimes_{\cU_\bS\bg} \mathsf{Q}_{\mu_s, \lambda},
\quad \text{resp.} \quad \mathsf{Q}_{\omega^{-1} \bullet \lambda, \lambda},
\quad \text{resp.} \quad \mathsf{Q}_{\lambda, \omega^{-1} \bullet \lambda}.
\]
(See~\cite[\S 3.9]{br-Hecke} for the definition of the bifunctor $\hatotimes_{\cU_\bS\bg}$.)
Since the functor is monoidal, the image of $\sfR_\omega \star \sfR_s \star \sfR_{\omega^{-1}}$ is
\[
\mathsf{Q}_{\omega^{-1} \bullet \lambda, \lambda} \hatotimes_{\cU_\bS\bg} \mathsf{Q}_{\lambda, \mu_s} \hatotimes_{\cU_\bS\bg} \mathsf{Q}_{\mu_s, \lambda} \hatotimes_{\cU_\bS\bg} \mathsf{Q}_{\lambda, \omega^{-1} \bullet \lambda}.
\]
Now, by~\cite[Lemma~5.3]{br-Hecke} we have isomorphisms
\[
\mathsf{Q}_{\omega^{-1} \bullet \lambda, \lambda} \hatotimes_{\cU_\bS\bg} \mathsf{Q}_{\lambda, \mu_s} \cong \mathsf{Q}_{\omega^{-1} \bullet \lambda, \mu_s}, \quad \mathsf{Q}_{\mu_s, \lambda} \hatotimes_{\cU_\bS\bg} \mathsf{Q}_{\lambda, \omega^{-1} \bullet \lambda} \cong \mathsf{Q}_{\mu_s, \omega^{-1} \bullet \lambda},
\]
and 
by~\cite[Lemma~5.5]{br-Hecke} we have isomorphisms
\[
\mathsf{Q}_{\omega^{-1} \bullet \lambda, \mu_s} \cong \mathsf{Q}_{\lambda, \omega \bullet \mu_s}, \quad
\mathsf{Q}_{\mu_s, \omega^{-1} \bullet \lambda} \cong \mathsf{Q}_{\omega \bullet \mu_s, \lambda}.
\]
Finally, using the fact that $\omega \bullet \mu_s$ and $\mu_{\omega s \omega^{-1}}$ both belong to the wall of $\overline{A_0}$ attached to the simple reflection $\omega s \omega^{-1}$ and~\cite[Lemma~5.15]{br-Hecke} one checks that
\[
\mathsf{Q}_{\lambda, \omega \bullet \mu_s} \hatotimes_{\cU_\bS\bg} \mathsf{Q}_{\omega \bullet \mu_s, \lambda} \cong 
\mathsf{Q}_{\lambda, \mu_{\omega s \omega^{-1}}} \hatotimes_{\cU_\bS\bg} \mathsf{Q}_{\mu_{\omega s \omega^{-1}}, \lambda}.
\]
The desired claim follows, using 
again~\cite[Lemma~5.5]{br-Hecke}.
%
\end{proof}

As explained e.g.~in~\cite[\S 6.6]{br-Hecke}, the results of that paper imply that for any $s \in \bSaff$ the morphism spaces
\[
\Hom_{\HC^{\hla,\hla}}((\cU\bg)^{\hla}, \sfR_s) \quad \text{and} \quad \Hom_{\HC^{\hla,\hla}}(\sfR_s, (\cU\bg)^{\hla})
\]
are free of rank $1$ as left (or right) modules over $\scO(\FN_{\bt^*/(\bWf, \bullet)}(\{\tla\}))$. (The left and right actions on these spaces coincide, since they do on the module $(\cU\bg)^{\hla}$.) We choose generators
$(\cU\bg)^{\hla} \to \sfR_s$ and $\sfR_s \to (\cU\bg)^{\hla}$,
and define the objects $\sfN_s$ and $\sfD_s$ of these spaces as their cone and cocone respectively, so that we have
distinguished triangles
\begin{equation}
\label{eqn:triangles-Ds-Ds'}
(\cU\bg)^{\hla} \to \sfR_s \to \sfN_s \xrightarrow{[1]} \quad \text{and} \quad \sfD_s \to \sfR_s \to (\cU\bg)^{\hla} \xrightarrow{[1]}.
\end{equation}
(These objects are therefore well defined up to isomorphism.)

We will denote by
\[
\BSHC^{\hla,\hla}
\]
the strictly full subcategory of $\HC^{\hla,\hla}_{\mathrm{diag},\mathrm{tilt}}$ generated under the monoidal product $\star$ by the unit object and the objects $\sfR_s$ ($s \in \bSaff$) and $\sfR_\omega$ ($\omega \in \mathbf{\Omega}$). In view of Lemma~\ref{lem:sfP-properties}, any object in $\BSHC^{\hla,\hla}$ is isomorphic to an object of the form
\[
\sfR_{s_1} \star \cdots \star \sfR_{s_r} \star \sfR_\omega
\]
where $s_1, \cdots, s_r \in \bSaff$ and $\omega \in \mathbf{\Omega}$. We will also denote by
\[
\SHC^{\hla,\hla}
\]
the karoubian envelope of the additive hull of $\BSHC^{\hla,\hla}$, i.e.~the strictly full subcategory of $\HC^{\hla,\hla}$ whose objects are the direct summands of the direct sums of objects in $\BSHC^{\hla,\hla}$.

\begin{rmk}
In this notation the letters ``$\mathsf{BS}$'' and ``$\mathsf{S}$'' refer to Bott--Samelson and Soergel respectively, since these categories will later be related to some category of Soergel bimodules, in such a way that $\BSHC^{\hla,\hla}$ corresponds to Bott--Samelson bimodules.
\end{rmk}

\subsection{Further properties of the braid bimodules}
\label{ss:further-properties}

We continue with the setting and notation of~\S\ref{ss:BSHC}.
In this subsection we prove for later use some properties of the objects $(\sfR_s : s \in \bSaff)$, $(\sfD_s : s \in \bSaff)$ and $(\sfN_s : s \in \bSaff)$.
The proofs of these properties rely on the constructions of~\cite{br-Hecke}.

\begin{lem}
\label{lem:SHC-w0}
Let $w_\circ = s_1 \cdots s_r$ be a reduced decomposition. Then
the object $\sfP^{\hla,\widehat{-\varsigma}} \star \sfP^{\widehat{-\varsigma},\hla}$ is a direct summand in $\sfR_{s_1} \star \sfR_{s_2} \star \cdots \star \sfR_{s_r}$;
in particular, 
it belongs to $\SHC^{\hla,\hla}$.
\end{lem}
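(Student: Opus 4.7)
My plan is to reduce the claim to a known assertion about Soergel-type bimodules $\mathsf{Q}_{\mu,\nu}$ of \cite[\S 5.2]{br-Hecke}, via the fully faithful monoidal restriction-to-a-Kostant-section functor recalled in the proof of Lemma~\ref{lem:sfP-properties}, which comes from \cite[Proposition~3.7]{br-Hecke}. Since that functor is fully faithful on $\HC^{\hla,\hla}_{\mathrm{diag}}$ and all objects in sight lie in this subcategory, being a direct summand transfers from $\HC^{\hla,\hla}_{\mathrm{diag}}$ to the image category.

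By \cite[Lemma~5.5]{br-Hecke}, as already exploited in the proof of Lemma~\ref{lem:sfP-properties}, this functor sends $\sfR_{s_i}$ to $\mathsf{Q}_{\lambda,\mu_{s_i}} \hatotimes_{\cU_\bS\bg} \mathsf{Q}_{\mu_{s_i},\lambda}$ and sends $\sfP^{\hla,\widehat{-\varsigma}} \star \sfP^{\widehat{-\varsigma},\hla}$ to $\mathsf{Q}_{\lambda,-\varsigma} \hatotimes_{\cU_\bS\bg} \mathsf{Q}_{-\varsigma,\lambda}$. Using the concatenation identity \cite[Lemma~5.3]{br-Hecke} to simplify adjacent interior factors $\mathsf{Q}_{\mu_{s_i},\lambda} \hatotimes_{\cU_\bS\bg} \mathsf{Q}_{\lambda,\mu_{s_{i+1}}} \cong \mathsf{Q}_{\mu_{s_i},\mu_{s_{i+1}}}$ (legitimate since the middle weight $\lambda$ is regular), the problem becomes showing that $\mathsf{Q}_{\lambda,-\varsigma} \hatotimes_{\cU_\bS\bg} \mathsf{Q}_{-\varsigma,\lambda}$ is a direct summand of
\[
\mathsf{Q}_{\lambda,\mu_{s_1}} \hatotimes_{\cU_\bS\bg} \mathsf{Q}_{\mu_{s_1},\mu_{s_2}} \hatotimes_{\cU_\bS\bg} \cdots \hatotimes_{\cU_\bS\bg} \mathsf{Q}_{\mu_{s_{r-1}},\mu_{s_r}} \hatotimes_{\cU_\bS\bg} \mathsf{Q}_{\mu_{s_r},\lambda}.
\]

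Since $-\varsigma + \varsigma = 0$ is fixed by all of $\bWf$ under the linear action, the weight $-\varsigma$ is fixed by $\bWf$ under the $\bullet$-action; it is therefore the ``most singular'' weight in $\overline{A_0}$, so the bimodule $\mathsf{Q}_{\lambda,-\varsigma} \hatotimes_{\cU_\bS\bg} \mathsf{Q}_{-\varsigma,\lambda}$ plays the role of the ``big'' indecomposable Soergel bimodule indexed by $w_\circ$ in the classical formalism, while the factors $\mathsf{Q}_{\mu_{s_i},\mu_{s_{i+1}}}$ assemble the Bott--Samelson bimodule attached to the chosen reduced expression. The required direct summand statement is then the classical fact that the Bott--Samelson bimodule attached to any reduced expression for $w_\circ$ contains $B_{w_\circ}$ as a direct summand.

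The main obstacle will be making this last identification precise enough to import the classical fact, i.e., matching the $\mathsf{Q}$-bimodule formalism of \cite{br-Hecke} with a genuine Soergel-bimodule formalism for the Coxeter system $(\bWf,\bSf)$ in which the summand analysis is understood. In practice, I expect this to be done either by passing through Abe's description of the Hecke category (the bridge alluded to in the introduction), or more directly by a rank/character computation on the Kostant slice, using that under the functor $\mathbb{V}$ the convolution of wall-crossing bimodules at the regular weight $\lambda$ categorifies the product $H_{s_1}\cdots H_{s_r}$ in the Hecke algebra of $(\bWf,\bSf)$, and the coefficient of $H_{w_\circ}$ (hence the multiplicity of the ``big'' summand) equals~$1$.
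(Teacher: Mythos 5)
Your approach is essentially the same one the paper takes, and correct in outline: both arguments transfer the claim along the ``Soergel type'' comparison of \cite{br-Hecke} and then invoke the classical fact that the indecomposable object attached to $w_\circ$ is a summand of the Bott--Samelson object for any reduced expression. The packaging is slightly different, and the difference is exactly where your acknowledged obstacle lives. You descend to the $\mathsf{Q}$-bimodule world via the fully faithful functor on $\HC^{\hla,\hla}_{\mathrm{diag}}$ from~\cite[Proposition~3.7]{br-Hecke}, so you need to establish the direct-summand property \emph{in} that target category before pulling it back; that forces the nontrivial identification of $\mathsf{Q}_{\lambda,-\varsigma} \hatotimes_{\cU_\bS\bg} \mathsf{Q}_{-\varsigma,\lambda}$ with an indecomposable Soergel bimodule for $w_\circ$, which you flag as unfinished. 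The paper instead goes \emph{downhill}: it uses the monoidal functor $\mathsf{H}_\aff \to \SHC^{\hla,\hla}$ (the Karoubian completion of the one from~\cite[Theorem~6.3]{br-Hecke}), invokes~\cite[Proposition~2.10]{abe} for the characterization of $B_{w_\circ}$ and the same computation as in~\cite[Proposition~6.6]{br-Hecke} to see that $B_{w_\circ}$ maps to $\sfP^{\hla,\widehat{-\varsigma}} \star \sfP^{\widehat{-\varsigma},\hla}$, and then simply pushes the summand relation $B_{w_\circ} \mid B_{s_1}\star\cdots\star B_{s_r}$ in $\mathsf{H}_\aff$ forward through the monoidal functor. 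This avoids having to re-prove any Soergel combinatorics on the image side. Your first suggested remedy (``passing through Abe's description of the Hecke category'') is the route the paper actually takes, so the plan is sound; but as written, the identification of the image object with $B_{w_\circ}$ (not just some object with the right support) is the step you have not supplied and cannot be waved away. One small additional point worth making explicit in your write-up: the reduction via full faithfulness also uses that $\HC^{\hla,\hla}_{\mathrm{diag}}$ is idempotent-complete (it is, by definition), so that a direct-summand relation in the image lifts.
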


\begin{proof}
Consider the ``Hecke category'' $\mathsf{H}'_\aff$ attached to $\bWaff$ as in~\cite[\S 2.1]{br-Hecke}, and
the monoidal functor 
\begin{equation}
\label{eqn:functor-br-Hecke}
\mathsf{H}_\aff' \to \BSHC^{\hla,\hla}
\end{equation}
of~\cite[Theorem~6.3]{br-Hecke}. (In~\cite{br-Hecke}, the category $\mathsf{H}'_\aff$ is denoted $\mathsf{D}_{\mathrm{BS}}$.) 
This functor extends uniquely to a monoidal functor 
\begin{equation}
\label{eqn:functor-br-Hecke-2}
\mathsf{H}_\aff \to \SHC^{\hla,\hla}
\end{equation}
where $\mathsf{H}_\aff$ is the karoubian closure of the additive envelope of $\mathsf{H}_\aff'$.

The indecomposable objects in $\mathsf{H}_\aff$
are classified (up to isomorphism and shift) by the elements in $\bWaff$. Moreover, the indecomposable object $B_{w_\circ}$ associated with 
$w_\circ$ 
is described (in terms of Abe's incarnation of the Hecke category, see~\cite[\S 2.2]{br-Hecke}) in~\cite[Proposition~2.10]{abe}
(whose assumptions are satisfied in our setting by~\cite[Remark~2.8]{abe}).
Using this description, the same arguments as for~\cite[Proposition~6.6]{br-Hecke} show that the functor~\eqref{eqn:functor-br-Hecke-2}
sends $B_{w_\circ}$ to $\sfP^{\hla,\widehat{-\varsigma}} \star \sfP^{\widehat{-\varsigma},\hla}$. 
With the notation in the statement, the object $B_{w_\circ}$ is a direct summand in $B_{s_1} \star \cdots \star B_{s_r}$, where $B_s \in \mathsf{H}_\aff$ is the indecomposable object associated with $s$. Applying the (monoidal) functor~\eqref{eqn:functor-br-Hecke-2},
which sends $B_s$ to $\sfR_s$ for any $s \in \bSaff$,
we deduce the desired claim.
\end{proof}

For $\uw = (s_1, \cdots, s_r)$ a word in $\bSaff$ we set
\[
\sfD_{\uw} := \sfD_{s_1} \star \cdots \star \sfD_{s_r}, \quad \sfN_{\uw} := \sfN_{s_1} \star \cdots \star \sfN_{s_r}.
\]

\begin{lem}
\phantomsection
\label{lem-properties-D-N-HC}
\begin{enumerate}
\item
\label{it:D-N-HC-1}
For any $\omega \in \mathbf{\Omega}$ and $s \in \bSaff$ we have
\[
\sfR_\omega \star \sfD_s \star \sfR_{\omega^{-1}} \cong \sfD_{\omega s \omega^{-1}}, \quad
\sfR_\omega \star \sfN_s \star \sfR_{\omega^{-1}} \cong \sfN_{\omega s \omega^{-1}}.
\]
\item
\label{it:D-N-HC-2}
For any $s \in \bSaff$ there exist isomorphisms
\[
\sfN_s \star \sfD_s \cong (\cU\bg)^{\hla} \cong \sfD_s \star \sfN_s.
\]
\item
\label{it:D-N-HC-3}
If $\uw$ is a reduced expression for an element in $\bWaff$, then the objects $\sfD_{\uw}$ and $\sfN_{\uw}$ only depend (up to isomorphism) on the image of $\uw$ in $\bWaff$.
\end{enumerate}
\end{lem}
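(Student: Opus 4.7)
The plan is to handle the three items in turn, relying crucially on the monoidal functor~\eqref{eqn:functor-br-Hecke-2} (and, for~\eqref{it:D-N-HC-2} and~\eqref{it:D-N-HC-3}, its extension to bounded homotopy categories). Part~\eqref{it:D-N-HC-1} follows formally from Lemma~\ref{lem:sfP-properties}: the functor $\Phi_\omega := \sfR_\omega \star (-) \star \sfR_{\omega^{-1}}$ is a monoidal auto-equivalence of $\Db \HC^{\hla,\hla}$ by Lemma~\ref{lem:sfP-properties}\eqref{it:sfP-properties-3}, with quasi-inverse $\Phi_{\omega^{-1}}$. Applying $\Phi_\omega$ to the triangles~\eqref{eqn:triangles-Ds-Ds'} and invoking Lemma~\ref{lem:sfP-properties}\eqref{it:sfP-properties-2} yields distinguished triangles
\[
(\cU\bg)^{\hla} \to \sfR_{\omega s \omega^{-1}} \to \Phi_\omega(\sfN_s) \xrightarrow{[1]}, \quad \Phi_\omega(\sfD_s) \to \sfR_{\omega s \omega^{-1}} \to (\cU\bg)^{\hla} \xrightarrow{[1]}.
\]
Since $\Phi_\omega$ is an equivalence, it induces isomorphisms on the rank-one Hom-spaces $\Hom((\cU\bg)^{\hla},\sfR_s)$ and $\Hom(\sfR_s, (\cU\bg)^{\hla})$ over $\scO(\FN_{\bt^*/(\bWf,\bullet)}(\{\tla\}))$; in particular it carries generators to generators, and the cones above identify with $\sfN_{\omega s \omega^{-1}}$ and $\sfD_{\omega s \omega^{-1}}$ respectively.

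For part~\eqref{it:D-N-HC-2}, I would pass to the bounded homotopy category. The monoidal functor of~\eqref{eqn:functor-br-Hecke-2} extends to $\Kb(\mathsf{H}_\aff) \to \Kb(\SHC^{\hla,\hla})$, which in turn embeds fully faithfully and monoidally into $\Db \HC^{\hla,\hla}$ via Corollary~\ref{cor:Kb-Db-HC} together with the monoidality remark at the end of~\S\ref{ss:monoidal-structure-DHC}. Under this composition, $\sfN_s$ and $\sfD_s$ correspond (up to cohomological shift) to the images of the Rouquier-type two-term complexes $F_s^+ := [(\cU\bg)^{\hla} \to \sfR_s]$ and $F_s^- := [\sfR_s \to (\cU\bg)^{\hla}]$ in $\Kb(\mathsf{H}_\aff)$. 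The standard Rouquier computation shows that $F_s^+ \star F_s^-$ and $F_s^- \star F_s^+$ are both homotopy equivalent to the monoidal unit: the four-term convolutions split off two complementary contractible summands built from the generating morphisms of the rank-one Hom-spaces. Transporting along our monoidal functor establishes~\eqref{it:D-N-HC-2}.

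For part~\eqref{it:D-N-HC-3}, Matsumoto's theorem reduces the claim to verifying that $(\sfD_s)_{s \in \bSaff}$ and $(\sfN_s)_{s \in \bSaff}$ satisfy the braid relations of $\bWaff$. These are classical for the Rouquier complexes $F_s^{\pm}$ in $\Kb(\mathsf{H}_\aff)$ (this is Rouquier's theorem on braid categorifications, valid for any monoidal categorification of the Hecke algebra of a Coxeter group), and transport along the monoidal functor above yields the corresponding relations in $\Db \HC^{\hla,\hla}$.

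The principal technical obstacle I anticipate is the compatibility needed in parts~\eqref{it:D-N-HC-2} and~\eqref{it:D-N-HC-3}: one must verify that the chosen generators of the Hom-spaces $\Hom((\cU\bg)^{\hla},\sfR_s)$ and $\Hom(\sfR_s, (\cU\bg)^{\hla})$ can be matched, up to units, with the corresponding generating morphisms in $\mathsf{H}_\aff$, so that the identification of $\sfN_s$ and $\sfD_s$ with Rouquier complexes is canonical enough to transport homotopy equivalences and braid relations. This is essentially a matter of bookkeeping with the Hom-space computations of~\cite{br-Hecke}, but needs to be carried out with some care.
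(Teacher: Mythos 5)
Your proof follows essentially the same route as the paper: item~\eqref{it:D-N-HC-1} is deduced from Lemma~\ref{lem:sfP-properties} (you spell out the intermediate step of conjugating the triangles~\eqref{eqn:triangles-Ds-Ds'} by the auto-equivalence $\sfR_\omega \star (-) \star \sfR_{\omega^{-1}}$, which the paper leaves implicit), and items~\eqref{it:D-N-HC-2}--\eqref{it:D-N-HC-3} are transported along the monoidal functor $\Kb \mathsf{H}_\aff \to D^- \HC^{\hla,\hla}$ after identifying $\sfD_s$ and $\sfN_s$ with the images of the standard and costandard (Rouquier-type) complexes. The only cosmetic difference is in the citation: the paper invokes the standard/costandard formalism of~\cite{arv} (specifically their Proposition~6.11, which packages both the invertibility and the reduced-expression independence), where you appeal to Matsumoto's theorem and classical Rouquier-complex computations; the underlying content is the same. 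The compatibility issue you flag — that a generator of $\Hom(\mathbf{1}, B_s)$ in $\mathsf{H}_\aff$ should map to a generator of $\Hom((\cU\bg)^{\hla}, \sfR_s)$ — is genuine but routine once one notes that $\sfD_s$, $\sfN_s$ are well defined up to isomorphism independently of the choice of generator (any two generators differ by a unit, inducing an isomorphism of cones); the paper treats this as an inspection of definitions, which is fair.
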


\begin{proof}
Statement~\eqref{it:D-N-HC-1} follows from Lemma~\ref{lem:sfP-properties}.

To prove~\eqref{it:D-N-HC-2} and~\eqref{it:D-N-HC-3},
we will again use the monoidal functor $\mathsf{H}_\aff \to \HC^{\hla,\hla}$ considered in the course of the proof of Lemma~\ref{lem:SHC-w0},
 and the induced functor between bounded homotopy categories. In~\cite{arv} it is explained (in a general setting that admits the category $\mathsf{H}_\aff$ as a special case) how to associate with any element in $\bWaff$ a ``standard object'' and a ``costandard object'' in $\Kb \mathsf{H}_\aff$.
 Inspecting on definitions we see that the composition of the functor considered above with the canonical functor
\[
\Kb \SHC^{\hla,\hla} \to D^- \HC^{\hla,\hla}
\]
sends, for any $s \in \bSaff$, the standard, resp.~costandard, object associated with $s$ to $\sfD_s$, resp.~$\sfN_s$. Since the latter functor is monoidal, the desired properties then follow from the similar properties of standard and costandard objects in $\Kb \mathsf{H}_\aff$, see~\cite[Proposition~6.11]{arv}.
\end{proof}

In view of Lemma~\ref{lem-properties-D-N-HC}
we can define complexes $(\sfD_w : w \in \bW)$ and $(\sfN_w : w \in \bW)$ as follows. For $w \in \bW$, if $\omega \in \mathbf{\Omega}$ is the unique element such that $w \omega^{-1} \in \bWaff$ and if 
$w \omega^{-1} = s_1 \cdots s_r$ is a reduced expression in $\bWaff$, we
set
\[
\sfD_w := \sfD_{s_1} \star \cdots \star \sfD_{s_r} \star \sfR_\omega, \quad
\sfN_w := \sfN_{s_1} \star \cdots \star \sfN_{s_r} \star \sfR_\omega.
\]
Lemma~\ref{lem-properties-D-N-HC}
guarantees that these objects do not depend on the choice of $s_1, \cdots, s_r$ and $\omega$, up to isomorphism, and that they satisfy the following properties:
\begin{enumerate}
\item
\label{it:properties-D-N-1}
for any $w \in \bW$ we have
\[
\sfD_w \star \sfN_{w^{-1}} \cong (\cU\bg)^{\hla} \cong \sfN_{w^{-1}} \star \sfD_w;
\]
\item
\label{it:properties-D-N-2}
if $y,w \in \bW$ satisfy $\ell(yw)=\ell(y)+\ell(w)$ then
\[
\sfD_y \star \sfD_w \cong \sfD_{yw}, \quad \sfN_y \star \sfN_w \cong \sfN_{yw}.
\]
\end{enumerate}

Recall the braid group $\Br_{\bW}$ associated with the group $\bW$, see~\cite[\S 1.1]{br-Baff}. (The subgroup $\mathbf{\Omega}$ considered in the present paper coincides with the subgroup denoted similarly in~\cite{br-Baff}.) This group admits a presentation with generators $(T_w : w \in \bW)$ and relations
\[
T_y T_w = T_{yw} \quad \text{if $\ell(yw)=\ell(y)+\ell(w)$.}
\]
The subgroup $\Br_{\aff}$ of $\Br_{\bW}$ generated by the elements $(T_w : w \in \bWaff)$ (or, equivalently, by the elements $(T_s : s \in \bSaff)$) is the usual Artin braid group associated with the Coxeter system $(\bWaff,\bSaff)$. The action of $\mathbf{\Omega}$ on $\bSaff$ induces an action on $\Br_\aff$, such that the assignment $(\omega, b) \mapsto T_\omega \cdot b$ induces a group isomorphism
\begin{equation}
\label{eqn:Br-semi-direct-prod}
\mathbf{\Omega} \ltimes \Br_{\aff} \simto \Br_{\bW}.
\end{equation}
The group $\Br_\bW$ also contains some ``Bernstein elements'' $(\theta_\lambda : \lambda \in X^*(\bT))$, and it is also generated by the elements
\begin{equation}
\label{eqn:generators-BrW}
\{T_s : s \in \bSf \} \cup \{\theta_\lambda : \lambda \in X^*(\bT)\}.
\end{equation}
(See~\cite[\S 1.1]{br-Baff} for a description of a presentation of $\Br_{\bW}$ in terms of these generators.)

The properties~\eqref{it:properties-D-N-1}--\eqref{it:properties-D-N-2} above show that the assignment $T_w \mapsto \sfN_w$ can be uniquely extended to a group morphism from $\Br_\bW$ to the group of isomorphism classes of invertible objects in the monoidal category $D^- \HC^{\hla,\hla}$. The image of $b$ (or, more specifically, a representative of this image) will be denoted $\sfN_b$. (It is clear from construction that this complex is bounded.) Note that for any $w \in \bW$ we have $\sfN_{(T_w)^{-1}} = \sfD_{w^{-1}}$.

\begin{lem}
\label{lem:conjugation-sfR}
Let $s,t \in \bSaff$ and $b \in \Br_\bW$, and assume that $bT_s b^{-1} = T_t$. Then there exists an isomorphism
\[
\sfN_b \star \sfR_s \star \sfN_{b^{-1}} \cong \sfR_t.
\]
\end{lem}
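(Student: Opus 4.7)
The plan is to proceed in three steps. First, I reduce to the case $b \in \Br_\aff$: using the isomorphism~\eqref{eqn:Br-semi-direct-prod}, I write $b = T_\omega \cdot b'$ with $\omega \in \mathbf{\Omega}$ and $b' \in \Br_\aff$, so that $\sfN_b \cong \sfR_\omega \star \sfN_{b'}$ and $\sfN_{b^{-1}} \cong \sfN_{(b')^{-1}} \star \sfR_{\omega^{-1}}$ (since $\sfN_{T_\omega} = \sfR_\omega$ by construction). The hypothesis $bT_sb^{-1}=T_t$ becomes $b'T_s(b')^{-1}=T_{\omega^{-1}t\omega}$, and Lemma~\ref{lem:sfP-properties}\eqref{it:sfP-properties-2} disposes of the outer conjugation by $\sfR_\omega$; so it is enough to treat $b \in \Br_\aff$.

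Second, for $b \in \Br_\aff$, I consider the monoidal autoequivalence $F_b := \sfN_b \star (-) \star \sfN_{b^{-1}}$ of $D^- \HC^{\hla,\hla}$, which is well-defined because $\sfN_b$ is invertible. Because $w \mapsto \sfN_w$ is a group morphism from $\Br_\bW$, I compute
\[
F_b(\sfN_s) \cong \sfN_{bT_sb^{-1}} = \sfN_t \quad \text{and} \quad F_b(\sfD_s) = F_b(\sfN_{T_s^{-1}}) \cong \sfD_t.
\]
Applying $F_b$ to the two triangles in~\eqref{eqn:triangles-Ds-Ds'} therefore exhibits $F_b(\sfR_s)$ as an object fitting in distinguished triangles $(\cU\bg)^{\hla} \to F_b(\sfR_s) \to \sfN_t \xrightarrow{[1]}$ and $\sfD_t \to F_b(\sfR_s) \to (\cU\bg)^{\hla} \xrightarrow{[1]}$, of exactly the same shape as the defining triangles of $\sfR_t$.

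Third, to upgrade this to an actual isomorphism $F_b(\sfR_s) \cong \sfR_t$, I invoke the monoidal functor $\Psi : \Kb \mathsf{H}_\aff \to D^- \HC^{\hla,\hla}$ obtained by extending~\eqref{eqn:functor-br-Hecke-2} to bounded homotopy categories, as already used in the proof of Lemma~\ref{lem-properties-D-N-HC}. Under $\Psi$, the Bott--Samelson object $B_s$ is sent to $\sfR_s$, and the costandard, respectively standard, complex associated to $b$ in the sense of~\cite{arv} is sent to $\sfN_b$, respectively $\sfD_b$. The desired isomorphism then results from applying $\Psi$ to the analogous statement in $\Kb \mathsf{H}_\aff$: conjugation of $B_s$ by the costandard complex of $b$ produces an indecomposable object (as conjugation is an autoequivalence and $B_s$ is indecomposable) whose Grothendieck class in the Hecke algebra of $\bWaff$ equals $H_t$, hence it must be isomorphic to $B_t$ by Krull--Schmidt in the karoubian Hecke category. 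The main obstacle is this last, purely Hecke-categorical identity, which relies on the classification of indecomposable objects in the modular Hecke category used in the proof of Lemma~\ref{lem:SHC-w0} (via the results of~\cite{abe}).
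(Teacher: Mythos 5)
Your first two steps (reduction to $b\in\Br_\aff$ via the semidirect-product structure and Lemma~\ref{lem:sfP-properties}\eqref{it:sfP-properties-2}, then transfer to $\Kb\mathsf{H}_\aff$ via the monoidal functor) agree with the paper. The divergence is in the final Hecke-categorical step, and there is a genuine gap there.

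The argument ``indecomposable $+$ Grothendieck class $= [B_t]$ $+$ Krull--Schmidt $\Rightarrow$ isomorphic to $B_t$'' does not go through in $\Kb\mathsf{H}_\aff$. First, the class in $K_0(\Kb\mathsf{H}_\aff)$ only sees the object up to the sign of shifts, so for instance $B_t[2]$ is indecomposable with the same class as $B_t$ but is not isomorphic to it; the class together with indecomposability does not pin down the object. Second, invoking Krull--Schmidt ``in the karoubian Hecke category'' presupposes that the conjugated complex $\mathcal{N}_b \cdot B_s \cdot \mathcal{N}_{b^{-1}}$ lies (up to isomorphism in $\Kb\mathsf{H}_\aff$) in the additive category $\mathsf{H}_\aff$ concentrated in homotopical degree $0$, and this is precisely the nontrivial point. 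Abe's classification, which you cite, is a statement about indecomposables of $\mathsf{H}_\aff$ itself; it gives no information about whether an indecomposable object of $\Kb\mathsf{H}_\aff$ is homotopy equivalent to a complex concentrated in a single degree.

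What closes the gap, and what the paper actually uses, is the perverse t-structure on $\Kb\mathsf{H}_\aff$ constructed in~\cite{arv}: by~\cite[\S 8.1]{arv} the object $B_t$ is characterized as the simple object of the heart attached to $t$, and conjugation by the standard/costandard (Rouquier) complexes is compatible with this t-structure. Since an autoequivalence that is t-exact carries simple objects of the heart to simple objects of the heart, and since the conjugated object is supported on $\{e,t\}$ with the right top, one concludes $\mathcal{N}_b \cdot B_s \cdot \mathcal{N}_{b^{-1}} \cong B_t$. Your observation in step two that $F_b$ carries the triangles~\eqref{eqn:triangles-Ds-Ds'} for $s$ to triangles of the same shape with $\sfN_t$, $\sfD_t$ is correct and suggestive, but it does not by itself give the isomorphism either (a triangle with those outer terms determines the middle term only together with the connecting map, and one must know the relevant map is a generator of the Hom space). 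The t-structure argument is what makes this precise.
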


\begin{proof}
Using the isomorphism~\eqref{eqn:Br-semi-direct-prod} and Lemma~\ref{lem:sfP-properties}\eqref{it:sfP-properties-2} one can assume that $b \in \Br_\aff$. Then
the proof is similar to that of Lemma~\ref{lem-properties-D-N-HC}\eqref{it:D-N-HC-2}--\eqref{it:D-N-HC-3}. Namely, it suffices to prove a similar isomorphism in $\Kb \mathsf{H}_\aff$,
which follows from the characterization of the simple object in the heart of the perverse t-structure attached to $t$ given in~\cite[\S 8.1]{arv}.
\end{proof}

\section{Completed localization for \texorpdfstring{$\cU\bg$}{Ug}-modules}
\label{sec:compl-loc}

\subsection{(Parabolic) Grothendieck resolutions}
\label{ss:parabolic-Groth}

Recall that the parabolic subgrou\-ps of $\bG$ containing $\bB$ are in a natural bijection with the subsets of $\fRs$.
Given a subset $I \subset \fRs$, we will denote by $\bP_I \subset \bG$ the associated parabolic subgroup, by $\bU_I$ its unipotent radical, by $\bL_I \subset \bP_I$ the Levi factor containing $\bT$, and by $\bp_I$, $\bu_I$ and $\bl_I$ their respective Lie algebras. The ``Grothendieck resolution'' associated with $I$ is the induced variety
\[
\tbg_I := \bG \times^{\bP_I} (\bg/\bu_I)^*.
\]
This variety is a vector bundle over $\bG/\bP_I$; in particular it is smooth. It is equipped with a $\bG$-action induced by left multiplication on $\bG$.
(When $I=\varnothing$ this variety is the usual Grothendieck resolution associated with $\bG$; in this case we will often omit the subscript $\varnothing$. At the other extreme, when $I=\fRs$ we have $\tbg_{\fRs} = \bg^*$.) If $I \subset J \subset \fRs$ we have a natural $\bG$-equivariant projective morphism $\tbg_I \to \tbg_J$; in particular, for any $I \subset \fRs$, taking $J=\fRs$ we obtain
a $\bG$-equivariant projective morphism $\tbg_I \to \bg^*$.

Consider the action of $\bL_I$ on $\bG/\bU_I$ induced by right multiplication on $\bG$, and the induced action on $T^* (\bG/\bU_I)$. Then we have a canonical identification
\[
T^* (\bG/\bU_I) \cong \bG \times^{\bU_I} (\bg/\bu_I)^*,
\]
under which the action of $\bL_I$ is given by $h \cdot [g : \xi] = [gh^{-1} : h \cdot \xi]$ for $h \in \bL_I$, $g \in \bG$ and $\xi \in (\bg/\bu_I)^*$, hence there exists a canonical morphism
\[
T^* (\bG/\bU_I) \to \tbg_I
\]
which is an $\bL_I$-torsor.

To $I \subset \fRs$ we also associate the subgroup $\bW_I$ of $\bWf$ generated by the simple reflections $s_\alpha$ with $\alpha \in I$, which identifies with the Weyl group of $\bL_I$. We have a Chevalley isomorphism
\[
\bl_I^* / \bL_I \simto \bt^* / \bW_I, 
\]
from which we obtain $\bP_I$-equivariant morphisms
\[
(\bg/\bu_I)^* \to \bl_I^* \to \bt^* / \bW_I
\]
(where the first map is induced by the restriction morphism $(\bg/\bu_I)^* \to (\bp_I/\bu_I)^* = \bl_I^*$, and where $\bP_I$ acts via the quotient $\bP_I \to \bL_I$ on $\bl_I^*$ and trivially on the right-hand side). We deduce a morphism
\begin{equation}
\label{eqn:morph-tbg-t}
\tbg_I \to \bt^* / \bW_I
\end{equation}
which is $\bG$-equivariant for the trivial action on $\bt^* / \bW_I$, and such that the following diagram commutes, where the bottom horizontal map is the coadjoint quotient morphism and the right vertical map is induced by the quotient morphism $\bt^* \to \bt^*/\bW$:
\[
\xymatrix{
\tbg_I \ar[r] \ar[d] & \bt^* / \bW_I \ar[d] \\
\bg^* \ar[r] & \bt^*/\bW.
}
\]
If $I \subset J \subset \fRs$ these morphisms (for $I$ and $J$) and the canonical morphism $\tbg_I \to \tbg_J$ satisfy various obvious compatibility properties, whose precise formulation is left to the reader.

\begin{lem}
\label{lem:flatness-Groth}
For any subset $I \subset \fRs$, the morphism $\tbg_I \to \bt^* / \bW_I$ is flat.
\end{lem}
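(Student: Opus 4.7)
The plan is to reduce the statement, by descent, to flatness of the composition $(\bg/\bu_I)^* \to \bl_I^* \to \bt^*/\bW_I$, and then to handle each factor separately.

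First I would observe that $\tbg_I = \bG \times^{\bP_I} (\bg/\bu_I)^*$ is the quotient of the smooth scheme $\bG \times (\bg/\bu_I)^*$ by a free action of $\bP_I$; consequently the quotient morphism $q : \bG \times (\bg/\bu_I)^* \to \tbg_I$ is a $\bP_I$-torsor, hence faithfully flat. By faithfully flat descent, the morphism $\tbg_I \to \bt^*/\bW_I$ is flat if and only if its composition with $q$ is flat. Because the map $\tbg_I \to \bt^*/\bW_I$ is $\bG$-equivariant for the trivial action on the target, this composition factors as $\pr_2 : \bG \times (\bg/\bu_I)^* \to (\bg/\bu_I)^*$ followed by the morphism $f_I : (\bg/\bu_I)^* \to \bt^*/\bW_I$ used to define~\eqref{eqn:morph-tbg-t}. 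The first factor is smooth, so the problem reduces to proving flatness of $f_I$.

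Next I would factor $f_I$ as
\[
(\bg/\bu_I)^* \xrightarrow{p_I} \bl_I^* \xrightarrow{\chi_I} \bl_I^*/\bL_I \cong \bt^*/\bW_I,
\]
where $p_I$ is dual to the inclusion $\bl_I \hookrightarrow \bg/\bu_I$ (a split injection of vector spaces) and $\chi_I$ is the Chevalley quotient for $\bL_I$. Since $p_I$ is dual to a split injection, choosing any splitting identifies $(\bg/\bu_I)^*$ with a product $\bl_I^* \times V$ over $\bl_I^*$, so $p_I$ is a trivial vector bundle and in particular flat. It therefore remains only to show that $\chi_I$ is flat.

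For this, the assumptions of~\S\ref{ss:HC-notation} are stable under passage to the Levi subgroup $\bL_I$ (as already noted after those assumptions), and in particular the analogues of conditions~\eqref{it:ass-1}--\eqref{it:ass-4} hold for $\bL_I$. Hence we may apply the same argument as in the proof of Proposition~\ref{prop:flatness-Ug}: by~\cite[Proposition~4.2.6]{bc} the coadjoint quotient $\bl_I^* \to \bl_I^*/\bL_I$ is flat. Combining the flatness of $p_I$ with that of $\chi_I$ gives flatness of $f_I$, and then descent along $q$ gives flatness of $\tbg_I \to \bt^*/\bW_I$. The only real ingredient is the classical flatness of the coadjoint quotient under the standing assumptions on $\ell$; the rest is purely formal.
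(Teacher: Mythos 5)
Your proof is correct. The paper and your argument both reduce to the same key input: the coadjoint quotient $\bl_I^* \to \bl_I^*/\bL_I$ is flat by the results of \cite{bc} cited in the proof of Proposition~\ref{prop:flatness-Ug}. Where you differ is in how you get from $\tbg_I$ down to $(\bg/\bu_I)^*$. The paper trivializes the vector bundle $\tbg_I \to \bG/\bP_I$ Zariski-locally over the big cell $\bU_I^+$ (and uses $\bG$-equivariance to cover the whole base), then reads off the factorization through projection to $\bl_I^*$ and the coadjoint quotient in these coordinates. You instead descend along the faithfully flat $\bP_I$-torsor $q\colon \bG\times(\bg/\bu_I)^*\to\tbg_I$, using that flatness of a morphism $f$ can be checked after precomposition with a faithfully flat map (see, e.g., \cite[\href{https://stacks.math.columbia.edu/tag/02YJ}{Tag 02YJ}]{stacks-project}), and observe that $f\circ q$ factors through $\mathrm{pr}_2$. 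This avoids choosing a local trivialization and is arguably the more intrinsic formulation, at the modest cost of invoking the descent lemma; the factorization $(\bg/\bu_I)^*\to\bl_I^*\to\bt^*/\bW_I$ and the flatness of the linear projection $p_I$ are then exactly the same as in the paper's local computation $\bU_I^+\times\bl_I^*\times(\bu_I^+)^*$.
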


\begin{proof}
Since flatness is local on the source, and by $\bG$-equivariance, it suffices to check that the restriction of the morphism to the preimage of the open subscheme $\bU_I^+ \subset \bG/\bP_I$ is flat, where $\bU_I^+$ is the unipotent radical of the parabolic subgroup opposite to $\bP_I$ (which identifies with the open subscheme $\bU_I^+ \cdot \bP_I / \bP_I \subset \bG/\bP_I$). Over this open subscheme $\tbg_I$ identifies with
\[
\bU_I^+ \times (\bg/\bu_I)^* \cong \bU_I^+ \times \bl_I^* \times (\bu_I^+)^*
\]
(where $\bu_I^+$ is the Lie algebra of $\bU_I^+$), and our morphism identifies with the composition of projection to $\bl_I^*$ with the coadjoint quotient morphism for $\bl_I$. As seen in the proof of Proposition~\ref{prop:flatness-Ug} the latter morphism is flat by results of~\cite{bc}, hence so is our morphism.
\end{proof}


\begin{rmk}
Below, the cases we will mainly consider are when $\#I \leq 1$. In case $I=\{s\}$ for some $s \in \bSf$, we will sometimes write $\tbg_s$ for $\tbg_{\{s\}}$.
\end{rmk}

\subsection{Sheaves of algebras and modules}
\label{ss:sheaves-alg-mod}

Let $X$ be a $\bk$-scheme, and $\scA$ be a sheaf of (not necessarily commutative) algebras on $X$ endowed with a morphism of sheaves of algebras $\scO_X \to \scA$ (which does not necessarily take values in the center of $\scA$) which makes $\scA$ a quasi-coherent $\scO_X$-module. In this setting we will denote by
\[
\Modqc(\scA)
\]
the abelian category of sheaves of left $\scA$-modules which are quasi-coherent as sheaves of $\scO_X$-modules. We will say that $\scA$ is left, resp.~right, noetherian if 
for any affine open subscheme $U \subset X$ the ring
$\Gamma(U, \scA_{|U})$ is left, resp.~right, noetherian; if $\scA$ is left noetherian we will denote by
\[
\Modc(\scA)
\]
the abelian category of sheaves of left $\scA$-modules which are coherent, i.e.~quasi-coherent and locally finitely generated.

We will consider these constructions in particular in the following setting.
If $X$ is a smooth $\bk$-variety, we will denote by $\cD_X$ the sheaf of algebras of crystalline differential operators on $X$ (see~\cite[\S 1.2]{bmr}), i.e.~the enveloping algebra of the tangent bundle. There exists a canonical morphism $\scO_X \to \cD_X$ which makes $\cD_X$ a quasi-coherent $\scO_X$-module, and we can consider the abelian category $\Modqc(\cD_X)$ and its abelian subcategory $\Modc(\cD_X)$. (Here $\cD_X$ is left and right noetherian.)

Since the Frobenius morphism $\Fr_X : X \to X^{(1)}$ is affine, the functor $\Fr_{X*}$ induces equivalences of categories
\[
\Modqc(\cD_X) \simto \Modqc(\Fr_{X*} \cD_X), \quad \Modc(\cD_X) \simto \Modc(\Fr_{X*} \cD_X).
\]
Moreover, $\Fr_{X*} \cD_X$ is a sheaf of $\scO_{X^{(1)}}$-algebras (i.e.~the natural morphism $\scO_{X^{(1)}} \to \Fr_{X*} \cD_X$ takes values in the center of $\Fr_{X*} \cD_X$); in fact there exists a canonical identification between the center of $\Fr_{X*} \cD_X$ and the pushforward of the structure sheaf of $T^* X^{(1)}$ under the canonical morphism $\varpi_X : T^* X^{(1)} \to X^{(1)}$, see~\cite[Lemma~1.3.2]{bmr}, and $\Fr_{X*} \cD_X$ is locally finitely generated over its center. Since $\varpi_X$ is an affine morphism, this implies that there exists a coherent sheaf of $\scO_{T^* X^{(1)}}$-algebras $\scD_X$ such that $\varpi_{X*} \scD_X = \Fr_{X*} \cD_X$. Then we have equivalences of categories
\begin{equation}
\label{eqn:equivalences-Dmod}
\Modqc(\cD_X) \simto \Modqc(\scD_X), \quad \Modc(\cD_X) \simto \Modc(\scD_X),
\end{equation}
and a quasi-coherent sheaf of $\scD_X$-modules is coherent iff it is coherent as a sheaf of $\scO_{T^*X^{(1)}}$-modules.

\subsection{Monodromic variant}
\label{ss:monodromic-variant}

Let us recall a variant of these constructions taken from~\cite[\S 1.2.1]{bmr2}, specialized to the setting we will require below.
Let $I \subset \fRs$ be a subset. We consider the natural morphism
\[
p_I : \bG/\bU_I \to \bG/\bP_I,
\]
a Zariski locally trivial torsor for the action of $\bL_I$ on $\bG/\bU_I$ considered in~\S\ref{ss:parabolic-Groth},
and the sheaf of algebras
\[
\tD_I := (p_I)_* (\cD_{\bG/\bU_I})^{\bL_I},
\]
where the superscript means $\bL_I$-invariants (for the action induced by the action of $\bL_I$ on $\bG/\bU_I$).

The action of $\bL_I$ on $\bG/\bU_I$ defines an algebra morphism $\cU\bl_I \to \Gamma(\bG/\bP_I, \tD_I)$. Restricting this morphism to the Harish-Chandra center $(\cU \bl_I)^{\bL_I} \cong \scO(\bt^*/(\bW_I,\bullet))$ we obtain a canonical morphism
\begin{equation}
\label{eqn:center-tDI}
\scO(\bt^*/(\bW_I,\bullet)) \to \Gamma(\bG/\bP_I, \tD_I);
\end{equation}
moreover this morphism takes values in the center of $\tD_I$. (Note that here the two possible meanings of the dot-action of $\bW_I$, obtained by seeing $\bW_I$ either as the Weyl group of $\bL_I$ or as a subgroup of $\bWf$, coincide.)

On the other hand, the sheaf of algebras $(\Fr_{\bG/\bU_I})_* \cD_{\bG/\bU_I}$ has a center isomorphic canonically to $(\varpi_{\bG/\bU_I})_* \scO_{T^*(\bG/\bU_I)^{(1)}}$, see~\S\ref{ss:sheaves-alg-mod}. We deduce a canonical morphism from the pushforward to $(\bG/\bP_I)^{(1)}$ of the structure sheaf of $\tbg_I^{(1)} = T^*(\bG/\bU_I)^{(1)} / \bL_I^{(1)}$ to the center of $(\Fr_{\bG/\bP_I})_* \tD_I$. Combining this construction with~\eqref{eqn:center-tDI}, we obtain a canonical morphism from the pushforward to $(\bG/\bP_I)^{(1)}$ of the structure sheaf of
\[
\tbg_I^{(1)} \times_{\bt^{*(1)} / \bW_I} \bt^*/(\bW_I,\bullet)
\]
to the center of $(\Fr_{\bG/\bP_I})_* \tD_I$. (Here the morphism $\bt^*/(\bW_I,\bullet) \to \bt^{*(1)} / \bW_I$ is induced by the Artin--Schreier morphism~\eqref{eqn:Artin-Schreier}.) Since all the morphisms under consideration are affine, there exists a coherent sheaf of $\scO_{\tbg_I^{(1)} \times_{\bt^{*(1)} / \bW_I} \bt^*/(\bW_I,\bullet)}$-algebras
$\tsD_I$
whose pushforward to $(\bG/\bP_I)^{(1)}$ is $(\Fr_{\bG/\bP_I})_* \tD_I$. Moreover, there exist canonical equivalences of categories
\[
\Modqc(\tD_I) \simto \Modqc(\tsD_I), \quad \Modc(\tD_I) \simto \Modc(\tsD_I),
\]
and a quasi-coherent $\tsD_I$-module is coherent iff it is coherent as a quasi-coherent sheaf on $\tbg_I^{(1)} \times_{\bt^{*(1)} / \bW_I} \bt^*/(\bW_I,\bullet)$.

\subsection{Completed localization, I}
\label{ss:comp-loc-statement}

We continue with our subset $I \subset \fRs$. We will consider below the quotient morphisms
\[
\kappa : \bt^* \to \bt^*/(\bWf,\bullet), \quad \kappa_I : \bt^* \to \bt^*/(\bW_I,\bullet),
\]
and denote by
\[
\kappa^I : \bt^*/(\bW_I,\bullet) \to \bt^*/(\bWf,\bullet)
\]
the morphism such that $\kappa=\kappa^I \circ \kappa_I$.

To lighten notation, in this subsection we set
\begin{gather*}
X_I := \tbg_I^{(1)} \times_{\bt^{*(1)} / \bW_I} \bt^*/(\bW_I,\bullet), \qquad Y := \bg^{*(1)} \times_{\bt^{*(1)} / \bWf} \bt^*/(\bWf,\bullet), \\
Y_I := Y \times_{\bt^*/(\bWf,\bullet)} \bt^*/(\bW_I,\bullet) = \bg^{*(1)} \times_{\bt^{*(1)} / \bWf} \bt^*/(\bW_I,\bullet).
\end{gather*}

For $\xi \in \bt^*$ we set
\[
(\cU \bg)^{\widehat{\xi}} := \cU \bg \otimes_{\scO(\bt^*/(\bWf,\bullet))} \scO(\FN_{\bt^*/(\bWf,\bullet)}(\{\kappa(\xi)\}))
\]
and
\[
\tD_I^{\widehat{\xi}} := \tD_I \otimes_{\scO(\bt^*/(\bW_I,\bullet))} \scO(\FN_{\bt^*/(\bW_I,\bullet)}(\{\kappa_I(\xi)\})).
\]
We will also denote by $\tsD_I^{\widehat{\xi}}$ the pullback of $\tsD_I$ under the natural morphism
from
\[
X_I^{\widehat{\xi}} := X_I \times_{\bt^*/(\bW_I,\bullet)} \FN_{\bt^*/(\bW_I,\bullet)}(\{\kappa_I(\xi)\}) = \tbg_I^{(1)} \times_{\bt^{*(1)} / \bW_I} \FN_{\bt^*/(\bW_I,\bullet)}(\{\kappa_I(\xi)\})
\]
to $X_I$.
Then, as in~\eqref{eqn:equivalences-Dmod}, there exist canonical equivalences of categories
\begin{equation}
\label{eqn:equivalences-Dmod-xi}
\Modqc(\tD_I^{\widehat{\xi}}) \cong \Modqc(\tsD_I^{\widehat{\xi}}), \quad
\Modc(\tD_I^{\widehat{\xi}}) \cong \Modc(\tsD_I^{\widehat{\xi}}).
\end{equation}


\begin{rmk}
\label{rmk:translation-DI}
Recall that any $\lambda \in X^*(\bT)$ which satisfies $\langle \lambda, \alpha^\vee \rangle=0$ for any $\alpha \in I$ defines in a natural way a line bundle $\scO_{\bG/\bP_I}(\lambda)$ on $\bG/\bP_I$. On the other hand, for such $\lambda$, the automorphism of $\bt^*$ given by $\xi \mapsto \xi + \ola$ commutes with the dot-action of $\bW_I$, hence induces an automorphism $\tau_{I,\lambda}$ of $\bt^*/(\bW_I,\bullet)$, which itself induces an isomorphism
\begin{multline*}
X_I^{\widehat{\xi}} = \tbg_I^{(1)} \times_{\bt^{*(1)} / \bW_I} \FN_{\bt^*/(\bW_I,\bullet)}(\{\kappa_I(\xi)\}) \simto \\
\tbg_I^{(1)} \times_{\bt^{*(1)} / \bW_I} \FN_{\bt^*/(\bW_I,\bullet)}(\{\kappa_I(\xi+\ola)\}) = X_I^{\widehat{\xi+\ola}}.
\end{multline*}
By~\cite[Lemma~1.2.1]{bmr2} (see also~\cite[Lemma~2.3.1]{bmr}), there exists a canonical equivalence of Azumaya algebras between the pushforward of $\tsD_I^{\widehat{\xi}}$ under this isomorphism and $\tsD_I^{\widehat{\xi+\ola}}$, which induces equivalences of categories
\[
\Modqc(\tsD_I^{\widehat{\xi}}) \simto \Modqc(\tsD_I^{\widehat{\xi+\ola}}), \quad \Modc(\tsD_I^{\widehat{\xi}}) \simto \Modqc(\tsD_I^{\widehat{\xi+\ola}}).
\]
Under the identifications~\eqref{eqn:equivalences-Dmod-xi}, this equivalence is given by the assignment
\[
\scF \mapsto \scO_{\bG/\bP_I}(\lambda) \otimes_{\scO_{\bG/\bP_I}} \scF.
\]

Below we will also consider an analogue of this construction for \emph{right} modules. In fact, as in~\cite[Lemma~3.0.6]{bmr2} we have a canonical isomorphism of sheaves of rings $\tD_I \simto \tD_I^\op$ which, for any $\xi$, induces equivalences of categories
\[
\Modqc(\tsD_I^{\widehat{\xi},\op}) \simto \Modqc(\tsD_I^{\widehat{-\xi-2\overline{\rho}}}), \quad
\Modc(\tsD_I^{\widehat{\xi},\op}) \simto \Modc(\tsD_I^{\widehat{-\xi-2\overline{\rho}}}).
\]
(Here, as usual, $\rho$ is the halfsum of the positive roots.)
Hence we have equivalences of categories
\[
\Modqc(\tsD_I^{\widehat{\xi}, \op}) \simto \Modqc(\tsD_I^{\widehat{\xi+\ola}, \op}), \quad \Modc(\tsD_I^{\widehat{\xi}, \op}) \simto \Modqc(\tsD_I^{\widehat{\xi+\ola}, \op})
\]
given by the assignment
\[
\scF \mapsto \scO_{\bG/\bP_I}(-\lambda) \otimes_{\scO_{\bG/\bP_I}} \scF.
\]
\end{rmk}

%
%
%

Setting
\[
\widetilde{\cU}_I \bg := \cU \bg \otimes_{\scO(\bt^*/(\bWf,\bullet))} \scO(\bt^*/(\bW_I,\bullet)),
\]
by~\cite[Proposition~1.2.3(b)]{bmr2} we have a canonical algebra isomorphism
\begin{equation}
\label{eqn:global-sections-tsDI}
\widetilde{\cU}_I \bg \simto \Gamma(X_I, \tsD_I).
\end{equation}
The $\scO(Y_I)$-algebra $\widetilde{\cU}_I \bg$ defines a coherent sheaf of algebras on the affine scheme $Y_I$, which we will denote similarly; then in view of~\eqref{eqn:global-sections-tsDI} we have a natural morphism of ringed spaces
\[
(X_I, \tsD_I) \to (Y_I, \widetilde{\cU}_I \bg).
\]
We can also consider the sheaf of rings on the affine scheme $Y$ defined by $\cU\bg$, which we also denote by $\cU\bg$, and the natural morphism of ringed spaces
\[
(Y_I, \widetilde{\cU}_I \bg) \to (Y,\cU\bg).
\]

If $\xi \in \bt^*$ we set
\begin{multline*}
(\widetilde{\cU}_I \bg)^{\widehat{\xi}} := \widetilde{\cU}_I \bg \otimes_{\scO(\bt^*/(\bW_I,\bullet))} \scO(\FN_{\bt^*/(\bW_I,\bullet)}(\{\kappa_I(\xi)\})) \\
= \cU \bg \otimes_{\scO(\bt^*/(\bWf,\bullet))} \scO(\FN_{\bt^*/(\bW_I,\bullet)}(\{\kappa_I(\xi)\})),
\end{multline*}
so that $(\widetilde{\cU}_I \bg)^{\widehat{\xi}}$ defines a sheaf of rings on the affine scheme
\[
Y_I^{\widehat{\xi}} := Y_I \times_{\bt^*/(\bW_I,\bullet)} \FN_{\bt^*/(\bW_I,\bullet)}(\{\kappa_I(\xi)\}) = Y \times_{\bt^*/(\bWf,\bullet)} \FN_{\bt^*/(\bW_I,\bullet)}(\{\kappa_I(\xi)\}).
\]
If $\mathrm{Stab}_{(\bWf,\bullet)}(\xi) \subset \bW_I$, then
by the standard criterion~\cite[Exp.~V, Proposition~2.2]{sga1} the morphism $\kappa^I$ is \'etale at $\kappa_I(\xi)$, 
and induces an isomorphism of schemes
\begin{equation}
\label{eqn:isom-FN}
\FN_{\bt^*/(\bW_I,\bullet)}(\{\kappa_I(\xi)\}) \simto \FN_{\bt^*/(\bWf,\bullet)}(\{\kappa(\xi)\}).
\end{equation}
(Here we use the standard identification between $\scO(\FN_{\bt^*/(\bW_I,\bullet)}(\{\kappa_I(\xi)\}))$ and the completion of the local ring of $\bt^*/(\bW_I,\bullet)$ at $\kappa_I(\xi)$, and similarly for the right-hand side.)
We deduce an isomorphism of schemes from $Y_I^{\widehat{\xi}}$
to
\[
Y^{\widehat{\xi}} := Y \times_{\bt^*/(\bWf,\bullet)} \FN_{\bt^*/(\bWf,\bullet)}(\{\kappa(\xi)\}) 
\]
and an isomorphism of rings between $(\widetilde{\cU}_I \bg)^{\widehat{\xi}}$
and $(\cU \bg)^{\widehat{\xi}}$. In other words, under this assumption we have an isomorphism of ringed spaces
\begin{equation}
\label{eqn:identification-Ug-xi}
(Y_I^{\widehat{\xi}}, (\widetilde{\cU}_I \bg)^{\widehat{\xi}}) \simto (Y^{\widehat{\xi}}, (\cU \bg)^{\widehat{\xi}})
\end{equation}
where once again we denote by the same symbol a quasi-coherent sheaf of $\scO_Z$-algebras over an affine scheme $Z$ and its global sections.

By~\eqref{eqn:global-sections-tsDI} and the flat base change theorem (see~\cite[\href{https://stacks.math.columbia.edu/tag/02KH}{Tag 02KH}]{stacks-project}), we have a canonical algebra isomorphism
\begin{equation}
\label{eqn:global-sections-DI-xi}
(\widetilde{\cU}_I \bg)^{\widehat{\xi}} \simto \Gamma(X_I^{\widehat{\xi}}, \tsD_I^{\widehat{\xi}});
\end{equation}
we therefore have a canonical morphism of ringed spaces
\[
f^{\prime,\xi}_I : (X_I^{\widehat{\xi}}, \tsD_I^{\widehat{\xi}}) \to (Y_I^{\widehat{\xi}}, (\widetilde{\cU}_I \bg)^{\widehat{\xi}}).
\]
In case $\mathrm{Stab}_{(\bWf,\bullet)}(\xi) \subset \bW_I$, the composition of this morphism
with the isomorphism~\eqref{eqn:identification-Ug-xi} will be denoted $f_I^\xi$. We can then consider the
(derived) push/pull functors
\[
R(f_I^\xi)_* : D^+ \Modqc (\tsD_I^{\widehat{\xi}}) \to D^+ \Mod((\cU \bg)^{\widehat{\xi}})
\]
and
\[
L(f_I^\xi)^* : D^- \Mod((\cU \bg)^{\widehat{\xi}}) \to D^- \Modqc (\tsD_I^{\widehat{\xi}}).
\]

Note that by standard arguments (see e.g.~\cite[Proof of Corollary~2.11]{arinkin-bezrukavnikov}) the natural functors
\[
\Db \Modc (\tsD_I^{\widehat{\xi}}) \to \Db \Modqc (\tsD_I^{\widehat{\xi}}), \quad \Db \Modfg((\cU \bg)^{\widehat{\xi}}) \to \Db \Mod((\cU \bg)^{\widehat{\xi}})
\]
are fully faithful, and their essential images consist of complexes with coherent (resp.~finitely generated) cohomology objects.

The following statement is a ``completed'' version of the main results of~\cite{bmr,bmr2}.

\begin{thm}
\label{thm:comp-loc}
Assume that $\mathrm{Stab}_{(\bWf,\bullet)}(\xi) \subset \bW_I$.
\begin{enumerate}
\item
\label{it:comp-loc-1}
The functor $R(f_I^\xi)_*$ restricts to a functor
\begin{equation}
\label{eqn:completed-loc-functor-1}
\Db \Modc (\tsD_I^{\widehat{\xi}}) \to \Db \Modfg((\cU \bg)^{\widehat{\xi}}),
\end{equation}
still denoted $R(f_I^\xi)_*$.
\item
\label{it:comp-loc-2}
The functor $L(f_I^\xi)^*$ restricts to a functor
\begin{equation}
\label{eqn:completed-loc-functor-2}
\Db \Modfg((\cU \bg)^{\widehat{\xi}}) \to \Db \Modc (\tsD_I^{\widehat{\xi}}),
\end{equation}
still denoted $L(f_I^\xi)^*$.
\item
\label{it:comp-loc-3}
The functors~\eqref{eqn:completed-loc-functor-1} and~\eqref{eqn:completed-loc-functor-2} are quasi-inverse equivalences of categories.
\end{enumerate}
%
\end{thm}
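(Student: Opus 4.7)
The plan is to deduce the theorem from the ``parameter family'' version of the derived localization theorem of~\cite{bmr,bmr2}, combined with a flat base change argument along the completion morphism. The étale identification~\eqref{eqn:identification-Ug-xi} provided by our stabilizer hypothesis is what makes this reduction possible.

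First I would settle the boundedness assertions~\eqref{it:comp-loc-1} and~\eqref{it:comp-loc-2}. For $L(f_I^\xi)^*$, the algebra $(\cU\bg)^{\widehat{\xi}}$ is finite over the regular Noetherian local ring $\scO(\FN_{\bt^*/(\bWf,\bullet)}(\{\kappa(\xi)\}))$ (cf.~Proposition~\ref{prop:flatness-Ug} and~\S\ref{ss:center-Ug}), so it has finite global dimension; combined with the standard fact that $\tsD_I$ is (locally, after the fpqc cover by $T^*(\bG/\bU_I)^{(1)}$) a matrix algebra over $\scO_{X_I}$, this gives the required bounded Tor-dimension of $L(f_I^\xi)^*$. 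For $R(f_I^\xi)_*$, the underlying morphism on ringed spaces is the composition of the affine morphism $(X_I^{\widehat{\xi}}, \tsD_I^{\widehat{\xi}}) \to (X_I^{\widehat{\xi}}, \scO_{X_I^{\widehat{\xi}}})$ (coming from the coherent algebra structure) with the morphism $X_I^{\widehat{\xi}} \to Y_I^{\widehat{\xi}}$; the latter factors through the projective map $\tbg_I \to \bg^*$ after base change, so cohomology vanishes above degree $\dim(\bG/\bP_I)$ on coherent modules.

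For the equivalence~\eqref{it:comp-loc-3}, I would check that the unit and counit of the adjunction $L(f_I^\xi)^* \dashv R(f_I^\xi)_*$ are isomorphisms. The diagrams
\[
\xymatrix{
\FN_{\bt^*/(\bW_I,\bullet)}(\{\kappa_I(\xi)\}) \ar[r] \ar[d] & \bt^*/(\bW_I,\bullet) \ar[d]^-{\kappa^I} \\
\FN_{\bt^*/(\bWf,\bullet)}(\{\kappa(\xi)\}) \ar[r] & \bt^*/(\bWf,\bullet)
}
\]
have flat horizontal arrows and an étale arrow $\kappa^I$ near $\kappa_I(\xi)$ (by the stabilizer hypothesis and~\cite[Exp.~V, Proposition~2.2]{sga1}), so by flat base change both $R(f_I^\xi)_*$ and $L(f_I^\xi)^*$ are obtained by base change from the uncompleted parameter functors
\[
R(f^{\prime}_I)_* : \Db\Modc(\tsD_I) \to \Db \Modfg(\widetilde{\cU}_I\bg), \quad L(f^{\prime}_I)^*
\]
on a Zariski open neighborhood of $\kappa_I(\xi)$ in $\bt^*/(\bW_I,\bullet)$. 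Over such a Zariski neighborhood these uncompleted functors are quasi-inverse equivalences by the main theorem of~\cite{bmr2} (respectively~\cite{bmr} when $I = \varnothing$), since the stabilizer hypothesis places $\kappa_I(\xi)$ in the ``good'' locus where these theorems apply. Faithful flatness of the completion morphism then transports this equivalence to the completed setting.

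The main obstacle is verifying carefully that our stabilizer hypothesis indeed implies that the parameter $\kappa_I(\xi)$ lies in the open subset of $\bt^*/(\bW_I,\bullet)$ where the parameter version of BMR/BMR2 holds as stated (the key geometric ingredient being the étaleness of $\kappa^I$ at $\kappa_I(\xi)$, which in turn forces sufficient regularity of the parameter relative to $\bW_I$). Once this is pinned down, the argument is a routine combination of flat base change on ringed spaces and transport of adjunctions, requiring no genuinely new input beyond the results of~\cite{bmr,bmr2}.
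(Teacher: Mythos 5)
Your argument for parts \eqref{it:comp-loc-1} and \eqref{it:comp-loc-2} is essentially the paper's, apart from a small slip: $(\cU\bg)^{\widehat{\xi}}$ is not finite over the one-dimensional local ring $\scO(\FN_{\bt^*/(\bWf,\bullet)}(\{\kappa(\xi)\}))$ (it is finite over the much larger ring $\scO(\bg^{*(1)} \times_{\bt^{*(1)}/\bWf} \FN_{\bt^*/(\bWf,\bullet)}(\{\kappa(\xi)\}))$), and in any case finiteness over a regular ring does not imply finite global dimension. The paper instead factors $L(f_I^\xi)^*$ through $\tD_I \lotimes_{\cU\bg} (-)$ and invokes the finite homological dimension of $\cU\bg$ itself.

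The real gap is in part \eqref{it:comp-loc-3}. Your strategy is to obtain the equivalence by flat base change from a version of the BMR/BMR2 localization theorem over a Zariski open neighborhood $U$ of $\kappa_I(\xi)$ in $\bt^*/(\bW_I,\bullet)$, then transport along the completion morphism. This fails on two counts. First, there is no such ``Zariski-local'' parameter version in~\cite{bmr,bmr2}: those theorems are proved at a \emph{fixed} central or Harish-Chandra character, i.e.\ on the fiber over a closed point of $\bt^*/(\bW_I,\bullet)$, not over a Zariski open. Second, even if one had such a statement, the morphism $\FN_{\bt^*/(\bW_I,\bullet)}(\{\kappa_I(\xi)\}) \to U$ is flat but \emph{not} faithfully flat on the category of coherent sheaves (objects supported off $\kappa_I(\xi)$ die), so being an isomorphism after base change would not be detectable in the direction you need, and more importantly the adjunction morphism in the completed category is not \emph{a priori} the base change of one computed over $U$ (the objects live over the completion from the start, not over $U$).

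What the paper actually does to prove the counit is an isomorphism is qualitatively different and is the nontrivial core of~\S\ref{ss:comp-loc-end-proof}. It first shows that the unit $\id \to R(f_I^\xi)_* L(f_I^\xi)^*$ is an isomorphism using $\mathsf{H}^{>0}(X_I,\tsD_I)=0$ and flat base change (your diagram is consistent with this part). For the counit, it reduces by ampleness (Lemma~\ref{lem:comp-loc-end}\eqref{it:comp-loc-end-1}) to showing the counit is an isomorphism on the twists $\tsD_I^{\widehat{\xi}}(\lambda)$. To prove this it considers the cone $\scC$, observes that $\scC$ and its top-degree cohomology are $\bG$-equivariant coherent sheaves, and invokes Lemma~\ref{lem:closedness}: the image in $\FN_{\bt^*/(\bW_I,\bullet)}(\{\kappa_I(\xi)\})$ of any $\bG$-invariant closed subset of $X_I^{\widehat{\xi}}$ is closed (this uses Seshadri's geometric reductivity over the affine base $\FN$). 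Hence if $\scC \neq 0$ its support meets the closed fiber, so $L(\imath_{I,\xi})^*\scC \neq 0$; but by tor-independence/base change, $L(\imath_{I,\xi})^* \circ L(f_I^\xi)^* \circ R(f_I^\xi)_*$ coincides with the composition of the BMR2 equivalence at the closed point and its inverse, forcing $L(\imath_{I,\xi})^*\scC = 0$, a contradiction. This equivariance-and-support argument is precisely the input that replaces your appeal to faithful flatness, and it is not something you can avoid by localizing on the base.
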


To prove Theorem~\ref{thm:comp-loc}  we will repeat the construction of the functors
in~\cite{bmr,bmr2}, and then reduce the proof that they are equivalences to the results of these references. In this subsection we prove statements~\eqref{it:comp-loc-1} and~\eqref{it:comp-loc-2}, and the following weaker variant of~\eqref{it:comp-loc-3}:
\begin{enumerate}
\setcounter{enumi}{3}
\item
\label{it:comp-loc-4}
The functor $L(f_I^\xi)^*$ of~\eqref{eqn:completed-loc-functor-2} is left adjoint to the functor $R(f_I^\xi)_*$ of~\eqref{eqn:completed-loc-functor-1}, and moreover the adjunction morphism $\id \to R(f_I^\xi)_* \circ L(f_I^\xi)^*$ is an isomorphism; in particular, $L(f_I^\xi)^*$ is fully faithful.
\end{enumerate}
Statement~\eqref{it:comp-loc-3} will be deduced in~\S\ref{ss:comp-loc-end-proof} below.

\begin{proof}[Proof of~\eqref{it:comp-loc-1}, \eqref{it:comp-loc-2} and~\eqref{it:comp-loc-4}]
We assume that $\mathrm{Stab}_{(\bWf,\bullet)}(\xi) \subset \bW_I$.

\eqref{it:comp-loc-1}
We have forgetful functors
\[
\Modqc (\tsD_I^{\widehat{\xi}}) \to \QCoh(X_I^{\widehat{\xi}}), \quad \Mod((\cU \bg)^{\widehat{\xi}}) \to \QCoh(Y^{\widehat{\xi}})
\]
which send coherent (resp.~finitely generated) modules to coherent sheaves,
and denoting by $g_I^\xi : X_I^{\widehat{\xi}} \to Y^{\widehat{\xi}}$ the morphism of schemes underlying $f_I^\xi$, the diagram
\begin{equation}
\label{eqn:diagram-pushforward-DI}
\vcenter{
\xymatrix@C=1.5cm{
D^+ \Modqc (\tsD_I^{\widehat{\xi}}) \ar[r]^-{R(f_I^\xi)_*} \ar[d] & D^+ \Mod((\cU \bg)^{\widehat{\xi}}) \ar[d] \\
D^+ \QCoh(X_I^{\widehat{\xi}}) \ar[r]^-{R(g_I^\xi)_*} & D^+ \QCoh(Y^{\widehat{\xi}})
}
}
\end{equation}
commutes by the same arguments as in~\cite[\S 3.1.9]{bmr}. Note that $g_I^\xi$ is proper, since it can be written as the composition
\begin{multline*}
X_I^{\widehat{\xi}} \to \tbg_I^{(1)} \times_{\bt^{*(1)} / \bWf} \FN_{\bt^*/(\bW_I,\bullet)}(\{\kappa_I(\xi)\}) \cong \tbg_I^{(1)} \times_{\bt^{*(1)} / \bWf} \FN_{\bt^*/(\bWf,\bullet)}(\{\kappa(\xi)\}) \\
\to \bg^{*(1)} \times_{\bt^{*(1)} / \bWf} \FN_{\bt^*/(\bWf,\bullet)}(\{\kappa(\xi)\}) = Y^{\widehat{\xi}}
\end{multline*}
where the first morphism is a closed immersion and the last one is obtained from the proper morphism $\tbg_I^{(1)} \to \bg^{*(1)}$ by base change.

Now, note that $X_I^{\widehat{\xi}}$ 
is a scheme of finite type over $\FN_{\bt^*/(\bW_I,\bullet)}(\{\kappa_I(\xi)\})$. Here $\scO(\FN_{\bt^*/(\bW_I,\bullet)}(\{\kappa_I(\xi)\}))$ is the completion of a noetherian local ring of finite Krull dimension (namely, the local ring of $\bt^*/(\bW_I,\bullet)$ at $\kappa_I(\xi)$), hence it is itself noetherian (see~\cite[\href{https://stacks.math.columbia.edu/tag/0316}{Tag 0316}]{stacks-project}) and of finite Krull dimension (see~\cite[\href{https://stacks.math.columbia.edu/tag/07NV}{Tag 07NV}]{stacks-project}). We deduce that $X_I^{\widehat{\xi}}$ 
is noetherian of finite Krull dimension, so that the functor $(g_I^\xi)_*$ has finite homological dimension,
see~\cite[p.~88]{hartshorne-RD}. Since $g_I^\xi$ is proper, we deduce using~\cite[\href{https://stacks.math.columbia.edu/tag/02O5}{Tag 02O5}]{stacks-project} that $R(g_I^\xi)_*$ restricts to a functor $\Db \Coh(X_I^{\widehat{\xi}}) \to \Db \Coh(Y^{\widehat{\xi}})$.
Using the diagram~\eqref{eqn:diagram-pushforward-DI} we deduce that $R(f_I^\xi)_*$ restricts to a functor
\[
\Db \Modc (\tsD_I^{\widehat{\xi}}) \to \Db \Modfg((\cU \bg)^{\widehat{\xi}}),
\]
as desired.

\eqref{it:comp-loc-2}
To prove this claim we will use the identifications~\eqref{eqn:equivalences-Dmod-xi}. 
The closed points in $(\kappa^I)^{-1}(\xi)$ consist of the $(\bW_I,\bullet)$-orbits $\theta \subset \bt^*$ contained in $\bWf \bullet \xi$, and as in~\cite[Lemma~3.4]{br-Hecke} we have a canonical identification
\begin{equation}
\label{eqn:decomp-formal-neighborhood}
\FN_{\bt^*/(\bW_I,\bullet)}(\{(\kappa^I)^{-1}(\xi)\}) = \bigsqcup_{\theta \subset \bWf \bullet \xi} \FN_{\bt^*/(\bW_I,\bullet)}(\{\theta\})
\end{equation}
where $\theta$ runs over the $(\bW_I,\bullet)$-orbits contained in $\bWf \bullet \xi$. The functor
\[
\tD_I \otimes_{\cU\bg} (-) : \Mod(\cU\bg) \to \Modqc(\tD_I),
\]
i.e.~the pullback functor associated with the natural morphism of ringed spaces
$(X_I, \tsD_I) \to (Y, \cU \bg)$, induces a functor
\begin{equation}
\label{eqn:loc-functor}
\Mod((\cU\bg)^{\widehat{\xi}}) \to \Modqc(\tD_I \otimes_{\scO(\bt^*/(\bW_I,\bullet))} \scO(\FN_{\bt^*/(\bW_I,\bullet)}(\{(\kappa^I)^{-1}(\xi)\}))),
\end{equation}
and the right-hand side identifies with the direct sum
\[
\bigoplus_{\theta \subset \bWf \bullet \xi} \Modqc(\tD_I \otimes_{\scO(\bt^*/(\bW_I,\bullet))} \scO(\FN_{\bt^*/(\bW_I,\bullet)}(\theta)))
\]
where $\theta$ is as above.
By definition, $(f_I^\xi)^*$ coincides with the composition of~\eqref{eqn:loc-functor} with projection to the direct summand $\Modqc(\tD_I^{\widehat{\xi}})$. Since any flat $(\cU\bg)^{\widehat{\xi}}$-module is also flat over $\cU\bg$, this identification also holds at the derived level, and we deduce that for any $M$ in $D^- \Mod((\cU \bg)^{\widehat{\xi}})$ the image in $D^- \Modqc(\tD_I)$ of $L(f_I^\xi)^*(M)$ is a direct summand in
\[
\tD_I \lotimes_{\cU\bg} M.
\]
Since $\cU\bg$ has finite homological dimension, it follows that $L(f_I^\xi)^*$ sends bounded complexes to bounded complexes. 
The desired claim follows, using the fact that any finitely generated $(\cU\bg)^{\widehat{\xi}}$-module is a quotient of a \emph{finitely generated} flat module.

\eqref{it:comp-loc-4}
The fact that our functors are adjoint follows from the fact that $(f_I^\xi)^*$ is left adjoint to $(f_I^\xi)_*$ and general properties of derived functors (see~\cite[\href{https://stacks.math.columbia.edu/tag/0DVC}{Tag 0DVC}]{stacks-project}).

Then we consider the adjunction morphism
$\id \to R(f_I^\xi)_* \circ L(f_I^\xi)^*$.
Recall from~\cite[Proposition~1.2.3(b)]{bmr2} that we have $\mathsf{H}^i(X_I, \scD_I)=0$ for any $i>0$. By the flat base change theorem (see~\cite[\href{https://stacks.math.columbia.edu/tag/02KH}{Tag 02KH}]{stacks-project}) this implies that $\mathsf{H}^i(X_I^{\widehat{\xi}}, \tsD^{\widehat{\xi}}_I)=0$ for any $i>0$. Using also~\eqref{eqn:global-sections-DI-xi} and the identification~\eqref{eqn:identification-Ug-xi}, we deduce that adjunction induces an isomorphism
\[
(\cU \bg)^{\widehat{\xi}} \simto R(f_I^\xi)_* \circ L(f_I^\xi)^* (\cU \bg)^{\widehat{\xi}}.
\]
Since $(f_I^\xi)_*$ has finite homological dimension (see the proof of~\eqref{it:comp-loc-1}), it follows that this property holds for any bounded above complex of finitely generated free $(\cU \bg)^{\widehat{\xi}}$-modules, hence finally for any object in $\Db \Modfg((\cU \bg)^{\widehat{\xi}})$, proving that this adjunction morphism $\id \to R(f_I^\xi)_* \circ L(f_I^\xi)^*$ is an isomorphism on $\Db \Modfg((\cU \bg)^{\widehat{\xi}})$. Fully faithfulness of the functor $L(f_I^\xi)^*$ is a standard consequence.
\end{proof}

\subsection{Completed localization, II}
\label{ss:comp-loc-end-proof}

We will now explain how to complete the proof of Theorem~\ref{thm:comp-loc}. We continue to assume that $\mathrm{Stab}_{(\bWf,\bullet)}(\xi) \subset \bW_I$. In view of the work done so far, to conclude it suffices to prove that the adjunction morphism
\begin{equation}
\label{eqn:adjunction-morph-Loc}
L(f_I^\xi)^* \circ R(f_I^\xi)_*(\scM) \to \scM
\end{equation}
is an isomorphism for any $\scM$ in $\Db \Modc (\tsD_I^{\widehat{\xi}})$.

Here and below we will identify $X^*(\bT^{(1)})$ with $X^*(\bT)$ in such a way that pullback under the Frobenius morphism $\bT \to \bT^{(1)}$ corresponds to the map $\lambda \mapsto \ell \cdot \lambda$.
As noted in Remark~\ref{rmk:translation-DI},
any $\lambda \in X^*(\bT) = X^*(\bT^{(1)})$ orthogonal to any $\alpha^\vee$ with $\alpha \in I$ defines a line bundle on $(\bG/\bP_I)^{(1)}$, whose pullback to $X_I^{\widehat{\xi}}$ will be denoted $\scO_{X_I^{\widehat{\xi}}}(\lambda)$. We can then consider the left $\tsD_I^{\widehat{\xi}}$-module
\[
\tsD_I^{\widehat{\xi}}(\lambda) = \tsD_I^{\widehat{\xi}} \otimes_{\scO_{X_I^{\widehat{\xi}}}} \scO_{X_I^{\widehat{\xi}}}(\lambda).
\]

\begin{lem}
\phantomsection
\label{lem:comp-loc-end}
\begin{enumerate}
\item
\label{it:comp-loc-end-1}
For any nonzero $\scM$ in $\Db \Modc (\tsD_I^{\widehat{\xi}})$, there exist $\lambda \in X^*(\bT)$ orthogonal to any $\alpha^\vee$ with $\alpha \in I$ and $n \in \Z$ such that
\[
\Hom_{\Db \Modc (\tsD_I^{\widehat{\xi}})}(\tsD_I^{\widehat{\xi}}(\lambda), \scM[n]) \neq 0.
\]
\item
\label{it:comp-loc-end-2}
For any $\lambda \in X^*(\bT)$ orthogonal to any $\alpha^\vee$ with $\alpha \in I$, the morphism~\eqref{eqn:adjunction-morph-Loc} is an isomorphism when $\scM = \tsD_I^{\widehat{\xi}}(\lambda)$.
\end{enumerate}
\end{lem}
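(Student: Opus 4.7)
The plan is to prove (1) by a relative Serre--ampleness argument for the projective morphism $g_I^\xi : X_I^{\widehat{\xi}} \to Y^{\widehat{\xi}}$ underlying $f_I^\xi$, and to prove (2) by combining the translation equivalence of Remark~\ref{rmk:translation-DI} with the computation of global sections in~\eqref{eqn:global-sections-DI-xi} and the cohomology vanishing quoted from~\cite[Proposition~1.2.3(b)]{bmr2}. The common starting point is the identification
\[
R\Hom_{\tsD_I^{\widehat{\xi}}}(\tsD_I^{\widehat{\xi}}(\lambda), \scN) \cong R\Gamma(X_I^{\widehat{\xi}}, \scO_{X_I^{\widehat{\xi}}}(-\lambda) \otimes_{\scO} \scN),
\]
valid for any $\scN \in D^+ \Modqc(\tsD_I^{\widehat{\xi}})$, which follows from the fact that $\tsD_I^{\widehat{\xi}}(\lambda) = \tsD_I^{\widehat{\xi}} \otimes_{\scO} \scO(\lambda)$ is locally free of rank one as a left $\tsD_I^{\widehat{\xi}}$-module, so that local $\Ext$ from $\tsD_I^{\widehat{\xi}}(\lambda)$ reduces to twisting by $\scO(-\lambda)$.

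For part (1), let $\scM \in \Db \Modc(\tsD_I^{\widehat{\xi}})$ be nonzero and let $n$ be the largest integer with $\scH^n(\scM) \neq 0$. Recall from the proof of Theorem~\ref{thm:comp-loc}\eqref{it:comp-loc-1} that $g_I^\xi$ is projective and that $Y^{\widehat{\xi}}$ is affine. Choose $\mu \in X^*(\bT)$ orthogonal to $\alpha^\vee$ for all $\alpha \in I$ and strictly dominant with respect to the simple coroots not in $I$, so that $\scO_{(\bG/\bP_I)^{(1)}}(\mu)$ is ample on the partial flag variety; its pullback to $X_I^{\widehat{\xi}}$ (through $\tbg_I^{(1)} \to (\bG/\bP_I)^{(1)}$ and base change) is then relatively ample for $g_I^\xi$. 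Taking $\lambda := -N\mu$ with $N \gg 0$, Serre's theorem yields $\mathsf{H}^p(X_I^{\widehat{\xi}}, \scO(-\lambda) \otimes \scH^q(\scM)) = 0$ for $p > 0$ and all $q$, together with $\mathsf{H}^0(X_I^{\widehat{\xi}}, \scO(-\lambda) \otimes \scH^n(\scM)) \neq 0$. The hypercohomology spectral sequence converging to $R\Gamma(X_I^{\widehat{\xi}}, \scO(-\lambda) \otimes \scM)$ degenerates at $E_2$, so $\Hom(\tsD_I^{\widehat{\xi}}(\lambda), \scM[n]) \neq 0$, as required.

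For part (2), the assertion amounts to showing that the higher cohomologies $R^i(f_I^\xi)_* \tsD_I^{\widehat{\xi}}(\lambda)$ vanish for $i > 0$ and that the induced map
\[
\tsD_I^{\widehat{\xi}} \lotimes_{(\cU\bg)^{\widehat{\xi}}} \Gamma(X_I^{\widehat{\xi}}, \tsD_I^{\widehat{\xi}}(\lambda)) \to \tsD_I^{\widehat{\xi}}(\lambda)
\]
is an isomorphism. For the vanishing I plan to invoke the Morita/Azumaya equivalence of Remark~\ref{rmk:translation-DI}: the identification $(\tau_{I,\lambda})_* \tsD_I^{\widehat{\xi}} \simto \tsD_I^{\widehat{\xi+\ola}}$ implies that, as sheaves of abelian groups, $\tsD_I^{\widehat{\xi}}(\lambda)$ coincides with $\tsD_I^{\widehat{\xi+\ola}}$ after the translation identification of $X_I^{\widehat{\xi}}$ with $X_I^{\widehat{\xi+\ola}}$, so that by the analogue of~\eqref{eqn:global-sections-DI-xi} at $\xi+\ola$ (whose proof only uses flat base change from~\eqref{eqn:global-sections-tsDI} and does not require $\mathrm{Stab}(\xi+\ola) \subset \bW_I$) together with $\mathsf{H}^i(X_I, \tsD_I) = 0$ for $i > 0$, we obtain $R\Gamma(X_I^{\widehat{\xi}}, \tsD_I^{\widehat{\xi}}(\lambda)) \cong (\widetilde{\cU}_I\bg)^{\widehat{\xi+\ola}}$ concentrated in degree $0$. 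For the isomorphism and Tor-vanishing, I plan to argue locally: the statement that the adjunction is an iso on $\tsD_I^{\widehat{\xi}}(\lambda)$ is stable under the Morita equivalence, so it reduces to the case $\lambda = 0$, where $R(f_I^\xi)_* \tsD_I^{\widehat{\xi}} = (\cU\bg)^{\widehat{\xi}}$ and the adjunction map is tautologically the identity $\tsD_I^{\widehat{\xi}} \lotimes_{(\cU\bg)^{\widehat{\xi}}} (\cU\bg)^{\widehat{\xi}} \simto \tsD_I^{\widehat{\xi}}$.

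The main obstacle will be carefully matching the $(\cU\bg)^{\widehat{\xi}}$-module structure on $R\Gamma(X_I^{\widehat{\xi}}, \tsD_I^{\widehat{\xi}}(\lambda))$ coming from the left action of $\tsD_I^{\widehat{\xi}}$ on $\tsD_I^{\widehat{\xi}}(\lambda)$ with the one that appears after transport via the translation equivalence, and consequently checking that the explicit natural map written above truly is the adjunction counit in $\Db \Modc(\tsD_I^{\widehat{\xi}})$. In particular, one must be careful that $\mathrm{Stab}_{(\bWf,\bullet)}(\xi+\ola)$ need not lie in $\bW_I$, so the translation only provides a ringed-space equivalence, not an identification with a formal neighborhood on $Y$; the bimodule structure of $\tsD_I^{\widehat{\xi}}(\lambda)$ (left over $\tsD_I^{\widehat{\xi}}$, right over $\tsD_I^{\widehat{\xi+\ola}}$) will be used to package the two central-character data coherently and to conclude the Tor-vanishing from the local triviality of $\scO(\lambda)$.
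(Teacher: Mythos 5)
Part~\eqref{it:comp-loc-end-1} of your argument matches the paper: the identification
$\Hom(\tsD_I^{\widehat{\xi}}(\lambda), \scM[n]) \cong \mathsf{H}^n(X_I^{\widehat{\xi}}, \scM \otimes \scO(-\lambda))$
followed by ampleness of $\scO_{X_I^{\widehat{\xi}}}(-\lambda)$ for suitable $\lambda$ is exactly what the paper does (citing a Stacks-project non-vanishing result where you run the hypercohomology spectral sequence). Fine.

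Part~\eqref{it:comp-loc-end-2} is where you diverge, and your proposed route has a genuine gap. The paper's proof is a geometric support argument, not a computation: it proves the isomorphism for \emph{every} complex which is $\bG$-equivariant as a coherent sheaf, by contradiction. If the cone $\scC$ of~\eqref{eqn:adjunction-morph-Loc} is nonzero, the support of its top nonvanishing cohomology sheaf is a $\bG$-invariant closed subset of $X_I^{\widehat{\xi}}$; the crucial Lemma~\ref{lem:closedness} (a GIT-type statement proved via Seshadri's theorem) says the image of such a set in $\FN_{\bt^*/(\bW_I,\bullet)}(\{\kappa_I(\xi)\})$ is closed, hence contains the unique closed point, so $L(\imath_{I,\xi})^*\scC \neq 0$. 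Then one checks via the base change theorem and Lemma~\ref{lem:flatness-Groth} that $L(\imath_{I,\xi})^* \circ L(f_I^\xi)^* R(f_I^\xi)_*$ matches the non-completed localization equivalence of~\cite{bmr2} on the closed fiber, so $L(\imath_{I,\xi})^*\scC = 0$, a contradiction.

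Your reduction-via-translation plan does not succeed as stated. The Morita equivalence $\Modqc(\tsD_I^{\widehat{\xi}}) \simto \Modqc(\tsD_I^{\widehat{\xi+\ola}})$ changes the sheaf side, but the adjunction in~\eqref{eqn:adjunction-morph-Loc} is attached to the \emph{fixed} morphism $f_I^\xi$ with fixed target $(\cU\bg)^{\widehat{\xi}}$; the equivalence does not intertwine $L(f_I^\xi)^* R(f_I^\xi)_*$ with $L(f_I^{\xi+\ola})^* R(f_I^{\xi+\ola})_*$. So transporting the counit for $\tsD_I^{\widehat\xi}(\lambda)$ via translation lands you at the counit for a \emph{different} push/pull pair, not at the $\lambda=0$ instance of the \emph{same} pair; and, as you note yourself, $f_I^{\xi+\ola}$ need not even be defined since $\mathrm{Stab}_{(\bWf,\bullet)}(\xi+\ola)$ may not lie in $\bW_I$. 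The explicit target of your natural map, $\tsD_I^{\widehat{\xi}} \lotimes_{(\cU\bg)^{\widehat{\xi}}} R\Gamma(X_I^{\widehat{\xi}}, \tsD_I^{\widehat{\xi}}(\lambda)) \to \tsD_I^{\widehat{\xi}}(\lambda)$, still requires you to establish the Tor-vanishing and surjectivity, which is not a formal consequence of the rank-one local triviality of $\scO(\lambda)$: these are the genuine content of the BMR localization theorem, which the paper imports precisely through the restriction to the closed fiber. You would need either to reprove a completed version of that computation from scratch, or, more efficiently, to notice the closed-point support argument and invoke Lemma~\ref{lem:closedness}. As written, your proof of~\eqref{it:comp-loc-end-2} does not go through.
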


The proof of~\eqref{it:comp-loc-end-2} will use the following preliminary result.

\begin{lem}
\label{lem:closedness}
The image under the morphism $X_I^{\widehat{\xi}} \to \FN_{\bt^*/(\bW_I,\bullet)}(\{\kappa_I(\xi)\})$ of any closed $\bG$-invariant subset is closed.
\end{lem}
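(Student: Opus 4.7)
I will factor the morphism under consideration as
\[
X_I^{\widehat\xi} \xrightarrow{a} Y^{\widehat\xi} \xrightarrow{b} \FN_{\bt^*/(\bWf,\bullet)}(\{\kappa(\xi)\})
\]
(where the target is identified with $\FN_{\bt^*/(\bW_I,\bullet)}(\{\kappa_I(\xi)\})$ via~\eqref{eqn:isom-FN}, the middle term is $Y^{\widehat\xi} = \bg^{*(1)} \times_{\bt^{*(1)}/\bWf} \FN_{\bt^*/(\bWf,\bullet)}(\{\kappa(\xi)\})$ as introduced in the proof of~\eqref{it:comp-loc-1}, the map $a$ is induced by the Grothendieck resolution $\tbg_I^{(1)} \to \bg^{*(1)}$ together with the projection to the completion factor, and $b$ is the second projection), and then treat each factor separately.

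For $a$: the compatibility of $\tbg_I^{(1)} \to \bt^{*(1)}/\bW_I$ with $\tbg_I^{(1)} \to \bg^{*(1)} \to \bt^{*(1)}/\bWf$, combined with the separatedness of $\bt^{*(1)}/\bW_I \to \bt^{*(1)}/\bWf$, furnishes a canonical closed immersion
\[
X_I^{\widehat\xi} \hookrightarrow \tbg_I^{(1)} \times_{\bg^{*(1)}} Y^{\widehat\xi},
\]
whose target is proper over $Y^{\widehat\xi}$ by base change of the projective morphism $\tbg_I^{(1)} \to \bg^{*(1)}$. Hence $a$ is proper and $\bG$-equivariant, so it maps closed $\bG$-invariant subsets of $X_I^{\widehat\xi}$ to closed $\bG$-invariant subsets of $Y^{\widehat\xi}$.

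For $b$, which is affine, the plan is to identify it with a categorical quotient for the action of the reductive group $\bG^{(1)}$. Using Lemma~\ref{lem:Sg-good-filtration} and the fixed identification $\bg \cong \bg^*$, the coordinate ring $\scO(\bg^{*(1)})$ admits a good filtration as a $\bG^{(1)}$-module; since $\scO(\FN_{\bt^*/(\bWf,\bullet)}(\{\kappa(\xi)\}))$ carries trivial $\bG^{(1)}$-action, the tensor product $\scO(Y^{\widehat\xi})$ inherits a good filtration and hence has vanishing higher $\bG^{(1)}$-cohomology. Combined with the Chevalley identification $\scO(\bg^{*(1)})^{\bG^{(1)}} = \scO(\bt^{*(1)}/\bWf)$, this gives
\[
\scO(Y^{\widehat\xi})^{\bG^{(1)}} = \scO(\FN_{\bt^*/(\bWf,\bullet)}(\{\kappa(\xi)\})),
\]
so that $b$ realizes $\FN_{\bt^*/(\bWf,\bullet)}(\{\kappa(\xi)\})$ as a GIT quotient of $Y^{\widehat\xi}$ by $\bG^{(1)}$. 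A standard GIT argument using Haboush's theorem (the geometric reductivity of $\bG^{(1)}$) then shows that for any $\bG^{(1)}$-invariant ideal $J \subset \scO(Y^{\widehat\xi})$ the image $b(V(J))$ coincides with the closed subset $V(J^{\bG^{(1)}})$.

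The delicate point is this last step: in characteristic $\ell>0$, $\bG^{(1)}$ is only geometrically reductive, so invariants need not commute with the (non-flat) base change along the completion morphism, and the base change of the GIT quotient $\bg^{*(1)} \to \bt^{*(1)}/\bWf$ need not automatically remain a good quotient. The good-filtration apparatus supplied by Lemma~\ref{lem:Sg-good-filtration} is precisely what ensures that the identification of invariants, together with the separation of disjoint $\bG^{(1)}$-invariant closed subsets by invariant functions, survives after base change.
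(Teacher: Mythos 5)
Your factorization through $Y^{\widehat\xi}$, the treatment of the first factor (equivariant and proper, hence maps invariant closed sets to invariant closed sets), the reduction to showing $\scO(\FN_{\bt^*/(\bWf,\bullet)}(\{\kappa(\xi)\})) \simto \scO(Y^{\widehat\xi})^{\bG}$, and the final application of geometric reductivity over a base are all precisely what the paper does. The one place you diverge is how you establish the identification of invariants, and that is where there is a genuine gap.

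You argue via Lemma~\ref{lem:Sg-good-filtration}: $\scO(\bg^{*(1)})$ has a good $\bG^{(1)}$-module filtration, the second factor has trivial action, so "the tensor product $\scO(Y^{\widehat\xi})$ inherits a good filtration." But $\scO(Y^{\widehat\xi}) = \scO(\bg^{*(1)}) \otimes_{\scO(\bt^{*(1)}/\bWf)} \scO(\FN_{\bt^*/(\bWf,\bullet)}(\{\kappa(\xi)\}))$ is a tensor product over $\scO(\bt^{*(1)}/\bWf)$, not over $\bk$. The standard stability of good filtrations under tensor product with a trivial module (or more generally~\cite[Proposition~II.4.21]{jantzen}) concerns $\bk$-linear tensor products; it does not apply to a relative tensor product over a ring on which $\bG^{(1)}$ acts trivially. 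As stated, this step does not follow.

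Your closing paragraph tries to justify the good-filtration detour by claiming the base change "along the completion morphism" is non-flat, so that one cannot simply commute invariants with it. This is mistaken: the completion of a noetherian local ring with respect to its maximal ideal is \emph{flat} over that local ring, and the morphism $\FN_{\bt^*/(\bWf,\bullet)}(\{\kappa(\xi)\}) \to \bt^*/(\bWf,\bullet) \to \bt^{*(1)}/\bWf$ is flat. Taking $\bG^{(1)}$-invariants commutes with flat base change (this is~\cite[Equation~(3) in \S I.2.10]{jantzen}), so the identification on invariants follows directly from the Chevalley isomorphism $\scO(\bg^{*(1)})^{\bG^{(1)}} \cong \scO(\bt^{*(1)}/\bWf)$ without any cohomology vanishing. (The paper actually runs this flat base change argument after first passing to the cover $\bt^* \to \bt^*/(\bWf,\bullet)$, using freeness from~\cite{demazure}, and then invoking the Artin--Schreier morphism; but the essential point in either variant is commutation of invariants with flat base change, not a good-filtration argument.) Once the invariants identification is correctly established, your appeal to geometric reductivity to deduce closedness of images of invariant closed sets is correct -- but the precise statement needed is over the base $\FN_{\bt^*/(\bWf,\bullet)}(\{\kappa(\xi)\})$, not over a field, so the appropriate reference is the relative version as in~\cite[Corollary~2 on p.~262]{seshadri} rather than bare Haboush.
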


\begin{proof}
In view of the isomorphism~\eqref{eqn:isom-FN}, to prove the lemma it suffices to prove that the image in $\FN_{\bt^*/(\bWf,\bullet)}(\{\kappa(\xi)\})$ of any $\bG$-invariant closed subset of $X_I^{\widehat{\xi}}$ is closed. Then, considering the factorization
\[
X_I^{\widehat{\xi}} \to Y^{\widehat{\xi}} \to \FN_{\bt^*/(\bWf,\bullet)}(\{\kappa(\xi)\})
\]
in which the first map is $\bG$-equivariant and proper (hence closed), we see that it suffices to prove that the image under the second map of any $\bG$-equivariant closed subset of $Y^{\widehat{\xi}}$ is closed. We will prove below that the latter morphism is the quotient morphism with respect to the action of $\bG$ on $Y^{\widehat{\xi}}$; in view of~\cite[Corollary~2 on p.~262]{seshadri} (applied to the natural closed immersion $Y^{\widehat{\xi}} \hookrightarrow \bg^* \times_\bk \FN_{\bt^*/(\bWf,\bullet)}(\{\kappa(\xi)\})$ and the reductive group $\bG \times_\bk \FN_{\bt^*/(\bWf,\bullet)}(\{\kappa(\xi)\})$ over $\FN_{\bt^*/(\bWf,\bullet)}(\{\kappa(\xi)\})$) this will imply that the image of any $\bG$-invariant closed subset is closed, as desired.

To conclude we therefore have to prove that the natural morphism
\[
\scO(\FN_{\bt^*/(\bWf,\bullet)}(\{\kappa(\xi)\})) \to \scO(Y^{\widehat{\xi}})^{\bG}
\]
is an isomorphism. Since $\scO(\bt^*)$ is free over $\scO(\bt^*/(\bWf, \bullet))$ by~\cite{demazure}, it suffices to prove this claim after base change along the quotient morphism $\bt^* \to \bt^*/(\bWf, \bullet)$. Now, by finiteness we have
\[
\bt^* \times_{\bt^*/(\bWf, \bullet)} \FN_{\bt^*/(\bWf,\bullet)}(\{\kappa(\xi)\}) \cong \FN_{\bt^*}(\{\kappa^{-1}(\kappa(\xi))\}),
\]
and by commutation of fixed points with flat base change (see~\cite[Equation~(3) in~\S I.2.10]{jantzen}) the base change of our morphism identifies with the natural morphism
\[
\scO(\FN_{\bt^*}(\{\kappa^{-1}(\kappa(\xi))\})) \to \scO(\bg^{*(1)} \times_{\bt^{*(1)}/\bWf} \FN_{\bt^*}(\{\kappa^{-1}(\kappa(\xi))\}))^\bG.
\]
Using again commutation of fixed points with flat base change,
the latter morphism is obtained from the Chevalley isomorphism $\bg^{*(1)} / \bG \simto \bt^{*(1)}/\bWf$ by base change along the composition of flat morphisms
\[
\FN_{\bt^*}(\{\kappa^{-1}(\kappa(\xi))\}) \to \bt^* \to \bt^{*(1)} \to \bt^{*(1)}/\bWf,
\]
which finishes the proof. (Here the second morphism is the Artin--Schreier morphism, which is \'etale hence flat, and the third morphism is flat by~\cite{demazure}.)
%
\end{proof}

\begin{proof}[Proof of Lemma~\ref{lem:comp-loc-end}]
\eqref{it:comp-loc-end-1}
It is clear that for any $\lambda$ and $n$ we have
\begin{multline*}
\Hom_{\Db \Modc (\tsD_I^{\widehat{\xi}})}(\tsD_I^{\widehat{\xi}}(\lambda), \scM[n]) \cong \Hom_{\Db \Coh(X_I^{\widehat{\xi}})}(\scO_{X_I^{\widehat{\xi}}}(\lambda), \scM[n]) \\
\cong \mathsf{H}^n(X_I^{\widehat{\xi}}, \scM \otimes_{\scO_{X_I^{\widehat{\xi}}}} \scO_{X_I^{\widehat{\xi}}}(-\lambda)).
\end{multline*}
Now the line bundle on $(\bG/\bP_I)^{(1)}$ defined by $-\lambda$ is ample if $\langle \lambda, \beta^\vee \rangle <0$ for any $\beta \in \fRs \smallsetminus I$ (see e.g.~\cite[Remark after Proposition~II.4.4]{jantzen}), hence in this case $\scO_{X_I^{\widehat{\xi}}}(-\lambda)$ is ample by~\cite[\href{https://stacks.math.columbia.edu/tag/0892}{Tag 0892}]{stacks-project}. This implies the desired claim by~\cite[\href{https://stacks.math.columbia.edu/tag/01Q3}{Tag 01Q3}]{stacks-project}.

\eqref{it:comp-loc-end-2}
In fact, we will prove this claim for any module $\scM$ which is $\bG$-equivariant as a coherent sheaf (with respect to the obvious action of $\bG$ on $X_I^{\widehat{\xi}}$). Namely, consider the cone $\scC$ of our morphism, and assume for a contradiction that $\scC \neq 0$. This cone is easily seen to be a complex of $\bG$-equivariant coherent sheaves, hence so are its cohomology sheaves. Recall the notion of support of a coherent sheaf, and its characterization in~\cite[\href{https://stacks.math.columbia.edu/tag/056J}{Tag 056J}]{stacks-project}. The support of the nonzero cohomology object of $\scC$ of highest degree is a $\bG$-invariant closed subset in $X_I^{\widehat{\xi}}$. By Lemma~\ref{lem:closedness} its image in $\FN_{\bt^*/(\bW_I,\bullet)}(\{\kappa_I(\xi)\})$ is closed. This implies that this image contains the unique closed point, and then that the (derived) pullback of $\scC$ under the closed immersion
\[
\imath_{I,\xi} : \tbg_I^{(1)} \times_{\bt^{*(1)} / \bW_I} \{\kappa_I(\xi)\} \hookrightarrow
\tbg_I^{(1)} \times_{\bt^{*(1)} / \bW_I} \FN_{\bt^*/(\bW_I,\bullet)}(\{\kappa_I(\xi)\}) =
X_I^{\widehat{\xi}}
\]
is nonzero.

Now, using the general version of the base change theorem (see~\cite[Theorem~3.10.3]{lipman}) and Lemma~\ref{lem:flatness-Groth} one sees that the composition of the functor $L(f_I^\xi)^* \circ R(f_I^\xi)_*$ with the pullback functor $L(\imath_{I,\xi})^*$ identifies with the composition of the latter functor with the composition of the equivalence in~\cite[Equation (5) in Theorem~1.5.1]{bmr2} and its inverse (see also~\cite[\S 1.6]{bmr2}); it follows that $L(\imath_{I,\xi})^* \scC=0$, which provides a contradiction.
\end{proof}

\begin{proof}[Proof of Theorem~\ref{thm:comp-loc}\eqref{it:comp-loc-3}]
In view of Lemma~\ref{lem:comp-loc-end}\eqref{it:comp-loc-end-1}, to prove the desired claim it suffices to prove that for any $\scM$ in $\Db \Modc (\tsD_I^{\widehat{\xi}})$, for any $\lambda \in X^*(\bT)$ and $n \in \Z$ the morphism
\[
\Hom(\tsD_I^{\widehat{\xi}}(\lambda), L(f_I^\xi)^* \circ R(f_I^\xi)_*(\scM)[n]) \to 
\Hom(\tsD_I^{\widehat{\xi}}(\lambda), \scM[n])
\]
induced by~\eqref{eqn:adjunction-morph-Loc} is an isomorphism. Now by Lemma~\ref{lem:comp-loc-end}\eqref{it:comp-loc-end-2}, fully faithfulness of $L(f_I^\xi)^*$, and adjunction, we have
\begin{multline*}
\Hom(\tsD_I^{\widehat{\xi}}(\lambda), L(f_I^\xi)^* \circ R(f_I^\xi)_*(\scM) [n]) \cong \\
\Hom(L(f_I^\xi)^* \circ R(f_I^\xi)_*(\tsD_I^{\widehat{\xi}}(\lambda)), L(f_I^\xi)^* \circ R(f_I^\xi)_*(\scM) [n]) \cong \\
\Hom( R(f_I^\xi)_*(\tsD_I^{\widehat{\xi}}(\lambda)), R(f_I^\xi)_*(\scM) [n]) \cong \\
\Hom( L(f_I^\xi)^* \circ R(f_I^\xi)_*(\tsD_I^{\widehat{\xi}}(\lambda)), \scM [n]).
\end{multline*}
We conclude by Lemma~\ref{lem:comp-loc-end}\eqref{it:comp-loc-end-2} again.
\end{proof}

\section{Derived localization for Harish-Chandra bimodules}
\label{sec:loc}

\subsection{Steinberg schemes}
\label{ss:Steinberg-schemes}

Our goal in this section is to prove an analogue of Theorem~\ref{thm:comp-loc} for the categories of (completed) Harish-Chandra bimodules introduced in~\S\ref{ss:central-characters}. First we introduce the geometry that will be involved in this theorem.

Given two subsets $I,J \subset \fRs$, we will consider the associated ``Steinberg scheme''
\[
 \St_{I,J} = \tbg_I \times_{\bg^*} \tbg_J.
\]
This scheme is equipped with a diagonal $\bG$-action, and with a canonical $\bG$-equiva\-riant morphism
\[
 \St_{I,J} \to \bt^* / \bW_I \times \bt^* / \bW_J
\]
(for the trivial action on the codomain) given by~\eqref{eqn:morph-tbg-t} on each factor,
which factors through $\bt^* / \bW_I \times_{\bt^*/\bWf} \bt^* / \bW_J$;
we will set
\[
 \St^{\wedge}_{I,J} := \St_{I,J} \times_{\bt^{*}/\bW_I \times_{\bt^*/\bWf} \bt^{*}/\bW_J} \FN_{\bt^{*}/\bW_I \times_{\bt^*/\bWf} \bt^{*}/\bW_J}( \{ (0,0) \} ).
\]
In case $I=J=\varnothing$, these subsets will usually be omitted from notation.

\begin{rmk}
\label{rmk:St-fiber-product}
By finiteness of the map $\bt^{*}/\bW_I \times_{\bt^*/\bWf} \bt^{*}/\bW_J \to \bt^*/\bWf$, and since $(0,0)$ is the only closed point in the preimage of $0$ under this map,
the scheme $\St^\wedge_{I,J}$ also identifies with the fiber product
\[
\St_{I,J} \times_{\bt^*/\bWf} \FN_{\bt^*/\bWf}(\{0\}).
\]
\end{rmk}

We set
\[
 \bZ_{I,J} := \bt^*/(\bW_I,\bullet) \times_{\bt^{*(1)}/\bWf} \bt^*/(\bW_J,\bullet).
 \]
(In case $I=J=\fRs$, this scheme is the scheme $\bZ$ from~\S\ref{ss:central-characters}.)\footnote{For most notation used in the paper, omission of a subset $I \subset \fRs$ in the notation means that $I=\varnothing$. The notation $\bZ$ is one the only exceptions to this rule.}
Consider the fiber product
\begin{multline}
\label{eqn:base-osDIJ}
\left( \tbg_I^{(1)} \times_{\bt^{*(1)} / \bW_I} \bt^*/(\bW_I,\bullet) \right) \times_{\bg^{*(1)}} \left( \tbg_J^{(1)} \times_{\bt^{*(1)} / \bW_J} \bt^*/(\bW_J,\bullet) \right) \\
\cong \St_{I,J}^{(1)} \times_{\bt^{*(1)} / \bW_I \times_{\bt^{*(1)}/\bWf} \bt^{*(1)} / \bW_J} \bZ_{I,J},
\end{multline}
where the morphisms to $\bg^{*(1)}$ are induced by the natural morphisms $\tbg_I \to \bg^*$ and $\tbg_J \to \bg^*$ (see~\S\ref{ss:parabolic-Groth}).

If $\lambda \in X^*(\bT)$, we will denote by $\tla_I$, resp.~$\tla_J$, the image of $\ola$ in $\bt^*/(\bW_I,\bullet)$, resp.~$\bt^*/(\bW_J,\bullet)$; in the notation of~\S\ref{ss:comp-loc-statement} we therefore have $\tla_I=\kappa_I(\ola)$, $\tla_J = \kappa_J(\ola)$. For $\lambda,\mu \in X^*(\bT)$ we also set
\[
\bZ^{\widehat{\lambda}, \widehat{\mu}}_{I,J} = \FN_{\bZ_{I,J}}( \{ (\tla_I,\tmu_J) \} ).
\]


\begin{lem}
\label{lem:steinberg-completion}
Let $\lambda,\mu \in X^*(\bT)$ and $I,J \subset \fRs$,
 and assume that
 \[
 \bW_I \subset \mathrm{Stab}_{(\bWf,\bullet)}(\ola), \quad \bW_J \subset \mathrm{Stab}_{(\bWf,\bullet)}(\omu).
 \]
Then the projection morphism
\[
\St_{I,J}^{(1)} \times_{\bt^{*(1)} / \bW_I \times_{\bt^{*(1)}/\bWf} \bt^{*(1)} / \bW_J} \bZ_{I,J} \to \St_{I,J}^{(1)}
\]
induces an
isomorphism of schemes
\[
\St_{I,J}^{(1)} \times_{\bt^{*(1)} / \bW_I \times_{\bt^{*(1)} / \bWf} \bt^{*(1)} / \bW_J} 
\bZ^{\widehat{\lambda}, \widehat{\mu}}_{I,J}
\simto \St^{\wedge(1)}_{I,J}.
\]
\end{lem}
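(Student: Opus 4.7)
My plan is to reduce the lemma to an isomorphism of formal schemes not involving $\St_{I,J}^{(1)}$. Setting $A := \bt^{*(1)}/\bW_I \times_{\bt^{*(1)}/\bWf} \bt^{*(1)}/\bW_J$, the stated morphism is obtained by base change along $\St_{I,J}^{(1)} \to A$ from the natural morphism $\bZ^{\widehat{\lambda},\widehat{\mu}}_{I,J} \to \FN_A(\{(0,0)\})$ induced by $\bZ_{I,J} \to A$ (under which $(\tla_I,\tmu_J)$ lies over $(0,0)$). It therefore suffices to show that this latter morphism is an isomorphism.

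I would next rewrite both sides as fiber products of formal neighborhoods. The projections to $\bt^{*(1)}/\bWf$ are finite, and $(\tla_I,\tmu_J) \in \bZ_{I,J}$, resp.\ $(0,0) \in A$, is isolated in its fiber over $0 \in \bt^{*(1)}/\bWf$. The standard decomposition of formal neighborhoods along fibers of finite morphisms (as used in~\eqref{eqn:decomp-formal-neighborhood} and~\cite[Eq.~(3.11)]{br-Hecke}) then gives
\begin{gather*}
\bZ^{\widehat{\lambda},\widehat{\mu}}_{I,J} \cong \FN_{\bt^*/(\bW_I,\bullet)}(\{\tla_I\}) \times_{\FN_{\bt^{*(1)}/\bWf}(\{0\})} \FN_{\bt^*/(\bW_J,\bullet)}(\{\tmu_J\}), \\
\FN_A(\{(0,0)\}) \cong \FN_{\bt^{*(1)}/\bW_I}(\{0\}) \times_{\FN_{\bt^{*(1)}/\bWf}(\{0\})} \FN_{\bt^{*(1)}/\bW_J}(\{0\}),
\end{gather*}
and the comparison map becomes the fiber product of the natural morphisms $\pi_I \colon \FN_{\bt^*/(\bW_I,\bullet)}(\{\tla_I\}) \to \FN_{\bt^{*(1)}/\bW_I}(\{0\})$ and its $J$-analogue $\pi_J$, so it is enough to prove that $\pi_I$ and $\pi_J$ are isomorphisms.

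For $\pi_I$, I would use that the Artin--Schreier morphism $\bt^* \to \bt^{*(1)}$ is \'etale (its derivative on tangent spaces is $-\id$ in characteristic $\ell$) and sends $\ola$ to $0$, hence induces an isomorphism $\FN_{\bt^*}(\{\ola\}) \simto \FN_{\bt^{*(1)}}(\{0\})$. This isomorphism is equivariant for the dot-action of $\bW_I$ on the source and the linear action on the target, and by the hypothesis $\bW_I \subset \mathrm{Stab}_{(\bWf,\bullet)}(\ola)$ both actions fix the respective base points. Passing to $\bW_I$-quotients yields $\pi_I$, and the same argument treats $\pi_J$.

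The main obstacle is justifying that formation of $\bW_I$-invariants commutes with completion at the fixed point when $\ell$ may divide $|\bW_I|$. The key input is that the $(\bW_I,\bullet)$-orbit of $\ola$ is $\{\ola\}$, so $\fm_\ola \subset \scO(\bt^*)$ is the unique maximal ideal lying over $\fm_{\tla_I} \subset \scO(\bt^*)^{(\bW_I,\bullet)}$; since $\scO(\bt^*)$ is finite over $\scO(\bt^*)^{(\bW_I,\bullet)}$, a standard semilocal argument identifies $\widehat{\scO(\bt^*)}_{\fm_\ola}$ with $\scO(\bt^*) \otimes_{\scO(\bt^*)^{(\bW_I,\bullet)}} \widehat{\scO(\bt^*)^{(\bW_I,\bullet)}}_{\fm_{\tla_I}}$, and taking $\bW_I$-invariants of the left-hand side recovers $\widehat{\scO(\bt^*)^{(\bW_I,\bullet)}}_{\fm_{\tla_I}}$. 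The analogous reasoning on the Frobenius-twisted side completes the proof.
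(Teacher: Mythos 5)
Your proof is correct but takes a genuinely different route from the paper's. Both arguments reduce to showing that the Artin--Schreier-induced morphism $\bt^*/(\bW_I,\bullet) \to \bt^{*(1)}/\bW_I$ becomes an isomorphism on formal neighborhoods at $\tla_I$ and $0$ (equivalently, is \'etale at $\tla_I$), and similarly for $J$; you spell out the intermediate reduction via the decomposition of $\bZ^{\hla,\hmu}_{I,J}$ and $\FN_A(\{(0,0)\})$ as fiber products of formal neighborhoods over $\FN_{\bt^{*(1)}/\bWf}(\{0\})$, which the paper compresses into the phrase that ``the lemma will follow from the definitions.'' The paper then establishes the \'etaleness by viewing the chain $\bt^* \to \bt^*/(\bW_I,\bullet) \to \bt^{*(1)}/\bW_I$ as successive quotients for the finite group $\bW_I \ltimes (X^*(\bT)/\ell X^*(\bT))$ acting by the dot action, observes that the stabilizer of $\ola$ is exactly $\bW_I$ (the translation part acting freely), and invokes the standard criterion of~\cite[Exp.~V, Prop.~2.2]{sga1}. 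You instead prove the isomorphism of completed local rings directly: using that the Artin--Schreier morphism is \'etale and $\bW_I$-equivariant (dot action upstairs, linear action downstairs) and sends $\ola$ to $0$, and then taking $\bW_I$-invariants of the resulting equivariant isomorphism of completed local rings. The commutation of invariants with completion, which you correctly flag as the potential obstacle, is handled properly by your uniqueness-of-the-maximal-ideal-above argument together with flatness of $\widehat{R}$ over $R$. The trade-off is that the paper's argument is shorter because it delegates the work to a named criterion, whereas your computation is more elementary and self-contained but requires verifying the invariants-vs-completion compatibility by hand.
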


\begin{proof}
The lemma will follow from the definitions once we prove that the morphism
\[
\bt^*/(\bW_I,\bullet) \to \bt^{*(1)} / \bW_I, \quad \text{resp.} \quad
\bt^*/(\bW_J,\bullet) \to \bt^{*(1)} / \bW_J,
\]
is \'etale at $\tla_I$, resp.~$\tmu_J$. The two cases are similar, so we concentrate on the first one. We consider the composition
\[
\bt^* \to \bt^{*(1)} \to \bt^{*(1)} / \bW_I,
\]
where the first morphism is the Artin--Schreier morphism~\eqref{eqn:Artin-Schreier}. This morphism is the quotient morphism for the natural ``dot'' action of $\bW_I \ltimes (X^*(\bT)/\ell X^*(\bT))$ on $\bt^*$ (where $X^*(\bT)/\ell X^*(\bT)$ acts by translation on $\bt^* = \bk \otimes_\Z X^*(\bT)$). Since the point $\ola \in \bt^*$ is stabilized by the action of $\bW_I$, whereas its orbit under $X^*(\bT)/\ell X^*(\bT)$ is free, its stabilizer is exactly $\bW_I$, so the claim follows from the general criterion~\cite[Exp. V, Proposition 2.2]{sga1}.
\end{proof}

We will denote by
\[
\Coh^{\bG^{(1)}}_{\cN}(\St_{I,J}^{(1)})
\]
the category of $\bG^{(1)}$-equivariant coherent sheaves on $\St_{I,J}^{(1)}$ set-theoretically supported on the preimage of the point $(0,0) \in \bt^{*(1)}/\bW_I \times_{\bt^{*(1)}/\bWf} \bt^{*(1)}/\bW_J$, or equivalently set-theoretically supported on the preimage of $0 \in \bt^{*(1)}/\bWf$. The following lemma is an application of the considerations in~\S\ref{ss:app-special-case}.

\begin{lem}
\label{lem:DbCoh-nil}
The obvious functor
\[
\Db \Coh^{\bG^{(1)}}_{\cN}(\St_{I,J}^{(1)}) \to \Db \Coh^{\bG^{(1)}}(\St^{\wedge(1)}_{I,J})
\]
is fully faithful; its essential image is the full subcategory whose objects are the complexes $\scF$ such that the morphism
\[
\scO(\FN_{\bt^{*(1)}/\bW_I \times_{\bt^{*(1)}/\bWf} \bt^{*(1)}/\bW_J}(\{(0,0)\})) \to \End(\scF)
\]
vanishes on a power of the unique maximal ideal.
\end{lem}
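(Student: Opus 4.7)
The plan is to recognize this as a direct instance of the abstract framework of~\S\ref{ss:app-special-case} and then quote the general result established there (used in the same way in the proof of Lemma~\ref{lem:HC-wedge-nil}). Concretely, set
\[
R := \scO \bigl( \bt^{*(1)}/\bW_I \times_{\bt^{*(1)}/\bWf} \bt^{*(1)}/\bW_J \bigr),
\]
which is a noetherian commutative $\bk$-algebra of finite type, and let $\fa \subset R$ be the maximal ideal corresponding to the closed point $(0,0)$. Then the completion $\widehat{R}$ is, by construction, the ring of functions on $\FN_{\bt^{*(1)}/\bW_I \times_{\bt^{*(1)}/\bWf} \bt^{*(1)}/\bW_J}(\{(0,0)\})$, and the natural morphism $\St_{I,J}^{(1)} \to \Spec(R)$ (induced by the morphisms~\eqref{eqn:morph-tbg-t} on each factor of the fiber product, Frobenius-twisted) realizes $\St_{I,J}^{\wedge(1)}$ as the base change of $\St_{I,J}^{(1)}$ along $\Spec(\widehat{R}) \to \Spec(R)$.

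Next I would check that the hypotheses of~\S\ref{ss:app-special-case} are satisfied in this setup. The group $\bG^{(1)}$ acts on $\St_{I,J}^{(1)}$ diagonally and compatibly with the projection to $\Spec(R)$, since $\bG^{(1)}$ acts trivially on each $\bt^{*(1)}/\bW_K$. The structure sheaf of $\St_{I,J}^{(1)}$ is a coherent sheaf of (commutative) $\scO_{\Spec(R)}$-algebras, so the situation fits verbatim into the framework of the appendix (with a trivial sheaf of noncommutative algebras). Finally, by the Nullstellensatz and the coherence assumption, a $\bG^{(1)}$-equivariant coherent sheaf on $\St_{I,J}^{(1)}$ is set-theoretically supported on the preimage of $(0,0)$ if and only if some power of $\fa$ annihilates it; so $\Coh^{\bG^{(1)}}_{\cN}(\St_{I,J}^{(1)})$ is exactly the subcategory of $\Coh^{\bG^{(1)}}(\St_{I,J}^{(1)})$ of modules annihilated by a power of $\fa$.

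With these identifications in hand, the fully faithfulness statement and the characterization of the essential image follow directly from the general results of~\S\ref{ss:app-special-case}, exactly as in the proof of Lemma~\ref{lem:HC-wedge-nil}. The only mildly delicate point---and the only place where one actually has to do any work---is the translation between the set-theoretic support condition defining $\Coh^{\bG^{(1)}}_{\cN}$ and the algebraic condition (``annihilated by some power of $\fa$'') used in the appendix; this is immediate from the fact that $R$ is noetherian and that $\fa$ is a maximal ideal, so that a finitely generated $R$-module is supported on $\{(0,0)\}$ iff $\fa^N$ annihilates it for some $N$. Everything else is a formal consequence of the general machinery.
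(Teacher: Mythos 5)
Your proposal is correct and takes essentially the same approach as the paper, which simply declares the lemma ``an application of the considerations in~\S\ref{ss:app-special-case}'' without spelling out the inputs; you have filled in exactly those inputs. The only thing worth making explicit (as the paper does in the proof of Lemma~\ref{lem:HC-wedge-nil}) is that the group scheme used in~\S\ref{ss:app-special-case} should be $\bG^{(1)} \times_{\Spec(\bk)} \Spec(R)$, obtained from $\bG^{(1)}$ by base change to $\Spec(R)$; since $\bG^{(1)}$ acts trivially on $\Spec(R)$ this changes nothing substantive, but it is needed to fit the hypotheses of the appendix literally.
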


\subsection{Equivariant sheaves of algebras and modules}
\label{ss:equiv-sheaves-alg-mod}

As in~\S\ref{ss:sheaves-alg-mod}, consider a $\bk$-scheme $X$, and a sheaf of algebras $\scA$ on $X$ endowed with a morphism of sheaves of algebras $\scO_X \to \scA$ which makes $\scA$ a quasi-coherent $\scO_X$-module. Assume furthermore that $X$ is endowed with an action of a smooth affine $\bk$-group scheme $H$, and that the $\scO_X$-module $\scA$ admits a structure of $H$-equivariant quasi-coherent sheaf such that the morphism $\scO_X \to \scA$ and the multiplication morphism $\scA \otimes_{\scO_X} \scA \to \scA$ are $H$-equivariant. In this setting, a \emph{weakly $H$-equivariant $\scA$-module} is an object $\scM$ of $\Modqc(\scA)$ endowed with a structure of $H$-equivariant quasi-coherent $\scO_X$-module (with respect to the restriction of the $\scA$-action to $\scO_X$) such that the action morphism $\scA \otimes_{\scO_X} \scM \to \scM$ is a morphism of $H$-equivariant quasi-coherent sheaves. These are naturally objects of an abelian category, which will be denoted
\[
\Modqc^H(\scA),
\]
and we have an obvious faithful exact functor
\[
\For^H_{\scA} : \Modqc^H(\scA) \to \Modqc(\scA).
\]
(We will omit the subscript ``$\scA$'' in this notation in case it is obvious from the context.)
If $\scA$ is noetherian, we will denote by
\[
\Modc^H(\scA)
\]
the full subcategory of $\Modqc^H(\scA)$ whose objects are those $\scM$ such that $\For_{\scA}^H(\scM)$ belongs to $\Modc(\scA)$.

There is a stronger notion of equivariance in the following setting. Consider again a smooth affine $\bk$-group scheme $H$ acting on a $\bk$-scheme $X$, and let $\fh$ be the Lie algebra of $H$. Recall that if $\scM$ is an $H$-equivariant quasi-coherent $\scO_X$-module, there exists a canonical Lie algebra morphism $L_{\scM} : \fh \to \End_{\bk}(\scM)$; see~\cite[\S 3.1]{kashiwara}.\footnote{In~\cite{kashiwara} it is assumed that the base field is $\mathbb{C}$; but the considerations used here and below hold over any base field.} Assume we are given a sheaf of algebras $\scA$ as above with an $H$-equivariant structure satisfying the properties considered at the beginning of the present subsection, and a morphism of Lie algebras $\sigma : \fh \to \Gamma(X, \scA)$ which is $H$-equivariant and satisfies
\begin{equation}
\label{eqn:cond-strong-equiv-modules}
L_{\scA}(x) = [\sigma(x),-] \quad \text{for any $x \in \fh$}
\end{equation}
(where the right-hand side is the commutator in the algebra $\Gamma(X, \scA)$). Then an object $\scM$ of $\Modqc^H(\scA)$, with structure morphism $\alpha_\scM : \scA \to \sEnd_\bk(\scM)$, is called a \emph{strongly $H$-equivariant $\scA$-module} if we have
\[
\alpha_\scM(\sigma(x)) = L_\scM(x) \quad \text{for any $x \in \fh$.}
\]
(For this notion, see also~\cite[\S 2.16]{bl2}.) We will denote by
\[
\Modqc(\scA,H)
\]
the full subcategory of $\Modqc^H(\scA)$ whose objects are the strongly equivariant modules, and by
\[
\Modc(\scA,H)
\]
the intersection of the full subcategories $\Modqc(\scA,H)$ and $\Modc^H(\scA)$ in the category $\Modqc^H(\scA)$.

The following statement 
follows from the same arguments as for Lemma~\ref{lem:HC-tHC}.

\begin{lem}
\label{lem:Modc-Modqc}
If $\scA$ is noetherian, the obvious functor
\[
\Db \Modc(\scA,H) \to \Db \Modqc(\scA,H)
\]
is fully faithful, and identifies the left-hand side with the full subcategory of the right-hand side whose objects are the complexes all of whose cohomology objects belong to $\Modc(\scA,H)$.
\end{lem}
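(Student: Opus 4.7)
The plan is to mimic the proof of Lemma~\ref{lem:HC-tHC}, which itself follows the standard pattern from~\cite{arinkin-bezrukavnikov}: one verifies that $\Modc(\scA,H)$ sits as a Serre subcategory inside $\Modqc(\scA,H)$ such that every ambient object is a filtered union of subobjects belonging to this Serre subcategory, and then appeals to the standard formal argument which yields both fully faithfulness and the identification of the essential image.

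First I would check that $\Modc(\scA,H)\subset\Modqc(\scA,H)$ is closed under subobjects, quotients, and extensions. Since $\scA$ is noetherian, this holds for the underlying $\scA$-modules; the strong equivariance condition $\alpha_{\scM}(\sigma(x))=L_{\scM}(x)$ is a pointwise identity of morphisms of $\bk$-modules which is visibly preserved under passage to subobjects, quotients, and extensions (using functoriality of $L_{(-)}$), so it is inherited automatically.

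Next I would show that every $N \in \Modqc(\scA,H)$ is the filtered union of its subobjects in $\Modc(\scA,H)$. Given a finite collection of local sections of $N$, one uses the fact that the underlying $H$-equivariant quasi-coherent sheaf is locally an $\scO(H)$-comodule and that $\scO(H)$ is the filtered union of its finite-dimensional subcomodules to enlarge the sections to a finite $H$-stable set; the $\scA$-submodule they generate is then coherent (by noetherianness of $\scA$) and $H$-stable, hence a weakly equivariant $\scA$-coherent submodule. The strong equivariance property for this submodule follows from that of $N$ by the same pointwise argument as in the first step.

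Finally, with these two ingredients in place, the standard formal argument applies: one computes $\Hom_{\Db \Modqc(\scA,H)}(M,N[n])$ for $M,N \in \Modc(\scA,H)$ by truncating representatives of $n$-extensions along the filtered colimit presentation, which reduces the computation to the corresponding Hom in $\Db \Modc(\scA,H)$; the essential image statement is obtained by the same truncation trick applied to a bounded complex with coherent cohomology. The main obstacle is the second step, namely producing $H$-stable coherent $\scA$-submodules containing prescribed sections; everything else is formal once this is established.
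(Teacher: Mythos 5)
Your proposal matches the paper's intended argument: the paper proves this lemma simply by invoking "the same arguments as for Lemma~\ref{lem:HC-tHC}," i.e.\ the standard mechanism from~\cite[Proof of Corollary~2.11]{arinkin-bezrukavnikov} that you describe (coherent objects form a Serre subcategory of the quasi-coherent ones, every quasi-coherent object is the filtered union of its coherent $H$-stable subobjects, the strong equivariance condition passes to subobjects, and the truncation argument then gives fully faithfulness and the description of the essential image). The only thing I would tighten is the second step: producing an $H$-stable \emph{coherent} subsheaf on a non-affine $X$ containing prescribed local sections is a globalization point (one needs the equivariant version of EGA~I~(6.9.9), as in Thomason), but the paper glosses over this in exactly the same way, so your proof is at the same level of detail as the source.
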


In particular, if $X$ is a smooth $\bk$-variety endowed with an action of a smooth affine $\bk$-group scheme $H$, the sheaf of algebras $\cD_X$ admits a canonical structure of $H$-equivariant quasi-coherent sheaf which satisfies the conditions above, and there exists a canonical $H$-equivariant morphism of Lie algebras $\fh \to \Gamma(X, \cD_X)$ such that~\eqref{eqn:cond-strong-equiv-modules} is satisfied. We can therefore consider the abelian categories
\[
\Modqc^H(\cD_X), \quad \Modc^H(\cD_X), \quad \Modqc(\cD_X,H), \quad \Modc(\cD_X,H).
\]
As explained in~\cite[Lemma~3.1.4]{kashiwara}, the category $\Modqc(\cD_X,H)$ can also be decribed as the full subcategory of $\Modqc^H(\cD_X)$ whose objects are the weakly $H$-equivariant $\cD_X$-modules $\scM$ such that the isomorphism
\[
a_X^* \scM \simto p_X^* \scM
\]
giving the structure of $H$-equivariant quasi-coherent sheaf is an isomorphism of $\cD_{H \times X}$-modules.
(Here, $a_X,p_X : H \times X \to X$ are the action and projection morphisms, respectively.)

In terms of the equivalences~\eqref{eqn:equivalences-Dmod}, there exists a canonical action of $H^{(1)}$ on $T^* X^{(1)}$, which by composition with $\Fr_H$ provides an action of $H$. The sheaf of algebras $\scD_X$ admits a canonical structure of $H$-equivariant quasi-coherent sheaf, and there exist canonical equivalences
\begin{equation*}
\Modqc^H(\cD_X) \simto \Modqc^H(\scD_X), \quad \Modc^H(\cD_X) \simto \Modc^H(\scD_X).
\end{equation*}
We can also consider the morphism $\fh \to \Gamma(X,\cD_X) = \Gamma(T^* X^{(1)}, \scD_X)$, hence the associated categories $\Modqc(\scD_X,H)$ and $\Modc(\scD_X,H)$, and we have canonical equivalences
\[
\Modqc(\cD_X,H) \simto \Modqc(\scD_X,H), \quad \Modc(\cD_X,H) \simto \Modc(\scD_X,H).
\]

Let us now consider a variant of these constructions in the setting of~\S\ref{ss:monodromic-variant}. Given a subset $I \subset \fRs$,
the $\scO_{\bG/\bP_I}$-module $\tD_I$ also admits a natural structure of $\bG$-equivariant quasi-coherent sheaf (with respect to the obvious action of $\bG$) which satisfies the conditions of~\S\ref{ss:sheaves-alg-mod}, and
the action of $\bG$ on $\bG/\bU_I$ induces a $\bG$-equivariant morphism of Lie algebras
\begin{equation}
\label{eqn:morphism-fg-tDI}
\bg \to \Gamma(\bG/\bP_I, \tD_I).
\end{equation}
We can therefore consider the categories
\begin{equation}
\label{eqn:categories-tDI-G}
\Modqc^{\bG}(\tD_I), \quad \Modc^{\bG}(\tD_I), \quad \Modqc(\tD_I,\bG), \quad \Modc(\tD_I,\bG).
\end{equation}
Similarly, $\tsD_I$ has a canonical $\bG$-equivariant structure with respect to the pullback (under $\Fr_\bG$) of the action of $\bG^{(1)}$ on $\tbg_I^{(1)} \times_{\bt^{*(1)} / \bW_I} \bt^*/(\bW_I,\bullet)$ induced by the action on $\tbg_I^{(1)}$, and the right-hand side in~\eqref{eqn:morphism-fg-tDI} identifies with $\Gamma \bigl( \tbg_I^{(1)} \times_{\bt^{*(1)} / \bW_I} \bt^*/(\bW_I,\bullet), \tsD_I \bigr)$. We can therefore consider the categories
\[
\Modqc^{\bG}(\tsD_I), \quad \Modc^{\bG}(\tsD_I), \quad \Modqc(\tsD_I,\bG), \quad \Modc(\tsD_I,\bG),
\]
and each of these categories identifies with the corresponding category in~\eqref{eqn:categories-tDI-G}.

\begin{rmk}
Consider the action of $\bL_I$ on $\bG/\bU_I$ as in~\S\ref{ss:monodromic-variant}. Then the sheaf of algebras $\cD_{\bG/\bU_I}$ has a canonical structure of $\bL_I$-equivariant quasi-coherent sheaf, and
we have a canonical equivalence of categories
$\Modqc^{\bL_I}(\cD_{\bG/\bU_I}) \simto \Modqc(\tD_I)$.
\end{rmk}

\subsection{Harish-Chandra \texorpdfstring{$\cD$}{D}-modules}
\label{ss:HC-Dmod}

From now on we fix two subsets $I,J \subset \fRs$, and set
\[
\tD_{I,J} := \tD_I \boxtimes \tD_J^\op, \qquad \tsD_{I,J} := \tsD_I \boxtimes \tsD_J^\op.
\]
Then $\tD_{I,J}$ is a sheaf of algebras on $\bG/\bP_I \times \bG/\bP_J$, $\tsD_{I,J}$ is a sheaf of algebras on
\begin{equation}
\label{eqn:base-tsDIJ}
\left( \tbg_I^{(1)} \times_{\bt^{*(1)} / \bW_I} \bt^*/(\bW_I,\bullet) \right) \times \left( \tbg_J^{(1)} \times_{\bt^{*(1)} / \bW_J} \bt^*/(\bW_J,\bullet) \right),
\end{equation}
and their pushforwards under the natural maps to $(\bG/\bP_I)^{(1)} \times (\bG/\bP_J)^{(1)}$ coincide canonically. We will consider the diagonal $\bG$-actions on $\bG/\bP_I \times \bG/\bP_J$ and on~\eqref{eqn:base-tsDIJ}, and the composition
\[
\bg \to \bg \times \bg^\op \to \Gamma(\bG/\bP_I \times \bG/\bP_J, \tD_{I,J}) = \Gamma(\tsD_{I,J})
\]
(where in the right-hand side we omit the scheme~\eqref{eqn:base-tsDIJ} to save space) where the first morphism is the antidiagonal embedding and the second one is induced by the morphisms~\eqref{eqn:morphism-fg-tDI}. With respect to these structures we can consider the categories
\[
\Modqc^\bG(\tD_{I,J}), \quad \Modqc(\tD_{I,J},\bG), \quad \Modqc^\bG(\tsD_{I,J}), \quad \Modqc(\tsD_{I,J},\bG)
\]
and their subcategories of coherent modules. We have canonical equivalences
\[
\Modqc^\bG(\tD_{I,J}) \cong \Modqc^\bG(\tsD_{I,J}), \quad \Modqc(\tD_{I,J},\bG) \cong \Modqc(\tsD_{I,J},\bG),
\]
and similarly for coherent modules.

Consider the algebra morphism
\[
\scO(\bg^{*(1)} \times \bg^{*(1)}) \to \Gamma(\tsD_{I,J}) = \Gamma(\bG/\bP_I \times \bG/\bP_J, \tD_{I,J})
\]
obtained from the natural morphism from~\eqref{eqn:base-tsDIJ} to $\bg^{*(1)} \times \bg^{*(1)}$. Using this morphism we can define the sheaf of algebras
\[
\oD_{I,J} := \tD_{I,J} \otimes_{\scO(\bg^{*(1)} \times \bg^{*(1)})} \scO(\Delta \bg^{*(1)})
\]
on $\bG/\bP_I \times \bG/\bP_J$,
where $\Delta \bg^{*(1)} \subset \bg^{*(1)} \times \bg^{*(1)}$ is the diagonal copy of $\bg^{*(1)}$.
If we denote by
\[
\osD_{I,J}
\]
the pullback of $\tsD_{I,J}$ to the closed subscheme~\eqref{eqn:base-osDIJ}, then we have equivalences
\[
\Modqc(\oD_{I,J}) \cong \Modqc(\osD_{I,J}), \quad \Modc(\oD_{I,J}) \cong \Modc(\osD_{I,J}).
\]
We can also consider the categories of weakly or strongly $\bG$-equivariant modules for these sheaves of algebras, and have similar equivalences for the categories of equivariant modules.

Since a $\cU\bg$-module obtained by differentiation from a $\bG$-module has trivial action of the Frobenius center, any strongly $\bG$-equivariant $\tsD_{I,J}$-module is scheme-theoretically supported on~\eqref{eqn:base-osDIJ}; in other words we have canonical equivalences of categories
\[
\Modqc(\tsD_{I,J}, \bG) \cong \Modqc(\osD_{I,J}, \bG), \quad
\Modc(\tsD_{I,J}, \bG) \cong \Modc(\osD_{I,J}, \bG).
\]

For $\lambda,\mu \in X^*(\bT)$ we will denote by
\[
\osD_{I,J}^{\widehat{\lambda},\widehat{\mu}}
\]
the pullback of $\osD_{I,J}$ to
\begin{equation}
\label{eqn:scheme-oD-hla-hmu}
\St_{I,J}^{(1)} \times_{\bt^{*(1)} / \bW_I \times_{\bt^{*(1)} / \bWf} \bt^{*(1)} / \bW_J} \bZ_{I,J}^{\hla,\hmu}.
\end{equation}
We can also consider the natural morphism
\[
\scO(\bZ_{I,J}) \to \Gamma(\osD_{I,J}) = \Gamma(\bG/\bP_I \times \bG/\bP_J, \oD_{I,J}),
\]
and then the sheaf of algebras
\[
\oD_{I,J}^{\widehat{\lambda},\widehat{\mu}} := \oD_{I,J} \otimes_{\scO(\bZ_{I,J})} \scO(\bZ_{I,J}^{\hla,\hmu})
\]
on $\bG/\bP_I \times \bG/\bP_J$. As above we have canonical equivalences
\begin{equation}
\label{eqn:equivalences-oD-osD}
\Modqc(\oD_{I,J}^{\widehat{\lambda},\widehat{\mu}}) \cong \Modqc(\osD_{I,J}^{\widehat{\lambda},\widehat{\mu}}), \quad \Modc(\oD_{I,J}^{\widehat{\lambda},\widehat{\mu}}) \cong \Modc(\osD_{I,J}^{\widehat{\lambda},\widehat{\mu}}),
\end{equation}
and similarly for the categories of equivariant modules.

\subsection{Localization theorem}



From now on we fix $\lambda, \mu \in X^*(\bT)$.
Our localization theorem for completed Harish-Chandra bimodules will be based on (a slight variant of)
Theorem~\ref{thm:comp-loc}, applied to the group $\bG \times \bG$
and the subset $I \times J \subset \fRs \times \fRs$.

 To save space we set
\begin{multline*}
X_{I,J}^{\widehat{\lambda},\widehat{\mu}} := \left( \tbg_I^{(1)} \times_{\bt^{*(1)} / \bW_I} \bt^*/(\bW_I,\bullet) \right) \times \left( \tbg_J^{(1)} \times_{\bt^{*(1)} / \bW_J} \bt^*/(\bW_J,\bullet) \right) \\
\times_{\bt^*/(\bW_I,\bullet) \times \bt^*/(\bW_J,\bullet)} \FN_{\bt^*/(\bW_I,\bullet) \times \bt^*/(\bW_J,\bullet)}( \{ (\tla_I,\tmu_J) \} )
\end{multline*}
and
\begin{multline*}
Y^{\widehat{\lambda},\widehat{\mu}} := \left( \bg^{*(1)} \times_{\bt^{*(1)} / \bWf} \bt^*/(\bWf,\bullet) \right) \times \left( \bg^{*(1)} \times_{\bt^{*(1)} / \bWf} \bt^*/(\bWf,\bullet) \right) \\
\times_{\bt^*/(\bWf,\bullet) \times \bt^*/(\bWf,\bullet)} \FN_{\bt^*/(\bWf,\bullet) \times \bt^*/(\bWf,\bullet)}( \{ (\tla,\tmu) \} ).
\end{multline*}
We have a sheaf of algebra $\tsD_{I,J}^{\widehat{\lambda},\widehat{\mu}}$ on $X_{I,J}^{\widehat{\lambda},\widehat{\mu}}$ (obtained by pullback from the algebra $\tsD_{I,J}$ of~\S\ref{ss:HC-Dmod}), and a sheaf of algebras on $Y^{\widehat{\lambda},\widehat{\mu}}$ corresponding to the $\scO(Y^{\widehat{\lambda},\widehat{\mu}})$-algebra
\begin{multline*}
(\cU \bg \otimes \cU\bg^\op)^{\widehat{\lambda},\widehat{\mu}} := \\
(\cU \bg \otimes \cU\bg^\op) \otimes_{\scO(\bt^*/(\bWf,\bullet) \times \bt^*/(\bWf,\bullet))} \scO(\FN_{\bt^*/(\bWf,\bullet) \times \bt^*/(\bWf,\bullet)}( \{ (\tla,\tmu) \} ))
\end{multline*}
(which will be denoted by the same symbol).
We have a canonical morphism of ringed spaces
\[
f_{I,J}^{\lambda,\mu} : \bigl( X_{I,J}^{\widehat{\lambda},\widehat{\mu}}, \tsD_{I,J}^{\widehat{\lambda},\widehat{\mu}} \bigr) \to
\bigl( Y^{\widehat{\lambda},\widehat{\mu}}, (\cU \bg \otimes \cU\bg^\op)^{\widehat{\lambda},\widehat{\mu}} \bigr),
\]
and Theorem~\ref{thm:comp-loc} says that, under the assumption that the stabilizer of $\ola$, resp.~$\omu$, for the $(\bWf,\bullet)$-action on $\bt^*$ is contained in $\bW_I$, resp.~$\bW_J$, the push/pull functors 
associated with this morphism restrict to quasi-inverse equivalences of categories
\begin{gather*}
L(f_{I,J}^{\lambda,\mu})^* : \Db \Modfg((\cU \bg \otimes \cU\bg^\op)^{\widehat{\lambda},\widehat{\mu}}) \simto \Db \Modc(\tsD_{I,J}^{\widehat{\lambda},\widehat{\mu}}), \\
R(f_{I,J}^{\lambda,\mu})_* : \Db \Modc(\tsD_{I,J}^{\widehat{\lambda},\widehat{\mu}}) \simto \Db \Modfg((\cU \bg \otimes \cU\bg^\op)^{\widehat{\lambda},\widehat{\mu}}).
\end{gather*}

\begin{rmk}
This statement is not exactly an application of Theorem~\ref{thm:comp-loc} for $\bG \times \bG$, since we work with the algebras $\tsD_I \boxtimes \tsD_J^\op$ and $\cU\bg \otimes \cU\bg^\op$ instead of the algebras $\tsD_I \boxtimes \tsD_J$ and $\cU\bg \otimes \cU\bg$. However, as in Remark~\ref{rmk:translation-DI}, a slight variant of~\cite[Lemma~3.0.6]{bmr2} provides an isomorphism between $\tsD_J^\op$ and the pullback of $\tsD_J$ under the automorphism of $\tbg_J^{(1)} \times_{\bt^{*(1)}/\bW_J} \bt^*/(\bW_J,\bullet)$ given by
\[
(g\bP_J^{(1)},\xi ; \kappa_I(\eta)) \mapsto (g\bP_J^{(1)},-\xi,\kappa_I(-\eta-2\overline{\rho}))
\]
for $g \in \bG^{(1)}$, $\xi \in \bg^{*(1)}$ and $\eta \in \bt^*$. In particular, there exists an equivalence of categories, compatible with the global sections functors, between $\Modc(\tsD_{I,J}^{\widehat{\lambda},\widehat{\mu}})$ and the category of coherent modules for the pullback of $\tsD_I \boxtimes \tsD_J$ to $X_{I,J}^{\widehat{\lambda}, \widehat{-\mu-2\rho}}$. Since $\mathrm{Stab}_{(\bWf,\bullet)}(\omu) = \mathrm{Stab}_{(\bWf,\bullet)}(\overline{-\mu-2\rho})$, this justifies our claim above regarding the functors $L(f_{I,J}^{\lambda,\mu})^*$ and $R(f_{I,J}^{\lambda,\mu})_*$.
\end{rmk}

Below we will assume that $\lambda$ and $\mu$ satisfy
\begin{equation}
\label{eqn:conditions-la-mu}
\mathrm{Stab}_{(\bWaff,\bullet)}(\lambda) = \bW_I, \quad
\mathrm{Stab}_{(\bWaff,\bullet)}(\mu) = \bW_J
\end{equation}
(where we consider the stabilizers for the restriction of the dot-action of $\bW$ on $X^*(\bT)$ to $\bWaff$).
In view of~\cite[Lemma~3.1(ii)]{br-Hecke}, these conditions imply that $\mathrm{Stab}_{(\bWf,\bullet)}(\ola) = \bW_I$ and $\mathrm{Stab}_{(\bWf,\bullet)}(\omu) = \bW_J$, so that the conditions considered above are satisfied.

\begin{rmk}
\phantomsection
\label{rmk:assumption-weights-loc}
\begin{enumerate}
\item
If $\lambda,\mu$ belong to the lower closure of the fundamental alcove (see~\S\ref{ss:BSHC}) and if $I=\{\alpha \in \fRs \mid \langle \lambda+\varsigma, \alpha^\vee \rangle=0\}$, $J=\{\alpha \in \fRs \mid \langle \mu+\varsigma, \alpha^\vee \rangle=0\}$, then the conditions~\eqref{eqn:conditions-la-mu} are satisfied, see~\cite[\S II.6.3]{jantzen}.
\item
\label{it:assumption-weights-loc-2}
Recall the extended affine Weyl group $\bW$ introduced in~\S\ref{ss:central-characters}.
From the proof of~\cite[Lemma~3.1(ii)]{br-Hecke} one sees that for any $\lambda \in X^*(\bT)$ we have $\mathrm{Stab}_{(\bW,\bullet)}(\lambda) \subset \bWaff$, hence $\mathrm{Stab}_{(\bWaff,\bullet)}(\lambda)=\mathrm{Stab}_{(\bW,\bullet)}(\lambda)$. In particular, if $\mathrm{Stab}_{(\bWaff,\bullet)}(\lambda)=\bW_I$ and $w \in \bW$ commutes with $\bW_I$ then $\mathrm{Stab}_{(\bWaff,\bullet)}(w \bullet \lambda) = \bW_I$.
\end{enumerate}
\end{rmk}



We have canonical forgetful functors
\begin{equation}
\label{eqn:forget-HC-loc-thm}
\Modqc(\osD_{I,J}^{\widehat{\lambda}, \widehat{\mu}}, \bG) \to \Modqc(\tsD_{I,J}^{\widehat{\lambda},\widehat{\mu}}), \quad 
\widetilde{\HC}^{\widehat{\lambda}, \widehat{\mu}} \to \Mod((\cU \bg \otimes \cU\bg^\op)^{\widehat{\lambda},\widehat{\mu}}).
\end{equation}
The following claims will be proved in~\S\ref{ss:proof-acyclicity-diag-induced} 
and~\S\ref{ss:proof-injectives-acyclic} 
below, respectively.

\begin{prop}
\label{prop:acyclicity}
Let $\lambda, \mu \in X^*(\bT)$ and $I,J \subset \fRs$, and assume that the conditions~\eqref{eqn:conditions-la-mu} are satisfied.

\begin{enumerate}
\item
\label{it:acyclicity-D}
The category $\Modqc(\osD_{I,J}^{\widehat{\lambda}, \widehat{\mu}}, \bG)$ has enough injective objects. Moreover, the image in $\Modqc(\tsD_{I,J}^{\widehat{\lambda},\widehat{\mu}})$ of any injective object is acyclic for the functor $(f_{I,J}^{\lambda,\mu})_*$.
\item
\label{it:acyclicity-HC}
For any $V \in \Rep^\infty(\bG)$
the image in $\Mod((\cU \bg \otimes \cU\bg^\op)^{\widehat{\lambda},\widehat{\mu}})$ of $\sfC^{\widehat{\lambda},\widehat{\mu}}(V \otimes \cU\bg)$ is acyclic for the functor $(f_{I,J}^{\lambda,\mu})^*$.
\end{enumerate}
\end{prop}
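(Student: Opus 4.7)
For the existence part of~(1), I would check that $\Modqc(\osD_{I,J}^{\widehat{\lambda}, \widehat{\mu}}, \bG)$ is a Grothendieck abelian category. It is evidently cocomplete with exact filtered colimits, and the strong equivariance condition cuts out a Serre subcategory inside the category of weakly equivariant quasi-coherent modules; a set of generators can be manufactured from coherent $\bG$-equivariant quotients of sheaves of the form $\tsD_{I,J}^{\widehat{\lambda},\widehat{\mu}} \otimes_{\scO} \scF$ where $\scF$ ranges over a family of $\bG$-equivariant coherent generators of $\QCoh^{\bG}(X_{I,J}^{\widehat{\lambda},\widehat{\mu}})$. Grothendieck's theorem then yields enough injectives.

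For the acyclicity in~(1), the target $(Y^{\widehat{\lambda},\widehat{\mu}}, (\cU\bg \otimes \cU\bg^\op)^{\widehat{\lambda},\widehat{\mu}})$ being affine, $(f_{I,J}^{\lambda,\mu})_*$ coincides with derived global sections of the underlying sheaf on $X_{I,J}^{\widehat{\lambda},\widehat{\mu}}$. I would then construct a right adjoint $\mathrm{CoInd}$ to the forgetful functor $\For : \Modqc(\osD_{I,J}^{\widehat{\lambda}, \widehat{\mu}}, \bG) \to \Modqc(\osD_{I,J}^{\widehat{\lambda}, \widehat{\mu}})$, built from the diagram $a, p : \bG \times X_{I,J}^{\widehat{\lambda},\widehat{\mu}} \to X_{I,J}^{\widehat{\lambda},\widehat{\mu}}$, and argue that any injective $\scM$ embeds via the adjunction unit (which is mono because $\For$ is faithful exact) into $\mathrm{CoInd}(\scJ)$ for some quasi-coherent injective $\scJ$; by injectivity of $\scM$ the embedding splits, exhibiting $\scM$ as a summand of $\mathrm{CoInd}(\scJ)$. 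Flat base change along $p$ gives
\[
R\Gamma(X_{I,J}^{\widehat{\lambda},\widehat{\mu}}, \mathrm{CoInd}(\scJ)) \cong \scO(\bG) \otimes_{\bk} R\Gamma(X_{I,J}^{\widehat{\lambda},\widehat{\mu}}, \scJ),
\]
and the right-hand side is concentrated in degree zero because injective quasi-coherent sheaves on the noetherian scheme $X_{I,J}^{\widehat{\lambda},\widehat{\mu}}$ have no higher cohomology.

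For~(2), by Remark~\ref{rmk:completion-diag-induced} and the decomposition~\eqref{eqn:decomp-Zwedge-C}, $\sfC^{\widehat{\lambda},\widehat{\mu}}(V \otimes \cU\bg)$ is the $(\widehat{\lambda}, \widehat{\mu})$-summand of $V \otimes (\cU\bg)^\wedge$, which is free as a left $(\cU\bg)^{\widehat{\lambda}}$-module and free as a right $(\cU\bg)^{\widehat{\mu}}$-module by the standard change of variables absorbing the diagonal $\bG$-action into the one-sided regular action (compare the proof of Lemma~\ref{lem:flatness}). The compatible factorizations $\tsD_{I,J}^{\widehat{\lambda},\widehat{\mu}} = \tsD_I^{\widehat{\lambda}} \boxtimes \tsD_J^{\widehat{\mu}, \op}$ and $(\cU\bg \otimes \cU\bg^\op)^{\widehat{\lambda}, \widehat{\mu}} \cong (\cU\bg)^{\widehat{\lambda}} \otimes_{\bk} (\cU\bg)^{\widehat{\mu}, \op}$ allow us to rewrite
\[
\tsD_{I,J}^{\widehat{\lambda},\widehat{\mu}} \lotimes_{(\cU\bg \otimes \cU\bg^\op)^{\widehat{\lambda},\widehat{\mu}}} \sfC^{\widehat{\lambda}, \widehat{\mu}}(V \otimes \cU\bg) \cong \tsD_I^{\widehat{\lambda}} \lotimes_{(\cU\bg)^{\widehat{\lambda}}} \sfC^{\widehat{\lambda}, \widehat{\mu}}(V \otimes \cU\bg) \lotimes_{(\cU\bg)^{\widehat{\mu}}} \tsD_J^{\widehat{\mu}, \op},
\]
and the flatness statements show that both derived tensor products collapse to their underived versions, giving acyclicity of the pullback.

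The main obstacle I expect is in~(1): the forgetful functor $\For$ first forgets the $\bG$-structure altogether, and a right adjoint must be engineered so as to land in the \emph{strongly} equivariant subcategory, not merely the weakly equivariant one. This requires exploiting the $\bG$-equivariance of the Lie algebra morphism $\sigma : \bg \to \Gamma(\osD_{I,J}^{\widehat{\lambda}, \widehat{\mu}})$ together with the central-character compatibility built into the completions at $(\tla,\tmu)$. Once this bookkeeping is arranged, both acyclicity results reduce to classical flatness and flabbiness statements.
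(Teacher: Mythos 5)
Your treatment of part~\eqref{it:acyclicity-HC} has a genuine gap, and in fact misses the central geometric content of the argument. There are two problems. First, the factorization $(\cU\bg \otimes \cU\bg^\op)^{\widehat{\lambda},\widehat{\mu}} \cong (\cU\bg)^{\widehat{\lambda}} \otimes_{\bk} (\cU\bg)^{\widehat{\mu},\op}$ is false: completion of $\scO(\bt^*/(\bWf,\bullet)) \otimes_\bk \scO(\bt^*/(\bWf,\bullet))$ at the maximal ideal of $(\tla,\tmu)$ does not commute with the tensor product over $\bk$ (compare $\bk[[x,y]]$ with $\bk[[x]] \otimes_\bk \bk[[y]]$). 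Second, and more fundamentally, the claim that both derived tensor products collapse by flatness does not hold: the first one collapses because $\sfC^{\widehat{\lambda},\widehat{\mu}}(V\otimes\cU\bg)$ is flat as a one-sided module, but the intermediate object $\tsD_I^{\widehat{\lambda}} \otimes_{(\cU\bg)^{\widehat{\lambda}}} \sfC^{\widehat{\lambda},\widehat{\mu}}(V\otimes\cU\bg)$ is \emph{not} flat over $(\cU\bg)^{\widehat{\mu}}$. Indeed $\tsD_I^{\widehat{\lambda}}$ itself is not flat over $(\cU\bg)^{\widehat{\lambda}}$: on associated graded this would amount to flatness of $\scO_{\tbg_I^{(1)}}$ over $\scO_{\bg^{*(1)}}$, i.e.~flatness of the Grothendieck--Springer morphism, which fails. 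The real content of the acyclicity is the Tor-independence of $\tbg_I$ and $\tbg_J$ over $\bg^*$, i.e.~the fact that $\St_{I,J}$ is a \emph{derived} fiber product (Remark~\ref{rmk:derived-fiber-prod}); this rests on the dimension count $\dim(\St_{I,J}) = \dim(\tbg_I \times \tbg_J) - \dim(\bg)$, and it is exactly what the paper proves. The actual argument changes rings along the antidiagonal embedding $\cU\bg \hookrightarrow \cU\bg\otimes\cU\bg^\op$ (over which $\cU\bg\otimes\cU\bg^\op$ is free, so there is no derived correction), reducing to $\tD_{I,J} \lotimes_{\cU\bg} V$, and then applies the Chevalley--Eilenberg resolution of $V$ together with a PBW-type filtration of $\tD_{I,J}$ so that the associated graded becomes a Koszul complex on a regular sequence. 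Nothing in your sketch substitutes for this Koszul computation.

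For part~\eqref{it:acyclicity-D} you have correctly identified the obstruction --- that the averaging $a_*p^*$ lands in weakly rather than strongly equivariant modules --- but you do not resolve it, and it is not resolved by bookkeeping on the Lie algebra morphism $\sigma$. The paper's proof appears in~\S\ref{ss:proof-injectives-acyclic}, \emph{after} the splitting machinery of Section~\ref{sec:splitting}: Proposition~\ref{prop:splitting-HC-equiv} identifies $\Modqc(\osD_{I,J}^{\widehat{\lambda},\widehat{\mu}},\bG)$ with $\QCoh^{\bG^{(1)}}(\St^{\wedge(1)}_{I,J})$ (via the Azumaya splitting bundle), transporting the question to a setting where the averaging adjunctions and existence of injectives are standard. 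The remaining point --- that the pushforward is along a $\bG$-equivariant map while injectives naturally arise from $\bG^{(1)}$-equivariant averaging --- is handled by Frobenius splitting: $\For_{\bG^{(1)}}$ of any $\bG^{(1)}$-injective is exhibited as a direct summand in $\For_\bG$ of a $\bG$-injective, using that $\scO_{\bG^{(1)}}$ is a summand of $(\Fr_\bG)_*\scO_\bG$. The splitting equivalence is therefore an essential ingredient, not a cosmetic reduction, and a direct coinduction argument of the kind you sketch is not what the paper does.
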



The morphism $f_{I,J}^{\lambda,\mu}$ induces a morphism of ringed spaces $\overline{f}_{I,J}^{\lambda,\mu}$ from
\[
\left( \St_{I,J}^{(1)} \times_{\bt^{*(1)} / \bW_I \times_{\bt^{*(1)} / \bWf} \bt^{*(1)} / \bW_J} 
\bZ^{\widehat{\lambda}, \widehat{\mu}}_{I,J}, \osD_{I,J}^{\widehat{\lambda}, \widehat{\mu}} \right)
\]
to
\[
\left( \bg^{*(1)} \times_{\bt^{*(1)}/\bWf} \bZ^{\widehat{\lambda}, \widehat{\mu}}, \sfU^{\widehat{\lambda}, \widehat{\mu}} \right),
\]
where once again we still denote by $\sfU^{\widehat{\lambda}, \widehat{\mu}}$ the sheaf of algebras on $\bg^{*(1)} \times_{\bt^{*(1)}/\bWf} \bZ^{\widehat{\lambda}, \widehat{\mu}}$ associated with this $\scO(\bg^{*(1)} \times_{\bt^{*(1)}/\bWf} \bZ^{\widehat{\lambda}, \widehat{\mu}})$-algebra. We have push/pull functors associated with this morphism, which induce adjoint functors
\[
(\overline{f}_{I,J}^{\lambda,\mu})^* : \widetilde{\HC}^{\widehat{\lambda}, \widehat{\mu}} \to \Modqc(\osD_{I,J}^{\widehat{\lambda}, \widehat{\mu}}, \bG), \quad
(\overline{f}_{I,J}^{\lambda,\mu})_* : \Modqc(\osD_{I,J}^{\widehat{\lambda}, \widehat{\mu}}, \bG) \to \widetilde{\HC}^{\widehat{\lambda}, \widehat{\mu}}.
\]
These functors are compatible with the functors $(f_{I,J}^{\lambda,\mu})^*$ and $(f_{I,J}^{\lambda,\mu})_*$ via the functors~\eqref{eqn:forget-HC-loc-thm}.

Since the category $\Modqc(\osD_{I,J}^{\widehat{\lambda}, \widehat{\mu}}, \bG)$ has enough injectives (see Proposi\-tion~\ref{prop:acyclicity}\eqref{it:acyclicity-D}) we can consider the right derived functor
\[
R(\overline{f}_{I,J}^{\lambda,\mu})_* : D^+ \Modqc(\osD_{I,J}^{\widehat{\lambda}, \widehat{\mu}}, \bG) \to D^+ \widetilde{\HC}^{\widehat{\lambda}, \widehat{\mu}}.
\]
On the other hand, Lemma~\ref{lem:surjection-diag-ind-comp} and Proposi\-tion~\ref{prop:acyclicity}\eqref{it:acyclicity-HC} imply that the category $\widetilde{\HC}^{\widehat{\lambda}, \widehat{\mu}}$ has enough objects whose images are acyclic for the functor $(f_{I,J}^{\lambda,\mu})^*$. Hence there is a left derived functor
\[
L(\overline{f}_{I,J}^{\lambda,\mu})^* : D^- \widetilde{\HC}^{\widehat{\lambda}, \widehat{\mu}} \to D^- \Modqc(\osD_{I,J}^{\widehat{\lambda}, \widehat{\mu}}, \bG)
\]
such that the diagram
\begin{equation}
\label{eqn:pullback-diagram}
\vcenter{
\xymatrix@C=1.5cm@R=0.5cm{
D^- \widetilde{\HC}^{\widehat{\lambda}, \widehat{\mu}} \ar[r]^-{L(\overline{f}_{I,J}^{\lambda,\mu})^*} \ar[d] & D^- \Modqc(\osD_{I,J}^{\widehat{\lambda}, \widehat{\mu}}, \bG) \ar[d] \\
D^- \Mod((\cU \bg \otimes \cU\bg^\op)^{\widehat{\lambda},\widehat{\mu}}) \ar[r]^-{L(f_{I,J}^{\lambda,\mu})^*} & D^- \Modqc(\tsD_{I,J}^{\widehat{\lambda},\widehat{\mu}})
}
}
\end{equation}
commutes, where the vertical arrows are induced by the functors in~\eqref{eqn:forget-HC-loc-thm}.

Note that we have fully faithful functors
\[
\Db \Modc(\osD_{I,J}^{\widehat{\lambda}, \widehat{\mu}}, \bG) \to \Db \Modqc(\osD_{I,J}^{\widehat{\lambda}, \widehat{\mu}}, \bG)
\]
(see Lemma~\ref{lem:Modc-Modqc}) and
\[
\Db \HC^{\widehat{\lambda}, \widehat{\mu}} \to \Db \widetilde{\HC}^{\widehat{\lambda}, \widehat{\mu}}
\]
(see Lemma~\ref{lem:HC-tHC}), whose essential images consist of complexes with coherent or finitely generated cohomology objects, respectively.
The following statement is the promised localization theorem for completed Harish-Chandra bimodules.

\begin{thm}
\label{thm:localization-HC}
Let $\lambda, \mu \in X^*(\bT)$ and $I,J \subset \fRs$ be such that
\[
\mathrm{Stab}_{(\bWaff,\bullet)}(\lambda) = \bW_I, \quad
\mathrm{Stab}_{(\bWaff,\bullet)}(\mu) = \bW_J.
\]


\begin{enumerate}
\item
\label{it:loc-HC-1}
The functor $R(\overline{f}_{I,J}^{\lambda,\mu})_*$ restricts to a functor
\begin{equation*}
\Gamma_{I,J}^{\lambda,\mu} : \Db \Modc(\osD_{I,J}^{\widehat{\lambda}, \widehat{\mu}}, \bG) \to \Db \HC^{\widehat{\lambda}, \widehat{\mu}}.
\end{equation*}
\item
\label{it:loc-HC-2}
The functor $L(\overline{f}_{I,J}^{\lambda,\mu})^*$ restricts to a functor
\begin{equation*}
\cL_{I,J}^{\lambda,\mu} : \Db \HC^{\widehat{\lambda}, \widehat{\mu}} \to \Db \Modc(\osD_{I,J}^{\widehat{\lambda}, \widehat{\mu}}, \bG).
\end{equation*}
\item
\label{it:loc-HC-3}
The functors $\Gamma_{I,J}^{\lambda,\mu}$ and $\cL_{I,J}^{\lambda,\mu}$
are quasi-inverse equivalences of categories.
\end{enumerate}
\end{thm}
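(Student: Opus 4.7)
The overall strategy is to deduce Theorem~\ref{thm:localization-HC} from Theorem~\ref{thm:comp-loc} (applied to the group $\bG \times \bG$ with subset $I \times J \subset \fRs \times \fRs$) by exploiting the commutative diagram~\eqref{eqn:pullback-diagram}, the acyclicity statements in Proposition~\ref{prop:acyclicity}, and the fact that the forgetful functors from (strongly) $\bG$-equivariant modules are exact, conservative, and detect coherence. The plan is to first reduce the construction of the restricted functors $\Gamma_{I,J}^{\lambda,\mu}$ and $\cL_{I,J}^{\lambda,\mu}$ to a computation of equivariant derived functors via resolutions that are acyclic for the non-equivariant functors, and then prove the equivalence by checking that the natural adjunction morphisms become isomorphisms after forgetting equivariance.

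For part~\eqref{it:loc-HC-2}, given $M \in \Db \HC^{\widehat{\lambda},\widehat{\mu}}$, I would resolve $M$ by a bounded-above complex of objects of the form $\sfC^{\widehat{\lambda},\widehat{\mu}}(V \otimes \cU\bg)$ using Lemma~\ref{lem:surjection-diag-ind-comp}; by Proposition~\ref{prop:acyclicity}\eqref{it:acyclicity-HC} this resolution computes $L(\overline{f}_{I,J}^{\lambda,\mu})^* M$ termwise. The commutative diagram~\eqref{eqn:pullback-diagram} then shows that the image of $L(\overline{f}_{I,J}^{\lambda,\mu})^* M$ in $D^- \Modqc(\tsD_{I,J}^{\widehat{\lambda},\widehat{\mu}})$ equals $L(f_{I,J}^{\lambda,\mu})^*$ of the image of $M$ in $D^- \Mod((\cU\bg \otimes \cU\bg^\op)^{\widehat{\lambda},\widehat{\mu}})$, which by Theorem~\ref{thm:comp-loc}\eqref{it:comp-loc-2} is bounded with coherent cohomology. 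Since the forgetful functor from $\Modqc(\osD_{I,J}^{\widehat{\lambda},\widehat{\mu}},\bG)$ to $\Modqc(\tsD_{I,J}^{\widehat{\lambda},\widehat{\mu}})$ is exact and conservative, and detects coherence, this forces $L(\overline{f}_{I,J}^{\lambda,\mu})^* M$ to lie in $\Db \Modc(\osD_{I,J}^{\widehat{\lambda},\widehat{\mu}},\bG)$, yielding $\cL_{I,J}^{\lambda,\mu}$. Part~\eqref{it:loc-HC-1} is dual: using that $\Modqc(\osD_{I,J}^{\widehat{\lambda},\widehat{\mu}},\bG)$ has enough injectives (Proposition~\ref{prop:acyclicity}\eqref{it:acyclicity-D}) and that injectives are acyclic for the non-equivariant pushforward, the same reasoning combined with Theorem~\ref{thm:comp-loc}\eqref{it:comp-loc-1} (and Lemmas~\ref{lem:HC-tHC}, \ref{lem:Modc-Modqc}) produces $\Gamma_{I,J}^{\lambda,\mu}$.

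For part~\eqref{it:loc-HC-3}, the adjunction $(L(\overline{f}_{I,J}^{\lambda,\mu})^*, R(\overline{f}_{I,J}^{\lambda,\mu})_*)$ descends to an adjunction between $\cL_{I,J}^{\lambda,\mu}$ and $\Gamma_{I,J}^{\lambda,\mu}$. To show the unit $\id \to \Gamma_{I,J}^{\lambda,\mu} \circ \cL_{I,J}^{\lambda,\mu}$ and counit $\cL_{I,J}^{\lambda,\mu} \circ \Gamma_{I,J}^{\lambda,\mu} \to \id$ are isomorphisms, I would apply the forgetful functors to the equivariant categories. By naturality of adjunction and the compatibility in the square~\eqref{eqn:pullback-diagram} together with its pushforward analogue, the images of these adjunction morphisms under the forgetful functors coincide with the corresponding unit and counit for the non-equivariant pair $(L(f_{I,J}^{\lambda,\mu})^*, R(f_{I,J}^{\lambda,\mu})_*)$, which are isomorphisms by Theorem~\ref{thm:comp-loc}\eqref{it:comp-loc-3}. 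Since the forgetful functors $\Db \Modc(\osD_{I,J}^{\widehat{\lambda},\widehat{\mu}},\bG) \to \Db \Modc(\osD_{I,J}^{\widehat{\lambda},\widehat{\mu}})$ and $\Db \HC^{\widehat{\lambda},\widehat{\mu}} \to \Db \Modfg((\cU\bg \otimes \cU\bg^\op)^{\widehat{\lambda},\widehat{\mu}})$ are conservative, the equivariant unit and counit are also isomorphisms, completing the proof.

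The main obstacle I anticipate is the subtle verification that the push/pull functors on equivariant derived categories are in fact well-defined and compatible with their non-equivariant counterparts, i.e.~that the commutativity of~\eqref{eqn:pullback-diagram} lifts to the desired restricted squares after passage to bounded coherent/finitely generated derived categories. This hinges delicately on having enough acyclics simultaneously in the equivariant and non-equivariant worlds, which is precisely the content of Proposition~\ref{prop:acyclicity}; once that proposition is in hand the remainder of the argument is essentially formal.
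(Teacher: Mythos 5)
Your proposal is correct and follows essentially the same route as the paper: deduce both parts~\eqref{it:loc-HC-1} and~\eqref{it:loc-HC-2} from Theorem~\ref{thm:comp-loc} via the commutative squares (namely~\eqref{eqn:pullback-diagram} and its pushforward analogue, whose commutativity rests on the acyclicity statements in Proposition~\ref{prop:acyclicity}), using that the forgetful functors are exact and detect coherence/finite generation, and then check in part~\eqref{it:loc-HC-3} that the equivariant adjunction morphisms are isomorphisms because they become so after forgetting equivariance. The paper phrases the coherence reduction through Lemmas~\ref{lem:HC-tHC} and~\ref{lem:Modc-Modqc} (identifying the coherent bounded derived subcategory as the full subcategory of coherent-cohomology complexes) rather than through conservativity of the forgetful functors, but this is the same argument.
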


\begin{proof}
\eqref{it:loc-HC-1}
The acyclicity statement in Proposi\-tion~\ref{prop:acyclicity}\eqref{it:acyclicity-D} shows that the diagram
\[
\xymatrix@C=1.5cm@R=0.5cm{
D^+ \Modqc(\osD_{I,J}^{\widehat{\lambda}, \widehat{\mu}}, \bG) \ar[d] \ar[r]^-{R(\overline{f}_{I,J}^{\lambda,\mu})_*} & D^+ \widetilde{\HC}^{\widehat{\lambda}, \widehat{\mu}} \ar[d] \\
D^+ \Modqc(\tsD_{I,J}^{\widehat{\lambda},\widehat{\mu}} ) \ar[r]^-{R(f_{I,J}^{\lambda,\mu})_*} & D^+ \Mod((\cU \bg \otimes \cU\bg^\op)^{\widehat{\lambda},\widehat{\mu}})
}
\]
commutes, where the vertical arrows are induced by the functors in~\eqref{eqn:forget-HC-loc-thm}. As explained above, by Theorem~\ref{thm:comp-loc} the lower functor restricts to a functor from $\Db \Modc(\tsD_{I,J}^{\widehat{\lambda},\widehat{\mu}} )$ to $\Db \Modfg((\cU \bg \otimes \cU\bg^\op)^{\widehat{\lambda},\widehat{\mu}})$, which implies that the restriction of $R(\overline{f}_{I,J}^{\lambda,\mu})_*$ to the full subcategory $\Db \Modc(\osD_{I,J}^{\widehat{\lambda}, \widehat{\mu}}, \bG)$
takes values in the full subcategory $\Db \HC^{\widehat{\lambda}, \widehat{\mu}}$.

\eqref{it:loc-HC-2}
The proof is similar to that of~\eqref{it:loc-HC-1}, using the commutative diagram~\eqref{eqn:pullback-diagram}.


\eqref{it:loc-HC-3}
By general properties of derived functors (see~\cite[\href{https://stacks.math.columbia.edu/tag/0DVC}{Tag 0DVC}]{stacks-project}) the functor $\cL_{I,J}^{\lambda,\mu}$
is left adjoint to $\Gamma_{I,J}^{\lambda,\mu}$.
We therefore have canonical morphisms of functors
$\id \to \Gamma_{I,J}^{\lambda,\mu} \circ \cL_{I,J}^{\lambda,\mu}$ and $\cL_{I,J}^{\lambda,\mu} \circ \Gamma_{I,J}^{\lambda,\mu} \to \id$ 
which are related to the similar adjunction morphisms for $f_{I,J}^{\lambda,\mu}$
by the functors induced by~\eqref{eqn:forget-HC-loc-thm}.
 Using the fact that the latter morphisms are isomorphisms (as explained above) one easily deduces that the former morphisms are also isomorphisms, which completes the proof.
\end{proof}

\subsection{Proof of Proposition~\ref{prop:acyclicity}\eqref{it:acyclicity-HC}}
\label{ss:proof-acyclicity-diag-induced}

In this subsection we explain the proof of Proposition~\ref{prop:acyclicity}\eqref{it:acyclicity-HC}.
We therefore fix $\lambda,\mu$ and $V$ as in this statement. If we set
\[
\tD_{I,J}^{\widehat{\lambda},\widehat{\mu}} = \scO(\FN_{\bt^*/(\bW_I,\bullet) \times \bt^*/(\bW_J,\bullet)}( \{ (\tla_I,\tmu_J) \} )) \otimes_{\scO(\bt^*/(\bW_I,\bullet) \times \bt^*/(\bW_J,\bullet))} \tD_{I,J},
\]
then as in~\eqref{eqn:equivalences-Dmod-xi} we have an equivalence of categories
\[
\Modqc(\tsD_{I,J}^{\widehat{\lambda},\widehat{\mu}}) \cong \Modqc(\tD_{I,J}^{\widehat{\lambda},\widehat{\mu}}),
\]
and under this identification we have
\[
L(f_{I,J}^{\lambda,\mu})^* \sfC^{\widehat{\lambda},\widehat{\mu}}(V \otimes \cU\bg) \cong \tD_{I,J}^{\widehat{\lambda},\widehat{\mu}} \lotimes_{(\cU \bg \otimes \cU\bg^\op)^{\widehat{\lambda},\widehat{\mu}}} \sfC^{\widehat{\lambda},\widehat{\mu}}(V \otimes \cU\bg).
\]
As in~\eqref{eqn:isom-FN}, our assumptions on $\lambda,\mu$ imply that the natural morphism
\[
\FN_{\bt^*/(\bW_I,\bullet) \times \bt^*/(\bW_J,\bullet)}( \{ (\tla_I,\tmu_J) \} ) \to \FN_{\bt^*/(\bWf,\bullet) \times \bt^*/(\bWf,\bullet)}( \{ (\tla,\tmu) \} )
\]
is an isomorphism, and moreover by exactness of completion we have a canonical isomorphism
\begin{multline*}
\sfC^{\widehat{\lambda},\widehat{\mu}}(V \otimes \cU\bg) \cong \\
\scO(\FN_{\bt^*/(\bWf,\bullet) \times \bt^*/(\bWf,\bullet)}( \{ (\tla,\tmu) \} )) \otimes_{\scO(\bt^*/(\bWf,\bullet) \times \bt^*/(\bWf,\bullet))} (V \otimes \cU \bg).
\end{multline*}
We deduce an isomorphism
\begin{multline*}
\tD_{I,J}^{\widehat{\lambda},\widehat{\mu}} \lotimes_{(\cU \bg \otimes \cU\bg^\op)^{\widehat{\lambda},\widehat{\mu}}} \sfC^{\widehat{\lambda},\widehat{\mu}}(V \otimes \cU\bg) \cong \\
\scO(\FN_{\bt^*/(\bW_I,\bullet) \times \bt^*/(\bW_J,\bullet)}( \{ (\tla_I,\tmu_J) \} )) \otimes_{\scO(\bt^*/(\bW_I,\bullet) \times \bt^*/(\bW_J,\bullet))} \\
\bigl( \tD_{I,J} \lotimes_{\cU\bg \otimes \cU\bg^\op} (V \otimes \cU\bg) \bigr),
\end{multline*}
which shows that to conclude it suffices to prove that the complex
\[
\tD_{I,J} \lotimes_{\cU\bg \otimes \cU\bg^\op} (V \otimes \cU\bg)
\]
is concentrated in degree $0$. Now, recall the isomorphism
\[
\bigl( \cU\bg \otimes \cU\bg^\op \bigr) \otimes_{\cU\bg} V \simto V \otimes \cU\bg
\]
from~\cite[\S 3.4]{br-Hecke}, where the morphism $\cU\bg \to \cU\bg \otimes \cU\bg^\op$ is the antidiagonal embedding. For this morphism, $\cU\bg \otimes \cU\bg^\op$ is free over $\cU\bg$; we deduce an isomorphism
\[
\tD_{I,J} \lotimes_{\cU\bg \otimes \cU\bg^\op} (V \otimes \cU\bg) \cong \tD_{I,J} \lotimes_{\cU\bg} V.
\]
To conclude, we therefore only have to show that the right-hand side is concentrated in degree $0$.

Consider the Chevalley--Eilenberg resolution
\[
\cU\bg \otimes \bigwedge \hspace{-3pt} {}^\bullet \bg \to \bk
\]
where $\wedge \hspace{-1pt} {}^a \bg$ is in degree $-a$. Tensoring with $V$ we obtain a resolution
\[
\cU\bg \otimes_\bk \bigwedge \hspace{-3pt} {}^\bullet \bg \otimes V \to V
\]
where $\cU\bg$ acts diagonally on $\cU\bg$ and $V$. Writing $V_{\mathrm{triv}}$ for the vector space $V$ endowed with the trivial action of $\cU\bg$, we have a natural isomorphism of $\cU\bg$-modules $\cU\bg \otimes V \cong \cU\bg \otimes V_{\mathrm{triv}}$, which provides a resolution
\[
\cU\bg \otimes \bigwedge \hspace{-3pt} {}^\bullet \bg \otimes V_{\mathrm{triv}} \to V
\]
where now $\cU\bg$ acts only on the first factor (but the differential involves the action on $V$).
Computing with this resolution, we obtain that $\tD_{I,J} \lotimes_{\cU\bg} V$ is the image of the complex
\begin{equation}
\label{eqn:complex-Chevalley-Eilenberg}
\tD_{I,J} \otimes \bigwedge \hspace{-3pt} {}^\bullet \bg \otimes V,
\end{equation}
for a certain Chevalley--Eilenberg type differential. The sheaf of algebras $\tD_{I,J}$ is by definition the enveloping algebra of a Lie algebroid (see~\cite[\S 1.2.1]{bmr2}); it therefore admits a canonical filtration $(F_{\leq n} (\tD_{I,J}) : n \in \Z_{\geq 0})$, whose associated graded is the pushforward of the structure sheaf of $\tbg_I \times \tbg_J$ to $\bG/\bP_I \times \bG/\bP_J$. We consider the filtration of the complex~\eqref{eqn:complex-Chevalley-Eilenberg} such that
\[
F_{\leq n}^p \left( \tD_{I,J} \otimes \bigwedge \hspace{-3pt} {}^\bullet \bg \otimes V \right) = F_{\leq n+p} (\tD_{I,J}) \otimes \bigwedge \hspace{-3pt} {}^{-p} \bg \otimes V.
\]
Below we will prove that
the associated graded of this filtered complex has cohomology only in degree $0$, which will conclude the proof.

The associated graded of our complex is of the form
\[
\scO_{\tbg_I \times \tbg_J} \otimes \bigwedge \hspace{-3pt} {}^{\bullet} \bg \otimes V,
\]
where we omit the pushforward functor, and where the differential only involves the first two factors; in fact if $e_1, \cdots, e_d$ is a basis of $\bg$, then $\scO_{\tbg_I \times \tbg_J} \otimes \bigwedge \hspace{-3pt} {}^{\bullet} \bg$ with this differential identifies with the Koszul complex on the images of $e_1, \cdots, e_d$ under the morphism
\[
\scO(\bg^*) \to \scO(\bg^* \times \bg^*) \to \scO(\tbg_I \times \tbg_J)
\]
(where the first morphism is induced by the map $\bg^* \times \bg^* \to \bg^*$ given by $(\xi,\eta) \mapsto \xi-\eta$) in the sense of~\cite[\href{https://stacks.math.columbia.edu/tag/062L}{Tag 062L}]{stacks-project}. Since a regular sequence is Koszul-regular (see~\cite[\href{https://stacks.math.columbia.edu/tag/062F}{Tag 062F}]{stacks-project}), to prove that this complex has cohomology only in degree $0$ it suffices to show that these images form a regular sequence in sections over an affine cover of $\tbg_I \times \tbg_J$. By the standard characterization of regular sequences in terms of dimension of the quotient (see~\cite[\href{https://stacks.math.columbia.edu/tag/02JN}{Tag 02JN}]{stacks-project}), this fact follows from the standard observation that $\dim(\tbg_I \times_{\bg^*} \tbg_J)=\dim(\bg) = \dim(\tbg_I \times \tbg_J) - \dim(\bg)$.

\begin{rmk}
\label{rmk:derived-fiber-prod}
The last step of this proof can be equivalently formulated as saying that the fiber product in the definition of $\St_{I,J}$ is also a \emph{derived} fiber product, i.e.~a fiber product in the sense of Derived Algebraic Geometry.
\end{rmk}



\subsection{Variant for a fixed central character}
\label{ss:variant-fixed-central-char}

Recall the constructions of~\S\ref{ss:HC-fixed-character}.
Given $I,J \subset \fRs$, we consider
\[
\St'_{I,J} := \St_{I,J} \times_{\bt^*/\bW_I \times_{\bt^*/\bWf} \bt^*/\bW_J} (\bt^*/\bW_I \times_{\bt^*/\bWf} \{0\}) = \St_{I,J} \times_{\bt^*/\bW_J} \{0\},
\]
where the second fiber product is taken with respect to the composition $\St_{I,J} \to \tbg_J \to \bt^*/\bW_J$.
The following statement is an analogue of Lemma~\ref{lem:steinberg-completion}, and follows from the same arguments. (In that statement, the formal neighborhood that appears is a finite $\bk$-scheme.)

\begin{lem}
\label{lem:steinberg-completion-prime}
Let $\lambda,\mu \in X^*(\bT)$ and $I,J \subset \fRs$,
 and assume that we have
 $\bW_I \subset \mathrm{Stab}_{(\bWf,\bullet)}(\ola)$.
Then the projection morphism
\[
\St_{I,J}^{\prime (1)} \times_{\bt^{*(1)} / \bW_I \times_{\bt^{*(1)}/\bWf} \{0\} } (\bt^*/(\bW_I,\bullet) \times_{\bt^{*(1)}/\bWf} \{\tmu_J\}) \to \St_{I,J}^{\prime (1)}
\]
induces an
isomorphism of schemes
\[
\St^{\prime(1)}_{I,J} \times_{\bt^{*(1)}/\bW_I \times_{\bt^{*(1)}/\bWf} \{0\}} \FN_{\bt^*/(\bW_I,\bullet) \times_{\bt^{*(1)}/\bWf} \{\tmu_J\}}( \{ (\tla_I,\tmu_J) \} )
\simto \St^{\prime(1)}_{I,J}.
\]
\end{lem}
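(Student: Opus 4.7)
The plan is to mimic the proof of Lemma~\ref{lem:steinberg-completion} almost verbatim, only now we need an étaleness statement on the $I$-side alone (since the $J$-factor has already been specialized to the closed point~$0$, so no completion of $\bt^*/(\bW_J,\bullet)$ enters). Concretely, the projection morphism in the statement is obtained by base change along $\St^{\prime(1)}_{I,J} \to \bt^{*(1)}/\bW_I \times_{\bt^{*(1)}/\bWf} \{0\}$ from the natural map
\[
\bt^*/(\bW_I,\bullet) \times_{\bt^{*(1)}/\bWf} \{\tmu_J\} \to \bt^{*(1)}/\bW_I \times_{\bt^{*(1)}/\bWf} \{0\},
\]
which is itself obtained by base change from the Artin--Schreier-induced morphism $\bt^*/(\bW_I,\bullet) \to \bt^{*(1)}/\bW_I$. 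It therefore suffices to show that the latter map is étale at $\tla_I$, and then to pass to formal neighborhoods at the relevant closed points.

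For the étaleness claim I would run exactly the argument used in Lemma~\ref{lem:steinberg-completion}: factor the map $\bt^* \to \bt^{*(1)}/\bW_I$ as the quotient morphism for the natural dot-action of $\bW_I \ltimes (X^*(\bT)/\ell X^*(\bT))$ on $\bt^*$. The hypothesis $\bW_I \subset \mathrm{Stab}_{(\bWf,\bullet)}(\ola)$ together with the freeness of the $(X^*(\bT)/\ell X^*(\bT))$-action on $\bt^*$ forces the stabilizer of $\ola$ in this semidirect product to be exactly $\bW_I$, and the standard criterion~\cite[Exp.~V, Proposition~2.2]{sga1} then yields étaleness of $\bt^*/(\bW_I,\bullet) \to \bt^{*(1)}/\bW_I$ at $\tla_I$.

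Étaleness at $\tla_I$ gives an isomorphism
\[
\FN_{\bt^*/(\bW_I,\bullet)}(\{\tla_I\}) \simto \FN_{\bt^{*(1)}/\bW_I}(\{0\}).
\]
I would then deduce, by base change along $\{\tmu_J\} \to \{0\}$, an isomorphism between $\FN_{\bt^*/(\bW_I,\bullet) \times_{\bt^{*(1)}/\bWf} \{\tmu_J\}}(\{(\tla_I,\tmu_J)\})$ and $\FN_{\bt^{*(1)}/\bW_I \times_{\bt^{*(1)}/\bWf} \{0\}}(\{(0,0)\})$; note that both sides are in fact finite $\bk$-schemes, since the fibers of $\bt^*/(\bW_I,\bullet) \to \bt^{*(1)}/\bWf$ are finite (cf.\ the proof of~\cite[Lemma~3.4]{br-Hecke}). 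Finally I would fiber-product with $\St^{\prime(1)}_{I,J}$ over $\bt^{*(1)}/\bW_I \times_{\bt^{*(1)}/\bWf} \{0\}$, which by finiteness of the relevant base change yields the claimed isomorphism.

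There is no serious obstacle here: the entire content is the étaleness of $\bt^*/(\bW_I,\bullet) \to \bt^{*(1)}/\bW_I$ at $\tla_I$, for which only the $I$-hypothesis is needed, and the rest is bookkeeping with fiber products. The only thing worth double-checking is that, on the $J$-side, one genuinely does not need any assumption on $\omu$: this is because $\{\tmu_J\}$ is already a single closed point, so no formal neighborhood in the $\bt^*/(\bW_J,\bullet)$-direction appears in the statement, and the finite scheme $\bt^*/(\bW_I,\bullet) \times_{\bt^{*(1)}/\bWf} \{\tmu_J\}$ plays the role that $\bZ^{\hla,\hmu}_{I,J}$ played in Lemma~\ref{lem:steinberg-completion}.
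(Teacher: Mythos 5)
Your proposal is correct and matches the paper's intended argument: the paper proves Lemma~\ref{lem:steinberg-completion-prime} merely by remarking that it ``follows from the same arguments'' as Lemma~\ref{lem:steinberg-completion}, which is exactly what you spell out---\'etaleness of $\bt^*/(\bW_I,\bullet)\to\bt^{*(1)}/\bW_I$ at $\tla_I$ (via the stabilizer computation in the semidirect product), then passage to formal neighborhoods, with the $J$-factor already specialized so that only finite artinian local schemes appear and no hypothesis on $\omu$ is needed. The one step I would make fully explicit is that $\bt^{*(1)}/\bW_I\times_{\bt^{*(1)}/\bWf}\{0\}$ is a \emph{local} finite $\bk$-scheme (single closed point $0$), so its formal neighborhood at $(0,0)$ is the scheme itself, which is what finally makes the fiber product collapse to $\St^{\prime(1)}_{I,J}$; your phrase ``by finiteness of the relevant base change'' slightly glosses the locality that is really doing the work.
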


Let us consider the pullback
\[
\osD^{\hla,\mu}_{I,J}
\]
of $\osD_{I,J}$ to
\begin{equation}
\label{eqn:scheme-oD-hla-mu}
\St^{\prime (1)}_{I,J} \times_{\bt^{*(1)}/\bW_I \times_{\bt^{*(1)}/\bWf} \{0\}} \FN_{\bt^*/(\bW_I,\bullet) \times_{\bt^{*(1)}/\bWf} \{\tmu_J\}}( \{ (\tla_I,\tmu_J) \} )
\end{equation}
(a scheme of finite type over $\bk$).
We have a canonical action of $\bG$ on this scheme, and $\osD_{I,J}^{\hla,\mu}$ has a canonical $\bG$-equivariant structure. We also have a morphism $\cU\bg \to \Gamma(\osD^{\hla,\mu}_{I,J})$ induced by the diagonal action of $\bG$ on $\bG/\bU_I \times \bG/\bU_J$, hence we can consider the categories
\[
\Modc^{\bG}(\osD^{\hla,\mu}_{I,J}) \subset \Modqc^{\bG}(\osD^{\hla,\mu}_{I,J}), \quad \Modc(\osD^{\hla,\mu}_{I,J}, \bG) \subset \Modqc(\osD^{\hla,\mu}_{I,J}, \bG).
\]

The scheme~\eqref{eqn:scheme-oD-hla-mu} identifies with a closed subscheme of~\eqref{eqn:scheme-oD-hla-hmu}, in such a way that the restriction of $\osD^{\hla,\hmu}_{I,J}$ identifies with $\osD^{\hla,\mu}_{I,J}$. We therefore have a natural pushforward functor
\[
\Modqc^{\bG}(\osD^{\hla,\mu}_{I,J}) \to \Modqc^{\bG}(\osD^{\hla,\hmu}_{I,J})
\]
which restricts to an exact functor
\begin{equation}
\label{eqn:functor-D-hla-mu-hmu}
\Modc(\osD^{\hla,\mu}_{I,J}, \bG) \to \Modc(\osD^{\hla,\hmu}_{I,J}, \bG).
\end{equation}

The proof of the following theorem is similar to that of Theorem~\ref{thm:localization-HC}; details are left to the reader. (In this case one does not need the considerations of Section~\ref{sec:compl-loc}; the results of~\cite{bmr,bmr2} can be used instead.)

\begin{thm}
\label{thm:localization-HC-fixed}
Let $\lambda, \mu \in X^*(\bT)$ and $I,J \subset \fRs$ be such that
\[
\mathrm{Stab}_{(\bWaff,\bullet)}(\lambda) = \bW_I, \quad
\mathrm{Stab}_{(\bWaff,\bullet)}(\mu) = \bW_J.
\]
Global sections induce an equivalence of triangulated categories
\[
\Db \Modc(\osD^{\hla,\mu}_{I,J}, \bG) \simto \Db \HC^{\hla,\mu}
\]
such that the following diagram commutes:
\[
\xymatrix@C=1.5cm@R=0.6cm{
\Db \Modc(\osD^{\hla,\mu}_{I,J}, \bG) \ar[r]^-{\sim} \ar[d]_-{\eqref{eqn:functor-D-hla-mu-hmu}} & \Db \HC^{\hla,\mu} \ar[d]^-{\eqref{eqn:functor-HC-hla-mu-hmu}} \\
\Db \Modc(\osD_{I,J}^{\widehat{\lambda}, \widehat{\mu}}, \bG) \ar[r]_-{\sim}^-{\Gamma_{I,J}^{\lambda,\mu}} & \Db \HC^{\widehat{\lambda}, \widehat{\mu}}.
}
\]
\end{thm}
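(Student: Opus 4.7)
The plan is to mimic the proof of Theorem~\ref{thm:localization-HC} with two adaptations. First, since $\tmu_J$ is a closed point rather than a formal neighborhood, the base scheme $\bt^*/(\bW_I,\bullet) \times_{\bt^{*(1)}/\bWf} \{\tmu_J\}$ is zero-dimensional, so its formal neighborhood at $(\tla_I,\tmu_J)$ is simply the artinian local ring there; consequently the scheme~\eqref{eqn:scheme-oD-hla-mu} is of finite type over $\bk$, and one can rely on the original BMR localization results from~\cite{bmr,bmr2} in place of the completed version (Theorem~\ref{thm:comp-loc}). Second, one works with the corresponding modification of the ringed-space morphism on both sides.

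Concretely, I would construct a morphism of ringed spaces $\overline{f}_{I,J}^{\lambda,\mu,\flat}$ from~\eqref{eqn:scheme-oD-hla-mu} equipped with $\osD^{\hla,\mu}_{I,J}$ to an appropriate affine scheme with structure algebra $\sfU^{\hla,\mu}$, then establish analogues of Proposition~\ref{prop:acyclicity}: namely that $\Modqc(\osD^{\hla,\mu}_{I,J},\bG)$ has enough injectives acyclic for global sections when forgotten to the weakly equivariant side, and that the diagonally induced bimodules $\sfC^{\hla,\mu}(V \otimes \cU\bg)$ with $V \in \Rep^\infty(\bG)$ are acyclic for pullback. For the latter, the Koszul-resolution argument of~\S\ref{ss:proof-acyclicity-diag-induced} goes through essentially verbatim since the crucial regularity of the sequence cutting out $\St_{I,J}$ inside $\tbg_I \times \tbg_J$ is preserved after the flat base change corresponding to specialization of the right factor at $\tmu_J$, via Proposition~\ref{prop:flatness-Ug} and Lemma~\ref{lem:flatness}\eqref{it:flatness-1}. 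Combined with the fixed-character analogue of Lemma~\ref{lem:surjection-diag-ind-comp}, these acyclicity statements produce the push/pull functors at the bounded derived level.

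To show that these functors are quasi-inverse equivalences, one reduces (exactly as in the proof of Theorem~\ref{thm:localization-HC}\eqref{it:loc-HC-3}) to the analogous equivalence for the underlying ringed-space morphism, namely between $\Db \Modc(\tsD_{I,J}^{\hla,\mu})$ and $\Db \Modfg(\sfU^{\hla,\mu})$. This hybrid localization (completed at $\lambda$, specialized at $\mu$) can either be proved by rerunning the strategy of Section~\ref{sec:compl-loc} with finitely many modifications, or deduced from Theorem~\ref{thm:comp-loc} applied to $\bG \times \bG$ with characters $(\lambda,\mu)$ via derived base change along the closed immersion corresponding to the quotient $\scO(\FN_{\bt^*/(\bWf,\bullet)}(\{\tmu\})) \twoheadrightarrow \bk_\mu$, using the flatness from Proposition~\ref{prop:flatness-Ug} and the finite global dimension of the local ring $\scO(\FN_{\bt^*/(\bWf,\bullet)}(\{\tmu\}))$ (both facts having been used already in~\S\ref{ss:HC-fixed-character}).

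Finally, commutativity of the diagram in the statement reduces to compatibility of the relevant derived pushforwards and pullbacks with the pushforwards along the closed immersions underlying the functors~\eqref{eqn:functor-D-hla-mu-hmu} and~\eqref{eqn:functor-HC-hla-mu-hmu}, which is a standard base-change compatibility. The main obstacle I anticipate is the hybrid BMR equivalence of the previous paragraph: while morally a derived base change of Theorem~\ref{thm:comp-loc}, a clean verification requires tracking the compatibility of the general base change formulas (of the type~\cite[Theorem~3.10.3]{lipman} used in~\S\ref{ss:comp-loc-end-proof}) with both $\bG$-equivariance and the Harish-Chandra condition; this is the step where the bookkeeping is heaviest, though no genuinely new input is needed beyond what was already developed in Section~\ref{sec:compl-loc}.
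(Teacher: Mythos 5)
Your proposal matches the paper's intended route: the paper here only offers the one-line hint that the proof is "similar to that of Theorem~\ref{thm:localization-HC}" with BMR's original results from~\cite{bmr,bmr2} substituted for the completed version (no input from Section~\ref{sec:compl-loc} needed), and you correctly identify the reason this works -- once $\tmu_J$ is specialized, the central scheme $\bZ^{\hla,\mu}$ is artinian over $\bk$, so everything is of finite type and the standard localization theorem applies. Two tiny remarks: the ringed-space localization you invoke should be phrased for $\osD^{\hla,\mu}_{I,J}$ (the paper never introduces a ``$\tsD_{I,J}^{\hla,\mu}$''), and the commutativity of the displayed diagram is just compatibility of the direct image $\Gamma^{\lambda,\mu}_{I,J}$ with composition of pushforwards along the two closed immersions~\eqref{eqn:functor-D-hla-mu-hmu} and~\eqref{eqn:functor-HC-hla-mu-hmu} -- no base-change theorem is needed in that direction.
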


\section{Splitting}
\label{sec:splitting}

\subsection{Azumaya property of \texorpdfstring{$\cU\bg$}{Ug}}

Recall that an element $\xi \in \bt^*$ is called \emph{unramified} if for any $\alpha \in \fR$ we have $\langle \xi + \overline{\varsigma}, \alpha^\vee \rangle \notin \mathbb{F}_\ell \smallsetminus \{0\}$, where $\mathbb{F}_\ell$ is seen as the prime subfield of $\bk$. We will denote by $\bt^*_{\mathrm{unr}} \subset \bt^*$ the affine open subscheme of unramified elements (which is the complement of a union of finitely many affine hyperplanes). It is clear that this open subscheme is stable under the dot-action of $\bWf$; in fact there exists an affine open subscheme of $\bt^* / (\bWf, \bullet)$ whose preimage in $\bt^*$ is $\bt^*_{\mathrm{unr}}$, namely the complement of the closed subscheme defined by the element
\[
\prod_{\substack{\alpha \in \fR \\ i \in \mathbb{F}_\ell \smallsetminus \{0\}}} ( \langle (-) + \overline{\varsigma}, \alpha^\vee \rangle - i) \quad \in \scO(\bt^*)^{(\bWf, \bullet)}. 
\]
This open subscheme therefore identifies with $\bt^*_{\mathrm{unr}} / (\bWf, \bullet)$. We can then consider the open affine subscheme
\[
\bg^{*(1)} \times_{\bt^{*(1)}/\bWf} \bt^*_{\mathrm{unr}}/(\bWf,\bullet) \subset \bg^{*(1)} \times_{\bt^{*(1)}/\bWf} \bt^*/(\bWf,\bullet).
\]
We set
\[
\cZ_{\mathrm{unr}} := \scO(\bg^{*(1)} \times_{\bt^{*(1)}/\bWf} \bt^*_{\mathrm{unr}}/(\bWf,\bullet)),
\]
a localization of $\cZ = \scO(\bg^{*(1)} \times_{\bt^{*(1)}/\bWf} \bt^*/(\bWf,\bullet))$,
and
\[
(\cU\bg)_{\mathrm{unr}} := \cZ_{\mathrm{unr}} \otimes_{\cZ} \cU\bg.
\]

The following statement is due to Brown--Gordon~\cite{brown-gordon}. (We recall some of the details of its proof below for the reader's convenience. For a quick reminder on Azumaya algebras, see~\cite[\S 4.1]{br-Hecke}.)

\begin{prop}
\label{prop:Ug-Azumaya}
The $\cZ_{\mathrm{unr}}$-algebra $(\cU\bg)_{\mathrm{unr}}$
is an Azumaya algebra.
\end{prop}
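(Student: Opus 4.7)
The plan is to verify the Azumaya property fiberwise at every closed point of $\Spec\cZ_\mathrm{unr}$, using the fact that for an algebra that is finite and free of constant rank $d^2$ over its center, being Azumaya is equivalent to each fiber algebra being a central simple algebra of dimension $d^2$, equivalently a matrix algebra.

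First I would record that $\cU\bg$ is free over $\cZ$ of constant rank $\ell^{2\dim\bU}$; this is a consequence of the freeness of $\cU\bg$ over $\ZFr$ (Proposition~\ref{prop:flatness-Ug}) combined with a dimension count via the isomorphism $\cZ\cong\ZFr\otimes_{\ZFr\cap\ZHC}\ZHC$ and the flatness of $\cU\bg$ over $\ZHC$. Hence $(\cU\bg)_\mathrm{unr}$ is a finitely generated projective $\cZ_\mathrm{unr}$-module of constant rank $\ell^{2\dim\bU}$, of PI-degree $d:=\ell^{\dim\bU}$; it is therefore Azumaya over $\cZ_\mathrm{unr}$ iff, at every closed point $\fm\subset\cZ_\mathrm{unr}$, the $\bk$-algebra $(\cU\bg)_\mathrm{unr}/\fm\cdot(\cU\bg)_\mathrm{unr}$ is a matrix algebra of dimension $d^2$, and since the total dimension is automatic it suffices to exhibit one simple module of dimension $d$ in each such fiber.

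Next I would identify a closed point of $\Spec\cZ_\mathrm{unr}$ with a pair $(\chi,\tla)$, where $\chi\in\bg^{*(1)}$ and $\tla\in\bt^*_\mathrm{unr}/(\bWf,\bullet)$ have matching image in $\bt^{*(1)}/\bWf$, the corresponding fiber being the quotient $\cU_\chi\bg/\fm^\lambda\cdot\cU_\chi\bg$ of the reduced enveloping algebra at central character $\tla$ (for any representative $\lambda$ of $\tla$). Using the standard Morita reduction based on the Jordan decomposition of $\chi$, one reduces to the case where $\chi$ is nilpotent on a Levi subalgebra of $\bg$; after conjugating $\bB$ one may assume $\chi|_\bu=0$ and consider the baby Verma module $Z_\chi(\lambda):=\cU_\chi\bg\otimes_{\cU_\chi\bb}\bk_\ola$, which has dimension $d=\ell^{\dim\bU}$ and central character $\tla$.

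The main obstacle is then to show that $Z_\chi(\lambda)$ is irreducible, which is the core of Brown--Gordon's analysis in~\cite{brown-gordon} and relies on the Jantzen sum formula~\cite[\S 10]{jantzen-prime}: the submodule structure of $Z_\chi(\lambda)$ is governed by the set of affine reflections $s_{\alpha,n}\in\bWaff$ such that $s_{\alpha,n}\bullet\lambda\in\bWf\bullet\lambda$, equivalently such that $\langle\ola+\overline{\varsigma},\alpha^\vee\rangle=n$ in $\bk$; the unramified condition $\langle\ola+\overline{\varsigma},\alpha^\vee\rangle\notin\F_\ell\smallsetminus\{0\}$ for every $\alpha\in\fR$ precisely eliminates every such nontrivial reflection, forcing $Z_\chi(\lambda)$ to have a unique composition factor. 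This produces the required simple module of dimension $d$ in every fiber of $(\cU\bg)_\mathrm{unr}$ and completes the verification of the Azumaya property.
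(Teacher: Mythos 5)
Your overall strategy — verify the Azumaya property fiberwise at closed points of $\Spec\cZ_{\mathrm{unr}}$ and exhibit a simple module of the right dimension in each fiber — is the same as the paper's. Your second and third paragraphs essentially unpack the proof of~\cite[Theorem~3.10]{brown-gordon} (irreducibility of baby Verma modules at unramified central characters, via the Jantzen filtration), which the paper cites as a black box; that unpacking is fine in outline.

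However, your first paragraph contains a genuine gap. You claim that $\cU\bg$ is free over $\cZ$ of constant rank $\ell^{2\dim\bU}$, and that this follows from freeness over $\ZFr$, flatness over $\ZHC$, and the isomorphism $\cZ\cong\ZFr\otimes_{\ZFr\cap\ZHC}\ZHC$. This reasoning does not work: flatness of a module over two factors of a fiber product does \emph{not} imply flatness over the fiber product. (Over $k[x]\otimes_k k[y]=k[x,y]$, the module $k[x,y]/(x-y)$ is free of rank one over each of $k[x]$ and $k[y]$ but has projective dimension one over $k[x,y]$.) In fact $\cU\bg$ is \emph{not} projective over $\cZ$ globally: the center $\cZ$ is singular precisely at ramified points, and $(\cU\bg)_{\mathrm{unr}}$ is projective over $\cZ_{\mathrm{unr}}$ only as a \emph{consequence} of the Azumaya property you are trying to establish (or, independently, of the regularity of $\cZ_{\mathrm{unr}}$ combined with Cohen--Macaulayness of $\cU\bg$ and Auslander--Buchsbaum, which your dimension count does not deliver). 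The paper avoids this issue entirely by invoking the characterization of the Azumaya locus of a prime noetherian PI algebra finite over its affine center recalled in~\cite[\S 4.1]{br-Hecke} (coming from~\cite[Theorems~2.5, 2.6]{brown-gordon}): $\fm$ lies in the Azumaya locus iff the fiber $A/\fm A$ is a simple algebra. This criterion requires no a priori projectivity, so the entire first paragraph of your argument becomes unnecessary once you replace it with this characterization. A smaller imprecision: in your Kac--Weisfeiler/Morita reduction you should be careful that passing to the Levi $\bl=\bg^{\chi_s}$ changes both the Lie algebra and the dimension of the baby Verma (to $\ell^{\dim(\bU\cap\bL)}$, with the discrepancy $\ell^{\dim\bU-\dim(\bU\cap\bL)}$ absorbed by the Morita bimodule), whereas the statement as written conflates this reduction with the separate observation that one may conjugate $\bB$ so that $\chi|_\bu=0$ and form a baby Verma of dimension $\ell^{\dim\bU}$ directly for $\bg$.
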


\begin{proof}
In view of the second characterization of Azumaya algebras recalled in~\cite[\S 4.1]{br-Hecke}, to prove the proposition it suffices to prove that for any $\chi \in \bg^{*(1)}$ and any $\eta \in \bt^*_{\mathrm{unr}}/(\bWf,\bullet)$ whose images in $\bt^{*(1)}/\bWf$ coincide the algebra
\[
\bk_{(\chi,\eta)} \otimes_{\scO(\bg^{*(1)} \times_{\bt^{*(1)}/\bWf} \bt^*/(\bWf,\bullet))} \cU\bg
\]
is a simple algebra. (Here, $\bk_{(\chi,\eta)}$ denotes the $1$-dimensional $\cZ$-module attached to the pair $(\chi,\lambda)$. Note that this simple algebra will automatically be a central simple algebra by the Wedderburn--Artin theorem, since $\bk$ is algebraically closed.) However, by~\cite[Theorems~2.5, 2.6 and~3.10]{brown-gordon} the maximal ideal of $\scO(\bg^{*(1)} \times_{\bt^{*(1)}/\bWf} \bt^*/(\bWf,\bullet))$ defined by~$(\chi,\eta)$ belongs to the Azumaya locus of $\cU\bg$, which by definition means that $\bk_{(\chi,\eta)} \otimes_{\scO(\bg^{*(1)} \times_{\bt^{*(1)}/\bWf} \bt^*/(\bWf,\bullet))} \cU\bg$ is a simple algebra.
%
\end{proof}

As explained in~\cite[\S 4.1]{br-Hecke}, Proposition~\ref{prop:Ug-Azumaya} implies that $(\cU\bg)_{\mathrm{unr}}$ is 
projective over $\cZ_{\mathrm{unr}}$, and that the natural morphism
\begin{equation}
\label{eqn:Azumaya-isom-unr}
(\cU\bg)_{\mathrm{unr}}
\otimes_{\cZ_{\mathrm{unr}}}
(\cU\bg)_{\mathrm{unr}}^{\op} \to \End_{\cZ_{\mathrm{unr}}}((\cU\bg)_{\mathrm{unr}})
\end{equation}
is an isomorphism.

Recall now the algebra $\sfU^{\widehat{-\varsigma},\widehat{-\varsigma}}$ from~\S\ref{ss:central-characters}, and its central subalgebra isomorphic to $\scO(\bg^{*(1)} \times_{\bt^{*(1)}/\bWf} \bZ^{\widehat{-\varsigma},\widehat{-\varsigma}})$.
By construction, this algebra acts naturally (via left and right multiplication) on the algebra
\[
(\cU\bg)^{\widehat{-\varsigma}} =
\scO(\FN_{\bt^*/(\bWf,\bullet)}(\{\widetilde{-\varsigma}\})) \otimes_{\scO(\bt^*/(\bWf,\bullet))} \cU\bg
\]
considered in~\S\ref{ss:comp-loc-statement}.

\begin{lem}
\label{lem:U-rho-Azumaya}
The $\scO(\bg^{*(1)} \times_{\bt^{*(1)}/\bWf} \bZ^{\widehat{-\varsigma},\widehat{-\varsigma}})$-module $(\cU\bg)^{\widehat{-\varsigma}}$ is projective, and the
action morphism
\[
\sfU^{\widehat{-\varsigma},\widehat{-\varsigma}} \to \End_{\scO(\bg^{*(1)} \times_{\bt^{*(1)}/\bWf} \bZ^{\widehat{-\varsigma},\widehat{-\varsigma}})}((\cU\bg)^{\widehat{-\varsigma}})
\]
is an isomorphism.
\end{lem}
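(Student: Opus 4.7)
The plan is to deduce the lemma from Proposition~\ref{prop:Ug-Azumaya} by a base-change argument, after identifying the relevant central subalgebras.

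The first step is to observe that $\overline{-\varsigma}$ is unramified, since $\langle \overline{-\varsigma} + \overline{\varsigma}, \alpha^\vee \rangle = 0 \notin \mathbb{F}_\ell \smallsetminus \{0\}$ for every $\alpha \in \fR$. Setting
\[
\cZ^{\widehat{-\varsigma}} := \scO(\bg^{*(1)} \times_{\bt^{*(1)}/\bWf} \FN_{\bt^*/(\bWf,\bullet)}(\{\widetilde{-\varsigma}\})),
\]
the defining polynomial of the complement of the unramified locus is nonzero at $\widetilde{-\varsigma}$, hence invertible in $\cZ^{\widehat{-\varsigma}}$, so the structure map $\cZ \to \cZ^{\widehat{-\varsigma}}$ factors through $\cZ_{\mathrm{unr}}$ and we obtain
$(\cU\bg)^{\widehat{-\varsigma}} \cong (\cU\bg)_{\mathrm{unr}} \otimes_{\cZ_{\mathrm{unr}}} \cZ^{\widehat{-\varsigma}}$. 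Proposition~\ref{prop:Ug-Azumaya} together with the stability of Azumaya algebras under base change then implies that $(\cU\bg)^{\widehat{-\varsigma}}$ is Azumaya over $\cZ^{\widehat{-\varsigma}}$. In particular it is a finitely generated projective $\cZ^{\widehat{-\varsigma}}$-module, and the analogue of~\eqref{eqn:Azumaya-isom-unr} after base change gives an isomorphism
\[
(\cU\bg)^{\widehat{-\varsigma}} \otimes_{\cZ^{\widehat{-\varsigma}}} (\cU\bg)^{\widehat{-\varsigma},\op} \simto \End_{\cZ^{\widehat{-\varsigma}}}((\cU\bg)^{\widehat{-\varsigma}}).
\]

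To match this with the statement of the lemma, I would next identify $\cZ^{\widehat{-\varsigma}}$ with $\scO(\bg^{*(1)} \times_{\bt^{*(1)}/\bWf} \bZ^{\widehat{-\varsigma},\widehat{-\varsigma}})$, and correspondingly identify the left-hand side above with $\sfU^{\widehat{-\varsigma},\widehat{-\varsigma}}$ using~\eqref{eqn:U-hla-hmu-tensor-prod}. Both of these reduce to showing that the morphism $\bt^*/(\bWf,\bullet) \to \bt^{*(1)}/\bWf$ induces an isomorphism of complete local rings at $\widetilde{-\varsigma}$ and $0$; granting this, $\bZ^{\widehat{-\varsigma},\widehat{-\varsigma}}$, which as a formal neighborhood of a diagonal point in a fiber product is $\FN_{\bt^*/(\bWf,\bullet)}(\{\widetilde{-\varsigma}\}) \times_{\FN_{\bt^{*(1)}/\bWf}(\{0\})} \FN_{\bt^*/(\bWf,\bullet)}(\{\widetilde{-\varsigma}\})$, collapses to $\FN_{\bt^*/(\bWf,\bullet)}(\{\widetilde{-\varsigma}\})$, yielding both identifications. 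To establish this isomorphism of complete local rings, I would use that the Artin--Schreier morphism $\bt^* \to \bt^{*(1)}$ is \'etale and $\bWf$-equivariant for the dot-action on the source and the standard action on the target (the equivariance coming from the $\mathbb{F}_\ell$-rationality of $\overline{\varsigma}$), it sends $\overline{-\varsigma}$ to $0$, and both of these points are fixed by the whole of $\bWf$ with induced tangent actions identified with the standard reflection representation on each side. Taking $\bWf$-invariants of the resulting $\bWf$-equivariant isomorphism of complete local rings at $\overline{-\varsigma}$ and $0$ then yields the desired isomorphism on quotients.

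The main technical obstacle is this last step, the \'etaleness-after-invariants statement at the fixed point $\overline{-\varsigma}$: its verification rests on the coincidence of stabilizers on the two sides of the Artin--Schreier morphism and on the good-behavior of $\bWf$-invariants under our hypotheses on $\ell$. Once these identifications are in hand, the projectivity and the action-morphism isomorphism of the lemma follow directly from the Azumaya property established in the first step.
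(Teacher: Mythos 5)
Your proof follows essentially the same strategy as the paper's: observe that $\overline{-\varsigma}$ is unramified, establish that $\kappa^\fRs : \bt^*/(\bWf,\bullet) \to \bt^{*(1)}/\bWf$ is \'etale (hence an isomorphism on completed local rings) at $\widetilde{-\varsigma}$, use this to collapse $\bZ^{\widehat{-\varsigma},\widehat{-\varsigma}}$ to $\FN_{\bt^*/(\bWf,\bullet)}(\{\widetilde{-\varsigma}\})$, and then base-change the Azumaya property of $(\cU\bg)_{\mathrm{unr}}$ from Proposition~\ref{prop:Ug-Azumaya}. The one place where you diverge is the \'etaleness step: the paper simply invokes the proof of Lemma~\ref{lem:steinberg-completion} in the case $I=\fRs$, which establishes \'etaleness by exhibiting $\bt^* \to \bt^{*(1)}/\bWf$ as the quotient by $\bWf \ltimes (X^*(\bT)/\ell X^*(\bT))$, computing the stabilizer of $\overline{-\varsigma}$, and applying the criterion of~\cite[Exp.~V, Prop.~2.2]{sga1}; you instead re-derive it by taking $\bWf$-invariants of the \'etale $\bWf$-equivariant Artin--Schreier isomorphism of complete local rings. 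Your route is plausible but demands a further justification you only gesture at, namely that forming $\bWf$-invariants commutes with completion at the fixed point $\overline{-\varsigma}$; this is true (by finiteness of $\bt^*$ over the quotient), but note that one cannot simply appeal to invertibility of $|\bWf|$ in $\bk$ under the paper's hypotheses on $\ell$. The paper's approach is cleaner on this point, and in particular you should reuse Lemma~\ref{lem:steinberg-completion} rather than reproving it.
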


\begin{proof}
As seen in the course of the proof of Lemma~\ref{lem:steinberg-completion} (in the special case $I=\fRs$), the morphism $\bt^*/(\bWf,\bullet) \to \bt^{*(1)}/\bWf$ is \'etale at $\widetilde{-\varsigma}$. It follows that the diagonal embedding
\[
\bt^*/(\bWf,\bullet) \to \bt^*/(\bWf,\bullet) \times_{\bt^{*(1)}/\bWf} \bt^*/(\bWf,\bullet) = \bZ
\]
induces an isomorphism
\[
\FN_{\bt^*/(\bWf,\bullet)}(\{ \widetilde{-\varsigma} \}) \simto \bZ^{\widehat{-\varsigma},\widehat{-\varsigma}},
\]
hence an isomorphism
\[
\sfU^{\widehat{-\varsigma},\widehat{-\varsigma}} = \scO(\bZ^{\widehat{-\varsigma},\widehat{-\varsigma}}) \otimes_{\scO(\bZ)} \sfU \simto \scO(\FN_{\bt^*/(\bWf,\bullet)}(\{ \widetilde{-\varsigma} \})) \otimes_{\scO(\bZ)} \sfU,
\]
where the right-hand side identifies with
\[
\scO(\FN_{\bt^*/(\bWf,\bullet)}( \{ \widetilde{-\varsigma} \})) \otimes_{\scO(\bt^*/(\bWf,\bullet))} ( \cU\bg \otimes_{\cZ} (\cU\bg)^{\op} ).
\]
Now since $\widetilde{-\varsigma}$ belongs to the open subscheme $\bt^*_{\mathrm{unr}}/(\bWf,\bullet)$, we have identifications
\[
\FN_{\bt_{\mathrm{unr}}^*/(\bWf,\bullet)}( \{ \widetilde{-\varsigma} \}) \simto \FN_{\bt^*/(\bWf,\bullet)}( \{ \widetilde{-\varsigma} \})
\]
and
\[
(\cU\bg)^{\widehat{-\varsigma}} \simto
\scO(\FN_{\bt^*_{\mathrm{unr}}/(\bWf,\bullet)}( \{ \widetilde{-\varsigma} \})) \otimes_{\scO(\bt_{\mathrm{unr}}^*/(\bWf,\bullet))} (\cU\bg)_{\mathrm{unr}}.
\]
Hence the first claim follows from the fact that $(\cU\bg)_{\mathrm{unr}}$ is projective over $\cZ_{\mathrm{unr}}$, and the second one
%
from the fact that~\eqref{eqn:Azumaya-isom-unr} is an isomorphism.
\end{proof}

\subsection{Relation with \texorpdfstring{$\osD_{I,J}^{\widehat{-\varsigma},\widehat{-\varsigma}}$}{D}}

Let $I,J \subset \fRs$ 
be two subsets, and consider the natural morphism
\[
\omega_{I,J} : 
\St_{I,J}^{(1)} \times_{\bt^{*(1)} / \bW_I \times_{\bt^{*(1)}/\bWf} \bt^{*(1)} / \bW_J} \bZ_{I,J} \to \bg^{*(1)} \times_{\bt^{*(1)}/\bWf} \bZ.
\]
This morphism induces a morphism
\begin{equation}
\label{eqn:omegaIJ-completion}
\St_{I,J}^{(1)} \times_{\bt^{*(1)} / \bW_I \times_{\bt^{*(1)} / \bWf} \bt^{*(1)} / \bW_J} \bZ_{I,J}^{\widehat{-\varsigma},\widehat{-\varsigma}} \to \bg^{*(1)} \times_{\bt^{*(1)}/\bWf} \bZ^{\widehat{-\varsigma},\widehat{-\varsigma}},
\end{equation}
which will also be denoted $\omega_{I,J}$. Since $\sfU^{\widehat{-\varsigma},\widehat{-\varsigma}}$ is a finite algebra over the structure sheaf of the affine scheme $\bg^{*(1)} \times_{\bt^{*(1)}/\bWf} \bZ^{\widehat{-\varsigma},\widehat{-\varsigma}}$, it defines a coherent sheaf of algebras on this scheme, which will also be denoted $\sfU^{\widehat{-\varsigma},\widehat{-\varsigma}}$.

\begin{lem}
\label{lem:splitting-DIJ-varsigma}
For any $I,J$,
there exists a canonical isomorphism of coherent sheaves of $\scO_{\St_{I,J}^{(1)} \times_{\bt^{*(1)} / \bW_I \times_{\bt^{*(1)} / \bWf} \bt^{*(1)} / \bW_J} \bZ_{I,J}^{\widehat{-\varsigma},\widehat{-\varsigma}}}$-algebras
\[
\osD_{I,J}^{\widehat{-\varsigma},\widehat{-\varsigma}} \simto (\omega_{I,J})^* \sfU^{\widehat{-\varsigma},\widehat{-\varsigma}}.
\]
\end{lem}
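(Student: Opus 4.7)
The plan is to construct a canonical morphism of sheaves of algebras
\[
\alpha : \omega_{I,J}^* \sfU^{\widehat{-\varsigma},\widehat{-\varsigma}} \to \osD_{I,J}^{\widehat{-\varsigma},\widehat{-\varsigma}}
\]
via global sections, and then prove it is an isomorphism by identifying both sides as Azumaya algebras split by a common locally free module.

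For the construction of $\alpha$: the algebra morphism $\cU\bg \to \widetilde{\cU}_I\bg \simeq \Gamma(\tsD_I)$ from~\eqref{eqn:global-sections-tsDI}, together with its opposite-algebra analogue for $J$, combines to give a morphism $\sfU = \cU\bg \otimes_{\ZFr} \cU\bg^{\op} \to \Gamma(\osD_{I,J})$. Completing at the central character $(-\varsigma,-\varsigma)$ yields a morphism $\sfU^{\widehat{-\varsigma},\widehat{-\varsigma}} \to \Gamma(\osD_{I,J}^{\widehat{-\varsigma},\widehat{-\varsigma}})$, and extending $\scO$-linearly along $\omega_{I,J}$ produces the desired map $\alpha$ of sheaves of $\scO$-algebras on the source of $\omega_{I,J}$.

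To see that $\alpha$ is an isomorphism, first observe that by Lemma~\ref{lem:U-rho-Azumaya} the algebra $\sfU^{\widehat{-\varsigma},\widehat{-\varsigma}}$ is Morita trivial over its center, being identified with $\End((\cU\bg)^{\widehat{-\varsigma}})$ for the projective module $(\cU\bg)^{\widehat{-\varsigma}}$; hence $\omega_{I,J}^* \sfU^{\widehat{-\varsigma},\widehat{-\varsigma}} \cong \sEnd\bigl(\omega_{I,J}^*(\cU\bg)^{\widehat{-\varsigma}}\bigr)$. On the other hand, since $-\varsigma$ is regular in the ordinary (non-dot) sense on $\bt^*$, i.e.\ $\langle -\varsigma, \alpha^\vee \rangle \neq 0$ for every $\alpha \in \fR$, the results of~\cite{bmr,bmr2} imply that $\tsD_I^{\widehat{-\varsigma}}$ and $\tsD_J^{\widehat{-\varsigma}}$ are Azumaya algebras on their respective bases; hence so is the external tensor product $\tsD_I^{\widehat{-\varsigma}} \boxtimes \tsD_J^{\widehat{-\varsigma},\op}$, and hence so is its restriction $\osD_{I,J}^{\widehat{-\varsigma},\widehat{-\varsigma}}$ along the diagonal embedding $\bg^{*(1)} \hookrightarrow \bg^{*(1)} \times \bg^{*(1)}$. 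The map $\alpha$ naturally equips $\omega_{I,J}^*(\cU\bg)^{\widehat{-\varsigma}}$ with a structure of $\osD_{I,J}^{\widehat{-\varsigma},\widehat{-\varsigma}}$-module, extending its $\omega_{I,J}^* \sfU^{\widehat{-\varsigma},\widehat{-\varsigma}}$-module structure. Verifying that this is a splitting module identifies $\osD_{I,J}^{\widehat{-\varsigma},\widehat{-\varsigma}}$ with $\sEnd\bigl(\omega_{I,J}^*(\cU\bg)^{\widehat{-\varsigma}}\bigr)$, and so $\alpha$ is forced to be the canonical isomorphism between these two descriptions.

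The main obstacle will be the splitting verification of the final step: one must check that $\omega_{I,J}^*(\cU\bg)^{\widehat{-\varsigma}}$ has the appropriate rank and gives a faithful module for $\osD_{I,J}^{\widehat{-\varsigma},\widehat{-\varsigma}}$. This should reduce, after passing to closed fibers over the base, to comparing dimensions of splitting bundles for $\cU\bg$ and for the twisted differential operators $\cD^{-\varsigma}_I$, $\cD^{-\varsigma}_J$ at the regular weight $-\varsigma$, a calculation already encoded in the BMR setup of~\cite{bmr,bmr2}.
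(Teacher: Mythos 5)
Your overall strategy (compare the two sides as Azumaya algebras via a canonical morphism of algebras) is reasonable, but the argument as written has a genuine circularity in the final step, and it re-derives something the references already establish.

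The circularity: you construct $\alpha : \omega_{I,J}^* \sfU^{\widehat{-\varsigma},\widehat{-\varsigma}} \to \osD_{I,J}^{\widehat{-\varsigma},\widehat{-\varsigma}}$ and then claim that $\alpha$ ``equips $\omega_{I,J}^*(\cU\bg)^{\widehat{-\varsigma}}$ with a structure of $\osD_{I,J}^{\widehat{-\varsigma},\widehat{-\varsigma}}$-module, extending its $\omega_{I,J}^* \sfU^{\widehat{-\varsigma},\widehat{-\varsigma}}$-module structure.'' This is backwards: a morphism of algebras $A \to B$ makes $B$-modules into $A$-modules, not the other way around. To promote $\omega_{I,J}^*(\cU\bg)^{\widehat{-\varsigma}}$ to an $\osD_{I,J}^{\widehat{-\varsigma},\widehat{-\varsigma}}$-module via $\alpha$ you would already need $\alpha$ to be invertible, which is what you are trying to prove. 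The splitting-module comparison is therefore not available as stated, and the ``dimension comparison'' you gesture at in the last paragraph is never actually performed, so the proof is incomplete.

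Two remarks on how to repair and simplify. First, once you know that both $\omega_{I,J}^* \sfU^{\widehat{-\varsigma},\widehat{-\varsigma}}$ and $\osD_{I,J}^{\widehat{-\varsigma},\widehat{-\varsigma}}$ are Azumaya algebras over the same (local, complete) base, any unital $\scO$-algebra morphism between them is automatically injective (simplicity of the fibers), and it is an isomorphism as soon as the ranks agree; the detour through a common splitting bundle is unnecessary. You would still need to carry out the rank count, which is the content of the ``Morita equivalence'' half of the BMR localization theorem. Second, the paper's proof takes a shorter route that avoids constructing $\alpha$ entirely: it invokes~\cite[Proposition~1.2.3(d)]{bmr2} (and~\cite[Proposition~5.2.1(b)]{bmr} for $I = \varnothing$), which already provides, over the unramified locus $\bt^*_{\mathrm{unr}}/(\bW_I,\bullet)$, a canonical isomorphism of Azumaya algebras $\tsD_I \simto (\omega_I)^* (\cU\bg)_{\mathrm{unr}}$ for each $I$. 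Taking the (twisted, via~\cite[Lemma~3.0.6]{bmr2}) external tensor product for $I$ and $J$ and pulling back along the closed immersion that defines $\osD_{I,J}$ gives the isomorphism on the unramified locus, and then completing at $(\widetilde{-\varsigma}_I, \widetilde{-\varsigma}_J)$ (which lies in the unramified locus since $\langle -\varsigma + \varsigma, \alpha^\vee \rangle = 0 \notin \mathbb{F}_\ell \smallsetminus\{0\}$) gives the statement. In short: the claim is a formal consequence of a result of~\cite{bmr,bmr2} that you are, in effect, trying to reprove; if you do want an independent proof, you need to fix the direction of the module action and actually verify the ranks.
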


\begin{proof}
Consider the natural morphism
\[
\omega_I : \tbg_I^{(1)} \times_{\bt^{*(1)} / \bW_I} \bt^*/(\bW_I,\bullet) \to \bg^{*(1)} \times_{\bt^{*(1)} / \bWf} \bt^*/(\bWf,\bullet).
\]
Then by~\cite[Proposition~1.2.3(d)]{bmr2} (see also~\cite[Proposition~5.2.1(b)]{bmr} for the case $I=\varnothing$) there exists a canonical isomorphism
\[
\tsD_I{}_{| \tbg_I^{(1)} \times_{\bt^{*(1)} / \bW_I} \bt_{\mathrm{unr}}^*/(\bW_I,\bullet)} \simto (\omega_I)^* (\cU\bg)_{\mathrm{unr}},
\]
where we still denote by $(\cU\bg)_{\mathrm{unr}}$ the coherent sheaf of $\scO_{\cZ_{\mathrm{unr}}}$-algebras defined by $(\cU\bg)_{\mathrm{unr}}$, and by $\omega_I$ the restriction of this morphism to $\tbg_I^{(1)} \times_{\bt^{*(1)} / \bW_I} \bt_{\mathrm{unr}}^*/(\bW_I,\bullet)$. Of course the same isomorphism holds for $J$, and we deduce an isomorphism
\begin{multline*}
\osD_{I,J}{}_{|\St_{I,J}^{(1)} \times_{\bt^{*(1)} / \bW_I \times_{\bt^{*(1)} / \bWf} \bt^{*(1)} / \bW_J} \bt_{\mathrm{unr}}^*/(\bW_I,\bullet) \times_{\bt^{*(1)} / \bWf} \bt_{\mathrm{unr}}^*/(\bW_J,\bullet)} \simto \\
(\omega_{I,J})^* \sfU_{| \bg^{*(1)} \times_{\bt^{*(1)} / \bWf} \bt_{\mathrm{unr}}^*/(\bWf,\bullet) \times_{\bt^{*(1)} / \bWf} \bt_{\mathrm{unr}}^*/(\bWf,\bullet)},
\end{multline*}
where we use the same notational conventions as above. Now, as in the proof of Lemma~\ref{lem:U-rho-Azumaya} we have an identification
\[
\bZ_{I,J}^{\widehat{-\varsigma},\widehat{-\varsigma}}
 = \FN_{\bt_{\mathrm{unr}}^*/(\bW_I,\bullet) \times_{\bt^{*(1)} / \bWf} \bt_{\mathrm{unr}}^*/(\bW_J,\bullet)}(\{(\widetilde{-\varsigma}_I,\widetilde{-\varsigma}_J)\})
\]
and similarly for $\bZ^{\widehat{-\varsigma},\widehat{-\varsigma}}$. Hence the desired isomorphism follows using pullback to $\St_{I,J}^{(1)} \times_{\bt^{*(1)} / \bW_I \times_{\bt^{*(1)} / \bWf} \bt^{*(1)} / \bW_J} \bZ_{I,J}^{\widehat{-\varsigma},\widehat{-\varsigma}}$.
\end{proof}

\subsection{\texorpdfstring{$\scD$}{D}-modules and coherent sheaves}
\label{ss:D-mod-Coh}



By Lemma~\ref{lem:U-rho-Azumaya}, $(\cU\bg)^{\widehat{-\varsigma}}$ defines a vector bundle on $\bg^{*(1)} \times_{\bt^{*(1)}/\bWf} \bZ^{\widehat{-\varsigma},\widehat{-\varsigma}}$, whose pullback under the morphism~\eqref{eqn:omegaIJ-completion} will be denoted $\scM_{I,J}^{\widehat{-\varsigma},\widehat{-\varsigma}}$. Then Lemma~\ref{lem:U-rho-Azumaya} and Lemma~\ref{lem:splitting-DIJ-varsigma} imply that there exists a canonical isomorphism
\begin{equation}
\label{eqn:splitting-oDIJ-varsigma}
\osD_{I,J}^{\widehat{-\varsigma},\widehat{-\varsigma}} \simto \mathscr{E} \hspace{-1pt} \mathit{nd}_{\scO_{\St_{I,J}^{(1)} \times_{\bt^{*(1)} / \bW_I \times_{\bt^{*(1)} / \bWf} \bt^{*(1)} / \bW_J} \bZ_{I,J}^{\widehat{-\varsigma},\widehat{-\varsigma}}}}(\scM_{I,J}^{\widehat{-\varsigma},\widehat{-\varsigma}}).
\end{equation}

Now, fix $I,J \subset \fRs$, and let $\lambda,\mu \in X^*(\bT)$ be weights which satisfy 
\[
\bW_I \subset \mathrm{Stab}_{(\bWaff,\bullet)}(\lambda), \quad
\bW_J \subset \mathrm{Stab}_{(\bWaff,\bullet)}(\mu).
\]
By the comments in Remark~\ref{rmk:translation-DI}, setting
\[
\scM_{I,J}^{\hla,\hmu} =
\scO_{\bG/\bP_I \times \bG/\bP_J}(\lambda+\varsigma, -\mu-\varsigma) \otimes_{\scO_{\bG/\bP_I \times \bG/\bP_J}} \scM_{I,J}^{\widehat{-\varsigma},\widehat{-\varsigma}} 
\]
we define a vector bundle on $\St_{I,J}^{(1)} \times_{\bt^{*(1)} / \bW_I \times_{\bt^{*(1)} / \bWf} \bt^{*(1)} / \bW_J} \bZ_{I,J}^{\hla, \hmu}$ with a canonical action of $\osD_{I,J}^{\hla,\hmu}$, such that the action morphism
\[
\osD_{I,J}^{\hla,\hmu} \to \mathscr{E} \hspace{-1pt} \mathit{nd}_{\scO_{\St_{I,J}^{(1)} \times_{\bt^{*(1)} / \bW_I \times_{\bt^{*(1)} / \bWf} \bt^{*(1)} / \bW_J} \bZ_{I,J}^{\hla,\hmu}}}(\scM_{I,J}^{\hla, \hmu})
\]
is an isomorphism. In particular, the functor
\[
\scM_{I,J}^{\hla, \hmu} \otimes_{\scO_{\St_{I,J}^{(1)} \times_{\bt^{*(1)} / \bW_I \times_{\bt^{*(1)} / \bWf} \bt^{*(1)} / \bW_J} \bZ_{I,J}^{\hla,\hmu}}} (-)
\]
defines an equivalence of categories
\[
\Modc^\bG(\osD_{I,J}^{\widehat{\lambda},\widehat{\mu}}) \cong \Coh^{\bG}(\St_{I,J}^{(1)} \times_{\bt^{*(1)} / \bW_I \times_{\bt^{*(1)} / \bWf} \bt^{*(1)} / \bW_J} \bZ_{I,J}^{\hla,\hmu}).
\]
Arguing as in~\cite[Corollary~4.8]{br-Hecke}, one checks that this equivalence restricts to an equivalence
\begin{equation}
\label{eqn:equiv-splitting-oDIJ}
\Modc(\osD_{I,J}^{\widehat{\lambda},\widehat{\mu}},\bG) \cong \Coh^{\bG^{(1)}}(\St_{I,J}^{(1)} \times_{\bt^{*(1)} / \bW_I \times_{\bt^{*(1)} / \bWf} \bt^{*(1)} / \bW_J} \bZ_{I,J}^{\hla,\hmu}).
\end{equation}

Combining these considerations with Lemma~\ref{lem:steinberg-completion}
we deduce the following claim.

\begin{prop}
\label{prop:splitting-HC-equiv}
Let $\lambda,\mu \in X^*(\bT)$ and $I,J \subset \fRs$ which satisfy
\[
\bW_I \subset \mathrm{Stab}_{(\bWaff,\bullet)}(\lambda), \quad
\bW_J \subset \mathrm{Stab}_{(\bWaff,\bullet)}(\mu).
\]
Then we have canonical equivalences of categories
\[
\Modc^\bG(\osD_{I,J}^{\widehat{\lambda},\widehat{\mu}}) \cong \Coh^{\bG}(\St_{I,J}^{\wedge (1)}), \quad
\Modc(\osD_{I,J}^{\widehat{\lambda},\widehat{\mu}},\bG) \cong \Coh^{\bG^{(1)}}(\St_{I,J}^{\wedge (1)} )
\]
under which the embedding
\[
\Modc(\osD_{I,J}^{\widehat{\lambda},\widehat{\mu}},\bG) \to \Modc^\bG(\osD_{I,J}^{\widehat{\lambda},\widehat{\mu}})
\]
corresponds to the natural pullback functor
\[
\Coh^{\bG^{(1)}}(\St_{I,J}^{\wedge (1)} ) \to \Coh^{\bG}(\St_{I,J}^{\wedge (1)} ).
\]
\end{prop}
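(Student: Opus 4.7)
The plan is to simply combine the equivalences of categories constructed in the immediately preceding discussion (the equivalence in~\eqref{eqn:equiv-splitting-oDIJ} and its non-strongly-equivariant counterpart) with the isomorphism of schemes provided by Lemma~\ref{lem:steinberg-completion}. The whole work of building Morita-type equivalences via the splitting bundle $\scM_{I,J}^{\hla,\hmu}$ has already been carried out in~\S\ref{ss:D-mod-Coh}; only one compatibility of hypotheses and one base-change identification remain to be checked.

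First, from the hypotheses $\bW_I \subset \mathrm{Stab}_{(\bWaff,\bullet)}(\lambda)$ and $\bW_J \subset \mathrm{Stab}_{(\bWaff,\bullet)}(\mu)$ I would deduce that $\bW_I \subset \mathrm{Stab}_{(\bWf,\bullet)}(\overline{\lambda})$ and $\bW_J \subset \mathrm{Stab}_{(\bWf,\bullet)}(\overline{\mu})$, since the differentiation map $X^*(\bT) \to \bt^*$ is $\bW$-equivariant and its composition with the projection $\bt^* \to \bt^*/(\bWf,\bullet)$ is constant on $(\bWaff,\bullet)$-orbits (as used already in Remark~\ref{rmk:assumption-weights-loc}\eqref{it:assumption-weights-loc-2}). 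This puts us in the setting of Lemma~\ref{lem:steinberg-completion}, which furnishes a canonical isomorphism of schemes
\[
\St_{I,J}^{(1)} \times_{\bt^{*(1)}/\bW_I \times_{\bt^{*(1)}/\bWf} \bt^{*(1)}/\bW_J} \bZ_{I,J}^{\widehat{\lambda},\widehat{\mu}} \simto \St_{I,J}^{\wedge(1)}.
\]

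Second, I would invoke~\eqref{eqn:equiv-splitting-oDIJ} (and the analogous Morita-style equivalence at the level of merely $\bG$-equivariant modules, obtained by the same formula $\scM_{I,J}^{\hla,\hmu} \otimes (-)$, which exists because the splitting bundle carries a natural $\bG$-equivariant structure inherited from the one on $\cU\bg$): these give equivalences
\[
\Modc^{\bG}(\osD_{I,J}^{\hla,\hmu}) \simto \Coh^{\bG}\bigl(\St_{I,J}^{(1)} \times_{\bt^{*(1)}/\bW_I \times_{\bt^{*(1)}/\bWf} \bt^{*(1)}/\bW_J} \bZ_{I,J}^{\hla,\hmu}\bigr)
\]
and the strongly equivariant variant with $\bG^{(1)}$ in place of $\bG$. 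Transporting along the isomorphism of schemes of the previous paragraph produces the two desired equivalences. The compatibility with the forgetful functor
\[
\Modc(\osD_{I,J}^{\hla,\hmu},\bG) \hookrightarrow \Modc^{\bG}(\osD_{I,J}^{\hla,\hmu})
\]
and the pullback $\Coh^{\bG^{(1)}}(\St_{I,J}^{\wedge(1)}) \to \Coh^{\bG}(\St_{I,J}^{\wedge(1)})$ is tautological once one uses that $\scM_{I,J}^{\hla,\hmu}$ admits a compatible pair of structures (a $\bG$-equivariant structure and a descent to $\bG^{(1)}$ via Frobenius) matching the two sides, exactly as explained in the proof of~\cite[Corollary~4.8]{br-Hecke}.

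The only subtle point, and the one I would treat most carefully, is the last compatibility: verifying that, under the Morita equivalence $\scF \mapsto \scM_{I,J}^{\hla,\hmu} \otimes \scF$, the stronger equivariance condition on $\osD_{I,J}^{\hla,\hmu}$-modules (namely that the differentiated $\bG$-action coincides with the action coming from $\cU\bg \to \Gamma(\osD_{I,J}^{\hla,\hmu})$) is equivalent to $\scF$ descending to a $\bG^{(1)}$-equivariant coherent sheaf. This is not difficult but is really the heart of the matter; it boils down to the fact that the $\bG$-action on $\scM_{I,J}^{\hla,\hmu}$ differentiates to the $\bg$-action coming from the global $\cU\bg$-action on the same sheaf, so that the two differentiated actions cancel precisely on those $\scF$ for which the residual $\bG$-action factors through Frobenius. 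Since this verification is formally identical to the one carried out in~\cite[\S 4]{br-Hecke}, I would simply appeal to that reference to conclude.
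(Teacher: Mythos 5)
Your proposal matches the paper's argument essentially verbatim: the paper proves this proposition in a single sentence ("Combining these considerations with Lemma~\ref{lem:steinberg-completion} we deduce the following claim"), where "these considerations" are exactly the Morita-type equivalences via the splitting bundle $\scM_{I,J}^{\hla,\hmu}$ established just above, including the appeal to~\cite[Corollary~4.8]{br-Hecke} for the strongly equivariant versions. Your observation that the $\bWaff$-stabilizer hypotheses imply the $\bWf$-stabilizer hypotheses of Lemma~\ref{lem:steinberg-completion} via $\bW$-equivariance of differentiation is correct and fills in a step the paper leaves implicit; your remaining steps (transport along the scheme isomorphism, compatibility under the Morita functor) coincide with what the paper does.
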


Finally, combining Proposition~\ref{prop:splitting-HC-equiv} with Theorem~\ref{thm:localization-HC}, for
$\lambda, \mu \in X^*(\bT)$ and $I,J \subset \fRs$ which satisfy
\[
\mathrm{Stab}_{(\bWaff,\bullet)}(\lambda) = \bW_I, \quad
\mathrm{Stab}_{(\bWaff,\bullet)}(\mu) = \bW_J,
\]
we obtain equivalences of triangulated categories
\begin{equation}
\label{eqn:equiv-HC-Coh}
\Db \HC^{\hla,\hmu} \xrightarrow[\sim]{\cL_{I,J}^{\lambda,\mu}} \Db\Modc(\osD_{I,J}^{\widehat{\lambda},\widehat{\mu}},\bG) \simto \Db \Coh^{\bG^{(1)}}(\St_{I,J}^{\wedge (1)})
\end{equation}
whose composition will be denoted $\Phi^{\hla,\hmu}$.

\begin{rmk}
Let $\lambda,\mu \in X^*(\bT)$ and $I,J \subset \fRs$ be such that $\mathrm{Stab}_{(\bWaff,\bullet)}(\lambda) = \bW_I$, $\mathrm{Stab}_{(\bWaff,\bullet)}(\mu) = \bW_J$. Let also $\lambda',\mu' \in X^*(\bT)$ which satisfy $\langle \lambda', \alpha^\vee \rangle=0$ for any $\alpha \in I$ and $\langle \mu', \alpha^\vee \rangle=0$ for any $\alpha \in J$. Then the pair $(-\lambda',\mu')$ defines a line bundle $\scO_{(\bG/\bP_I \times \bG/\bP_J)^{(1)}}(-\lambda',\mu')$ on $(\bG/\bP_I \times \bG/\bP_J)^{(1)}$, whose pullback to $\St_{I,J}^{\wedge(1)}$ will be denoted $\scO_{\St_{I,J}^{\wedge(1)}}(-\lambda',\mu')$. 
On the other hand, we also have
\[
\mathrm{Stab}_{(\bWaff,\bullet)}(\lambda+\ell\lambda') = \bW_I, \quad \mathrm{Stab}_{(\bWaff,\bullet)}(\mu+\ell \mu') = \bW_J
\]
(see Remark~\ref{rmk:assumption-weights-loc}\eqref{it:assumption-weights-loc-2}), and $\HC^{\hla,\hmu} = \HC^{\widehat{\lambda+\ell\lambda'},\widehat{\mu+\ell\mu'}}$. In this setting we have a functorial isomorphism
\begin{equation}
\label{eqn:Phi-translation-weights}
\Phi^{\widehat{\lambda+\ell\lambda'},\widehat{\mu+\ell\mu'}}(M) \cong \scO_{\St_{I,J}^{\wedge(1)}}(-\lambda',\mu') \otimes_{\scO_{\St_{I,J}^{\wedge(1)}}} \Phi^{\hla,\hmu}(M)
\end{equation}
for any $M$ in $\Db\HC^{\hla,\hmu}$.
\end{rmk}

\subsection{Proof of Proposition~\ref{prop:acyclicity}\eqref{it:acyclicity-D}}
\label{ss:proof-injectives-acyclic}

For simplicity
we have stated 
Proposition~\ref{prop:splitting-HC-equiv} for coherent modules only, but it is clear that the same considerations provide an equivalence of categories
\[
\Modqc(\osD_{I,J}^{\widehat{\lambda},\widehat{\mu}},\bG) \cong \QCoh^{\bG^{(1)}}(\St_{I,J}^{\wedge (1)} ).
\]
It is a standard fact that the right-hand side has enough injective objects (see e.g.~\cite[\S A.2]{mr1}), and we deduce that the left-hand side also does.

Next, we have to prove that the image in $\Modqc(\tsD_{I,J}^{\widehat{\lambda},\widehat{\mu}})$ of any injective object in $\Modqc(\osD_{I,J}^{\widehat{\lambda},\widehat{\mu}},\bG)$ is acyclic for the functor $(f^{\lambda,\mu}_{I,J})_*$. For that, we remark that the equivalences above also hold in the non-equivariant setting, so that we have a commutative diagram where all the horizontal arrows are equivalences:
\begin{equation}
\label{eqn:equivalences-inj}
\vcenter{
\xymatrix@R=0.5cm{
\Modqc(\osD_{I,J}^{\widehat{\lambda},\widehat{\mu}},\bG) \ar[d]_-{\For_1} \ar[r]^-{\sim} & \QCoh^{\bG^{(1)}}(\St_{I,J}^{\wedge (1)} ) \ar[d]^-{\For_2} \\
\Modqc^{\bG}(\osD_{I,J}^{\widehat{\lambda},\widehat{\mu}}) \ar[r]^-{\sim} \ar[d]_-{\For_{\bG}} & \QCoh^{\bG}(\St_{I,J}^{\wedge (1)} ) \ar[d]^-{\For_{\bG}} \\
\Modqc(\osD_{I,J}^{\widehat{\lambda},\widehat{\mu}}) \ar[r]^-{\sim} & \QCoh(\St_{I,J}^{\wedge (1)} ).
}
}
\end{equation}
We also set $\For_{\bG^{(1)}} = \For_{\bG} \circ \For_2$.

We claim that for any injective object $\scF \in \QCoh^{\bG^{(1)}}(\St_{I,J}^{\wedge (1)} )$ there exists an injective object $\scG \in \QCoh^{\bG}(\St_{I,J}^{\wedge (1)} )$ such that $\For_{\bG^{(1)}}(\scF)$ is a direct summand in $\For_{\bG}(\scG)$. In fact, recall that the functor $\For_{\bG^{(1)}}$ has a right adjoint
\[
\Av_{\bG^{(1)}} : \QCoh(\St_{I,J}^{\wedge (1)} ) \to \QCoh^{\bG^{(1)}}(\St_{I,J}^{\wedge (1)} )
\]
which can be described as $\Av_{\bG^{(1)}}=a_*p^*$ where $a,p : \bG^{(1)} \times \St_{I,J}^{\wedge (1)} \to \St_{I,J}^{\wedge (1)}$ are the action and projection maps respectively (see once again~\cite[\S A.2]{mr1}). Similarly, the functor $\For_{\bG}$ has a right adjoint
\[
\Av_{\bG} : \QCoh(\St_{I,J}^{\wedge (1)} ) \to \QCoh^{\bG}(\St_{I,J}^{\wedge (1)} )
\]
which can be described as $\Av_{\bG}=a'_*(p')^*$ where $a',p' : \bG \times \St_{I,J}^{\wedge (1)} \to \St_{I,J}^{\wedge (1)}$ are the action and projection maps respectively. If $\Fr_{\bG} : \bG \to \bG^{(1)}$ is the Frobenius morphism, then we have
\[
a' = a \circ (\Fr_\bG \times \id), \quad p' = p \circ (\Fr_\bG \times \id).
\]
Using adjunction it is easy to see
that any injective object in $\QCoh^{\bG^{(1)}}(\St_{I,J}^{\wedge (1)} )$ is a direct summand of an object of the form $\Av_{\bG^{(1)}}(\scF')$ where $\scF' \in \QCoh(\St_{I,J}^{\wedge (1)} )$ is injective. Hence we can assume that $\scF$ is of this form. In this case we will prove that $\For_{\bG^{(1)}}(\scF)$ is a direct summand in $\For_{\bG}(\Av_{\bG}(\scF'))$, which will complete the proof of the claim since $\Av_{\bG}(\scF')$ is injective in $\QCoh^{\bG}(\St_{I,J}^{\wedge (1)} )$ by adjunction. In fact we have
\[
\For_{\bG}(\Av_{\bG}(\scF')) = a'_* (p')^* \scF' = a_* (\Fr_\bG \times \id)_* (\Fr_\bG \times \id)^* p^* \scF'.
\]
Using the projection formula (in the form of~\cite[Proposition~3.9.4]{lipman}) and the fact that $\Fr_\bG$ is affine and flat, we deduce an isomorphism
\[
\For_{\bG}(\Av_{\bG}(\scF')) \cong a_* \bigl( (p^* \scF') \otimes_{\scO_{\bG^{(1)} \times \St_{I,J}^{\wedge (1)}}} (\Fr_\bG \times \id)_*\scO_{\bG \times \St_{I,J}^{\wedge (1)}} \bigr).
\]
Now since $\bG$ is a smooth affine variety, $\scO_{\bG^{(1)}}$ is a direct summand in $(\Fr_{\bG})_* \scO_{\bG}$ by~\cite[Proposition~1.1.6]{bk}, hence $\scO_{\bG^{(1)} \times \St_{I,J}^{\wedge (1)}}$ is a direct summand in $(\Fr_\bG \times \id)_*\scO_{\bG \times \St_{I,J}^{\wedge (1)}}$, which proves our claim since $\For_{\bG^{(1)}}(\Av_{\bG^{(1)}}(\scF')) = a_* p^* \scF'$.

Using the diagram~\eqref{eqn:equivalences-inj}, the claim we have just proven says that for any injective object $\scF$ in $\Modqc(\osD_{I,J}^{\widehat{\lambda},\widehat{\mu}},\bG)$ there exists an injective object $\scG$ in $\Modqc^{\bG}(\osD_{I,J}^{\widehat{\lambda},\widehat{\mu}})$ such that $\For_\bG \circ \For_1(\scF)$ is a direct summand in $\For_\bG(\scF)$. Hence, to conclude the proof it suffices to prove that the image in $\Modqc(\tsD_{I,J}^{\widehat{\lambda},\widehat{\mu}})$ of any injective object in $\Modqc^{\bG}(\osD_{I,J}^{\widehat{\lambda},\widehat{\mu}})$ is acyclic for the functor $(f^{\lambda,\mu}_{I,J})_*$. The latter fact can be checked as in~\cite[Lemma~A.9]{mr1}, since $\osD_{I,J}^{\widehat{\lambda},\widehat{\mu}}$ is a $\bG$-equivariant quasi-coherent sheaf of algebras.


\subsection{Variant for a fixed Harish-Chandra character}
\label{ss:localization-fixed-HC-char}

The same considerations as above apply in the setting considered in~\S\ref{ss:variant-fixed-central-char}, using in particular Lemma~\ref{lem:steinberg-completion-prime}. We obtain the following result.

\begin{prop}
\label{prop:splitting-HC-equiv-fix}
Let $\lambda,\mu \in X^*(\bT)$ and $I,J \subset \fRs$ which satisfy
\[
\bW_I \subset \mathrm{Stab}_{(\bWaff,\bullet)}(\lambda), \quad
\bW_J \subset \mathrm{Stab}_{(\bWaff,\bullet)}(\mu).
\]
Then we have canonical equivalences of categories
\[
\Modc^\bG(\osD_{I,J}^{\widehat{\lambda},\mu}) \cong \Coh^{\bG}(\St_{I,J}^{\prime (1)}), \quad
\Modc(\osD_{I,J}^{\widehat{\lambda},\mu},\bG) \cong \Coh^{\bG^{(1)}}(\St_{I,J}^{\prime (1)} )
\]
under which the embedding
\[
\Modc(\osD_{I,J}^{\widehat{\lambda},\mu},\bG) \to \Modc^\bG(\osD_{I,J}^{\widehat{\lambda},\mu})
\]
corresponds to the natural pullback functor
\[
\Coh^{\bG^{(1)}}(\St_{I,J}^{\prime (1)} ) \to \Coh^{\bG}(\St_{I,J}^{\prime (1)} ).
\]
\end{prop}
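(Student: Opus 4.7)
The plan is to mimic the construction carried out in~\S\ref{ss:D-mod-Coh} for the ``completed-on-both-sides'' case, but starting from a splitting bundle restricted to the smaller scheme that appears here. More precisely, I would first observe that we have a natural closed immersion
\[
\iota : \St_{I,J}^{\prime(1)} \times_{\bt^{*(1)}/\bW_I \times_{\bt^{*(1)}/\bWf} \{0\}} \FN_{\bt^*/(\bW_I,\bullet) \times_{\bt^{*(1)}/\bWf} \{\tmu_J\}}(\{(\tla_I,\tmu_J)\}) \hookrightarrow \St_{I,J}^{(1)} \times_{\bt^{*(1)}/\bW_I \times_{\bt^{*(1)}/\bWf} \bt^{*(1)}/\bW_J} \bZ_{I,J}^{\hla,\hmu}
\]
induced by the obvious closed immersions of the two factors, and that by construction the pullback of $\osD_{I,J}^{\hla,\hmu}$ under $\iota$ is $\osD_{I,J}^{\hla,\mu}$.

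Next, I would set $\scM_{I,J}^{\hla,\mu} := \iota^* \scM_{I,J}^{\hla,\hmu}$, where $\scM_{I,J}^{\hla,\hmu}$ is the splitting bundle from~\S\ref{ss:D-mod-Coh}. Since pullback is compatible with the formation of $\mathscr{E}\hspace{-1pt}\mathit{nd}$ of a vector bundle, restricting the canonical isomorphism $\osD_{I,J}^{\hla,\hmu} \simto \mathscr{E}\hspace{-1pt}\mathit{nd}(\scM_{I,J}^{\hla,\hmu})$ under $\iota$ yields an isomorphism
\[
\osD_{I,J}^{\hla,\mu} \simto \mathscr{E}\hspace{-1pt}\mathit{nd}_{\scO}(\scM_{I,J}^{\hla,\mu})
\]
exhibiting $\osD_{I,J}^{\hla,\mu}$ as a matrix algebra on a vector bundle over the target of $\iota$. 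Combining this with the isomorphism of schemes provided by Lemma~\ref{lem:steinberg-completion-prime}, the functor $\scM_{I,J}^{\hla,\mu} \otimes_{\scO} (-)$ defines an equivalence
\[
\Modc^{\bG}(\osD_{I,J}^{\hla,\mu}) \simto \Coh^{\bG}(\St_{I,J}^{\prime(1)}),
\]
exactly as in the completed setting.

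Finally, to upgrade this to the strongly equivariant version and identify it with $\Coh^{\bG^{(1)}}(\St_{I,J}^{\prime(1)})$, I would argue verbatim as in the proof of~\cite[Corollary~4.8]{br-Hecke} (which was already invoked for~\eqref{eqn:equiv-splitting-oDIJ}): strong $\bG$-equivariance amounts to the condition that the antidiagonal action of $\cU\bg$ factors through its Frobenius center, which translates under the Morita equivalence into the condition that the $\bG$-equivariant structure on a coherent sheaf over $\St_{I,J}^{\prime(1)}$ descends to $\bG^{(1)}$. Compatibility with the forgetful functors is automatic from the construction. The only real verification to make is that $\iota^*\scM_{I,J}^{\hla,\hmu}$ carries a compatible strong $\bG$-equivariant structure, but this is inherited directly from that of $\scM_{I,J}^{\hla,\hmu}$ because $\iota$ is $\bG$-equivariant and the relevant morphism $\cU\bg \to \Gamma(\osD_{I,J}^{\hla,\mu})$ is the specialization of $\cU\bg \to \Gamma(\osD_{I,J}^{\hla,\hmu})$; so no essentially new input is needed beyond the material of~\S\ref{ss:D-mod-Coh} and Lemma~\ref{lem:steinberg-completion-prime}.
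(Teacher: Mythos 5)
Your proposal is correct and follows essentially the same route the paper takes: the paper's proof is simply the sentence ``the same considerations as above apply, using Lemma~\ref{lem:steinberg-completion-prime},'' and you have spelled out exactly those considerations — restricting the splitting bundle $\scM_{I,J}^{\hla,\hmu}$ along the closed immersion onto the $\mu$-fixed locus, using that pullback commutes with $\sEnd$ of a vector bundle, and invoking Lemma~\ref{lem:steinberg-completion-prime} in place of Lemma~\ref{lem:steinberg-completion} to identify the base with $\St_{I,J}^{\prime(1)}$, with the strongly equivariant upgrade argued exactly as for~\eqref{eqn:equiv-splitting-oDIJ}.
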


Combining Proposition~\ref{prop:splitting-HC-equiv-fix} with Theorem~\ref{thm:localization-HC-fixed}, for
$\lambda, \mu \in X^*(\bT)$ and $I,J \subset \fRs$ which satisfy
\[
\mathrm{Stab}_{(\bWaff,\bullet)}(\lambda) = \bW_I, \quad
\mathrm{Stab}_{(\bWaff,\bullet)}(\mu) = \bW_J
\]
we obtain equivalences of triangulated categories
\begin{equation}
\label{eqn:equiv-HC-Coh-fix}
\Db \HC^{\hla,\mu} \simto \Db\Modc(\osD_{I,J}^{\widehat{\lambda},\mu},\bG) \simto \Db \Coh^{\bG^{(1)}}(\St_{I,J}^{\prime (1)}).
\end{equation}
It is easily seen that the following diagram commutes, where the right vertical arrows are the push/pull functors associated with the closed embedding $\St_{I,J}^{\prime (1)} \hookrightarrow \St_{I,J}^{\wedge (1)}$:
\[
\xymatrix@C=2cm{
\Db \HC^{\hla,\hmu} \ar[r]^-{\Phi^{\hla,\hmu}} \ar@<1ex>[d]^-{\mathsf{Sp}_{\lambda,\mu}} & \Db \Coh^{\bG^{(1)}}(\St_{I,J}^{\wedge (1)}) \ar@<1ex>[d] \\
\Db \HC^{\hla,\mu} \ar[r]^-{\eqref{eqn:equiv-HC-Coh-fix}} \ar@<1ex>[u]^-{\eqref{eqn:functor-HC-hla-mu-hmu}} & \Db \Coh^{\bG^{(1)}}(\St_{I,J}^{\prime (1)}). \ar@<1ex>[u]
}
\]

\section{Convolution}
\label{sec:monoidality}

From now on, in addition to the conditions of~\S\ref{ss:HC-notation}, we assume that $\ell \geq h$. We also fix a weight $\lambda \in A_0 \cap X^*(\bT)$. (Here $A_0$ is the fundamental alcove, see~\S\ref{ss:translation-bimod}.) Below we will mainly consider the constructions of the preceding sections in case $I=J=\varnothing$, and for the pair of weights $(\lambda,\lambda)$.

\subsection{Convolution bifunctors for coherent sheaves}
\label{ss:convolution}

We have a natural action of the group $\bG \times \Gm$ on $\tbg$ where $\bG$ acts as in~\S\ref{ss:parabolic-Groth} and $z \in \Gm$ acts by dilation by $z^2$ on the fibers of the projection $\tbg \to \bG/\bB$. (Our conventions here follow those of~\cite{br-Baff}.) From this action we deduce a diagonal action of $(\bG \times \Gm)^{(1)}$ on any product of copies of $\tbg^{(1)}$.
Consider the natural morphisms
\[
p_{i,j} : \tbg^{(1)} \times \tbg^{(1)} \times \tbg^{(1)} \to \tbg^{(1)} \times \tbg^{(1)}
\]
of projection to the $i$-th and $j$-th factors, where $i,j \in \{1,2,3\}$. Then it is a classical fact that
we have a monoidal structure on the derived category
\[
\Db \Coh^{(\bG \times \Gm)^{(1)}}_{\St^{(1)}}( \tbg^{(1)} \times \tbg^{(1)})
\]
of equivariant coherent sheaves set-theoretically supported on $\St^{(1)}$, with monoidal product given by
\[
\scF \star \scG = R(p_{1,3})_* \left( L(p_{1,2})^* \scF \lotimes_{\scO_{\tbg^{(1)} \times \tbg^{(1)} \times \tbg^{(1)}}} L(p_{2,3})^*\scG \right)
\]
and unit object the structure sheaf $\scO_{\Delta \tbg^{(1)}}$ of the diagonal copy $\Delta \tbg^{(1)} \subset \tbg^{(1)} \times \tbg^{(1)}$. Considering the Springer resolution
\[
\tcN := \tbg \times_{\bt^*} \{0\},
\]
we also have a canonical action of the category $\Db \Coh^{(\bG \times \Gm)^{(1)}}_{\St^{(1)}}( \tbg^{(1)} \times \tbg^{(1)})$ on the category $\Db \Coh^{(\bG \times \Gm)^{(1)}}_{\St^{\prime (1)}}( \tbg^{(1)} \times \tcN^{(1)})$, defined by a similar formula as above. Similar comments apply to categories of $\bG^{(1)}$-equivariant coherent sheaves, or $\bG$-equivariant coherent sheaves (where $\bG$ acts via the Frobenius morphism $\bG \to \bG^{(1)}$), or nonequivariant coherent sheaves.

Now we consider the schemes $\St = \tbg \times_{\bg^*} \tbg$ and $\St^\wedge$. The latter scheme can also be described as a fiber product of the same form. Namely, set
\begin{gather*}
\tbg^\wedge = \tbg \times_{\bt^*/\bWf} \FN_{\bt^*/\bWf}(\{0\}) \cong \tbg \times_{\bt^*} \FN_{\bt^*}(\{0\}), \\
 \bg^{*\wedge} = \bg^* \times_{\bt^*/\bWf} \FN_{\bt^*/\bWf}(\{0\}).
\end{gather*}
Then Remark~\ref{rmk:St-fiber-product} implies that we have a canonical identification
\begin{equation}
\label{eqn:Stwedge-fiber-prod}
\St^\wedge = \tbg^\wedge \times_{\bg^{*\wedge}} \tbg^\wedge.
\end{equation}

A similar construction as above provides monoidal products on the categories
\[
\Db \Coh^{(\bG \times \Gm)^{(1)}}(\St^{(1)}) \quad \text{and} \quad \Db \Coh^{\bG^{(1)}}(\St^{\wedge(1)})
\]
(and appropriate variants with different equivariance conditions, such that the obvious forgetful functors are monoidal)
such that the natural pushforward functor
\[
\Db \Coh^{(\bG \times \Gm)^{(1)}}(\St^{(1)}) \to \Db \Coh^{\bG^{(1)}}_{\St^{(1)}}( \tbg^{(1)} \times \tbg^{(1)})
\]
and the natural pullback functor
\begin{equation}
\label{eqn:pullback-convolution}
\Db \Coh^{\bG^{(1)}}(\St^{(1)}) \to \Db \Coh^{\bG^{(1)}}(\St^{\wedge(1)})
\end{equation}
admit canonical monoidal structures. The monoidal products on these categories will also be denoted $\star$. For this product, the image of the fully faithful functor of Lemma~\ref{lem:DbCoh-nil} (for $I=J=\varnothing$) is an ideal; in particular, this endows its domain with a canonical monoidal structure. We also have a canonical action of $\Db \Coh^{\bG^{(1)}}(\St^{(1)})$ on $\Db \Coh^{\bG^{(1)}}(\St^{\prime(1)})$, which factors through an action of $\Db \Coh^{\bG^{(1)}}(\St^{\wedge(1)})$.

This construction requires the use of concepts from Derived Algebraic Geometry; here we only sketch how it proceeds (more details will appear in~\cite{adr}). In fact, the fiber products defining $\St$ and $\St^\wedge$ are isomorphic to the corresponding \emph{derived} fiber products (see e.g.~Remark~\ref{rmk:derived-fiber-prod}), which are therefore ``honnest'' schemes. But this is \emph{not} the case for the triple products
\[
\St_{\mathrm{conv}} := \tbg \rtimes_{\bg^*} \tbg \rtimes_{\bg^*} \tbg \quad \text{and} \quad
\St_{\mathrm{conv}}^\wedge := \tbg^\wedge \rtimes_{\bg^{*\wedge}} \tbg^\wedge \rtimes_{\bg^{*\wedge}} \tbg^\wedge
\]
which are involved in the definition of the convolution product, which should be defined using the formula
\[
\scF \star \scG = R(q_{1,3})_* \left( L(q_{1,2})^* \scF \lotimes_{\scO} L(q_{2,3})^*\scG \right)
\]
where $q_{i,j}$ is the projection on the $i$-th and $j$-th factors from $\St_{\mathrm{conv}}^{(1)}$ or $\St_{\mathrm{conv}}^{\wedge(1)}$, and the tensor product is taken over the structure (derived) sheaf of the triple fiber product. This formula is initially defined on the $\infty$-category of (nonequivariant) quasi-coherent sheaves on these derived schemes; it defines a monoidal structure (in the sense of $\infty$-categories) because it can be interpreted as composition of ``convolution kernels;'' see~\cite[Corollary~4.10 and~\S 5.2]{bzfn}. Then one obtains monoidal structures on $\infty$-categories of equivariant quasi-coherent sheaves (equivalently, quasi-coherent sheaves on the associated quotient derived stacks) by taking categorical invariants under the appropriate weak actions (see~\cite[\S\S 2.2.7--2.2.8]{beraldo}).\footnote{The results of~\cite{bzfn} apply to derived stacks, but only when they are \emph{perfect}. Quotient stacks over fields of positive characteristic are not perfect in general, so that these results cannot be applied directly, which justifies our need for a second step.} Passing to homotopy categories we deduce a monoidal structure on the (unbounded) derived categories of equivariant quasi-coherent sheaves.

It remains finally to explain why these convolution products restrict to bounded derived categories of coherent sheaves; here equivariance plays no role, hence it can be forgotten for simplicity. For the case of $\St^{(1)}$, this can be justified using the compatibility with the monoidal product on $\Db \Coh^{\bG^{(1)}}_{\St^{(1)}}( \tbg^{(1)} \times \tbg^{(1)})$. For the case of $\St^{\wedge (1)}$ one can proceed similarly using compatibility with the bifunctor on the derived category $\Db \Coh^{\bG^{(1)}}_{\St^{\wedge (1)}}( (\tbg \times \tbg)^{\wedge(1)})$, where
\[
(\tbg \times \tbg)^{\wedge} = (\tbg \times \tbg) \times_{\bt^* \times \bt^*} \FN_{\bt^* \times \bt^*}(\{(0,0)\}),
\]
defined using formulas similar to those above for the morphisms from
\[
(\tbg \times \tbg \times \tbg)^{\wedge} := (\tbg \times \tbg \times \tbg) \times_{\bt^* \times \bt^* \times \bt^*} \FN_{\bt^* \times \bt^* \times \bt^*}(\{(0,0,0)\})
\]
to $(\tbg \times \tbg)^{\wedge}$ induced by the obvious projections. (We do not claim, and do not expect, that this bifunctor defines a monoidal structure.) This bifunctor respects bounded complexes of coherent sheaves because these maps are flat, proper on the support of the complexes under consideration, and because their domain is smooth over a regular scheme, hence regular, of finite dimension. (Flatness can be justified by factoring this map via $(\tbg \times \tbg) \times_{\bt^* \times \bt^*} \FN_{\bt^* \times \bt^*}(\{(0,0)\}) \times \tbg$, where the first morphism is induced by the natural map $\FN_{\bt^* \times \bt^* \times \bt^*}(\{(0,0,0)\}) \to \FN_{\bt^* \times \bt^*}(\{(0,0)\}) \times \bt^*$, which is flat since $\scO(\FN_{\bt^* \times \bt^* \times \bt^*}(\{(0,0,0)\}))$ is a completion of $\scO(\FN_{\bt^* \times \bt^*}(\{(0,0)\}) \times \bt^*)$.)


\subsection{Monoidality}
\label{ss:monoidality}


The following is the main result of this section

\begin{thm}
\phantomsection
\label{thm:monoidality}
\begin{enumerate}
\item
\label{it:monoidality-1}
The composition
\begin{equation}
\label{eqn:composition-Phi-monoidal}
\Db \Coh^{\bG^{(1)}}(\St^{\wedge(1)}) \xrightarrow[\sim]{(\Phi^{\hla,\hla})^{-1}} \Db \HC^{\hla,\hla} \to D^- \HC^{\hla,\hla}
\end{equation}
(where the second functor is the obvious one) admits a canonical monoidal structure, where the monoidal product on the domain is as discussed in~\S\ref{ss:convolution}, and the one on the codomain is the structure considered in~\S\ref{ss:monoidal-structure-DHC}.
\item
\label{it:monoidality-2}
The composition
\[
\Db \Coh^{\bG^{(1)}}(\St^{\prime(1)}) \xrightarrow[\sim]{\eqref{eqn:equiv-HC-Coh-fix}} \Db \HC^{\hla,\lambda} \to D^- \HC^{\hla,\lambda}
\]
is compatible in the obvious sense with the action of $\Db \Coh^{\bG^{(1)}}(\St^{\wedge(1)})$ on its domain (see~\S\ref{ss:convolution}) and of $D^- \HC^{\hla,\hla}$ on its codomain (see~\S\ref{ss:HC-fixed-character}).
\end{enumerate}
\end{thm}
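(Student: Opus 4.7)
\medskip

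\noindent\textbf{Proof proposal.} The plan is to realize the convolution on both sides as derived pushforward along the projection from a triple product, and match them via the Azumaya splitting. Throughout we take $I=J=\varnothing$; part~\eqref{it:monoidality-2} will be an essentially formal consequence of the argument for~\eqref{it:monoidality-1}, using that $\St^{\prime(1)}$ is the fiber of $\St^{(1)}$ over $0 \in \bt^{*(1)}/\bWf$ (so everything factors through the appropriate specialization).

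First I would introduce an intermediate category: the derived category of $\bG$-equivariant strongly equivariant $\oD$-modules on $\bG/\bB \times \bG/\bB$ (with the relevant completion at $(\tla,\tla)$), and define convolution there via the formula
\[
\scM \star \scN := R(p_{1,3})_* \bigl( L(p_{1,2})^* \scM \lotimes_{\scO^{(\bG/\bB)^{\times 3}}} L(p_{2,3})^* \scN \bigr),
\]
where the tensor product is over the appropriate sheaf of rings on the triple product obtained from $\oD$ along the middle factor (so that the result carries a left action on the first leg and a right action on the third, exactly as in the bimodule picture). The key compatibility, on which both~\eqref{it:monoidality-1} and~\eqref{it:monoidality-2} hinge, is that the localization equivalence $\cL_{\varnothing,\varnothing}^{\lambda,\lambda}$ intertwines this convolution with $\star$ on $D^- \HC^{\hla,\hla}$.

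To prove this, I would run the Chevalley--Eilenberg argument of~\S\ref{ss:proof-acyclicity-diag-induced} on a triple product: computing $\tD \lotimes_{\cU\bg} (V \otimes \cU\bg)$ (or rather its iterated version with three copies of $\tbg$) via the Koszul resolution shows that the fiber product defining $\St^\wedge$ and its ``convolution'' version $\St^{\wedge}_{\mathrm{conv}}$ are genuine derived fiber products in the sense used in~\S\ref{ss:convolution}. Combined with the projection formula for the proper flat morphisms $\bG/\bB \times \bG/\bB \times \bG/\bB \to \bG/\bB \times \bG/\bB$ and the acyclicity of diagonally induced HC bimodules under $(f^{\lambda,\lambda}_{\varnothing,\varnothing})^*$ (Proposition~\ref{prop:acyclicity}\eqref{it:acyclicity-HC}), this gives a bifunctorial isomorphism $\cL^{\lambda,\lambda}_{\varnothing,\varnothing}(M \star N) \cong \cL^{\lambda,\lambda}_{\varnothing,\varnothing}(M) \star \cL^{\lambda,\lambda}_{\varnothing,\varnothing}(N)$; the coherence (associativity and unitality) then follows by functoriality, using that the unit $(\cU\bg)^{\hla}$ goes to $\scO_{\Delta}$ in the D-module picture.

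For the second half, match the D-module convolution with coherent convolution via Azumaya splitting. The splitting module $\scM^{\hla,\hla}_{\varnothing,\varnothing}$ of~\S\ref{ss:D-mod-Coh} gives an equivalence $\Coh(\osD^{\hla,\hla}_{\varnothing,\varnothing},\bG) \cong \Coh^{\bG^{(1)}}(\St^{\wedge(1)})$. One needs to check that the ``triple'' splitting bundle $\scM^{\hla,\hla,\hla}_{\varnothing,\varnothing,\varnothing}$ on $(\bG/\bB)^{\times 3}$ (pulled back to the derived triple Steinberg) decomposes, along the central leg, compatibly with the tensor product appearing in the definition of $\star$. This is a local computation reducing, via Lemma~\ref{lem:U-rho-Azumaya} and the isomorphism~\eqref{eqn:splitting-oDIJ-varsigma}, to the elementary statement that for an Azumaya algebra $A$ with splitting module $\scE$ one has $A \simto \sEnd(\scE)$ and $\scE \otimes_A \scE^\vee$ is the structure sheaf.

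The main obstacle I anticipate is the rigorous set-up of derived fiber products and base change in the completed setting on the geometric side (where $\St^\wedge$, and especially its triple version, are not of finite type over $\bk$). The cleanest route is to do all calculations on the non-completed triple product $\tbg^{(1)} \times \tbg^{(1)} \times \tbg^{(1)}$ and its Koszul-type resolution, and only at the very end base-change along $\FN_{\bt^*/\bWf}(\{0\}) \to \bt^*/\bWf$, using flatness of completion for a finite-type map. The other delicate point is handling the strong vs weak equivariance in the triple convolution, which is why passing through $\oD$-modules (where the two coincide thanks to vanishing of the Frobenius center action on diagonally induced bimodules) is essential.
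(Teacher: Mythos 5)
Your proposal is essentially the same route as the paper's sketch: factor $\Phi^{\hla,\hla}$ through the $\cD$-module category, define convolution there via the triple product, intertwine via the projection formula/ Koszul acyclicity, and reduce the geometric leg to the Azumaya case at $\widehat{-\varsigma}$. The only genuine difference is tactical: you prove the intertwining at the level of $\cL^{\lambda,\lambda}_{\varnothing,\varnothing}$ by running the Chevalley--Eilenberg/Koszul computation on the triple product, whereas the paper instead constructs the isomorphism $R(L(M)\star L(N)) \simto M\star N$ directly from the projection formula applied to the global sections functor $R$ together with~\eqref{eqn:global-sections-DI-xi}. Both are legitimate and more or less dual to each other.

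One point you treat as automatic but the paper has to argue. To make sense of ``intertwining with $\star$ on $D^-\HC^{\hla,\hla}$'' you need to know that the localization equivalence $\Db\Modc(\osD^{\hla,\hla},\bG)\simto\Db\HC^{\hla,\hla}$ actually extends to an equivalence of bounded-above derived categories (where the convolution $\star$ of~\S\ref{ss:monoidal-structure-DHC} naturally lives and where derived tensor products of non-flat objects take values). This is not formal: the pull/push functors $L$ and $R$ extend to $D^-$ and remain adjoint with $\id \simto RL$, but fully faithfulness of $L$ on $D^-$ requires ruling out that $R$ kills a nonzero bounded-above object. The paper proves this by a truncation argument ($\tau_{<N-M}, \tau_{\geq N-M}$) using right t-exactness of $L$. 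Without this step your ``bifunctorial isomorphism $\cL(M\star N)\cong \cL(M)\star\cL(N)$'' is an isomorphism of objects in a target category that has not been shown to be equivalent to $D^-\HC^{\hla,\hla}$, so you cannot yet conclude that you have a monoidal \emph{equivalence}. You should add this extension lemma explicitly.

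A minor remark: you work on $(\bG/\bB)^{\times 3}$ with the algebras $\oD$, while the paper works on $(\tbg\times\tbg\times\tbg)^{\wedge(1)}$ with $\tsD$; since these are pushforward/pullback of each other along affine morphisms, this is only a notational preference and does not affect the argument, though the paper's choice is what makes the comparison with $\Db\Coh^{\bG^{(1)}}(\St^{\wedge(1)})$ most transparent.
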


Let us remark that these results imply that the subcategory
\[
\Db \HC^{\hla,\hla} \subset D^- \HC^{\hla,\hla}
\]
is stable under the monoidal product (hence a monoidal category), and that its action on $D^- \HC^{\hla,\lambda}$ stabilizes $\Db \HC^{\hla,\lambda}$.

Below we give a sketch of proof of~\eqref{it:monoidality-1}, which can easily be adapted for the proof of~\eqref{it:monoidality-2}. A detailed treatment of these questions requires the use of Derived Algebraic Geometry, and will appear in~\cite{adr}.

\begin{proof}[Sketch of proof of Theorem~\ref{thm:monoidality}\eqref{it:monoidality-1}]
The functor $(\Phi^{\hla,\hla})^{-1}$ is a composition
\begin{equation}
\label{eqn:Phi-composition-monoidality}
\Db \Coh^{\bG^{(1)}}(\St^{\wedge(1)}) \simto \Db \Modc(\osD^{\hla, \hla}, \bG) \simto \Db \HC^{\hla,\hla}.
\end{equation}
We first note that the bounded above derived category $D^- \Modc(\osD^{\hla, \hla}, \bG)$ admits a monoidal structure, constructed as follows. Recall the sheaf of algebras $\tsD^{\hla,\hla}$, which we regard here as living on $(\tbg \times \tbg)^{\wedge(1)}$. One can consider the category of strongly $\bG$-equivariant coherent modules for this sheaf of algebras, which identifies with $\Modc(\osD^{\hla, \hla}, \bG)$. (This is similar to the situation for $\cU\bg$-modules considered in~\S\ref{ss:def-HC}.) Denoting by
\[
r_{i,j} : (\tbg \times \tbg \times \tbg)^{\wedge(1)} \to (\tbg \times \tbg)^{\wedge(1)}
\]
the morphism induced by projection on the $i$-th and $j$-th components, we have a canonical bifunctor
\[
D^- \Modc(\tsD^{\hla, \hla}, \bG) \times D^- \Modc(\tsD^{\hla, \hla}, \bG) \to D^- \Modc(r_{1,3}^* \tsD^{\hla, \hla}, \bG)
\]
sending a pair $(\scF,\scG)$ to the derived tensor product of $Lr_{1,2}^* \scF$ and $Lr_{2,3}^* \scG$ over the pullback of $\tsD^{\hla}$ under the morphism $(\tbg \times \tbg \times \tbg)^{\wedge(1)} \to \tbg^{\wedge(1)}$ induced by projection on the second component. Composing with pushforward along $r_{1,3}$ we deduce the desired bifunctor defining the monoidal product.

Next, we claim that the second equivalence in~\eqref{eqn:Phi-composition-monoidality} ``extends'' to an equivalence of triangulated categories
\begin{equation}
\label{eqn:equivalences-D-}
D^- \Modc(\osD^{\hla, \hla}, \bG) \simto D^- \HC^{\hla,\hla}.
\end{equation}
In fact the functors in both directions naturally extend to bounded above derived categories, and these extentions are still adjoint to each other. Let us write $L$ for the left adjoint (given by a pullback functor from $D^- \HC^{\hla,\hla}$ to $D^- \Modc(\osD^{\hla, \hla}, \bG)$) and $R$ for the right adjoint (given by global sections). By the same argument as for bounded derived categories the adjunction morphism $\id \to RL$ is an isomorphism; to conclude the proof it therefore suffices to prove that $R$ does not kill any nonzero object. However if $\scF \in D^- \Modc(\osD^{\hla, \hla}, \bG)$ is a nonzero complex, consider the highest degree $N$ in which $\scF$ is nonzero. For $M>0$ we can consider the truncation triangle
\[
\tau_{<N-M} \scF \to \scF \to \tau_{\geq N-M} \scF \xrightarrow{[1]},
\]
where the third term is bounded. If $M \gg 0$, the complex $R(\tau_{<N-M} \scF)$ vanishes in degrees $\geq N$, so that to conclude it suffices to show that $R(\tau_{\geq N-M} \scF)$ has nonzero cohomology in some degree $\geq N$. But we already know that $R$ and $L$ are inverse equivalences on bounded complexes, so that $\tau_{\geq N-M} \scF \cong L R(\tau_{\geq N-M} \scF)$. Since $L$ is right t-exact (for the tautological t-structures) and $\scF$ is nonzero in degree $N$, this implies the desired claim and finishes the proof.

Next, we claim that~\eqref{eqn:equivalences-D-} has a natural monoidal structure. In fact, to construct such a structure, using the notation above it suffices to construct an isomorphism of functors
\[
R(L(M) \star L(N)) \simto M \star N
\]
for all $M,N \in D^- \HC^{\hla,\hla}$, where we denote both monoidal products by $\star$. This isomorphism can be deduced from the projection formula, realizing the monoidal structures in terms of tensor product with the pullback of $\tsD^{\hla} \boxtimes \tsD^{\hla} \boxtimes \tsD^{\hla}$ under the natural morphism
\[
(\tbg \times \tbg \times \tbg)^{\wedge(1)} \to \tbg^{\wedge(1)} \times \tbg^{\wedge(1)} \times \tbg^{\wedge(1)},
\]
resp.~with
\[
(\cU\bg \otimes_\bk \cU\bg \otimes_\bk \cU\bg) \otimes_{\scO((\bt^*/(\bW, \bullet))^3)} \scO(\FN_{(\bt^*/(\bW, \bullet))^3}(\{(0,0,0)\})),
\]
and using~\eqref{eqn:global-sections-DI-xi}.

Finally, to conclude it remains to construct a monoidal structure on the functor
\[
\Db \Coh^{\bG^{(1)}}(\St^{\wedge(1)}) \to D^- \Modc(\osD^{\hla, \hla}, \bG).
\]
Now we have a canonical equivalence of categories
\[
\Modc(\osD^{\hla, \hla}, \bG) \simto \Modc(\osD^{\widehat{-\varsigma}, \widehat{-\varsigma}}, \bG).
\]
The same considerations as above provide a monoidal structure on the bounded above derived category of the right-hand side, such that the induced equivalence between bounded above derived categories is monoidal. This reduces the proof to the construction of a monoidal structure on the similarly defined functor
\[
\Db \Coh^{\bG^{(1)}}(\St^{\wedge(1)}) \to D^- \Modc(\osD^{\widehat{-\varsigma}, \widehat{-\varsigma}}, \bG),
\]
which is easy because the sheaf of algebras $\tsD^{\widehat{-\varsigma}}$ is the pullback of an Azumaya algebra on $\bg^{*\wedge(1)}$.
\end{proof}

\subsection{Bott--Samelson and Soergel type complexes of coherent sheaves}
\label{ss:BSCoh}

Recall the notation introduced in~\S\ref{ss:BSHC}. For any $s \in \bSaff$, resp.~$\omega \in \mathbf{\Omega}$, we set
\[
\scR_s := \Phi^{\hla,\hla}(\sfR_s), \quad \text{resp.} \quad
\scR_\omega := \Phi^{\hla,\hla}(\sfR_\omega).
\]
(It can be deduced from Proposition~\ref{prop:translation-push-pull} below that these objects do not depend on the choice of $\lambda$ up to isomorphism, so that it is legitimate not to indicate this weight in the notation.)
It follows from Lemma~\ref{lem:sfP-properties} and the monoidality of the functor~\eqref{eqn:composition-Phi-monoidal}
that for $s \in \bSaff$ and $\omega,\omega' \in \mathbf{\Omega}$ we have
\begin{equation}
\label{eqn:convolution-R}
\scR_\omega \star \scR_s \star \scR_{\omega^{-1}} \cong \scR_{\omega s \omega^{-1}}, \quad \scR_\omega \star \scR_{\omega'} \cong \scR_{\omega \omega'}.
\end{equation}
We then define the category
\[
\BSCoh^{\bG^{(1)}}(\St^{\wedge(1)})
\]
as the strictly full subcategory of $\Db \Coh^{\bG^{(1)}}(\St^{\wedge(1)})$ generated under the monoidal product $\star$ by the unit object and the objects $\scR_s$ ($s \in \bSaff$) and $\scR_\omega$ ($\omega \in \mathbf{\Omega}$). By~\eqref{eqn:convolution-R}, any object in this category is isomorphic to an object
\[
\scR_{s_1} \star \cdots \star \scR_{s_r} \star \scR_\omega
\]
where $s_1, \cdots, s_r \in \bSaff$ and $\omega \in \mathbf{\Omega}$. We will also denote by
\[
\SCoh^{\bG^{(1)}}(\St^{\wedge(1)})
\]
the karoubian envelope of the additive hull of $\BSCoh^{\bG^{(1)}}(\St^{\wedge(1)})$.

By construction, the functor $\Phi^{\hla,\hla}$ restricts to equivalences of monoidal categories
\begin{equation}
\label{eqn:BSCoh-BSHC}
\BSHC^{\hla,\hla} \simto \BSCoh^{\bG^{(1)}}(\St^{\wedge(1)}), \qquad \SHC^{\hla,\hla} \simto \SCoh^{\bG^{(1)}}(\St^{\wedge(1)}).
\end{equation}

\begin{prop}
\label{prop:Ext-vanishing-Coh}
For any $\scF,\scG$ in $\SCoh^{\bG^{(1)}}(\St^{\wedge(1)})$ and $n \in \Z \smallsetminus \{0\}$ we have
\[
\Hom_{\Db \Coh^{\bG^{(1)}}(\St^{\wedge(1)})}(\scF,\scG [n]) = 0.
\]
\end{prop}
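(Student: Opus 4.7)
The strategy is to transfer the desired vanishing across the equivalences $\Phi^{\hla,\hla}$ to the side of Harish-Chandra bimodules, where it follows (essentially) from Proposition~\ref{prop:Ext-vanishing}. More precisely, by~\eqref{eqn:BSCoh-BSHC} the equivalence $\Phi^{\hla,\hla}$ restricts to an equivalence $\SHC^{\hla,\hla} \simto \SCoh^{\bG^{(1)}}(\St^{\wedge(1)})$, and, since $\Phi^{\hla,\hla}$ is a triangulated equivalence $\Db \HC^{\hla,\hla} \simto \Db \Coh^{\bG^{(1)}}(\St^{\wedge(1)})$, it identifies $\Hom$-spaces with shifts. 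It therefore suffices to show that for any $A,B \in \SHC^{\hla,\hla}$ and any $n \neq 0$ we have
\[
\Hom_{\Db \HC^{\hla,\hla}}(A,B[n])=0.
\]
(The case $n<0$ is automatic since $A,B$ lie in the heart, so only $n>0$ has content.)

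By definition $\SHC^{\hla,\hla}$ is the karoubian envelope of the additive hull of $\BSHC^{\hla,\hla}$, and $\Ext$-groups are additive in each variable and stable under direct summands. It is thus enough to establish the vanishing when $A$ and $B$ lie in $\BSHC^{\hla,\hla}$, i.e.~are of the form $\sfR_{s_1}\star\cdots\star \sfR_{s_r}\star \sfR_\omega$ for some $s_i \in \bSaff$ and $\omega \in \mathbf{\Omega}$.

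Now I would observe that such objects belong to $\HC^{\hla,\hla}_{\mathrm{diag},\mathrm{tilt}}$. Indeed, by Lemma~\ref{lem:translation-bimod-simples} each factor $\sfP^{\hla,\widehat{\mu_s}}$, $\sfP^{\widehat{\mu_s},\hla}$, and $\sfR_\omega = \sfP^{\hla,\widehat{\omega \bullet \lambda}}$ is isomorphic to $\sfC^{\hla,\hmu}(\sfT(\nu)\otimes \cU\bg)$ for an appropriate tilting module, and by the stability of $\HC^{\hla,\hmu}_{\mathrm{diag},\mathrm{tilt}}$ under the convolution bifunctors~\eqref{eqn:convolution-HC-la-mu-nu} (discussed in~\S\ref{ss:monoidal-structure-DHC}, using that tensor products of tilting $\bG$-modules are tilting) the same property holds for convolutions. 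Hence $A$ and $B$ are direct summands of objects $\sfC^{\hla,\hla}(V\otimes \cU\bg)$ and $\sfC^{\hla,\hla}(V'\otimes \cU\bg)$ with $V,V' \in \Tilt(\bG)$.

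At this stage the desired vanishing is an immediate consequence of Proposition~\ref{prop:Ext-vanishing} (equivalently, Corollary~\ref{cor:Kb-Db-HC}), once again by additivity of $\Ext$ with respect to direct sums and summands. No step here is a serious obstacle; the only point that requires some care is the verification that $\BSHC^{\hla,\hla}$ is contained in $\HC^{\hla,\hla}_{\mathrm{diag},\mathrm{tilt}}$, i.e.~that the stability of tilting-type diagonally-induced bimodules under convolution really does apply to the objects $\sfR_s$ and $\sfR_\omega$. This is however guaranteed by Lemma~\ref{lem:translation-bimod-simples} and the discussion in~\S\ref{ss:monoidal-structure-DHC}, so the proposition follows.
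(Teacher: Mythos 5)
Your proposal is correct and matches the paper's approach exactly: the paper's proof simply says "Using the equivalence $\Phi^{\hla,\hla}$ one sees that the claim follows from Proposition~\ref{prop:Ext-vanishing}," and you have correctly unpacked the intermediate steps (reduction to $\BSHC^{\hla,\hla}$ by additivity, membership of $\sfR_s$ and $\sfR_\omega$ in $\HC^{\hla,\hla}_{\mathrm{diag},\mathrm{tilt}}$, stability under convolution) that make that one-line argument go through.
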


\begin{proof}
Using the equivalence $\Phi^{\hla,\hla}$ one sees that the claim follows from Proposition~\ref{prop:Ext-vanishing}.
\end{proof}

\subsection{Some kernels}
\label{ss:kernels}



For $s \in \bSf$ we consider the $(\bG \times \Gm)^{(1)}$-stable closed subscheme $\tbg^{(1)} \times_{\tbg^{(1)}_{s}} \tbg^{(1)} \subset \St^{(1)}$, and set
\[
\scX_s := \scO_{\tbg^{(1)} \times_{\tbg^{(1)}_{s}} \tbg^{(1)}} \quad
\in \Db\Coh^{(\bG \times \Gm)^{(1)}}(\St^{(1)}).
\]
As explained in~\cite[\S 1.10]{br-Baff} the scheme $\tbg^{(1)} \times_{\tbg_{s}^{(1)}} \tbg^{(1)}$ is reduced, with two irreductible components. One is the diagonal copy $\Delta \tbg^{(1)}$ of $\tbg^{(1)}$, and the pushforward of the structure sheaf of the other one will be denoted $\scZ_s$. Then, there exist exact sequences of equivariant coherent sheaves
\begin{equation}
\label{eqn:exact-seq-braid-gp-action}
\scO_{\Delta \tbg^{(1)}} \langle -2 \rangle \hookrightarrow \scX_s
\twoheadrightarrow \scZ_s, \qquad
\scZ_s(-\varsigma, \varsigma-\alpha_s) \hookrightarrow \scX_s
\twoheadrightarrow \scO_{\Delta \tbg_\varnothing}.
\end{equation}
Here, $\alpha_s$ is the simple root attached to $s$, $\scZ_s(-\varsigma, \varsigma-\alpha_s)$ is the twist of $\scZ_s$ by the pullback of the line bundle on $(\bG^{(1)}/\bB^{(1)})^2$ attached to the pair of weights $(-\varsigma, \varsigma-\alpha_s)$, and for $n \in \Z$ we denote by $\langle n \rangle$ the functor of tensoring with the $1$-dimensional $\Gm^{(1)}$-module of weight $n$.


Recall the braid group $\Br_{\bW}$ introduced in~\S\ref{ss:further-properties}.
For $\lambda \in X^*(\bT)$, we will denote by $\scO_{\Delta \tbg^{(1)}}(\lambda)$ the pushforward to $\St^{(1)}$ of the pullback to $\Delta \tbg^{(1)}$ of the line bundle on $\bG^{(1)}/\bB^{(1)}$ naturally associated with $\lambda$.
It follows from~\cite[Theorem~1.3.1]{br-Baff} that there exists a unique group morphism from $(\Br_{\bW})^{\op}$ to the group of isomorphism classes of invertible elements in the monoidal category 
\begin{equation}
\label{eqn:DbCoh-St}
\bigl( \Db \Coh^{(\bG \times \Gm)^{(1)}}(\St^{(1)}), \star \bigr)
\end{equation}
which sends $T_s$ to $\scZ_s \langle 1 \rangle$ for any $s \in \bSf$, and $\theta_\lambda$ to $\scO_{\Delta \tbg^{(1)}}(\lambda)$ for any $\lambda \in X^*(\bT)$. The image of an element $b \in \Br_{\bW}$ will be denoted $\scI_b$. For any $s \in \bSf$ we have
\[
\scI_{T_s^{-1}} = \scZ_s(-\varsigma, \varsigma-\alpha_s) \langle 1 \rangle.
\]

\begin{rmk}
\label{rmk:convention-action-Br}
We insist that for $b,c \in \Br_{\bW}$ we have $\scI_b \star \scI_c \cong \scI_{cb}$. Our normalization is therefore different from that in~\cite{br-Baff}, and rather follows that considered in~\cite{mr1}; see~\cite[\S 3.3]{mr1} for comments.
\end{rmk}

Consider now some element $s \in \bSaff \smallsetminus \bSf$. As explained in~\cite[Lemma~6.2]{riche} or~\cite[Lemma~2.1.1]{bm-loc}, there exist $b \in \Br_{\bW}$ and $t \in \bSf$ such that $T_s = b T_t b^{-1}$. We fix such elements once and for all (for every $s$ as above) and set
\[
\scX_s := \scI_{b^{-1}} \star \scX_t \star \scI_{b}.
\]
(Note the change of orders of the factors, which is intentional and related to the comments in Remark~\ref{rmk:convention-action-Br}.)
In view of~\eqref{eqn:exact-seq-braid-gp-action}, for any $s \in \bSaff$ we have distinguished triangles
\begin{equation}
\label{eqn:exact-seq-braid-gp-action-2}
\scO_{\Delta \tbg^{(1)}} \langle -1 \rangle \to \scX_s \langle 1 \rangle
\to \scI_{T_s} \xrightarrow{[1]}, \quad
\scI_{T_s^{-1}} \to \scX_s \langle 1 \rangle
\to \scO_{\Delta \tbg^{(1)}} \langle 1 \rangle \xrightarrow{[1]}
\end{equation}
in $\Db \Coh^{(\bG \times \Gm)^{(1)}}(\St^{(1)})$.

Recall that we have identified $X^*(\bT^{(1)})$ with $X^*(\bT)$, see~\S\ref{ss:comp-loc-end-proof}. In particular, for any $\lambda \in X^*(\bT)^+$ we have an indecomposable tilting $\bG^{(1)}$-module $\sfT^{(1)}(\lambda)$ of highest weight $\lambda$.

\begin{lem}
\label{lem:free-tilting-SCoh}
For any $\lambda \in X^*(\bT)^+$, there exist $n \in \Z$, $\omega \in \mathbf{\Omega}$ and $s_1, \cdots, s_r \in \bSaff$ such that the object
\[
\sfT^{(1)}(\lambda) \otimes \scO_{\St^{(1)}}
\]
is a direct summand of the object
\[
\scO_{\St^{(1)}} \star \scI_{T_\omega} \star \scX_{s_1} \star \cdots \star \scX_{s_r} \langle n \rangle
\]
in $\Db \Coh^{(\bG \times \Gm)^{(1)}}(\St^{(1)})$.
\end{lem}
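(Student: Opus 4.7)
The plan is to realize $\sfT^{(1)}(\lambda) \otimes \scO_{\St^{(1)}}$ as a direct summand of a Bott--Samelson type convolution by transporting to the coherent side the classical construction of tilting modules via iterated wall-crossing functors applied to a line bundle.

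First, I would invoke the following standard fact from the translation principle for tilting modules: for any $\lambda \in X^*(\bT)^+$, one can find a weight $\mu$ in the lower closure of the fundamental alcove, a weight $\nu \in X^*(\bT)$, an element $\omega \in \mathbf{\Omega}$ and reflections $s_1,\dots,s_r \in \bSaff$ such that $\sfT^{(1)}(\lambda)$ is a direct summand of the module obtained from $\sfT^{(1)}(\mu)$ by first twisting by the character $\ell\nu$ (through the Frobenius) and then applying the iterated wall-crossing $\Theta_{s_r} \circ \cdots \circ \Theta_{s_1}$ together with the $\omega$-translate. For $\mu$ in the lower closure of the fundamental alcove, we moreover have $\sfT^{(1)}(\mu) = \Xi^{(1)}(\mu) = H^0(\bG^{(1)}/\bB^{(1)}, \scO(\mu))$, so this base case reduces to a line bundle on the flag variety.

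Second, I would translate this construction into convolutions in $\Db \Coh^{(\bG \times \Gm)^{(1)}}(\St^{(1)})$. Under the natural correspondence ``tensoring with $V$'' $\leftrightarrow$ ``convolving with $V \otimes \scO_{\St^{(1)}}$'' (obtained from the identification $V \otimes \scO_{\St^{(1)}} \cong \scO_{\St^{(1)}} \star (V \otimes \scO_{\Delta\tbg^{(1)}})$), twisting by a line bundle of weight $\nu$ translates to convolution with $\scI_{\theta_\nu} = \scO_{\Delta\tbg^{(1)}}(\nu)$. The key point is that, for each $s \in \bSaff$, the wall-crossing functor $\Theta_s$ corresponds (up to a shift $\langle n_s \rangle$) to convolution with $\scX_s$: for $s \in \bSf$ this follows directly from the pull-push presentation of $\scX_s$ via the parabolic Grothendieck resolution $\tbg^{(1)} \to \tbg^{(1)}_s$, and for $s \in \bSaff \smallsetminus \bSf$ the conjugation definition of $\scX_s$ reduces the statement to the simple-root case. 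The Bernstein relations in $\Br_\bW$, together with the distinguished triangles \eqref{eqn:exact-seq-braid-gp-action-2}, then allow any $\scI_{\theta_\nu}$ to be rewritten as an iterated convolution of $\scX_s$'s and $\scI_{T_\omega}$'s with $\omega \in \mathbf{\Omega}$, so the Frobenius twist by $\ell\nu$ can be absorbed into the product of the stated form.

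Combining these two steps yields $\sfT^{(1)}(\lambda) \otimes \scO_{\St^{(1)}}$ as a direct summand of some object $\scO_{\St^{(1)}} \star \scI_{T_\omega} \star \scX_{s_1} \star \cdots \star \scX_{s_r} \langle n \rangle$, as desired. The main obstacle is the precise matching in the second step between wall-crossing on the representation-theoretic side and convolution by $\scX_s$ on the coherent side: this requires careful bookkeeping of the $\Gm^{(1)}$-grading shifts, the various Azumaya splittings produced in Section~\ref{sec:splitting} (in particular \eqref{eqn:splitting-oDIJ-varsigma}), and the translation-of-weights formula \eqref{eqn:Phi-translation-weights}, so that the base case $\sfT^{(1)}(\mu) \otimes \scO_{\St^{(1)}}$ for $\mu$ in the fundamental alcove closure really does appear, after a suitable grading shift, as a summand of a convolution involving only the line bundles $\scO_{\Delta\tbg^{(1)}}(\mu)$ and the $\scI_{T_\omega}$'s.
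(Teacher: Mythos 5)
Your proposal takes a genuinely different route from the paper. The paper's proof is short and sharp: it introduces the Fourier--Mukai action of $\Db\Coh^{(\bG\times\Gm)^{(1)}}(\St^{(1)})$ on $\Db\Coh^{(\bG\times\Gm)^{(1)}}(\tbg^{(1)})$, observes the compatibility $L(p_2)^*(\scG\star\scF)\cong(L(p_2)^*\scG)\star\scF$, and simply cites Mautner--Riche~\cite[Lemma~6.7]{mr} for the analogous statement on $\tbg^{(1)}$; applying $L(p_2)^*$ then gives the result on the Steinberg variety. You instead try to re-derive the key input from scratch by transporting the wall-crossing construction of indecomposable tilting modules to the coherent side. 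That re-derivation is essentially the content of~\cite[Lemma~6.7]{mr}, so what you gain in self-containedness you pay for in having to redo that reference's work; and the paper's reduction from $\St^{(1)}$ to $\tbg^{(1)}$, which you skip entirely, is a convenient organizing step because the Mautner--Riche result lives on $\tbg^{(1)}$.

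There is, however, a genuine gap in your second step, and it is larger than your closing remark acknowledges. You claim that ``the wall-crossing functor $\Theta_s$ corresponds (up to a shift) to convolution with $\scX_s$: for $s\in\bSf$ this follows directly from the pull-push presentation of $\scX_s$ via the parabolic Grothendieck resolution $\tbg^{(1)}\to\tbg^{(1)}_s$.'' But these two operations act on genuinely different structures. The representation-theoretic wall-crossing functor $\Theta_s$ on $\Rep(\bG^{(1)})$ acts by diagonally tensoring the $\bG^{(1)}$-module $V$ with a simple module and projecting to a block; it does nothing to the $\scO_{\tbg^{(1)}}$- (or $\scO_{\St^{(1)}}$-) module structure. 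Conversely, the Fourier--Mukai transform $(-)\star\scX_s$ acts via push-pull along $\tbg^{(1)}\to\tbg_s^{(1)}$ and does not see the auxiliary $\bG^{(1)}$-module factor except through equivariance. The identification you need---that $\scX_s\star(V\otimes\scO_{\tbg^{(1)}})$ realizes $\Theta_s(V)\otimes\scO_{\tbg^{(1)}}$ up to grading shift---is not a formal consequence of the parabolic pull-push description but is precisely the derived-Satake-type compatibility between representation-theoretic and geometric wall-crossing that is proved (via parity sheaves and the Mirkovi\'c--Vilonen conjecture) in~\cite{mr}. The analogous statement \emph{is} established in this paper for the Harish--Chandra localization (Corollary~\ref{cor:isom-R-X}, $\scR_s\cong\scX_s^\wedge$), but only for the \emph{completed} categories $\Db\HC^{\hla,\hla}$ and $\Db\Coh^{\bG^{(1)}}(\St^{\wedge(1)})$, and for $\bG$-bimodules rather than $\bG^{(1)}$-modules; it does not directly give what you need on the non-completed, $(\bG\times\Gm)^{(1)}$-equivariant side. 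A similar issue arises with your passage from $\scI_{\theta_\nu}$ to a product of $\scX_s$'s and $\scI_{T_\omega}$'s: the Bernstein relations express $\theta_\nu$ in $\Br_\bW$ as a word in $T_s^{\pm1}$ and $T_\omega$, but the $\scX_s$ are only cones in distinguished triangles involving $\scI_{T_s}$, so replacing $\scI_{T_s}^{\pm1}$ by $\scX_s$ in the convolution takes an additional argument that you do not supply. In short, the ``main obstacle'' you flag at the end is not a matter of bookkeeping grading shifts and splitting bundles: it \emph{is} the theorem, and the paper dispatches it in one line by invoking~\cite[Lemma~6.7]{mr}.
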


\begin{proof}
Consider the projections
\[
p_1, p_2 : \St^{(1)} \to \tbg^{(1)}
\]
on the first and second factor respectively.
To any $\scF \in \Db \Coh^{(\bG \times \Gm)^{(1)}}(\St^{(1)})$ we can associate a Fourier--Mukai transform
\[
(-) \star \scF : \Db \Coh^{\bG \times \Gm}(\tbg^{(1)}) \to \Db \Coh^{\bG \times \Gm}(\tbg^{(1)})
\]
defined by
\[
\scG \star \scF := R(p_2)_* \bigl( L(p_1)^* \scG \lotimes_{\scO_{\St^{(1)}}} \scF \bigr).
\]
(This functor can also be expressed in terms of a similar formula involving the smooth scheme $\tbg^{(1)} \times \tbg^{(1)}$, which justifies that it indeed takes values in bounded complexes.) By standard considerations involving the flat base change theorem and the projection formula, this assignment defines a right action of the monoidal category~\eqref{eqn:DbCoh-St}
on $\Db \Coh^{(\bG \times \Gm)^{(1)}}(\tbg^{(1)})$, which justifies our notation.
For any $\scF,\scG$ as above we have a canonical isomorphism
\begin{equation}
\label{eqn:convolution-Lp2}
L(p_2)^*(\scG \star \scF) \cong \bigl( L(p_2)^* \scG \bigr) \star \scF.
\end{equation}

By~\cite[Lemma~6.7]{mr}, there exist $n$, $\omega$, and $s_1, \cdots, s_r$ as in the statement such that $\sfT^{(1)}(\lambda) \otimes \scO_{\tbg^{(1)}}$ is a direct summand of
\[
\scO_{\tbg^{(1)}} \star \scI_{T_\omega} \star \scX_{s_1} \star \cdots \star \scX_{s_r} \langle n \rangle
\]
in $\Db \Coh^{(\bG \times \Gm)^{(1)}}(\tbg^{(1)})$.
Applying the functor $L(p_2)^*$ and using~\eqref{eqn:convolution-Lp2}, we deduce the desired claim.
\end{proof}

\section{Study of braid objects}
\label{sec:braid-gp}

We continue with the setting of Section~\ref{sec:monoidality} (hence, in particular, with our fixed weight $\lambda \in X^*(\bT) \cap A_0$).

\subsection{Translation functors and $\scD$-modules}
\label{ss:translation-D}


Recall the ``translation bimodules'' of~\S\ref{ss:BSHC}.
The following statement is an analogue of~\cite[Lemma~6.1.2]{bmr} (see also~\cite[Lemma~2.2.3]{bmr2}). (In~\eqref{it:translation-D-out} we consider the standard order on $X^*(\bT)$ determined by our choice of $\fR_+$.)

\begin{lem}
\label{lem:translation-functors-Dmod}
Let $\mu_1,\mu_2,\mu_3 \in X^*(\bT)$ and $J \subset \fRs$ which satisfy
\[
\mathrm{Stab}_{(\bWaff, \bullet)}(\mu_1) = \varnothing \quad \text{and} \quad \mathrm{Stab}_{(\bWaff, \bullet)}(\mu_3) = \bW_J.
\]

\begin{enumerate}
\item
\label{it:translation-D-in}
Assume that $\mu_2$ belongs to the closure of the alcove of $\mu_1$.
For any $\scF$ in $\Db\Modc(\osD_{\varnothing,J}^{\widehat{\mu_1},\widehat{\mu_3}},\bG)$ there exists a canonical isomorphism
\[
\sfP^{\widehat{\mu_2},\widehat{\mu_1}} \star \Gamma_{\varnothing,J}^{\mu_1,\mu_3}(\scF) \cong 
R(\overline{f}_{\varnothing,J}^{\mu_2,\mu_3})_*(\scO_{\bG/\bB \times \bG/\bP_J}(\mu_2-\mu_1,0) \otimes_{\scO_{\bG/\bB \times \bG/\bP_J}} \scF).
\]
\item
\label{it:translation-D-out}
Assume that $\mu_2$ lies on a wall of the alcove of $\mu_1$, and denote by $\mu_1'$ the image of $\mu_1$ under the reflection in $\bWaff$ whose fixed-point hyperplane contains that wall. If $\mu_1' < \mu_1$, then for any $\scF$ in $\Db\Modc(\osD_{\varnothing,J}^{\widehat{\mu_2},\widehat{\mu_3}},\bG)$ there exists a distinguished triangle
\begin{multline*}
\Gamma_{\varnothing,J}^{\mu_1', \mu_3}(\scO_{\bG/\bB \times \bG/\bP_J}(\mu_1' - \mu_2,0) \otimes_{\scO_{\bG/\bB \times \bG/\bP_J}} \scF) \to \sfP^{\widehat{\mu_1},\widehat{\mu_2}} \star 
R(\overline{f}_{\varnothing,J}^{\mu_2,\mu_3})_*(\scF) \\
\to \Gamma_{\varnothing,J}^{\mu_1, \mu_3}(\scO_{\bG/\bB \times \bG/\bP_J}(\mu_1 - \mu_2,0) \otimes_{\scO_{\bG/\bB \times \bG/\bP_J}} \scF) \xrightarrow{[1]}
\end{multline*}
in $\Db \HC^{\widehat{\mu_1},\widehat{\mu_3}}$.
\end{enumerate}
\end{lem}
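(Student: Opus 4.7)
The plan is to prove both parts by transferring the statements through the localization equivalence of Theorem~\ref{thm:localization-HC}, reducing them to explicit computations with the translation bimodules $\sfP^{\widehat{\mu_2},\widehat{\mu_1}}$ and $\sfP^{\widehat{\mu_1},\widehat{\mu_2}}$; these in turn correspond on the $\scD$-module side to line-bundle twists via Remark~\ref{rmk:translation-DI}. This parallels the characteristic-zero arguments of~\cite[Lemma~6.1.2]{bmr} and~\cite[Lemma~2.2.3]{bmr2}, adapted to the completed, parabolic setting of the present paper.

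For part~(1), the hypothesis that $\mu_1$ is regular and $\mu_2$ lies in the closure of its alcove ensures $\mu_2 - \mu_1$ is dominant, so that $\sfP^{\widehat{\mu_2},\widehat{\mu_1}} = \sfC^{\widehat{\mu_2},\widehat{\mu_1}}(\sfL(\mu_2-\mu_1) \otimes \cU\bg)$. Unwinding the monoidal product of~\S\ref{ss:monoidal-structure-DHC}, the convolution $\sfP^{\widehat{\mu_2},\widehat{\mu_1}} \star \Gamma_{\varnothing,J}^{\mu_1,\mu_3}(\scF)$ is the $(\widehat{\mu_2},\widehat{\mu_3})$-completion of the diagonal tensor $\sfL(\mu_2-\mu_1) \otimes \Gamma_{\varnothing,J}^{\mu_1,\mu_3}(\scF)$; by $\bG$-equivariance and the projection formula this equals the completion of $R(\overline{f}_{\varnothing,J}^{\mu_1,\mu_3})_*(\sfL(\mu_2-\mu_1) \otimes \scF)$ at $\widehat{\mu_2}$. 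I would then argue that this completion picks out exactly the line-bundle subquotient $\scO(\mu_2-\mu_1,0) \otimes \scF$ of the $\bB$-weight filtration on $\sfL(\mu_2-\mu_1) \otimes \scO_{\bG/\bB \times \bG/\bP_J} \otimes \scF$: regularity of $\mu_1$ implies that the only weight $\eta$ of $\sfL(\mu_2-\mu_1)$ satisfying $\mu_1 + \eta \in \bWaff \bullet \mu_2$ is $\eta = \mu_2 - \mu_1$ itself, so that every other contribution is killed by the completion.

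For part~(2), I would run the same reduction to show
\[
\sfP^{\widehat{\mu_1},\widehat{\mu_2}} \star R(\overline{f}_{\varnothing,J}^{\mu_2,\mu_3})_*(\scF) \cong \text{($\widehat{\mu_1}$-completion of)} \; R(\overline{f}_{\varnothing,J}^{\mu_2,\mu_3})_*(\sfL(\mu_1-\mu_2) \otimes \scF).
\]
The character-shift analysis of part~(1) now yields \emph{two} surviving contributions: because $\mu_2$ lies on the wall between the alcoves of $\mu_1$ and $\mu_1'$, both $\eta = \mu_1 - \mu_2$ and $\eta = \mu_1' - \mu_2$ satisfy $\mu_2 + \eta \in \bWaff \bullet \mu_1$. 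The $\bB$-weight filtration on $\sfL(\mu_1-\mu_2) \otimes \scO_{\bG/\bB \times \bG/\bP_J}$ thus reduces, after completion at $\widehat{\mu_1}$, to a two-step extension
\[
\scO(\mu_1'-\mu_2, 0) \otimes \scF \to (\text{completion}) \to \scO(\mu_1-\mu_2, 0) \otimes \scF,
\]
whose direction (sub versus quotient) is forced by the alcove-order hypothesis $\mu_1' < \mu_1$. Applying $R(\overline{f}_{\varnothing,J}^{\mu_2,\mu_3})_*$ yields the distinguished triangle claimed in the statement, with the outer terms identified via part~(1).

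The main obstacle will be justifying the two completion claims: in part~(1), that the $\widehat{\mu_2}$-completion really annihilates every weight contribution except $\eta = \mu_2-\mu_1$; and in part~(2), that the filtration reduces cleanly to a length-two extension with the asserted direction. Both rely on a careful bookkeeping of which weights of $\sfL(\mu_i - \mu_j)$ contribute to each central-character summand, using both the regularity of $\mu_1$ and the alcove geometry controlling the relevant wall-reflection.
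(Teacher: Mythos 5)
Your approach is essentially the paper's: reduce $\sfP^{\widehat{\mu_2},\widehat{\mu_1}} \star (-)$ to tensoring with a simple module, apply the projection formula, and then run the $\bB$-weight-filtration / linkage argument of~\cite[Lemma~6.1.2]{bmr}. The paper's own proof is a short reduction that defers exactly to these BMR steps (and to~\cite[Lemma~5.5]{br-Hecke}), so you are filling in the same details that the paper cites rather than taking a different route.

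One slip worth fixing: it is \emph{not} true that $\mu_2 - \mu_1$ is dominant under the stated hypotheses. If $\mu_1 \in A_0$ and $\mu_2 \in \overline{A_0}$, the pairings $\langle \mu_2-\mu_1, \alpha^\vee\rangle$ can be negative. The definition in \S\ref{ss:translation-bimod} takes $\nu$ to be the unique dominant weight in $\bWf(\mu_2-\mu_1)$, so $\sfP^{\widehat{\mu_2},\widehat{\mu_1}} = \sfC^{\widehat{\mu_2},\widehat{\mu_1}}(\sfL(\nu)\otimes\cU\bg)$ with $\nu \neq \mu_2-\mu_1$ in general (and $\sfL(\mu_2-\mu_1)$ need not even make sense). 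Your linkage argument still works with this correction: $\mu_2 - \mu_1$ is an extremal weight of $\sfL(\nu)$, and regularity of $\mu_1$ ensures it is the only weight $\eta$ of $\sfL(\nu)$ with $\mu_1 + \eta$ linked to $\mu_2$, so the completion still picks out exactly the subquotient $\scO_{\bG/\bB\times\bG/\bP_J}(\mu_2-\mu_1,0)\otimes\scF$. The same correction applies in part~(2), where $\nu$ is the dominant representative of $\bWf(\mu_1-\mu_2)$; the two surviving weights $\mu_1-\mu_2$ and $\mu_1'-\mu_2$ are then weights of $\sfL(\nu)$, and the direction of the resulting two-step extension is fixed by $\mu_1' < \mu_1$ as you say. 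Once $\sfL(\mu_2-\mu_1)$ is replaced by $\sfL(\nu)$ throughout, your argument is correct.
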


\begin{proof}
\eqref{it:translation-D-in}
Let $\nu$ be the unique dominant weight in $\bWf(\mu_2-\mu_1)$.
By definition of convolution, for any $M$ in $\Db \HC^{\widehat{\mu_1},\widehat{\mu_3}}$, the complex $\sfP^{\widehat{\mu_2},\widehat{\mu_1}} \star M$ is the component corresponding to $(\mu_2,\mu_3)$ of the complex
\[
\sfC^{\wedge}(\sfL(\nu) \otimes \cU\bg) \star M
\]
in the decomposition induced by~\eqref{eqn:decomp-Zwedge-cat}. Now by Remark~\ref{rmk:completion-diag-induced} we have $\sfC^{\wedge}(\sfL(\nu) \otimes \cU\bg) \cong \sfL(\nu) \otimes (\cU\bg)^\wedge$, so that
\[
\sfC^{\wedge}(\sfL(\nu) \otimes \cU\bg) \star M \cong \sfL(\nu) \otimes M.
\]
The rest of the proof is similar to that of~\cite[Lemma~6.1.2(a)]{bmr} (see also~\cite[Lemma~5.5]{br-Hecke} for similar considerations).

\eqref{it:translation-D-out}
The proof is similar, and parallel to that of~\cite[Lemma~6.1.2(b)]{bmr}.
\end{proof}


\subsection{Geometric description of translation functors}
\label{ss:geometric-translation}

Let us consider three weights $\mu_1,\mu_2,\mu_3 \in X^*(\bT)$ and two subsets $J,K \subset \fRs$ which satisfy the following conditions:
\begin{enumerate}
\item
$\mathrm{Stab}_{(\bWaff, \bullet)}(\mu_1) = \varnothing$;
\item
$\mathrm{Stab}_{(\bWaff, \bullet)}(\mu_2) = \bW_J$;
\item
$\mathrm{Stab}_{(\bWaff, \bullet)}(\mu_3) = \bW_K$;
\item
$\mu_2$ belongs to the closure of the alcove of $\mu_1$.
\end{enumerate}
Then we have 
a natural morphism $\tbg \to \tbg_J$, which induces a morphism
\[
q_{J,K} : \St_{\varnothing,K}^{\wedge (1)} \to \St_{J,K}^{\wedge (1)}.
\]
We also have equivalences of categories
\begin{align*}
\Phi^{\widehat{\mu_1},\widehat{\mu_3}} &: \Db \HC^{\widehat{\mu_1},\widehat{\mu_3}} \simto \Db \Coh^{\bG^{(1)}}(\St^{\wedge (1)}_{\varnothing,K}), \\
\Phi^{\widehat{\mu_2},\widehat{\mu_3}} &: \Db \HC^{\widehat{\mu_2},\widehat{\mu_3}} \simto \Db \Coh^{\bG^{(1)}}(\St^{\wedge (1)}_{J,K}),
\end{align*}
see~\eqref{eqn:equiv-HC-Coh},
and ``translation bimodules''
$\sfP^{\widehat{\mu_1},\widehat{\mu_2}} \in \HC^{\widehat{\mu_1},\widehat{\mu_2}}$, $\sfP^{\widehat{\mu_2},\widehat{\mu_1}} \in \HC^{\widehat{\mu_2},\widehat{\mu_1}}$.

\begin{prop}
\label{prop:translation-push-pull}
The following diagrams commute up to isomorphism:
\[
\xymatrix@C=1.5cm{
\Db \HC^{\widehat{\mu_1},\widehat{\mu_3}} \ar[r]_-{\sim}^-{\Phi^{\widehat{\mu_1},\widehat{\mu_3}}} \ar[d]_-{\sfP^{\widehat{\mu_2},\widehat{\mu_1}} \hatstar (-)} & \Db \Coh^{\bG^{(1)}}(\St^{\wedge (1)}_{\varnothing,K}) \ar[d]^-{R(q_{J,K})_*} \\
\Db \HC^{\widehat{\mu_2},\widehat{\mu_3}} \ar[r]_-{\sim}^-{\Phi^{\widehat{\mu_2},\widehat{\mu_3}}} & \Db \Coh^{\bG^{(1)}}(\St^{\wedge (1)}_{J,K}),
}
\]
\[
\xymatrix@C=1.5cm{
\Db \HC^{\widehat{\mu_1},\widehat{\mu_3}} \ar[r]_-{\sim}^-{\Phi^{\widehat{\mu_1},\widehat{\mu_3}}} & \Db \Coh^{\bG^{(1)}}(\St^{\wedge (1)}_{\varnothing,K}) \\
\Db \HC^{\widehat{\mu_2},\widehat{\mu_3}} \ar[r]_-{\sim}^-{\Phi^{\widehat{\mu_2},\widehat{\mu_3}}} \ar[u]^-{\sfP^{\widehat{\mu_1},\widehat{\mu_2}} \hatstar (-)} & \Db \Coh^{\bG^{(1)}}(\St^{\wedge (1)}_{J,K}). \ar[u]_-{L(q_{J,K})^*}
}
\]
\end{prop}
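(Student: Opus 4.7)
The plan is to deduce the first square from Lemma~\ref{lem:translation-functors-Dmod}\eqref{it:translation-D-in} by transporting it through the splitting equivalence of Proposition~\ref{prop:splitting-HC-equiv}, and then to obtain the second square by adjunction.

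For the first square, I would fix $M \in \Db\HC^{\widehat{\mu_1},\widehat{\mu_3}}$ and set $\scF := \cL_{\varnothing,K}^{\mu_1,\mu_3}(M)$ in $\Db\Modc(\osD_{\varnothing,K}^{\widehat{\mu_1},\widehat{\mu_3}},\bG)$. Applying Lemma~\ref{lem:translation-functors-Dmod}\eqref{it:translation-D-in} (our $K$ playing the role of the lemma's $J$) gives
\[
\sfP^{\widehat{\mu_2},\widehat{\mu_1}} \star M \cong R(\overline{f}_{\varnothing,K}^{\mu_2,\mu_3})_*\bigl(\scO_{\bG/\bB \times \bG/\bP_K}(\mu_2-\mu_1,0) \otimes_{\scO} \scF\bigr),
\]
where the object in brackets is regarded, via Remark~\ref{rmk:translation-DI}, as an object of $\Db\Modc(\osD_{\varnothing,K}^{\widehat{\mu_2},\widehat{\mu_3}},\bG)$; call it $\scG$. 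The key observation is that $\overline{f}_{\varnothing,K}^{\mu_2,\mu_3}$ factors as $\overline{f}_{J,K}^{\mu_2,\mu_3} \circ \widetilde{q}_{J,K}$, where $\widetilde{q}_{J,K}$ is the upgrade of $q_{J,K}$ to a morphism of ringed spaces between $(\St_{\varnothing,K}^{\wedge(1)},\osD_{\varnothing,K}^{\widehat{\mu_2},\widehat{\mu_3}})$ and $(\St_{J,K}^{\wedge(1)},\osD_{J,K}^{\widehat{\mu_2},\widehat{\mu_3}})$. This upgrade exists (and is well behaved) thanks to the assumption $\mathrm{Stab}_{(\bWaff,\bullet)}(\mu_2) = \bW_J$, which, by the completed analogue of~\cite[Proposition~1.2.3]{bmr2}, identifies the pushforward of the sheaf of differential operators on $\bG/\bB$ with its counterpart on $\bG/\bP_J$ after completion along $\widehat{\mu_2}$.

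To close the square, I would transport the resulting identification $\sfP^{\widehat{\mu_2},\widehat{\mu_1}} \star M \cong \Gamma_{J,K}^{\mu_2,\mu_3}(R(\widetilde{q}_{J,K})_*\scG)$ through the splitting equivalence of Proposition~\ref{prop:splitting-HC-equiv}. Since the splitting bundles $\scM_{\varnothing,K}^{\widehat{\mu_2},\widehat{\mu_3}}$ and $\scM_{J,K}^{\widehat{\mu_2},\widehat{\mu_3}}$ both arise from $\scM^{\widehat{-\varsigma},\widehat{-\varsigma}}$ by twisting with $\scO(\mu_2+\varsigma,-\mu_3-\varsigma)$, and since $\mu_2+\varsigma$ and $\mu_3+\varsigma$ are orthogonal to the coroots indexed by $J$ and $K$ respectively, these twists descend from $\bG/\bB^{(1)} \times \bG/\bB^{(1)}$ to $(\bG/\bP_J \times \bG/\bP_K)^{(1)}$. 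A projection-formula argument, combined with~\eqref{eqn:Phi-translation-weights} to absorb the twist by $\scO(\mu_2-\mu_1,0)$ on the coherent side, then matches $R(\widetilde{q}_{J,K})_*\scG$ with the image of $R(q_{J,K})_*\Phi^{\widehat{\mu_1},\widehat{\mu_3}}(M)$ under the inverse splitting equivalence.

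For the second square, I would argue by adjunction. The functor $\sfP^{\widehat{\mu_1},\widehat{\mu_2}} \star (-)$ is both left and right adjoint to $\sfP^{\widehat{\mu_2},\widehat{\mu_1}} \star (-)$ by~\S\ref{ss:translation-bimod}, while $L(q_{J,K})^*$ is left adjoint to $R(q_{J,K})_*$; since the equivalences $\Phi^{\widehat{\mu_i},\widehat{\mu_3}}$ intertwine adjoints (which are unique up to canonical isomorphism), the commutativity of the first square forces that of the second. The main obstacle is the middle step: verifying that the ringed-space factorization $\overline{f}_{\varnothing,K}^{\mu_2,\mu_3} = \overline{f}_{J,K}^{\mu_2,\mu_3} \circ \widetilde{q}_{J,K}$ is compatible with the splitting equivalences, i.e.~that the comparison between the two splitting bundles under $\widetilde{q}_{J,K}$ is canonical and not merely abstract. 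This is a technical but standard check in the style of~\cite[\S 2.2]{bmr2}.
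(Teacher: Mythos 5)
Your proposal takes essentially the same route as the paper: the first square is deduced from Lemma~\ref{lem:translation-functors-Dmod}\eqref{it:translation-D-in} by transport through the splitting equivalences (the paper simply refers to the analogous argument in~\cite[\S 2.2.5]{bmr2} for this step, while you spell out the factorization of $\overline{f}_{\varnothing,K}^{\mu_2,\mu_3}$ through $q_{J,K}$ and the compatibility of splitting bundles explicitly), and the second square is then obtained by the same adjunction argument (the biadjointness of the translation functors from~\S\ref{ss:translation-bimod} against the $(L(q_{J,K})^*, R(q_{J,K})_*)$-adjunction). The details you add are correct and fill out what the paper leaves to the cited reference.
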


\begin{proof}
The proof of commutativity of the first diagram is similar to that in~\cite[\S 2.2.5]{bmr2}, based on Lemma~\ref{lem:translation-functors-Dmod}\eqref{it:translation-D-in}. The commutativity of the second one follows, by adjunction (see the comments after Lemma~\ref{lem:translation-bimod-simples}.)
\end{proof}

Let us note the following consequence. Here, for $s \in \bSaff$ we denote by $\scX_s^\wedge$ the image of $\scX_s$ under the pullback functor~\eqref{eqn:pullback-convolution} (for $\bH=(\bG \times \Gm)^{(1)}$).

\begin{cor}
\label{cor:isom-R-X}
For any $s \in \bSf$, we have $\scR_s \cong \scX_s^\wedge$.
\end{cor}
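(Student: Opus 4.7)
The plan is to combine the two translation identities of Proposition~\ref{prop:translation-push-pull} with a base change computation, then appeal to compatibility with the completion morphism~\eqref{eqn:pullback-convolution}. Concretely, take $\mu_1 = \mu_3 = \lambda$, $\mu_2 = \mu_s$ (the chosen weight on the wall of $\overline{A_0}$ attached to $s$), $J = \{s\}$ and $K = \varnothing$. The stabilizer hypotheses of the proposition are satisfied since $\lambda \in A_0 \cap X^*(\bT)$ gives $\mathrm{Stab}_{(\bWaff,\bullet)}(\lambda) = \varnothing$, and $\mu_s \in \overline{A_0}$ lies on the wall of $s$, forcing $\mathrm{Stab}_{(\bWaff,\bullet)}(\mu_s) = \bW_{\{s\}}$. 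Moreover $\mu_s$ belongs to the closure of the alcove of $\lambda$, as required.

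By Theorem~\ref{thm:monoidality}\eqref{it:monoidality-1}, the equivalence $\Phi^{\hla,\hla}$ is monoidal and therefore sends the unit $(\cU\bg)^{\hla}$ of $D^-\HC^{\hla,\hla}$ to the unit $\scO_{\Delta \tbg^{\wedge(1)}}$ of $\Db \Coh^{\bG^{(1)}}(\St^{\wedge(1)})$. Applying first the top diagram of Proposition~\ref{prop:translation-push-pull} to $M = (\cU\bg)^{\hla}$ and then the bottom diagram to $N = \sfP^{\widehat{\mu_s},\hla}$, one obtains the chain of isomorphisms
\[
\scR_s = \Phi^{\hla,\hla}\bigl(\sfP^{\hla,\widehat{\mu_s}} \star \sfP^{\widehat{\mu_s},\hla}\bigr) \cong L(q_{\{s\},\varnothing})^* \, \Phi^{\widehat{\mu_s},\hla}\bigl(\sfP^{\widehat{\mu_s},\hla}\bigr) \cong L(q_{\{s\},\varnothing})^* R(q_{\{s\},\varnothing})_* \scO_{\Delta \tbg^{\wedge(1)}}.
\]

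Next I would identify this push–pull with $\scX_s^\wedge$. The restriction of the morphism $q_{\{s\},\varnothing}: \St^{(1)} \to \St^{(1)}_{\{s\},\varnothing}$ (induced by $q_s^{(1)}: \tbg^{(1)} \to \tbg_s^{(1)}$ on the first factor) to $\Delta \tbg^{(1)} \hookrightarrow \St^{(1)}$ is the closed immersion $(q_s^{(1)}, \id): \tbg^{(1)} \hookrightarrow \St^{(1)}_{\{s\},\varnothing}$ given by the graph of $q_s^{(1)}$, so the pushforward $R(q_{\{s\},\varnothing})_* \scO_{\Delta \tbg^{(1)}}$ reduces to an ordinary direct image along this closed embedding. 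Base change along the cartesian square
\[
\xymatrix{
\tbg^{(1)} \times_{\tbg_s^{(1)}} \tbg^{(1)} \ar[r] \ar[d] & \St^{(1)} \ar[d]^-{q_{\{s\},\varnothing}} \\
\tbg^{(1)} \ar[r]^-{(q_s^{(1)}, \id)} & \St^{(1)}_{\{s\},\varnothing}
}
\]
then yields $L(q_{\{s\},\varnothing})^* R(q_{\{s\},\varnothing})_* \scO_{\Delta \tbg^{(1)}} \cong \scO_{\tbg^{(1)} \times_{\tbg_s^{(1)}} \tbg^{(1)}} = \scX_s$ in $\Db \Coh^{\bG^{(1)}}(\St^{(1)})$; applying the monoidal pullback functor~\eqref{eqn:pullback-convolution} and using flatness of the completion map provides the desired $\scR_s \cong \scX_s^\wedge$.

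The main obstacle is justifying that the above square is \emph{derived} cartesian, i.e.~that $\scO_\tbg \lotimes_{\scO_{\tbg_s}} \scO_\tbg$ is concentrated in degree $0$ and coincides with $\scO_{\tbg \times_{\tbg_s} \tbg}$. The classical dimension count is favourable—both $\tbg$ and $\tbg_s$ are smooth of dimension $\dim \bG$, and $\tbg \times_{\tbg_s} \tbg$ has the expected dimension $\dim \bG$—so the standard Tor-independence criterion for Cohen--Macaulay schemes applies; alternatively, one can invoke the DAG framework outlined in~\S\ref{ss:convolution} (and developed in detail in the companion work~\cite{adr}), in which such derived-fiber-product issues are handled systematically. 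Once this derived-cartesianness is in hand, the passage to $\St^{\wedge(1)}$ and $\St^{\wedge(1)}_{\{s\},\varnothing}$ is a formal consequence of flat base change along the completion of $\bt^{*(1)}/\bWf$ at $0$.
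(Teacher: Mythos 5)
Your proof is correct and takes essentially the same approach as the paper's: apply Proposition~\ref{prop:translation-push-pull} with $\mu_1=\mu_3=\lambda$, $\mu_2=\mu_s$, $J=\{s\}$, $K=\varnothing$ (and the object $\scO_{\Delta\tbg^{\wedge(1)}}$), reduce to the tor-independence of $\tbg\times_{\tbg_s}\tbg$ justified by the same dimension count as in Remark~\ref{rmk:derived-fiber-prod}, and then compare with the definition of $\scX_s$. You have unpacked the base change identification (and the passage from $\St^{(1)}$ to $\St^{\wedge(1)}$) more explicitly than the paper, but the underlying argument is the same; only a small notational point: the paper distinguishes the non-completed morphism (written $q'_{\{s\},\varnothing}$) from its completed base change $q_{\{s\},\varnothing}$, whereas you use the latter symbol for both.
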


\begin{proof}
We apply Proposition~\ref{prop:translation-push-pull} in the special case $\mu_1=\mu_3=\lambda$, $\mu_2=\mu_s$ (in the notation of~\S\ref{ss:BSHC}), $J=\{s\}$, $K=\varnothing$, and for the object $\scO_{\Delta \tbg^{(1)}}$.
Using the classical fact that the cartesian diagram
\[
\xymatrix{
\tbg \times_{\tbg_{s}} \tbg \ar[r] \ar[d] & \tbg \ar[d] \\
\tbg \ar[r] & \tbg_s
}
\]
is tor-independent in the sense of~\cite[Definition~3.10.2]{lipman} (which follows from the same considerations as in Remark~\ref{rmk:derived-fiber-prod})
we see that
\[
\scX_s = L(q'_{\{s\},\varnothing})^* \circ R(q'_{\{s\},\varnothing})_* \scO_{\Delta \tbg^{(1)}},
\]
where $q'_{\{s\},\varnothing} : \St \to \St_{\{s\}, \varnothing}$ is the morphism from which $q_{\{s\},\varnothing}$ is obtained by base change.
The desired claim follows, in view of the definition of $\scR_s$.
\end{proof}

\subsection{Localization of braid bimodules}
\label{ss:bimodules-W}

Recall the objects $(\sfD_w : w \in \bW)$ in $\Db \HC^{\hla,\hla}$ introduced in~\S\ref{ss:further-properties}.

Since the stabilizer of $\lambda$ in $\bW$ is trivial (see Remark~\ref{rmk:assumption-weights-loc}), the orbit $\bW \bullet \lambda$ identifies naturally with $\bW$. Via this identification the right action of $\bW$ on itself given by multiplication on the right defines an action on $\bW \bullet \lambda$, which we will denote by $(\mu,w) \mapsto \mu * w$.
Given $\mu \in \bW \bullet \lambda$ and $w \in \bW$, we will say that $w$ \emph{decreases $\mu$} if, denoting by $\omega$ the unique element in $\mathbf{\Omega}$ such that $\omega^{-1} w \in \bWaff$, there exists a reduced expression $\omega^{-1} w= s_1 \cdots s_r$ for some $s_1, \cdots, s_r \in \bSaff$ 
such that 
\[
\mu * \omega > \mu * (\omega s_1) > \cdots > \mu * (\omega s_1 \cdots s_{r-1}) > \mu * (\omega s_1 \cdots s_{r})
\]
(for the standard order on $X^*(\bT)$ determined by our choice of $\fR_+$).


\begin{lem}
\label{lem:image-Dw}
For any $w \in \bW$ and $\mu \in \bW \bullet \lambda$ such that $w^{-1}$ decreases $\mu$ we have
\[
\Phi^{\widehat{\mu * w^{-1}}, \hmu} (\sfD_w) \cong \scO_{\Delta \tbg^{\wedge(1)}}.
\]
\end{lem}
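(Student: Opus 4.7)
The strategy is an induction on $\ell(w)$, using the multiplicativity $\sfD_{yw} \cong \sfD_y \star \sfD_w$ from property \eqref{it:properties-D-N-2} together with the monoidality of $\Phi$ established in Theorem~\ref{thm:monoidality}. Concretely, if $w^{-1} = s_1 \cdots s_r \cdot \omega^{-1}$ is the reduced expression implicit in the decreasing hypothesis, then setting $w' = ws_1$ we have $w^{-1} = s_1 \cdot (w')^{-1}$ with $\ell(w') = \ell(w)-1$, the chain $\mu > \mu * s_1 > \cdots > \mu * w^{-1}$ splits as $\mu > \mu * s_1$ (so $s_1$ decreases $\mu$) and $\mu * s_1 > \mu * s_1 \cdot (w')^{-1}$ (so $(w')^{-1}$ decreases $\mu * s_1$), and $\sfD_w = \sfD_{w'} \star \sfD_{s_1}$. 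Via the monoidal structure on the $\Phi$'s (interpreted through the decomposition \eqref{eqn:decomp-Zwedge-cat} of $\HC^\wedge$ and the convolution bifunctors~\eqref{eqn:convolution-HC-la-mu-nu}), the image is the $\star$-product of the images of the two factors. Since $\scO_{\Delta \tbg^{\wedge(1)}}$ is the monoidal unit in $\Db \Coh^{\bG^{(1)}}(\St^{\wedge(1)})$, the inductive step is immediate provided each factor maps to the unit.

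This reduces the lemma to the two base cases $w = \omega \in \mathbf{\Omega}$ and $w = s \in \bSaff$. The case $w = \omega \in \mathbf{\Omega}$: here $\sfD_\omega = \sfR_\omega = \sfP^{\hla, \widehat{\omega \bullet \lambda}}$ and the hypothesis is vacuous. One applies Proposition~\ref{prop:translation-push-pull} with $\mu_1 = \mu_2 = \mu * \omega^{-1}$ (or rather its $\bullet$-translate), using that $\omega$ preserves the alcove structure, so the relevant morphism $q_{\varnothing,\varnothing}$ is the identity and pullback/pushforward yield the diagonal (up to the identification provided by Remark~\ref{rmk:P-action-W}, which allows $\sfR_\omega$ to be viewed in $\HC^{\widehat{\mu * \omega^{-1}}, \hmu}$).

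The case $w = s \in \bSaff$ with $\mu * s < \mu$ is the heart of the matter. For $s \in \bSf$, let $\mu_s'$ denote the weight in the lower closure of $\mu$'s alcove that lies on the wall separating the $\mu$- and $(\mu * s)$-alcoves. Combining Proposition~\ref{prop:translation-push-pull} with Proposition~\ref{prop:splitting-HC-equiv}, the functor $\sfP^{\widehat{\mu * s}, \widehat{\mu_s'}} \star \sfP^{\widehat{\mu_s'}, \hmu} \star (-)$ on $\Db \HC^{\hmu,\hmu}$ corresponds on the coherent side to $L(q_{\{s\}, \varnothing})^* \circ R(q_{\{s\}, \varnothing})_*$. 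Apply this to $(\cU\bg)^{\hmu}$, whose image under $\Phi^{\hmu,\hmu}$ is $\scO_{\Delta \tbg^{\wedge(1)}}$, and use Lemma~\ref{lem:translation-functors-Dmod}\eqref{it:translation-D-out} (with $\mu_1 = \mu$, $\mu_1' = \mu*s$, $\mu_2 = \mu_s'$): this yields a distinguished triangle whose left term, by the hypothesis $\mu*s < \mu$ and the construction of $\sfD_s$, is precisely the $(\widehat{\mu*s}, \hmu)$-component of the bimodule built from the cocone defining $\sfD_s$, and which localizes geometrically to $\scO_{\Delta \tbg^{\wedge(1)}}$ (the ``lower'' piece in the Koszul-type decomposition~\eqref{eqn:exact-seq-braid-gp-action-2}, whose two constituents get interchanged according to whether $\mu_1 < \mu_1'$ or $\mu_1' < \mu_1$). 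For $s \in \bSaff \setminus \bSf$, write $T_s = b T_t b^{-1}$ with $t \in \bSf$ and $b \in \Br_{\bW}$, and conjugate by $\sfN_b$: Lemma~\ref{lem:conjugation-sfR} (together with its $\sfD/\sfN$ analogue coming from Lemma~\ref{lem-properties-D-N-HC}\eqref{it:D-N-HC-1} combined with \eqref{it:D-N-HC-2}) identifies $\sfD_s$ with $\sfN_b \star \sfD_t \star \sfN_{b^{-1}}$; on the coherent side the $\sfN_b$'s become invertible operators that preserve the monoidal unit, reducing this case to the previously-treated $s \in \bSf$.

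\textbf{Main obstacle.} The principal technical difficulty is making precise the interpretation of $\sfD_w$ as an object of $\Db \HC^{\widehat{\mu * w^{-1}}, \hmu}$ for $\mu \neq \lambda$, since $\sfD_w$ is constructed in $\Db \HC^{\hla,\hla}$. One must systematically exploit Remark~\ref{rmk:P-action-W} (and the $\bW$-equivariance of translation bimodules it embodies) to reconstruct $\sfD_s$ as a cocone of a map between translation bimodules with endpoints $\hmu$ and $\widehat{\mu * s}$ rather than $\hla$ and $\hla$, and to check that this reconstruction is compatible with convolution in the sense needed for the inductive step. Once this bookkeeping is settled, the geometric identification through Proposition~\ref{prop:translation-push-pull} and Lemma~\ref{lem:translation-functors-Dmod} is forced by the matching of the adjunction triangles defining $\sfD_s$ with the tor-independence triangle describing the fiber product $\tbg^{\wedge(1)} \times_{\tbg_s^{\wedge(1)}} \tbg^{\wedge(1)}$.
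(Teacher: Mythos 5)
Your high-level plan (induction on $\ell(w)$, using translation functors through Lemma~\ref{lem:translation-functors-Dmod} and tracking the varying superscripts) matches the paper's strategy; the difference that you strip a reflection from the right of $w$ rather than from the left of $w$ (as the paper does, writing $\sfD_w \cong \sfD_s \star \sfD_{sw}$ and isolating the convolution by $\sfR_s$) is cosmetic. Your separate treatment of $s \in \bSaff \smallsetminus \bSf$ via conjugation by $\sfN_b$ is unnecessary: Lemma~\ref{lem:translation-functors-Dmod}\eqref{it:translation-D-out} is stated for any wall of the alcove, so the argument covers all $s \in \bSaff$ uniformly. Also, the reduction of the inductive step to the length-one case via ``monoidality across superscripts'' (i.e.\ an identity of the form $\Phi^{\hnu,\hmu}(M\star N)\cong\Phi^{\hnu,\heta}(M)\star\Phi^{\heta,\hmu}(N)$) is plausible, and would follow from Theorem~\ref{thm:monoidality} together with \eqref{eqn:Phi-translation-weights}, but it is not established in the paper; the paper sidesteps this entirely by feeding Lemma~\ref{lem:translation-functors-Dmod}\eqref{it:translation-D-out} the object coming from $\sfD_{sw}$ directly, rather than splitting $\Phi$ across a convolution.

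The real gap is in the base case $w=s$, which you treat in a sentence but which is the entire content of the lemma. Lemma~\ref{lem:translation-functors-Dmod}\eqref{it:translation-D-out} produces a distinguished triangle with $\sfR_s$ (or $\sfR_s$ convolved with $\sfD_{sw}$) in the middle, and you assert that its cocone is $\sfD_s$ ``by construction.'' But $\sfD_s$ is the cocone of a \emph{generator} of the morphism space $\Hom_{\HC^{\hla,\hla}}(\sfR_s,(\cU\bg)^{\hla})$, which is a free rank-one module over the (non-field!) local ring $\scO(\FN_{\bt^*/(\bWf,\bullet)}(\{\tla\}))$. The morphism you obtain from the wall-crossing triangle is a priori just some element of that module, and nothing in your sketch rules out that it lies in the maximal ideal, in which case the cocone would not be $\sfD_s$. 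The paper closes this gap by multiplying the triangle by $\sfN_{w^{-1}s}$ and then applying the (fully faithful on $\HC^{\hla,\hla}_{\mathrm{diag}}$) restriction-to-a-Kostant-section functor of~\cite{br-Hecke}: one checks that the image of the first term $(\Phi^{\widehat{\mu*w^{-1}},\hmu})^{-1}(\scO_{\Delta\tbg^{\wedge(1)}}) \star \sfN_{w^{-1}s}$ under that functor is concentrated in nonpositive degrees (using that $\sfN_{w^{-1}s}$ is in nonpositive degrees and that the Kostant restriction of $(\Phi^{\widehat{\mu*w^{-1}},\hmu})^{-1}(\scO_{\Delta\tbg^{\wedge(1)}})$ is concentrated in degree~$0$, by a tor-independence and dimension computation), hence that the restricted morphism $\sfR_s\to(\cU\bg)^{\hla}$ is surjective in degree~$0$, hence a generator. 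This rigidity argument is what forces the identification; your proposal does not supply a substitute for it.
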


\begin{proof}
%
We proceed by induction on $\ell(w)$. 
If $\ell(w)=0$ we have
\[
\sfD_w = \sfR_w = \sfP^{\hla, \widehat{w \bullet \lambda}} = \sfP^{\widehat{\mu * w^{-1}}, \hmu}
\]
(where we use Remark~\ref{rmk:P-action-W}).
By Lemma~\ref{lem:translation-functors-Dmod}\eqref{it:translation-D-in} and our choice of splitting bundles (see~\S\ref{ss:D-mod-Coh}) we have
\[
\sfP^{\widehat{\mu * w^{-1}}, \hmu} \star (\Phi^{\hmu,\hmu})^{-1}(\scO_{\Delta \tbg^{\wedge(1)}}) \cong (\Phi^{\widehat{\mu * w^{-1}}, \hmu})^{-1}(\scO_{\Delta \tbg^{\wedge(1)}}).
\]
By monoidality, the object $(\Phi^{\hmu,\hmu})^{-1}(\scO_{\Delta \tbg^{\wedge(1)}})$ is the unit object for convolution, from which we deduce the desired isomorphism.

Now let $w \in \bW$ be an element of positive length, and assume the isomorphism is known for elements of length strictly smaller that $\ell(w)$. 
Since $w^{-1}$ decreases $\mu$, there exists $s \in \bSaff$ such that $\ell(sw)=\ell(w)-1$, $w^{-1}s$ decreases $\mu$ and $\mu * w^{-1} < \mu * (w^{-1} s)$. Then $\sfD_{w} \cong \sfD_s \star \sfD_{sw}$, and
by induction we have
\[
\Phi^{\widehat{\mu * (w^{-1} s)},\hmu} (\sfD_{sw}) \cong \scO_{\Delta \tbg^{\wedge(1)}}.
\]
By Lemma~\ref{lem:translation-functors-Dmod} and our choice of splitting bundles there exists a distinguished triangle
\begin{multline*}
(\Phi^{\widehat{\mu * w^{-1}},\hmu})^{-1}(\scO_{\Delta \tbg^{\wedge(1)}}) \to \sfR_s \star (\Phi^{\widehat{\mu * (w^{-1}s)},\hmu})^{-1}(\scO_{\Delta \tbg^{\wedge(1)}}) \\
\to (\Phi^{\widehat{\mu * (w^{-1}s)},\hmu})^{-1}(\scO_{\Delta \tbg^{\wedge(1)}}) \xrightarrow{[1]}
\end{multline*}
in $\Db \HC^{\hla,\hla}$, which can be rewritten as
\[
(\Phi^{\widehat{\mu * w^{-1}},\hmu})^{-1}(\scO_{\Delta \tbg^{\wedge(1)}}) \to \sfR_s \star \sfD_{sw} \to \sfD_{sw} \xrightarrow{[1]}.
\]
Consider the image of this triangle by right multiplication with $\sfN_{w^{-1}s}$ (i.e.~the inverse of $\sfD_{sw}$):
\[
(\Phi^{\widehat{\mu * w^{-1}},\hmu})^{-1}(\scO_{\Delta \tbg^{\wedge(1)}}) \star \sfN_{w^{-1}s} \to \sfR_s \to (\cU \bg)^{\hla} \xrightarrow{[1]}.
\]
We claim that the second morphism in this triangle is a generator of the space of morphisms from $\sfR_s$ to $(\cU\bg)^{\hla}$; this will imply that its cocone identifies with $\sfD_s$, hence that
\[
(\Phi^{\widehat{\mu * w^{-1}},\hmu})^{-1}(\scO_{\Delta \tbg^{\wedge(1)}}) \cong \sfD_s \star \sfD_{sw} \cong \sfD_w,
\]
which will conclude the proof.

The proof of this claim will use the constructions of~\cite{br-Hecke}. Namely, in~\cite[\S 3.9]{br-Hecke} we have constructed an exact functor of ``restriction to a Kostant section'' on $\HC^{\hla,\hla}$, and in~\cite[Proposition~3.7]{br-Hecke} we have proved that this functor is fully faithful on $\HC^{\hla,\hla}_{\mathrm{diag}}$. To conclude, it suffices to prove that the image under this functor of our morphism $\sfR_s \to (\cU \bg)^{\hla}$ is surjective, or in other words that the image of the complex $(\Phi^{\widehat{\mu * w^{-1}},\hmu})^{-1}(\scO_{\Delta \tbg^{\wedge(1)}}) \star \sfN_{w^{-1}s}$ is concentrated in nonpositive degrees. Here $\sfN_{w^{-1}s}$ is itself concentrated in nonpositive degrees by construction; since our functor intertwines convolution in $\Db \HC^{\hla,\hla}$ with a derived version of the convolution considered in~\cite[\S 3.9]{br-Hecke}, it therefore suffices to prove that the image of the complex $(\Phi^{\widehat{\mu * w^{-1}},\hmu})^{-1}(\scO_{\Delta \tbg^{\wedge(1)}})$ is concentrated in nonpositive degrees. In fact this complex is concentrated in degree $0$, as follows from the general form of the base change theorem (see~\cite[Theorem~3.10.3]{lipman}) using the fact that if $\bS^*$ is as in~\cite[\S 3.8]{br-Hecke} the fiber product
$\tbg \times_{\bg^*} \bS^*$ is tor-independent in the sense of~\cite[Definition~3.10.2]{lipman} and an affine scheme. Here the second property follows from the fact that the natural morphism $\tbg \to \bt^*$ restricts to an isomorphism $\tbg \times_{\bg^*} \bS^* \simto \bt^*$, see e.g.~\cite[Proposition~3.5.5]{riche-kostant}, and the first one follows from a standard dimension argument (as e.g.~in Remark~\ref{rmk:derived-fiber-prod}).
\end{proof}

\subsection{Study of the realization functors}
\label{ss:study-real}

Consider the composition of natural functors
\begin{equation}
\label{eqn:realization-SHC}
\Kb \SHC^{\hla,\hla} \to \Kb \HC^{\hla,\hla} \to \Db \HC^{\hla,\hla}.
\end{equation}
This functor is easily seen to be monoidal, and Corollary~\ref{cor:Kb-Db-HC} shows that it is fully faithful.
Consider also the functor
\begin{equation}
\label{eqn:realization-SCoh}
\Kb \SCoh^{\bG^{(1)}}(\St^{\wedge(1)}) \to \Db \Coh^{\bG^{(1)}}(\St^{\wedge(1)})
\end{equation}
defined so that the following diagram commutes, where the upper horizontal arrow is induced by the restriction of $\Phi^{\hla,\hla}$, see~\eqref{eqn:BSCoh-BSHC}:
\begin{equation}
\label{eqn:diagram-Phi-real}
\vcenter{
\xymatrix@C=1.5cm{
\Kb \SHC^{\hla,\hla} \ar[r]^-{\sim} \ar[d]_-{\eqref{eqn:realization-SHC}} & \Kb \SCoh^{\bG^{(1)}}(\St^{\wedge(1)}) \ar[d]^-{\eqref{eqn:realization-SCoh}} \\
\Db \HC^{\hla,\hla} \ar[r]^-{\Phi^{\hla,\hla}}_-{\sim} & \Db \Coh^{\bG^{(1)}}(\St^{\wedge(1)}).
}
}
\end{equation}
This functor is also monoidal and fully faithful.

Our next goal is to prove the following result.

\begin{prop}
\label{prop:realization-equiv}
The functors~\eqref{eqn:realization-SHC} and~\eqref{eqn:realization-SCoh} are equivalences of categories.
\end{prop}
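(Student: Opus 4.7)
Both functors are fully faithful: this follows from Corollary~\ref{cor:Kb-Db-HC} for~\eqref{eqn:realization-SHC} once one notes (using Lemma~\ref{lem:translation-bimod-simples}) that the generators $\sfR_s$ and $\sfR_\omega$ of $\SHC^{\hla,\hla}$ lie in $\HC^{\hla,\hla}_{\mathrm{diag},\mathrm{tilt}}$, and then~\eqref{eqn:realization-SCoh} inherits the property from the commutativity of~\eqref{eqn:diagram-Phi-real}. By the same diagram, essential surjectivity of the two functors is equivalent, so I concentrate on~\eqref{eqn:realization-SCoh}, denoting its essential image by $\cC$---a strictly full triangulated subcategory of $\Db\Coh^{\bG^{(1)}}(\St^{\wedge(1)})$ which is closed under direct summands. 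The task thus reduces to exhibiting, inside $\cC$, a family of objects which generates $\Db\Coh^{\bG^{(1)}}(\St^{\wedge(1)})$ as a triangulated category.

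Combining Corollary~\ref{cor:isom-R-X} with the distinguished triangles~\eqref{eqn:exact-seq-braid-gp-action-2} places $\scI_{T_s}^\wedge$ and $\scI_{T_s^{-1}}^\wedge$ in $\cC$ for every $s \in \bSf$; the case $s \in \bSaff \setminus \bSf$ follows by conjugation, via Lemma~\ref{lem:conjugation-sfR} and the definition of $\scX_s$ given in~\S\ref{ss:kernels}. After identifying $\scR_\omega \cong \scI_{T_\omega}^\wedge$ for $\omega \in \mathbf{\Omega}$ (a matching which can be read off from Proposition~\ref{prop:translation-push-pull} together with Lemma~\ref{lem:image-Dw}), the decomposition~\eqref{eqn:Br-semi-direct-prod} and the monoidality of~\eqref{eqn:pullback-convolution} place $\scI_b^\wedge$ in $\cC$ for every $b \in \Br_\bW$. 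In particular every line bundle $\scO_{\St^{\wedge(1)}}(\mu_1,\mu_2)$ lies in $\cC$, and Lemma~\ref{lem:free-tilting-SCoh} combined with closure of $\cC$ under direct summands and convolution then yields $\sfT^{(1)}(\lambda) \otimes \scO_{\St^{\wedge(1)}}(\mu_1,\mu_2) \in \cC$ for every $\lambda \in X^*(\bT)^+$ and all $\mu_1,\mu_2 \in X^*(\bT)$.

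It remains to prove that this last family triangulatedly generates $\Db\Coh^{\bG^{(1)}}(\St^{\wedge(1)})$; this is the main technical step. Since $\bt^{*(1)}/\bWf$ is isomorphic to an affine space by Chevalley, the ring $\scO(\FN_{\bt^{*(1)}/\bWf}(\{0\}))$ is regular noetherian local; combined with the Koszul-regularity recorded in Remark~\ref{rmk:derived-fiber-prod}, this yields finite global dimension for $\St^{\wedge(1)}$ (viewed as a complete intersection inside the smooth ambient scheme $\tbg^{\wedge(1)} \times \tbg^{\wedge(1)}$). A standard dévissage then reduces the generation question to the assertion that line bundles on $(\bG^{(1)}/\bB^{(1)})^2$, tensored with tilting $\bG^{(1)}$-modules, generate the equivariant derived category of $\tbg^{\wedge(1)} \times \tbg^{\wedge(1)}$: the line bundles come from the Beilinson--Kapranov exceptional collection on the flag variety pulled up along the affine bundle $\tbg \to \bG/\bB$, while $\Tilt(\bG^{(1)})$ handles the equivariance because it generates $\Rep(\bG^{(1)})$. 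The principal difficulty lies in assembling these classical ingredients in the equivariant, completed, and non-smooth setting of $\St^{\wedge(1)}$; this is handled via the derived fiber product perspective of Remark~\ref{rmk:derived-fiber-prod}, which guarantees that pullback along the defining closed embedding interacts with the generation data as expected.
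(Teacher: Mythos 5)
Your first two paragraphs are essentially sound and in fact parallel ingredients the paper also uses. Full faithfulness follows from Corollary~\ref{cor:Kb-Db-HC} plus the commutativity of~\eqref{eqn:diagram-Phi-real}, and placing the braid objects $\scI_b^\wedge$ inside the essential image $\cC$ (via Corollary~\ref{cor:isom-R-X}, the triangles~\eqref{eqn:exact-seq-braid-gp-action-2}, and~\eqref{eqn:Romega-I}) is the right starting point. One small slip: the objects $\scI_b^\wedge$ are supported on the \emph{diagonal} $\Delta\tbg^{\wedge(1)}$, so they are not themselves the line bundles $\scO_{\St^{\wedge(1)}}(\mu_1,\mu_2)$; you first need $\scO_{\St^{\wedge(1)}} \in \cC$ (which is Lemma~\ref{lem:free-tilting-Soergel} with $V$ trivial, resting on Lemma~\ref{lem:SHC-w0}), and then obtain the twists by convolving with $\scI^\wedge_{\theta_{\mu_1}}$ and $\scI^\wedge_{\theta_{-\mu_2}}$. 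With that correction, $V \otimes \scO_{\St^{\wedge(1)}}(\mu_1,\mu_2) \in \cC$ for all tilting $V$ and all $\mu_1,\mu_2$, as you assert.

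The third paragraph, however, has a genuine gap. The inference that the Koszul-regularity of Remark~\ref{rmk:derived-fiber-prod} ``yields finite global dimension for $\St^{\wedge(1)}$'' is incorrect: being a local complete intersection does not imply regularity, and $\St^{\wedge(1)}$ is in fact singular. Consequently the objects $V \otimes \scO_{\St^{\wedge(1)}}(\mu_1,\mu_2)$ are perfect complexes, and the triangulated (even thick) subcategory they generate lies inside $\mathrm{Perf}$, which is a \emph{proper} subcategory of $\Db\Coh$ on a singular scheme or stack. There is no ``standard d\'evissage'' that carries a generation statement on the smooth ambient $\tbg^{\wedge(1)} \times \tbg^{\wedge(1)}$ down to the singular closed subscheme $\St^{\wedge(1)}$; restriction along a closed embedding does not preserve generation, and the derived fiber product perspective controls Tor-independence but does not resolve this obstruction. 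So the family you propose cannot generate $\Db\Coh^{\bG^{(1)}}(\St^{\wedge(1)})$, and the last step of your argument fails for a structural reason.

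The paper takes a different route that specifically produces \emph{non-perfect} generators supported on proper closed subschemes. It filters $\St^{\wedge(1)}$ by the preimage $Y_0 \subset \cdots \subset Y_r = \St^{\wedge(1)}$ of the Bruhat stratification of $(\bG/\bB \times \bG/\bB)^{(1)}$; each open stratum $Y_i \smallsetminus Y_{i-1}$ has the explicit affine model appearing in Lemma~\ref{lem:generators-triang-cat}, whose equivariant derived category is generated by the structure sheaf twisted by characters. Picking a reduced expression $w_i = s_1 \cdots s_r$ for the relevant Weyl group element and setting $\scA_i := \scR_{s_r} \star \cdots \star \scR_{s_1}$ (in $\cC$ by definition), one checks via Corollary~\ref{cor:isom-R-X} that $\scA_i$ is pushed forward from $Y_i$ and restricts to the structure sheaf on $Y_i \smallsetminus Y_{i-1}$; combining with the diagonal twists $\scO_{\Delta\tbg^{\wedge(1)}}(\nu) \in \cC$ (obtained from~\eqref{eqn:Phi-D'-dominant}), the objects $\scO_{\Delta\tbg^{\wedge(1)}}(\nu) \star \scA_i$ are in $\cC$ and generate each stratum. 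The d\'evissage Lemma~\ref{lem:triang-categories-generators}, applied to the Verdier quotients arising from pushforward along $Y_{i-1} \hookrightarrow Y_i$, then concludes. You would need to incorporate some such stratification to repair the final step.
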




The proof of Proposition~\ref{prop:realization-equiv} will use two preliminary lemmas.

\begin{lem}
\label{lem:triang-categories-generators}
Let $\sfD$ be a triangulated category. Assume we are given:
\begin{itemize}
\item
triangulated categories $\sfD_0, \sfD_1, \cdots, \sfD_r$ with $\sfD_0=0$ and $\sfD_r=\sfD$;
\item
triangulated functors $F_i : \sfD_i \to \sfD_{i+1}$ ($i \in \{0, \cdots, r-1\}$);
\item
for each $i \in \{1, \cdots, r\}$, a set $A_i$ and a collection $(X^i_a : a \in A_i)$ of objects of $\sfD_i$ whose images generate the Verdier quotient $\sfD_i/ \langle F_{i-1} \rangle $ as a triangulated category, where $\langle F_{i-1} \rangle$ is the triangulated subcategory generated by the essential image of $F_{i-1}$.
\end{itemize}
Then the collection
\[
\bigl( F_{r-1} \circ \cdots \circ F_i(X^i_a) : i \in \{1, \cdots, r\}, \, a \in A_i \bigr)
\]
generates $\sfD$ as a triangulated category.
\end{lem}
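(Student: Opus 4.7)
The plan is to argue by induction on $r$. For the base case $r=1$, we have $\sfD_0 = 0$, so $\langle F_0 \rangle = 0$ and the Verdier quotient $\sfD_1 / \langle F_0 \rangle$ equals $\sfD_1 = \sfD$; the hypothesis then directly asserts that $(X^1_a : a \in A_1)$ generates $\sfD$, which is the conclusion.

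Before running the induction, I would record two elementary facts about triangulated categories. First, if $F : \sfC \to \sfC'$ is a triangulated functor and $(Y_j)$ is a family generating $\sfC$, then $(F(Y_j))$ generates the triangulated subcategory $\langle F \rangle$ of $\sfC'$ generated by the essential image of $F$; this is immediate by considering the full subcategory of $\sfC$ of objects $Z$ with $F(Z)$ lying in the triangulated subcategory generated by the $F(Y_j)$, noting that this subcategory is triangulated and contains the $Y_j$, so equals $\sfC$. Second, if $\sfE \subset \sfD'$ is a strict full triangulated subcategory, $(X_b)$ generates $\sfE$, and $(Y_c)$ has images generating $\sfD'/\sfE$, then $(X_b) \cup (Y_c)$ generates $\sfD'$. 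To check this, let $\sfF$ be the triangulated subcategory generated by $(X_b) \cup (Y_c)$, so that $\sfE \subset \sfF$. For any $Z \in \sfD'$, the image of $Z$ in $\sfD'/\sfE$ lies in the image of $\sfF$, so there exists a roof $Z \xleftarrow{s} W \to Z'$ with $Z' \in \sfF$ and the cones of both morphisms in $\sfE$. The distinguished triangles $W \to Z' \to \mathrm{cone} \xrightarrow{[1]}$ and $W \xrightarrow{s} Z \to \mathrm{cone}(s) \xrightarrow{[1]}$ then force $W \in \sfF$ and subsequently $Z \in \sfF$.

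For the inductive step with $r \geq 2$, applying the induction hypothesis to the chain $\sfD_0, \ldots, \sfD_{r-1}$ shows that
\[
\bigl( F_{r-2} \circ \cdots \circ F_i(X^i_a) : i \in \{1, \ldots, r-1\}, \, a \in A_i \bigr)
\]
generates $\sfD_{r-1}$ as a triangulated category. By the first preliminary fact applied to $F_{r-1}$, the family $(F_{r-1} \circ \cdots \circ F_i(X^i_a) : i \leq r-1, \, a \in A_i)$ generates $\langle F_{r-1} \rangle \subset \sfD_r$. Combined with the hypothesis that $(X^r_a : a \in A_r)$ has images generating $\sfD_r / \langle F_{r-1}\rangle$, the second preliminary fact (applied to $\sfD' = \sfD_r$ and $\sfE = \langle F_{r-1}\rangle$) yields the desired conclusion.

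I do not expect any serious obstacle here: the only nontrivial ingredient is the roof-calculus argument for the second preliminary fact, which is entirely standard. The rest is purely formal manipulation of triangulated subcategories and Verdier quotients.
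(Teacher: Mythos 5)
Your proof is correct and takes essentially the same approach as the paper's: both reduce to the two-step situation by induction and then use the standard roof argument in the Verdier quotient to conclude. The only difference is presentational — you isolate two auxiliary facts that the paper handles inline (the paper simply reduces to the case $r=2$ and argues directly).
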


\begin{proof}
Arguing by induction, we can assume that $r=2$. So we are given a triangulated functor $F_1 : \sfD_1 \to \sfD$, a collection $(X_a : a \in A)$ of objects of $\sfD_1$ which generates $\sfD_1$ as a triangulated category, and a collection $(Y_b : b \in A')$ of objects of $\sfD$ whose images generate $\sfD/\langle F_1 \rangle$. Let $\sfD'$ be the triangulated subcategory of $\sfD$ generated by the objects $(F_1(X_a) : a \in A)$ and $(Y_b : b \in A')$. This subcategory contains $\langle F_1 \rangle$, so that we can consider $\sfD' / \langle F_1 \rangle$ and the natural triangulated functor $\sfD' / \langle F_1 \rangle \to \sfD / \langle F_1 \rangle$. Using the fact that $\sfD'$ is triangulated we see that this functor is fully faithful. Its essential image is therefore a triangulated subcategory of $\sfD / \langle F_1 \rangle$; since it contains a generating family of objects it coincides with the whole of $\sfD / \langle F_1 \rangle$.

If now $X$ is an object of $\sfD$, its image in $\sfD / \langle F_1 \rangle$ is isomorphic to the image of an object $Y$ of $\sfD'$. This means that we have a diagram
\[
X \xleftarrow{f} Z \xrightarrow{g} Y
\]
where the cones of both $f$ and $g$ belong to $\langle F_1 \rangle$. Then $Z$ belongs to $\sfD'$, and $X$ therefore also does.
\end{proof}

Given $w \in \bWf$, we will write ${}^w \hspace{-1pt} \bB$, resp.~${}^w \hspace{-1pt} \bu$, for the conjugate of $\bB$, resp.~$\bu$, by any lift of $w$ in $\mathrm{N}_{\bG}(\bT)$. The following lemma is similar to~\cite[Lemma~4.1]{ahr}; we leave it to the reader to adapt the proof.

\begin{lem}
\label{lem:generators-triang-cat}
For any $w \in \bWf$, the triangulated category
\[
\Db \Coh^{\bB \cap {}^w \hspace{-1pt} \bB} \bigl( (\bg/(\bu + {}^w \hspace{-1pt} \bu))^* \times_{\bt^*} \FN_{\bt^*}(\{0\}) \bigr)
\]
is generated by the objects $\scO_{(\bg/(\bu + {}^w \hspace{-1pt} \bu))^*  \times_{\bt^*} \FN_{\bt^*}(\{ 0 \})} \otimes_\bk \bk_{\bB \cap {}^w \hspace{-1pt} \bB}(\nu)$ for $\nu \in X^*(\bT)$.
\end{lem}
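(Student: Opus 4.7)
Set $H_w := \bB \cap {}^w \hspace{-1pt} \bB$, $Z_w := (\bg/(\bu + {}^w \hspace{-1pt} \bu))^* \times_{\bt^*} \FN_{\bt^*}(\{0\})$, and $R := \scO(Z_w)$, so that $\Coh^{H_w}(Z_w)$ is the category of finitely generated $H_w$-equivariant $R$-modules, and the asserted generators are the equivariant free modules $R \otimes_\bk \bk_{H_w}(\nu)$ for $\nu \in X^*(\bT)$. The plan is to exhibit, for every $M \in \Coh^{H_w}(Z_w)$, a finite $H_w$-equivariant resolution by modules of the form $R \otimes_\bk V_i$, then observe that each term is built from the $R \otimes \bk_{H_w}(\nu)$ by iterated extensions.

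First, the standard truncation-triangle argument reduces the problem from $\Db \Coh^{H_w}(Z_w)$ to showing that every object of the heart lies in the triangulated subcategory generated by the asserted objects. Second, because the $H_w$-action on any $M \in \Coh^{H_w}(Z_w)$ is rational, the $H_w$-translates of any finite generating set span a finite-dimensional $H_w$-stable subspace $V \subset M$; this produces an equivariant surjection $R \otimes_\bk V \twoheadrightarrow M$, whose kernel again lies in $\Coh^{H_w}(Z_w)$. Iterating builds an equivariant resolution of $M$ by modules of the form $R \otimes_\bk V_i$.

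To make this resolution bounded, I would use that $R$ is regular of finite global dimension: indeed $(\bg/(\bu + {}^w \hspace{-1pt} \bu))^*$ is a finite-dimensional affine space splitting $H_w$-equivariantly over $\bt^*$, so $R \cong A[x_1,\dotsc,x_n]$ where $A := \scO(\FN_{\bt^*}(\{0\}))$ is a regular local ring. To guarantee not just boundedness but also that the last term of the resolution has the required shape, I would introduce the auxiliary $\Gm$-grading on $R$ coming from the dilation action on $(\bg/(\bu + {}^w \hspace{-1pt} \bu + \bt))^*$: this grading is concentrated in nonnegative degrees, has degree-zero piece $A$, and is preserved by $H_w$. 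Choosing the $V$'s above to be graded (for instance, as the minimal part of a minimal graded generating set), one obtains a graded equivariant resolution, whose syzygies are graded $H_w$-equivariant modules; over the polynomial ring $R$ with local base $A$, a graded finitely generated projective module is graded free by graded Nakayama, and therefore of the form $R \otimes_\bk W$ for a graded $H_w$-representation $W$. Finite global dimension then forces termination after $\dim V^* + \dim \bt$ steps.

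Finally, since $H_w = \bT \cdot (\bU \cap {}^w \hspace{-1pt} \bU)$ is a connected solvable linear algebraic group, the Lie--Kolchin theorem (valid in all characteristics) produces for every finite-dimensional rational $H_w$-representation $V$ a filtration by subrepresentations with $1$-dimensional quotients; each such character factors through $H_w / [H_w,H_w]$, which is a quotient of $\bT$, so every quotient is isomorphic to some $\bk_{H_w}(\nu)$. Tensoring with $R$ expresses each $R \otimes V_i$ as an iterated extension of objects of the asserted form, finishing the proof. \textbf{The main obstacle} is not Lie--Kolchin, which is routine, but rather the termination step: one must ensure that the iterated equivariant resolution ends with a module literally of the form $R \otimes_\bk W$, and not merely with a finitely generated $R$-projective module of unclear equivariant structure. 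The cleanest way to handle this is the graded argument above; an alternative is a Quillen--Suslin style input upgraded to the equivariant setting by decomposing $H_w$ into $\bT$ and the unipotent subgroup $\bU \cap {}^w \hspace{-1pt} \bU$ and working inductively along a composition series of the latter.
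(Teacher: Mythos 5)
Your overall plan is sound---build a finite $H_w$-equivariant resolution of any coherent object by free modules $R \otimes_\bk V$, then use Lie--Kolchin to reduce each $V$ to characters $\bk_{H_w}(\nu)$---and the Lie--Kolchin step is correct (every character of $H_w := \bB \cap {}^w \hspace{-1pt} \bB$ kills the unipotent part $\bU \cap {}^w\hspace{-1pt}\bU$, hence factors through $\bT$). Note that the paper does not give a self-contained proof of this lemma: it refers to \cite[Lemma~4.1]{ahr} and leaves the adaptation to the reader, so there is no paper-internal proof to compare against. But the termination step of your argument, which you yourself flag as the main obstacle, contains a genuine gap.

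The specific error is the assertion that $(\bg/(\bu + {}^w\hspace{-1pt}\bu))^*$ ``splits $H_w$-equivariantly over $\bt^*$,'' which underlies the claimed $H_w$-stable $\Gm$-grading with $R_0 = A$. This splitting does not exist in general. The $H_w$-module sequence $0 \to \bt \to \bg/(\bu + {}^w\hspace{-1pt}\bu) \to \bg/(\bu + {}^w\hspace{-1pt}\bu + \bt) \to 0$ is generically non-split: the nonzero $\bT$-weights of $\bg/(\bu + {}^w\hspace{-1pt}\bu)$ are exactly $-(\fR_+ \cap w\fR_+)$, the \emph{negatives} of the weights of $\mathrm{Lie}(\bU \cap {}^w\hspace{-1pt}\bU)$, so for $\beta \in \fR_+ \cap w\fR_+$, $y \in \bg_\beta$ and $z \in \bg_{-\beta}$ the adjoint action of $\exp(y)$ contributes $\overline{[y,z]} \in \bt$ to $\bar z$, exactly obstructing a $(\bU \cap {}^w\hspace{-1pt}\bU)$-equivariant retraction onto $\bt$. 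Already for $\bG = \mathrm{SL}_2$ and $w = e$ one sees that $\bg/\bu$ is the non-split $\bB$-extension of $\bk\bar f$ by $\bt = \bk\bar h$, and dually $(\bg/\bu)^* \twoheadrightarrow \bt^*$ has no $\bB$-equivariant section. As a result, any $\Gm$-grading on $R$ with $R_0 = A$ (e.g.~via a regular dominant cocharacter) is $\bT$-stable but \emph{not} $(\bU \cap {}^w\hspace{-1pt}\bU)$-stable---only the filtration $\bigoplus_{d\le n}R_d$ is preserved---so graded Nakayama in the form you invoke is unavailable equivariantly. More intrinsically: since $H_w$ is not linearly reductive, the minimal-cover $M/\mathfrak m M$ (with $\mathfrak m$ the maximal ideal at the unique $H_w$-fixed closed point) need not lift into $M$ $H_w$-equivariantly, and a non-minimal resolution has no reason to end in a term of the form $R \otimes_\bk W$.

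Some of your observations do go in the right direction: $\mathfrak m$ \emph{is} $H_w$-stable (it is the maximal ideal of the $H_w$-fixed point, generated by the image of $V := \bg/(\bu + {}^w\hspace{-1pt}\bu)$ in $R$, which forms a Koszul-regular sequence), so the $H_w$-equivariant Koszul complex $\scO \otimes \bigwedge^\bullet V$ resolves the skyscraper at the fixed point and shows that all complexes supported there lie in the subcategory in question. And your second alternative---decomposing $H_w = \bT \ltimes (\bU \cap {}^w\hspace{-1pt}\bU)$ and inducting along a composition series of the unipotent radical---has the right shape for a correct argument. But as written, the graded argument rests on a false splitting and leaves the termination step open.
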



We can finally give the proof of Proposition~\ref{prop:realization-equiv}.

\begin{proof}[Proof of Proposition~\ref{prop:realization-equiv}]
We have already explained that our functors are fully faithful; it follows that their essential images are triangulated subcategories, so that to conclude it suffices to show that the category $\Db \HC^{\hla,\hla}$, resp.~$\Db \Coh^{\bG^{(1)}}(\St^{\wedge(1)})$, is generated as a triangulated by the subcategory $\SHC^{\hla,\hla}$, resp.~$\SCoh^{\bG^{(1)}}(\St^{\wedge(1)})$. In view of the equivalence $\Phi^{\hla,\hla}$ (see~\S\ref{ss:D-mod-Coh}) the two cases are equivalent, so we concentrate on~\eqref{eqn:realization-SCoh}.


Let us denote by $\sfC$ the triangulated subcategory of $\Db \Coh^{\bG^{(1)}}(\St^{\wedge(1)})$ generated by $\SCoh^{\bG^{(1)}}(\St^{\wedge(1)})$. Note that $\sfC$ is a \emph{monoidal} subcategory in $\Db \Coh^{\bG^{(1)}}(\St^{\wedge(1)})$.
Recall that $\bG^{(1)}$ has finitely many orbits for the diagonal action on $(\bG/\bB \times \bG/\bB)^{(1)}$. Choose a numbering of these orbits which refines the order given by closure inclusions, and consider the associated filtration
\[
\varnothing = X_0 \subset X_1 \subset X_2 \subset \cdots \subset X_{r-1} \subset X_r = (\bG/\bB \times \bG/\bB)^{(1)}
\]
where each $X_i$ is closed and reduced and each $X_i \smallsetminus X_{i-1}$ is a single $\bG$-orbit. 
Consider also the pullback
\[
\varnothing = Y_0 \subset Y_1 \subset Y_2 \subset \cdots \subset Y_{r-1} \subset Y_r = \St^{\wedge (1)}
\]
of this filtration to $\St^{\wedge (1)}$. Then each $Y_i \smallsetminus Y_{i-1}$ is of the form
\[
\Bigl( \bG \times^{\bB \cap {}^{w_i} \hspace{-1pt} \bB} \bigl( (\bg/(\bu + {}^{w_i} \hspace{-1pt} \bu))^* \times_{\bt^*} \FN_{\bt^*}(\{0\}) \bigr) \Bigr)^{(1)}
\]
for some $w_i \in \bWf$.

If $\eta \in X^*(\bT)^+$, the element $t_{-\eta}$ decreases the weight $\lambda + \ell\eta=t_\eta \bullet \lambda$.
By Lemma~\ref{lem:image-Dw}, we deduce that
\[
\Phi^{\hla,\widehat{\lambda+\ell\eta}} (\sfD_{t_\eta}) \cong \scO_{\Delta \tbg^{\wedge(1)}}.
\]
On the other hand, by~\eqref{eqn:Phi-translation-weights} we have
\[
\Phi^{\hla,\widehat{\lambda+\ell\eta}} (\sfD_{t_\eta}) \cong \scO_{\St^{\wedge(1)}}(0,\eta) \otimes_{\scO_{\St^{\wedge(1)}}} \Phi^{\hla,\hla} (\sfD_{t_\eta}),
\]
which implies that
\begin{equation}
\label{eqn:Phi-D'-dominant}
\Phi^{\hla,\hla} (\sfD_{t_\eta}) \cong \scO_{\Delta \tbg^{\wedge(1)}}(-\eta).
\end{equation}
By construction of $\sfD_{t_\eta}$, 
$\Phi^{\hla,\hla} (\sfD_{t_\eta})$ belongs to $\sfC$, hence so does $\scO_{\Delta \tbg^{\wedge(1)}}(-\eta)$. By monoidality, $\sfC$ then contains all objects $\scO_{\Delta \tbg^{\wedge(1)}}(\nu)$ with $\nu \in X^*(\bT)$.

Now let $i \in \{1, \cdots, r\}$, consider the element $w_i \in \bWf$, and fix a reduced expression $w_i=s_1 \cdots s_r$. Then using Lemma~\ref{cor:isom-R-X} one sees that the object $\scA_i := \scR_{s_r} \star \cdots \star \scR_{s_1}$ is the pushforward of a complex on $Y_i$, and that its restriction to $Y_i \smallsetminus Y_{i-1}$ is the structure sheaf. Using Lemma~\ref{lem:generators-triang-cat} we deduce that the images in $\Db\Coh^{\bG^{(1)}}(Y_i \smallsetminus Y_{i-1})$ of the objects
\[
\scO_{\Delta \tbg^{\wedge(1)}}(\nu) \star \scA_i
\]
for $\nu \in X^*(\bT)$ generate this category.

Consider now, for any $i$, the pushforward functor
\[
\Db\Coh^{\bG^{(1)}}(Y_{i-1}) \to \Db\Coh^{\bG^{(1)}}(Y_i).
\]
By~\cite[Remark after Lemma~2.12]{arinkin-bezrukavnikov}, the quotient of the target category by the subcategory generated by the essential image of this functor identifies with $\Db\Coh^{\bG^{(1)}}(Y_i \smallsetminus Y_{i-1})$. We are therefore in the setting of Lemma~\ref{lem:triang-categories-generators}, and this result shows that the objects $\scO_{\Delta \tbg^{\wedge(1)}}(\nu) \star \scA_i$ for $\nu \in X^*(\bT)$ and $i \in \{1, \cdots, r\}$ generate $\Db \Coh^{\bG^{(1)}}(\St^{\wedge(1)})$ as a triangulated category. Since these objects all belong to $\sfC$, this shows that $\sfC$ is the whole of $\Db \Coh^{\bG^{(1)}}(\St^{\wedge(1)})$, and therefore finishes the proof.
\end{proof}

\subsection{Image of braid objects}
\label{ss:image-braid}

For $b \in \Br_{\bW}$ we will denote by $\scI_b^\wedge$ the image of $\scI_b$ under the monoidal functor~\eqref{eqn:pullback-convolution}. Then we have
\[
\scI^\wedge_b \star \scI_c^\wedge \cong \scI^\wedge_{cb}
\]
for any $b,c \in \Br_{\bW}$. 
Recall also the objects $(\scX^\wedge_s : s \in \bSaff)$ introduced in~\S\ref{ss:geometric-translation}, and the objects $(\sfN_b : b \in \Br_{\bW})$ introduced in~\S\ref{ss:further-properties}.

Consider the group anti-automorphism $\imath$ of $\Br_{\bW}$ which satisfies $\imath(T_w) = T_{w^{-1}}$ for any $w \in \bW$. (This map is clearly an involution.)

\begin{lem}
\label{lem:image-braid}
For any $b \in \Br_{\bW}$ we have $\Phi^{\hla,\hla}(\sfN_b) \cong \scI^\wedge_{\imath(b)}$.
\end{lem}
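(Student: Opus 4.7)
The plan is to recognize both $b \mapsto \Phi^{\hla,\hla}(\sfN_b)$ and $b \mapsto \scI^\wedge_{\imath(b)}$ as group homomorphisms from $\Br_\bW$ to the Picard group of invertible objects in $(\Db \Coh^{\bG^{(1)}}(\St^{\wedge(1)}), \star)$, so that it suffices to verify the isomorphism on a generating subset. For the first this uses the monoidality of $\Phi^{\hla,\hla}$ (Theorem~\ref{thm:monoidality}\eqref{it:monoidality-1}) combined with the construction of $b \mapsto \sfN_b$ in~\S\ref{ss:further-properties}; for the second one uses that $b \mapsto \scI_b^\wedge$ is an anti-homomorphism (from~\S\ref{ss:kernels}, together with the monoidality of the pullback functor~\eqref{eqn:pullback-convolution}) composed with the anti-involution $\imath$. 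I will use the generating set $\{T_s : s \in \bSf\} \cup \{T_{t_{-\eta}} : \eta \in X^*(\bT)^+\}$, whose generation of $\Br_\bW$ follows from~\eqref{eqn:generators-BrW} together with the braid-group identity $\theta_{-w_\circ(\nu)} = T_{w_\circ}^{-1} T_{t_{-\nu}} T_{w_\circ}$ for $\nu \in X^*(\bT)^+$, a standard consequence of the Bernstein relations.

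For the generators $T_s$ with $s \in \bSf$, where $\imath(T_s) = T_s$, I will apply $\Phi^{\hla,\hla}$ to the defining triangle $(\cU\bg)^{\hla} \to \sfR_s \to \sfN_s \xrightarrow{[1]}$ and invoke Corollary~\ref{cor:isom-R-X} to identify $\Phi^{\hla,\hla}(\sfR_s) \cong \scX_s^\wedge$; simultaneously, pulling back the first triangle of~\eqref{eqn:exact-seq-braid-gp-action-2} via~\eqref{eqn:pullback-convolution} (and forgetting the trivial $\Gm$-grading) gives $\scO_{\Delta \tbg^{\wedge(1)}} \to \scX_s^\wedge \to \scI^\wedge_{T_s} \xrightarrow{[1]}$. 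Both $\Phi^{\hla,\hla}(\sfN_s)$ and $\scI^\wedge_{T_s}$ thus appear as cones of morphisms in the free rank-one $\scO(\FN_{\bt^*/(\bWf,\bullet)}(\{\tla\}))$-module $\Hom(\scO_{\Delta \tbg^{\wedge(1)}}, \scX_s^\wedge)$ (the rank-one property being transported via $\Phi^{\hla,\hla}$ from the analogous HC-side fact recalled in~\S\ref{ss:BSHC}). Since both cones are invertible, a Nakayama-type argument forces both morphisms to be generators (otherwise the restriction of the cone to the closed fiber would split as a nontrivial shifted direct sum, contradicting invertibility), so the two morphisms differ by a unit and their cones agree.

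For the remaining generators $T_{t_{-\eta}}$ with $\eta \in X^*(\bT)^+$, both sides can be computed explicitly. Since $\imath(T_{t_{-\eta}}) = T_{t_\eta} = \theta_\eta$ by dominance, the right-hand side is $\scI^\wedge_{\theta_\eta} = \scO_{\Delta \tbg^{\wedge(1)}}(\eta)$ by the normalization of $\scI$ on Bernstein generators recalled in~\S\ref{ss:kernels}. For the left-hand side, equation~\eqref{eqn:Phi-D'-dominant} gives $\Phi^{\hla,\hla}(\sfD_{t_\eta}) \cong \scO_{\Delta \tbg^{\wedge(1)}}(-\eta)$, and the group-theoretic identity $\sfN_w \cong \sfD_{w^{-1}}^{-1}$ (a reformulation of property~\eqref{it:properties-D-N-1}) yields $\Phi^{\hla,\hla}(\sfN_{t_{-\eta}}) \cong \scO_{\Delta \tbg^{\wedge(1)}}(\eta)$, matching the right-hand side. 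The hardest step will be the Nakayama-type argument of the second paragraph; a cleaner alternative is to identify both morphisms $\scO_{\Delta \tbg^{\wedge(1)}} \to \scX_s^\wedge$ with the adjunction unit for the pair $(L(q_{\{s\},\varnothing})^*, R(q_{\{s\},\varnothing})_*)$ under the canonical isomorphism $\Phi^{\hla,\hla}(\sfR_s) \cong L(q_{\{s\},\varnothing})^* R(q_{\{s\},\varnothing})_* \scO_{\Delta \tbg^{\wedge(1)}} \cong \scX_s^\wedge$ extracted from the proof of Corollary~\ref{cor:isom-R-X}, thereby bypassing any generator-choice indeterminacy.
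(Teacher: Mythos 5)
Your proof follows essentially the same strategy as the paper's: reduce to the generating set $\{T_s : s \in \bSf\} \cup \{T_{t_{-\nu}} : \nu \in X^*(\bT)^+\}$ using that both assignments are (anti)homomorphisms; for $s \in \bSf$ compare the distinguished triangle defining $\sfN_s$ (transported via $\Phi^{\hla,\hla}$ and Corollary~\ref{cor:isom-R-X}) with the pulled-back triangle from~\eqref{eqn:exact-seq-braid-gp-action-2}; and for the translation generators invoke~\eqref{eqn:Phi-D'-dominant} together with the inversion relation $\sfN_{t_{-\nu}} \cong \sfD_{t_\nu}^{-1}$ from property~\eqref{it:properties-D-N-1}. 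The only place you diverge is in spelling out why the two morphisms $\scO_{\Delta\tbg^{\wedge(1)}} \to \scX_s^\wedge$ agree up to a unit: the paper elides this (simply asserting that the first morphism is a generator and "comparing" the two triangles), whereas you supply both a Nakayama-type indecomposability argument and, as an alternative, an identification with the adjunction unit from the proof of Corollary~\ref{cor:isom-R-X}. Both of your justifications are reasonable and the second one is the cleaner of the two, since it sidesteps the need to know that the morphism in~\eqref{eqn:exact-seq-braid-gp-action} stays a generator after the monoidal pullback; but the overall architecture of the proof is the paper's.
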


\begin{proof}
It suffices to prove the isomorphism for $b$ in a generating subset of $\Br_{\bW}$; in practice we will use the subset $\{T_s : s \in \bSf\} \cup \{T_{t_{-\nu}} : \nu \in X^*(\bT)^+\}$.

First, consider the case $b=T_s$ for some $s \in \bSf$. By Corollary~\ref{cor:isom-R-X} we have $\Phi^{\hla,\hla}(\sfR_s) \cong \scX_s^\wedge$, hence a distinguished triangle
\[
\scO_{\tbg^{\wedge(1)}} \to \scX_s^\wedge \to \Phi^{\hla,\hla}(\sfN_s) \xrightarrow{[1]}
\]
in which the first morphism is a generator of the space of morphisms from $\scO_{\tbg^{\wedge(1)}}$ to $\scX_s^\wedge$. Comparing this triangle with the image of 
the left triangle in~\eqref{eqn:exact-seq-braid-gp-action} we deduce an isomorphism
\[
\Phi^{\hla,\hla}(\sfN_s) \cong \scI^\wedge_{T_s},
\]
as desired.
If $\nu \in X^*(\bT)^+$, by~\eqref{eqn:Phi-D'-dominant} we have $\Phi^{\hla,\hla} (\sfD_{t_\nu}) \cong \scO_{\Delta \tbg^{\wedge(1)}}(-\nu)$, hence (passing to inverses)
\[
\Phi^{\hla,\hla} (\sfN_{t_{-\nu}}) \cong \scO_{\Delta \tbg^{\wedge(1)}}(\nu) \cong \scI^\wedge_{T_{t_\nu}},
\]
which concludes the proof.
\end{proof}

In particular, for $\omega \in \mathbf{\Omega}$, applying Lemma~\ref{lem:image-braid} with $b=T_\omega$ we obtain that $\Phi^{\hla,\hla}(\sfN_\omega) \cong \scI^\wedge_{T_{\omega^{-1}}}$, hence that
\begin{equation}
\label{eqn:Romega-I}
\scR_\omega \cong \scI^\wedge_{T_{\omega^{-1}}}.
\end{equation}
Similarly, for $s \in \bSaff$, using Lemma~\ref{lem:image-braid} in case $b \in \{T_s, T_s^{-1}\}$ and applying the functor $\Phi^{\hla,\hla}$ to the triangles in~\eqref{eqn:triangles-Ds-Ds'} we obtain distinguished triangles
\begin{equation}
\label{eqn:triangle-Rs-I}
\scO_{\tbg^{\wedge(1)}} \to \scR_s \to \scI^\wedge_{T_s} \xrightarrow{[1]}, \quad
\scI^\wedge_{T_s^{-1}} \to \scR_s \to \scO_{\tbg^{\wedge(1)}} \xrightarrow{[1]}.
\end{equation}
In fact one can also deduce a generalization of Corollary~\ref{cor:isom-R-X}.

\begin{cor}
\label{cor:isom-R-X-2}
For any $s \in \bSaff$, we have $\scR_s \cong \scX_s^\wedge$.
\end{cor}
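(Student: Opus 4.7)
For $s \in \bSf$ the claim is Corollary~\ref{cor:isom-R-X}, so I focus on $s \in \bSaff \smallsetminus \bSf$. Letting $b \in \Br_\bW$ and $t \in \bSf$ denote the elements fixed in the definition $\scX_s = \scI_{b^{-1}} \star \scX_t \star \scI_b$ (so $bT_tb^{-1}=T_s$), the plan is to combine Lemma~\ref{lem:conjugation-sfR} and the monoidal equivalence $\Phi^{\hla,\hla}$ (Theorem~\ref{thm:monoidality}\eqref{it:monoidality-1}) to transport the already-known isomorphism $\Phi^{\hla,\hla}(\sfR_t)\cong \scX_t^\wedge$ to the corresponding isomorphism for $s$.

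The crucial step is the choice of conjugating element. Because Lemma~\ref{lem:image-braid} computes $\Phi^{\hla,\hla}(\sfN_c)$ as $\scI^\wedge_{\imath(c)}$, where $\imath$ is the anti-involution of $\Br_\bW$, conjugating naively by $b$ would produce $\scI^\wedge_{\imath(b)} \star \scX_t^\wedge \star \scI^\wedge_{\imath(b)^{-1}}$, which does not literally match the definition of $\scX_s^\wedge$. Instead I would set $b' := \imath(b)^{-1}$; since $\imath$ fixes every $T_u$ with $u \in \bSaff$, applying $\imath$ to $bT_tb^{-1}=T_s$ yields $b' T_t (b')^{-1} = T_s$, so $b'$ is a valid conjugating element for the hypotheses of Lemma~\ref{lem:conjugation-sfR}.

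Reading Lemma~\ref{lem:conjugation-sfR} with the roles of the two simple reflections swapped then gives an isomorphism $\sfR_s \cong \sfN_{b'} \star \sfR_t \star \sfN_{(b')^{-1}}$ in $D^-\HC^{\hla,\hla}$. Applying $\Phi^{\hla,\hla}$ and invoking Corollary~\ref{cor:isom-R-X} together with Lemma~\ref{lem:image-braid}, the remaining computation reduces to the two identities $\imath(b') = b^{-1}$ and $\imath((b')^{-1})=b$, both immediate from the fact that $\imath$ is an involutive anti-automorphism. This delivers
\[
\scR_s \cong \scI^\wedge_{b^{-1}} \star \scX_t^\wedge \star \scI^\wedge_b = \scX_s^\wedge,
\]
as desired.

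The entire proof is concentrated in the substitution $b \mapsto \imath(b)^{-1}$; once this is made the remaining manipulations are formal. I do not foresee a genuine obstacle beyond keeping track of the interplay between the three conventions at play: the order-reversed convolution on the coherent side (Remark~\ref{rmk:convention-action-Br}), the occurrence of $\imath$ in the geometric realization of the braid group (Lemma~\ref{lem:image-braid}), and the conjugation formula of Lemma~\ref{lem:conjugation-sfR}.
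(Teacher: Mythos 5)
Your proposal is correct and follows essentially the same route as the paper's proof: write $\scX_s^\wedge = \scI^\wedge_{b^{-1}}\star\scX_t^\wedge\star\scI^\wedge_b$, transport through $\Phi^{\hla,\hla}$ via Lemma~\ref{lem:image-braid} (picking up $\imath$), and match against Lemma~\ref{lem:conjugation-sfR} applied to the conjugating element $\imath(b)^{-1}$. The paper runs the same calculation from $\scX_s^\wedge$ toward $\sfR_s$ rather than, as you do, from $\sfR_s$ toward $\scX_s^\wedge$, but the ingredients and the key substitution $b\mapsto\imath(b)^{-1}$ are identical.
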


\begin{proof}
The case $s \in \bSf$ has been treated in Corollary~\ref{cor:isom-R-X}, hence we can assume that $s \in \bSaff \smallsetminus \bSf$. In this case, to define $\scX_s$ we have fixed $b \in \Br_\bW$ and $t \in \bSf$ such that $T_s = b T_t b^{-1}$, and set $\scX_s = \scI_{b^{-1}} \star \scX_t \star \scI_b$ (see~\S\ref{ss:kernels}); we then have
\begin{equation}
\label{eqn:Xs-conjugation}
\scX_s^\wedge = \scI^\wedge_{b^{-1}} \star \scX^\wedge_t \star \scI^\wedge_b.
\end{equation}
Using Lemma~\ref{lem:image-braid} and the case of $t$ which we have already treated we deduce that
\[
(\Phi^{\hla,\hla})^{-1}(\scX_s^\wedge) \cong \sfN_{\imath(b)^{-1}} \star \sfR_t \star \sfN_{\imath(b)}.
\]
Here $T_s = \imath(b)^{-1} T_t \imath(b)$, hence by
Lemma~\ref{lem:conjugation-sfR} the right-hand side is isomorphic to $\sfR_s$, which finishes the proof.
\end{proof}

Let us note the following consequence for later use.

\begin{lem}
\label{lem:free-tilting-Soergel}
For any $V \in \Tilt(\bG^{(1)})$, the object $V \otimes \scO_{\St^{\wedge(1)}}$ belongs to the subcategory $\SCoh^{\bG^{(1)}}(\St^{\wedge(1)})$.
\end{lem}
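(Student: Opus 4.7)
The plan is to deduce the lemma from Lemma~\ref{lem:free-tilting-SCoh} by transporting that statement to $\St^{\wedge(1)}$ via the monoidal pullback functor~\eqref{eqn:pullback-convolution}. Since the functor $V \mapsto V \otimes \scO_{\St^{\wedge(1)}}$ is additive and $\SCoh^{\bG^{(1)}}(\St^{\wedge(1)})$ is karoubian, it suffices to treat the case $V = \sfT^{(1)}(\lambda)$ for an arbitrary $\lambda \in X^*(\bT)^+$. Beyond the identifications already established, the one extra ingredient needed will be that $\scO_{\St^{\wedge(1)}}$ itself lies in $\SCoh^{\bG^{(1)}}(\St^{\wedge(1)})$.

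To establish this, I would combine Lemma~\ref{lem:SHC-w0} (which places $\sfP^{\hla,\widehat{-\varsigma}} \star \sfP^{\widehat{-\varsigma},\hla}$ in $\SHC^{\hla,\hla}$) with Proposition~\ref{prop:translation-push-pull}. A direct computation using~\eqref{eqn:dot-action} shows $\mathrm{Stab}_{(\bWaff,\bullet)}(-\varsigma) = \bWf = \bW_\fRs$, so the proposition applies with $(\mu_1,\mu_2,\mu_3) = (\lambda,-\varsigma,\lambda)$, $J = \fRs$, $K = \varnothing$. Since $\St^\wedge_{\fRs,\varnothing} = \tbg^\wedge$, the morphism $q_{\fRs,\varnothing}$ is identified with the second projection $p_2 : \St^{\wedge(1)} \to \tbg^{\wedge(1)}$; applying the two diagrams of Proposition~\ref{prop:translation-push-pull} in succession to the unit object then gives
\[
\Phi^{\hla,\hla}(\sfP^{\hla,\widehat{-\varsigma}} \star \sfP^{\widehat{-\varsigma},\hla}) \cong L(p_2)^* R(p_2)_* (\scO_{\Delta \tbg^{\wedge(1)}}) \cong L(p_2)^*(\scO_{\tbg^{\wedge(1)}}) = \scO_{\St^{\wedge(1)}},
\]
and the equivalence~\eqref{eqn:BSCoh-BSHC} delivers $\scO_{\St^{\wedge(1)}} \in \SCoh^{\bG^{(1)}}(\St^{\wedge(1)})$.

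Next, apply Lemma~\ref{lem:free-tilting-SCoh} to $\sfT^{(1)}(\lambda)$ to obtain $n \in \Z$, $\omega \in \mathbf{\Omega}$ and $s_1,\ldots,s_r \in \bSaff$ such that $\sfT^{(1)}(\lambda) \otimes \scO_{\St^{(1)}}$ is a direct summand of $\scO_{\St^{(1)}} \star \scI_{T_\omega} \star \scX_{s_1} \star \cdots \star \scX_{s_r} \langle n \rangle$ in $\Db \Coh^{(\bG \times \Gm)^{(1)}}(\St^{(1)})$. Forgetting the $\Gm^{(1)}$-equivariance (under which the shift $\langle n \rangle$ trivializes) and then applying the monoidal pullback~\eqref{eqn:pullback-convolution}, the direct-summand relation is preserved, so $\sfT^{(1)}(\lambda) \otimes \scO_{\St^{\wedge(1)}}$ appears as a direct summand of $\scO_{\St^{\wedge(1)}} \star \scI^\wedge_{T_\omega} \star \scX^\wedge_{s_1} \star \cdots \star \scX^\wedge_{s_r}$. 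Corollary~\ref{cor:isom-R-X-2} gives $\scX^\wedge_{s_i} \cong \scR_{s_i}$, and~\eqref{eqn:Romega-I} gives $\scI^\wedge_{T_\omega} \cong \scR_{\omega^{-1}}$; the convolution rewrites as $\scO_{\St^{\wedge(1)}} \star \scR_{\omega^{-1}} \star \scR_{s_1} \star \cdots \star \scR_{s_r}$.

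Since $\SCoh^{\bG^{(1)}}(\St^{\wedge(1)})$ is stable under $\star$ and each factor above lies in it -- the first by the preliminary step, the others by the definition of $\BSCoh$ -- the whole convolution belongs to $\SCoh^{\bG^{(1)}}(\St^{\wedge(1)})$, and karoubianness transfers this to the direct summand $\sfT^{(1)}(\lambda) \otimes \scO_{\St^{\wedge(1)}}$. The only nonroutine step in the argument is the preliminary identification of $\scO_{\St^{\wedge(1)}}$ as the image under $\Phi^{\hla,\hla}$ of an object of $\SHC^{\hla,\hla}$; once this is in hand, the rest of the proof is a formal consequence of the monoidality of~\eqref{eqn:pullback-convolution} and the kernel identifications of~\S\ref{ss:image-braid}.
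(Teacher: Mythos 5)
Your argument is correct and follows essentially the same route as the paper's: reduce to $\scO_{\St^{\wedge(1)}}$ via Lemma~\ref{lem:free-tilting-SCoh}, \eqref{eqn:Romega-I}, and Corollary~\ref{cor:isom-R-X-2}, and place $\scO_{\St^{\wedge(1)}}$ in $\SCoh^{\bG^{(1)}}(\St^{\wedge(1)})$ by combining Lemma~\ref{lem:SHC-w0} with Proposition~\ref{prop:translation-push-pull} applied to $(\mu_1,\mu_2,\mu_3)=(\lambda,-\varsigma,\lambda)$ and $J=\fRs$, $K=\varnothing$. The only difference is that you spell out the stabilizer computations and the identification $q_{\fRs,\varnothing}\cong p_2$ explicitly, where the paper simply cites the proposition together with Remark~\ref{rmk:derived-fiber-prod}.
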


\begin{proof}
First we consider the case when $V=\bk$ is the trivial module.
By Proposition~\ref{prop:translation-push-pull} (applied with $\mu_1=\mu_3=\lambda$, $\mu_2=-\varsigma$, $J=\fRs$ and $K=\varnothing$) and Remark~\ref{rmk:derived-fiber-prod} we have
\[
\scO_{\St^{\wedge(1)}} = \Phi^{\hla,\hla}(\sfP^{\hla,\widehat{-\varsigma}} \hatstar \sfP^{\widehat{-\varsigma},\hla}).
\]
By Lemma~\ref{lem:SHC-w0} the object $\sfP^{\hla,\widehat{-\varsigma}} \hatstar \sfP^{\widehat{-\varsigma},\hla}$ belongs to $\SHC^{\hla,\hla}$. Hence $\scO_{\St^{\wedge(1)}}$ indeed belongs to $\SCoh^{\bG^{(1)}}(\St^{\wedge(1)})$.

To conclude the proof it then suffices to observe that for any indecomposable $V \in \Tilt(\bG^{(1)})$ there exist $s_1, \cdots, s_r \in \bSaff$ and $\omega \in \mathbf{\Omega}$ such that $V \otimes \scO_{\St^{\wedge(1)}}$ is a direct summand in the complex
\[
\scO_{\St^{\wedge(1)}} \star \scR_\omega \star \scR_{s_1} \star \cdots \star \scR_{s_r}.
\]
In fact, this claim follows from Lemma~\ref{lem:free-tilting-SCoh}, using~\eqref{eqn:Romega-I} and Corollary~\ref{cor:isom-R-X-2}.
\end{proof}

%
%

\section{Restriction to a Steinberg section}
\label{sec:Steinberg-section}

We continue with the setting of Sections~\ref{sec:monoidality}--\ref{sec:braid-gp}.
In this section we introduce a functor of ``restriction to a Steinberg section'' and show that it allows to identify the category $\SHC^{\hla,\hla}$ with a suitable category of ``Soergel bimodules.'' This construction is similar to one considered in~\cite{br-Hecke}, but slightly different: in~\cite{br-Hecke} we worked with a \emph{Kostant section} (section to the adjoint quotient for $\bg$) while here we work with a \emph{Steinberg section} (section to the adjoint quotient for $\bG$).
All the results in this section have analogues involving sheaves on varieties attached to the Lie algebra rather than the group and a Kostant section rather than a Steinberg section, which are in a sense more natural. But here we need to work with the group versions in order to make a connection with the constructions of~\cite{br-pt2}.

\subsection{Pseudo-logarithm}
\label{ss:pseudo-log}

We first explain how one can compare the geometry of some schemes attached to $\bG$ with that of schemes similarly attached to $\bg$ or $\bg^*$. For that
we will assume that there exists a ``pseudo-logarithm,'' i.e.~a $\bG$-equi\-variant morphism
\[
\varphi' : \bG \to \bg
\]
which sends $e \in \bG$ to $0 \in \bg$ and is \'etale at $e$, and we fix such a morphism. 

\begin{rmk}
\label{rmk:existence-pseudolog}
The condition above is satisfied at least if one of the following conditions are satisfied:
\begin{itemize}
\item
$\bG=\mathrm{GL}_n(\bk)$;
\item
$\bG$ is semisimple and simply connected and $\ell$ is very good for $\bG$.
\end{itemize}
In fact, in the former case one can take for $\varphi'$ the morphism given by $X \mapsto X-\mathrm{I}_n$. For the latter case the claim clearly reduces to the case $\bG$ is quasi-simple. In this case, if $\bG=\mathrm{SL}_n$ one can take $X \mapsto X-\frac{\mathrm{tr}(X)}{n} \mathrm{I}_n$, and if $\bG$ is not of type $\mathbf{A}$ this is a standard consequence of results of Springer--Steinberg (see e.g.~\cite[\S 5.3]{modrap1} for references).
\end{rmk}


Composing $\varphi'$ with our fixed $\bG$-equivariant isomorphism $\bg \simto \bg^*$ we obtain a $\bG$-equivariant morphism $\varphi : \bG \to \bg^*$. For $I \subset \fRs$ we consider the smooth scheme $\tbG_I$ defined by
\[
\tbG_I = \{(g,h\bP_I) \in \bG \times \bG/\bP_I \mid g \in h\bP_I h^{-1}\}.
\]
(Here the natural morphism $\tbG_I \to \bG/\bP_I$ is Zariski locally trivial, with fibers isomorphic to $\bP_I$, which justifies smoothness.) This scheme is equipped with a natural morphism to $\bG$, and we have an identification
\begin{equation}
\label{eqn:tbG-induced-variety}
\tbG_I = \bG \times^{\bP_I} \bP_I
\end{equation}
where $\bP_I$ acts on itself by conjugation. In particular, from the composition
\[
\bP_I \to \bL_I \to \bL_I/\bL_I \cong \bT/\bW_I,
\]
(where the first map is the projection with kernel $\bU_I$, the second one is the adjoint quotient, and the isomorphism is the standard one, see e.g.~\cite[\S 2.2]{br-pt2})
which is $\bP_I$-equivariant for the trivial action on $\bT/\bW_I$, we deduce a morphism
\begin{equation}
\label{eqn:morph-tbG-adj-quotient}
\tbG_I \to \bT/\bW_I.
\end{equation}
As usual, when $I=\varnothing$ the subscript will often be omitted from notation.

We claim that the morphism $\varphi \times \id_{\bG/\bP_I}$ restricts to a morphism
\[
\widetilde{\varphi}_I : \tbG_I \to \tbg_I.
\]
In fact, since we work with reduced schemes it suffices to check this at the level of $\bk$-points. Now, for any parabolic subgroup $\bP \subset \bG$ there exists a $1$-parameter subgroup $\chi : \Gm \to \bG$ such that $\bP$, resp.~$\mathrm{Lie}(\bP)$, is the attractor for the action of $\Gm$ on $\bG$, resp.~$\bg$, by conjugation via $\chi$, see e.g.~\cite[Proposition~2.2.9]{cgp}. Since $\varphi$ is $\bG$-equivariant it commutes with these actions of $\Gm$, hence sends $\tbG_I$ into $\tbg_I$.

Note also that the morphism $\bG/\bG \to \bg^*/\bG$ (where in both cases we consider the (co)adjoint quotients) is \'etale at the image of $e \in \bG$ by~\cite[Theorem~4.1]{bardsley-richardson}; it therefore induces an isomorphism of schemes
\begin{equation}
\label{eqn:isom-varphi-FN}
\FN_{\bG/\bG}(\{e\}) \xrightarrow{\sim} \FN_{\bg^*/\bG}(\{0\}).
\end{equation}
Here, as in~\S\ref{ss:center-Ug}, the Chevalley isomorphism identifies $\bg^*/\bG$ with $\bt^*/\bWf$, and the adjoint quotient $\bG/\bG$ identifies naturally with $\bT/\bWf$, see the comments above. Hence we can interpret the isomorphism above as an isomorphism of schemes
\[
\FN_{\bT/\bWf}(\{e\}) \simto \FN_{\bt^*/\bWf}(\{0\}).
\]

\begin{lem}
\label{lem:isom-varphi-tbG}
For any $I \subset \fRs$, the morphism $\widetilde{\varphi}_I$ induces an isomorphism of schemes
\[
\FN_{\bG/\bG}(\{e\}) \times_{\bG/\bG} \tbG_I \simto \FN_{\bg^*/\bG}(\{0\}) \times_{\bg^*/\bG} \tbg_I.
\]
\end{lem}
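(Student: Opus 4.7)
The plan is to combine the induced-variety description of $\tbG_I$ and $\tbg_I$ with a flatness-plus-Nakayama reduction, leaving a parabolic Springer-type isomorphism on closed fibers as the main input.

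First, using~\eqref{eqn:tbG-induced-variety} and its analogue $\tbg_I = \bG \times^{\bP_I} (\bg/\bu_I)^*$, together with the factorization of the morphisms to $\bG/\bG \cong \bT/\bWf$ and $\bg^*/\bG \cong \bt^*/\bWf$ through the $\bL_I$-Chevalley maps valued in $\bT/\bW_I$ and $\bt^*/\bW_I$, the displayed isomorphism is obtained by applying $\bG \times^{\bP_I}(-)$ to the corresponding statement for $\bP_I$ and $(\bg/\bu_I)^*$. Note here that $e \in \bT/\bW_I$ is the unique preimage of $e \in \bT/\bWf$ (since $\bWf$ fixes $e \in \bT$), so that $\FN_{\bT/\bWf}(\{e\}) \times_{\bT/\bWf} \bT/\bW_I = \FN_{\bT/\bW_I}(\{e\})$, and similarly on the coadjoint side. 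Also, the identification of the two bases is induced by $\varphi|_{\bT} : \bT \to \bt^*$, which is étale at $e$ since its differential (restriction of $\bg \simto \bg^*$) restricts to the isomorphism $\bt \simto \bt^*$; this identification is $\bWf$-equivariant and hence descends to $\FN_{\bT/\bW_I}(\{e\}) \simto \FN_{\bt^*/\bW_I}(\{0\})$. The problem is thus reduced to showing that $\varphi|_{\bP_I}$ induces an isomorphism
$$\FN_{\bT/\bW_I}(\{e\}) \times_{\bT/\bW_I} \bP_I \simto \FN_{\bt^*/\bW_I}(\{0\}) \times_{\bt^*/\bW_I} (\bg/\bu_I)^*.$$

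Next, I would verify that both sides are flat over this common Noetherian local base. For the coadjoint side this follows from Lemma~\ref{lem:flatness-Groth} applied to the Levi $\bL_I$ (flatness of $\bl_I^* \to \bl_I^*/\bL_I$), combined with smoothness of the projection $(\bg/\bu_I)^* \twoheadrightarrow \bl_I^*$. For the conjugation side one uses the multiplicative analogue: classical flatness of the adjoint quotient $\bL_I \to \bL_I/\bL_I$ combined with smoothness of $\bP_I \twoheadrightarrow \bL_I$. Once flatness is in hand, a standard induction on powers of the maximal ideal (iso $\mathrm{mod}\,\fm$ plus flatness gives iso $\mathrm{mod}\,\fm^n$ for all $n$, hence iso after formal completion) reduces the desired isomorphism to checking iso on the fiber over the closed point.

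On the closed fiber, the required statement is that $\varphi|_{\bP_I}$ restricts to an isomorphism from the unipotent subvariety $\bP_I^{\mathrm{uni}}$ onto the ``nilpotent'' subvariety of $(\bg/\bu_I)^*$ (the preimages of $\{e\}$ in $\bT/\bW_I$ and of $\{0\}$ in $\bt^*/\bW_I$ respectively). This is the main obstacle. I would handle it by invoking the semidirect decompositions $\bP_I = \bL_I \ltimes \bU_I$ and $(\bg/\bu_I)^* = \bl_I^* \oplus (\bu_I^+)^*$ (the latter via the $\bG$-invariant form of~\S\ref{ss:HC-notation}): the $\bU_I$-part is handled by noting that $\varphi$ induces a $\bU_I$-equivariant étale-at-$0$ morphism between two affine spaces of the same dimension (hence an isomorphism), while the Levi part reduces to the Springer isomorphism $\bL_I^{\mathrm{uni}} \simto \bl_I^{*,\mathrm{nil}}$ induced by $\varphi|_{\bL_I}$. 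Since the latter is itself a pseudo-logarithm for $\bL_I$ by restriction, such a Springer isomorphism exists under the hypotheses of Remark~\ref{rmk:existence-pseudolog}.
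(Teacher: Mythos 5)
Your reduction to a $\bP_I$-equivariant statement via the induced-variety descriptions is sound, and so is the identification of the two completed bases. The gap is in the ``flatness plus Nakayama'' step. For schemes that are merely flat and of finite type (not finite) over a complete Noetherian local ring $\widehat{R}$, an isomorphism on the closed fiber does not propagate to an isomorphism: take $\widehat{R} = \bk[[t]]$ and the morphism $\mathbb{A}^1_{\widehat{R}} \to \mathbb{A}^1_{\widehat{R}}$ given on coordinate rings by $y \mapsto x + t x^2$. Both sides are flat over $\widehat{R}$, the reduction mod $t$ is the identity of $\mathbb{A}^1_\bk$, and in fact the map is an isomorphism mod $t^n$ for every $n$ (one solves for $x$ order by order), yet $x$ does not lie in $\bk[[t]][x + t x^2]$, so the map is not an isomorphism. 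The fiber product $\FN_{\bT/\bW_I}(\{e\}) \times_{\bT/\bW_I} \bP_I$ is a scheme of finite type over $\scO(\FN_{\bT/\bW_I}(\{e\}))$, not a formal or finite scheme, so ``iso on all infinitesimal thickenings'' does not give the desired isomorphism; your phrase ``hence iso after formal completion'' proves the wrong statement.

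Separately, even granting the reduction, the closed-fiber input you need --- that $\varphi$ restricts to an isomorphism from the unipotent variety of $\bP_I$ to the ``nilpotent'' variety of $(\bg/\bu_I)^*$ --- is a Springer-isomorphism statement that does not follow from $\varphi'$ being \'etale at $e$; Remark~\ref{rmk:existence-pseudolog} only asserts existence of a pseudo-logarithm, and a pseudo-logarithm is not a priori a Springer isomorphism. The paper avoids both issues at once by invoking the Bardsley--Richardson \'etale slice theorem at $e$: this produces an honest Zariski open $(\bG/\bG)_{f\circ\varphi}$ over which $\widetilde{\varphi}_I$ is already an isomorphism (checked on $\bk$-points of the smooth, hence reduced, schemes using the attractor description of $\bP_I$ and $\bp_I$), and then the base change along $\FN_{\bG/\bG}(\{e\}) \to \bG/\bG$ is trivial because $f\circ\varphi$ is already a unit in the complete local ring. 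If you want to keep your inductive reduction you would need, at minimum, some properness or finiteness over the base to make the limit argument valid, and a genuine Springer isomorphism as input --- at which point the Bardsley--Richardson route is both shorter and self-contained.
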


\begin{proof}
The claim follows from (the positive-characteristic version of) Luna's \'etale slice theorem. More specifically one can proceed as follows.
By~\cite[Theorem~6.2]{bardsley-richardson}, there exists $f \in \scO(\bg^*/\bG) = \scO(\bg^*)^\bG$ with value $1$ at $0$ and such that $\varphi$ induces a (surjective) \'etale morphism $(\bG/\bG)_{f \circ \varphi} \to (\bg^*/\bG)_f$ and an isomorphism of affine varieties
\[
\bG_{f \circ \varphi} \simto (\bG/\bG)_{f \circ \varphi} \times_{(\bg^*/\bG)_f} (\bg^*)_f,
\]
where the subscripts mean the open subschemes defined by the given function. Consider the induced isomorphism
\[
\bG_{f \circ \varphi} \times \bG/\bP_I \simto (\bG/\bG)_{f \circ \varphi} \times_{(\bg^*/\bG)_f} \bigl( (\bg^*)_f \times \bG/\bP_I \bigr).
\]
We claim that the preimage of the closed subscheme
\begin{multline*}
(\bG/\bG)_{f \circ \varphi} \times_{\bg^*/\bG} \tbg_I = (\bG/\bG)_{f \circ \varphi} \times_{(\bg^*/\bG)_f} ((\bg^*/\bG)_f \times_{\bg^*/\bG} \tbg_I) \\
 \subset (\bG/\bG)_{f \circ \varphi} \times_{(\bg^*/\bG)_f} \bigl( (\bg^*)_f \times \bG/\bP_I \bigr)
\end{multline*}
is the closed subscheme
\[
(\bG/\bG)_{f \circ \varphi} \times_{\bG/\bG} \tbG_I \subset \bG_{f \circ \varphi} \times \bG/\bP_I.
\]
In fact, all the schemes under consideration are smooth (note that the scheme $(\bG/\bG)_{f \circ \varphi} \times_{(\bg^*/\bG)_f} \tbg_I$ admits an \'etale morphism to the smooth scheme $\tbg_I$, hence is smooth), in particular reduced, so that this claim can be checked at the level of $\bk$-points. Then it follows from the same considerations as above for $\widetilde{\varphi}_I$, involving attractors.

From this claim we deduce that $\widetilde{\varphi}_I$ induces an isomorphism
\begin{equation}
\label{eqn:isom-varphi-opens}
(\bG/\bG)_{f \circ \varphi} \times_{\bG/\bG} \tbG_I \simto (\bG/\bG)_{f \circ \varphi} \times_{\bg^*/\bG} \tbg_I.
\end{equation}
Now we observe that the natural projection
\[
\FN_{\bG/\bG}(\{e\}) \times_{\bG/\bG} (\bG/\bG)_{f \circ \varphi} \to \FN_{\bG/\bG}(\{e\})
\]
is an isomorphism; in fact the ring of functions on $\FN_{\bG/\bG}(\{e\}) \times_{\bG/\bG} (\bG/\bG)_{f \circ \varphi}$ identifies with the localization of $\scO(\FN_{\bG/\bG}(\{e\}))$ with respect to the multiplicative subset generated by the image of $f \circ \varphi$, but this element is already invertible. 
Using this claim, from the isomorphism~\eqref{eqn:isom-varphi-opens} we deduce an isomorphism
\[
\FN_{\bG/\bG}(\{e\}) \times_{\bG/\bG} \tbG_I \simto \FN_{\bG/\bG}(\{e\}) \times_{\bg^*/\bG} \tbg_I.
\]
Composing with the isomorphism induced by the isomorphism in~\eqref{eqn:isom-varphi-FN} we deduce the desired isomorphism.
\end{proof}

\begin{rmk}
\label{rmk:identification-G-g}
In case $I=\fRs$, the isomorphism of Lemma~\ref{lem:isom-varphi-tbG} reads
\[
\FN_{\bG/\bG}(\{e\}) \times_{\bG/\bG} \bG \simto \FN_{\bg^*/\bG}(\{0\}) \times_{\bg^*/\bG} \bg^*.
\]
\end{rmk}

\subsection{The multiplicative Steinberg variety}

We set
\[
\Stm := \tbG \times_{\bG} \tbG, \quad \Stm^\wedge := \Stm \times_{\bG/\bG} \FN_{\bG/\bG}(\{e\}).
\]
As in remark~\ref{rmk:St-fiber-product} we have a canonical isomorphism
\[
 \Stm^\wedge := \Stm \times_{\bT^{(1)} \times_{\bT^{(1)}/\bWf} \bT^{(1)}} \FN_{\bT^{(1)} \times_{\bT^{(1)}/\bWf} \bT^{(1)}}(\{(e,e)\})
\]
We consider these schemes as ``multiplicative versions'' of the schemes $\St$ and $\St^\wedge$; they are endowed with natural (diagonal) actions of $\bG$. We will more specifically consider the bounded derived categories
\[
\Db \Coh^{\bG^{(1)}}(\Stm^{(1)}), \quad \Db \Coh^{\bG^{(1)}}(\Stm^{\wedge(1)})
\]
of $\bG^{(1)}$-equivariant coherent sheaves on $\Stm^{(1)}$ and $\Stm^{\wedge(1)}$ respectively. We will denote by $\cU \subset \bG^{(1)}$ the unipotent cone, i.e.~the preimage of the image of $e$ under the quotient morphism $\bG^{(1)} \to \bG^{(1)}/\bG^{(1)}$, and by $\Coh_{\cU}^{\bG^{(1)}}(\Stm^{(1)})$ the full subcategory of $\Coh^{\bG^{(1)}}(\Stm^{(1)})$ whose objects are the coherent sheaves supported set-theoretically on $\cU$.

The following statement is an analogue of Lemma~\ref{lem:DbCoh-nil}, and admits the same proof.

\begin{lem}
\label{lem:DbCoh-unip}
The obvious functor
\[
\Db \Coh^{\bG^{(1)}}_{\cU}(\Stm^{(1)}) \to \Db \Coh^{\bG^{(1)}}(\Stm^{\wedge(1)})
\]
is fully faithful; its essential image is the full subcategory whose objects are the complexes $\scF$ such that the morphism
\[
\scO(\FN_{\bT^{(1)} \times_{\bT^{(1)}/\bWf} \bT^{(1)}}(\{(e,e)\})) \to \End(\scF)
\]
vanishes on a power of the unique maximal ideal.
\end{lem}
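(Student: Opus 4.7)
The plan is to deduce this from the general results in the appendix (Section \ref{ss:app-special-case}) in exactly the same way as Lemma~\ref{lem:DbCoh-nil}. First I would identify the multiplicative data to plug into the general framework: take the noetherian commutative ring
\[
R := \scO(\bT^{(1)} \times_{\bT^{(1)}/\bWf} \bT^{(1)}),
\]
the ideal $\fm \subset R$ corresponding to the closed point $(e,e)$, the $R$-scheme $\Stm^{(1)}$ (via the canonical morphism \eqref{eqn:morph-tbG-adj-quotient} on each factor), the structure sheaf $\scO_{\Stm^{(1)}}$ viewed as a sheaf of commutative algebras, and the group scheme $\bG^{(1)}$ acting on $\Stm^{(1)}$.

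Next I would check that under these identifications the two categories match what the appendix considers: by definition
\[
\Stm^{\wedge(1)} = \Stm^{(1)} \times_{\Spec R} \Spec \widehat{R},
\]
where $\widehat{R} = \scO(\FN_{\bT^{(1)} \times_{\bT^{(1)}/\bWf} \bT^{(1)}}(\{(e,e)\}))$ is the $\fm$-adic completion, and $\Coh^{\bG^{(1)}}(\Stm^{\wedge(1)})$ is exactly the category of $\bG^{(1)}$-equivariant coherent $\scO_{\Stm^{(1)}}$-modules on which the action of $R$ extends to an action of $\widehat{R}$. On the other hand, a $\bG^{(1)}$-equivariant coherent sheaf on $\Stm^{(1)}$ is set-theoretically supported on $\cU$ if and only if the action of $R$ factors through $R/\fm^n$ for some $n$: this uses that $\cU$ is the scheme-theoretic preimage of the closed point $(e,e) \in \bT^{(1)} \times_{\bT^{(1)}/\bWf} \bT^{(1)}$, which in turn follows from the identification of adjoint quotients $\bG^{(1)}/\bG^{(1)} \cong \bT^{(1)}/\bWf$ and the fact that a coherent sheaf is supported on a closed subscheme iff it is annihilated by a power of its ideal of definition.

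With these identifications in place, the statement is an instance of the general fully faithfulness and essential image result of \S\ref{ss:app-special-case} (applied to $(R,\fm,\Stm^{(1)},\scO_{\Stm^{(1)}},\bG^{(1)})$), which says that the natural functor from the bounded derived category of $\fm$-nilpotent equivariant coherent modules to the bounded derived category of equivariant coherent modules over the completed sheaf of rings is fully faithful, with essential image described by the nilpotence condition on the action of $\widehat{R}$ at the level of $\End$ in the derived category. The verification that the two equivalent formulations of the essential image condition (pointwise nilpotence on cohomology, versus vanishing on a power of $\fm$ of the endomorphism morphism) match is identical to the argument used in Lemma~\ref{lem:HC-central-char}; it presents no additional difficulty in this setting because there is no noncommutative structure beyond the equivariance. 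The only point requiring slight care is that $R$ is genuinely noetherian (so that the appendix applies): this is immediate since $R$ is of finite type over $\bk$, being the coordinate ring of a fiber product of finite-type affine varieties.
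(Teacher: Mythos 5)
Your proposal is correct and follows exactly the route the paper takes: the paper explicitly states that this lemma ``is an analogue of Lemma~\ref{lem:DbCoh-nil}, and admits the same proof,'' which in turn simply instantiates the general results of~\S\ref{ss:app-special-case} with the data you identify (the noetherian ring $\scO(\bT^{(1)} \times_{\bT^{(1)}/\bWf} \bT^{(1)})$, the maximal ideal at $(e,e)$, the finite-type scheme $\Stm^{(1)}$, the structure sheaf, and the group $\bG^{(1)}$). Your verification that $\cU$ pulls back from $(e,e)$ via the identification $\bG^{(1)}/\bG^{(1)} \cong \bT^{(1)}/\bWf$ supplies exactly the small check the paper leaves implicit.
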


As in~\eqref{eqn:Stwedge-fiber-prod} we have a natural identification
\[
\Stm^\wedge = (\tbG \times_{\bG/\bG} \FN_{\bG/\bG}(\{e\})) \times_{\bG \times_{\bG/\bG} \FN_{\bG/\bG}(\{e\})} (\tbG \times_{\bG/\bG} \FN_{\bG/\bG}(\{e\}));
\]
Lemma~\ref{lem:isom-varphi-tbG} therefore implies that we have a $\bG$-equivariant isomorphism
\begin{equation}
\label{eqn:identification-Stm-St}
\Stm^\wedge \simto \St^\wedge,
\end{equation}
which induces an equivalence of triangulated categories
\begin{equation}
\label{eqn:Coh-St-Stm}
\Db \Coh^{\bG^{(1)}}(\Stm^{\wedge(1)}) \simto \Db\Coh^{\bG^{(1)}}(\St^{\wedge(1)}).
\end{equation}
The same constructions as in Section~\ref{sec:monoidality} allow to define a natural convolution product on the category $\Db \Coh^{\bG^{(1)}}(\Stm^{\wedge(1)})$ (which will once again be denoted $\star$), and this equivalence has a canonical monoidal structure. We will denote by
\[
\Psi^{\hla,\hla} : \Db \HC^{\hla,\hla} \simto \Db \Coh^{\bG^{(1)}}(\Stm^{\wedge(1)})
\]
the composition of $\Phi^{\hla,\hla}$ (see~\S\ref{ss:D-mod-Coh}) with the inverse of this equivalence. We will denote by $\Delta \tbG^{\wedge} \subset \Stm^\wedge$ the diagonal copy of $\tbG \times_{\bG/\bG} \FN_{\bG/\bG}(\{e\})$ in $\Stm^\wedge$, so that the monoidal unit in $\Db \Coh^{\bG^{(1)}}(\Stm^{\wedge(1)})$ is $\scO_{\Delta \tbG^{\wedge(1)}}$

For any $s \in \bSaff$, resp.~$\omega \in \mathbf{\Omega}$, we will denote by $\scS_s$, resp.~$\scS_\omega$, the inverse image of $\scR_s$, resp.~$\scR_\omega$, under~\eqref{eqn:Coh-St-Stm}.
Then for $s \in \bSaff$ and $\omega,\omega' \in \mathbf{\Omega}$ we have
\begin{equation*}
\scS_\omega \star \scS_s \star \scS_{\omega^{-1}} \cong \scS_{\omega s \omega^{-1}}, \quad \scS_\omega \star \scS_{\omega'} \cong \scS_{\omega \omega'}.
\end{equation*}
We define the category
\[
\BSCoh^{\bG^{(1)}}(\Stm^{\wedge(1)})
\]
as the strictly full subcategory of $\Db \Coh^{\bG^{(1)}}(\Stm^{\wedge(1)})$ generated under the monoidal product $\star$ by the unit object and the objects $\scS_s$ ($s \in \bSaff$) and $\scS_\omega$ ($\omega \in \mathbf{\Omega}$). Any object in this category is isomorphic to an object
$\scS_{s_1} \star \cdots \star \scS_{s_r} \star \scS_\omega$
where $s_1, \cdots, s_r \in \bSaff$ and $\omega \in \mathbf{\Omega}$. We will also denote by
\[
\SCoh^{\bG^{(1)}}(\Stm^{\wedge(1)})
\]
the karoubian envelope of the additive hull of $\BSCoh^{\bG^{(1)}}(\Stm^{\wedge(1)})$.
Then~\eqref{eqn:Coh-St-Stm} induces equivalences of categories
\begin{align}
\BSCoh^{\bG^{(1)}}(\Stm^{\wedge(1)}) &\simto \BSCoh^{\bG^{(1)}}(\St^{\wedge(1)}), \\
\label{eqn:equiv-SCoh-St-Stm}
 \SCoh^{\bG^{(1)}}(\Stm^{\wedge(1)}) &\simto \SCoh^{\bG^{(1)}}(\St^{\wedge(1)}).
\end{align}
Proposition~\ref{prop:realization-equiv} also implies that we have an equivalence of monoidal categories
\begin{equation}
\label{eqn:realization-SCoh-Stm}
\Kb \SCoh^{\bG^{(1)}}(\Stm^{\wedge(1)}) \simto \Db \Coh^{\bG^{(1)}}(\Stm^{\wedge(1)}).
\end{equation}

\subsection{Restriction to a Steinberg section}
\label{ss:restriction-Steinberg}

Let $\mathbf{\Sigma} \subset \bG$ be a Steinberg section as in~\cite[\S 2.2]{br-pt2}, and set $\Sigma = \mathbf{\Sigma}^{(1)} \subset \bG^{(1)}$. Then we have the universal centralizer $\bbJ_{\mathbf{\Sigma}}$, a smooth affine group scheme over $\mathbf{\Sigma}$ equipped with a canonical closed immersion of group schemes $\bbJ_{\mathbf{\Sigma}} \hookrightarrow \bG \times \mathbf{\Sigma}$, see~\cite[\S 2.8]{br-pt2}. The composition
\[
\mathbf{\Sigma} \hookrightarrow \bG \to \bG/\bG \cong \bT/\bWf
\]
is an isomorphism, where the second morphism is the adjoint quotient. In particular there exists a canonical morphism
\[
\FN_{\bT^{(1)} \times_{\bT^{(1)}/\bWf} \bT^{(1)}}(\{(e,e)\}) \to \Sigma,
\]
and we will denote by $\bbI_\Sigma^\wedge$ the pullback of $\bbJ_{\mathbf{\Sigma}}^{(1)}$ to $\FN_{\bT^{(1)} \times_{\bT^{(1)}/\bWf} \bT^{(1)}}(\{(e,e)\})$. (The present $\bbI_\Sigma^\wedge$ is therefore the Frobenius twist of the group scheme $\bbI_{\mathbf{\Sigma}}^\wedge$ of~\cite[\S 3.3]{br-pt2}.) Then we can consider the category $\Rep(\bbI_\Sigma^\wedge)$ of coherent representations of this group scheme.

By~\cite[Lemma~3.3]{br-pt2} we have a canonical identification
\[
\FN_{\bT^{(1)} \times_{\bT^{(1)}/\bWf} \bT^{(1)}}(\{(e,e)\}) \cong \FN_{\bT^{(1)}}(\{e\}) \times_{\FN_{\bT^{(1)}/\bWf}(\{e\})} \FN_{\bT^{(1)}}(\{e\}),
\]
so that a representation of $\bbI_{\Sigma}^\wedge$ is a bimodule over the regular ring $\scO(\FN_{\bT^{(1)}}(\{e\}))$ with some extra structure. As a consequence, the derived tensor product of bimodules induces a monoidal structure on the derived category
\[
\Db \Rep(\bbI_{\Sigma}^\wedge).
\]
In~\cite[\S 3.3]{br-pt2} we have considered a full subcategory 
\[
\SRep(\bbI_{\Sigma}^{\wedge})
\]
of $\Rep(\bbI_\Sigma^\wedge)$, which is stable under the convolution product on $\Db \Rep(\bbI_{\Sigma}^\wedge)$.
(More precisely, we apply Frobenius twists everywhere in these constructions from~\cite{br-pt2}.)
This category contains some distinguished objects $(\scB^\wedge_s : s \in \bSaff)$ and $(\scM^\wedge_w : w \in \bW)$, which satisfy in particular
\begin{equation}
\label{eqn:convolution-Ms}
\scM^\wedge_y \star \scM^\wedge_w \cong \scM^\wedge_{yw} \quad \text{for any $y,w \in \bW$.}
\end{equation}

It follows from~\cite[Proposition~2.13]{br-pt2} that the natural morphism $\tbG \to \bT$ restricts to an isomorphism
$\tbG \times_{\bG} \mathbf{\Sigma} \simto \bT$. In particular we deduce a natural closed immersion
\[
\bT \times_{\bT/\bWf} \bT \hookrightarrow \Stm,
\]
and then a closed immersion
\begin{equation}
\label{eqn:section-Stm}
\FN_{\bT^{(1)} \times_{\bT^{(1)}/\bWf} \bT^{(1)}}(\{(e,e)\}) \hookrightarrow \Stm^{\wedge(1)}.
\end{equation}
It follows also from~\cite[Proposition~2.13]{br-pt2} that
the restriction to the closed subscheme $\FN_{\bT^{(1)} \times_{\bT^{(1)}/\bWf} \bT^{(1)}}(\{(e,e)\})$
of the universal stabilizer for the action of $\bG^{(1)}$ on $\Stm^{\wedge (1)}$ identifies canonically with $\bbI_{\Sigma}^{\wedge}$.
Using the general construction spelled out e.g.~in~\cite[\S 2.2]{mr},
we deduce that there exists a natural functor
\begin{equation}
\label{eqn:restriction-Stm-centralizer-Ab}
\Coh^{\bG^{(1)}}(\Stm^{\wedge (1)}) \to \Rep(\bbI_{\Sigma}^{\wedge})
\end{equation}
which is exact. (In fact, in the noncompleted setting, this functor identifies, via the third equivalence in~\cite[Proposition~2.20]{br-pt2}, with restriction to an open subscheme. The completed case can be treated similarly.) It therefore induces a functor
\begin{equation}
\label{eqn:restriction-Stm-centralizer}
\Db \Coh^{\bG^{(1)}}(\Stm^{\wedge (1)}) \to \Db \Rep(\bbI_{\Sigma}^{\wedge}),
\end{equation}
which is easily seen to be monoidal.

Our goal in the rest of this section is to prove the following claim.

\begin{prop}
\label{prop:restriction-SCoh-Sigma}
The functor~\eqref{eqn:restriction-Stm-centralizer} restricts to an equivalence of monoidal categories
\[
\SCoh^{\bG^{(1)}}(\Stm^{\wedge (1)}) \simto \SRep(\bbI_{\Sigma}^{\wedge}).
\]
\end{prop}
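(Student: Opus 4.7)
The plan is to verify the equivalence on the level of generators, deduce monoidality, and then reduce fully faithfulness to the analogous statement over a Kostant slice that was already proved in~\cite{br-Hecke}. Concretely, since both $\SCoh^{\bG^{(1)}}(\Stm^{\wedge (1)})$ and $\SRep(\bbI_{\Sigma}^{\wedge})$ are defined as karoubian envelopes of additive hulls of monoidal subcategories generated by explicit collections of objects, and since the functor~\eqref{eqn:restriction-Stm-centralizer} is monoidal, one only needs to (a) match the generators on both sides, and (b) prove full faithfulness of the restriction on the Bott--Samelson subcategory.

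First I would identify the images of generators: the unit $\scO_{\Delta \tbG^{\wedge(1)}}$ clearly restricts to the unit of $\Rep(\bbI_\Sigma^\wedge)$; for $\omega \in \mathbf{\Omega}$, the object $\scS_\omega$ is (via~\eqref{eqn:identification-Stm-St} and~\eqref{eqn:Romega-I}) identified with an invertible sheaf twist of the diagonal, and a direct inspection of the construction of $\scM^\wedge_\omega$ in~\cite{br-pt2} shows its restriction to~\eqref{eqn:section-Stm} is $\scM^\wedge_\omega$; for $s \in \bSaff$, combining Corollary~\ref{cor:isom-R-X-2}, the isomorphism $\Stm^\wedge \cong \St^\wedge$ and the definition of $\scB^\wedge_s$ in~\cite{br-pt2}, one checks that the image of $\scS_s$ is $\scB^\wedge_s$. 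By monoidality the functor then sends $\BSCoh^{\bG^{(1)}}(\Stm^{\wedge(1)})$ into the corresponding subcategory of $\Rep(\bbI_\Sigma^\wedge)$ generated by these same objects, and taking karoubian envelopes gives a functor landing in $\SRep(\bbI_\Sigma^\wedge)$.

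The main obstacle is fully faithfulness. My approach would be the following: by Proposition~\ref{prop:realization-equiv} (more precisely the version~\eqref{eqn:realization-SCoh-Stm}) and the monoidality established in Theorem~\ref{thm:monoidality}, there is a monoidal equivalence $\Psi^{\hla,\hla}$ identifying $\SCoh^{\bG^{(1)}}(\Stm^{\wedge(1)})$ with $\SHC^{\hla,\hla}$. It therefore suffices to show that the composition
\[
\SHC^{\hla,\hla} \xrightarrow{\Psi^{\hla,\hla}} \SCoh^{\bG^{(1)}}(\Stm^{\wedge(1)}) \to \SRep(\bbI_\Sigma^\wedge)
\]
is fully faithful on $\BSHC^{\hla,\hla}$. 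For this I would compare with the restriction functor to a Kostant slice from~\cite[\S 3.9]{br-Hecke}, whose fully faithfulness on $\HC^{\hla,\hla}_{\mathrm{diag}}$ (in particular on $\BSHC^{\hla,\hla}$) is~\cite[Proposition~3.7]{br-Hecke}. The compatibility between these two ``restriction'' functors on diagonally induced bimodules can be obtained from Lemma~\ref{lem:isom-varphi-tbG} and Remark~\ref{rmk:identification-G-g}: the pseudo-logarithm $\varphi$ identifies, in a formal neighborhood of the unit, the multiplicative and additive pictures, and moreover induces a canonical identification of the Steinberg section with the Kostant section on the relevant completions. Composing both functors with this identification yields isomorphic functors on $\BSHC^{\hla,\hla}$, which transfers full faithfulness from the Kostant to the Steinberg setting.

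Finally, essential surjectivity is automatic: since $\SRep(\bbI_\Sigma^\wedge)$ is by construction the karoubian envelope of the additive hull of the monoidal subcategory generated by the unit and the $\scB^\wedge_s$, $\scM^\wedge_\omega$, and since these objects are in the essential image by step (a), a fully faithful functor of karoubian additive categories with this property is essentially surjective. Monoidality of the resulting equivalence is inherited from the monoidality of~\eqref{eqn:restriction-Stm-centralizer} and $\Psi^{\hla,\hla}$.
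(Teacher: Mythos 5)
Your overall plan — match generators, deduce monoidality, and derive full faithfulness from a comparison with a section-restriction functor — is indeed the structure of the paper's proof. However, your key step for full faithfulness contains a genuine gap.

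You claim that the pseudo-logarithm $\varphi$ ``induces a canonical identification of the Steinberg section with the Kostant section on the relevant completions.'' This is not justified and is in general false. A pseudo-logarithm is merely a $\bG$-equivariant morphism $\bG \to \bg$ sending $e$ to $0$ and \'etale at $e$; it is constrained only by \'etaleness and equivariance, so there is no reason that $\varphi(\mathbf{\Sigma})$ should coincide with a Kostant slice $\bS^* \subset \bg^*$, even formally near the unit. Lemma~\ref{lem:isom-varphi-tbG} and Remark~\ref{rmk:identification-G-g} give isomorphisms on formal neighborhoods of the \emph{adjoint quotients} and of schemes over them, but say nothing about carrying one distinguished section to the other. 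Your ``transfer'' of full faithfulness therefore rests on an unproven compatibility. The paper sidesteps this by not transferring at all: it proves Proposition~\ref{prop:restriction-HC-Sigma} directly, as a Steinberg-section analogue of~\cite[Proposition~3.7]{br-Hecke}, re-running the same arguments with $\cU\bg$ replaced by its pullback to $\bG^{(1)} \times_{\bt^{*(1)}/\bWf} \bt^*/(\bWf,\bullet)$, and then deduces Corollary~\ref{cor:restriction-SCoh-Sigma-ff} from the commutative diagram comparing~\eqref{eqn:rest-Sigma-HC} with~\eqref{eqn:restriction-Stm-centralizer}. If you want your route to work, you would first have to choose (or construct) a pseudo-logarithm compatible with your two sections, which is an additional hypothesis, and then verify that it also identifies the universal centralizers — none of which is done in your proposal.

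A secondary issue: your identification of generators is too quick. For $\scS_s \mapsto \scB^\wedge_s$ you cannot treat all $s \in \bSaff$ uniformly: when $s \in \bSf$ the computation is an explicit restriction of the structure sheaf of $\tbG \times_{\tbG_s} \tbG$ (this is the paper's Lemma~\ref{lem:scS-scB}), but when $s \in \bSaff \smallsetminus \bSf$ the object $\scX_s$ is defined by braid-group conjugation, and one must first prove Lemma~\ref{lem:scJ-scM} (namely $\scJ^\Sigma_b \cong \scM^\wedge_{\jmath\circ\imath(b)}$) and then invoke~\cite[Lemma~3.5]{br-pt2}. Similarly, $\scS_\omega \mapsto \scM^\wedge_\omega$ is not ``direct inspection'' but follows from~\eqref{eqn:Romega-I} combined with Lemma~\ref{lem:scJ-scM}, whose proof is carried out on the generators $\{T_s : s \in \bSf\} \cup \{\theta_\mu : \mu \in X^*(\bT)\}$ of $\Br_\bW$ and uses~\cite[Lemma~2.21]{br-pt2} for the translation elements. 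These parts of your argument need to be fleshed out rather than asserted.
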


\subsection{Restriction of Harish-Chandra bimodules to the Steinberg section}

For any $\mu,\nu \in X^*(\bT)$ we have the algebra $\sfU^{\hmu,\hnu}$, which can be considered as a coherent sheaf of algebras on the scheme
\[
\bg^{*(1)} \times_{\bt^{*(1)}/\bWf} \FN_{\bt^*/(\bWf,\bullet) \times_{\bt^{*(1)}/\bWf} \bt^*/(\bWf,\bullet)}(\{(\tmu,\tnu)\}).
\]
Now this scheme identifies with
\begin{multline*}
(\bg^{*(1)} \times_{\bt^{*(1)}/\bWf} \FN_{\bt^{*(1)}/\bWf}(\{0\})) \times_{\FN_{\bt^{*(1)}/\bWf}(\{0\})} \\
\FN_{\bt^*/(\bWf,\bullet) \times_{\bt^{*(1)}/\bWf} \bt^*/(\bWf,\bullet)}(\{(\tmu,\tnu)\}),
\end{multline*}
which by Remark~\ref{rmk:identification-G-g} identifies with
\begin{multline*}
(\bG^{(1)} \times_{\bT^{(1)}/\bWf} \FN_{\bT^{(1)}/\bWf}(\{e\})) \times_{\FN_{\bt^{*(1)}/\bWf}(\{0\})} \\
\FN_{\bt^*/(\bWf,\bullet) \times_{\bt^{*(1)}/\bWf} \bt^*/(\bWf,\bullet)}(\{(\tmu,\tnu)\}).
\end{multline*}
We will denote by $\sfU_\Sigma^{\hmu,\hnu}$ the restriction of $\sfU^{\hmu,\hnu}$ to the (affine) closed subscheme
\begin{multline*}
(\Sigma \times_{\bT^{(1)}/\bWf} \FN_{\bT^{(1)}/\bWf}(\{e\})) \times_{\FN_{\bt^{*(1)}/\bWf}(\{0\})} \\
\FN_{\bt^*/(\bWf,\bullet) \times_{\bt^{*(1)}/\bWf} \bt^*/(\bWf,\bullet)}(\{(\tmu,\tnu)\})
\end{multline*}
(which identifies with $\FN_{\bt^*/(\bWf,\bullet) \times_{\bt^{*(1)}/\bWf} \bt^*/(\bWf,\bullet)}(\{(\tmu,\tnu)\})$).
If we set
\[
\bbM_\Sigma := \bbJ_{\mathbf{\Sigma}}^{(1)} \times_{\bG^{(1)} \times \Sigma} (\bG \times \Sigma)
\]
(where the morphism $\bG \times \Sigma \to \bG^{(1)} \times \Sigma$ is $\Fr_{\bG} \times \id$),
then this algebra is endowed with a natural action of the group scheme
\begin{multline*}
\bbM_\Sigma^{\hmu,\hnu} := (\bbM_\Sigma \times_{\bT^{(1)}/\bWf} \FN_{\bT^{(1)}/\bWf}(\{e\})) \times_{\FN_{\bt^{*(1)}/\bWf}(\{0\})} \\
\FN_{\bt^*/(\bWf,\bullet) \times_{\bt^{*(1)}/\bWf} \bt^*/(\bWf,\bullet)}(\{(\tmu,\tnu)\}).
\end{multline*}

We will denote by $\Modfg^{\bbM}(\sfU_\Sigma^{\hmu,\hnu})$ the abelian category of equivariant finitely generated modules over $\sfU_\Sigma^{\hmu,\hnu}$, and by $\HC_\Sigma^{\hmu,\hnu}$ its subcategory of Harish-Chandra bimodules, i.e.~objects such that the restriction of the action of $\bbM_\Sigma^{\hmu,\hnu}$ to the subgroup
\[
\bG_1 \times \FN_{\bt^*/(\bWf,\bullet) \times_{\bt^{*(1)}/\bWf} \bt^*/(\bWf,\bullet)}(\{(\tmu,\tnu)\})
\]
coincides with the restriction of the action of $\sfU_\Sigma^{\hmu,\hnu}$ via the natural morphism $\cU\bg \to \sfU_\Sigma^{\hmu,\hnu}$. Then we have a natural (exact) restriction functor
\[
\Modfg^{\bG}(\sfU^{\hmu,\hnu}) \to \Modfg^{\bbM}(\sfU_\Sigma^{\hmu,\hnu}),
\]
which restricts to an exact functor
\begin{equation}
\label{eqn:rest-Sigma-HC}
\HC^{\hmu,\hnu} \to \HC_\Sigma^{\hmu,\hnu}.
\end{equation}

The following statement is an analogue of~\cite[Proposition~3.7]{br-Hecke}.

\begin{prop}
\label{prop:restriction-HC-Sigma}
For any $\mu,\nu \in X^*(\bT)$, the functor~\eqref{eqn:rest-Sigma-HC} is fully faithful on the subcategory $\HC^{\hmu,\hnu}_{\mathrm{diag}}$.
\end{prop}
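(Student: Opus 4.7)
The plan is to adapt the strategy used in \cite[Proposition 3.7]{br-Hecke} (which proved the analogous fully faithfulness for the Kostant section in the additive setting), transporting it to the multiplicative situation via the pseudo-logarithm $\varphi$ of Section~\ref{ss:pseudo-log}.

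First, since $\Hom$ is biadditive and passes through direct summands, and since $\HC^{\hmu,\hnu}_{\mathrm{diag}}$ is by definition the idempotent-complete additive closure of objects of the form $\sfC^{\hmu,\hnu}(V \otimes \cU\bg)$ with $V \in \Rep(\bG)$, it suffices to show that for all $V, W \in \Rep(\bG)$ the restriction morphism
\[
\Hom_{\HC^{\hmu,\hnu}}\bigl(\sfC^{\hmu,\hnu}(V \otimes \cU\bg),\, \sfC^{\hmu,\hnu}(W \otimes \cU\bg)\bigr) \longrightarrow \Hom_{\HC_\Sigma^{\hmu,\hnu}}\bigl((\text{restrictions})\bigr)
\]
is bijective. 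Using the adjunction from~\S\ref{ss:monoidal-structure-DHC} (between $V \otimes (-)$ and $V^* \otimes (-)$, which is preserved by restriction to $\Sigma$ since the latter is functorial and monoidal), one can reduce further to the case $V = \bk$ trivial.

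Next, via the equivalence~\eqref{eqn:HC-modules-wedge} and Remark~\ref{rmk:completion-diag-induced}, both Hom spaces can be rewritten as $\bG$-invariants: on the left one obtains a space of $\bG$-invariants of $W$ tensored with an appropriate slice of $(\cU\bg)^{\widehat{\nu}}$ (namely, the quotient forcing the left central character to lie in a formal neighborhood of $\tmu$), while on the right one obtains the analogous $\bG$-invariants of the restriction of this object to the Steinberg section. Since $\mathbf{\Sigma}$ is a section of the adjoint quotient $\bG \to \bG/\bG \cong \bT/\bWf$, one expects global $\bG$-invariants of a $\bG$-equivariant coherent sheaf on $\bG^{(1)}$ (viewed as an $\scO(\bT^{(1)}/\bWf)$-module through the adjoint quotient) to be recovered faithfully by restriction to $\Sigma$; this is the multiplicative analog of the Chevalley restriction principle used in \cite{br-Hecke}.

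To make the link with the Kostant-section argument precise, I would invoke the isomorphism of Remark~\ref{rmk:identification-G-g},
\[
\FN_{\bG/\bG}(\{e\}) \times_{\bG/\bG} \bG \;\simto\; \FN_{\bg^*/\bG}(\{0\}) \times_{\bg^*/\bG} \bg^*,
\]
coming from the pseudo-logarithm $\varphi$. Combined with the identification of $\ZFr$ with $\scO(\bg^{*(1)})$ and the Chevalley isomorphism $\bg^{*(1)}/\bG^{(1)} \simto \bt^{*(1)}/\bWf$, this identifies the formal neighborhood of $\Sigma$ in $\bG^{(1)}$ appearing in the definition of $\HC_\Sigma^{\hmu,\hnu}$ with the corresponding formal neighborhood of a Kostant section in $\bg^{*(1)}$, under which the restriction functor~\eqref{eqn:rest-Sigma-HC} becomes the Kostant-section restriction functor of \cite[\S 3.9]{br-Hecke}. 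The statement therefore reduces to \cite[Proposition 3.7]{br-Hecke}.

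The main obstacle will be the bookkeeping required to match up the various completions: the bimodule side is completed at $(\tmu,\tnu) \in \bt^*/(\bWf,\bullet) \times \bt^*/(\bWf,\bullet)$ (with an additional fiber product over $\bt^{*(1)}/\bWf$ at $0$), while the Steinberg side is completed at $(e,e) \in \bT^{(1)} \times_{\bT^{(1)}/\bWf} \bT^{(1)}$, and one must verify that all the reduction steps (to $V = \bk$, then to $\bG$-invariants, then to restriction to $\Sigma$) commute with these completions and with the identification provided by $\varphi$. Once this is settled, the flatness statements of Proposition~\ref{prop:flatness-Ug} and the exactness of completion should make the argument go through cleanly.
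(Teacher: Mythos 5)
Your approach is essentially the same as the paper's: both proofs hinge on the pseudo-logarithm identification of Remark~\ref{rmk:identification-G-g} to transfer the problem from the multiplicative setting to one where the argument of \cite[Proposition~3.7]{br-Hecke} applies. The preliminary reductions you describe (to $V=\bk$ by adjunction, then to $\bG$-invariants) are the natural steps and are part of what running \emph{loc.~cit.} amounts to.

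The one imprecision is in the claim that $\varphi$ carries (a formal neighborhood of) the Steinberg section $\mathbf{\Sigma}$ onto a formal neighborhood of a Kostant section $\bS^* \subset \bg^*$, and that under this identification the functor~\eqref{eqn:rest-Sigma-HC} literally \emph{becomes} the Kostant-section restriction functor of \cite[\S~3.9]{br-Hecke}. The pseudo-logarithm $\varphi$ is $\bG$-equivariant and induces an isomorphism $\FN_{\bG/\bG}(\{e\}) \times_{\bG/\bG} \bG \simto \FN_{\bg^*/\bG}(\{0\}) \times_{\bg^*/\bG} \bg^*$, but there is no reason why $\varphi(\mathbf{\Sigma})$ should coincide with the specific Kostant section $\bS^*$: they are two a priori different sections of the adjoint quotient. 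So a literal reduction to \cite[Proposition~3.7]{br-Hecke} as stated does not go through. The paper avoids this by not identifying the two slices; it instead treats $\sfU^\wedge$ as a sheaf of algebras on the \emph{group-side} scheme provided by Remark~\ref{rmk:identification-G-g} and reruns the argument of \cite[Proposition~3.7]{br-Hecke} with $\cU\bg$ replaced by its pullback to $\bG^{(1)} \times_{\bt^{*(1)}/\bWf} \bt^*/(\bWf,\bullet)$, using $\mathbf{\Sigma}$ in place of $\bS^*$. Equivalently, one can justify your reduction by observing that the argument of \emph{loc.~cit.} only uses that the slice is an adjoint-quotient section, a property shared by both $\bS^*$ and $\varphi(\mathbf{\Sigma})$; once this is made explicit, your proposal is correct and essentially coincides with the paper's proof.
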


\begin{proof}
The proof is similar to that of~\cite[Proposition~3.7]{br-Hecke}. Namely, we can consider $\sfU^\wedge$ as a sheaf of algebras on the scheme
\begin{multline*}
(\bg^{*(1)} \times_{\bt^{*(1)}/\bWf} \FN_{\bt^{*(1)}/\bWf}(\{0\})) \times_{\FN_{\bt^{*(1)}/\bWf}(\{0\})} \\
\Bigl( \FN_{\bt^{*(1)}/\bWf}(\{0\}) \times_{\bt^{*(1)}/\bWf} (\bt^*/(\bWf,\bullet) \times_{\bt^{*(1)}/\bWf} \bt^*/(\bWf,\bullet)) \Bigr),
\end{multline*}
which by Remark~\ref{rmk:identification-G-g} identifies with
\begin{multline*}
(\bG^{(1)} \times_{\bT^{(1)}/\bWf} \FN_{\bT^{(1)}/\bWf}(\{e\})) \times_{\FN_{\bt^{*(1)}/\bWf}(\{0\})} \\
\Bigl( \FN_{\bt^{*(1)}/\bWf}(\{0\}) \times_{\bt^{*(1)}/\bWf} (\bt^*/(\bWf,\bullet) \times_{\bt^{*(1)}/\bWf} \bt^*/(\bWf,\bullet)) \Bigr).
\end{multline*}
One can then follow exactly the same arguments as in~\cite{br-Hecke}, replacing everywhere the algebra $\cU\bg$ by its pullback to the scheme
$\bG^{(1)} \times_{\bt^{*(1)}/\bWf} \bt^*/(\bWf,\bullet)$.
\end{proof}

\subsection{Localization over the Steinberg section}

Recall from~\S\ref{ss:HC-Dmod} the sheaf of algebras $\osD^{\hla,\hla}_{\varnothing,\varnothing}$ over the scheme
\[
\St^{(1)} \times_{\bt^{*(1)} \times_{\bt^{*(1)}/\bWf} \bt^{*(1)}} \FN_{\bt^*/(\bWf,\bullet) \times_{\bt^{*(1)}/\bWf} \bt^*/(\bWf,\bullet)}(\{(\tla,\tla)\}),
\]
which 
by Lemma~\ref{lem:isom-varphi-tbG} identifies with
\[
\Stm^{\wedge(1)} \times_{\FN_{\bt^{*(1)} \times_{\bt^{*(1)}/\bWf} \bt^{*(1)}}(\{(0,0)\})} \FN_{\bt^*/(\bWf,\bullet) \times_{\bt^{*(1)}/\bWf} \bt^*/(\bWf,\bullet)}(\{(\tla,\tla)\}).
\]
We will denote by $\osD_{\Sigma}^{\hla,\hla}$ the restriction of $\osD^{\hla,\hla}_{\varnothing,\varnothing}$ along the closed immersion of $\FN_{\bt^*/(\bWf,\bullet) \times_{\bt^{*(1)}/\bWf} \bt^*/(\bWf,\bullet)}(\{(\tla,\tla)\})$ into this scheme
induced by~\eqref{eqn:section-Stm}. This algebra admits a canonical action of the group scheme $\bbM_\Sigma^{\hla,\hla}$; we can therefore consider the category $\Modc^{\bbM}(\osD_{\Sigma}^{\hla,\hla})$ of (weakly) equivariant modules for this sheaf of algebras, and its subcategory $\Modc(\osD_{\Sigma}^{\hla,\hla}, \bbM)$ of modules on which the action of the subgroup scheme $\bG_1 \times \FN_{\bt^*/(\bWf,\bullet) \times_{\bt^{*(1)}/\bWf} \bt^*/(\bWf,\bullet)}(\{(\tla,\tla)\})$ coincides with the action coming from that of $\osD_{\Sigma}^{\hla,\hla}$. We then have restriction functors
\begin{equation}
\label{eqn:restriction-osD-Sigma}
\Modc^{\bG}(\osD^{\hla,\hla}_{\varnothing,\varnothing}) \to \Modc^{\bbM}(\osD_{\Sigma}^{\hla,\hla}), \quad \Modc(\osD^{\hla,\hla}_{\varnothing,\varnothing}, \bG) \to \Modc(\osD_{\Sigma}^{\hla,\hla}, \bbM).
\end{equation}

In~\S\ref{ss:D-mod-Coh} we have constructed a splitting bundle for $\osD^{\hla,\hla}_{\varnothing,\varnothing}$, which by restriction provides a splitting bundle for $\osD_{\Sigma}^{\hla,\hla}$. Using this splitting bundle we obtain equivalences of categories
\[
\Modfg^{\bbM}(\osD_\Sigma^{\hla,\hla}) \simto \Rep(\bbM_\Sigma^{\hla,\hla}), \quad \Modc(\osD_{\Sigma}^{\hla,\hla}, \bbM) \simto \Rep(\bbI^{\hla,\hla}_\Sigma),
\]
where $\bbI^{\hla,\hla}_\Sigma$ is the pullback of $\bbJ_{\mathbf{\Sigma}}^{(1)}$ along the natural morphism
\[
\FN_{\bt^*/(\bWf,\bullet) \times_{\bt^{*(1)}/\bWf} \bt^*/(\bWf,\bullet)}(\{(\tla,\tla)\}) \to \Sigma,
\]
i.e.~the quotient of $\bbM_\Sigma^{\hla,\hla}$ by the normal subgroup scheme
\[
\bG_1 \times \FN_{\bt^*/(\bWf,\bullet) \times_{\bt^{*(1)}/\bWf} \bt^*/(\bWf,\bullet)}(\{(\tla,\tla)\}).
\]
Now we have an identification
\[
\FN_{\bT^{(1)} \times_{\bT^{(1)}/\bWf} \bT^{(1)}}(\{(e,e)\}) \cong \FN_{\bt^*/(\bWf,\bullet) \times_{\bt^{*(1)}/\bWf} \bt^*/(\bWf,\bullet)}(\{(\tla,\tla)\}),
\]
and the pullback of $\bbI^{\hla,\hla}_\Sigma$ along this isomorphism is $\bbI^\wedge_\Sigma$. We therefore finally obtain an equivalence of categories
\[
\Modc(\osD_{\Sigma}^{\hla,\hla}, \bbM) \simto \Rep(\bbI^\wedge_\Sigma)
\]
such that the diagram
\[
\xymatrix{
\Modc(\osD^{\hla,\hla}_{\varnothing,\varnothing}, \bG) \ar[d]_-{\eqref{eqn:restriction-osD-Sigma}} \ar[r]^-{\sim} & \Coh^{\bG^{(1)}}(\Stm^{\wedge(1)}) \ar[d]^-{\eqref{eqn:restriction-Stm-centralizer-Ab}} \\
\Modc(\osD_{\Sigma}^{\hla,\hla}, \bbM) \ar[r]^-{\sim} & \Rep(\bbI_\Sigma^\wedge)
}
\]
commutes, where the upper horizontal arrow is the composition of the equivalence of Proposition~\ref{prop:splitting-HC-equiv} with (the restriction of)~\eqref{eqn:Coh-St-Stm}.

Now the scheme $\FN_{\bt^*/(\bWf,\bullet) \times_{\bt^{*(1)}/\bWf} \bt^*/(\bWf,\bullet)}(\{(\tla,\tla)\})$ is affine, and using the computation of the derived global sections of $\tsD_\varnothing$ (see~\cite[Proposition~3.4.1]{bmr}) and the general form of the base change theorem (see~\cite[Theorem~3.10.3]{lipman}) one checks that the global sections of $\osD_{\Sigma}^{\hla,\hla}$ identify canonically with $\sfU^{\hla,\hla}_\Sigma$. We deduce equivalences of categories
\[
\Modfg^{\bbM}(\osD_\Sigma^{\hla,\hla}) \simto \Modfg^{\bbM}(\sfU_\Sigma^{\hla,\hla}), \quad
\Modc(\osD_{\Sigma}^{\hla,\hla}, \bbM) \simto \HC_\Sigma^{\hla,\hla}
\]
such that the following diagram commutes:
\[
\xymatrix{
\Db \Modc(\osD^{\hla,\hla}_{\varnothing,\varnothing}, \bG) \ar[d]_-{\eqref{eqn:restriction-osD-Sigma}} \ar[r]^-{\Gamma^{\lambda,\lambda}_{\varnothing,\varnothing}}_-{\sim} & \Db \HC^{\hla,\hla} \ar[d]^-{\eqref{eqn:rest-Sigma-HC}} \\
\Db\Modc(\osD_{\Sigma}^{\hla,\hla}, \bbM) \ar[r]^-{\sim} & \Db\HC_\Sigma^{\hla,\hla}.
}
\]

Combining the above informations, we therefore obtain an equivalence of categories
\[
\HC_\Sigma^{\hla,\hla} \simto \Rep(\bbI_{\Sigma}^{\wedge})
\]
such that the following diagram commutes:
\[
\xymatrix@C=1.5cm{
\Db \HC^{\hla, \hla} \ar[d]_-{\eqref{eqn:rest-Sigma-HC}}  \ar[r]^-{\Psi^{\hla,\hla}}_-{\sim} & \Db \Coh^{\bG^{(1)}}(\Stm^{\wedge(1)}) \ar[d]^-{\eqref{eqn:restriction-Stm-centralizer}} \\
\Db \HC_\Sigma^{\hla,\hla} \ar[r]^-{\sim} & \Db \Rep(\bbI_{\Sigma}^{\wedge}).
}
\]

From this commutative diagram and Proposition~\ref{prop:restriction-HC-Sigma} we deduce the following.

\begin{cor}
\label{cor:restriction-SCoh-Sigma-ff}
The functor~\eqref{eqn:restriction-Stm-centralizer} is fully faithful on
$\SCoh^{\bG^{(1)}}(\Stm^{\wedge (1)})$.
\end{cor}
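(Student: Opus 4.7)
\smallskip

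The plan is to reduce the claim to Proposition~\ref{prop:restriction-HC-Sigma} via the commutative diagram displayed just above the statement of the corollary. Since $\Psi^{\hla,\hla}$ is an equivalence of categories, fully faithfulness of the right-hand vertical functor~\eqref{eqn:restriction-Stm-centralizer} on a subcategory $\sfC \subset \Db\Coh^{\bG^{(1)}}(\Stm^{\wedge(1)})$ is equivalent to fully faithfulness of the left-hand vertical functor~\eqref{eqn:rest-Sigma-HC} on $(\Psi^{\hla,\hla})^{-1}(\sfC)$. I will therefore first identify the preimage of $\SCoh^{\bG^{(1)}}(\Stm^{\wedge(1)})$ under $\Psi^{\hla,\hla}$ and then check that this preimage sits inside a subcategory on which the restriction to the Steinberg section is already known to be fully faithful.

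First, by construction, $\Psi^{\hla,\hla}$ is the composition of $\Phi^{\hla,\hla}$ with the inverse of the monoidal equivalence~\eqref{eqn:Coh-St-Stm}. The latter sends $\scR_s$ to $\scS_s$ and $\scR_\omega$ to $\scS_\omega$ for every $s \in \bSaff$ and $\omega \in \mathbf{\Omega}$, and it is a monoidal equivalence; via~\eqref{eqn:BSCoh-BSHC} and~\eqref{eqn:equiv-SCoh-St-Stm} it therefore restricts to an equivalence of monoidal categories
\[
\SHC^{\hla,\hla} \simto \SCoh^{\bG^{(1)}}(\Stm^{\wedge(1)}).
\]
Consequently, $(\Psi^{\hla,\hla})^{-1}(\SCoh^{\bG^{(1)}}(\Stm^{\wedge(1)})) = \SHC^{\hla,\hla}$ (as a strictly full subcategory of $\Db\HC^{\hla,\hla}$), so it is enough to show that the restriction functor~\eqref{eqn:rest-Sigma-HC} is fully faithful on $\SHC^{\hla,\hla}$.

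Now, by definition (see~\S\ref{ss:BSHC}), every object of the subcategory $\BSHC^{\hla,\hla}$ is of the form $\sfR_{s_1} \star \cdots \star \sfR_{s_r} \star \sfR_\omega$ with $s_i \in \bSaff$ and $\omega \in \mathbf{\Omega}$, and each of these factors lies in $\HC^{\hla,\hla}_{\mathrm{diag},\mathrm{tilt}}$. As observed in~\S\ref{ss:monoidal-structure-DHC}, the subcategories $\HC^{\hla,\hmu}_{\mathrm{diag},\mathrm{tilt}}$ are stable under convolution, so $\BSHC^{\hla,\hla} \subset \HC^{\hla,\hla}_{\mathrm{diag}}$, and hence $\SHC^{\hla,\hla}$ is contained in the idempotent completion of the additive hull of $\HC^{\hla,\hla}_{\mathrm{diag}}$ taken inside $\HC^{\hla,\hla}$. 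By Proposition~\ref{prop:restriction-HC-Sigma} the exact functor~\eqref{eqn:rest-Sigma-HC} is fully faithful on $\HC^{\hla,\hla}_{\mathrm{diag}}$; fully faithfulness is clearly stable under passing to finite direct sums and, since both $\HC^{\hla,\hla}$ and $\HC_\Sigma^{\hla,\hla}$ are idempotent complete (being abelian), also under passing to the Karoubi envelope inside $\HC^{\hla,\hla}$. Thus~\eqref{eqn:rest-Sigma-HC} is fully faithful on $\SHC^{\hla,\hla}$, which together with the previous paragraph gives the claim.

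There is no serious obstacle: the entire argument is a formal deduction from the commutative diagram preceding the corollary together with Proposition~\ref{prop:restriction-HC-Sigma}. The only mildly subtle point is the extension of fully faithfulness from $\HC^{\hla,\hla}_{\mathrm{diag}}$ to its Karoubian additive envelope, but this is a standard categorical fact once one observes that $\HC^{\hla,\hla}$ and $\HC_\Sigma^{\hla,\hla}$ are abelian (hence idempotent complete), and that the restriction functor is exact and therefore respects direct sums and splittings of idempotents.
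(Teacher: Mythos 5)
Your proof is correct and follows essentially the same route as the paper: reduce via the commutative diagram to fully faithfulness of~\eqref{eqn:rest-Sigma-HC} on $\SHC^{\hla,\hla}$, which is covered by Proposition~\ref{prop:restriction-HC-Sigma}. The final paragraph about passing to the Karoubian additive envelope is, however, unnecessary: since $\HC^{\hla,\hla}_{\mathrm{diag}}$ is by definition already idempotent-complete and additive, the inclusion $\BSHC^{\hla,\hla}\subset\HC^{\hla,\hla}_{\mathrm{diag},\mathrm{tilt}}\subset\HC^{\hla,\hla}_{\mathrm{diag}}$ immediately gives $\SHC^{\hla,\hla}\subset\HC^{\hla,\hla}_{\mathrm{diag}}$, and Proposition~\ref{prop:restriction-HC-Sigma} applies verbatim.
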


\subsection{Image of Soergel-type coherent sheaves}

In view of Corollary~\ref{cor:restriction-SCoh-Sigma-ff},
to conclude the proof of Proposition~\ref{prop:restriction-SCoh-Sigma} it remains to describe the essential image of $\SCoh^{\bG^{(1)}}(\Stm^{\wedge (1)})$. In fact, by construction of this subcategory, it will be enough to prove that for any $s \in \bSaff$, resp.~$\omega \in \mathbf{\Omega}$, this functor sends the object $\scS_s$, resp.~$\scS_\omega$, to $\scB^\wedge_s$, resp.~$\scB^\wedge_\omega$.

Recall the objects $(\scX_s^\wedge : s \in \bSaff)$ in $\Db\Coh^{\bG^{(1)}}(\St^{\wedge(1)})$ considered in~\S\ref{ss:geometric-translation}. By Corollary~\ref{cor:isom-R-X-2}, for any $s \in \bSaff$, $\scS_s$ is isomorphic to the image of $\scX_s^\wedge$ in $\Db \Coh^{\bG^{(1)}}(\Stm^{\wedge(1)})$.

In case $s \in \bSf$, this image is the structure sheaf of the closed subscheme
\[
(\tbG^{(1)} \times_{\tbG_s^{(1)}} \tbG^{(1)} ) \times_{\bG^{(1)}/\bG^{(1)}} \FN_{\bG^{(1)}/\bG^{(1)}}(\{e\}) \subset \Stm^{\wedge(1)},
\]
where we write $\tbG_s$ for $\tbG_{\{s\}}$. Now the preimage of $\mathbf{\Sigma}$ in $\tbG_s$ identifies (via~\eqref{eqn:morph-tbG-adj-quotient}) with $\bT/\{1,s\}$. Hence the image of $\scS_s$ under~\eqref{eqn:restriction-Stm-centralizer} is the structure sheaf of the closed subscheme
\[
\FN_{\bT^{(1)} \times_{\bT^{(1)}/\{1,s\}} \bT^{(1)}}(\{(e,e)\}) \subset \FN_{\bT^{(1)} \times_{\bT^{(1)}/\bWf} \bT^{(1)}}(\{(e,e)\}).
\]
Looking at the definition of $\scB^\wedge_s$ in this case, we deduce the following.

\begin{lem}
\label{lem:scS-scB}
For any $s \in \bSf$, the functor~\eqref{eqn:restriction-Stm-centralizer} sends $\scS_s$ to $\scB^\wedge_s$.
\end{lem}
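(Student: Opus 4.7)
The plan is to chain together the identifications already set up in Sections~\ref{sec:monoidality}--\ref{sec:Steinberg-section}: first identify $\scS_s$ concretely as the pushforward of a structure sheaf, then transport through the isomorphism~\eqref{eqn:identification-Stm-St}, and finally restrict to the Steinberg section and match the result against the definition of $\scB^\wedge_s$ from~\cite{br-pt2}.

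First, by Corollary~\ref{cor:isom-R-X-2} we have $\scR_s \cong \scX_s^\wedge$, so by the definition of $\scS_s$ as the preimage of $\scR_s$ under the equivalence~\eqref{eqn:Coh-St-Stm}, the object $\scS_s$ is the image of $\scX_s^\wedge$ under the inverse of this equivalence. Since $s \in \bSf$, the sheaf $\scX_s$ is by construction the structure sheaf of the (reduced) closed subscheme $\tbg^{(1)} \times_{\tbg_s^{(1)}} \tbg^{(1)} \subset \St^{(1)}$, so $\scX_s^\wedge$ is the structure sheaf of $(\tbg^{(1)} \times_{\tbg_s^{(1)}} \tbg^{(1)}) \times_{\bg^{*(1)}/\bG^{(1)}} \FN_{\bg^{*(1)}/\bG^{(1)}}(\{0\})$. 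Pulling this back through the $\bG$-equivariant isomorphism~\eqref{eqn:identification-Stm-St} (which is $\widetilde{\varphi}_\varnothing^{(1)} \times \widetilde{\varphi}_\varnothing^{(1)}$ after Frobenius twist, using Lemma~\ref{lem:isom-varphi-tbG}), and noting that $\widetilde{\varphi}_I$ visibly intertwines the projections $\tbG_I \to \bG$ and $\tbg_I \to \bg^*$ for $I = \{s\}$, we see that $\scS_s$ identifies with the structure sheaf of the closed subscheme
\[
Z_s := (\tbG^{(1)} \times_{\tbG_s^{(1)}} \tbG^{(1)}) \times_{\bG^{(1)}/\bG^{(1)}} \FN_{\bG^{(1)}/\bG^{(1)}}(\{e\}) \subset \Stm^{\wedge(1)}.
\]

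Next, I compute the image of $\scS_s$ under the restriction functor~\eqref{eqn:restriction-Stm-centralizer}. Recall from~\S\ref{ss:restriction-Steinberg} that this functor, on the abelian level, is obtained from the closed immersion~\eqref{eqn:section-Stm} of $\FN_{\bT^{(1)} \times_{\bT^{(1)}/\bWf} \bT^{(1)}}(\{(e,e)\})$ into $\Stm^{\wedge(1)}$: a $\bG^{(1)}$-equivariant coherent sheaf on $\Stm^{\wedge(1)}$ restricts to a sheaf on this formal neighborhood, which becomes a representation of $\bbI_\Sigma^\wedge$ via the fact that the universal stabilizer along~\eqref{eqn:section-Stm} is $\bbI_\Sigma^\wedge$. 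Since $\scS_s$ is the structure sheaf of $Z_s$, its restriction along~\eqref{eqn:section-Stm} is the structure sheaf of the intersection $Z_s \cap \FN_{\bT^{(1)} \times_{\bT^{(1)}/\bWf} \bT^{(1)}}(\{(e,e)\})$. To compute this intersection, I use (as in~\cite[Proposition~2.13]{br-pt2}) that the natural morphism $\tbG \to \bT$ restricts to an isomorphism $\tbG \times_\bG \mathbf{\Sigma} \simto \bT$, and the analogous fact for the minimal parabolic: the preimage of $\mathbf{\Sigma}$ in $\tbG_s$ is $\bT / \{1,s\}$. Taking Frobenius twists and the evident fiber product, I conclude that the intersection is
\[
\FN_{\bT^{(1)} \times_{\bT^{(1)}/\{1,s\}} \bT^{(1)}}(\{(e,e)\}) \subset \FN_{\bT^{(1)} \times_{\bT^{(1)}/\bWf} \bT^{(1)}}(\{(e,e)\}).
\]

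Finally, I match this with the definition of $\scB^\wedge_s$. In~\cite[\S 3.3]{br-pt2} the object $\scB_s^\wedge$ is defined (up to the obvious Frobenius twist) precisely as the structure sheaf of the completed subscheme $\FN_{\bT^{(1)} \times_{\bT^{(1)}/\{1,s\}} \bT^{(1)}}(\{(e,e)\})$, with its natural $\bbI_\Sigma^\wedge$-equivariant structure coming from the action of the universal centralizer. Since both the underlying coherent sheaf and the equivariant structure (which is determined by the universal stabilizer computation) agree, we obtain the desired isomorphism.

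The one point that needs to be checked with some care is the compatibility of the equivariant structures on the two sides: $\scS_s$ carries its structure from the $\bG^{(1)}$-action on $\Stm^{\wedge(1)}$ passing through restriction and the splitting bundle/universal stabilizer formalism, while $\scB_s^\wedge$ is defined directly in terms of $\bbI_\Sigma^\wedge$ in~\cite{br-pt2}. This will be the main (and essentially only) obstacle; it should follow formally from the fact that the stabilizer along~\eqref{eqn:section-Stm} of points in $Z_s \cap \FN_{\bT^{(1)} \times_{\bT^{(1)}/\bWf} \bT^{(1)}}(\{(e,e)\})$ is exactly the pullback of $\bbJ_{\mathbf{\Sigma}}^{(1)}$ along the corresponding morphism, i.e.\ the same group scheme that acts on $\scB_s^\wedge$ in~\cite{br-pt2}.
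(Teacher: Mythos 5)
Your proposal follows essentially the same approach as the paper: identify $\scS_s$ via Corollary~\ref{cor:isom-R-X-2} as the structure sheaf of the completed $\tbG^{(1)} \times_{\tbG_s^{(1)}} \tbG^{(1)}$ inside $\Stm^{\wedge(1)}$, then use the identification of the preimage of $\mathbf{\Sigma}$ in $\tbG_s$ with $\bT/\{1,s\}$ to compute the restriction along~\eqref{eqn:section-Stm}, and match the result with the definition of $\scB_s^\wedge$ in~\cite{br-pt2}. The closing remark about equivariant structures is correct but is exactly the point the paper leaves implicit; the stabilizer computation in~\cite[Proposition~2.13]{br-pt2} is what makes that compatibility automatic.
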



Recall now the objects $(\scI_b^\wedge : b \in \Br_{\bW})$ considered in~\S\ref{ss:image-braid}. For any $b \in \Br_{\bW}$ we will denote by $\scJ^\wedge_b$ the image of $\scI^\wedge_b$ in $\Db \Coh^{\bG^{(1)}}(\Stm^{\wedge(1)})$, and by $\scJ_b^\Sigma$ the image of $\scJ^\wedge_b$ in $\Db \Rep(\bbI_\Sigma^\wedge)$. We will also denote by $\jmath : \Br_{\bW} \to \bW$ the canonical group morphism, such that $\jmath(T_w) = w$ for any $w \in \bW$.


\begin{lem}
\label{lem:scJ-scM}
For any $b \in \Br_\bW$, we have $\scJ^\Sigma_b \cong \scM_{\jmath \circ \imath(b)}^\wedge$.
\end{lem}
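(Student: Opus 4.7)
The plan is to show that both assignments $b \mapsto \scJ^\Sigma_b$ and $b \mapsto \scM^\wedge_{\jmath \circ \imath(b)}$ define group morphisms from $(\Br_\bW)^{\op}$ to the group of isomorphism classes of invertible objects in $(\Db \Rep(\bbI_\Sigma^\wedge), \star)$, and then check that these morphisms agree on a generating set. For the first assignment, the morphism property follows because $b \mapsto \scI_b$ is a group morphism from $(\Br_\bW)^{\op}$ by construction (\S\ref{ss:kernels}) and all the transitions involved — the pullback~\eqref{eqn:pullback-convolution}, the equivalence~\eqref{eqn:Coh-St-Stm}, and the restriction functor~\eqref{eqn:restriction-Stm-centralizer} — are monoidal. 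For the second, $\jmath$ is a morphism and $\imath$ is an anti-automorphism of $\Br_\bW$, so $\jmath \circ \imath$ is a group anti-morphism $\Br_\bW \to \bW$; combining with the morphism $w \mapsto \scM^\wedge_w$ coming from~\eqref{eqn:convolution-Ms}, we obtain the required anti-morphism $\Br_\bW \to$ invertibles.

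Given this reduction, it suffices to verify the isomorphism on a generating subset of $\Br_\bW$. Using~\eqref{eqn:Br-semi-direct-prod}, we can take $\{T_s : s \in \bSaff\} \cup \{T_\omega : \omega \in \mathbf{\Omega}\}$. For $s \in \bSf$, combining Corollary~\ref{cor:isom-R-X-2}, Lemma~\ref{lem:scS-scB}, and the first triangle in~\eqref{eqn:triangle-Rs-I}, the functor~\eqref{eqn:restriction-Stm-centralizer} produces a distinguished triangle
\[
\scO_{\FN_{\bT^{(1)} \times_{\bT^{(1)}/\bWf} \bT^{(1)}}(\{(e,e)\})} \to \scB^\wedge_s \to \scJ^\Sigma_{T_s} \xrightarrow{[1]}
\]
in $\Db \Rep(\bbI_\Sigma^\wedge)$. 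The object $\scM^\wedge_s$ from~\cite{br-pt2} is defined by an analogous distinguished triangle, so $\scJ^\Sigma_{T_s} \cong \scM^\wedge_s$, which matches since $\jmath \circ \imath(T_s) = s^{-1} = s$. For $s \in \bSaff \smallsetminus \bSf$, the decomposition~\eqref{eqn:Xs-conjugation} together with the already established morphism property reduces the claim to the case of $\bSf$.

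For $\omega \in \mathbf{\Omega}$, the isomorphism~\eqref{eqn:Romega-I} gives $\scR_\omega \cong \scI^\wedge_{T_{\omega^{-1}}}$, so the image $\scS_\omega$ in $\Db \Coh^{\bG^{(1)}}(\Stm^{\wedge(1)})$ identifies with $\scJ^\wedge_{T_{\omega^{-1}}}$. The explicit description of the action of~\eqref{eqn:restriction-Stm-centralizer} on the structure sheaf of the ``$\omega$-component'' of $\Stm^\wedge$ then identifies this image with $\scM^\wedge_\omega$, using the corresponding explicit construction in~\cite{br-pt2}. Since $\jmath \circ \imath(T_{\omega^{-1}}) = \omega$, this completes the verification on generators.

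The main obstacle is the matching on the coherent side of the specific distinguished triangles (for $s \in \bSaff$) and equivariant line bundles (for $\omega \in \mathbf{\Omega}$) with the defining data of $\scM^\wedge_s$ and $\scM^\wedge_\omega$ in~\cite{br-pt2}; once these explicit identifications are in place, the multiplicative structure established in the first step upgrades the check on generators to the full statement for arbitrary $b \in \Br_\bW$.
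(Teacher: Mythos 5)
Your reduction-to-generators strategy is sound (both assignments are group anti-morphisms from $\Br_\bW$ to isomorphism classes of invertible objects, for the reasons you give), and your treatment of $T_s$ with $s \in \bSf$ is essentially the paper's. But your choice of generating set, namely $\{T_s : s \in \bSaff\} \cup \{T_\omega : \omega \in \mathbf{\Omega}\}$, leads to two serious problems, while the paper uses the Bernstein generating set~\eqref{eqn:generators-BrW}, i.e. $\{T_s : s \in \bSf\} \cup \{\theta_\lambda : \lambda \in X^*(\bT)\}$, precisely to sidestep them.

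For $s \in \bSaff \smallsetminus \bSf$, your reduction via~\eqref{eqn:Xs-conjugation} is circular. Write $T_s = b T_t b^{-1}$ with $t \in \bSf$ and $b \in \Br_\bW$ as in~\S\ref{ss:kernels}. The anti-morphism property gives $\scJ^\Sigma_{T_s} \cong \scJ^\Sigma_{b^{-1}} \star \scJ^\Sigma_{T_t} \star \scJ^\Sigma_b$, but to identify this with $\scM^\wedge_{\jmath \circ \imath(T_s)}$ you need to know $\scJ^\Sigma_b \cong \scM^\wedge_{\jmath \circ \imath(b)}$ for the particular conjugating element $b$, which is generally not a generator. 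That identity is exactly what the lemma asserts and has not been established at this point. The same issue prevents you from importing the fact that the image of $\scS_s$ is $\scB^\wedge_s$ for affine $s$: in the paper that statement (the last step of the proof of Proposition~\ref{prop:restriction-SCoh-Sigma}) is proved \emph{using} Lemma~\ref{lem:scJ-scM}.

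For $\omega \in \mathbf{\Omega}$, the appeal to the ``$\omega$-component of $\Stm^\wedge$'' is not a well-defined notion in the paper, and a direct identification of $\scJ^\Sigma_{T_{\omega^{-1}}}$ with $\scM^\wedge_\omega$ does not come for free: the paper again obtains this \emph{as a consequence} of Lemma~\ref{lem:scJ-scM} when finishing the proof of Proposition~\ref{prop:restriction-SCoh-Sigma}. The elements $T_\omega$ are not translations $\theta_\mu$ in general, so there is no immediate explicit description of $\scI^\wedge_{T_\omega}$ without falling back on the Bernstein generators.

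The paper's choice of generating set handles both remaining cases cleanly: for $\theta_\mu$, the object $\scJ^\wedge_{\theta_\mu}$ is by construction the pushforward of a line bundle on the diagonal $\bigl(\tbG \times_{\bG/\bG} \FN_{\bG/\bG}(\{e\})\bigr)^{(1)}$, and~\cite[Lemma~2.21]{br-pt2} directly identifies its image in $\Rep(\bbI_\Sigma^\wedge)$ with $\scM^\wedge_{t_\mu}$. Replacing your second half of the generating set with the Bernstein elements $\theta_\mu$ and verifying those directly removes the circularity.
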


\begin{proof}
In view of~\eqref{eqn:convolution-Ms}, it suffices to prove the isomorphism for $b$ in a generating subset of $\Br_{\bW}$. Here we will use the subset~\eqref{eqn:generators-BrW}.

If $s \in \bSf$, using the identification of the image of $\scX_s^\wedge$ discussed above, from the exact sequences~\eqref{eqn:exact-seq-braid-gp-action} we obtain exact sequences
\[
\scJ_e^\Sigma \hookrightarrow \scB^\wedge_s \twoheadrightarrow \scJ_{T_s}^\Sigma, \qquad \scJ_{T_s^{-1}}^\Sigma \hookrightarrow \scB^\wedge_s \twoheadrightarrow \scJ_{e}^\Sigma.
\]
Comparing with the exact sequences appearing in~\cite[\S 3.2]{br-pt2}, we deduce isomorphisms
\[
\scJ_{T_s}^\Sigma \cong \scM_s^\wedge \cong \scJ_{T_s^{-1}}^\Sigma.
\]
On the other hand, for $\mu \in X^*(\bT)$, the object $\scJ^\wedge_{\theta_\mu}$ is the pushforward under the diagonal embedding
\[
\Bigl( \tbG \times_{\bG/\bG} \FN_{\bG/\bG}(\{e\}) \Bigr)^{(1)} \to \Stm^{\wedge(1)}
\]
of the pullback of the line bundle on $(\bG/\bB)^{(1)}$ associated with $\mu$. By~\cite[Lemma~2.21]{br-pt2}, the image of this object in $\Rep(\bbI_\Sigma^\wedge)$ is $\scM^\wedge_{t_\mu}$, which concludes the proof.
\end{proof}

We can now conclude the proof of Proposition~\ref{prop:restriction-SCoh-Sigma}. In fact, for $\omega \in \mathbf{\Omega}$ we have $\scS_\omega \cong \scJ^\wedge_{T_{\omega^{-1}}}$ by~\eqref{eqn:Romega-I}, so that its image in $\SRep(\bbI_{\Sigma}^{\wedge})$ in $\scJ^\Sigma_{T_{\omega^{-1}}}$, which is isomorphic to $\scM_{\omega}^\wedge$ by Lemma~\ref{lem:scJ-scM}. For $s \in \bSf$, we have already proved that the image of $\scS_s$ is $\scB_s^\wedge$ in Lemma~\ref{lem:scS-scB}. Finally, in case $s \in \bSaff \smallsetminus \bSf$, recall the elements $b \in \Br_\bW$ and $t \in \bSf$ such that $T_s = b T_t b^{-1}$, see~\S\ref{ss:kernels}. In view of~\eqref{eqn:Xs-conjugation}, Lemma~\ref{lem:scS-scB} and Lemma~\ref{lem:scJ-scM}, the image of $\scS_s$ is then
\[
\scM^\wedge_{\jmath \circ \imath(b)^{-1}} \star \scB^\wedge_t \star \scM^\wedge_{\jmath \circ \imath(b)}.
\]
Here we have $s = \jmath \circ \imath(b)^{-1} \cdot t \cdot \jmath \circ \imath(b)$, hence by~\cite[Lemma~3.5]{br-pt2} the right-hand side is isomorphic to $\scB^\wedge_s$, which concludes the proof.


\section{Equivalences}
\label{sec:equiv}

We continue with the setting of Section~\ref{sec:Steinberg-section}, hence in particular with the assumption introduced in~\S\ref{ss:pseudo-log}. In order to apply the results of~\cite{br-pt2}, we also assume from now on that $\ell \neq 19$, resp.~$\ell \neq 31$, if $\fR$ contains a component of type $\mathbf{E}_7$, resp.~$\mathbf{E}_8$. We also consider our fixed element
$\lambda \in A_0 \cap X^*(\bT)$.

\subsection{Relation with constructible sheaves}
\label{ss:relation-const-sheaves}


Consider now a reductive group $G$ over an algebraically closed field $\F$ of positive characteristic $p \neq \ell$ with a maximal torus $T$ such that $G^\vee_\bk = \bG^{(1)}$ (where $G^\vee_\bk$ is the Langlands dual reductive group provided by the geometric Satake equivalence of~\cite{mv} applied to the group $G$). In particular we have identifications $X_*(T)=X^*(\bT^{(1)}) =X^*(\bT)$. We will denote by $B \subset G$ the Borel subgroup containing $T$ whose roots are the coroots of $\bB$. Note that the extended affine Weyl group $\bW$ used above then identifies with the group $W$ of~\cite[\S 4.1]{br-pt2}, and all the structures related to this group identify in the obvious way. From now on we will thus use the conventions of~\cite{br-pt2}, and often write $W$ for $\bW$, $\Wf$ for $\bWf$, $\Omega$ for $\mathbf{\Omega}$, etc.~when the considerations are unrelated to the geometry of the group $\bG$.

We will use the constructions and notation introduced in~\cite{brr-pt1,br-pt2} for this group $G$. In particular we consider the category $\sfD^\wedge_{\Iwu, \Iwu}$ of sheaves on the natural $T$-torsor $\widetilde{\Fl}_G$ over the affine flag variety $\Fl_G$ of $G$, and its subcategory $\sfT^\wedge_{\Iwu, \Iwu}$ of tilting perverse sheaves; the former category contains some distinguished objects $(\Xi^\wedge_{s,!} : s \in \Saff)$ labelled by $\Saff$ (see~\cite[\S 11.1]{br-pt2}), and two families $(\nabla^\wedge_w : w \in W)$ and $(\Delta^\wedge_w : w \in W)$ of objects labelled by $W$ (see~\cite[\S 6.2]{br-pt2}). Here, $\Xi^\wedge_{s,!}$ belongs to $\sfT^\wedge_{\Iwu, \Iwu}$ for any $s \in \Saff$, and for $\omega \in \Omega$ we have $\Delta^\wedge_\omega = \nabla^\wedge_\omega$ and this object also belongs to $\sfT^\wedge_{\Iwu, \Iwu}$. 

\begin{rmk}
In~\cite{br-pt2}, the geometric constructions of the first part of the paper were applied to the group $G^\vee_\bk$, while here they are applied to the group $\bG$ whose Frobenius twist is $G^\vee_\bk$. This is due to the fact that in the present paper we also consider the category of Harish-Chandra bimodules, which should be considered for $\bG$ and not $G^\vee_\bk$. There is no contradiction since in practice all varieties that already appeared in~\cite{br-pt2} will appear here with a Frobenius twist, and the Frobenius twist of the Grothendieck, resp.~Steinberg, etc.~variety of $\bG$ is the Grothendieck, resp.~Steinberg, etc.~variety of $\bG^{(1)} = G^\vee_\bk$.
\end{rmk}

The category $\sfD^\wedge_{\Iwu, \Iwu}$ admits a monoidal product $\hatstar$ (see~\cite[\S 6.1]{br-pt2}), and $\sfT^\wedge_{\Iwu, \Iwu}$ is a monoidal subcategory. We also have a monoidal equivalence of categories
\begin{equation}
\label{eqn-D-KbTilt}
\Kb \sfT^\wedge_{\Iwu, \Iwu} \simto
\sfD^\wedge_{\Iwu, \Iwu},
\end{equation}
see~\cite[Equation~(6.11)]{br-pt2}. 

Using the notation of~\S\ref{ss:restriction-Steinberg}, in~\cite[Theorem~11.2]{br-pt2} we have constructed an equivalence of monoidal categories
\begin{equation}
\label{eqn:equiv-pt2}
\sfT^\wedge_{\Iwu, \Iwu} \simto \SRep(\bbI_{\Sigma}^{\wedge})
\end{equation}
which sends $\Xi^\wedge_s$ to $\scB_s^\wedge$ for any $s \in \Saff$ and $\Delta^\wedge_\omega$ to $\scM^\wedge_\omega$ for any $\omega \in \Omega$.

%

Combining the equivalences~\eqref{eqn:equiv-pt2}, \eqref{eqn:equiv-SCoh-St-Stm} and~\eqref{eqn:BSCoh-BSHC}
with that of Proposition~\ref{prop:restriction-SCoh-Sigma} we deduce equivalences of monoidal categories
\begin{equation}
\label{eqn:equivalences-Soergel-cats}
\sfT^\wedge_{\Iwu, \Iwu} \simto \SCoh^{\bG^{(1)}}(\Stm^{\wedge(1)}) \simto \SCoh^{\bG^{(1)}}(\St^{\wedge(1)}) \simto \SHC^{\hla,\hla}
\end{equation}
which, for $s \in \Saff$ and $\omega \in \Omega$, match our distinguished objects in the following way:
\[
\Xi^\wedge_{s,!} \leftrightarrow \scS_s \leftrightarrow \scR_s \leftrightarrow \sfR_s, \quad \Delta^\wedge_{\omega} \leftrightarrow \scS_\omega \leftrightarrow \scR_\omega \leftrightarrow \sfR_\omega.
\]

We finally obtain the following theorem, which is the main result of this paper.

\begin{thm}
\label{thm:equivalences-comp}
There exist equivalences of monoidal triangulated categories
\[
\sfD^\wedge_{\Iwu, \Iwu} 
\xrightarrow[\sim]{\Theta^{\hla,\hla}} 
\Db \Coh^{\bG^{(1)}}(\Stm^{\wedge(1)})
\simto
\Db \Coh^{\bG^{(1)}}(\St^{\wedge(1)})
\xleftarrow[\sim]{\Phi^{\hla,\hla}}
\Db \HC^{\hla,\hla}.
\]
\end{thm}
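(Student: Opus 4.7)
My plan is to assemble $\Theta^{\hla,\hla}$ as a composition of three monoidal equivalences all of which have essentially been established in the preceding sections, while the other two equivalences appearing in the theorem are already in hand.

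First I would construct $\Theta^{\hla,\hla}$. The point of departure is the monoidal equivalence $\sfD^\wedge_{\Iwu, \Iwu} \simto \Kb \sfT^\wedge_{\Iwu, \Iwu}$ of \eqref{eqn-D-KbTilt}, which realizes the constructible affine Hecke category as the bounded homotopy category of its tilting perverse heart. Next, applying $\Kb(-)$ to the leftmost link of the chain \eqref{eqn:equivalences-Soergel-cats} (obtained by combining the equivalence \eqref{eqn:equiv-pt2} from \cite{br-pt2} with Proposition~\ref{prop:restriction-SCoh-Sigma}) yields a monoidal equivalence $\Kb \sfT^\wedge_{\Iwu, \Iwu} \simto \Kb \SCoh^{\bG^{(1)}}(\Stm^{\wedge(1)})$. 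Finally, composing with \eqref{eqn:realization-SCoh-Stm}, namely $\Kb \SCoh^{\bG^{(1)}}(\Stm^{\wedge(1)}) \simto \Db \Coh^{\bG^{(1)}}(\Stm^{\wedge(1)})$ (which rests on Proposition~\ref{prop:realization-equiv}, i.e.~generation of the coherent side by Soergel-type complexes, together with the Ext-vanishing of Proposition~\ref{prop:Ext-vanishing-Coh}), produces $\Theta^{\hla,\hla}$.

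The middle equivalence in the theorem is \eqref{eqn:Coh-St-Stm}, induced by the $\bG$-equivariant isomorphism $\Stm^\wedge \simto \St^\wedge$ coming from the pseudo-logarithm (Lemma~\ref{lem:isom-varphi-tbG}); its monoidal structure was recorded in Section~\ref{sec:Steinberg-section}. The rightmost equivalence is $\Phi^{\hla,\hla}$ from \eqref{eqn:equiv-HC-Coh}, whose monoidal upgrade is precisely Theorem~\ref{thm:monoidality}\eqref{it:monoidality-1}. Composing the four monoidal equivalences in the required orientation yields the statement.

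At this stage of the paper the substantive analysis is complete, so the remaining task is largely bookkeeping: one must check at each step that the distinguished objects match, as recorded after \eqref{eqn:equivalences-Soergel-cats} (e.g.~$\Xi^\wedge_{s,!} \leftrightarrow \scS_s \leftrightarrow \scR_s \leftrightarrow \sfR_s$ and $\Delta^\wedge_\omega \leftrightarrow \scS_\omega \leftrightarrow \scR_\omega \leftrightarrow \sfR_\omega$), and that monoidal structures are compatible throughout. The one place where I expect a genuine---though non-conceptual---obstacle is the coherent handling of monoidality for the three realization functors \eqref{eqn:realization-SHC}, \eqref{eqn:realization-SCoh} and \eqref{eqn:realization-SCoh-Stm}: each of them is already known to be a fully faithful monoidal embedding, but to turn them into monoidal \emph{equivalences} one must propagate monoidality from the Soergel-type hearts simultaneously to the bounded homotopy and to the derived targets. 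This is however formal once the monoidal structures on $\Db \HC^{\hla,\hla}$ and $\Db \Coh^{\bG^{(1)}}(\St^{\wedge(1)})$ are in place (via Section~\ref{ss:monoidal-structure-DHC} and Theorem~\ref{thm:monoidality}), so no new technical input beyond what has been developed in the earlier sections is required.
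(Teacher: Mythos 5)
Your proposal is correct and follows essentially the same route as the paper's proof: $\Theta^{\hla,\hla}$ is assembled by conjugating the first equivalence of \eqref{eqn:equivalences-Soergel-cats} (passed to bounded homotopy categories) by \eqref{eqn-D-KbTilt} and \eqref{eqn:realization-SCoh-Stm}, the middle equivalence is \eqref{eqn:Coh-St-Stm}, and the right one is $\Phi^{\hla,\hla}$ from \S\ref{ss:D-mod-Coh} with monoidal structure from Theorem~\ref{thm:monoidality}.
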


\begin{proof}
The first equivalence is obtained from the first equivalence in~\eqref{eqn:equivalences-Soergel-cats} by passing to homotopy categories and conjugating by the equivalences~\eqref{eqn-D-KbTilt} and~\eqref{eqn:realization-SCoh-Stm}. The second equivalence is~\eqref{eqn:Coh-St-Stm}. Finally, the equivalence in the right-hand side was constructed in~\S\ref{ss:D-mod-Coh}.
\end{proof}

\begin{rmk}
\label{rmk:matching-standards-costandards}
For $s \in \Saff$,
comparing the triangles~\eqref{eqn:triangles-Ds-Ds'} with similar triangles in $\sfD^\wedge_{\Iwu,\Iwu}$ one sees that under the equivalence $\sfD^\wedge_{\Iwu, \Iwu} \simto \Db \HC^{\hla,\hla}$ of Theorem~\ref{thm:equivalences-comp} the object $\Delta^\wedge_s$, resp.~$\nabla^\wedge_s$, corresponds to $\sfD_s$, resp.~$\sfN_s$. By monoidality it then follows that, for any $w \in W$, the object $\Delta^\wedge_w$, resp.~$\nabla^\wedge_w$, corresponds to $\sfD_w$, resp.~$\sfN_w$. By Lemma~\ref{lem:image-braid}, the object in $\Db \Coh^{\bG^{(1)}}(\St^{\wedge(1)})$ corresponding to $\Delta^\wedge_w$ and $\sfD_w$, resp.~to $\nabla^\wedge_w$ and $\sfN_w$, is $\scI^\wedge_{T_w^{-1}}$, resp.~$\scI^\wedge_{T_{w^{-1}}}$.
\end{rmk}




Once the equivalences in Theorem~\ref{thm:equivalences-comp} are established we can ``forget about completions'' in the following sense. Recall the category $\sfD_{\Iwu, \Iwu}$ and its monoidal product $\star_{\Iwu}$ considered in~\cite[\S 5.1]{br-pt2}.

\begin{thm}
\label{thm:equivalences}
The equivalences of Theorem~\ref{thm:equivalences-comp} restrict to equivalences of monoidal categories
\[
\sfD_{\Iwu, \Iwu} 
\xrightarrow[\sim]{\Theta_{\mathrm{nil}}^{\hla,\hla}} 
\Db \Coh_{\cU}^{\bG^{(1)}}(\Stm^{(1)}) \simto \Db \Coh_{\cN}^{\bG^{(1)}}(\St^{(1)})
\xleftarrow[\sim]{\Phi_{\mathrm{nil}}^{\hla,\hla}} \Db \HC_{\mathrm{nil}}^{\hla,\hla}.
\]
\end{thm}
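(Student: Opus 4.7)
The plan is to deduce Theorem~\ref{thm:equivalences} from Theorem~\ref{thm:equivalences-comp} by showing that the three full inclusions
\[
\sfD_{\Iwu,\Iwu} \hookrightarrow \sfD^\wedge_{\Iwu,\Iwu}, \quad
\Db\Coh_{\cU}^{\bG^{(1)}}(\Stm^{(1)}) \hookrightarrow \Db\Coh^{\bG^{(1)}}(\Stm^{\wedge(1)}),
\]
\[
\Db\Coh_{\cN}^{\bG^{(1)}}(\St^{(1)}) \hookrightarrow \Db\Coh^{\bG^{(1)}}(\St^{\wedge(1)}), \quad
\Db\HC_\nil^{\hla,\hla} \hookrightarrow \Db\HC^{\hla,\hla}
\]
are each characterized by a common ``nilpotency'' property, and that the equivalences of Theorem~\ref{thm:equivalences-comp} intertwine the natural central actions. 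The characterizations are already available: Lemma~\ref{lem:HC-central-char} for Harish-Chandra bimodules, Lemma~\ref{lem:DbCoh-nil} for $\St$, Lemma~\ref{lem:DbCoh-unip} for $\Stm$, and for $\sfD_{\Iwu,\Iwu}$ the analogous statement follows from the construction recalled in Remark~\ref{rmk:main-equiv-intro}\eqref{it:main-equiv-intro-class} together with the monodromy description of $\sfD^\wedge_{\Iwu,\Iwu}$ from~\cite{br-pt2}.

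First I would identify the relevant central subalgebras and match them across the equivalences. On the coherent side, both $\Db\Coh^{\bG^{(1)}}(\St^{\wedge(1)})$ and $\Db\Coh^{\bG^{(1)}}(\Stm^{\wedge(1)})$ carry tautological central actions by the structure rings of $\FN_{\bt^{*(1)}\times_{\bt^{*(1)}/\bWf}\bt^{*(1)}}(\{(0,0)\})$ and $\FN_{\bT^{(1)}\times_{\bT^{(1)}/\bWf}\bT^{(1)}}(\{(e,e)\})$ respectively, and the equivalence between them comes directly from the isomorphism~\eqref{eqn:identification-Stm-St}, which by construction covers the isomorphism~\eqref{eqn:isom-varphi-FN} of the two bases. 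Hence the middle equivalence of Theorem~\ref{thm:equivalences-comp} intertwines the two central actions and restricts to an equivalence between the subcategories where they act nilpotently. For the equivalence $\Phi^{\hla,\hla}$ of the right-hand side, the central action of $\scO(\bZ^{\hla,\hla})$ on $\Db\HC^{\hla,\hla}$ is transported, under the localization equivalence~\eqref{eqn:equiv-HC-Coh} and the splitting construction of~\S\ref{ss:D-mod-Coh}, to the action coming from the base of $\St^{\wedge(1)}$; this is essentially tautological from the definition of $\osD_{\varnothing,\varnothing}^{\hla,\hla}$ and the identification~\eqref{eqn:isom-FN}. Combined with Lemma~\ref{lem:HC-central-char} and Lemma~\ref{lem:DbCoh-nil}, this gives the restriction $\Phi_\nil^{\hla,\hla}$.

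The main obstacle is the compatibility of the left-hand equivalence $\Theta^{\hla,\hla}$ with the central actions: on the constructible side the relevant action is the \emph{monodromy} action of $\scO(\FN_{T^{(1)}\times T^{(1)}}(\{(1,1)\}))$ coming from the $T$-torsor structure on $\tFl_G$, and one must show that under $\Theta^{\hla,\hla}$ this matches the action of $\scO(\FN_{\bT^{(1)}\times_{\bT^{(1)}/\bWf}\bT^{(1)}}(\{(e,e)\}))$ on the $\Stm^{\wedge(1)}$ side. My approach would be to verify this at the level of the Soergel subcategories, using that the equivalence $\sfT^\wedge_{\Iwu,\Iwu}\simto \SRep(\bbI_\Sigma^\wedge)$ of~\cite{br-pt2} was \emph{constructed} in a way that makes the monodromy correspond to the natural bimodule structure over $\scO(\FN_{\bT^{(1)}}(\{e\}))\otimes_{\scO(\FN_{\bT^{(1)}/\bWf}(\{e\}))}\scO(\FN_{\bT^{(1)}}(\{e\}))$, which is exactly the algebra governing the base of $\Stm^\wedge$. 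Once the compatibility is established on $\sfT^\wedge_{\Iwu,\Iwu}$ it propagates to the full triangulated category via the realization functor~\eqref{eqn-D-KbTilt} and Proposition~\ref{prop:realization-equiv} (transported via the equivalences~\eqref{eqn:equivalences-Soergel-cats}).

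Finally, granted the three compatibilities, each of the restricted functors is a fully faithful exact functor between triangulated categories whose essential image contains a set of generators (namely the image of the respective ambient category's Soergel subcategory, which lies in the nilpotent part because the unit object does), so one obtains the three equivalences of triangulated categories. Monoidality of the restrictions is immediate: on each side the ``nilpotent'' subcategory is a monoidal ideal, because for any $M$ in this subcategory and any $N$ in the ambient category, the convolution $M\star N$ has a power of the maximal ideal of the central subalgebra acting by zero, by the same flat-resolution argument as in the convolution lemma following Lemma~\ref{lem:HC-wedge-nil}; in particular the monoidal structure on $\Db\HC^{\hla,\hla}$ (which is a priori only defined on the bounded-above derived category in~\S\ref{ss:monoidal-structure-DHC}) restricts to one on $\Db\HC_\nil^{\hla,\hla}$, and likewise on the other two sides. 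The resulting equivalences are then monoidal since the ambient ones are.
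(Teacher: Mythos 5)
Your approach is essentially identical to the paper's: observe that the equivalences of Theorem~\ref{thm:equivalences-comp} are linear over the common local ring $\scO(\bZ^{\hla,\hla})\cong\scO(\FN_{\bT^{(1)}\times_{\bT^{(1)}/\bWf}\bT^{(1)}}(\{(e,e)\}))$, and then invoke Lemma~\ref{lem:HC-central-char}, Lemma~\ref{lem:DbCoh-nil}, Lemma~\ref{lem:DbCoh-unip}, and the corresponding statement from~\cite{br-pt2} (cf.~\cite[Lemma~6.2]{br-pt2}) to identify the subcategories of objects on which the maximal ideal acts nilpotently with the ``non-completed'' categories in the statement. Your discussion of how to verify the linearity for $\Theta^{\hla,\hla}$ via the Soergel subcategories and the monodromy bimodule structure in $\SRep(\bbI^\wedge_\Sigma)$ is a reasonable (if more detailed) account of what the paper implicitly takes for granted.

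However, the final paragraph contains a genuine error: you claim that the image of the ambient Soergel subcategory lies in the nilpotent part ``because the unit object does.'' This is false. The unit object of $\Db\HC^{\hla,\hla}$ is $(\cU\bg)^{\hla}$, a completion, not a quotient by a power of the maximal ideal; equivalently $\scO_{\Delta\tbg^{\wedge(1)}}$ is \emph{not} set-theoretically supported on the preimage of the nilpotent cone. More generally $\SCoh^{\bG^{(1)}}(\St^{\wedge(1)})$ does not lie inside $\Db\Coh^{\bG^{(1)}}_{\cN}(\St^{(1)})$. Fortunately this extra generator argument is also unnecessary: once the equivalences are known to be linear over the local ring, they automatically restrict to equivalences between the full subcategories of objects on which the maximal ideal acts nilpotently (both full faithfulness and essential surjectivity are immediate, since an object lies in the nilpotent part iff its image does). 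You should simply delete the generator argument; the rest of your proof stands.
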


\begin{proof}
The equivalences in Theorem~\ref{thm:equivalences-comp} are linear for the actions of the local ring
\[
\scO(\bZ^{\hla,\hla}) \cong \scO(\FN_{\bt^{*(1)} \times_{\bt^{*(1)}/\bWf} \bt^{*(1)}}(\{(0,0)\})) \cong \scO(\FN_{\bT^{(1)} \times_{\bT^{(1)}/\bWf} \bT^{(1)}}(\{(e,e)\}));
\]
they therefore restrict to equivalences between the full subcategories of objects on which the action of the maximal ideal of this ring is nilpotent. In the case of $\sfD^\wedge_{\Iwu, \Iwu}$, this subcategory has been identified with $\sfD_{\Iwu, \Iwu}$ in~\cite[Lemma~6.2]{br-pt2}. For $\Db \HC^{\hla,\hla}$, this subcategory has been identified with $\Db \HC_{\mathrm{nil}}^{\hla,\hla}$ in Lemma~\ref{lem:HC-central-char}. Finally, for $\Db \Coh^{\bG^{(1)}}(\St^{\wedge(1)})$, resp.~$\Db \Coh^{\bG^{(1)}}(\Stm^{\wedge(1)})$, this subcategory has been identified with $\Db \Coh_{\cN}^{\bG^{(1)}}(\St^{(1)})$, resp.~$\Db \Coh_{\cU}^{\bG^{(1)}}(\Stm^{(1)})$, in Lemma~\ref{lem:DbCoh-nil}, resp.~in Lemma~\ref{lem:DbCoh-unip}.
\end{proof}

\subsection{Some objects associated with tilting \texorpdfstring{$\bG^{(1)}$}{G(1)}-modules}

Recall from~\cite[Proposition~7.9]{br-pt2} that for any $V \in \Tilt(\bG^{(1)})$ we have a tilting perverse sheaf
\[
\scZ^\wedge(V) \hatstar \Xi_!^\wedge \quad \in \sfT^\wedge_{\Iwu,\Iwu}.
\]
(Here $\Xi_!^\wedge$ is the ``big tilting pro-object'' on the basic affine space of $G$, see~\cite[\S 6.6]{br-pt2}, and $\scZ^\wedge$ is the pro-monodromic variant of Gaitsgory's central functor constructed in~\cite{gaitsgory}, see~\cite[\S 7.4]{br-pt2}.)
On the other hand, we can consider the equivariant coherent sheaf
\[
V \otimes \scO_{\Stm^{\wedge(1)}} \quad \in \Db \Coh^{\bG^{(1)}}(\Stm^{\wedge(1)}),
\]
which by Lemma~\ref{lem:free-tilting-Soergel} belongs to $\SCoh^{\bG^{(1)}}(\Stm^{\wedge(1)})$. Recall also that we have a canonical isomorphism
\begin{equation*}
\scO(\FN_{\bT^{(1)} \times_{\bT^{(1)}/\bWf} \bT^{(1)}}(\{(e,e)\})) \simto \End_{\sfT^\wedge_{\Iwu,\Iwu}}(\Xi_!^\wedge)
\end{equation*}
induced by left and right monodromy operations (this is the main result of~\cite{br-soergel}; see also~\cite[Equation~(8.7)]{br-pt2}). On the other hand, the projection morphism
\[
\Stm^{\wedge(1)} \to \FN_{\bT^{(1)} \times_{\bT^{(1)}/\bWf} \bT^{(1)}}(\{(e,e)\})
\]
induces a canonical algebra morphism
\[
\scO(\FN_{\bT^{(1)} \times_{\bT^{(1)}/\bWf} \bT^{(1)}}(\{(e,e)\})) \to \End_{\Coh^{\bG^{(1)}}(\Stm^{\wedge(1)})}(\scO_{\Stm^{\wedge(1)}}).
\]

\begin{prop}
\label{prop:image-central}
For any $V \in \Tilt(\bG^{(1)})$ there exists a functorial (in $V$) isomorphism
\[
\Theta^{\hla,\hla}
(\scZ^\wedge(V) \hatstar \Xi_!^\wedge) \cong V \otimes \scO_{\Stm^{\wedge(1)}},
\]
which moreover has the property that for any $r \in \scO(\FN_{\bT^{(1)} \times_{\bT^{(1)}/\bWf} \bT^{(1)}}(\{(e,e)\}))$ with images $f \in \End(\Xi_!^\wedge)$ and $g \in \End(\scO_{\Stm^{\wedge(1)}})$ the following diagram commutes:
\[
\xymatrix{
\Theta^{\hla,\hla}(\scZ^\wedge(V) \hatstar \Xi_!^\wedge) \ar[r]^-{\sim} \ar[d]_-{\Theta^{\hla,\hla}(\id \hatstar f)} & V \otimes \scO_{\Stm^{\wedge(1)}} \ar[d]^-{\id \otimes g} \\
\Theta^{\hla,\hla}(\scZ^\wedge(V) \hatstar \Xi_!^\wedge) \ar[r]^-{\sim} & V \otimes \scO_{\Stm^{\wedge(1)}}.
}
\]
\end{prop}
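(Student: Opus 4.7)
The plan is to reduce the statement to a comparison inside $\SRep(\bbI_\Sigma^\wedge)$, where both sides become a canonical free representation on $V$. First I would place the two objects in the appropriate Soergel-type subcategories: the object $\scZ^\wedge(V) \hatstar \Xi_!^\wedge$ belongs to $\sfT^\wedge_{\Iwu,\Iwu}$ by~\cite[Proposition~7.9]{br-pt2}, while $V \otimes \scO_{\Stm^{\wedge(1)}}$ belongs to $\SCoh^{\bG^{(1)}}(\Stm^{\wedge(1)})$ by Lemma~\ref{lem:free-tilting-Soergel}. Since $\Theta^{\hla,\hla}$ is constructed as the bounded homotopy extension of the first equivalence in~\eqref{eqn:equivalences-Soergel-cats}, it restricts to a monoidal equivalence between these two subcategories. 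By Corollary~\ref{cor:restriction-SCoh-Sigma-ff} together with the equivalence~\eqref{eqn:equiv-pt2}, the restriction functor~\eqref{eqn:restriction-Stm-centralizer} is fully faithful on $\SCoh^{\bG^{(1)}}(\Stm^{\wedge(1)})$, so it suffices to produce a functorial isomorphism between the images in $\SRep(\bbI_\Sigma^\wedge)$, compatible with the two monodromy actions.

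The image of $V \otimes \scO_{\Stm^{\wedge(1)}}$ under restriction is straightforward: pullback along~\eqref{eqn:section-Stm} yields the free $\scO(\FN_{\bT^{(1)} \times_{\bT^{(1)}/\bWf} \bT^{(1)}}(\{(e,e)\}))$-module with underlying space $V$, on which $\bbI_\Sigma^\wedge$ acts via its canonical embedding into $\bG^{(1)}$ followed by the given representation on $V$. The monodromy action on this object is, by construction, the right (equivalently left) multiplication of $\scO(\FN_{\bT^{(1)} \times_{\bT^{(1)}/\bWf} \bT^{(1)}}(\{(e,e)\}))$ on the second tensor factor, and both sides of the projection $\Stm^{\wedge(1)} \to \FN_{\bT^{(1)} \times_{\bT^{(1)}/\bWf} \bT^{(1)}}(\{(e,e)\})$ induce the same action since $\scO_{\Stm^{\wedge(1)}}$ is symmetric in the two factors.

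Next I would compute the image of $\scZ^\wedge(V) \hatstar \Xi_!^\wedge$ under the equivalence~\eqref{eqn:equiv-pt2}. The strategy is to combine two inputs from~\cite{br-pt2}: (i) the description of the image of $\Xi_!^\wedge$, whose left and right monodromy endomorphisms recover the bimodule structure of $\scO(\FN_{\bT^{(1)} \times_{\bT^{(1)}/\bWf} \bT^{(1)}}(\{(e,e)\}))$ via the identification from~\cite{br-soergel}, and (ii) the fact that convolution with $\scZ^\wedge(V)$ corresponds on the $\SRep$-side to tensoring (over the base) with the $\bbI_\Sigma^\wedge$-module obtained from $V$ via $\bbI_\Sigma^\wedge \hookrightarrow \bG^{(1)}$. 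Input (ii) can be checked by centrality: $\scZ^\wedge(V) \hatstar \Xi_!^\wedge$ is characterized, inside the heart of tilting perverse sheaves, by its underlying vector space together with its left/right monodromy actions, and both data are computed by the nearby-cycles construction of $\scZ^\wedge$ together with the matching of the Frobenius twist built into $\Phi^{\hla,\hla}$. Once (i) and (ii) are in place, the resulting image is the same free representation as in the previous paragraph, and the isomorphism between the two is functorial in $V$.

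The main obstacle is step (ii), namely identifying the composition of Gaitsgory's central functor with the equivalence~\eqref{eqn:equiv-pt2} with the natural functor $\Rep(\bG^{(1)}) \to \SRep(\bbI_\Sigma^\wedge)$ given by restriction along $\bbI_\Sigma^\wedge \hookrightarrow \bG^{(1)}$ followed by free extension of scalars. This requires unwinding the construction of~\cite[Theorem~11.2]{br-pt2} against the nearby-cycles description of $\scZ^\wedge$ and its interaction with the Soergel realization functor; compatibility with monodromy will then drop out from naturality, since both left and right monodromy factor through the corresponding operations on $\Xi_!^\wedge$ on one side and through the scalar action of $\scO(\FN_{\bT^{(1)} \times_{\bT^{(1)}/\bWf} \bT^{(1)}}(\{(e,e)\}))$ on the other.
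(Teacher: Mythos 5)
Your reduction is the right one and matches the paper's proof: by fully faithfulness of the restriction functor~\eqref{eqn:restriction-Stm-centralizer} on $\SCoh^{\bG^{(1)}}(\Stm^{\wedge(1)})$, it suffices to compare images in $\Rep(\bbI^\wedge_\Sigma)$, and your identification of the image of $V \otimes \scO_{\Stm^{\wedge(1)}}$ as the free representation $V \otimes \scO_{\FN_{\bT^{(1)} \times_{\bT^{(1)}/\bWf} \bT^{(1)}}(\{(e,e)\})}$ (with the action coming from $\bbI^\wedge_\Sigma \hookrightarrow \bG^{(1)}$) is correct. The one place where you substantially over-estimate the work is step (ii). You flag as the ``main obstacle'' the identification of the image of $\scZ^\wedge(V) \hatstar \Xi_!^\wedge$ under~\eqref{eqn:equiv-pt2}, and propose to re-derive it by decomposing into (i) the description of the image of $\Xi_!^\wedge$ plus (ii) a centrality argument. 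But that identification is not an obstacle to be worked around: it is precisely one of the assertions of~\cite[Theorem~11.2]{br-pt2}, which is the theorem that constructs~\eqref{eqn:equiv-pt2} in the first place. The paper's proof simply cites this: the image of $\scZ^\wedge(V) \hatstar \Xi_!^\wedge$ is identified there with $V \otimes \scO_{\FN_{\bT^{(1)} \times_{\bT^{(1)}/\bWf} \bT^{(1)}}(\{(e,e)\})}$, functorially in $V$, with the specified $\bbI^\wedge_\Sigma$-action and the prescribed monodromy. So the two images coincide on the nose, and the commutativity of the diagram with $f$ and $g$ is also built into the statement of that theorem rather than something to derive afresh. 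Your plan would work if carried out, but you are essentially proposing to re-prove a result you should instead quote directly; once you quote it, the proof is complete.
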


%
%
%

\begin{proof}
By construction of $\Theta^{\hla,\hla}$, to construct an isomorphism as in the proposition it suffices to construct a canonical isomorphism between the images of $\scZ^\wedge(V) \hatstar \Xi_!^\wedge$ and $V \otimes \scO_{\Stm^{\wedge(1)}}$ in $\Rep(\bbI^\wedge_\Sigma)$. In~\cite[Theorem~11.2]{br-pt2} we have identified the image of $\scZ^\wedge(V) \hatstar \Xi_!^\wedge$ with
\[
V \otimes \scO_{\FN_{\bT^{(1)} \times_{\bT^{(1)}/\bWf} \bT^{(1)}}(\{(e,e)\})},
\]
with the action coming from the embedding
\[
\bbI^\wedge_\Sigma \subset \bG^{(1)} \times \FN_{\bT^{(1)} \times_{\bT^{(1)}/\bWf} \bT^{(1)}}(\{(e,e)\}).
\]
It is clear that this object is also the image of $V \otimes \scO_{\Stm^{\wedge(1)}}$, which provides the desired isomorphism. The commutativity of the lemma is clear from construction.
\end{proof}

\subsection{Variant for a fixed central character}

Recall the categories considered in~\S\ref{ss:HC-fixed-character} and~\S\ref{ss:localization-fixed-HC-char}, and the category $\sfD_{\Iwu,\Iw}$ of~\cite[\S 4.2]{br-Hecke}. Set also
\[
\Stm' := \Stm \times_{\bT^{(1)} \times_{\bT^{(1)}/\bWf} \bT^{(1)}} \bigl( \bT^{(1)} \times_{\bT^{(1)}/\bWf} \{e\} \bigr).
\]
Then the isomorphism~\eqref{eqn:identification-Stm-St} restricts to an isomorphism
\begin{equation}
\label{eqn:isom-St'-Stm'}
\Stm' \simto \St'.
\end{equation}

The following statement is the analogue of Theorems~\ref{thm:equivalences-comp} and~\ref{thm:equivalences} in the present context.

\begin{thm}
\label{thm:equivalences-fixed}
There exist equivalences of triangulated categories
\[
\sfD_{\Iwu, \Iw} 
\simto
\Db \Coh^{\bG^{(1)}}(\Stm^{\prime (1)})
\simto
\Db \Coh^{\bG^{(1)}}(\St^{\prime (1)})
\simto
\Db \HC^{\hla,\lambda}.
\]
These equivalences are compatible with those of Theorem~\ref{thm:equivalences-comp} in the sense that they are equivalences of module categories over the monoidal categories appearing in the latter statement, and also that the diagrams involving the various push/pull functors relating these categories are commutative.
\end{thm}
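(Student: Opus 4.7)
The plan is to derive the three equivalences by combining constructions already made in the paper with a ``specialization'' argument that mimics the passage from Theorem~\ref{thm:equivalences-comp} to Theorem~\ref{thm:equivalences}. First, the rightmost equivalence $\Db \Coh^{\bG^{(1)}}(\St^{\prime (1)}) \simto \Db \HC^{\hla,\lambda}$ is essentially already constructed as~\eqref{eqn:equiv-HC-Coh-fix} in~\S\ref{ss:localization-fixed-HC-char}, by combining the localization theorem for a fixed right central character (Theorem~\ref{thm:localization-HC-fixed}) with the splitting of $\osD_{\varnothing,\varnothing}^{\hla,\mu}$ (Proposition~\ref{prop:splitting-HC-equiv-fix}); the commutative diagram at the end of~\S\ref{ss:localization-fixed-HC-char} already records its compatibility with $\mathsf{Sp}_{\lambda,\lambda}$ and with push/pull along the closed immersion $\St^{\prime (1)} \hookrightarrow \St^{\wedge (1)}$. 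The middle equivalence $\Db \Coh^{\bG^{(1)}}(\Stm^{\prime (1)}) \simto \Db \Coh^{\bG^{(1)}}(\St^{\prime (1)})$ is immediate from the isomorphism of schemes~\eqref{eqn:isom-St'-Stm'}, which is just the restriction of~\eqref{eqn:identification-Stm-St} to the appropriate closed subscheme; its compatibility with the middle equivalence of Theorem~\ref{thm:equivalences-comp} is tautological.

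For the leftmost equivalence $\sfD_{\Iwu, \Iw} \simto \Db \Coh^{\bG^{(1)}}(\Stm^{\prime (1)})$, the strategy is to realize both sides as ``specializations at the trivial right character'' of their completed analogues $\sfD^\wedge_{\Iwu,\Iwu}$ and $\Db \Coh^{\bG^{(1)}}(\Stm^{\wedge (1)})$, and then to transport this specialization through $\Theta^{\hla,\hla}$. Concretely, on the constructible side, the forgetful functor along the $T$-torsor $\tFl_G \to \Fl_G$ identifies $\sfD_{\Iwu, \Iw}$ with the subcategory of objects in $\sfD^\wedge_{\Iwu, \Iwu}$ on which the right monodromy action of the local ring $\scO(\FN_{\bT^{(1)}}(\{e\}))$ factors through the trivial character (see~\cite[\S 4.2]{br-Hecke} for a similar analysis in the finite case). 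On the coherent side, $\Db \Coh^{\bG^{(1)}}(\Stm^{\prime (1)})$ is the analogous specialization of $\Db \Coh^{\bG^{(1)}}(\Stm^{\wedge (1)})$ along the natural projection $\Stm^{\wedge (1)} \to \FN_{\bT^{(1)}}(\{e\})$ (projection on the second factor), which kills the right action of the maximal ideal of $\scO(\FN_{\bT^{(1)}}(\{e\}))$.

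To conclude, one needs to check that the equivalence $\Theta^{\hla,\hla}$ intertwines the right monodromy action on $\sfD^\wedge_{\Iwu,\Iwu}$ with the action on $\Db \Coh^{\bG^{(1)}}(\Stm^{\wedge (1)})$ coming from the right projection $\Stm^{\wedge (1)} \to \FN_{\bT^{(1)}}(\{e\})$. This compatibility is already implicit in the construction of~\cite{br-pt2}: the equivalence~\eqref{eqn:equiv-pt2} was set up so that right monodromy on tilting perverse sheaves matches the natural action of $\scO(\FN_{\bT^{(1)}}(\{e\}))$ on objects of $\SRep(\bbI^\wedge_\Sigma)$ via the right projection, and Proposition~\ref{prop:image-central} provides the corresponding compatibility for the central objects $\scZ^\wedge(V) \hatstar \Xi^\wedge_!$ through $\Theta^{\hla,\hla}$. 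Once this matching of right-monodromy structures is established, the leftmost equivalence follows by applying a formal ``specialization'' procedure on both sides, and the compatibility with Theorem~\ref{thm:equivalences-comp} (both as module categories and in terms of the various push/pull functors) follows because the whole construction is manifestly linear in the module structure.

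The step I expect to be the main obstacle is the precise identification of $\sfD_{\Iwu,\Iw}$ as the specialization of $\sfD^\wedge_{\Iwu,\Iwu}$ under right monodromy, and the verification that $\Theta^{\hla,\hla}$ is equivariant for the right-monodromy actions. The first point requires a careful analysis of the definition of $\sfD_{\Iwu,\Iw}$ and its relationship to $\sfD^\wedge_{\Iwu,\Iwu}$ analogous to the ``nilpotent action'' description used in Theorem~\ref{thm:equivalences}, but for a fixed character rather than nilpotence. The second point should follow from the explicit compatibility results in Section~\ref{sec:Steinberg-section} (in particular from the construction of the splitting bundle $\scM_{I,J}^{\hla,\hmu}$ in~\S\ref{ss:D-mod-Coh}, which carries the right $T$-monodromy via its tensor factor $\scO_{\bG/\bP_J}(-\mu-\varsigma)$) together with the matching of right monodromy performed in~\cite{br-pt2}.
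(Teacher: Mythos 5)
Your treatment of the middle and right equivalences matches the paper: the middle one is the scheme isomorphism~\eqref{eqn:isom-St'-Stm'} and the right one is~\eqref{eqn:equiv-HC-Coh-fix}, together with the compatibility diagram recorded at the end of~\S\ref{ss:localization-fixed-HC-char}. It is the leftmost equivalence where your proposal has a genuine gap.

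Your plan rests on identifying $\sfD_{\Iwu,\Iw}$ with the full subcategory of $\sfD^\wedge_{\Iwu,\Iwu}$ on which the right monodromy action of $\scO(\FN_{\bT^{(1)}}(\{e\}))$ factors through the trivial character, in parallel with realizing $\Db\Coh^{\bG^{(1)}}(\Stm^{\prime(1)})$ inside $\Db\Coh^{\bG^{(1)}}(\Stm^{\wedge(1)})$ via pushforward along a closed immersion. These two ``specializations'' are \emph{not} of the same nature. On the coherent side the pushforward along $\Stm^{\prime(1)}\hookrightarrow\Stm^{\wedge(1)}$ is fully faithful; on the constructible side neither $\pi^\dag$ nor $\pi_\dag$ along $\tFl_G\to\Fl_G$ is fully faithful, and $\sfD_{\Iwu,\Iw}$ is not a full subcategory of the completed monodromic category. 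Because of this mismatch, the ``formal specialization procedure'' you appeal to is not formal at all: you cannot pass from a matching of the completed categories to a matching of the fixed-character categories simply by restricting to objects killed by the augmentation ideal.

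What makes the specialization work in the paper is a concrete Hom base-change statement, not an isomorphism of subcategories: the functors $\pi_\dag$ (from $\cite[\text{Lemma 5.9}]{br-soergel}$, as cited in the proof) and $\mathsf{Sp}_{\lambda,\lambda}$ (via Proposition~\ref{prop:Ext-vanishing-fixed-char}) are essentially surjective onto the Bott--Samelson objects and induce isomorphisms
\[
\bk \otimes_{\scO(\FN_{\bT^{(1)}}(\{e\}))} \Hom(M,N) \simto \Hom(\pi_\dag M, \pi_\dag N), \qquad
\bk_\lambda \otimes_{\scO(\FN_{\bt^*/(\bWf,\bullet)}(\{\tla\}))} \Hom(M,N) \simto \Hom(\mathsf{Sp}_{\lambda,\lambda}M, \mathsf{Sp}_{\lambda,\lambda}N),
\]
the second of which also requires higher $\Ext$-vanishing. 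The paper then restricts the equivalence $\sfD^\wedge_{\Iwu,\Iwu}\simto\Db\HC^{\hla,\hla}$ to $\mathsf{BST}^\wedge_{\Iwu,\Iwu}\simto\BSHC^{\hla,\hla}$, transports it along these two base-change functors to get $\mathsf{BST}_{\Iwu,\Iw}\simto\BSHC^{\hla,\lambda}$, and then takes bounded homotopy categories of Karoubian additive hulls using~\eqref{eqn:real-T-Iwu-Iw} and~\eqref{eqn:real-HC-fixed}. Your proposal does not identify this base-change-on-Hom ingredient, and without it there is no argument that the specializations on the two sides agree; the equivariance of $\Theta^{\hla,\hla}$ for the right monodromy/central actions, while true, is not by itself enough to deduce the equivalence.
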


\begin{proof}
The second equivalence is induced by the ($\bG$-equivariant) isomorphism~\eqref{eqn:isom-St'-Stm'}, while the third one is proved in~\S\ref{ss:localization-fixed-HC-char}. The compatibilities of these equivalences with the operations considered in the statement are either obvious or discussed in~\S\ref{ss:localization-fixed-HC-char} and Section~\ref{sec:monoidality}. It remains to explain the construction of the equivalence 
\begin{equation}
\label{eqn:equiv-D-HC-fixed-char}
\sfD_{\Iwu, \Iw} \simto \Db \HC^{\hla,\lambda}
\end{equation}
with appropriate compatibility properties.

We have the additive subcategory $\sfT_{\Iwu,\Iw} \subset \sfD_{\Iwu, \Iw}$ of tilting perverse sheaves, and an equivalence of categories
\begin{equation}
\label{eqn:real-T-Iwu-Iw}
\Kb \sfT_{\Iwu, \Iw} \simto \sfD_{\Iwu, \Iw}.
\end{equation}
We also have a natural functor
\[
\pi_\dag : \sfT^\wedge_{\Iwu, \Iwu} \to \sfT_{\Iwu, \Iw},
\]
see~\cite[\S 6.3]{br-pt2}.
Let us denote by $\mathsf{BST}^\wedge_{\Iwu,\Iwu}$, resp.~$\mathsf{BST}_{\Iwu,\Iw}$ the full subcategory of $\sfT^\wedge_{\Iwu,\Iwu}$, resp.~$\sfT_{\Iwu, \Iw}$, whose objects are the tilting perverse sheaves isomorphic to an object of the form
\[
\Delta^\wedge_\omega \hatstar \Xi_{s_1,!}^\wedge \hatstar \cdots \hatstar \Xi_{s_r,!}^\wedge, \quad \text{resp.} \quad
\pi_\dag(\Delta^\wedge_\omega \hatstar \Xi_{s_1,!}^\wedge \hatstar \cdots \hatstar \Xi_{s_r,!}^\wedge),
\]
with $\omega \in \Omega$ and $s_1, \cdots, s_r \in \Saff$. Then $\pi_\dag$ restricts to an essentially surjective functor $\mathsf{BST}^\wedge_{\Iwu,\Iwu} \to \mathsf{BST}_{\Iwu,\Iw}$, which has the property that for any $M,N \in \mathsf{BST}^\wedge_{\Iwu,\Iwu}$ it induces an isomorphism
\[
\bk \otimes_{\scO(\FN_{\bT^{(1)}}(\{e\}))} \Hom_{\sfT^\wedge_{\Iwu, \Iwu}}(M,N) \simto \Hom_{\sfT_{\Iwu, \Iw}}(\pi_\dag M, \pi_\dag N),
\]
see~\cite[Lemma~5.9]{br-soergel}.
Moreover the category $\sfT^\wedge_{\Iwu,\Iwu}$, resp.~$\sfT_{\Iwu, \Iw}$, identifies with the karoubian closure of the additive hull of $\mathsf{BST}^\wedge_{\Iwu,\Iwu}$, resp.~$\mathsf{BST}_{\Iwu,\Iw}$.

On the other hand, recall the functor $\mathsf{Sp}_{\lambda,\lambda}$ introduced in~\S\ref{ss:HC-fixed-character}.
 Consider the full subcategory $\BSHC^{\hla,\lambda} \subset \HC^{\hla,\lambda}$ whose objects are the images under this functor
 of the bimodules
\[
\sfR_\omega \star \sfR_{s_1} \star \cdots \star \sfR_{s_r}
\]
with $\omega \in \mathbf{\Omega}$ and $s_1, \cdots, s_r \in \bSaff$, and denote by $\SHC^{\hla,\lambda}$ the karoubian envelope of the additive hull of $\BSHC^{\hla,\lambda}$. Then $\mathsf{Sp}_{\lambda,\lambda}$ restricts to an essentially surjective functor $\BSHC^{\hla,\hla} \to \BSHC^{\hla,\lambda}$, which has the property that for any $M,N \in \BSHC^{\hla,\lambda}$ it induces an isomorphism
\[
\bk_\mu \otimes_{\scO(\FN_{\bt^*/(\bWf,\bullet)}(\{\tla\}))} \Hom_{\HC^{\hla,\hla}}(M,N) \simto \Hom_{\HC^{\hla,\lambda}}(\mathsf{Sp}_{\lambda,\lambda}(M), \mathsf{Sp}_{\lambda,\lambda}(N)),
\]
see Proposition~\ref{prop:Ext-vanishing-fixed-char}.
Arguing as in~\S\ref{eqn:realization-SHC} (and using the second part in Proposition~\ref{prop:Ext-vanishing-fixed-char}), one also checks that the natural functor
\begin{equation}
\label{eqn:real-HC-fixed}
\Kb \SHC^{\hla,\lambda} \to \Db \HC^{\hla,\lambda}
\end{equation}
is an equivalence of monoidal categories.

It is clear that the equivalence $\sfD^\wedge_{\Iwu,\Iwu} \simto \Db \HC^{\hla,\hla}$ restricts to an equivalence of categories
\[
\mathsf{BST}^\wedge_{\Iwu,\Iwu} \simto
\BSHC^{\hla,\hla}.
\]
The comments above show that this equivalence in turn induces an equivalence of categories
\[
\mathsf{BST}_{\Iwu,\Iw} \simto
\BSHC^{\hla,\lambda}.
\]
Passing to bounded homotopy categories of the karoubian closure of the additive hull, and finally conjugating by the equivalences~\eqref{eqn:real-T-Iwu-Iw} and~\eqref{eqn:real-HC-fixed}, we deduce the wished-for equivalence~\eqref{eqn:equiv-D-HC-fixed-char}.

It is easily seen from the construction that this equivalence is compatible in the required sense with the convolution actions of $\sfD_{\Iwu,\Iwu}^\wedge$ and $\Db \HC^{\hla,\hla}$, and that it matches the functor $\pi_\dag$ with $\mathsf{Sp}_{\lambda,\lambda}$. The other compatibility follows by adjunction.
\end{proof}

\subsection{Antispherical quotient}
\label{ss:antispherical}

In this subsection we indicate how one can deduce from the results of this paper an analogue for positive-characteristic coefficients of the main result of~\cite{ab}.

Recall the Springer resolution $\tcN$ considered in~\S\ref{ss:convolution}, and the ``Iwhori--Whittaker'' category $\sfD_{\IW,\Iw}$ of sheaves on $\Fl_G$, see~\cite[\S 7.1]{brr-pt1}.
We consider the functor
\begin{equation}
\label{eqn:functor-modAB}
\Db \Coh^{\bG^{(1)}}(\tcN^{(1)}) \to \sfD_{\IW, \Iw}
\end{equation}
defined as the composition
\[
\Db \Coh^{\bG^{(1)}}(\tcN^{(1)}) \to \Db \Coh^{\bG^{(1)}}(\St^{\prime (1)}) \cong \sfD_{\Iwu,\Iw} \xrightarrow{\Av_{\IW}} \sfD_{\IW, \Iw}
\]
where the left functor is pushforward under the diagonal embedding $\tcN^{(1)} \to \St^{\prime (1)}$, the middle equivalence is given by Theorem~\ref{thm:equivalences-fixed}, and the functor on the right is given by Iwahori--Whittaker averaging (see~\cite[\S 7.5]{br-pt2}).

\begin{thm}
\label{thm:modAB}
The functor~\eqref{eqn:functor-modAB} is an equivalence of categories.
\end{thm}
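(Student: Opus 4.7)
Using the equivalence of Theorem~\ref{thm:equivalences-fixed}, the statement reduces to showing that the composition
\[
\Upsilon : \Db \Coh^{\bG^{(1)}}(\tcN^{(1)}) \xrightarrow{i_*} \Db \Coh^{\bG^{(1)}}(\St^{\prime(1)}) \xrightarrow{\sim} \sfD_{\Iwu,\Iw} \xrightarrow{\Av_\IW} \sfD_{\IW,\Iw}
\]
is an equivalence, where $i : \tcN^{(1)} \hookrightarrow \St^{\prime(1)}$ denotes the diagonal closed embedding. The strategy I would pursue is to realize both source and target as cyclic module categories over the monoidal category $\mathsf{M} := \Db \Coh^{\bG^{(1)}}(\St^{\wedge(1)}) \simeq \sfD^\wedge_{\Iwu,\Iwu}$ from Theorem~\ref{thm:equivalences-comp}, and to match the corresponding annihilator ideals in $\mathsf{M}$.

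Both the source and target of $\Upsilon$ carry natural module structures over $\mathsf{M}$, and $\Upsilon$ is $\mathsf{M}$-linear: compatibility of $i_*$ with convolution follows from flat base change along the relevant (derived) fiber products; the middle equivalence is module-linear by the compatibility clauses in Theorem~\ref{thm:equivalences-fixed}; and $\Av_\IW$ commutes with convolution as established in~\cite{brr-pt1,br-pt2}. Next I would show that $\Db \Coh^{\bG^{(1)}}(\tcN^{(1)})$ is generated, as an $\mathsf{M}$-module, by $\scO_{\tcN^{(1)}}$, following the template of Proposition~\ref{prop:realization-equiv}: Lemma~\ref{lem:free-tilting-Soergel} together with the observation that line bundles pulled back from $(\bG/\bB)^{(1)}$ form a generating family for $\Db \Coh^{\bG^{(1)}}(\tcN^{(1)})$ and are all obtainable from $\scO_{\tcN^{(1)}}$ by convolving with objects $\scI^\wedge_{T_\mu}$ suffices. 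Dually, $\sfD_{\IW,\Iw}$ is generated as an $\sfD^\wedge_{\Iwu,\Iwu}$-module by $\Upsilon(\scO_{\tcN^{(1)}})$, which by direct computation is identifiable with a specific costandard $\IW$-perverse sheaf on $\tFl_G$ associated with the minimal-length representative of the nontrivial $\Wf$-coset indexing $\IW$-strata.

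Given these generation statements, $\Upsilon$ will be an equivalence once it induces isomorphisms
\[
\Hom^\bullet(\scO_{\tcN^{(1)}}, X \star \scO_{\tcN^{(1)}}) \xrightarrow{\sim} \Hom^\bullet(\Upsilon(\scO_{\tcN^{(1)}}), X \star \Upsilon(\scO_{\tcN^{(1)}}))
\]
for all $X \in \mathsf{M}$; by $\mathsf{M}$-linearity this amounts to matching, under Theorem~\ref{thm:equivalences-comp}, the annihilator ideals of $\scO_{\tcN^{(1)}}$ and of $\Upsilon(\scO_{\tcN^{(1)}})$. The identification of the $\IW$-side annihilator with the thick ideal generated by the objects $\{\Xi^\wedge_{s,!} : s \in \bSf\}$ is the antispherical-quotient description of $\sfD_{\IW,\Iw}$ from~\cite{brr-pt1}; the parallel description of the coherent-side annihilator as generated by $\{\scR_s : s \in \bSf\}$ (equivalently by $\{\scX_s^\wedge\}$ via Corollary~\ref{cor:isom-R-X-2}) is the main obstacle. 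The key computation is to verify, using the triangles~\eqref{eqn:exact-seq-braid-gp-action-2} together with the geometry of the closed embedding $\tcN \hookrightarrow \tbg$, that for each $s \in \bSf$ the off-diagonal component $\scZ_s$ in $\tbg^{(1)} \times_{\tbg_s^{(1)}} \tbg^{(1)}$ restricts trivially to the diagonal copy of $\tcN^{(1)}$, yielding the expected cancellation for the action of $\scR_s$ on $\scO_{\tcN^{(1)}}$ and showing that these objects generate the full annihilator ideal. Once this matching is established, full faithfulness of $\Upsilon$ on the generator is formal, essential surjectivity follows from generation of $\sfD_{\IW,\Iw}$ by $\Upsilon(\scO_{\tcN^{(1)}})$, and the theorem follows.
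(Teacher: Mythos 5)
Your strategy is structurally quite different from the paper's, which follows the template of Arkhipov--Bezrukavnikov (as repackaged in~\cite[\S 6.6]{ar-book}): the paper identifies the images of the generating objects $\scO_{\tcN^{(1)}}(\lambda)$ and $V \otimes \scO_{\tcN^{(1)}}$ (they go to $\Av_\IW(\Wak_\lambda)$ and $\Av_\IW(\scZ(V))$ respectively), invokes the known generator-and-$\Ext$-comparison argument, and observes that the only step needing modification is an injectivity lemma, which it proves directly from Theorem~\ref{thm:equivalences-fixed}, fully faithfulness of (non-derived) closed pushforward, $t$-exactness of $\Av_\IW$, and simplicity of $\delta_{\Fl}$. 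Your module-category approach is not merely a reformulation of this; it is a genuinely different route, and it has a genuine gap.

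The gap is in the step where you claim that the desired isomorphisms
\[
\Hom^\bullet(\scO_{\tcN^{(1)}}, X \star \scO_{\tcN^{(1)}}) \simto \Hom^\bullet(\Upsilon(\scO_{\tcN^{(1)}}), X \star \Upsilon(\scO_{\tcN^{(1)}}))
\]
``amount to matching the annihilator ideals.'' This reduction is not valid as stated. Matching the kernels of the functors $X \mapsto X \star \mathbf{1}$ only tells you that the same objects of $\mathsf{M}$ act by zero; it does not control the $\Hom$ spaces from $\mathbf{1}$ to $X \star \mathbf{1}$ for $X$ not in the kernel. The reduction would be valid if you additionally knew that \emph{both} module categories are realized as Verdier quotients of $\mathsf{M}$ with $\mathbf{1}$ the image of the monoidal unit (i.e., that $\mathsf{M} \to \mathcal{M}$, $X \mapsto X \star \mathbf{1}$ is a quotient functor). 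For $\sfD_{\IW,\Iw}$ this is an antispherical-quotient description you are willing to quote; but the parallel claim that $\Db \Coh^{\bG^{(1)}}(\tcN^{(1)})$ is the Verdier quotient of $\mathsf{M}$ by the ideal $\langle \scR_s : s \in \bSf \rangle$ is itself essentially equivalent to Theorem~\ref{thm:modAB}, not a preliminary to it. Your ``key computation'' (that $\scZ_s$ restricts trivially to $\Delta \tcN^{(1)}$ for $s \in \bSf$) at best addresses which $\scR_s$ annihilate the generator; it does not establish the fullness needed to identify the coherent category as the relevant quotient, and so does not close the gap. You have relocated the difficulty rather than resolved it.

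If you want to pursue this route, the missing input is precisely a full-faithfulness statement for the cyclic-module functor $\mathsf{M} \to \Db\Coh^{\bG^{(1)}}(\tcN^{(1)})$, and the natural way to get it is to compare $\Hom$ spaces via a functor-$\mathbb{V}$/Steinberg-section type argument---which is exactly what the paper avoids by relying on the $\Ext$-injectivity trick from~\cite[\S 6.6]{ar-book}.
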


\begin{proof}
From the construction we know that the functor sends the object $\scO_{\tcN^{(1)}}(\lambda)$ to $\Av_{\IW}(\Wak_\lambda)$ for any $\lambda$, and $V \otimes_\bk \scO_{\tcN^{(1)}}$ to $\Av_{\IW}(\scZ(V))$ for any $V \in \Rep(G^\vee_\bk)$. (Here, $\scZ$ is the composition of Gaitsgory's central functor, see~\cite[\S 4.2]{brr-pt1}, with the geometric Satake equivalence.) To prove that this functor is an equivalence we follow the same steps as in~\cite{ab}; see~\cite[\S 6.6]{ar-book}. The only step that has to be modified is~\cite[Lemma~6.6.4]{ar-book}. For this lemma we need to prove the composition
\begin{multline*}
\Hom_{\Db \Coh^{\bG^{(1)}}(\tcN^{(1)})}(\scO_{\tcN^{(1)}}, V \otimes \scO_{\tcN^{(1)}}) \to \\
\Hom_{\Db \Coh^{\bG^{(1)}}(\St^{\prime (1)})}(\scO_{\Delta \tcN^{(1)}}, V \otimes \scO_{\Delta \tcN^{(1)}}) \to \\
\Hom_{\sfD_{\Iwu,\Iw}}(\delta_{\Fl}, \scZ(V)) \to \Hom_{\sfD_{\IW, \Iw}}(\Av_{\IW}(\delta_{\Fl}), \Av_{\IW}(\scZ(V)))
\end{multline*}
is injective. 
Here the first arrow is an isomorphism by fully faithfulness of (non derived!) pushforward under a closed embedding, the second arrow is an isomorphism because it is induced by an equivalence of categories. 
Finally, the third arrow is injective because any nonzero morphism in $\Hom(\delta_{\Fl}, \scZ(V))$ is injective (by simplicity of $\delta_{\Fl}$) and $\Av_{\IW}$ is t-exact.
(The latter fact can be proved as in~\cite[Corollary~7.5]{brr-pt1}.)
\end{proof}

\begin{rmk}
The equivalence of Theorem~\ref{thm:modAB} does not obviously satisfy some of the properties of its counterpart in~\cite{ab}, in particular compatibility with natural incarnations of ``affine Hecke category actions" on both sides. For this reason it might be interesting to construct this equivalence in a different way, closer to the strategy used in~\cite{ab}. Some technical difficulties have prevented us from completing such an approach so far, but we understand that several groups of authors have made progress in this direction in different sheaf-theoretic contexts, so that such a construction might be within reach now.
\end{rmk}

\subsection{Discussion of our assumptions}
\label{ss:assumptions}

Let us summarize the assumptions we have made for the proof of
the main results of the paper (Theorems~\ref{thm:equivalences-comp},~\ref{thm:equivalences} and~\ref{thm:equivalences-fixed}). In this process we start with the connected reductive group $G$ over an algebraically closed field $\F$ of characteristic $p>0$, its Borel subgroup $B$, and its maximal torus $T$. Then we choose a field $\bk$ which is an algebraic closure of a finite field of characteristic $\ell \neq p$, and the geometric Satake equivalence provides us with the dual group $G^\vee_\bk$, its Borel subgroup $B^\vee_\bk$ and maximal torus $T^\vee_\bk$. Since this dual group is canonically the base-change to $\bk$ of a reductive group scheme over $\Z_\ell$, there is a canonical group $\bG$ with $\bG^{(1)} = G^\vee_\bk$. We have to assume that this group satisfies the assumptions of~\S\ref{ss:HC-notation}, Section~\ref{sec:monoidality}, \S\ref{ss:pseudo-log} and~\S\ref{ss:relation-const-sheaves}; in concrete terms this means that:
\begin{enumerate}
\item 
\label{it:assumptions-discussion-1}
the quotient of $X^*(T)$ by the root lattice of $(G,T)$ is free;\footnote{This assumption means that the derived subgroup of $G^\vee_\bk$ is simply connected, or equivalently that the center of $G$ is a torus.}
\item
the quotient of $X_*(T)$ by the coroot lattice of $(G,T)$ has no $\ell$-torsion;
\item
there exists a $\bG$-equivariant morphism $\bG \to \bg$ sending $e$ to $0$ and \'etale at $e$;
\item
$\bg$ admits a nondegenerate $\bG$-invariant bilinear form;
\item 
\label{it:assumptions-discussion-5}
$\ell$ is odd, $\ell \geq h$, and if $\fR$ contains a component of type $\mathbf{E}_7$, resp.~$\mathbf{E}_8$, then $\ell \neq 19$, resp.~$\ell \neq 31$ (see~\S\ref{ss:relation-const-sheaves}).
\end{enumerate}

The assumptions on the group $G$ can be relaxed a posteriori by using the following observation. Assume we are given a connected reductive group $G'$ over $\F$ and a central isogeny $G' \to G$ such that the dual central isogeny $G^\vee_\bk \to (G')^\vee_\bk$ is \'etale. Then equivalences as in Theorems~\ref{thm:equivalences-comp},~\ref{thm:equivalences} and~\ref{thm:equivalences-fixed} for the group $G'$ can be deduced from similar equivalences for $G$, as we explain below. Note that, given a connected reductive group $G'$ and a field $\bk$ as above, if $\ell$ is strictly larger than the bound of Figure~\ref{fig:bounds} for any component of the root of system of $G'$, then a central isogeny $G' \to G$ with $G$ satisfying the conditions~\eqref{it:assumptions-discussion-1}--\eqref{it:assumptions-discussion-5} above automatically exists. (This follows from the considerations in~\cite[\S II.1.18]{jantzen}, using Remarks~\ref{rmk:assumptions-group} and~\ref{rmk:existence-pseudolog} and the fact that $\ell$ is automatically very good for $G$; one can e.g.~take for $G$ a product of the adjoint quotient of $G'$ with a torus.) This remark justifies the presentation of our results in Section~\ref{sec:intro}.

Let us now sketch a justification of the assertion above about Theorem~\ref{thm:equivalences-comp}; the other cases can be treated similarly. So, we consider groups $G$, $G'$ as above, and the associated \'etale isogeny $\bG \to \bG'$ obtained from the dual groups. What happens is that each of the categories considered in Theorem~\ref{thm:equivalences-comp} for the group $G'$ identifies with a direct summand of the corresponding category for $G$, and the equivalences for $G$ restrict to equivalences between these direct summands. On the constructible side, it is a classical fact that (possibly up to a universal homeomorphism) the affine flag variety of $G'$ identifies with a union of connected components of that of $G$, which implies the desired claim. (See~\cite[\S 9.3]{brr-pt1} for similar considerations.) For the categories associated with $\bG$ and $\bG'$, we consider the kernel $\bK$ of the isogeny $\bG \to \bG'$, a smooth finite diagonalizable group scheme. The isogeny identifies the Lie algebras $\bg$ and $\bg'$ of $\bG$ and $\bG'$, together with most of the associated structures (in particular, the finite Weyl groups and Grothendieck resolutions). Any complex in $\Db \Coh^{\bG^{(1)}}(\St^{\wedge(1)})$ or $\Db \HC^{\hla,\hla}$ admits a canonical decomposition according to the action of $\bK^{(1)}$, parametrized by the character lattice of this group scheme, and the corresponding categories attached to $\bG'$ identify with the subcategories of objects whose summands associated with nontrivial characters vanish. Finally, the morphism relating the \emph{multiplicative} Grothendieck resolutions $\tbG$ and $\tbG'$ is not an isomorphism, but arguing as in Lemma~\ref{lem:isom-varphi-tbG} one sees that it induces an isomorphism of schemes
$\FN_{\bG/\bG}(\{e\}) \times_{\bG/\bG} \tbG \simto \FN_{\bG'/\bG'}(\{e\}) \times_{\bG'/\bG'} \tbG'$. One therefore obtains an identification between the associated ``completed versions'' of the multiplicative Steinberg varieties, and can conclude as before.

\section{t-structures}
\label{sec:t-structures}

In this section we continue with our running assumptions that conditions~\eqref{it:assumptions-discussion-1}--\eqref{it:assumptions-discussion-5} in~\S\ref{ss:assumptions} are satisfied. (Here again, all the statements in this sections can a posteriori be generalized to more general reductive groups using the procedure in~\S\ref{ss:assumptions}.)


\subsection{Unicity of t-structures}
\label{ss:unicity-t-str}

Consider the nilpotent cone $\cN \subset \bg^{*(1)}$, i.e.~the preimage of the image of $0$ under the quotient morphism $\bg^{*(1)} \to \bg^{*(1)}/\bG^{(1)} \cong \bt^{*(1)}/\bWf$. We then consider the derived category
\[
\Db \Coh^{\bG^{(1)}}_{\cN}(\bg^{*(1)})
\]
of $\bG^{(1)}$-equivariant coherent sheaves on the scheme $\bg^{*(1)}$ supported set-theoretically on $\cN$, see~\S\ref{ss:coh-subscheme-statement}. If we denote by $q : \St^{(1)} \to \bg^{*(1)}$ the natural (proper) morphism, then we can consider the natural functor
\begin{equation}
\label{eqn:Rpi*}
Rq_* : \Db \Coh^{\bG^{(1)}}_{\cN}(\St^{(1)}) \to \Db \Coh^{\bG^{(1)}}_{\cN}(\bg^{*(1)}).
\end{equation}

Recall from Lemma~\ref{lem:DbCoh-nil} the fully faithful functor
\[
\Db \Coh^{\bG^{(1)}}_{\cN}(\St^{(1)}) \to \Db \Coh^{\bG^{(1)}}(\St^{\wedge(1)}).
\]
As explained in~\S\ref{ss:convolution}, the essential image of this functor is a two-sided ideal in the monoidal category $\Db \Coh^{\bG^{(1)}}(\St^{\wedge(1)})$.

%

Consider the braid group $\Br_{\bW}$ from~\S\ref{ss:further-properties}.
We will denote by $\Br_{\bW}^+$ the sub-semigroup of $\Br_{\bW}$ generated by the elements $T_w$ with $w \in \bW$. Any element $b \in \Br_{\bW}^+$ can be written as $b=T_{s_1} \cdots T_{s_r} \cdot T_\omega$ for some $s_1, \cdots, s_r \in \bSaff$ and $\omega \in \mathbf{\Omega}$. We will denote by $\ell(b)$ the minimal $r$ such that such an expression exists.

\begin{lem}
\label{lem:nonzero-cohomology-braid-translate}
For any nonzero $\scF$ in $\Db \Coh^{\bG^{(1)}}_{\cN}(\St^{(1)})$, there exist $b,c \in \Br_{\bW}^+$ such that $Rq_*(\scI^\wedge_b \star \scF \star \scI^\wedge_{c}) \neq 0$.
\end{lem}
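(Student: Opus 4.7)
The plan is to pick $b$ and $c$ corresponding to translations by sufficiently dominant weights, so that convolution by $\scI^\wedge_b$ and $\scI^\wedge_c$ becomes tensoring $\scF$ with a line bundle on $\St^{(1)}$ which is ample relative to $q$; the required non-vanishing will then follow from the standard relative Serre non-vanishing argument for proper morphisms.

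Concretely, I would first fix $\nu_1, \nu_2 \in X^*(\bT)^+$ in the interior of the dominant cone. Each $T_{t_{\nu_i}}$ is by definition a generator of $\Br_\bW^+$ (since $t_{\nu_i} \in \bW$), and by (the proof of) Lemma~\ref{lem:image-braid} we have $\scI^\wedge_{T_{t_{\nu_i}}} \cong \scO_{\Delta \tbg^{\wedge(1)}}(\nu_i)$. A direct base-change computation along $p_{2,3} \colon (\tbg^{(1)})^3 \to (\tbg^{(1)})^2$ (applied to the diagonal) combined with the projection formula yields, for any $\scG$,
\[
\scG \star \scO_{\Delta \tbg^{(1)}}(\nu_2) \cong \scG \otimes p_2^* \scO_{\tbg^{(1)}}(\nu_2), \qquad \scO_{\Delta \tbg^{(1)}}(\nu_1) \star \scG \cong \scG \otimes p_1^* \scO_{\tbg^{(1)}}(\nu_1),
\]
where $p_1, p_2 \colon \St^{(1)} \to \tbg^{(1)}$ are the two projections. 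Combining these, we get
\[
\scI^\wedge_{T_{t_{\nu_1}}} \star \scF \star \scI^\wedge_{T_{t_{\nu_2}}} \cong \scF \otimes \scL(\nu_1,\nu_2), \qquad \scL(\nu_1,\nu_2) := p_1^* \scO_{\tbg^{(1)}}(\nu_1) \otimes p_2^* \scO_{\tbg^{(1)}}(\nu_2).
\]

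For the ampleness step, I use that $\St^{(1)}$ embeds as a closed subscheme of $\bg^{*(1)} \times (\bG^{(1)}/\bB^{(1)})^2$ (via the standard embedding $\tbg \hookrightarrow \bg^* \times \bG/\bB$), under which $q$ corresponds to the restriction of the projection to $\bg^{*(1)}$ and $\scL(\nu_1, \nu_2)$ is the pullback of $\scO(\nu_1,\nu_2)$ from $(\bG^{(1)}/\bB^{(1)})^2$. Since $\nu_1, \nu_2$ are strictly dominant, this line bundle is ample on $(\bG^{(1)}/\bB^{(1)})^2$, so $\scL(\nu_1, \nu_2)$ is ample relative to $q$.

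Finally, let $i_0$ be maximal with $\coH^{i_0}(\scF) \neq 0$. Relative Serre vanishing applied to the finitely many nonzero coherent cohomology sheaves of $\scF$ gives, for $n \gg 0$ and every $i$, that $R^j q_*(\coH^i(\scF) \otimes \scL(\nu_1,\nu_2)^{\otimes n}) = 0$ for $j > 0$; meanwhile the surjectivity of the counit $q^* q_*((-) \otimes \scL^{\otimes n}) \otimes \scL^{\otimes -n} \twoheadrightarrow (-)$ on coherent sheaves for $n \gg 0$ forces $q_*(\coH^{i_0}(\scF) \otimes \scL(\nu_1,\nu_2)^{\otimes n}) \neq 0$. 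The hypercohomology spectral sequence for $Rq_*$ then degenerates at $E_2$ and yields $\coH^{i_0}(Rq_*(\scF \otimes \scL(\nu_1,\nu_2)^{\otimes n})) \neq 0$. Taking $b := T_{t_{n\nu_1}}$ and $c := T_{t_{n\nu_2}}$ (both in $\Br_\bW^+$) then completes the proof. The only slightly delicate point is verifying the convolution identification above, which is a standard if slightly lengthy base-change computation with the diagonal.
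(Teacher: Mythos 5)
Your proposal is correct and takes essentially the same route as the paper: identify $\scI^\wedge_{T_{t_\nu}} \star (-) \star \scI^\wedge_{T_{t_{\nu'}}}$ with twisting by the line bundle $\scO_{\St^{(1)}}(\nu,\nu')$ pulled back from $(\bG/\bB)^{(1)} \times (\bG/\bB)^{(1)}$, then use ampleness of that bundle for strictly dominant $\nu,\nu'$ to get nonvanishing after a large twist. The only cosmetic difference is that the paper exploits the fact that $\St^{(1)} \to (\bG/\bB)^{(1)}\times(\bG/\bB)^{(1)}$ is affine to get absolute ampleness on $\St^{(1)}$ and then concludes via $R\Gamma(\St^{(1)},-) = R\Gamma(\bg^{*(1)}, Rq_*(-))$ (the base $\bg^{*(1)}$ being affine), whereas you phrase it in terms of $q$-ampleness and relative Serre vanishing plus the counit surjectivity; these are the same argument in slightly different words.
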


\begin{proof}
Fix $\scF$ as in the statement.
As explained already in the course of the proof of Lemma~\ref{lem:comp-loc-end}, for any strictly dominant weights $\nu, \nu' \in X^*(\bT)$ the line bundle $\scO_{(\bG/\bB)^{(1)}}(\nu) \boxtimes \scO_{(\bG/\bB)^{(1)}}(\nu')$ is ample (see~\cite[Proposition~II.4.4]{jantzen}).
Since the morphism $\St^{(1)} \to (\bG/\bB)^{(1)} \times (\bG/\bB)^{(1)}$ is affine, the pullback $\scO_{\St^{(1)}}(\nu,\nu')$ of this line bundle to $\St^{(1)}$ is also ample,
see~\cite[\href{https://stacks.math.columbia.edu/tag/0892}{Tag 0892}]{stacks-project}. Hence, by standard considerations (based on~\cite[\href{https://stacks.math.columbia.edu/tag/0B5U}{Tag 0B5U}]{stacks-project} and~\cite[\href{https://stacks.math.columbia.edu/tag/01PR}{Tag 01PR}]{stacks-project}) there exist $\nu,\nu' \in X^*(\bT)^+$ such that
\[
R\Gamma(\St^{(1)}, \scF \otimes_{\scO_{\St^{(1)}}} \scO_{\St^{(1)}}(\nu,\nu')) \neq 0.
\]
Now we have
\[
\scF \otimes_{\scO_{\St^{(1)}}} \scO_{\St^{(1)}}(\nu,\nu') = \scI^\wedge_{T_{t_\nu}} \star \scF \star \scI^\wedge_{T_{t_{\nu'}}},
\]
so that this implies the desired claim.
\end{proof}


We will say that a t-structure on the category $\Db \Coh^{\bG^{(1)}}_{\cN}(\St^{(1)})$ is \emph{braid positive} if for any $b,c \in \Br_{\bW}^+$ the functor
\[
\scI_b^\wedge \star (-) \star \scI^\wedge_{c} : \Db \Coh^{\bG^{(1)}}_{\cN}(\St^{(1)}) \to \Db \Coh^{\bG^{(1)}}_{\cN}(\St^{(1)})
\]
is right t-exact.

\begin{lem}
\label{lem:properties-braid-pos-t-str}
Assume we are given a braid positive t-structure on the triangulated category $\Db \Coh^{\bG^{(1)}}_{\cN}(\St^{(1)})$.

\begin{enumerate}
\item
\label{it:properties-braid-pos-t-str-1}
For any $b,c \in \Br_{\bW}^+$ the functor
\[
\scI^\wedge_{b^{-1}} \star (-) \star \scI^\wedge_{c^{-1}} : \Db \Coh^{\bG^{(1)}}_{\cN}(\St^{(1)}) \to \Db \Coh^{\bG^{(1)}}_{\cN}(\St^{(1)})
\]
is left t-exact.
\item
\label{it:properties-braid-pos-t-str-2}
For any $s \in \bSaff$ the functors
\[
\scR_s \star (-), \, (-) \star \scR_{s} : \Db \Coh^{\bG^{(1)}}_{\cN}(\St^{(1)}) \to \Db \Coh^{\bG^{(1)}}_{\cN}(\St^{(1)})
\]
are t-exact.
\end{enumerate}
\end{lem}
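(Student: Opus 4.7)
My approach will be to exploit the fact that the ``braid objects'' $\scI_b^\wedge$ are invertible in $(\Db \Coh^{\bG^{(1)}}(\St^{\wedge(1)}), \star)$ with $(\scI_b^\wedge)^{-1} \cong \scI_{b^{-1}}^\wedge$, and that the essential image of $\Db \Coh^{\bG^{(1)}}_{\cN}(\St^{(1)})$ inside $\Db \Coh^{\bG^{(1)}}(\St^{\wedge(1)})$ is a two-sided ideal under $\star$. Together these show that for any $b, c \in \Br_{\bW}$ the functor $\scI_b^\wedge \star (-) \star \scI_c^\wedge$ restricts to an autoequivalence of $\Db \Coh^{\bG^{(1)}}_{\cN}(\St^{(1)})$, whose quasi-inverse is $\scI_{b^{-1}}^\wedge \star (-) \star \scI_{c^{-1}}^\wedge$ (using the rule $\scI_b^\wedge \star \scI_c^\wedge \cong \scI_{cb}^\wedge$, cf.~Remark~\ref{rmk:convention-action-Br}).

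For part~\eqref{it:properties-braid-pos-t-str-1}, I will use the following standard general fact: if $F$ and $G$ are quasi-inverse autoequivalences of a triangulated category endowed with a t-structure, then $F$ and $G$ are biadjoint, so that $F$ being right t-exact is equivalent to $G$ being left t-exact (left adjoints to left t-exact functors are right t-exact, and vice versa). Applied to $F = \scI_b^\wedge \star (-) \star \scI_c^\wedge$ (right t-exact by braid positivity) and $G = \scI_{b^{-1}}^\wedge \star (-) \star \scI_{c^{-1}}^\wedge$, this yields the claim.

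For part~\eqref{it:properties-braid-pos-t-str-2}, I will work with the two distinguished triangles
\[
\scO_{\tbg^{\wedge(1)}} \to \scR_s \to \scI^\wedge_{T_s} \xrightarrow{[1]}, \qquad \scI^\wedge_{T_s^{-1}} \to \scR_s \to \scO_{\tbg^{\wedge(1)}} \xrightarrow{[1]}
\]
from~\eqref{eqn:triangle-Rs-I}, convolving them with an arbitrary $X$ on the right. Since the unit $\scO_{\tbg^{\wedge(1)}}$ acts t-exactly, it suffices to control the outer braid terms. For $X \in D^{\leq 0}$, the first triangle combined with braid positivity applied to $b = T_s$, $c = e$ gives $\scI_{T_s}^\wedge \star X \in D^{\leq 0}$, hence $\scR_s \star X \in D^{\leq 0}$. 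For $X \in D^{\geq 0}$, the second triangle combined with part~\eqref{it:properties-braid-pos-t-str-1} applied to $b = T_s$, $c = e$ gives $\scI_{T_s^{-1}}^\wedge \star X \in D^{\geq 0}$, hence $\scR_s \star X \in D^{\geq 0}$. The symmetric argument (convolving on the left by $X$ and using the right versions of braid positivity and part~\eqref{it:properties-braid-pos-t-str-1}) handles $(-) \star \scR_s$.

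I do not expect a significant obstacle here: the proof is purely formal, given the adjunction/t-exactness dictionary and the existence of the triangles~\eqref{eqn:triangle-Rs-I}. The only bookkeeping that requires some care is the sign convention $\scI_b \star \scI_c \cong \scI_{cb}$, which must be tracked when checking that $\scI_{b^{-1}}^\wedge \star (-) \star \scI_{c^{-1}}^\wedge$ is genuinely inverse to $\scI_b^\wedge \star (-) \star \scI_c^\wedge$, and the observation that $T_s \in \Br_{\bW}^+$ for every $s \in \bSaff$ (so that braid positivity and part~\eqref{it:properties-braid-pos-t-str-1} both apply with $b = T_s$).
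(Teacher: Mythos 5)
Your proof is correct and follows essentially the same route as the paper: part~\eqref{it:properties-braid-pos-t-str-1} is the adjunction dictionary (right adjoint of a right t-exact functor is left t-exact, applied to the biadjoint pair of mutually inverse autoequivalences), and part~\eqref{it:properties-braid-pos-t-str-2} is obtained by convolving the two triangles~\eqref{eqn:triangle-Rs-I} and invoking part~\eqref{it:properties-braid-pos-t-str-1} together with braid positivity. The only differences are expository: you spell out the verification that $\scI^\wedge_{b^{-1}} \star (-) \star \scI^\wedge_{c^{-1}}$ is the quasi-inverse (tracking the order convention $\scI_b \star \scI_c \cong \scI_{cb}$), which the paper leaves implicit.
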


\begin{proof}
Statement~\eqref{it:properties-braid-pos-t-str-1} follows from the fact that a right adjoint of a right t-exact functor is left t-exact. Then~\eqref{it:properties-braid-pos-t-str-2} follows using the triangles~\eqref{eqn:triangle-Rs-I}.
\end{proof}

The following statement closely resembles a ``unicity of t-structure'' result encountered in~\cite{bm-loc}.

\begin{prop}
\label{prop:unicity-t-str}
Given any 
t-structure $(\sfD_{\bg^*}^{\leq 0}, \sfD_{\bg^*}^{\geq 0})$ on $\Db \Coh^{\bG^{(1)}}_{\cN}(\bg^{*(1)})$, there exists at most one bounded t-structure $(\sfD_{\St}^{\leq 0}, \sfD_{\St}^{\geq 0})$ on $\Db \Coh^{\bG^{(1)}}_{\cN}(\St^{(1)})$ which is braid positive and such that the functor~\eqref{eqn:Rpi*} is t-exact for the t-structures
\[
(\sfD_{\St}^{\leq 0}, \sfD_{\St}^{\geq 0}) \quad \text{and} \quad (\sfD_{\bg^*}^{\leq 0}, \sfD_{\bg^*}^{\geq 0}).
\]
If this t-structure exists, then its nonpositive part is given by
\begin{equation}
\label{eqn:description-Dleq0}
\sfD_{\St}^{\leq 0} = \{ \scF \in \Db \Coh^{\bG^{(1)}}_{\cN}(\St^{(1)}) \mid \forall b,c \in \Br_{\bW}^+, \, Rq_*(\scI^\wedge_b \star \scF \star \scI^\wedge_{c}) \in \sfD_{\bg^*}^{\leq 0} \}.
\end{equation}
\end{prop}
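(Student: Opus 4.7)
The plan is to prove the explicit formula~\eqref{eqn:description-Dleq0}: this simultaneously yields uniqueness, because the right-hand side is characterized intrinsically in terms of $(\sfD_{\bg^*}^{\leq 0}, \sfD_{\bg^*}^{\geq 0})$ and the convolution action, and $\sfD_{\St}^{\leq 0}$ determines the bounded t-structure completely. The easy inclusion $\sfD_{\St}^{\leq 0} \subseteq \text{RHS}$ is almost tautological: given $\scF \in \sfD_{\St}^{\leq 0}$ and $b,c \in \Br_{\bW}^+$, braid positivity forces $\scI_b^\wedge \star \scF \star \scI_c^\wedge \in \sfD_{\St}^{\leq 0}$, and t-exactness of $Rq_*$ then gives $Rq_*(\scI_b^\wedge \star \scF \star \scI_c^\wedge) \in \sfD_{\bg^*}^{\leq 0}$.

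For the reverse inclusion I would argue by contradiction. Suppose $\scF$ lies in the RHS but not in $\sfD_{\St}^{\leq 0}$; by boundedness, choose the largest integer $n > 0$ with $M := \coH^n(\scF) \neq 0$, so that $\tau^{\geq n}\scF \cong M[-n]$. For every $b,c \in \Br_{\bW}^+$, apply $\scI_b^\wedge \star (-) \star \scI_c^\wedge$ followed by $Rq_*$ to the truncation triangle $\tau^{<n}\scF \to \scF \to M[-n] \xrightarrow{[1]}$. By right t-exactness of convolution combined with t-exactness of $Rq_*$, the image of $\tau^{<n}\scF$ lies in $\sfD_{\bg^*}^{<n}$, and the image of $\scF$ lies in $\sfD_{\bg^*}^{\leq 0} \subseteq \sfD_{\bg^*}^{<n}$ by the hypothesis on $\scF$. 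The long exact sequence in perverse cohomology in degree $n$ therefore forces
\[
\coH^0\bigl(Rq_*(\scI_b^\wedge \star M \star \scI_c^\wedge)\bigr) \;=\; \coH^n\bigl(Rq_*(\scI_b^\wedge \star M[-n] \star \scI_c^\wedge)\bigr) \;=\; 0
\]
for every $b,c \in \Br_{\bW}^+$. To obtain a contradiction, it is therefore enough to establish the following key claim: \emph{for any nonzero object $M$ in the heart of the t-structure, there exist $b, c \in \Br_{\bW}^+$ such that $\coH^0(Rq_*(\scI_b^\wedge \star M \star \scI_c^\wedge)) \neq 0$}.

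To prove the key claim, Lemma~\ref{lem:nonzero-cohomology-braid-translate} provides $b_0, c_0 \in \Br_{\bW}^+$ with $Rq_*(\scI_{b_0}^\wedge \star M \star \scI_{c_0}^\wedge)$ nonzero; right t-exactness of convolution together with t-exactness of $Rq_*$ places this complex in $\sfD_{\bg^*}^{\leq 0}$, so some perverse cohomology in a degree $\leq 0$ is nonzero. The principal obstacle, and the technical heart of the argument, is to promote this to nonvanishing in degree $0$ for some choice of $(b,c)$. The strategy I would follow, in the spirit of the analogous uniqueness argument in~\cite{bm-loc}, is to convolve further on the two sides by $\scI_{T_{t_\nu}}^\wedge$ and $\scI_{T_{t_{\nu'}}}^\wedge$ for $\nu,\nu' \in X^*(\bT)$ strictly dominant and sufficiently large: this is the same as tensoring with the relatively ample (with respect to $q$) line bundle $\scO(\nu, \nu')$ on $\St^{(1)}$, and a relative Serre vanishing argument together with the boundedness of both t-structures should force the top nonzero cohomology of $Rq_*(\scI_b^\wedge \star M \star \scI_c^\wedge)$ to eventually land in degree $0$. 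Getting this last ``ampleness-to-t-structure'' step right, i.e.\ reconciling standard Serre vanishing with the abstractly given bounded t-structure on $\Db\Coh^{\bG^{(1)}}_{\cN}(\bg^{*(1)})$, is where I expect the real work to be.
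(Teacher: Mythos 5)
Your easy inclusion and the reduction to the ``key claim'' are correct; the packaging differs slightly from the paper's, which works directly with $\scF' := \tau_{>0}\scF$ rather than with the top cohomology object $M = \coH^n(\scF)$, but the two reductions are interchangeable. However, the proposed route to the key claim --- twisting by large powers of a $q$-relatively ample line bundle and invoking Serre vanishing --- has a genuine gap. Serre vanishing controls the \emph{tautological} cohomology of $Rq_*$, but what you must show is nonvanishing in degree $0$ \emph{for the abstract t-structure} $(\sfD_{\bg^*}^{\leq 0}, \sfD_{\bg^*}^{\geq 0})$. The proposition places no constraint whatsoever on $(\sfD_{\bg^*}^{\leq 0}, \sfD_{\bg^*}^{\geq 0})$, so there is no mechanism by which vanishing of higher tautological cohomology would say anything about $\coH^0$ in the abstract sense; indeed if the given t-structure were a nontrivial shift of the tautological one, Serre vanishing would push you \emph{away} from the degree you want. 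Your closing caveat that reconciling the two ``is where I expect the real work to be'' understates the difficulty: in the stated generality this step cannot be made to work.

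The paper's argument is different and uses no ampleness. Among the pairs $(b,c) \in \Br_{\bW}^+ \times \Br_{\bW}^+$ with $Rq_*(\scI^\wedge_b \star \scF' \star \scI^\wedge_c) \neq 0$ (at least one exists by Lemma~\ref{lem:nonzero-cohomology-braid-translate}), it chooses one with $\ell(b)+\ell(c)$ \emph{minimal}. Writing $b = T_{s_1} \cdots T_{s_{\ell(b)}} T_\omega$ and similarly for $c$, the distinguished triangles~\eqref{eqn:exact-seq-braid-gp-action-2} together with minimality (the third terms of those triangles would produce pushforwards with strictly smaller $\ell(b)+\ell(c)$, hence vanishing) allow one to replace every $\scI^\wedge_{T_{s_i}}$ by $\scX^\wedge_{s_i} \cong \scR_{s_i}$ without changing $Rq_*(-)$. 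After this replacement, the expression is a composition of $Rq_*$ (t-exact) with convolutions by $\scR_{s_i}$ (t-exact by Lemma~\ref{lem:properties-braid-pos-t-str}(2)) and by $\scI^\wedge_{T_\omega}, \scI^\wedge_{T_{\omega'}}$ (left t-exact by Lemma~\ref{lem:properties-braid-pos-t-str}(1), as both $T_{\omega}$ and $T_{\omega^{-1}}$ lie in $\Br_{\bW}^+$). Since $\scF' \in \sfD_{\St}^{\geq 1}$, the result lies in $\sfD_{\bg^*}^{\geq 1}$, contradicting the hypothesis that it also lies in $\sfD_{\bg^*}^{\leq 0}$ and is nonzero. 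The same minimality device, applied to $M$ in the heart instead of $\scF'$, proves your key claim verbatim: for minimal $(b,c)$, the result lies in the heart of $(\sfD_{\bg^*}^{\leq 0}, \sfD_{\bg^*}^{\geq 0})$ and is nonzero, so $\coH^0 \neq 0$.
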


\begin{proof}
It is enough to show that any t-structure $(\sfD_{\St}^{\leq 0}, \sfD_{\St}^{\geq 0})$ as in the lemma satisfies~\eqref{eqn:description-Dleq0}, since a t-structure is determined by its nonpositive part. Let us therefore assume we are given a bounded t-structure $(\sfD_{\St}^{\leq 0}, \sfD_{\St}^{\geq 0})$ on $\Db \Coh^{\bG^{(1)}}_{\cN}(\St^{(1)})$ which is braid positive and such that the functor $Rq_*$ is t-exact for the t-structures $(\sfD_{\St}^{\leq 0}, \sfD_{\St}^{\geq 0})$ and $(\sfD_{\bg^*}^{\leq 0}, \sfD_{\bg^*}^{\geq 0})$. It is clear that any object $\scF$ in $\sfD_{\St}^{\leq 0}$ will then satisfy
\[
Rq_*(\scI^\wedge_b \star \scF \star \scI^\wedge_{c}) \in \sfD_{\bg^*}^{\leq 0}
\]
for any $b,c \in \Br_{\bW}^+$.

Reciprocally, let us fix an object $\scF$ such that $Rq_*(\scI^\wedge_b \star \scF \star \scI^\wedge_{c}) \in \sfD_{\bg^*}^{\leq 0}$ for any $b,c \in \Br_{\bW}^+$. 
Since the t-structure is bounded, there exists $i \in \Z$ such that $\scF \in \sfD_{\St}^{\leq i}$ but $\scF \notin \sfD_{\St}^{\leq i-1}$; to conclude we need to show that $i \leq 0$. Assume for a contradiction that $i>0$, and set $\scF' := \tau_{>0}(\scF)$ where $\tau_{>0}$ is the truncation functor (in positive degrees) associated with our t-structure. From the truncation triangle
\[
\tau_{\leq 0} \scF \to \scF \to \scF' \xrightarrow{[1]}
\]
and the inclusion proven above we see that $\scF'$ also satisfies
\[
Rq_*(\scI^\wedge_b \star \scF' \star \scI^\wedge_{c}) \in \sfD_{\bg^*}^{\leq 0}
\]
for any $b,c \in \Br_{\bW}^+$. By Lemma~\ref{lem:nonzero-cohomology-braid-translate}, there exist $b,c \in \Br_{\bW}^+$ such that
\[
Rq_*(\scI^\wedge_b \star \scF' \star \scI^\wedge_{c}) \neq 0.
\]
Let us choose $b$ and $c$ which satisfy this property and such that $\ell(b) + \ell(c)$ is minimal among elements satisfying it.
Write $b=T_{s_1} \cdots T_{s_{\ell(b)}} T_\omega$ and $c = T_{s_1'} \cdots T_{s'_{\ell(c)}} T_{\omega'}$ for some $s_1, \cdots, s_{\ell(b)},s'_1, \cdots, s'_{\ell(c)} \in \bSaff$ and $\omega, \omega' \in \mathbf{\Omega}$.
Using the triangles~\eqref{eqn:exact-seq-braid-gp-action-2} and minimality we obtain that
\[
Rq_*(\scI^\wedge_b \star \scF' \star \scI^\wedge_{c}) \cong Rq_*( \scX^\wedge_{s_1} \star \cdots \star \scX^\wedge_{s_{\ell(b)}} \star \scI^\wedge_{T_\omega} \star \scF' \star \scX^\wedge_{s'_1} \star \cdots \star \scX^\wedge_{s'_{\ell(c)}} \star \scI^\wedge_{T_{\omega'}}).
\]
Now the right-hand side belongs to $\sfD_{\bg^*}^{\geq 1}$ by Lemma~\ref{lem:properties-braid-pos-t-str}\eqref{it:properties-braid-pos-t-str-2}. (Note that the functors $\scI^\wedge_{T_\omega} \star (-)$ and $(-) \star \scI^\wedge_{T_{\omega'}}$ are left t-exact by Lemma~\ref{lem:properties-braid-pos-t-str}\eqref{it:properties-braid-pos-t-str-1}.)
We have therefore produced a nonzero object in $\sfD_{\bg^*}^{\leq 0} \cap \sfD_{\bg^*}^{\geq 1}$, a contradiction.
\end{proof}

\subsection{The perverse coherent t-structure on coherent sheaves}

We now consider the action of $\bG$ on $\bg^{*(1)}$ obtained by pullback of the coadjoint action of $\bG^{(1)}$ by the Frobenius morphism $\Fr_{\bG}$, and
the derived category
\[
\Db \Coh^{\bG}_{\cN}(\bg^{*(1)})
\]
of $\bG$-equivariant coherent sheaves on $\bg^{*(1)}$ supported set-theoretically on $\cN$.
For any $n \geq 1$ we consider the $n$-th infinitesimal neighborhood $\cN^{[n]}$ of $\cN$ in $\bg^{*(1)}$; then the action of $\bG$ on $\bg^{*(1)}$ induces an action on $\cN^{[n]}$, and by Lemma~\ref{lem:DbCoh-colim} we have an equivalence of triangulated categories
\begin{equation}
\label{eqn:equiv-coh-N-g*}
\mathrm{colim}_{n \geq 1} \Db \Coh^{\bG}(\cN^{[n]}) \simto \Db \Coh^{\bG}_{\cN}(\bg^{*(1)}).
\end{equation}

For any $n \geq 1$ we can consider the perverse coherent t-structure on the category $\Db \Coh^{\bG}(\cN^{[n]})$ as in~\cite{arinkin-bezrukavnikov} (see also~\cite{bez-pcoh}). More precisely, as in~\cite[Example~4.15]{arinkin-bezrukavnikov}, points of the stack $\cN^{[n]}/\bG$ correspond to $\bG^{(1)}$-orbits on $\cN$. The ``middle perversity'' is defined by $p(x)=-\frac{\dim(O_x)}{2}$ where $O_x$ is the orbit corresponding to $x$, and the procedure in~\cite[Definition~3.7]{arinkin-bezrukavnikov} using this function produces a bounded t-structure whose heart is noetherian and artinian, see~\cite[Corollary~4.13]{arinkin-bezrukavnikov}. For our purposes we will apply a global shift to this t-structure, and say that a complex $\scF$ belongs to the heart of the \emph{perverse coherent t-structure} iff
$\scF[\frac{\dim(\cN)}{2}]$
belongs to the heart of the t-structure of~\cite{arinkin-bezrukavnikov} associated with the middle perversity. (Note that $\dim(\cN) \in 2\Z$, so that this definition makes sense.) With this convention, the restriction to the regular orbit of any object in the heart of the perverse coherent t-structure is concentrated in degree $0$ (for the tautological t-structure).

For any $n \geq 1$ the pushforward functor
\[
\Db \Coh^{\bG}(\cN^{[n]}) \to \Db \Coh^{\bG}(\cN^{[n+1]})
\]
is t-exact for the perverse coherent t-structures, see~\cite[Lemma~3.3(c)]{arinkin-bezrukavnikov}. In fact since this functor does not kill any nonzero object, it ``detects the t-structure'' in the sense that an object $\scF$ in $\Db \Coh^{\bG}(\cN^{[n]})$ belongs to the non-negative, resp.~non-positive, part of the perverse coherent t-structure on $\Db \Coh^{\bG}(\cN^{[n]})$ iff its image belongs to the non-negative, resp.~non-positive, part of the perverse coherent t-structure on $\Db \Coh^{\bG}(\cN^{[n+1]})$. We therefore obtain a t-structure on the categories in~\eqref{eqn:equiv-coh-N-g*} such that each pushforward functor
\[
\Db \Coh^{\bG}(\cN^{[n]}) \to \Db \Coh^{\bG}_{\cN}(\bg^{*(1)})
\]
is t-exact (and, in fact, ``detects the t-structure'' in the same sense as above).

The cohomology functors for the perverse coherent t-structure on the triangulated category $\Db \Coh^{\bG}_{\cN}(\bg^{*(1)})$ will be denoted $(\pcH^n : n \in \Z)$.

\begin{rmk}
\label{rmk:pcoh-tautological-tstr}
Consider the full subcategory
\begin{equation}
\label{eqn:DbCoh-0}
\Db \Coh^{\bG}_{0}(\bg^{*(1)}) \subset
\Db \Coh^{\bG}_{\cN}(\bg^{*(1)})
\end{equation}
whose objects are the complexes which are set-theoretically supported on the zero-orbit $\{0\}$. By definition of the perverse coherent t-structure the embedding~\eqref{eqn:DbCoh-0} is t-exact if the left-hand side is endowed with the shift by $\frac{\dim(\cN)}{2}$ of the tautological t-structure and the right-hand side with the perverse coherent t-structure; in other words, for any $\scF$ in $\Db \Coh^{\bG}_{0}(\bg^{*(1)})$ we have
\[
 \pcH^n(\scF) = \scH^{n+\frac{\dim(\cN)}{2}}(\scF)
\]
for any $n \in \Z$. In particular, if $\delta$ denotes the skyscraper sheaf at $0$, then $\delta[-\frac{\dim(\cN)}{2}]$ belongs to the heart of the perverse coherent t-structure; in fact it is easily seen to be a simple object therein.
\end{rmk}

The same procedure also produces a t-structure on the category $\Db \Coh^{\bG^{(1)}}_{\cN}(\bg^{*(1)})$ considered in~\S\ref{ss:unicity-t-str}, such that the obvious pullback functor
\[
\Db \Coh^{\bG^{(1)}}_{\cN}(\bg^{*(1)}) \to \Db \Coh^{\bG}_{\cN}(\bg^{*(1)})
\]
is t-exact.
This t-structure will also be called the perverse coherent t-structure.

\subsection{The perverse coherent t-structure on Harish-Chandra bimodules}

For any $\mu,\nu \in X^*(\bT)$, restriction of the action to $\ZFr$ defines a natural functor
\begin{equation}
\label{eqn:restriction-HC-center}
\Db \HC^{\hmu,\hnu}_{\mathrm{nil}} \to \Db \Coh^{\bG}_{\cN}(\bg^{*(1)}).
\end{equation}

\begin{lem}
\label{lem:existence-pcoh-t-str}
There exists a unique t-structure on $\Db \HC^{\hmu,\hnu}_{\mathrm{nil}}$ such that the functor~\eqref{eqn:restriction-HC-center} is t-exact with respect to the perverse coherent t-structure on the category $\Db \Coh^{\bG}_{\cN}(\bg^{*(1)})$; moreover this t-structure is bounded.
\end{lem}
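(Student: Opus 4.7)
The plan is to construct the t-structure by the general glueing procedure of~\cite{arinkin-bezrukavnikov} applied to the sheaf of algebras $\sfU^{\hmu,\hnu}$, then verify uniqueness via conservativity of the restriction functor.

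For existence, I would first view a Harish-Chandra bimodule as a strongly $\bG$-equivariant coherent module over the sheaf of algebras $\sfU^{\hmu,\hnu}$ on $\bg^{*(1)}$; this sheaf of algebras is coherent over $\scO_{\bg^{*(1)}}$ because $\cU\bg$ is free over $\ZFr$ by Proposition~\ref{prop:flatness-Ug}. The perverse coherent t-structure of~\cite{arinkin-bezrukavnikov} admits a general lift to such module categories: one uses the same perversity function (built from $\bG^{(1)}$-orbit dimensions in $\cN$) and the same glueing procedure, where the restriction/extension functors along orbit stratifications are $\sfU^{\hmu,\hnu}$-linear and preserve strong $\bG$-equivariance. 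This produces a bounded t-structure on $\Db \Modfg^{\bG}(\sfU^{\hmu,\hnu})$ for which the forgetful functor to $\Db \Coh^{\bG}_{\cN}(\bg^{*(1)})$ is tautologically t-exact. Since the category $\HC^{\hmu,\hnu}_{\mathrm{nil}}$ is a Serre subcategory of $\Modfg^{\bG}(\sfU^{\hmu,\hnu})$ preserved by all the glueing functors, this t-structure restricts to $\Db \HC^{\hmu,\hnu}_{\mathrm{nil}}$.

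For uniqueness I would use that the restriction functor $F$ of~\eqref{eqn:restriction-HC-center} is conservative: since $\cU\bg$ is free over $\ZFr$, any nonzero HC bimodule has nonzero underlying coherent sheaf on $\bg^{*(1)}$, and the same holds in the bounded derived category. Thus if $(\sfD^{\leq 0}, \sfD^{\geq 0})$ is any t-structure making $F$ t-exact, then for $X$ with $F(X) \in \sfD^{\leq 0}_{\mathrm{pcoh}}$ we have $F(\tau^{>0} X) = \tau^{>0}_{\mathrm{pcoh}} F(X) = 0$, whence $\tau^{>0} X = 0$ by conservativity, so $X \in \sfD^{\leq 0}$; this forces $\sfD^{\leq 0} = F^{-1}(\sfD^{\leq 0}_{\mathrm{pcoh}})$, and symmetrically for $\sfD^{\geq 0}$. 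Boundedness of the t-structure constructed above then follows from the same argument, together with boundedness of the perverse coherent t-structure on the target.

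The main obstacle is the verification that the glueing construction of~\cite{arinkin-bezrukavnikov} adapts to the category of strongly $\bG$-equivariant coherent $\sfU^{\hmu,\hnu}$-modules: pushforward and pullback along $\bG^{(1)}$-equivariant locally closed embeddings in $\cN$ are $\sfU^{\hmu,\hnu}$-linear and interact well with strong equivariance, but one must check compatibility with the Harish-Chandra condition and with the nilpotency hypothesis defining $\HC^{\hmu,\hnu}_{\mathrm{nil}}$. This reduces to a stratum-by-stratum check using that both conditions are preserved by subquotients, and that the glueing functors preserve the relevant Serre subcategory structure.
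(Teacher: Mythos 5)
Your uniqueness argument is correct and is essentially the paper's: conservativity of the restriction functor forces the nonpositive and nonnegative parts to be preimages of those for the perverse coherent t-structure.

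Your existence argument, however, has a genuine gap. You assert that $\sfU^{\hmu,\hnu}$ is coherent over $\scO_{\bg^{*(1)}}$ ``because $\cU\bg$ is free over $\ZFr$.'' Freeness of $\cU\bg$ over $\ZFr$ only gives that $\sfU = \cU\bg \otimes_{\ZFr} \cU\bg^\op$ is finite over $\ZFr = \scO(\bg^{*(1)})$; but $\sfU^{\hmu,\hnu} = \sfU \otimes_{\scO(\bZ)} \scO(\bZ^{\hmu,\hnu})$ involves the completion $\scO(\bZ^{\hmu,\hnu})$, which is not finite over $\scO(\bZ)$ (and, correspondingly, $\sfU^{\hmu,\hnu}$ is only a coherent sheaf of algebras over the noetherian but non-finite-type scheme $\bg^{*(1)} \times_{\bt^{*(1)}/\bWf} \bZ^{\hmu,\hnu}$). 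The Arinkin--Bezrukavnikov construction of the perverse coherent t-structure is set up for schemes of finite type over a field with a group action and a coherent sheaf of algebras, stratified by orbits; it does not apply directly to $\Db \Modfg^{\bG}(\sfU^{\hmu,\hnu})$, so the claimed t-structure on this ambient category is not available and therefore cannot be ``restricted.''

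The paper circumvents this in two steps that your proposal does not reproduce. First, it forgets the right $\cU\bg$-action, using the equivalence $\HC^\wedge \simeq \Modfg^{\bG}\bigl((\cU\bg)^\wedge\bigr)$ from~\eqref{eqn:HC-modules-wedge} to reduce to modules over $\cU\bg$, which \emph{is} coherent over $\scO_{\bg^{*(1)}}$. Second, it identifies $\Db \HC^\wedge_{\mathrm{nil}}$ with the colimit $\mathrm{colim}_n \Db\Coh^{\bG}\bigl(\cN^{[n]}, (\cU\bg)^{[n]}\bigr)$ via Lemma~\ref{lem:DbCoh-colim}, so that Arinkin--Bezrukavnikov's machinery is applied only on the finite nilpotent thickenings $\cN^{[n]}$ (honest finite-type schemes with a coherent sheaf of algebras $(\cU\bg)^{[n]}$); the t-structure on the colimit is then produced by the t-exactness of the transition pushforwards. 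Without both of these reductions, the existence step of your argument does not go through.
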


\begin{proof}
Unicity is clear from the fact that the functor~\eqref{eqn:restriction-HC-center} does not kill any nonzero object: if the t-structure exists, its nonpositive, resp.~nonnegative, part must consist of complexes whose image is in the nonpositive, resp.~nonnegative, part of the perverse coherent t-structure on $\Db \Coh^{\bG}_{\cN}(\bg^{*(1)})$. 

To show existence, we will in fact construct a bounded t-structure on the category $\Db \HC^\wedge_{\mathrm{nil}}$ (see~\S\ref{ss:nilpotent-HC}) such that the forgetful functor
\[
\Db \HC^\wedge_{\mathrm{nil}} \to \Db \Coh^{\bG}_{\cN}(\bg^{*(1)})
\]
is t-exact with respect to the perverse coherent t-structure. By~\eqref{eqn:decomp-HC-nil} we have a decomposition as a sum of triangulated categories
\[
\Db \HC^\wedge_{\mathrm{nil}} = \bigoplus_{\mu,\nu \in X^*(\bT)/(\bW,\bullet)} \Db \HC^{\hmu,\hnu},
\]
hence the t-structure will restrict to a t-structure with the appropriate property on each factor $\Db \HC^{\hmu,\hnu}$.
As in the proof of Lemma~\ref{lem:HC-wedge-nil}, restriction to the left action defines an equivalence of categories between $\HC^\wedge_{\mathrm{nil}}$ and the category of $\bG$-equivariant finitely generated $\cU\bg$-modules such that the maximal ideal $\fn \subset \scO(\bt^{*(1)}/\bWf) = \ZHC \cap \ZFr$ acts nilpotently. The $\ZFr$-algebra $\cU\bg$ defines a $\bG$-equivariant coherent sheaf of $\scO_{\bg^{*(1)}}$-algebras on $\bg^{*(1)}$, and restricting this algebra to $\cN^{[n]}$ produces a sheaf of algebras $(\cU\bg)^{[n]}$. Then by Lemma~\ref{lem:DbCoh-colim} we therefore have a canonical equivalence of triangulated categories
\[
\Db \HC^\wedge_{\mathrm{nil}} \cong \mathrm{colim}_{n \geq 1} \Db \Coh^{\bG}(\cN^{[n]}, (\cU\bg)^{[n]}).
\]

The construction in~\cite{arinkin-bezrukavnikov, bez-pcoh} applies more generally to categories of coherent sheaves of modules for a coherent sheaf of algebras; in particular,
for any $n \geq 1$ it provides a bounded t-structure on $\Db \Coh^{\bG}(\cN^{[n]}, (\cU\bg)^{[n]})$ such that the forgetful functor
\[
\Db \Coh^{\bG}(\cN^{[n]}, (\cU\bg)^{[n]}) \to \Db \Coh^{\bG}(\cN^{[n]})
\]
is t-exact. Then each embedding
\[
\Db \Coh^{\bG}(\cN^{[n]}, (\cU\bg)^{[n]}) \to \Db \Coh^{\bG}(\cN^{[n+1]}, (\cU\bg)^{[n+1]})
\]
is t-exact, and we obtain a t-structure on the colimit which solves our problem.
\end{proof}

The t-structure of Lemma~\ref{lem:existence-pcoh-t-str} on the triangulated category $\Db \HC^{\hmu,\hnu}_{\mathrm{nil}}$ will also be called the perverse coherent t-structure, and its cohomology functors will also be denoted $(\pcH^n : n \in \Z)$.

\begin{lem}
\label{lem:translation-exact-pcoh}
For any $\mu,\nu,\eta \in X^*(\bT)$, the functors
\[
\sfP^{\hmu,\hnu} \star (-) : \Db \HC^{\hnu,\widehat{\eta}}_{\mathrm{nil}} \to \Db \HC^{\hmu,\widehat{\eta}}_{\mathrm{nil}}
\quad \text{and} \quad
(-) \star \sfP^{\hmu,\hnu} : \Db \HC^{\widehat{\eta},\hmu}_{\mathrm{nil}} \to \Db \HC^{\widehat{\eta},\hnu}_{\mathrm{nil}}
\]
are t-exact for the perverse coherent t-structures.
\end{lem}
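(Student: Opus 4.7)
The plan is to reduce t-exactness to the soft statement that tensoring with any finite-dimensional algebraic $\bG$-module is t-exact for the perverse coherent t-structure on $\Db \Coh^{\bG}_{\cN}(\bg^{*(1)})$. I will treat only the case of left convolution, since the right case is entirely symmetric.

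First, I will introduce the global functor
\[
F := \sfC^\wedge(\sfL(\xi) \otimes \cU\bg) \star (-) : \Db \HC^\wedge_{\mathrm{nil}} \to \Db \HC^\wedge_{\mathrm{nil}},
\]
where $\xi \in X^*(\bT)^+$ is the unique dominant weight in $\bWf(\mu-\nu)$. By Remark~\ref{rmk:completion-diag-induced}, $\sfC^\wedge(\sfL(\xi) \otimes \cU\bg) \cong \sfL(\xi) \otimes_\bk (\cU\bg)^\wedge$ is free as a right $(\cU\bg)^\wedge$-module, so $F$ coincides with the underived functor $M \mapsto \sfL(\xi) \otimes_\bk M$ (equipped with the diagonal $\bG$-action and the obvious bimodule structure). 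Using the decomposition~\eqref{eqn:decomp-HC-nil}, the functor $\sfP^{\hmu,\hnu} \star (-)$ will be seen to be the composition of $F$ restricted to the $(\hnu,\widehat{\eta})$-summand and projected to the $(\hmu,\widehat{\eta})$-summand. Since such direct-summand inclusions and projections are automatically t-exact for the direct-sum perverse coherent t-structure on $\Db\HC^\wedge_{\mathrm{nil}}$, it will suffice to show that $F$ is t-exact on each factor.

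By Lemma~\ref{lem:existence-pcoh-t-str}, t-exactness of $F$ on each summand $\Db \HC^{\hla,\widehat{\eta}}_{\mathrm{nil}}$ is equivalent to t-exactness of its composition with the restriction-to-center functor~\eqref{eqn:restriction-HC-center}. Since $\ZFr$ acts trivially on any algebraic $\bG$-module (its canonical generators $x^\ell - x^{[\ell]}$ act as zero via differentiation), this composition will be identified with the functor $\sfL(\xi) \otimes_\bk (-)$ on $\Db \Coh^{\bG}_{\cN}(\bg^{*(1)})$. To conclude, I will verify that this latter functor is t-exact for the perverse coherent t-structure. Using the colimit presentation~\eqref{eqn:equiv-coh-N-g*}, the verification reduces to t-exactness on each $\Db \Coh^{\bG}(\cN^{[n]})$, where $\sfL(\xi) \otimes_\bk \scO_{\cN^{[n]}}$ is a locally free $\scO_{\cN^{[n]}}$-module of finite rank: tensoring with such a vector bundle is exact and preserves both the supports of cohomology sheaves and the local depth of any coherent sheaf at every point, which are precisely the invariants controlling the two halves of the perverse coherent t-structure of~\cite{arinkin-bezrukavnikov}. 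The main delicate point lies in this last verification, and amounts to the observation that every condition in the definition of the perverse coherent t-structure is insensitive to twisting by a vector bundle of finite rank.
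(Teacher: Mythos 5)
Your proof follows essentially the same route as the paper: reduce left convolution with a translation bimodule to the functor $V\otimes(-)$ on $\Db\HC^\wedge_{\mathrm{nil}}$ via~\eqref{eqn:completion-diag-induced} and the summand decomposition, then check t-exactness of tensoring with a finite-dimensional $\bG$-module. The only difference is that the paper dismisses the final step as ``clearly t-exact,'' while you unpack it by observing that twisting by a trivial vector bundle on each infinitesimal neighborhood $\cN^{[n]}$ preserves supports and local depths, hence both halves of the perverse coherent t-structure; this elaboration is correct and is a reasonable thing to spell out.
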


\begin{proof}
Using~\eqref{eqn:completion-diag-induced} one sees that the functors under consideration are obtained from a functor of the form
\[
V \otimes (-) : \Db \HC^{\wedge}_{\mathrm{nil}} \to \Db \HC^{\wedge}_{\mathrm{nil}}
\]
with $V \in \Rep(\bG)$
by restriction to a direct summand and composition with projection to a (possibly different) direct summand. Since the functor of tensoring by $V$ is clearly t-exact for the perverse coherent t-structure on $\Db \Coh^{\bG}_{\cN}(\bg^{*(1)})$, we deduce the desired claim.
\end{proof}

\subsection{Compatibility}

The main result of this section is following.

\begin{thm}
\label{thm:t-structure}
The equivalence
$\sfD_{\Iwu, \Iwu} \simto \Db \HC_{\mathrm{nil}}^{\hla,\hla}$
of Theorem~\ref{thm:equivalences} 
is t-exact with respect to the perverse t-structure on $\sfD_{\Iwu, \Iwu}$ and the perverse coherent t-structure on $\Db \HC_{\mathrm{nil}}^{\hla,\hla}$.
\end{thm}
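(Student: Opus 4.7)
The plan is to apply the unicity result of Proposition~\ref{prop:unicity-t-str}. Via the equivalences of Theorem~\ref{thm:equivalences}, the perverse t-structure on $\sfD_{\Iwu,\Iwu}$ and the perverse coherent t-structure on $\Db\HC_{\mathrm{nil}}^{\hla,\hla}$ transport to t-structures $t_1$ and $t_2$ on $\Db\Coh^{\bG^{(1)}}_{\cN}(\St^{(1)})$, and the theorem amounts to the equality $t_1 = t_2$. I will verify that both $t_1$ and $t_2$ are braid positive and that both make the pushforward $Rq_*$ t-exact with respect to the perverse coherent t-structure on $\Db\Coh^{\bG^{(1)}}_{\cN}(\bg^{*(1)})$; the unicity statement then forces $t_1=t_2$.

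Braid positivity of $t_1$ should follow from the constructible side: by Remark~\ref{rmk:matching-standards-costandards}, the object $\scI^\wedge_{T_w}$ corresponds for every $w \in \bW$ to $\nabla^\wedge_{w^{-1}}$, and the statement reduces to right t-exactness of $\nabla^\wedge_v \hatstar (-)$ (and similarly on the right) for $v \in \bW$ with respect to the perverse t-structure on $\sfD_{\Iwu,\Iwu}$, which is a general property of the constructible affine Hecke category, provable by induction on $\ell(v)$ from $\nabla^\wedge_{yv} \cong \nabla^\wedge_y \hatstar \nabla^\wedge_v$ (when lengths add up) and the case of a simple reflection, where it is standard.

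The central technical step is to identify the functor $Rq_*$, transported to $\Db\HC_{\mathrm{nil}}^{\hla,\hla}$ via $\Phi_{\mathrm{nil}}^{\hla,\hla}$ and then composed with the (t-exact, t-structure--detecting) pullback $\Db\Coh^{\bG^{(1)}}_{\cN}(\bg^{*(1)}) \to \Db\Coh^{\bG}_{\cN}(\bg^{*(1)})$, with the forgetful functor $\Db\HC_{\mathrm{nil}}^{\hla,\hla} \to \Db\Coh^{\bG}_{\cN}(\bg^{*(1)})$ given by restriction along $\scO(\bg^{*(1)}) = \ZFr \subset \sfU^{\hla,\hla}$. This is essentially a tracing exercise through the construction of $\Phi^{\hla,\hla}$ via the localization equivalence $\Gamma^{\lambda,\lambda}_{\varnothing,\varnothing}$ of Theorem~\ref{thm:localization-HC} and the splitting bundle of~\S\ref{ss:D-mod-Coh}: global sections of an $\osD^{\hla,\hla}_{\varnothing,\varnothing}$-module, viewed as a $\cU\bg$-bimodule, coincide with global sections of its pushforward to $\bg^{*(1)}$. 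Once this identification is in place, the t-exactness of $Rq_*$ for $t_2$ is immediate from the defining property of the perverse coherent t-structure on $\Db\HC_{\mathrm{nil}}^{\hla,\hla}$ (Lemma~\ref{lem:existence-pcoh-t-str}). For braid positivity of $t_2$, the needed statement reduces to right t-exactness of the convolutions by the bimodules corresponding to positive braid elements; using the triangles~\eqref{eqn:triangles-Ds-Ds'} together with the t-exactness of convolution with the translation bimodules $\sfR_s$ given by Lemma~\ref{lem:translation-exact-pcoh}, this reduces in turn to a surjectivity statement in the heart of the perverse coherent t-structure for the relevant unit/counit maps, which I would verify by restriction to the regular orbit in $\bg^{*(1)}$ (where the perverse coherent t-structure becomes the tautological one) followed by a devissage along the strata of the nilpotent cone.

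The main obstacle is establishing t-exactness of $Rq_*$ for $t_1$ with respect to the perverse coherent t-structure. Equivalently, via the identification above, one must show that the composition $\sfD_{\Iwu, \Iwu} \simto \Db\HC_{\mathrm{nil}}^{\hla,\hla} \to \Db\Coh^{\bG}_{\cN}(\bg^{*(1)})$ is t-exact between the perverse and perverse coherent t-structures. The plan here is to exploit Proposition~\ref{prop:image-central}: the images of the generating objects $\scZ^\wedge(V) \hatstar \Xi_!^\wedge$ for $V \in \Tilt(\bG^{(1)})$ are $V \otimes \scO_{\Stm^{\wedge(1)}}$, whose pushforwards to $\bg^{*(1)}$ are computed explicitly and lie, after appropriate shift, in the heart of the perverse coherent t-structure. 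Extending from this generating family to all perverse sheaves on $\sfD_{\Iwu,\Iwu}$ then proceeds by devissage, following the strategy of~\cite{bm-loc}.
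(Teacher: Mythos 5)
Your overall strategy matches the paper's: both proofs reduce to the unicity statement of Proposition~\ref{prop:unicity-t-str} by showing that the transports of the perverse t-structure and of the perverse coherent t-structure to $\Db\Coh^{\bG^{(1)}}_{\cN}(\St^{(1)})$ are each braid positive and make $Rq_*$ t-exact, and your treatment of braid positivity of $t_1$ is essentially the paper's (right t-exactness of convolution with the $\nabla^\wedge_w$). However, there are two places where the proposal diverges, one of which is a genuine gap.

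The gap is in the t-exactness of $Rq_*$ for $t_1$. Proving that tilting objects map to the heart of the perverse coherent t-structure is not by itself enough to conclude t-exactness --- a triangulated functor preserving a generating family of tilting objects need not be t-exact, so "devissage" is not a complete plan here. The paper's proof of Proposition~\ref{prop:image-perv} hinges on a vanishing statement that you do not identify: the image under $Rq_*\circ\Phi^{-1}$ of $\pi^\dag\For^{\Iw}_{\Iwu}(\IC_w)$ vanishes whenever $w$ is not minimal in $\Wf w\Wf$. (This is proved precisely by testing against the objects $\scZ^\wedge(V)\hatstar\Xi_!^\wedge$ of Proposition~\ref{prop:image-central} and using that $\pi_\dag(\Xi^\wedge_!)$ dies on partial flag varieties, and centrality of $\scZ^\wedge(V)$.) It is this vanishing that reduces the images of the standard and costandard objects $\pi^\dag\For^{\Iw}_{\Iwu}(\Delta^{\Iw}_w)$, $\pi^\dag\For^{\Iw}_{\Iwu}(\nabla^{\Iw}_w)$ to those indexed by the double coset $\Wf w\Wf$, hence to (anti)dominant translations, where $Rq_*\scO_{\Delta\tcN^{(1)}}(\pm\mu)$ being perverse coherent is classical. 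Without the vanishing claim one cannot collapse the problem to the translation case, and showing t-exactness on arbitrary perverse sheaves directly seems out of reach.

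On the Harish-Chandra side, your identification of $Rq_*$ with the restriction along $\ZFr$ (after forgetting down to $\bG$-equivariance) is close in spirit but not what the paper establishes. Proposition~\ref{prop:translation-push-pull} identifies $Rq_*$ with the translation bifunctor $\sfP^{\widehat{-\varsigma},\hla}\star(-)\star\sfP^{\hla,\widehat{-\varsigma}}$ landing in $\Db\HC^{\widehat{-\varsigma},\widehat{-\varsigma}}_{\mathrm{nil}}$; the target coherent category for $\Phi^{\widehat{-\varsigma},\widehat{-\varsigma}}$ is attached to $\St_{\fRs,\fRs}=\bg^*$ and the splitting is by a vector bundle, hence t-exact. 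Since the perverse coherent t-structure on HC bimodules is defined via restriction along $\ZFr$, the t-exactness then follows from Lemma~\ref{lem:translation-exact-pcoh}. Your identification blurs the line-bundle and splitting-bundle twists that relate the $\lambda$ and $-\varsigma$ completions, so the two functors agree only up to a t-exact operation. Finally, for braid positivity of $t_2$ no "surjectivity of unit/counit" statement is needed: once $\scR_s\star(-)$ is t-exact (Lemma~\ref{lem:translation-exact-pcoh}) and the unit is t-exact, the triangle $\scO_{\tbg^{\wedge(1)}}\to\scR_s\to\scI^\wedge_{T_s}\to[1]$ immediately gives right t-exactness of $\scI^\wedge_{T_s}\star(-)$, because the cone of a morphism between objects of $D^{\leq 0}$ lies in $D^{\leq 0}$.
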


Our strategy will be the following. By Proposition~\ref{prop:unicity-t-str}, there exists at most one bounded t-structure on $\Db \Coh^{\bG^{(1)}}_{\cN}(\St^{(1)})$ which is braid positive and such that the functor~\eqref{eqn:Rpi*} is t-exact, where the target category is equipped with the perverse coherent t-structure. We will prove in~\S\ref{ss:image-perv} that the image under 
the equivalence $\sfD_{\Iwu, \Iwu} \simto \Db \Coh^{\bG^{(1)}}_{\cN}(\St^{(1)})$ 
of the perverse t-structure on $\sfD_{\Iwu, \Iwu}$ satisfies these properties, and in~\S\ref{ss:image-pcoh} that the image under the equivalence 
$\Db \HC_{\mathrm{nil}}^{\hla,\hla} \simto \Db \Coh^{\bG^{(1)}}_{\cN}(\St^{(1)})$ 
of the perverse coherent t-structure on $\Db \HC_{\mathrm{nil}}^{\hla,\hla}$ also satisfies these properties. This will imply Theorem~\ref{thm:t-structure}.

\subsection{Image of the perverse t-structure}
\label{ss:image-perv}

\begin{prop}
\label{prop:image-perv}
The image of the perverse t-structure on $\sfD_{\Iwu,\Iwu}$ under the equivalence
$\sfD_{\Iwu,\Iwu} \simto \Db \Coh_{\cN}^{\bG^{(1)}}(\St^{(1)})$
of Theorem~\ref{thm:equivalences} 
is braid positive and such that the functor~\eqref{eqn:Rpi*} is t-exact with respect to the perverse coherent t-structure on $\Db \Coh^{\bG^{(1)}}_{\cN}(\bg^{*(1)})$.
\end{prop}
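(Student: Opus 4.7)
The proposition asserts two things about the t-structure on $\Db \Coh^{\bG^{(1)}}_\cN(\St^{(1)})$ obtained by transporting the perverse t-structure on $\sfD_{\Iwu,\Iwu}$ via the equivalence $\Theta$ of Theorem~\ref{thm:equivalences}: first, braid positivity, and second, t-exactness of $Rq_*$. I address each in turn.

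\textbf{Part 1 (braid positivity).} Since $\Br_{\bW}^+$ is generated as a semigroup by $\{T_w : w \in \bW\}$ (equivalently by $\{T_s : s \in \bSaff\} \cup \{T_\omega : \omega \in \mathbf{\Omega}\}$), and the convolution product is associative, it suffices to verify right t-exactness of $\scI^\wedge_{T_w} \star (-)$ and $(-) \star \scI^\wedge_{T_w}$ for each $w \in \bW$. Via the monoidal equivalence $\Theta$, together with Remark~\ref{rmk:matching-standards-costandards} and Lemma~\ref{lem:image-braid} (which give $\scI^\wedge_{T_w} \leftrightarrow \nabla^\wedge_{w^{-1}}$), these functors correspond to convolution with the costandard perverse sheaf $\nabla^\wedge_{w^{-1}}$ on $\sfD_{\Iwu,\Iwu}$. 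The task therefore reduces to a classical fact about perverse sheaves on affine flag varieties: for every $v \in \bW$, the functors $\nabla^\wedge_v \hatstar (-)$ and $(-) \hatstar \nabla^\wedge_v$ are right t-exact for the perverse t-structure. One reduces to $v \in \bSaff$ (where right t-exactness is checked by direct inspection of the convolution with the $j_*$-extension from a one-dimensional stratum, using the standard triangle relating $\Delta^\wedge_s$, $\nabla^\wedge_s$, and a tilting object) and to $v \in \mathbf{\Omega}$ (where convolution is an equivalence, hence t-exact). The general case follows from the multiplicativity $\nabla^\wedge_{yw} \cong \nabla^\wedge_y \hatstar \nabla^\wedge_w$ whenever $\ell(yw)=\ell(y)+\ell(w)$, together with the observation that compositions of right t-exact functors are right t-exact.

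\textbf{Part 2 (t-exactness of $Rq_*$).} This is the main technical step. The plan is to identify $Rq_* \circ \Theta \colon \sfD_{\Iwu,\Iwu} \to \Db\Coh^{\bG^{(1)}}_\cN(\bg^{*(1)})$ with a functor on $\sfD_{\Iwu,\Iwu}$ that is manifestly t-exact, where the target carries the perverse coherent t-structure. I expect this identification step to be the hard part. My approach would be to verify the claim first on a generating class of objects (the tilting perverse sheaves $\sfT^\wedge_{\Iwu,\Iwu}$, which correspond under $\Theta$ to objects of $\SCoh^{\bG^{(1)}}(\St^{\wedge(1)})$ and hence have explicit geometric descriptions in terms of fiber products of (partial) Grothendieck resolutions) and then extend to general perverse sheaves. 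For a Soergel-type coherent sheaf $\scS_{s_1}\star\cdots\star\scS_{s_r}\star\scS_\omega$, one can compute the pushforward along $q$ explicitly using the smoothness of the partial Grothendieck resolutions (Lemma~\ref{lem:flatness-Groth}) and standard base change to reduce to a computation of $R\Gamma$ along iterated $\mathbb P^1$-bundles, which produces a complex whose hypercohomology is concentrated in a single perverse coherent degree.

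To pass from tilting objects to general perverse sheaves, I would use the fact that the heart of the perverse t-structure on $\sfD_{\Iwu,\Iwu}$ has enough projectives/injectives modeled on tilting resolutions (so that any perverse sheaf admits a finite tilting resolution in $\Db\sfT^\wedge_{\Iwu,\Iwu}$), together with the observation that $Rq_*$ commutes with finite extensions and cones. Combined with the orbit-stratification description of the perverse coherent t-structure on $\bg^{*(1)}$, this reduces the question to controlling the supports and codimensions of the $\bG^{(1)}$-orbits through which the tilting-layer computations factor; here the semi-smallness of $q$ (fiber dimension over a nilpotent orbit $O$ equals $\dim\cN-\dim O$, matched with the middle-perversity shift) provides precisely the numerics needed for the pushforward of a middle-dimensional cohomology class to land in perverse coherent degree zero. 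Combining these two steps yields the desired t-exactness.
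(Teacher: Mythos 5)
Your Part~1 (braid positivity) matches the paper's argument: both reduce to showing convolution with $\nabla^\wedge_w$ is right t-exact, which the paper handles directly by observing that $\nabla^\wedge_w \hatstar \pi^\dag\For^{\Iw}_{\Iwu}(\Delta^{\Iw}_y)$ is perverse (a standard fact), and you handle via reduction to $\Saff$ and $\Omega$; these are the same idea.

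Your Part~2, however, takes a genuinely different route from the paper and has a real gap. The paper does \emph{not} argue via tilting objects. Instead, it reduces t-exactness of $Rq_*$ to the claim that $Rq_*\circ\Theta$ sends each $\pi^\dag\For^{\Iw}_{\Iwu}(\Delta^{\Iw}_w)$ and $\pi^\dag\For^{\Iw}_{\Iwu}(\nabla^{\Iw}_w)$ into the heart of the perverse coherent t-structure, then proves a key vanishing statement (that the image of $\pi^\dag\For^{\Iw}_{\Iwu}(\IC_w)$ vanishes when $w$ is not minimal in $\Wf w\Wf$, using Proposition~\ref{prop:image-central} and the behavior of $\Xi_!^\wedge$ and $\scZ^\wedge(V)$ under pushforward to partial affine flag varieties), which by a standard argument allows one to replace $w$ by any element of $\Wf w \Wf$; choosing translations by dominant/antidominant weights reduces to the known fact ([bez-nilp], [achar]) that $Rq_*\scO_{\Delta\widetilde{\cN}^{(1)}}(\pm\mu)$ is perverse coherent for $\mu\in X^*(\bT)^+$. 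This is where the semismallness of $q$ actually enters.

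Your proposed route has two problems. First, it is not clear that $Rq_*$ of (the coherent-side image of) a tilting perverse sheaf is concentrated in a single perverse coherent degree: tilting objects on $\Fl_G$ need not push forward to perverse objects on $\Gr_G$, and the iterated-$\mathbb{P}^1$-bundle computation you sketch does not establish such concentration. Second, even granting that claim, passing from tiltings to arbitrary perverse sheaves by "finite tilting resolutions" does not yield t-exactness: a perverse sheaf is represented by a bounded complex of tiltings of generally nonzero amplitude, and knowing $Rq_*$ kills each term into perverse-coherent degree $0$ only bounds the cohomological amplitude of $Rq_*(\scF)$ by the length of the resolution, which is useless. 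To deduce t-exactness you must check the generating objects of $\pD^{\leq 0}$ and $\pD^{\geq 0}$ directly, i.e.\ $\Delta$'s and $\nabla$'s, which is precisely what the paper does and what your proposal avoids. The missing idea is the $\IC$-vanishing/coset-reduction step, without which you cannot reduce to the explicitly computable case of translation elements.
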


\begin{proof}
In this proof we will use the categories $\sfD_{\Iwu,\Iw}$ and $\sfD_{\Iw,\Iw}$, the functors $\pi^\dag : \sfD_{\Iwu,\Iw} \to \sfD_{\Iwu,\Iwu}$ and $\For^{\Iw}_{\Iwu} : \sfD_{\Iw,\Iw} \to \sfD_{\Iwu,\Iw}$, and the objects $(\Delta^{\Iw}_w : w \in W)$ and $(\nabla^{\Iw}_w : w \in W)$ from~\cite{brr-pt1,br-pt2}. Then the nonpositive, resp.~nonnegative, part of the perverse t-structure on $\sfD_{\Iwu,\Iwu}$ is generated under extensions by the objects
\[
(\pi^\dag \For^{\Iw}_{\Iwu}(\Delta_w^{\Iw})[n] : w \in W, n \in \Z_{\geq 0}), \quad \text{resp.} \quad
(\pi^\dag \For^{\Iw}_{\Iwu}(\nabla_w^{\Iw})[n] : w \in W, n \in \Z_{\leq 0}).
\]

It is a standard fact that the functors of left and right convolution with objects $\nabla^\wedge_w$ on $\sfD_{\Iwu,\Iwu}$ are right t-exact for the perverse t-structure; in fact, for left convolution this follows from the observation that if $y,w \in W$ the complex
\[
\nabla^\wedge_w \hatstar \pi^\dag \For^{\Iw}_{\Iwu}(\Delta_y^{\Iw}) \cong \pi^\dag \For^{\Iw}_{\Iwu}(\nabla_w^{\Iw} \star^{\Iw} \Delta_y^{\Iw})
\]
is perverse (see e.g.~\cite[Lemma~4.1.7]{ar-book}). By Remark~\ref{rmk:matching-standards-costandards}, this implies that the t-structure under consideration is braid positive.

Now, consider the composition
\begin{equation}
\label{eqn:functor-projection-nilp}
\sfD_{\Iwu,\Iwu} \simto \Db \Coh_{\cN}^{\bG^{(1)}}(\St^{(1)})
\xrightarrow{Rq_*} \Db \Coh^{\bG^{(1)}}_{\cN}(\bg^{*(1)}).
\end{equation}
To conclude we have to prove that this functor is t-exact, which will follow if we prove that it sends any object $\pi^\dag \For^{\Iw}_{\Iwu}(\Delta^{\Iw}_w)$ or $\pi^\dag \For^{\Iw}_{\Iwu}( \Delta^{\Iw}_w)$ ($w \in W$) to an object in the heart of the perverse coherent t-structure.

Recall the simple objects $(\IC_w : w \in W)$ in the heart of the perverse t-structure on $\sfD_{\Iw,\Iw}$.
We claim that the image under~\eqref{eqn:functor-projection-nilp} of any object of the form $\pi^\dag \For^{\Iw}_{\Iwu}(\IC_w)$ where $w$ in not minimal in $\Wf w \Wf$ vanishes. In fact, let us denote this image by $\scF_w$.
Since $\bg^{*(1)}$ is an affine scheme, to prove that $\scF_w=0$ it suffices to prove that $R\Gamma(\bg^{*(1)}, \scF_w) = 0$, which will follow if we prove that
\[
\Ext^n_{\bG^{(1)}}(V, R\Gamma(\bg^{*(1)}, \scF_w))=0
\]
for any $V \in \Tilt(\bG^{(1)})$ and $n \in \Z$, i.e.~that
\[
\Hom_{\Db \Coh^{\bG^{(1)}}(\St^{\wedge(1)})}(V \otimes \scO_{\St^{\wedge(1)}}, \scF_w[n]) =0
\]
for any $V \in \Tilt(\bG^{(1)})$ and $n \in \Z$. Transferring this condition in $\sfD^\wedge_{\Iwu,\Iwu}$ and using Proposition~\ref{prop:image-central}, this amounts to proving that
\[
\Hom_{\sfD^\wedge_{\Iwu,\Iwu}}(\scZ^\wedge(V) \hatstar \Xi^\wedge_!, \pi^\dag \For^{\Iw}_{\Iwu} (\IC_w)[n])=0
\]
for any $V \in \Tilt(\bG^{(1)})$ and $n \in \Z$. Recall that if $w$ is not minimal in $\Wf w \Wf$, then it is not minimal either in $\Wf w$ or in $w\Wf$. If $w$ is not minimal in $w\Wf$, then $\IC_w$ is obtained by pullback from a partial affine flag variety associated with a subset of $\fRs$, and the claim follows from adjunction using the fact that the pushforward of $\pi_\dag(\Xi_!^\wedge)$ to this partial flag variety vanishes. The case when $w$ is not minimal in $\Wf w$ is similar, using the centrality of $\scZ^\wedge(V)$.

By standard arguments (see e.g.~\cite[Lemma~3(a)]{bez-nilp}), the claim we have proved above implies that the image under~\eqref{eqn:functor-projection-nilp} of the object
\[
\pi^\dag \For^{\Iw}_{\Iwu}(\Delta^{\Iw}_w), \quad \text{resp.} \quad \pi^\dag \For^{\Iw}_{\Iwu}(\Delta^{\Iw}_w),
\]
only depends on the coset $\Wf w \Wf$. Now, any such coset contains the translation associated with a dominant weight, and the translation associated with an antidominant weight. If $\mu \in X^*(\bT)^+$, then by Remark~\ref{rmk:matching-standards-costandards}
the image of $\nabla^{\wedge}_{t_{-\mu}}$ under the equivalence
\[
\sfD^\wedge_{\Iwu, \Iwu} 
\simto
\Db \Coh^{\bG^{(1)}}(\St^{\wedge(1)})
\]
of Theorem~\ref{thm:equivalences-comp}
is $\scO_{\Delta \tbg^{\wedge(1)}}(\mu)$,
which by linearity for the right action of the ring $\scO(\FN_{\bt^{*(1)}}(\{0\}))$ implies that the image of
$\pi^\dag \For^{\Iw}_{\Iwu}(\nabla^{\Iw}_{t_{-\mu}})$ under the equivalence we consider here is $\scO_{\Delta \widetilde{\cN}^{(1)}}(\mu)$. (Here $\widetilde{\cN} = \tbg \times_{\bt^*} \{0\}$ is the Springer resolution.)
It is a standard fact that the object $Rq_* \scO_{\Delta \widetilde{\cN}^{(1)}}(\mu)$ belongs to the heart of the perverse coherent t-structure (see~\cite[\S 2.2]{bez-nilp} or~\cite[\S 2.3]{achar}), which implies the desired claim for costandard objects. 
Similarly, for any $\mu \in X^*(\bT)^+$ the image of
$\pi^\dag \For^{\Iw}_{\Iwu}(\nabla^{\Iw}_{t_{\mu}})$ is $\scO_{\Delta \widetilde{\cN}^{(1)}}(-\mu)$.
Since $Rq_* \scO_{\Delta \widetilde{\cN}^{(1)}}(-\mu)$ also belongs to the heart of the perverse coherent t-structure also in this case (see again~\cite[\S 2.2]{bez-nilp} or~\cite[\S 2.3]{achar}), this proves the desired property for standard objects and finishes the proof.
\end{proof}


\subsection{Image of the perverse coherent t-structure on Harish-Chandra bimodules}
\label{ss:image-pcoh}

\begin{prop}
The image of the perverse coherent t-structure on $\Db \HC_{\mathrm{nil}}^{\hla,\hla}$ under the equivalence
\[
\Phi^{\hla,\hla}_{\mathrm{nil}} : \Db \HC_{\mathrm{nil}}^{\hla,\hla} \simto
\Db \Coh_{\cN}^{\bG^{(1)}}(\St^{(1)})
\]
is braid positive and such that the functor~\eqref{eqn:Rpi*} is t-exact with respect to the perverse coherent t-structure on $\Db \Coh^{\bG^{(1)}}_{\cN}(\bg^{*(1)})$.
\end{prop}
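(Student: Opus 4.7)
The plan is to verify separately the two required properties. For both, I would transfer the question from $\Db \Coh^{\bG^{(1)}}_{\cN}(\St^{(1)})$ back to $\Db \HC^{\hla,\hla}_{\mathrm{nil}}$ via the equivalence $\Phi^{\hla,\hla}_{\mathrm{nil}}$, and exploit the fact that the perverse coherent t-structure on $\Db \HC^{\hla,\hla}_{\mathrm{nil}}$ is defined by the t-exactness of the restriction-to-Frobenius-center functor~\eqref{eqn:restriction-HC-center}, together with the t-exactness of convolution with translation bimodules established in Lemma~\ref{lem:translation-exact-pcoh}.

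\textbf{t-exactness of $Rq_*$.} First, I would establish a canonical isomorphism of functors from $\Db \HC^{\hla,\hla}_{\mathrm{nil}}$ to $\Db \Coh^{\bG}_{\cN}(\bg^{*(1)})$,
\[
(\text{pullback}) \circ Rq_* \circ \Phi^{\hla,\hla}_{\mathrm{nil}} \;\cong\; \eqref{eqn:restriction-HC-center},
\]
where ``pullback'' denotes the t-exact, conservative functor $\Db \Coh^{\bG^{(1)}}_{\cN}(\bg^{*(1)}) \to \Db \Coh^{\bG}_{\cN}(\bg^{*(1)})$. Tracing the definition of $\Phi^{\hla,\hla}$ through the localization equivalence of Theorem~\ref{thm:localization-HC} and the splitting bundle $\scM^{\hla,\hla}$ of~\S\ref{ss:D-mod-Coh}, this reduces to a compatibility between global sections on the $\osD$-module side, pushforward along $q$, and the description of $\cU\bg$ on the unramified locus as the endomorphism algebra of the Azumaya splitting bundle (Lemma~\ref{lem:U-rho-Azumaya}). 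The requisite identification is essentially a projection-formula computation. Since pullback from $\bG^{(1)}$- to $\bG$-equivariance is t-exact and conservative, t-exactness of~\eqref{eqn:restriction-HC-center} then yields t-exactness of $Rq_*$ with respect to the image t-structure.

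\textbf{Braid positivity.} Since $\Br^+_{\bW}$ is generated as a semigroup by $\{T_s : s \in \bSaff\} \cup \{T_\omega : \omega \in \mathbf{\Omega}\}$, and the composition of right t-exact functors is right t-exact, it suffices to show that left and right convolution with $\scI^\wedge_{T_s}$ (for $s \in \bSaff$) and $\scI^\wedge_{T_\omega}$ (for $\omega \in \mathbf{\Omega}$) are right t-exact. By Lemma~\ref{lem:image-braid} and Remark~\ref{rmk:matching-standards-costandards}, these correspond under $(\Phi^{\hla,\hla}_{\mathrm{nil}})^{-1}$ respectively to convolution with $\sfN_s$ and $\sfN_\omega$ on $\Db \HC^{\hla,\hla}_{\mathrm{nil}}$. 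For $\omega \in \mathbf{\Omega}$, we have $\sfN_\omega = \sfR_\omega = \sfP^{\hla, \widehat{\omega \bullet \lambda}}$, a translation bimodule, so convolution with it is t-exact by Lemma~\ref{lem:translation-exact-pcoh}. For $s \in \bSaff$, I would use the distinguished triangle $(\cU\bg)^{\hla} \to \sfR_s \to \sfN_s \xrightarrow{[1]}$ from~\eqref{eqn:triangles-Ds-Ds'}: since $\sfR_s = \sfP^{\hla,\widehat{\mu_s}} \star \sfP^{\widehat{\mu_s},\hla}$ is a two-fold convolution of translation bimodules, $\sfR_s \star (-)$ is t-exact, and convolution with $(\cU\bg)^{\hla}$ is the identity, so the long exact sequence of perverse coherent cohomology applied to $X \to X \star \sfR_s \to X \star \sfN_s \xrightarrow{[1]}$ shows that $X \in \sfD^{\leq 0}$ implies $X \star \sfN_s \in \sfD^{\leq 0}$. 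The right-convolution case is entirely symmetric.

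\textbf{Main obstacle.} The genuinely delicate point is the first step: verifying that $Rq_* \circ \Phi^{\hla,\hla}_{\mathrm{nil}}$ matches the Frobenius-center restriction after passing to $\bG$-equivariance. This requires careful bookkeeping through the composition of equivalences defining $\Phi^{\hla,\hla}$ (localization, splitting, passage between module categories) and compatibility of each with pushforward to $\bg^{*(1)}$. Once this identification is in place, both properties of the proposition are extracted essentially formally from already-established facts about translation bimodules and the braid triangles of Section~\ref{sec:HCBim}.
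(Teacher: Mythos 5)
Your braid positivity argument is essentially the same as the paper's and is correct: transport $\scI^\wedge_{T_\omega}$ and $\scI^\wedge_{T_s}$ back to $\sfR_{\omega^{-1}}$ (a translation bimodule, hence t-exact by Lemma~\ref{lem:translation-exact-pcoh}) and to $\sfN_s$, then use the triangle $(\cU\bg)^{\hla} \to \sfR_s \to \sfN_s \xrightarrow{[1]}$ together with the t-exactness of $\sfR_s \star (-)$ and $(-) \star \sfR_s$ to conclude right t-exactness. That part is fine.

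The t-exactness of $Rq_*$ is where there is a genuine gap. You propose the identification
\[
(\text{pullback}) \circ Rq_* \circ \Phi^{\hla,\hla}_{\mathrm{nil}} \;\cong\; \eqref{eqn:restriction-HC-center},
\]
i.e.~that pushing $\Phi^{\hla,\hla}_{\mathrm{nil}}(M)$ down to $\bg^{*(1)}$ recovers $M$ as a $\ZFr$-module. This is false, and the obstruction is precisely the splitting bundle you acknowledge but optimistically say is ``essentially a projection-formula computation.'' Already on the unit object: $\Phi^{\hla,\hla}\bigl((\cU\bg)^{\hla}\bigr)\cong\scO_{\Delta\tbg^{\wedge(1)}}$, so $Rq_*\Phi^{\hla,\hla}\bigl((\cU\bg)^{\hla}\bigr)\cong Rq_*\scO_{\tbg^{\wedge(1)}}$, whose generic rank over $\bg^{*(1)}$ is $|\bWf|$; whereas $(\cU\bg)^{\hla}$ restricted to $\ZFr$ is free of rank $\ell^{\dim\bg}$. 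These functors produce sheaves of different rank and are simply not isomorphic, even after forgetting from $\bG^{(1)}$- to $\bG$-equivariance. The rank-$\ell^{\dim\bg}$ splitting bundle $\scM^{\hla,\hla}$ is divided out in passing from $\osD^{\hla,\hla}$-modules to $\scO_{\St^{\wedge(1)}}$-modules, and that twist does not cancel against anything in $Rq_*$.

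The argument the paper actually uses avoids this entirely. Instead of trying to match $Rq_*\circ\Phi^{\hla,\hla}_{\mathrm{nil}}$ with the Frobenius-center restriction, it uses Proposition~\ref{prop:translation-push-pull} to express $Rq_*$ (at the level of $\Db\HC$) as the translation functor $\sfP^{\widehat{-\varsigma},\hla}\star(-)\star\sfP^{\hla,\widehat{-\varsigma}}$ to the \emph{singular} central character $-\varsigma$, followed by the singular localization equivalence $\Phi^{\widehat{-\varsigma},\widehat{-\varsigma}}_{\mathrm{nil}}:\Db\HC^{\widehat{-\varsigma},\widehat{-\varsigma}}_{\mathrm{nil}}\simto\Db\Coh^{\bG^{(1)}}_{\cN}(\bg^{*(1)})$ coming from $\St_{\fRs,\fRs}=\bg^*$. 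The latter equivalence is tensoring with an Azumaya splitting bundle on an affine scheme (Lemma~\ref{lem:U-rho-Azumaya}), hence t-exact for perverse coherent t-structures; and the translation step is t-exact by Lemma~\ref{lem:translation-exact-pcoh}. So t-exactness of $Rq_*$ follows, but through the $-\varsigma$ singular localization rather than through any direct comparison with~\eqref{eqn:restriction-HC-center}. You would need to replace your first step with this mechanism for the proof to go through.
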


\begin{proof}
To prove braid positivity, we have to prove that for $b \in \{T_\omega : \omega \in \mathbf{\Omega}\} \cup \{T_s : s \in \bSaff\}$ the functors
\begin{align*}
\scI_b^\wedge \star (-) &: \Db \Coh_{\cN}^{\bG^{(1)}}(\St^{(1)}) \to \Db \Coh_{\cN}^{\bG^{(1)}}(\St^{\wedge(1)}), \\
(-) \star \scI_b^\wedge &: \Db \Coh_{\cN}^{\bG^{(1)}}(\St^{(1)}) \to \Db \Coh_{\cN}^{\bG^{(1)}}(\St^{\wedge(1)})
\end{align*}
are right t-exact with respect to our given t-structure. By~\eqref{eqn:Romega-I}
and monoidality,
for any $\omega \in \mathbf{\Omega}$ we have a commutative diagram
\[
\xymatrix@C=1.5cm{
\Db \HC_{\mathrm{nil}}^{\hla,\hla} \ar[d]_-{\Phi^{\hla,\hla}_{\mathrm{nil}}}^-{\wr} \ar[r]^-{\sfR_{\omega^{-1}} \star (-)} & \Db \HC_{\mathrm{nil}}^{\hla,\hla} \ar[d]^-{\Phi^{\hla,\hla}_{\mathrm{nil}}}_-{\wr} \\
\Db \Coh_{\cN}^{\bG^{(1)}}(\St^{(1)}) \ar[r]^-{\scI_{T_\omega}^\wedge \star (-)} & \Db \Coh_{\cN}^{\bG^{(1)}}(\St^{(1)}).
}
\]
By Lemma~\ref{lem:translation-exact-pcoh} the upper arrow is t-exact for the perverse coherent t-structure, which implies that the lower horizontal arrow is t-exact for the t-structure we consider. Similar arguments apply to the functor $(-) \star \scI_{T_\omega}^\wedge$.

By similar arguments one sees that, for any $s \in \bSaff$, the functors
\[
\scR_s \star (-), (-) \star \scR_s : \Db \Coh_{\cN}^{\bG^{(1)}}(\St^{(1)}) \to \Db \Coh_{\cN}^{\bG^{(1)}}(\St^{(1)})
\]
are t-exact for our t-structure.
Using the left triangle in~\eqref{eqn:triangle-Rs-I} we deduce that the functors $\scI_{T_s}^\wedge \star (-)$ and $(-) \star \scI_{T_s}^\wedge$ are right t-exact for our t-structure, which finishes the proof of braid positivity.

Now we will prove that the functor $Rq_*$ is t-exact.
Recall the weight $\varsigma$ fixed in~\S\ref{ss:HC-notation}.
 Then we have an equivalence
\[
\Phi^{\widehat{-\varsigma}, \widehat{-\varsigma}} : \Db \HC^{\widehat{-\varsigma}, \widehat{-\varsigma}} \simto \Db \Coh^{\bG^{(1)}}(\St^{\wedge(1)}_{\fRs,\fRs}),
\]
see~\S\ref{ss:D-mod-Coh}, which as in Theorem~\ref{thm:equivalences} restricts to an equivalence
\[
\Phi^{\widehat{-\varsigma}, \widehat{-\varsigma}}_{\mathrm{nil}} : \Db \HC^{\widehat{-\varsigma}, \widehat{-\varsigma}}_{\mathrm{nil}} \simto \Db \Coh^{\bG^{(1)}}_{\cN}(\St^{(1)}_{\fRs,\fRs}),
\]
and moreover $\St_{\fRs,\fRs}=\bg^*$. Since $\Phi^{\widehat{-\varsigma}, \widehat{-\varsigma}}_{\mathrm{nil}}$ is given by tensoring with a vector bundle, it is t-exact for the perverse coherent t-structures on both sides. By Proposition~\ref{prop:translation-push-pull} (and an analogue for convolution on the right) we have a commutative diagram
\[
\xymatrix@C=2cm{
\Db \HC_{\mathrm{nil}}^{\hla,\hla} \ar[d]_-{\Phi^{\hla,\hla}_{\mathrm{nil}}}^-{\wr} \ar[r]^-{\sfP^{\widehat{-\varsigma},\hla} \star (-) \star \sfP^{\hla,\widehat{-\varsigma}}} & \Db \HC_{\mathrm{nil}}^{\widehat{-\varsigma}, \widehat{-\varsigma}} \ar[d]^-{\Phi^{\widehat{-\varsigma},\widehat{-\varsigma}}_{\mathrm{nil}}}_-{\wr} \\
\Db \Coh_{\cN}^{\bG^{(1)}}(\St^{(1)}) \ar[r]^-{Rq_*} & \Db \Coh_{\cN}^{\bG^{(1)}}(\bg^{*(1)}).
}
\]
By Lemma~\ref{lem:translation-exact-pcoh} the upper line is t-exact for the perverse coherent t-structures; we deduce the desired exactness property for the lower line.
\end{proof}

\section{Application to the Finkelberg--Mirkovi{\'c} conjecture}
\label{sec:FM}

We continue with the assumptions of Section~\ref{sec:t-structures}, replacing the condition that $\ell \geq h$ by the condition $\ell > h$.

\subsection{Image of the trivial bimodule}

Recall the ``trivial Harish-Chandra bimodule'' considered in~\S\ref{ss:bimod-trivial-action}. 
Consider also the simple perverse sheaf $\IC_{w_\circ} \in \sfD_{\Iwu,\Iw}$ associated with the longest element $w_\circ$ in $\bWf$, considered in particular in the proof of Proposition~\ref{prop:image-perv}.
The following statement is analogous to~\cite[Lemma~6.7]{blo2}.

\begin{prop}
\label{prop:image-k}
The shifted trivial Harish-Chandra bimodule
\[
\bk \left[ -\frac{\dim(\cN)}{2} \right] \quad \in \Db \HC^{\widehat{0},\widehat{0}}
\]
is the image of the simple perverse sheaf
\[
\pi^\dag(\IC_{w_\circ}) \quad \in \sfD_{\Iwu,\Iwu}
\]
under the equivalence of Theorem~\ref{thm:equivalences}.
\end{prop}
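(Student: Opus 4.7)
The plan is to show that $\bk[-\dim(\cN)/2]$ is a simple object in the heart of the perverse coherent t-structure on $\Db\HC_{\mathrm{nil}}^{\widehat{0},\widehat{0}}$, then apply Theorem~\ref{thm:t-structure} to conclude its image in $\sfD_{\Iwu,\Iwu}$ is a simple perverse sheaf, and finally identify this simple with $\pi^\dag(\IC_{w_\circ})$. For the placement in the heart, the trivial bimodule is the image of the trivial $\bG$-module under~\eqref{eqn:Rep-HC-lambda} (with $\lambda=0$), so both left and right $\cU\bg$-actions are trivial; in particular the Frobenius center $\ZFr\cong\scO(\bg^{*(1)})$ acts through the character at $0\in\bg^{*(1)}$. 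Hence the restriction functor~\eqref{eqn:restriction-HC-center} sends $\bk$ to the skyscraper sheaf $\delta_0\in\Coh^{\bG}_{\cN}(\bg^{*(1)})$ in degree $0$. By Remark~\ref{rmk:pcoh-tautological-tstr}, $\delta_0[-\dim(\cN)/2]$ is a simple object of the heart of the perverse coherent t-structure. The perverse coherent t-structure on $\Db\HC_{\mathrm{nil}}^{\widehat{0},\widehat{0}}$ in Lemma~\ref{lem:existence-pcoh-t-str} is built so that~\eqref{eqn:restriction-HC-center} is t-exact; since this functor is also conservative (it restricts scalars from an algebra with unit to $\ZFr$), it detects both the t-structure and simplicity, placing $\bk[-\dim(\cN)/2]$ in the heart as a simple object.

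By Theorem~\ref{thm:t-structure}, the equivalence carries $\bk[-\dim(\cN)/2]$ to a simple perverse sheaf $\scF\in\sfD_{\Iwu,\Iwu}$. By construction of the various localization equivalences and push/pull functors, the composition of the equivalence with $Rq_*$ into $\Db\Coh^{\bG^{(1)}}_{\cN}(\bg^{*(1)})$, post-composed with the $\bG^{(1)}$-to-$\bG$ forgetful, agrees with~\eqref{eqn:restriction-HC-center}. Consequently the image of $\scF$ under~\eqref{eqn:functor-projection-nilp} equals $\delta_0[-\dim(\cN)/2]$, a perverse coherent sheaf supported set-theoretically at $\{0\}\subset\bg^{*(1)}$.

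To identify $\scF$ with $\pi^\dag(\IC_{w_\circ})$, I plan to apply Proposition~\ref{prop:image-central} to the trivial $\bG^{(1)}$-module $V=\bk$: the central object $\scZ^\wedge(\bk)\hatstar\Xi^\wedge_!$ corresponds to $\scO_{\Stm^{\wedge(1)}}$, i.e., to the monoidal unit $(\cU\bg)^{\widehat{0}}$. Via the nearby-cycles description of $\scZ^\wedge$, the sheaf $\scZ^\wedge(\bk)$ is supported on (the $T$-torsor over) the finite-dimensional flag variety $G/B=\overline{\Iw w_\circ\Iw/\Iw}\subset\Fl_G$, whose IC sheaf is $\IC_{w_\circ}$. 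The trivial bimodule $\bk$ is a distinguished simple quotient of the unit $(\cU\bg)^{\widehat{0}}$ (the quotient by the image of the augmentation ideal of $\cU\bg$), and tracking this quotient structure through the equivalence should identify $\scF\cong\pi^\dag(\IC_{w_\circ})$.

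The main obstacle will be precisely this last matching, which requires careful analysis of the simple subquotients of the central object $\scZ^\wedge(\bk)\hatstar\Xi^\wedge_!$ and of the interaction between the pro-monodromic structure and the quotient $(\cU\bg)^{\widehat 0}\twoheadrightarrow\bk$. As a more robust alternative, which I may carry out in parallel if the above matching is delicate, one can argue from the support constraint alone: the proof of Proposition~\ref{prop:image-perv} already rules out simple perverse sheaves indexed by non-minimal double-coset representatives $w\in\Wf\backslash\bW/\Wf$; combining this with the requirement that $Rq_*(\scF)$ be concentrated at $\{0\}\subset\bg^{*(1)}$ confines $\scF$ to IC sheaves of Schubert strata contained in the preimage in $\Fl_G$ of the basepoint of $\Gr_G$, i.e., to $G/B$, and a direct computation of the pushforwards singles out $\pi^\dag(\IC_{w_\circ})$.
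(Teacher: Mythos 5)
Your first paragraph is sound in substance: $\bk$ is a one-dimensional object of $\HC^{\widehat 0,\widehat 0}_{\mathrm{nil}}$ supported scheme-theoretically at $\{0\}$, so $\bk[-\dim(\cN)/2]$ lies in the heart of the perverse coherent t-structure and is simple there (the ``conservativity detects simplicity'' phrasing is not a valid general principle, but the conclusion holds because any perverse coherent subobject is forced to be supported at $\{0\}$, where the t-structure is a plain shift and $\bk$ has no proper nonzero subobjects). By Theorem~\ref{thm:t-structure} the image is thus a simple perverse sheaf $\pi^\dag(\scF)$ with $\scF$ a simple $\Iw$-equivariant perverse sheaf.

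The identification step is where your argument breaks down, and the errors are not cosmetic. First, $\scO_{\Stm^{\wedge(1)}}$ is not the monoidal unit of $\Db\Coh^{\bG^{(1)}}(\Stm^{\wedge(1)})$; the unit is $\scO_{\Delta\tbG^{\wedge(1)}}$ (the diagonal), while $\scO_{\Stm^{\wedge(1)}}$ corresponds under $\Psi^{\hla,\hla}$ to $\sfP^{\hla,\widehat{-\varsigma}}\hatstar\sfP^{\widehat{-\varsigma},\hla}$ (cf.\ Lemma~\ref{lem:free-tilting-Soergel}). Likewise $\scZ^\wedge(\bk)=\delta^\wedge$ is supported at the basepoint of $\tFl_G$, not on the $T$-torsor over $G/B$; Proposition~\ref{prop:image-central} for $V=\bk$ just restates the matching of $\Xi^\wedge_!$ with $\scO_{\Stm^{\wedge(1)}}$. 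Most seriously, the claimed agreement between $Rq_*$ (post-composed with the forgetful functor) and the restriction functor~\eqref{eqn:restriction-HC-center} is false. As shown in the proof that $Rq_*$ is perverse-coherent t-exact on the $\HC$ side, $Rq_*$ corresponds under $\Phi^{\hla,\hla}$ to the singular translation $\sfP^{\widehat{-\varsigma},\hla}\star(-)\star\sfP^{\hla,\widehat{-\varsigma}}$, \emph{not} to restriction of scalars to $\ZFr$. Applied to $\bk$ this translation vanishes, and indeed the proof of Proposition~\ref{prop:image-perv} explicitly establishes that the functor~\eqref{eqn:functor-projection-nilp} kills $\pi^\dag\For^{\Iw}_{\Iwu}(\IC_w)$ for every $w$ that is \emph{not} minimal in $\Wf w\Wf$ --- and $w_\circ$ is not minimal. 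So $Rq_*$ of the object in question is zero, not the skyscraper $\delta_0[-\dim(\cN)/2]$, and your ``support at $\{0\}$'' alternative is based on this false premise as well. That alternative would in fact constrain $\scF$ to \emph{minimal} double-coset representatives, which is the opposite of what you want.

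The paper identifies the simple differently, and you are missing both of its key moves. First it shows $\sfR_s\star\bk=0=\bk\star\sfR_s$ for every $s\in\bSf$ (by comparison with translation functors for $\bG$-modules), which forces $\scF$ to be $\Loop^+ G$-equivariant on both sides and hence the pullback of a simple object $\scG$ in the Satake category $\sfP_{\Loop^+ G,\Loop^+ G}$. Second, a Chevalley--Eilenberg computation shows every cohomology object of $\bk\star\bk$ is a direct sum of copies of $\bk$; the identification of the $\bG$-action as trivial uses the hypothesis $\ell>h$ via~\cite[Lemma~II.12.10]{jantzen}. Transporting this along the equivalence, each perverse cohomology of $\pi^\dag(\scF)\hatstar\pi^\dag(\scF)$ is a sum of copies of $\pi^\dag(\scF)$, which forces $\scG$ to be the skyscraper on $\Gr_G$, i.e.\ $\scF=\IC_{w_\circ}$. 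Neither ingredient appears in your proposal, and the replacements you offer do not work.
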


\begin{proof}
As explained in Remark~\ref{rmk:pcoh-tautological-tstr}, the object $\bk [ -\frac{\dim(\cN)}{2} ]$ is a simple object in the heart of the perverse coherent t-structure. By Theorem~\ref{thm:t-structure} its image is therefore a simple perverse sheaf in $\sfD_{\Iwu,\Iwu}$, hence is of the form $\pi^\dag(\scF)$ for some simple $\Iw$-equivariant perverse sheaf $\scF$ on $\Fl_G$. For any $s \in \Sf$, by using the comparison with translation functors for $\bG$-modules (see~\cite[Lemma~6.1]{br-Hecke}) we see that
\[
\sfR_s \star \bk = 0,
\]
hence that
\[
\Xi^\wedge_s \hatstar \pi^\dag(\scF)=0,
\]
which implies that $\scF$ is $\Loop^+ G$-equivariant. By symmetry we also have $\bk \star \sfR_s=0$ for $s \in \Sf$, which implies that $\scF$ is the pullback of a simple perverse sheaf $\scG$ in the Satake category $\sfP_{\Loop^+ G,\Loop^+ G}$ (see~\cite[\S 4.3]{br-pt2} or~\S\ref{ss:proof-FM} below).

Now, we claim that each cohomology object of the complex
$\bk \star \bk$
(for the tautological t-structure) is a direct sum of copies of the trivial module $\bk$. In fact, clearly the left and right actions of $\cU\bg$ on these cohomology objects are trivial, so it only remains to show that the action of $\bG$ is trivial too. The convolution $\bk \star \bk$ can be computed using the Chevalley--Eilenberg resolution of the trivial $\bg$-module (which is a resolution as a $\bG$-equivariant $\cU\bg$-module, hence can be seen as a resolution as Harish-Chandra bimodule, see~\eqref{eqn:HC-modules}); we see that it coincides with the cohomology of the complex $\bigwedge^\bullet \bg$, endowed with the differential given by
\[
d(x_1 \wedge \cdots \wedge x_r) = \sum_{i<j} (-1)^{i+j} [x_i, x_j] \wedge \cdots \wedge \widehat{x_i} \wedge \cdots \wedge \widehat{x_j} \wedge \cdots \wedge x_r.
\]
As explained above the action of $\bG_1$ on this cohomology is trivial; in particular the action of $\bT_1$ is trivial. Hence the cohomology under consideration is also the cohomology of the complex $(\bigwedge^\bullet \bg)^{\bT_1}$. By~\cite[Lemma~II.12.10]{jantzen}, under our assumptions on $\ell$ the action of $\bT$ on $(\bigwedge^\bullet \bg)^{\bT_1}$ is trivial; hence the action of $\bT$ on our cohomology is trivial too. This implies that the action of $\bG$ is trivial.

From this claim and Remark~\ref{rmk:pcoh-tautological-tstr}, we see that each cohomology object of the complex
\[
\bk \left[ -\frac{\dim(\cN)}{2} \right] \star \bk \left[ -\frac{\dim(\cN)}{2} \right] 
\]
for the perverse coherent t-structure is a direct sum of copies of $\bk \left[ -\frac{\dim(\cN)}{2} \right]$. Hence each perverse cohomology object of $\pi^\dag(\scF) \hatstar \pi^\dag(\scF)$ is isomorphic to a sum of copies of $\pi^\dag(\scF)$. This is only possible if $\scG$ is the sky-scraper perverse sheaf, i.e.~if $\scF=\IC_{w_\circ}$.
%
\end{proof}

\subsection{Statement of the Finkelberg--Mirkovi{\'c} conjecture}
\label{ss:FM-statement}

Recall the extended principal block $\Rep_{\langle 0 \rangle}(\bG) \subset \Rep(\bG)$ considered in~\S\ref{ss:HC-notation}. Recall also the extended affine Weyl group $\bW$ introduced in~\S\ref{ss:central-characters}, and denote by ${}^{\mathrm{f}} \hspace{-1pt} \bW \subset \bW$ the subset consisting of elements $w$ which are of minimal length in $\bWf w$. Then it is a standard fact that the map $w \mapsto w \bullet 0$ induces a bijection
\[
{}^{\mathrm{f}} \hspace{-1pt} \bW \simto (\bW \bullet 0) \cap X^*(\bT)^+.
\]
The simple objects in the category $\Rep_{\langle 0 \rangle}(\bG)$ are therefore in a canonical bijection with ${}^{\mathrm{f}} \hspace{-1pt} \bW$. It is well known also that this category admits a highest weight structure with weight poset ${}^{\mathrm{f}} \hspace{-1pt} \bW$ endowed with the restriction of the Bruhat order,\footnote{Here the Bruhat order on $\bW$ is the order such that for $y,y' \in \bWaff$ and $\omega,\omega' \in \mathbf{\Omega}$ we have $y \omega \leq y' \omega'$ iff $\omega=\omega'$ and $y \leq y'$ for the Bruhat order on $\bWaff$ constructed from the Coxeter system $(\bWaff,\bSaff)$.} and standard, resp.~costandard, objects given by the Weyl, resp.~induced, modules. Steinberg's tensor product formula implies that the action of $\Rep(\bG^{(1)})$ on $\Rep(\bG)$ given by
\[
(V, V') \mapsto \Fr_\bG^*(V) \otimes V'
\]
for $V \in \Rep(\bG^{(1)})$ and $V' \in \Rep(\bG)$
(where $\Fr_\bG : \bG \to \bG^{(1)}$ is the Frobenius morphism) stabilizes the subcategory $\Rep_{\langle 0 \rangle}(\bG)$.

Consider now the affine Grassmannian
\[
\Gr_G = \Loop G / \Loop^+ G
\]
for the group $G$, and the action of $\Loop^+ G$ by multiplication on the left. We will denote by $\sfD_{\Loop^+ G,\Loop^+ G}$ the $\Loop^+ G$-equivariant derived category of constructible $\bk$-sheaves on $\Gr_G$, and by $\sfP_{\Loop^+ G,\Loop^+ G}$ its full subcategory of perverse sheaves. The category $\sfD_{\Loop^+ G,\Loop^+ G}$ admits a canonical monoidal product $\star^{\Loop^+ G}$ which is t-exact on each side for the perverse t-structure. We therefore obtain a monoidal category $(\sfP_{\Loop^+ G,\Loop^+ G}, \star^{\Loop^+ G})$. The \emph{geometric Satake equivalence} provides an equivalence of monoidal categories
\[
\Sat : (\sfP_{\Loop^+ G,\Loop^+ G}, \star^{\Loop^+ G}) \simto (\Rep(\bG^{(1)}), \otimes),
\]
see~\cite{mv}.

On the other hand, consider the ``twisted'' version
\[
\Gr_G' = \Loop^+ G \backslash \Loop G,
\]
and the action of $\Iwu$ by multiplication on the right. Let us denote by $\sfD_{\Loop^+ G,\Iwu}$ the $\Iwu$-equivariant derived category of constructible $\bk$-sheaves on $\Gr_G'$, and by $\sfP_{\Loop^+ G,\Iwu}$ its full subcategory of perverse sheaves. Since the $\Iwu$-orbits on $\Gr_G'$ are isomorphic to affine spaces, the category $\sfP_{\Loop^+ G,\Iwu}$ admits a canonical structure of highest weight category, see~\cite[\S\S 3.2--3.3]{bgs}. Namely, the $\Iwu$-orbits on $\Gr_G'$ are naturally labelled by ${}^{\mathrm{f}} \hspace{-1pt} W$, and the order given by inclusions of closures of strata is the restriction of the Bruhat order on $W$. (Here ${}^{\mathrm{f}} \hspace{-1pt} W \subset W$ is the subset corresponding to ${}^{\mathrm{f}} \hspace{-1pt} \bW \subset \bW$.) The weight poset for this highest weight structure is therefore ${}^{\mathrm{f}} \hspace{-1pt} W$ endowed with the Bruhat order. The standard, resp.~costandard, object associated with an element $w \in {}^{\mathrm{f}} \hspace{-1pt} W$ is the $!$-extension, resp.~$*$-extension, of the perversely shifted constant sheaf on the orbit corresponding to $w$. The simple object in $\sfP_{\Loop^+ G,\Iwu}$ associated with $w \in {}^{\mathrm{f}} \hspace{-1pt} W$ will be denoted $\scL_w$; it is the intersection cohomology complex of the orbit associated with $w$.

We also have a convolution bifunctor
\begin{equation}
\label{eqn:convolution-sph}
\star^{\Loop^+ G} : \sfD_{\Loop^+ G,\Loop^+ G} \times \sfD_{\Loop^+ G,\Iwu} \to \sfD_{\Loop^+ G,\Iwu}
\end{equation}
which is also t-exact on both sides for the perverse t-structures, hence induces an action of the monoidal category $(\sfP_{\Loop^+ G,\Loop^+ G}, \star^{\Loop^+ G})$ on $\sfP_{\Loop^+ G,\Iwu}$.

The main result of this section is the following theorem, which was conjectured by Finkelberg--Mirkovi{\'c}, see~\cite{fm}.

\begin{thm}
\label{thm:FM}
There exists an equivalence of highest weight categories
\[
\Phi : \sfP_{\Loop^+ G,\Iwu} \cong \Rep_{\langle 0 \rangle}(\bG)
\]
which satisfies
\[
\Phi(\scL_w) \cong \sfL(w \bullet 0)
\]
for any $w \in {}^{\mathrm{f}} \hspace{-1pt} \bW$, and which is an equivalence of module categories for $\sfP_{\Loop^+ G,\Loop^+ G}$ and $\Rep(\bG^{(1)})$, where the latter two categories are identified via the geometric Satake equivalence $\Sat$.
\end{thm}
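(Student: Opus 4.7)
The plan is to transport the whole problem to the world of Harish-Chandra bimodules through the monoidal equivalence of Theorem~\ref{thm:equivalences}, exploiting the identification of t-structures from Theorem~\ref{thm:t-structure} and the computation of the image of the trivial bimodule in Proposition~\ref{prop:image-k}. First I would identify $\sfP_{\Loop^+ G, \Iwu}$ (up to the cohomological shift $[\dim(\cN)/2]$ forced by the dimension of the $\Loop^+ G$-orbit through the basepoint) with the full subcategory of $\sfP_{\Iwu,\Iwu} := \sfD_{\Iwu,\Iwu}^{\heartsuit,\mathrm{perv}}$ consisting of objects whose left $\Iwu$-equivariant structure extends to a (strongly) $\Loop^+ G$-equivariant structure. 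These can be characterized as the objects $\scF \in \sfP_{\Iwu,\Iwu}$ for which the natural adjunction morphism $\pi^\dag(\IC_{w_\circ}) \hatstar \scF \to \scF[\tfrac{\dim(\cN)}{2}]$ (obtained from $\IC_{w_\circ}$ being the pushforward of the $\Loop^+ G$-equivariant constant sheaf on $G/B \subset \Fl_G$) is an isomorphism.

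Next I would transport this characterization to $\Db\HC^{\widehat{0},\widehat{0}}_{\mathrm{nil}}$. By Proposition~\ref{prop:image-k}, convolution with $\pi^\dag(\IC_{w_\circ})$ corresponds under Theorem~\ref{thm:equivalences} to convolution with the shifted trivial bimodule $\bk[-\tfrac{\dim(\cN)}{2}]$. Combining Theorem~\ref{thm:t-structure} with Lemma~\ref{lem:Rep-cohomology-convolution}, the heart of the perverse coherent t-structure on $\Db\HC^{\widehat{0},\widehat{0}}_{\mathrm{nil}}$ contains, as a full subcategory, the essential image of $\Rep_{\langle 0 \rangle}(\bG) \hookrightarrow \HC^{\widehat{0},\widehat{0}}_\nil$ (shifted appropriately), and this subcategory is precisely the locus where the analogous ``projector'' $\bk[-\tfrac{\dim(\cN)}{2}] \star (-)$ acts as the identity (up to the shift), since the left $\cU\bg$-action on such objects is trivial by construction. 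Matching these two full subcategories through Theorems~\ref{thm:equivalences} and~\ref{thm:t-structure} yields an exact equivalence of abelian categories $\Phi : \sfP_{\Loop^+ G, \Iwu} \simto \Rep_{\langle 0 \rangle}(\bG)$.

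It remains to identify simples, establish the highest-weight compatibility, and match the actions. For simples, the distinguished case of $\scL_e$ (the skyscraper at the basepoint) corresponds via Proposition~\ref{prop:image-k} to $\sfL(0) = \bk$; the general identification $\Phi(\scL_w) \cong \sfL(w \bullet 0)$ for $w \in {}^{\mathrm{f}}\hspace{-1pt}\bW$ then follows by propagating this via the compatible left actions of the spherical category on both sides, together with standard length-induction arguments using the $\sfP_{\Loop^+ G, \Loop^+ G}$-action and the classification of simples on either side. Compatibility with geometric Satake reduces to Proposition~\ref{prop:image-central}: the central functor $\scZ^\wedge(V) \hatstar (-)$ on $\sfD^\wedge_{\Iwu,\Iwu}$ corresponds to convolution by $V \otimes \scO_{\Stm^{\wedge(1)}}$ in coherent sheaves, which in turn corresponds on bimodules to the functor $V \otimes (-)$ (with $V$ pulled back through Frobenius)---this is exactly the action of $\Rep(\bG^{(1)})$ on $\Rep_{\langle 0 \rangle}(\bG)$. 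Finally, the highest-weight structure compatibility (matching standard and costandard objects with Weyl and induced modules) should follow by an inductive argument: for the trivial coset the claim is immediate, and one propagates via translation functors using the $\sfP_{\Loop^+ G, \Loop^+ G}$-action (alternatively, via the objects $\scZ^\wedge(\sfT^{(1)}(\lambda)) \hatstar \Xi_!^\wedge$ which have been shown to correspond to tilting modules).

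The main obstacle will be making step one precise: realizing $\sfP_{\Loop^+ G,\Iwu}$ as a full subcategory of $\sfP_{\Iwu,\Iwu}$ via the ``$\Loop^+ G$-equivariance'' criterion, and checking that this subcategory is characterized on the bimodule side by triviality of the left $\cU\bg$-action via the projector $\bk[-\tfrac{\dim(\cN)}{2}]\star(-)$. This requires careful bookkeeping of equivariant structures and shifts, and depends essentially on combining Proposition~\ref{prop:image-k} with Theorem~\ref{thm:t-structure} in the abelian setting. Once that identification is established the remaining steps---matching simples, highest-weight structures, and Satake compatibility---are more or less formal consequences of the monoidality statements already proven in Section~\ref{sec:monoidality} and Proposition~\ref{prop:image-central}.
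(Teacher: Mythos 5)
Your high-level strategy tracks the paper's: transport to $\Db\HC_{\mathrm{nil}}^{\widehat{0},\widehat{0}}$ via Theorem~\ref{thm:equivalences}, match t-structures via Theorem~\ref{thm:t-structure}, anchor things with Proposition~\ref{prop:image-k}, and deduce the Satake compatibility from Lemma~\ref{lem:image-central-2}. The simple-object and highest-weight compatibilities likewise follow from wall-crossing considerations as you sketch. Up to here the two arguments are essentially parallel.

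However, step one contains a genuine error, not just a gap to be made precise. You propose to characterize $\sfP_{\Loop^+ G,\Iwu} \subset \sfP_{\Iwu,\Iwu}$ as the objects $\scF$ for which a morphism $\pi^\dag(\IC_{w_\circ}) \hatstar \scF \to \scF[\tfrac{\dim(\cN)}{2}]$ is an isomorphism, i.e.\ as fixed points of $\pi^\dag(\IC_{w_\circ})$-convolution. This cannot work, because $\pi^\dag(\IC_{w_\circ}) \hatstar (-)$ is not an idempotent in any naive sense: as the proof of Proposition~\ref{prop:image-k} itself points out, the cohomology of $\bk \star \bk$ is a \emph{direct sum of several copies} of $\bk$, not a single copy; concretely, for $\Loop^+ G$-equivariant $\scF$ the convolution $\IC_{w_\circ} \star \scF$ performs an integration along $G/B$-fibers, which tensors $\scF$ with the total cohomology of $G/B$ rather than returning a shift of $\scF$. (Already for $\scF = \IC_{w_\circ}$ itself the claimed isomorphism fails.) The paper's characterization is genuinely different and does work: it identifies each side not as fixed points but as an \emph{essential image}, namely the set of $\pcH^n(\bk \star M)$ for $M \in \Db\HC^{\widehat{0},\widehat{0}}$ (using Lemma~\ref{lem:Rep-cohomology-convolution} combined with Remark~\ref{rmk:pcoh-tautological-tstr}, which lets one pass between the tautological shift and perverse coherent cohomology on bimodules with trivial left action), and respectively the set of $\pH^n(\pi^\dag(\IC_{w_\circ}) \hatstar \scG)$ for $\scG \in \sfD^\wedge_{\Iwu,\Iwu}$. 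Once the characterization is phrased this way, the t-structure match of Theorem~\ref{thm:t-structure} together with Proposition~\ref{prop:image-k} and monoidality immediately yields the equivalence upon restriction. You should also note that the last step you describe (matching the $\Rep(\bG^{(1)})$-actions) needs Lemma~\ref{lem:image-central-2} and not just Proposition~\ref{prop:image-central}: the former upgrades the latter from an isomorphism of objects after convolving with $\Xi_!^\wedge$ to an isomorphism of functors for $\scZ^\wedge(-)$ itself, which is what is required.
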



\subsection{Proof of the Finkelberg--Mirkovi{\'c} conjecture}
\label{ss:proof-FM}

Before giving the proof of Theorem~\ref{thm:FM}, we need to study some objects associated with representations of $\bG^{(1)}$. Namely, for any $V \in \Rep(\bG^{(1)})$, the action of $\cU\bg$ on $\Fr_\bG^*(V)$ is trivial. We therefore have an exact functor
\[
\HC^{\hla,\hla} \to \HC^{\hla,\hla}
\]
which sends an object $M$ to $V \otimes M$, where the action of $\bG$ is diagonal and the left and right actions of $\cU\bg$ are induced by the actions on $M$. This functor identifies with left and right convolution with the object $\sfC^{\hla,\hla}(V \otimes \cU\bg)$. Similarly we have functors
\begin{align*}
\Db\Coh^{\bG^{(1)}}(\Stm^{\wedge (1)}) &\to \Db\Coh^{\bG^{(1)}}(\Stm^{\wedge (1)}), \\
 \Db\Coh^{\bG^{(1)}}(\St^{\wedge (1)}) &\to \Db\Coh^{\bG^{(1)}}(\St^{\wedge (1)})
\end{align*}
of tensoring with $V$, which identify with convolution with the objects $V \otimes \scO_{\Delta \tbG^{\wedge (1)}}$ and $V \otimes \scO_{\Delta \tbg^{\wedge(1)}}$ respectively. It is clear that these functors and objects correspond under the relevant equivalences of Theorem~\ref{thm:equivalences-comp}.

The identification of the corresponding functors on $\sfD^\wedge_{\Iwu,\Iwu}$ (in case $V$ is tilting) is the subject of the following lemma.

\begin{lem}
\label{lem:image-central-2}
There exists an isomorphism
\[
\Theta^{\hla,\hla} \circ \scZ^\wedge(-) \cong (-) \otimes \scO_{\Delta \tbG^{\wedge(1)}}
\]
of monoidal functors from $\Tilt(\bG^{(1)})$ to $\Db\Coh^{\bG^{(1)}}(\Stm^{\wedge (1)})$.
\end{lem}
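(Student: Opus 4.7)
The strategy is to bootstrap from Proposition~\ref{prop:image-central} to the desired ``unconvolved'' statement. First, I would specialize that proposition to $V = \bk$: since $\scZ^\wedge(\bk)$ is the monoidal unit $\delta_e^\wedge$ of $\sfD^\wedge_{\Iwu, \Iwu}$, this yields $\Theta^{\hla,\hla}(\Xi_!^\wedge) \cong \scO_{\Stm^{\wedge(1)}}$. Combined with monoidality of $\Theta^{\hla,\hla}$ and the identity $(V \otimes \scO_{\Delta \tbG^{\wedge(1)}}) \star \scO_{\Stm^{\wedge(1)}} \cong V \otimes \scO_{\Stm^{\wedge(1)}}$ (which follows from the fact that the functor $V \otimes (-)$ on $\bG^{(1)}$-equivariant coherent sheaves commutes with convolution), Proposition~\ref{prop:image-central} produces a natural isomorphism
\[
\Theta^{\hla,\hla}(\scZ^\wedge(V)) \star \scO_{\Stm^{\wedge(1)}} \;\cong\; (V\otimes \scO_{\Delta \tbG^{\wedge(1)}}) \star \scO_{\Stm^{\wedge(1)}},
\]
compatible with the monoidal structures and with the action of $\scO(\FN_{\bT^{(1)} \times_{\bT^{(1)}/\bWf} \bT^{(1)}}(\{(e,e)\}))$ on both sides via the monodromy on the left and the projection to the base on the right.

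The principal obstacle is to remove the factor $\scO_{\Stm^{\wedge(1)}}$. My plan is to produce a natural transformation in the desired direction as follows. The canonical surjection of perverse sheaves $\Xi_!^\wedge \twoheadrightarrow \delta_e^\wedge$ onto the monoidal unit (coming from the fact that $\delta_e^\wedge=\Delta_e^\wedge$ appears at the bottom of the costandard filtration of the tilting $\Xi_!^\wedge$), convolved on the left with $\scZ^\wedge(V)$, yields a natural morphism $\scZ^\wedge(V) \hatstar \Xi_!^\wedge \to \scZ^\wedge(V)$. I would then identify the image under $\Theta^{\hla,\hla}$ of the map $\Xi_!^\wedge \to \delta_e^\wedge$ with the restriction morphism $\scO_{\Stm^{\wedge(1)}} \to \scO_{\Delta \tbG^{\wedge(1)}}$ associated with the closed immersion $\Delta \tbG^{\wedge(1)} \hookrightarrow \Stm^{\wedge(1)}$; this identification can be verified by restricting to the Steinberg section $\Sigma$ via the framework of Section~\ref{sec:Steinberg-section}, where both maps correspond to the augmentation of the completed regular representation of $\bbI_\Sigma^\wedge$. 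Combining this with the isomorphism above and the naturality of $\Theta^{\hla,\hla}$ produces the desired natural transformation
\[
\eta_V : \Theta^{\hla,\hla}(\scZ^\wedge(V)) \to V \otimes \scO_{\Delta \tbG^{\wedge(1)}}.
\]

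The hard part of the proof is to promote $\eta$ to a monoidal natural isomorphism. For this I would appeal to the general principle that any monoidal natural transformation between strong monoidal functors out of a rigid monoidal category is automatically invertible, with the inverse $\eta_V^{-1}$ constructed from $\eta_{V^*}$ using the evaluation and coevaluation for the dual $V^* \cong \sfT(-w_\circ\lambda)$ of a tilting module. Since $\Tilt(\bG^{(1)})$ is rigid and both $\Theta^{\hla,\hla}\circ \scZ^\wedge$ (composition of Gaitsgory's monoidal central functor with the monoidal equivalence $\Theta^{\hla,\hla}$) and $(-)\otimes \scO_{\Delta \tbG^{\wedge(1)}}$ are strong monoidal, this principle applies provided we can verify that $\eta$ is compatible with the monoidal structures. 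Establishing this monoidality is the technical heart of the argument and amounts to careful bookkeeping: it reduces to checking that the surjection $\Xi_!^\wedge \to \delta_e^\wedge$ interacts compatibly with the monoidal constraints of Proposition~\ref{prop:image-central} and with the central structure on $\scZ^\wedge$. Once this is done, rigidity of $\Tilt(\bG^{(1)})$ takes over and delivers the isomorphism without further work.
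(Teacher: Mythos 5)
Your argument genuinely departs from the paper's, and the rigidity idea is appealing, but there are two gaps.

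First, the construction of $\eta_V$ as described goes in the wrong direction: convolving the surjection $\Xi_!^\wedge \twoheadrightarrow \delta_e^\wedge$ with $\scZ^\wedge(V)$ and applying $\Theta^{\hla,\hla}$ gives a morphism $V \otimes \scO_{\Stm^{\wedge(1)}} \to \Theta^{\hla,\hla}(\scZ^\wedge(V))$, not a morphism out of $\Theta^{\hla,\hla}(\scZ^\wedge(V))$; similarly the restriction $\scO_{\Stm^{\wedge(1)}} \to \scO_{\Delta \tbG^{\wedge(1)}}$ gives a map out of $V \otimes \scO_{\Stm^{\wedge(1)}}$. Both $\Theta^{\hla,\hla}(\scZ^\wedge(V))$ and $V \otimes \scO_{\Delta \tbG^{\wedge(1)}}$ are thus targets of a common object, and to compare them one needs a notion of image in a triangulated category, which requires a t-structure. (You could instead start from the injection $\delta_e^\wedge \hookrightarrow \Xi_!^\wedge$ and compose with the restriction map to produce a map $\eta_V$ in the claimed direction; but then proving that it is an isomorphism is essentially the whole problem.)

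Second, even granting the construction of $\eta$, the rigidity argument requires $\eta$ to be a \emph{monoidal} natural transformation, and you defer this to ``careful bookkeeping.'' This is far from routine: the maps $\delta_e^\wedge \to \Xi_!^\wedge$ and $\Xi_!^\wedge \to \delta_e^\wedge$ carry no monoidal structure of their own, and reconciling the multiplicativity constraint of $\scZ^\wedge$ with the $\hatstar\, \Xi_!^\wedge$-factorizations is a substantive verification, not a formality. The paper's proof avoids both issues by working with t-structures: it identifies $\Theta^{\hla,\hla}(\scZ^\wedge(V))$ as the image of $\Theta^{\hla,\hla}(\id \hatstar f)$ for the transported perverse t-structure (using t-exactness of convolution with central sheaves), identifies $V \otimes \scO_{\Delta \tbG^{\wedge(1)}}$ as the image of $\id \otimes g$ for the perverse coherent t-structure, and then invokes Theorem~\ref{thm:t-structure} (together with Theorem~\ref{thm:equivalences-fixed}) to identify the two t-structures. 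The isomorphism then exists and is unique by the universal property of the image, and functoriality and monoidality fall out of this characterization for free. Your proposal does not appeal to either of these theorems, which signals that the crucial t-structure comparison is missing.
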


\begin{proof}
It is known that the $\scO(\FN_{\bT^{(1)}}(\{e\}))$-modules
\[
\Hom_{\sfP^\wedge_{\Iwu,\Iwu}}(\Xi^\wedge_!, \delta^\wedge) \quad \text{and} \quad
\Hom_{\sfP^\wedge_{\Iwu,\Iwu}}(\delta^\wedge, \Xi^\wedge_!)
\]
are free of rank one, that any generator of the first, resp.~second, module is surjective, resp.~injective, and that these morphisms remain so after application of the functor $\pi_\dag : \sfD^\wedge_{\Iwu,\Iwu} \to \sfD_{\Iwu,\Iw}$. Let us fix generators $f_1 : \Xi^\wedge_! \to \delta^\wedge$ and $f_2 : \delta^\wedge \to \Xi^\wedge_!$ of these spaces, and set $f=f_2 \circ f_1$. Consider also $g_1 = \Theta^{\hla,\hla}(f_1)$, $g_2 = \Theta^{\hla,\hla}(f_2)$ and $g = \Theta^{\hla,\hla}(f) = g_2 \circ g_1$. Then by monoidality and Proposition~\ref{prop:image-central}  there exist canonical identifications
\[
\Theta^{\hla,\hla}(\delta^\wedge) \cong \scO_{\tbG^{\wedge(1)}}, \quad \Theta^{\hla,\hla}(\Xi^\wedge_!) \cong \scO_{\Stm^{\wedge(1)}}
\]
(so that $g_1$ can be considered as a morphism from $\scO_{\Stm^{\wedge(1)}}$ to $\scO_{\tbG^{\wedge(1)}}$, and similarly for $g_2$ and $g$) and, for any $V \in \Tilt(\bG^{(1)})$, an identification
\[
\Theta^{\hla,\hla}(\scZ^\wedge(V) \hatstar \Xi^\wedge_!) \cong V \otimes \scO_{\Stm^{\wedge(1)}}
\]
such that the following diagram commutes:
\[
\xymatrix@C=1.7cm{
\Theta^{\hla,\hla}(\scZ^\wedge(V) \hatstar \Xi_!^\wedge) \ar[d]_-{\wr} \ar[r]^-{\Theta^{\hla,\hla}(\id \hatstar f)} & \Theta^{\hla,\hla}(\scZ^\wedge(V) \hatstar \Xi_!^\wedge)  \ar[d]^-{\wr} \\
V \otimes \scO_{\Stm^{\wedge(1)}} \ar[r]^-{\id \otimes g} & V \otimes \scO_{\Stm^{\wedge(1)}}.
}
\]

We claim that there exists a unique morphism
\begin{equation}
\label{eqn:isom-Delta}
\Theta^{\hla,\hla} \circ \scZ^\wedge (V) \to V \otimes \scO_{\Delta \tbG^{\wedge(1)}}
\end{equation}
such that the following diagram commutes:
\[
\xymatrix@C=1.7cm{
\Theta^{\hla,\hla}(\scZ^\wedge(V) \hatstar \Xi_!^\wedge) \ar[d]_-{\wr} \ar[r]^-{\Theta^{\hla,\hla}(\id \hatstar f_1)} & \Theta^{\hla,\hla}(\scZ^\wedge(V)) \ar[d]^-{\eqref{eqn:isom-Delta}} \ar[r]^-{\Theta^{\hla,\hla}(\id \hatstar f_2)} & \Theta^{\hla,\hla}(\scZ^\wedge(V) \hatstar \Xi_!^\wedge)  \ar[d]^-{\wr} \\
V \otimes \scO_{\Stm^{\wedge(1)}} \ar[r]^-{\id \otimes g_1} & V \otimes \scO_{\Delta \tbG^{\wedge(1)}} \ar[r]^-{\id \otimes g_2} & V \otimes \scO_{\Stm^{\wedge(1)}},
}
\]
and that this morphism is an isomorphism.
In fact, the functor of convolution with $\scZ^\wedge(V)$ is t-exact for the perverse t-structure by~\cite[Theorem~7.8(5)]{br-pt2}, so that $\Theta^{\hla,\hla}(\scZ^\wedge(V))$ is the image of the morphism $\Theta^{\hla,\hla}(\id \hatstar f)$, for the t-structure on $\Db\Coh^{\bG^{(1)}}(\Stm^{\wedge (1)})$ obtained by transporting the perverse t-structure along $\Theta^{\hla,\hla}$. On the other hand, by Theorem~\ref{thm:equivalences-fixed} the functor $\pi^\dag \pi_\dag$ corresponds, under the equivalence $\Theta^{\hla,\hla}$, to the composition $Li^* \circ i_*$ where $i : \St^{\prime(1)} \hookrightarrow \St^{\wedge(1)}$ is the natural embedding. Using the analogous properties for $f_1$ and $f_2$, we deduce that the latter functor sends the cocone of $g_1$ and the cone of $g_2$ to objects in the heart of the t-structure. By Theorem~\ref{thm:t-structure} this t-structure restricts to a t-structure on $\Db\Coh^{\bG^{(1)}}_{\cU}(\Stm^{(1)})$, which moreover is the transport of the perverse coherent t-structure along the equivalence
\[
\Db\Coh^{\bG^{(1)}}_{\cU}(\Stm^{(1)}) \simto \Db \HC^{\hla,\hla}_{\mathrm{nil}}
\]
of Theorem~\ref{thm:equivalences}. The latter equivalence commutes with the functors of tensoring with $V$ on both sides, and this functor is t-exact for the perverse coherent t-structure. From these remarks we deduce that the image under $Li^* \circ i_*$ of the cocone of $\id \otimes g_1$ and of the cone of $\id \otimes g_2$ are in the heart of our t-structure. Using standard properties of the perverse t-structure in completed categories (see~\cite[Lemma~5.3]{br-soergel}) it follows that these objects themselves belong to the heart of the t-structure. Hence $V \otimes \scO_{\Delta \tbG^{\wedge(1)}}$ identifies with the image in this t-structure of the morphism $\id \otimes g$, which implies our claim.

Note that the isomorphism~\eqref{eqn:isom-Delta} does not depend on the choice of $f_1$ and $f_2$, by compatibility of our constructions with right multiplication by elements in $\scO(\FN_{\bT^{(1)}}(\{e\}))$. The functoriality and monoidality of these isomorphisms can be checked using the characterization in terms of the commutative diagram above.
\end{proof}

\begin{proof}[Proof of Theorem~\ref{thm:FM}]
Recall the category $\Db \Coh^{\bG}_{0}(\bg^{*(1)})$ of Remark~\ref{rmk:pcoh-tautological-tstr}.
If we denote by
\[
\Db \HC^{\widehat{0},\widehat{0}}_0
\]
the preimage of $\Db \Coh^{\bG}_{0}(\bg^{*(1)})$ under the functor~\eqref{eqn:restriction-HC-center},
we deduce from that remark that for any $M$ in $\Db \HC^{\widehat{0},\widehat{0}}_0$ and $n \in \Z$ we have
\[
H^{n+\frac{\dim(\cN)}{2}}(M) = \pcH^n(M),
\]
where in the left-hand side we consider cohomology with respect to the tautological t-structure.
Any object of the form $\bk \star M$ with $M \in \Db \HC^{\widehat{0},\widehat{0}}$ clearly belongs to $\Db \HC^{\widehat{0},\widehat{0}}_0$;
these comments and Lemma~\ref{lem:Rep-cohomology-convolution} therefore
imply that $\Rep_{\langle 0 \rangle}(\bG)$ identifies with the full subcategory of $\Db \HC^{\widehat{0},\widehat{0}}_{\mathrm{nil}}$ whose objects are the complexes of the form $\pcH^n(\bk \star M)$
with $M \in \Db \HC^{\widehat{0},\widehat{0}}$ and $n \in \Z$.

On the other hand, it is clear that $\sfP_{\Loop^+ G, \Iwu}$ identifies with the full subcategory of $\sfD_{\Iwu,\Iwu}$ whose objects are the perverse sheaves of the form
\[
\pH^n(\pi^\dag(\IC_{w_\circ}) \hatstar \scF)
\]
for $n \in \Z$ and $\scF \in \sfD^\wedge_{\Iwu,\Iwu}$. In view of Theorem~\ref{thm:t-structure}, Proposition~\ref{prop:image-k} and monoidality, the equivalence $\Phi$ can therefore be obtained by restriction of the equivalence $\sfD_{\Iwu,\Iwu} \simto \Db \HC_{\mathrm{nil}}^{\widehat{0},\widehat{0}}$ of Theorem~\ref{thm:equivalences}.

The description of images of simple objects follows from a standard support argument using the fact that convolution with the object $\Xi^{\wedge}_{s,!}$ on $\sfP_{\Loop^+ G,\Iwu}$ corresponds to the wall-crossing functor attached to $s$ on $\Rep_{\langle 0 \rangle}(\bG)$, for any $s \in \bSaff$.

Finally, we consider compatibility with the actions of $\Rep(\bG^{(1)})$.
Using centrality of the functor $\mathscr{Z}^\wedge$ (see~\cite[Theorem~7.8(4)]{br-pt2}), standard properties of convolution, the compatibility of $\mathscr{Z}^\wedge$ with Gaitsgory's central functor (see~\cite[Theorem~7.8(1)]{br-pt2}), and the fact that the composition of the latter functor with pushforward to $\Gr$ coincides with the equivalence $\Sat$, we see that, for any $V \in \Rep(\bG^{(1)})$, the functor of right convolution with $\mathscr{Z}^\wedge(V)$ stabilizes the image of $\sfP_{\Loop^+ G, \Iwu}$, and identifies with convolution on the left with $\Sat(V)$ in the sense of the bifunctor~\eqref{eqn:convolution-sph}. In view of Lemma~\ref{lem:image-central-2}, this shows that the diagram
\[
\xymatrix{
\sfT_{\Loop^+ G, \Loop^+ G} \times \sfP_{\Loop^+ G, \Iwu}  \ar[d] \ar[r] & \sfP_{\Loop^+ G, \Iwu} \ar[d] \\
\Tilt(\bG^{(1)}) \times \Rep_{\langle 0 \rangle}(\bG) \ar[r] & \Rep_{\langle 0 \rangle}(\bG)
}
\]
commutes, where the horizontal arrows are the action bifunctors, and the vertical arrows are induced by $\Sat$ and $\Phi$. (Here, $\sfT_{\Loop^+ G, \Loop^+ G}$ is the category of tilting objects in the highest weight category $\sfP_{\Loop^+ G, \Loop^+ G}$.) To ``extend'' this isomorphism to the whole of $\Rep(\bG^{(1)})$ one can proceed as follows. For a fixed $\scF \in \sfP_{\Iwu,\Loop^+ G}$, the commutativity above provides an isomorphism between the functors
\[
\Kb \sfT_{\Loop^+ G, \Loop^+ G} \to \Db \Rep_{\langle 0 \rangle}(\bG)
\]
obtained on the one hand by applying $\Sat$ and tensoring with $\Phi(\scF)$, and on the other hand by convolving with $\scF$ and then applying $\Db(\Phi)$. (In this discussion we omit the natural functors from the homotopy category to the derived category.) Now by general properties of highest weight categories there exists a canonical equivalence of monoidal categories
\[
\Kb \sfT_{\Loop^+ G, \Loop^+ G} \simto \Db \sfP_{\Loop^+ G, \Loop^+ G};
\]
composing the functors above with the inverse equivalence, and then restricting to $\sfP_{\Loop^+ G, \Loop^+ G}$, we obtain the desired extension.
\end{proof}

\appendix

\section{Coherent sheaves supported on a subscheme}
\label{sec:app-completions}

In this appendix we prove a general statement on coherent sheaves that generalizes~\cite[Lemma in~\S 3.1.7]{bmr} and is used in various settings in several places of the paper.

\subsection{Statement}
\label{ss:coh-subscheme-statement}

Let $k$ be a commutative ring. Let $X$ be a noetherian $k$-scheme, let $H$ be a flat affine group scheme over $k$ acting on $X$, and let $\scA$ be an $H$-equivariant coherent sheaf of $\scO_X$-algebras. In other words, $\scA$ is a (not necessarily commutative) sheaf of rings on $X$ endowed with a morphism of sheaves of rings $\scO_X \to \scA$ which takes values in the center of $\scA$ and makes $\scA$ a coherent $\scO_X$-module, and with a structure of $H$-equivariant quasi-coherent sheaf (see~\cite[Appendix]{mr1} for the definition and references) such that the multiplication morphism $\scA \otimes_{\scO_X} \scA \to \scA$ is a morphism of $H$-equivariant quasi-coherent sheaves.
We can then consider the abelian category $\QCoh^H(X,\scA)$ of $H$-equivariant quasi-coherent sheaves on $X$ endowed with a compatible left action of $\scA$, and its full abelian subcategory $\Coh^H(X,\scA)$ whose objects are coherent as $\scO_X$-modules.

Let now $Y \subset X$ be an $H$-stable closed subscheme, corresponding to a quasi-coherent ideal $\scI \subset \scO_X$. Recall that a quasi-coherent sheaf $\scF$ on $X$ is said to be \emph{set-theoretically supported on $Y$} if its restriction to $X \smallsetminus Y$ vanishes, or equivalently if each local section of $\scF$ is annihilated by a power of $\scI$. In particular, a coherent sheaf $\scF$ on $X$ is set-theoretically supported on $Y$ iff the multiplication morphism $\scI^n \otimes_{\scO_X} \scF \to \scF$ vanishes for $n \gg 0$. We will denote by
\[
\QCoh^H_Y(X,\scA) \subset \QCoh^H(X,\scA), \quad \Coh^H_Y(X,\scA) \subset \Coh^H(X,\scA)
\]
the Serre subcategories of sheaves set-theoretically supported on $Y$.

Our goal in this section is to prove the following.

\begin{prop}
\label{prop:sheaves-support}
The obvious functors
\[
\Db \Coh_Y^H(X,\scA) \to \Db \Coh^H(X,\scA) \quad \text{and} \quad
\Db \QCoh_Y^H(X,\scA) \to \Db \QCoh^H(X,\scA)
\]
are fully faithful; their essential images consist of complexes all of whose cohomology objects are set-theoretically supported on $Y$. For the first functor, the essential image can also be described as the full subcategory whose objects are the complexes $\scF$ such that there exists $n \geq 0$ such that for any open subscheme $U \subset X$ the natural morphism
\[
\Gamma(U,\scI^n) \to \End_{\Db \Coh(U)}(\scF_{|U})
\]
vanishes.
\end{prop}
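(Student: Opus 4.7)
The proposition generalizes~\cite[Lemma in \S 3.1.7]{bmr} to the equivariant setting with modules over an equivariant sheaf of algebras, and my plan is to follow the same Serre-subcategory strategy. I would first verify that $\Coh^H_Y(X, \scA)$ and $\QCoh^H_Y(X, \scA)$ are Serre subcategories of $\Coh^H(X, \scA)$ and $\QCoh^H(X, \scA)$ respectively: closure under subobjects and quotients is immediate from the definition of set-theoretic support, and closure under extensions follows from the elementary observation that if $0 \to \scF' \to \scF \to \scF'' \to 0$ is exact with $\scI^m \scF' = 0$ and $\scI^{m'} \scF'' = 0$ locally, then $\scI^{m+m'} \scF = 0$ locally. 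Introducing the $H$-stable closed subschemes $Y_n \hookrightarrow X$ defined by $\scI^{n+1}$ and the $H$-equivariant sheaves of algebras $\scA_n := \scA / (\scI^{n+1} \cdot \scA)$ on $Y_n$, one has the identification $\Coh^H_Y(X, \scA) = \bigcup_{n \geq 0} \Coh^H(Y_n, \scA_n)$ as a filtered union of abelian subcategories of $\Coh^H(X, \scA)$, and analogously for quasi-coherent sheaves.

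The key homological input is the comparison
\[
\Ext^i_{\Coh^H_Y(X,\scA)}(\scF, \scG) \simto \Ext^i_{\Coh^H(X,\scA)}(\scF, \scG) \qquad (i \geq 0)
\]
for $\scF, \scG$ in $\Coh^H_Y(X, \scA)$, together with the analogous statement in the quasi-coherent case. Once this is established, full faithfulness of $\Db \Coh^H_Y \to \Db \Coh^H$ and the identification of the essential image with the complexes whose cohomology lies in $\Coh^H_Y$ follow by a standard truncation argument: given $\scF \in \Db \Coh^H$ with cohomology in $\Coh^H_Y$, the truncation triangle $\tau_{<b}\scF \to \scF \to \coH^b(\scF)[-b] \xrightarrow{[1]}$ has its outer terms in $\Db \Coh^H_Y$ by induction on amplitude, and fully faithfulness allows lifting the connecting morphism to form the cone therein. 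The Ext comparison itself, whose proof I would carry out via the Yoneda interpretation, reduces to showing that any $i$-fold extension in $\Coh^H(X, \scA)$ of objects in $\Coh^H_Y$ is Yoneda-equivalent to one with intermediate terms in $\Coh^H_Y$, and that any Yoneda equivalence in $\Coh^H$ between such can be realized inside $\Coh^H_Y$; this exploits both the closure of $\Coh^H_Y$ under extensions and the filtration by $\Coh^H(Y_n, \scA_n)$.

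For the alternative characterization in the coherent case, the backward implication is immediate since annihilation of $\scF$ by $\scI^n$ in $\Db \Coh(U)$ implies annihilation of each cohomology sheaf, hence cohomology supported on $Y$. For the forward implication, full faithfulness from the first part lets us represent any object of the essential image as a bounded complex whose terms all lie in $\Coh^H_Y(X, \scA)$; since each of the finitely many nonzero terms is annihilated by some power of $\scI$, the maximum of these powers yields a single $n$ annihilating every term, hence acting by zero on $\scF$ as a chain map and a fortiori in $\Db \Coh(U)$ for every open $U$. The main technical obstacle will be the Ext comparison above, where the Yoneda manipulations must be carried out carefully in both the coherent and the quasi-coherent settings without invoking resolutions that leave the support subcategory; although morally natural, this step is where the essential homological work of the argument is concentrated, and the cleanest packaging is likely to proceed by induction on $i$, using at each stage that the endpoints (and thus, after a suitable modification, also the intermediate terms) are annihilated by a common power of $\scI$.
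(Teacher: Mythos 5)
Your plan and the paper's proof diverge at the central step. The paper does \emph{not} argue via Yoneda extensions; instead it constructs explicit injective objects. The key ingredient is Lemma~\ref{lem:support-modules-inj}: for any $\scF$ in $\QCoh_Y(X,\scA)$ there is an embedding $\scF \hookrightarrow \scG$ with $\scG$ still set-theoretically supported on $Y$ \emph{and} injective in the ambient category $\QCoh(X,\scA)$. This is proved affine-locally by coinducing from $\scO_X$-modules to $\scA$-modules via $\mathrm{Coind}_A(M)=\Hom_R(A,M)$, and using the classical fact that the maximal $I$-power-torsion submodule of an injective $R$-module is again injective and $I$-power torsion. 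This is then promoted to the equivariant setting (Lemma~\ref{lem:support-modules-inj-equiv}) by the right adjoint $\Av_H$ to the forgetful functor. Full faithfulness of $D^+\QCoh_Y^H\to D^+\QCoh^H$ is then immediate from the standard injective-resolution argument, and the bounded/coherent versions follow by nesting the four natural fully faithful functors in a commutative square. Your characterization of the essential image and your handling of the final coherent criterion are correct and match the paper.

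The gap in your proposal is precisely the Ext-comparison step, which you identify as ``where the essential homological work of the argument is concentrated'' but do not actually carry out. It is not automatic that a Serre subcategory $\mathcal B\subset\mathcal A$ satisfies $\Ext^i_{\mathcal B}(F,G)\simto\Ext^i_{\mathcal A}(F,G)$ for $i\geq 2$: one needs either that $\mathcal B$ has enough objects injective in $\mathcal A$ (the paper's route), or a concrete Yoneda-type reduction. Your sketch --- replace intermediate terms by quotients modulo a power of $\scI$ ``after a suitable modification'' --- does not obviously work: the surjection $\scM_i\twoheadrightarrow\scF$ factors through $\scM_i/\scI^n\scM_i$, but this replacement destroys the exactness of the complex at the earlier spots, so you cannot simply truncate term-by-term. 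Moreover the quasi-coherent case is worse, since objects of $\QCoh^H_Y(X,\scA)$ need not be annihilated globally by a single power of $\scI$, so your ``common power of $\scI$'' induction has no starting point there. To complete your strategy you would need either to exhibit relative injectives as the paper does, or to prove the Yoneda reduction in the form of a criterion like the one in~\cite[\href{https://stacks.math.columbia.edu/tag/0FCL}{Tag 0FCL}]{stacks-project}: the proposal as written does not close this gap.
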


\subsection{Preliminaries}
\label{ss:prelim-support}



First we consider the categories $\QCoh_Y(X,\scA)$, $\QCoh(X,\scA)$ as above for the trivial group scheme (which we omit from notation).

\begin{lem}
\label{lem:support-modules-inj}
For any $\scF$ in $\QCoh_Y(X,\scA)$, there exists $\scG$ in $\QCoh_Y(X,\scA)$ which is injective in $\QCoh(X,\scA)$ and an embedding $\scF \hookrightarrow \scG$ in $\QCoh_Y(X,\scA)$.
\end{lem}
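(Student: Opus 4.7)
The plan is to produce $\scG$ as the largest subsheaf of a standard injective in $\QCoh(X,\scA)$ that is supported on $Y$.

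First I would note that $\QCoh(X,\scA)$ is a Grothendieck abelian category (it is cocomplete with exact filtered colimits, and admits a generator built from coherent $\scA$-modules of the form $j_{U!}(\scA|_U)$ over an affine open cover), so it has enough injectives. Embed $\scF \hookrightarrow \scE$ with $\scE$ injective in $\QCoh(X,\scA)$. Next, define
\[
\scG := \Gamma_Y(\scE) = \bigcup_{n \geq 0} \sHom_{\scO_X}(\scO_X/\scI^n, \scE),
\]
the subsheaf of local sections annihilated by some power of $\scI$. Noetherianity of $X$ (which makes $\scI$ locally of finite type), together with centrality of $\scO_X \to \scA$, guarantee that $\scG$ is a quasi-coherent $\scA$-submodule of $\scE$, lying by construction in $\QCoh_Y(X,\scA)$. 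The embedding $\scF \hookrightarrow \scE$ factors through $\scG$ because $\scF$ is supported on $Y$.

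The key step is to verify that $\scG$ remains injective in the ambient category $\QCoh(X,\scA)$. I would reduce to the commutative case $\scA = \scO_X$ by means of the exact, faithful forgetful functor $F: \QCoh(X,\scA) \to \QCoh(X)$, whose right adjoint $F^!(-) := \sHom_{\scO_X}(\scA, -)$ preserves injectives (since $F$ is exact) and whose unit $\id \to F^! F$ is monomorphic (evaluation at $1 \in \scA$). Standard arguments then show that every injective object of $\QCoh(X,\scA)$ is a direct summand of $F^!(\scM)$ for some injective $\scM \in \QCoh(X)$. Because $\scA$ is coherent, $\sHom_{\scO_X}(\scA,-)$ commutes with the filtered colimits defining $\Gamma_Y$, yielding a natural isomorphism
\[
\Gamma_Y \circ F^! \;\cong\; F^! \circ \Gamma_Y.
\]
Thus it suffices to know that $\Gamma_Y(\scM)$ is injective in $\QCoh(X)$ whenever $\scM$ is, which is the classical result of Matlis on noetherian schemes: after reducing to $X = \Spec R$, every injective $R$-module decomposes as a direct sum of injective hulls $E(R/\mathfrak{p})$ of quotients by primes, on each of which $\Gamma_\scI$ acts as either the identity (when $\scI \subseteq \mathfrak{p}$) or zero, exhibiting $\Gamma_Y(\scM)$ as a direct summand of $\scM$.

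The principal obstacle is the commutation $\Gamma_Y \circ F^! \cong F^! \circ \Gamma_Y$, which crucially exploits coherence of $\scA$ and noetherianity of $X$, and the globalization of the Matlis decomposition from affine patches to the noetherian scheme $X$; both rely on standard but somewhat delicate arguments (see e.g.~Hartshorne, \emph{Residues and Duality}).
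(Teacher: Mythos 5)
Your proof is correct, but it takes a slightly different route from the paper's, and there is one minor misstep worth flagging.

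The paper works affine-locally first and then glues: over $X=\Spec(R)$, it starts from an $I$-power torsion $A$-module $M$, embeds $\For(M)$ into an injective $R$-module $Q$, passes to the $I$-power torsion submodule $Q[I^\infty]$ (which is still injective by~\cite[\href{https://stacks.math.columbia.edu/tag/08XW}{Tag 08XW}]{stacks-project}), and then applies the coinduction functor $\mathrm{Coind}_A=\Hom_R(A,-)$ to get an $I$-power torsion injective $A$-module containing $M$. The global case is then obtained by taking a finite affine cover and pushing forward along the open immersions. You instead work globally from the outset: embed $\scF$ into an arbitrary injective $\scE$ of $\QCoh(X,\scA)$, set $\scG=\Gamma_Y(\scE)$, and prove $\scG$ is injective by writing $\scE$ as a direct summand of $F^!(\scM)=\sHom_{\scO_X}(\scA,\scM)$ for $\scM$ injective in $\QCoh(X)$, commuting $\Gamma_Y$ past $F^!$ via coherence of $\scA$, and invoking the global Gabriel--Matlis structure theory for injective quasi-coherent sheaves on a noetherian scheme (every such injective is a direct sum of indecomposables $\scJ(x)$ indexed by points, on each of which $\Gamma_Y$ is the identity or zero). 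Both arguments rest on the same two ingredients---coinduction along $\scO_X\to\scA$, and the fact that the $\scI$-power torsion of an injective is injective---but yours applies them in the opposite order and replaces the affine-cover gluing with the global classification of injective quasi-coherent sheaves. This is a genuine alternative, arguably a bit more conceptual, at the cost of depending on a somewhat heavier structural result in the final step.

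One concrete flaw: your parenthetical construction of a generator via $j_{U!}(\scA|_U)$ does not work, since for an open immersion $j$ the extension-by-zero functor does not preserve quasi-coherence (and $j^*:\QCoh(X)\to\QCoh(U)$ has no left adjoint in general). This is harmless for the overall argument---$\QCoh(X,\scA)$ has enough injectives by other standard means, e.g.\ via the adjunction with $\QCoh(X)$ as in the paper---but the specific justification you offered should be dropped or replaced.
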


\begin{proof}
This lemma is the main ingredient of the proof in~\cite[Lemma in~\S 3.1.7]{bmr}. We repeat its proof for the reader's convenience.

First, assume that $X=\Spec(R)$ is affine (with $R$ a noetherian $k$-algebra), and set $I:=\Gamma(X,\scI) \subset R$. Then $A:=\Gamma(X,\scA)$ is an $R$-algebra, $\QCoh(X,\scA)$ identifies with the category $\Mod(A)$ of $A$-modules, and an object is set-theoretically supported on $Y$ iff it is $I$-power torsion as an $R$-module in the sense of~\cite[\href{https://stacks.math.columbia.edu/tag/05E6}{Tag 05E6}]{stacks-project}. We will use the notation of~\cite[\href{https://stacks.math.columbia.edu/tag/0ALX}{Tag 0ALX}]{stacks-project} and, for an $R$-module $M$, denote by $M[I^\infty]$ its maximal $I$-power torsion submodule, i.e.~the submodule consisting of elements annihilated by a power of $I$.

 Consider the natural forgetful functor
\[
 \Mod(A) \to \Mod(R).
\]
This functor is exact, and admits as right adjoint the ``coinduction'' functor $\mathrm{Coind}_A$ defined by
\[
 \mathrm{Coind}_A(M) = \Hom_R(A,M),
\]
where $A$ acts on $\Hom_R(A,M)$ via right multiplication on $A$. 
This coinduction functor therefore sends injective $R$-modules to injective $A$-modules. By construction, it sends $R$-modules which are $I$-power torsion to objects of $\Mod(A)$ which are $I$-power torsion as $R$-modules. (This claim uses our assumption that $\scA$ is coherent, i.e.~that $A$ is a finite $R$-module.) Moreover, if $M \in \Mod(A)$ and $M' \in \Mod(R)$, the adjunction isomorphism
\[
 \Hom_R(M,M') \simto \Hom_{A}(M, \mathrm{Coind}_A(M'))
\]
sends injective morphisms to injective morphisms.

Let $M$ be an $A$-module which is $I$-power torsion as an $R$-module. Consider an injective $R$-module $Q$ and an embedding $M \hookrightarrow Q$. Then if $Q':=Q[I^\infty]$, 
the morphism $M \hookrightarrow Q$ factors through an embedding $M \hookrightarrow Q'$, and $Q'$ is an injective $R$-module which is $I$-power torsion
by~\cite[\href{https://stacks.math.columbia.edu/tag/08XW}{Tag 08XW}]{stacks-project}. By adjunction, from the morphism $M \to Q'$ we deduce a morphism of $A$-modules $M \to \mathrm{Coind}_A(Q')$ which is injective, and the right-hand side is injective in $\Mod(A)$ and $I$-power torsion as an $R$-module; this proves the lemma in this case.

For the general case, we consider a finite affine covering $X = \bigcup_a U_a$. For any $a$ we denote by $j_a : U_a \to X$ the embedding; then the (exact) restriction functor
\[
\QCoh(X,\scA) \to \QCoh(U_a, \scA_{| U_a})
\]
admits as right adjoint the pushforward functor $(j_a)_*$; the latter functor therefore sends injective objects to injective objects. (Here $U_a$ is a noetherian scheme, see~\cite[\href{https://stacks.math.columbia.edu/tag/02IK}{Tag 02IK}]{stacks-project}, hence $j_a$ is quasi-compact, see~\cite[\href{https://stacks.math.columbia.edu/tag/01P0}{Tag 01P0}]{stacks-project}. This morphism is also separated since $U_a$ is affine, see~\cite[\href{https://stacks.math.columbia.edu/tag/01KN}{Tag 01KN}]{stacks-project}. Hence the functor $(j_a)_*$ sends quasi-coherent sheaves to quasi-coherent sheaves by~\cite[\href{https://stacks.math.columbia.edu/tag/01LC}{Tag 01LC}]{stacks-project}.) By the case treated above, there exists $\scG_a \in \QCoh(U_a,\scA_{| U_a})$ which is set-theoretically supported on $Y \cap U_a$ and an embedding $\scF_{| U_a} \hookrightarrow \scG_a$, which by adjunction provides a morphism $\scF \to (j_a)_* \scG_a$. Here both sheaves are supported set-theoretically on $Y$, the direct sum 
\[
\scF \to \bigoplus_a (j_a)_* \scG_a
\]
is injective, and the right-hand side is an injective object in $\QCoh(X,\scA)$.
\end{proof}

Next we generalize Lemma~\ref{lem:support-modules-inj} to the equivariant setting.

\begin{lem}
\label{lem:support-modules-inj-equiv}
For any $\scF$ in $\QCoh^H_Y(X,\scA)$, there exists $\scG$ in $\QCoh^H_Y(X,\scA)$ which is injective in $\QCoh^H(X,\scA)$ and an embedding $\scF \hookrightarrow \scG$ in $\QCoh_Y^H(X,\scA)$.
\end{lem}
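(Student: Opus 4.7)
My plan is to reduce to the non-equivariant Lemma~\ref{lem:support-modules-inj} by using the right adjoint to the forgetful functor $\For^H : \QCoh^H(X,\scA) \to \QCoh(X,\scA)$. This functor is exact, so it admits a right adjoint $\Av^H$ realized by averaging along the action: if $a,p : H \times X \to X$ denote the action and projection, then $\Av^H(\scG') = a_* p^* \scG'$, equipped with its natural $H$-equivariant structure and the $\scA$-module structure induced by the $H$-equivariant structure on $\scA$ (which provides a canonical isomorphism $a^* \scA \simeq p^* \scA$).

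Given $\scF$ in $\QCoh^H_Y(X,\scA)$, I would first apply Lemma~\ref{lem:support-modules-inj} to $\For^H(\scF)$ to obtain an embedding $\For^H(\scF) \hookrightarrow \scG'$ in $\QCoh_Y(X,\scA)$ with $\scG'$ injective in $\QCoh(X,\scA)$. Applying $\Av^H$ and composing with the adjunction unit $\eta_{\scF} : \scF \to \Av^H(\For^H(\scF))$ produces a morphism $\scF \to \Av^H(\scG')$ in $\QCoh^H(X,\scA)$; the plan is to show that (i) $\Av^H(\scG')$ belongs to $\QCoh^H_Y(X,\scA)$, (ii) $\Av^H(\scG')$ is injective in $\QCoh^H(X,\scA)$, and (iii) the composed morphism is injective.

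For (i), since $Y$ is $H$-stable we have $a^{-1}(Y) = H \times Y = p^{-1}(Y)$, so $p^* \scG'$ is set-theoretically supported on $H \times Y$, and $a_*$ of a sheaf supported on $H \times Y$ is set-theoretically supported on $Y$. For (ii), this is the standard fact that a right adjoint of an exact functor sends injectives to injectives. For (iii), the triangle identity for adjunction implies that $\For^H(\eta_{\scF})$ admits a retraction, hence is injective in $\QCoh(X,\scA)$; since $\For^H$ is exact and faithful it reflects injectivity of morphisms, so $\eta_{\scF}$ is injective in $\QCoh^H(X,\scA)$. The composition $\scF \hookrightarrow \Av^H(\For^H(\scF)) \hookrightarrow \Av^H(\scG')$ (where the second arrow is $\Av^H$ applied to the embedding, still injective because $\Av^H$ is left exact as a right adjoint) then provides the required embedding.

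The main technical point to check carefully is that the averaging functor $\Av^H$ is genuinely well defined on the category of $H$-equivariant $\scA$-modules and interacts correctly with the equivariant structure of $\scA$ — i.e.\ that the formula $a_* p^* \scG'$ acquires a canonical $\scA$-module structure compatible with its $H$-equivariant structure, and that this construction is indeed right adjoint to $\For^H$ as a functor of $\scA$-module categories. Once this is in place (and it is standard; compare the discussion of $\Av_{\bG}$ in~\S\ref{ss:proof-injectives-acyclic}), the rest of the argument is a formal consequence of the non-equivariant statement.
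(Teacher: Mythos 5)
Your proof is correct and follows essentially the same route as the paper: reduce to Lemma~\ref{lem:support-modules-inj} via the averaging functor $\Av_H = a_*p^*$, which is right adjoint to $\For^H$, preserves injectivity and set-theoretic support on $Y$, and then obtain the equivariant embedding by adjunction. The only cosmetic difference is that the paper asserts directly that the adjunction bijection sends injective morphisms to injective morphisms, whereas you decompose the resulting map as $\scF \hookrightarrow \Av_H(\For^H(\scF)) \hookrightarrow \Av_H(\scG')$ and check each factor separately via the triangle identity and left exactness of $\Av_H$ — a slightly more explicit rendering of the same point.
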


\begin{proof}
Consider the action and projection morphisms
\[
a,p : H \times_k X \to X.
\]
Then the pullback functor $p^*$ sends quasi-coherent sheaves to quasi-coherent sheaves by~\cite[\href{https://stacks.math.columbia.edu/tag/01BG}{Tag 01BG}]{stacks-project}, and so does the functor $a_*$ by~\cite[\href{https://stacks.math.columbia.edu/tag/01LC}{Tag 01LC}]{stacks-project}. (Here $a$ is affine, being the composition of an isomorphism with the affine morphism $p$; hence it is quasi-compact and separated by~\cite[\href{https://stacks.math.columbia.edu/tag/01S7}{Tag 01S7}]{stacks-project}.)

The structure we have on $\scA$ provides an isomorphism of $\scO_{H \times_k X}$-algebras $a^* \scA \cong p^* \scA$. The functor $p^*$ provides an equivalence of categories
\[
\QCoh(X, \scA) \simto \QCoh^H(H \times_k X, p^* \scA),
\]
and we have a natural pushforward functor
\[
a_* : \QCoh^H(H \times_k X, a^* \scA) \to \QCoh^H(X, \scA).
\]
Standard arguments show that the composition
\[
\mathsf{Av}_H := a_* \circ p^* : \QCoh(X, \scA) \to \QCoh^H(X,\scA)
\]
is right adjoint to the (exact) forgetful functor
\[
\For_H : \QCoh^H(X, \scA) \to \QCoh(X, \scA);
\]
this functor therefore sends injective objects in $\QCoh(X, \scA)$ to injective objects in $\QCoh^H(X, \scA)$. It is also easily seen that $\mathsf{Av}_H$ sends sheaves set-theoretically supported on $Y$ to sheaves set-theoretically supported on $Y$, and that for $\scF$ in $\QCoh^H(X)$ and $\scG$ in $\QCoh(X)$ the adjunction isomorphism
\[
\Hom_{\QCoh(X)}(\For_H(\scF), \scG) \cong \Hom_{\QCoh^H(X)}(\scF, \Av_H(\scG))
\]
sends injective morphisms to injective morphisms. Now by Lemma~\ref{lem:support-modules-inj} there exists $\scG'$ in $\QCoh_Y(X,\scA)$ which is injective in $\QCoh(X,\scA)$ and an embedding $\For_H(\scF) \hookrightarrow \scG'$ in $\QCoh_Y(X,\scA)$. By adjunction we deduce an injective morphism $\scF \hookrightarrow \mathsf{Av}_H(\scG')$, where $\mathsf{Av}_H(\scG')$ is set-theoretically supported on $Y$ and injective in $\QCoh^H(X, \scA)$.
\end{proof}

\subsection{Proof of Proposition~\ref{prop:sheaves-support}}

%
%

Lemma~\ref{lem:support-modules-inj-equiv} and standard arguments (see~\cite[Chap.~I, Proposition~4.8]{hartshorne-RD}) imply that the canonical functor
\[
D^+ \QCoh_Y^H(X,\scA) \to D^+ \QCoh^H(X,\scA)
\]
is fully faithful.
Restricting this functor to complexes with bounded cohomology, we deduce that the natural functor
\[
\Db \QCoh_Y^H(X,\scA) \to \Db \QCoh^H(X,\scA)
\]
is fully faithful.
Using the fact that $\scA$ is a coherent sheaf on the noetherian scheme $X$, standard arguments (see e.g.~\cite[\S 2.2]{arinkin-bezrukavnikov}) show that the natural functors
\[
\Db \Coh^H(X,\scA) \to \Db \QCoh^H(X,\scA) \quad \text{and} \quad \Db \Coh_Y^H(X,\scA) \to \Db \QCoh_Y^H(X,\scA)
\]
are fully faithful.
We therefore have a diagram
\[
\xymatrix{
\Db \Coh_Y^H(X,\scA) \ar[r] \ar[d] & \Db \QCoh_Y^H(X,\scA) \ar[d] \\
\Db \Coh^H(X,\scA) \ar[r] & \Db \QCoh^H(X,\scA)
}
\]
in which the horizontal and right vertical arrows are fully faithful, which implies that the left vertical arrow is fully faithful hence finishes the proof of the first claim.

Now that we know that our functors are fully faithful, we deduce that their essential images are triangulated subcategories. As a consequence, the essential image of the first, resp.~second, functor is the full triangulated subcategory generated by $\Coh_Y^H(X,\scA)$, resp.~$\QCoh_Y^H(X,\scA)$, which clearly coincides with the full subcategory whose objects are the complexes all of whose cohomology objects belong to $\Coh_Y^H(X,\scA)$, resp.~$\QCoh_Y^H(X,\scA)$.

Finally we consider the case of coherent sheaves. Any complex in the essential image of the functor $\Db \Coh_Y^H(X,\scA) \to \Db \Coh^H(X,\scA)$ can be represented by a bounded complex of coherent sheaves set-theoretically supported on $Y$, so that for $n$ large enough the morphism $\Gamma(U,\scI^n) \to \End_{\Db \Coh(U)}(\scF_{|U})$ vanishes for any open subscheme $U$. On the other hand, if a complex satisfies this condition, then all of its cohomology sheaves are set-theoretically supported on $Y$, so that the complex belongs to the essential image of $\Db \Coh_Y^H(X,\scA)$ by the first description of this essential image.

\subsection{A special case}
\label{ss:app-special-case}

The setting we encounter in the body of the paper is the following. Let $k$ be a noetherian commutative ring and $H$ a flat affine group scheme over $\Spec(k)$. Let 
$X$ be a $k$-scheme of finite type endowed with an action of $H$; 
then $X$ is noetherian by~\cite[\href{https://stacks.math.columbia.edu/tag/01T6}{Tag 01T6}]{stacks-project}. Let also $\scA$ be an $H$-equivariant coherent sheaf of $\scO_X$-algebras.

We fix an ideal $J \subset k$, and denote by $k^\wedge$ the completion of $k$ with respect to $J$ (a noetherian ring, see~\cite[\href{https://stacks.math.columbia.edu/tag/05GH}{Tag 05GH}]{stacks-project}). We set
\[
X^\wedge := X \times_{\Spec(k)} \Spec(k^\wedge), \quad H^\wedge := H \times_{\Spec(k)} \Spec(k^\wedge);
\]
then $H^\wedge$ is a flat group scheme over $\Spec(k^\wedge)$, it acts naturally on $X^\wedge$, and $f$ induces a morphism of finite type $X^\wedge \to \Spec(k^\wedge)$ (see~\cite[\href{https://stacks.math.columbia.edu/tag/01T4}{Tag 01T4}]{stacks-project}), so that again $X^\wedge$ is a noetherian scheme. We also denote by $\scA^\wedge$ the pullback of $\scA$ to $X^\wedge$ (an $H^\wedge$-equivariant coherent sheaf of $\scO_{X^\wedge}$-algebras). Let $J^\wedge$ be the completion of the $R$-module $J$ with respect to $J$; then $J^\wedge$ identifies with the ideal in $k^\wedge$ generated by $J$, see~\cite[\href{https://stacks.math.columbia.edu/tag/031C}{Tag 031C}]{stacks-project}. Let also $Y \subset X$, resp.~$Y^\wedge \subset X^\wedge$, be the pullback of the closed subscheme in $\Spec(k)$, resp.~$\Spec(k^\wedge)$, defined by $J$, resp.~$J^\wedge$. We can then consider the categories
\[
\Coh^H_Y(X, \scA) \quad \text{and} \quad \Coh^{H^\wedge}_{Y^\wedge}(X^\wedge, \scA^\wedge).
\]
Here $\Coh^H_Y(X, \scA)$, resp.~$\Coh^{H^\wedge}_{Y^\wedge}(X^\wedge, \scA^\wedge)$, identifies with the full subcategory of $\Coh^H(X, \scA)$, resp.~$\Coh^{H^\wedge}(X^\wedge, \scA^\wedge)$, whose objects are the sheaves which are annihilated by a power of $J$, resp.~$J^\wedge$. Since, for any $n \geq 0$, the natural morphism $k/J^n \to k^\wedge / (J^\wedge)^n$ is an isomorphism (see~\cite[\href{https://stacks.math.columbia.edu/tag/031C}{Tag 031C}]{stacks-project}), we deduce that the projection morphism $X^\wedge \to X$ induces an equivalence of categories
\[
\Coh^{H^\wedge}_{Y^\wedge}(X^\wedge, \scA^\wedge) \simto \Coh^H_Y(X, \scA).
\]
In this setting, Proposition~\ref{prop:sheaves-support} therefore provides a fully faithful functor
\[
\Db \Coh^H_Y(X, \scA) \to \Db \Coh^{H^\wedge}(X^\wedge, \scA^\wedge)
\]
whose essential image is the full subcategory whose objects are the complexes $\scF$ all of whose cohomology objects belong to $\Coh^{H^\wedge}_{Y^\wedge}(X^\wedge, \scA^\wedge)$, or equivalently such that there exists $n \geq 0$ such that the morphism
\[
J^n \to \End_{\Db \Coh^{H^\wedge}(X^\wedge, \scA^\wedge)}(\scF)
\]
vanishes.

\subsection{Description in terms of a colimit of categories}

We come back to the general setting of~\S\ref{ss:coh-subscheme-statement}.
In the body of the paper we will also require a different description of the category $\Db \Coh_Y^H(X,\scA)$, as follows. For any $n \geq 1$ we can consider the closed subscheme $Y^{[n]} \subset X$ determined by the ideal $\scI^n$; in other words, $Y^{[n]}$ is the $n$-th infinitesimal neighborhood of $Y$ in $X$. The action of $H$ on $X$ restricts to an action on $Y^{[n]}$, so that we can consider the category $\Coh^H(Y^{[n]}, \scA_{| Y^{[n]}})$ of $H$-equivariant modules over the restriction $\scA_{|Y^{[n]}}$ of $\scA$ to $Y^{[n]}$ which are coherent as $\scO_{Y^{[n]}}$-modules. For any $n \geq 1$ we have a closed immersion $Y^{[n]} \to Y^{[n+1]}$, and the associated pushforward functor
\[
\Db \Coh^H(Y^{[n]}, \scA_{| Y^{[n]}}) \to \Db \Coh^H(Y^{[n+1]}, \scA_{| Y^{[n+1]}}),
\]
and we will consider the colimit category
\[
\mathrm{colim}_{n \geq 1} \Db \Coh^H(Y^{[n]}, \scA_{| Y^{[n]}}).
\]
(Note that the transition functors in this colimit are \emph{not} fully faithful.)
Recall that an object in this category is a pair $(n, \scF)$ where $n \geq 1$ and $\scF$ is an object in $\Db \Coh^H(Y^{[n]}, \scA_{| Y^{[n]}})$, and that the space of morphisms from $(n,\scF)$ to $(n',\scF')$ is
\[
\mathrm{colim}_{m \geq \max(n,n')} \Hom_{\Db \Coh^H(Y^{[m]}, \scA_{| Y^{[m]}})}(\scF,\scF')
\]
where the omit of the pushforward functors under the closed immersions $Y^{[n]} \to Y^{[m]}$ and $Y^{[n']} \to Y^{[m]}$.

\begin{lem}
\label{lem:DbCoh-colim}
There exists a canonical equivalence of categories
\[
\mathrm{colim}_{n \geq 1} \Db \Coh^H(Y^{[n]}, \scA_{| Y^{[n]}}) \simto \Db \Coh_Y^H(X,\scA).
\]
\end{lem}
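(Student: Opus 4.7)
The plan is to construct an explicit functor $F\colon \mathrm{colim}_{n \geq 1} \Db \Coh^H(Y^{[n]}, \scA_{| Y^{[n]}}) \to \Db \Coh_Y^H(X,\scA)$ and verify that it is essentially surjective and fully faithful. Write $i_n \colon Y^{[n]} \hookrightarrow X$ for the closed immersion and $i_{n,m} \colon Y^{[n]} \hookrightarrow Y^{[m]}$ for the transition closed immersions; since these are closed immersions, their pushforwards are exact on (quasi-)coherent sheaves and hence pass to bounded derived categories term-wise. Define $F(n,\scF) := (i_n)_*\scF$. The identity $(i_m)_* \circ (i_{n,m})_* = (i_n)_*$ shows that $F$ is compatible with the transition functors of the colimit, hence is well-defined.

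Essential surjectivity is immediate from Proposition~\ref{prop:sheaves-support}. Any object of $\Db \Coh_Y^H(X,\scA)$ can be represented by a bounded complex $\scF^\bullet$ in $\Coh_Y^H(X,\scA)$. Each term $\scF^i$ is a coherent $\scA$-module set-theoretically supported on $Y$, hence (using coherence and the noetherian hypothesis on $X$) globally annihilated by some power $\scI^{n_i}$. Setting $n := \max_i n_i$, which exists by boundedness, realizes $\scF^\bullet$ as a bounded complex in $\Coh^H(Y^{[n]}, \scA_{|Y^{[n]}})$ that maps to the given object under $F$.

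The main step is full faithfulness. By Proposition~\ref{prop:sheaves-support}, morphisms in $\Db \Coh_Y^H(X,\scA)$ agree with morphisms in $\Db \Coh^H(X,\scA)$, so one must show that for $\scF \in \Db \Coh^H(Y^{[n]}, \scA_{|Y^{[n]}})$ and $\scG \in \Db \Coh^H(Y^{[n']}, \scA_{|Y^{[n']}})$ the natural map
\[
\mathrm{colim}_{m \geq \max(n,n')} \Hom_{\Db \Coh^H(Y^{[m]},\scA_{|Y^{[m]}})}((i_{n,m})_*\scF, (i_{n',m})_*\scG) \to \Hom_{\Db \Coh^H(X,\scA)}((i_n)_*\scF, (i_{n'})_*\scG)
\]
is bijective. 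Truncation triangles and the five lemma reduce this to the case where $\scF$ and $\scG$ are single coherent $\scA$-modules, and to the individual $\Ext^i$ groups. For single sheaves, by Verdier's theorem, $\Ext^i$ in any abelian category is given by equivalence classes of $i$-fold Yoneda extensions. The decisive observation is that every intermediate object of such a Yoneda extension in $\Coh_Y^H(X,\scA)$ is itself a coherent sheaf set-theoretically supported on $Y$, hence individually annihilated by some power of $\scI$; since there are only finitely many intermediate objects in a finite-length extension, a single common power $\scI^m$ annihilates all of them, placing the extension inside $\Coh^H(Y^{[m]},\scA_{|Y^{[m]}})$. This yields surjectivity of the colimit map, and the same finiteness argument applied to the chains of morphisms realizing equivalences of Yoneda classes yields injectivity. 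Both the $H$-equivariance and the $\scA$-module structure are preserved automatically, since the annihilation estimates involve only the underlying $\scO_X$-action. The main obstacle is the Yoneda-extension reduction, but the above argument handles it cleanly via coherence and finiteness of Yoneda chains.
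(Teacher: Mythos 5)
Your proof is correct, and it takes a genuinely different route from the paper's for the full-faithfulness step. You reduce to a comparison of Yoneda $\Ext^i$ groups between single sheaves via truncation triangles and the five lemma (using exactness of filtered colimits), and then argue by the finiteness of intermediate objects in Yoneda extensions and in the zig-zags realizing equivalences of their classes; this is sound, since $\Coh^H(Y^{[m]},\scA_{|Y^{[m]}})$ is a full abelian subcategory of $\Coh^H_Y(X,\scA)$ closed under subquotients and extensions (it consists of objects killed by $\scI^m$), so Yoneda $\Ext$ on both sides agrees with $\Hom$ in the respective bounded derived categories. The paper instead avoids any d\'evissage: it represents a morphism in the derived category directly by a roof $F(\scF) \xleftarrow{f} \scG \xrightarrow{g} F(\scF')$ with $f$ a quasi-isomorphism of bounded complexes in $\Coh^H_Y(X,\scA)$, then observes that $\scG$ lands in $\Coh^H(Y^{[m]},\scA_{|Y^{[m]}})$ for $m$ large enough (proving fullness), and for faithfulness invokes the calculus-of-fractions criterion that a roof composite $g\circ h$ is zero for some quasi-isomorphism $h : \scH \to \scG$, again with $\scH$ landing in some $Y^{[m]}$. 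The paper's argument is shorter and handles all $\Hom$'s at once; yours is slightly more laborious but also correct and uses only elementary facts about Yoneda $\Ext$ and filtered colimits.
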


\begin{proof}
For any $n \geq 1$ the pushforward functor under the closed immersion $Y^{[n]} \to X$ induces a functor $\Db \Coh^H(Y^{[n]}, \scA_{| Y^{[n]}}) \to \Db \Coh_Y^H(X,\scA)$, and taking all of these functors together provides a functor
\[
F : \mathrm{colim}_{n \geq 1} \Db \Coh^H(Y^{[n]}, \scA_{| Y^{[n]}}) \to \Db \Coh_Y^H(X,\scA).
\]
It is clear that this functor is essentially surjective. To prove that it is fully faithful we consider some objects $(n,\scF)$ and $(n',\scF')$ in the colimit category. Replacing $\scF$ or $\scF'$ by a pushforward we can (and will) assume that $n=n'$. A morphism in $\Db \Coh_Y^H(X,\scA)$ from $F(\scF)$ to $F(\scF')$ is a diagram
\[
F(\scF) \xleftarrow{f} \scG \xrightarrow{g} F(\scF')
\]
where $\scG$ is a bounded complex of objects in $\Coh_Y^H(X,\scA)$ and $f,g$ are morphisms of complexes in this category, with $f$ a quasi-isomorphism. For $m \gg 0$, $\scG$ is a complex of objects in $\Coh^H(Y^{[m]}, \scA_{| Y^{[m]}})$, proving that $F$ is full. On the other hand, a morphism from $(n,\scF)$ to $(n,\scF')$ is a diagram
\[
\scF \xleftarrow{f} \scG \xrightarrow{g} \scF'
\]
where $\scG$ is a bounded complex of objects in $\Coh^H(Y^{[m]}, \scA_{| Y^{[m]}})$ for some $m \geq n$, $f,g$ are morphisms of complexes in this category with $f$ a quasi-isomorphism, and we omit the pushforward functor under the immersion $Y^{[n]} \to Y^{[m]}$. If the image of this morphism under $F$ vanishes, by standard considerations (see e.g.~\cite[\href{https://stacks.math.columbia.edu/tag/04VJ}{Tag 04VJ}]{stacks-project}) there exists a bounded complex $\scH$ of objects in $\Coh_Y^H(X,\scA)$ and a quasi-isomorphism $h : \scH \to \scG$ such that $g \circ h=0$. For $m \gg 0$, $\scH$ is a complex of objects in $\Coh^H(Y^{[m]}, \scA_{| Y^{[m]}})$, and then the image of our morphism in $\Db \Coh^H(Y^{[m]}, \scA_{| Y^{[m]}})$ vanishes, which proves faithfulness.
\end{proof}

\section{Equivariant modules}
\label{sec:extension-scalars}

In this appendix we prove some generalities on categories of equivariant modules over equivariant algebras. These results (and their proofs) are very similar to those discussed in~\cite[Appendix~A]{mr1}.

\subsection{Morphisms and invariants}

Let $k$ be a noetherian commutative ring, let $A$ be a left noetherian $k$-algebra, and let $H$ be a flat affine group scheme over $k$ acting on $A$ by algebra automorphisms. Consider the categories $\Mod^H(A)$, $\Modfg^H(A)$ of $H$-equivariant $A$-modules and finitely generated $H$-equivariant $A$-modules. Since $A$ is noetherian, standard arguments (as e.g.~in~\cite[\S 2.2]{arinkin-bezrukavnikov}) show that the natural functor
\[
\Db \Modfg^H(A) \to \Db \Mod^H(A)
\]
is fully faithful.

It is a standard fact that the forgetful functor
\[
\For^H : \Mod^H(A) \to \Mod(A)
\]
admits a right adjoint
\[
\Av_H : \Mod(A) \to \Mod^H(A);
\]
explicitly this functor sends an $A$-module $M$ to the $H$-module $\mathrm{Ind}_{\{e\}}^H(M) = M \otimes_k \scO(H)$, seen as the $H$-module of algebraic functions $H \to M$ (with the action induced by multiplication on the right on $H$), with the action of $A$ given by $(a \cdot \varphi)(h) = (h \cdot a) \cdot \varphi(h)$. In other words, as an $A$-module, $\mathrm{Ind}_{\{e\}}^H(M)$ is obtained from the $A \otimes_k \scO(H)$-module $M \otimes_k \scO(H)$ (with the obvious action) by restriction of scalars along the algebra morphism $A \to A \otimes_k \scO(H)$ sending $a$ to the function $h \mapsto h \cdot a$ (i.e.~the coaction morphism).
In particular, this fact implies that the category $\Mod^H(A)$ has enough injectives; in fact any injective object is a direct summand in an object of the form $\Av_H(M)$ with $M$ an injective $A$-module. This fact implies that we can consider the derived bifunctor
\begin{equation}
\label{eqn:RHom-equiv}
R\Hom_{\Mod^H(A)} : D^- \Mod^H(A) \times D^+ \Mod^H(A) \to D^+ \Mod(k).
\end{equation}

\begin{lem}
\label{lem:injectives-equiv}
If $M \in \Modfg^{H}(A)$ and if $N \in \Mod^H(A)$ is injective, then we have
\[
\Ext^n_{\Mod(A)}(M,N)=0
\]
for any $n>0$.
\end{lem}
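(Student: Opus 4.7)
The plan is to prove the stronger statement that $\For^H(N)$ is injective in $\Mod(A)$ whenever $N \in \Mod^H(A)$ is injective; the lemma is then immediate, and in fact the finite-generation hypothesis on $M$ becomes unnecessary.

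First I would reduce to the case $N = \Av_H(J)$ for some injective $J \in \Mod(A)$. Since $\For^H$ is exact and admits $\Av_H$ as a right adjoint, $\Av_H$ sends injectives in $\Mod(A)$ to injectives in $\Mod^H(A)$. Conversely, for any $N \in \Mod^H(A)$ one embeds $\For^H(N) \hookrightarrow J$ into an injective $J$ of $\Mod(A)$ and applies adjunction to obtain a morphism $N \to \Av_H(J)$ which is monic (it fits over the given embedding via the counit); when $N$ is injective this map splits, exhibiting $N$ as a direct summand of $\Av_H(J)$. By additivity of $\Ext$ it suffices to prove the vanishing for $N = \Av_H(J)$.

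Next I would identify $\Mod^H(A)$ with the category of left modules over the smash product algebra $A \rtimes \scO(H)$, whose underlying $k$-module is $A \otimes_k \scO(H)$ and whose multiplication encodes both the algebra structure of $A$ and the coaction $\rho: A \to A \otimes_k \scO(H)$. Under this identification the forgetful functor $\For^H$ becomes restriction of scalars along the canonical subalgebra inclusion $A \hookrightarrow A \rtimes \scO(H)$, while $\Av_H$ becomes the corresponding right adjoint $\Hom_A(A \rtimes \scO(H), -)$. Because $\scO(H)$ is flat over $k$ (as $H$ is a flat affine group scheme), the algebra $A \rtimes \scO(H)$ is flat as a right $A$-module: after twisting by the coaction isomorphism $\widetilde{\rho}$ of $A \otimes_k \scO(H)$, the right $A$-action identifies with multiplication on the first factor, which is manifestly flat.

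The conclusion then follows at once: restriction of scalars along a ring homomorphism whose target is right-flat over the source admits the exact left adjoint $(-) \otimes_A (A \rtimes \scO(H))$, and therefore preserves injectives. Hence $\For^H(\Av_H(J))$ is injective in $\Mod(A)$, so that $\Ext^n_{\Mod(A)}(M, N) = 0$ for all $n > 0$ and any $M$. The only nontrivial input is the identification $\Mod^H(A) \cong \Mod(A \rtimes \scO(H))$ and the right-$A$-flatness of the smash product; both are standard consequences of the Hopf-algebra structure on $\scO(H)$ and will not be treated in detail.
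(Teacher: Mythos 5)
Your strategy is to prove a stronger statement---that $\For^H$ preserves injectives---by identifying $\Mod^H(A)$ with left modules over a ``smash product'' $A \rtimes \scO(H)$. This identification is the crux of your argument, and it is false when $H$ is not finite over $k$. The equivariance condition is a \emph{comodule} condition over $\scO(H)$, and categories of comodules over a (non-finite, non-split) Hopf algebra are not module categories in general. The Hopf-algebraic smash product $A \# \Lambda$ you allude to is built from a $\Lambda$-\emph{module} algebra, not a $\Lambda$-\emph{comodule} algebra; the construction does not transport to the present setting. Concretely, taking $A=k$ already exhibits the problem: your identification would assert $\Rep^\infty(H) \cong \Mod(\scO(H))$, but e.g.\ for $H = \Gm$ the first is graded vector spaces while the second is $k[t^{\pm 1}]$-modules.

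With the identification gone, the rest of the argument (right-$A$-flatness of the smash product, restriction preserving injectives) has nothing to stand on, and in fact the stronger conclusion you aim for is precisely what the paper's own proof is \emph{unable} to reach. After reducing to $N=\Av_H(N')$, the paper's chain of isomorphisms passes through $(A \otimes_k \scO(H)) \otimes_A M \simto M \otimes_k \scO(H)$, and this step uses the \emph{comodule structure of} $M$ (the Sweedler notation $m_{(1)}, m_{(2)}$ in the displayed formula). For an arbitrary $A$-module one cannot untwist $(A \otimes_k \scO(H)) \otimes_A^{(\rho)} M$ into a module with the naive product structure, and the argument stalls. So the paper's route establishes the Ext vanishing only for equivariant $M$, computed in $\Mod(A)$, and not that $\For^H(N)$ is an injective $A$-module. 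To salvage your plan you would need an independent proof that the coaction-twisted $A$-module $N' \otimes_k \scO(H)$ is injective over $A$, which would require in particular that $N' \otimes_k \scO(H)$ be injective over $A \otimes_k \scO(H)$; that is far from automatic (e.g.\ $\scO(\Ga)=k[x]$ is not self-injective), and nothing in the hypotheses gives it to you.

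One smaller remark: if you only want to drop the finite-generation assumption on $M$ while retaining $H$-equivariance, that is already implicit in the paper's proof---finite generation is only used for $A$ itself and its ideals (i.e.\ noetherianity), not for $M$. The genuinely stronger assertion you aim for, allowing arbitrary $A$-modules $M$, is a different matter and your argument does not prove it.
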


\begin{proof}
By the comments above we can assume that $N=\Av_H(N')$ for some injective $A$-module $N'$. Since $A \otimes_k \scO(H)$ is flat over $A$ (for the action induced by the coaction morphism as above) we have
\begin{multline*}
\Ext^n_{A}(M,N) = \Ext^n_A(M, \Av_H(N')) \\
\cong \Ext^n_{A \otimes_k \scO(H)}((A \otimes_k \scO(H)) \otimes_A M, N' \otimes_k \scO(H)).
\end{multline*}
Now we have an isomorphism of $A \otimes_k \scO(H)$-modules
\[
(A \otimes_k \scO(H)) \otimes_A M \simto M \otimes_k \scO(H)
\]
(where on the right-hand side we consider the obvious module structure), given in Sweedler's notation by
\[
(a \otimes \varphi) \otimes m \mapsto (a \cdot m_{(1)}) \otimes \varphi m_{(2)}.
\]
We deduce isomorphisms
\begin{multline*}
\Ext^n_{A \otimes_k \scO(H)}((A \otimes_k \scO(H)) \otimes_A M, N' \otimes_k \scO(H)) \\
\cong \Ext^n_{A \otimes_k \scO(H)}(M \otimes_k \scO(H), N' \otimes_k \scO(H)) \\
\cong \Ext^n_{A}(M, N' \otimes_k \scO(H))
\end{multline*}
where now $A$ acts on $N' \otimes_k \scO(H)$ via the restriction of the action of $A \otimes_k \scO(H)$ along the obvious algebra morphism $A \to A \otimes_k \scO(H)$; in other words the action is induced by the action on $N'$.

For this action, $N' \otimes_k \scO(H)$ is injective as an $A$-module, which will conclude the proof. In fact, by Baer's criterion,
to prove this it suffices to prove that for any left ideal $I \subset A$ the map
\[
\Hom_A(A,N' \otimes_k \scO(H)) \to \Hom_A(I,N' \otimes_k \scO(H))
\]
is surjective. However $A$ and $I$ are finitely generated $A$-modules and $\scO(H)$ is flat over $k$, so that by~\cite[Lemma~3.8(i)]{br-Hecke} these spaces identify with $\Hom_A(A,N') \otimes_k \scO(H)$ and $\Hom_A(I,N') \otimes_k \scO(H)$ respectively, in such a way that the map above is induced by the similar map $\Hom_A(A,N') \to \Hom_A(I,N')$. We conclude by injectivity of $N'$ as an $A$-module and flatness of $\scO(H)$.
\end{proof}

As explained e.g.~in~\cite[Lemma~3.8(ii)]{br-Hecke}, for $M$ in $\Modfg^H(A)$ and $N$ in $\Mod^H(A)$ the $k$-module $\Hom_A(M,N)$ admits a canonical $H$-module structure. If $M$ is a bounded complex of objects in $\Modfg^H(A)$, we can therefore consider the functor
\[
\Hom^\bullet_A(M,-) : C^+ \Mod^H(A) \to C^+ \Rep^\infty(H).
\]
Since $\Mod^H(A)$ has enough injectives this functor admits a derived functor, which will be denoted
\[
R\Hom_A(M,-) : D^+ \Mod^H(A) \to D^+ \Rep^\infty(H);
\]
this notation is justified by the fact that, by Lemma~\ref{lem:injectives-equiv}, the underlying complex of $k$-modules of $R\Hom_A(M,N)$ is the complex $R\Hom_A(\For^H(M),\For^H(N))$. Note that for any $M'$ in $C^{\mathrm{b}} \Modfg^H(A)$ and any quasi-isomorphism $M \to M'$, the induced morphism $R\Hom_A(M',N) \to R\Hom_A(M,N)$ is an isomorphism, so that this construction provides a bifunctor
\[
\Db \Modfg^H(A) \times D^+ \Mod^H(A) \to D^+ \Rep^\infty(H).
\]

Now we consider the functor of $H$-invariants $(-)^H$, and denote its derived functor by
\[
\Inv^H : D^+ \Rep^\infty(H) \to D^+ \Mod(k).
\]
(This functor exists because $\Rep^\infty(H)$ has enough injectives.)

\begin{lem}
\label{lem:RHom-equiv}
For any $M \in \Db \Modfg^H(A)$ and $N \in D^+ \Mod^H(A)$, there exists a canonical (in particular, bifunctorial), isomorphism
\[
R\Hom_{\Mod^H(A)}(M,N) \simto \Inv^H \circ R\Hom_A(M,N).
\]
\end{lem}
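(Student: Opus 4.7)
The plan is to recognize this as a Grothendieck-type composition of derived functors. At the level of underived functors, we have the tautological equality
\[
\Hom_{\Mod^H(A)}(M,N) = \bigl( \Hom_A(M,N) \bigr)^H
\]
of functors from $\Modfg^H(A)^{\op} \times \Mod^H(A)$ to $\Mod(k)$, since $H$-equivariance of a morphism of $A$-modules is exactly $H$-invariance of the corresponding element of $\Hom_A(M,N)$. Both $\Mod^H(A)$ and $\Rep^\infty(H)$ have enough injectives, so the two sides of the statement make sense; the natural morphism in the statement will be the one provided by the universal property of derived functors.

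To show this natural transformation is an isomorphism, I would apply the standard criterion: if the first functor in the composition sends injectives to objects acyclic for the second functor, then the derived functor of the composition is the composition of the derived functors. Concretely, I would fix a bounded complex representative for $M$ (with terms in $\Modfg^H(A)$) and an injective resolution $N \to I^\bullet$ in $\Mod^H(A)$, and prove that for any injective $I$ in $\Mod^H(A)$ the $H$-module $\Hom_A(M,I)$ is acyclic for $\Inv^H$. Granting this, the quasi-isomorphism $(\Hom_A^\bullet(M,I^\bullet))^H \simto \Inv^H(\Hom_A^\bullet(M,I^\bullet))$ is exactly the desired isomorphism, since the left-hand side computes $R\Hom_{\Mod^H(A)}(M,N)$ and the right-hand side computes $\Inv^H \circ R\Hom_A(M,N)$ (the latter because Lemma~\ref{lem:injectives-equiv} shows the terms $\Hom_A(M,I^k)$ represent $R\Hom_A(M,N)$).

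The main obstacle (and the only nonformal step) is the acyclicity statement. For this, I would use that any injective object of $\Mod^H(A)$ is a direct summand of $\Av_H(N')$ for some injective $A$-module $N'$, so it suffices to treat $I = \Av_H(N') = N' \otimes_k \scO(H)$ with its specific equivariant $A$-module structure. As in the proof of Lemma~\ref{lem:injectives-equiv}, using finite generation of the terms of $M$ together with flatness of $\scO(H)$ over $k$ (which gives $\Hom_A(M, N' \otimes_k \scO(H)) \cong \Hom_A(M, N') \otimes_k \scO(H)$ by~\cite[Lemma~3.8(i)]{br-Hecke}), one obtains an isomorphism of $H$-modules
\[
\Hom_A(M, \Av_H(N')) \cong \mathrm{Ind}_{\{e\}}^H \bigl( \Hom_A(M, N') \bigr),
\]
whose right-hand side is coinduced, hence injective in $\Rep^\infty(H)$ and in particular $\Inv^H$-acyclic with $\Inv^H$ computing the invariants correctly. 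This yields the required acyclicity and therefore the lemma.
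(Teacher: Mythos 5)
Your argument matches the paper's proof step for step: same tautological identity, same Grothendieck-type derived composition criterion, same reduction to $N=\Av_H(N')$, and the same identification $\Hom_A(M,\Av_H(N')) \cong \Hom_A(M,N')\otimes_k\scO(H)$ via finite generation of $M$ and flatness of $\scO(H)$. The only small inaccuracy is that $\mathrm{Ind}_{\{e\}}^H(\Hom_A(M,N'))$ need not be \emph{injective} in $\Rep^\infty(H)$ unless $\Hom_A(M,N')$ is injective over $k$; however its $\Inv^H$-acyclicity (which is all you actually use, and all the paper needs, citing~\cite[Lemma~I.4.7]{jantzen}) does hold for an arbitrary $k$-module, so the argument goes through.
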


\begin{proof}
As explained in~\cite[Lemma~3.8(ii)]{br-Hecke}, for $M$ in $\Modfg^H(A)$ and $N$ in $\Mod^H(A)$ there exists a canonical isomorphism
\[
\Hom_{\Mod^H(A)}(M,N) \simto (\Hom_A(M,N))^H.
\]
By standard properties of derived functors, we deduce a bifunctorial morphism
\[
R\Hom_{\Mod^H(A)}(M,N) \to \Inv^H \circ R\Hom_A(M,N)
\]
for $M,N$ as in the lemma. To prove that this morphism is an isomorphism, it suffices to prove that if $M \in \Modfg^H(A)$ and $N$ is an injective object of $\Mod^H(A)$ we have
\[
\mathsf{H}^n (\Inv^H(\Hom_A(M,N))=0
\]
for all $n>0$. Here we can assume that $N=\Av_H(N')$ for some injective $A$-module $N'$. Then as in the proof of Lemma~\ref{lem:injectives-equiv} we have
\[
\Hom_A(M,N) \cong \Hom_A(M, N' \otimes_k \scO(H)) \cong \Hom_A(M,N') \otimes_k \scO(H),
\]
where the second isomorphism uses~\cite[Lemma~3.8(i)]{br-Hecke}. The desired claim therefore follows from~\cite[Lemma~I.4.7]{jantzen}.
\end{proof}

\subsection{Extension of scalars}

Let now $k'$ be another noetherian commutative ring,
and consider the data $A' := k' \otimes_k A$, $H' := \Spec(k') \times_{\Spec(k)} H$ obtained by base change. We will make the following assumptions:
\begin{enumerate}
\item
\label{it:assumption-ext-scalars-00}
$A$ is flat over $k$;
\item
\label{it:assumption-ext-scalars-0}
$A'$ is noetherian;
\item
\label{it:assumption-ext-scalars-1}
$k$ has finite global dimension;
\item
\label{it:assumption-ext-scalars-2}
for any $M \in \Rep(H)$, there exists an object $M' \in \Rep(H)$ which is flat over $k$ and a surjection $M' \twoheadrightarrow M$.
\end{enumerate}
The assumption~\eqref{it:assumption-ext-scalars-2} is subtle, as discussed (in a more general setting) in~\cite{thomason}. Note that by~\cite[Corollary~2.9]{thomason}, it is satisfied at least if $H$ is a split reductive group scheme over $k$.

Let us note the following consequence of~\eqref{it:assumption-ext-scalars-2}.

\begin{lem}
\label{lem:equiv-modules-resolution}
For any $M \in \Modfg^H(A)$, resp.~$M \in \Mod^H(A)$, there exists an object $M' \in \Modfg^{H}(A)$, resp.~$M' \in \Mod^{H}(A)$, which is flat over $A$ and a surjection $M' \twoheadrightarrow M$.
\end{lem}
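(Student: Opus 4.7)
The plan is to reduce the problem to finding flat resolutions in $\Rep(H)$ (or $\Rep^\infty(H)$), and then to ``induce up'' via the functor $V \mapsto A \otimes_k V$. Specifically, I would first observe that if $V$ is any object of $\Rep^\infty(H)$, then $A \otimes_k V$ carries a natural structure of $H$-equivariant $A$-module, where $H$ acts diagonally and $A$ acts on the first tensor factor by left multiplication; compatibility of the two structures (i.e.~$H$-equivariance of the multiplication map) is a direct check. Moreover, this assignment defines a functor $\Rep^\infty(H) \to \Mod^H(A)$ which is left adjoint to the forgetful functor $\For^H_A : \Mod^H(A) \to \Rep^\infty(H)$, and the counit of this adjunction is the surjection $A \otimes_k M \twoheadrightarrow M$ given by the $A$-action.

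For the case $M \in \Mod^H(A)$, I would first extend assumption~\eqref{it:assumption-ext-scalars-2} from $\Rep(H)$ to $\Rep^\infty(H)$: any object of $\Rep^\infty(H)$ is a filtered union of its finitely generated subrepresentations (by local finiteness of $\scO(H)$-comodules), and by taking the direct sum of flat covers of these (flatness is preserved by direct sums) we obtain a flat cover in $\Rep^\infty(H)$. Applying this to the underlying $H$-representation of $M$, I would obtain a surjection $V \twoheadrightarrow M$ in $\Rep^\infty(H)$ with $V$ flat over $k$, and then compose $A \otimes_k V \twoheadrightarrow A \otimes_k M \twoheadrightarrow M$ to produce the desired surjection in $\Mod^H(A)$.

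For the case $M \in \Modfg^H(A)$, I would first find a finitely generated $H$-subrepresentation $V_0 \subset M$ which generates $M$ as an $A$-module: starting from finitely many $A$-generators $m_1,\dots,m_n$ of $M$, each $m_i$ lies in a subrepresentation finitely generated over $k$ (namely, the $k$-span of the finitely many components of its image under the coaction $M \to M \otimes_k \scO(H)$), and the sum of these is still finitely generated over the noetherian ring $k$. Then assumption~\eqref{it:assumption-ext-scalars-2} as stated provides an object $V \in \Rep(H)$, flat over $k$, with a surjection $V \twoheadrightarrow V_0$, and the induced map $A \otimes_k V \to M$ sending $a \otimes v$ to $a$ times the image of $v$ is a surjection in $\Modfg^H(A)$, with $A \otimes_k V$ finitely generated over $A$ by any finite set of $k$-generators of $V$.

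The only nontrivial point left to verify is that $A \otimes_k V$ is flat over $A$ whenever $V$ is flat over $k$, which I expect to be the main technical step of the argument; but this is immediate from the computation $N \otimes_A (A \otimes_k V) \cong N \otimes_k V$ for any right $A$-module $N$, which shows that the functor $(-) \otimes_A (A \otimes_k V)$ agrees with $(-) \otimes_k V$ and hence is exact. Thus no real obstacle arises, and both statements of the lemma follow.
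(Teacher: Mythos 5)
Your proof is correct and takes essentially the same approach as the paper's: produce a finitely generated $H$-stable $k$-submodule of $M$ generating it over $A$, lift it via assumption~\eqref{it:assumption-ext-scalars-2} to a $k$-flat object of $\Rep(H)$, and induce up via $A \otimes_k (-)$; for the non-finitely-generated case, write $M$ as a filtered union and take direct sums. The paper simply cites Jantzen (I.2.13(3)) for the existence of the generating subrepresentation and Thomason (\S 2.2) for the filtered-colimit reduction, where you spell these out explicitly, but this is a difference of exposition rather than method.
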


\begin{proof}
It suffices to prove the claim for finitely generated modules; the general case then follows by the same considerations as in~\cite[\S 2.2]{thomason}.
Let $M \in \Modfg^H(A)$, and let $M' \subset M$ be an $H$-stable finitely generated $k$-submodule which generates $M$ as an $A$-module. (Such a submodule exists by~\cite[I.2.13(3)]{jantzen}.) By assumption there exists $N' \in \Rep(H)$ which is flat over $k$ and a surjection $N \twoheadrightarrow M'$. Then $A \otimes_k N$ is naturally an object of $\Modfg^H(A)$ which is flat as an $A$-module, and the natural morphism $A \otimes_k N \to M$ is surjective.
\end{proof}

Consider the categories $\Mod^H(A)$, $\Modfg^H(A)$ and $\Mod^{H'}(A')$, $\Modfg^{H'}(A')$, and the natural functor
\begin{equation}
\label{eqn:ext-scalars-functor}
\Mod^H(A) \to \Mod^{H'}(A')
\end{equation}
given by $M \mapsto A' \otimes_A M = k' \otimes_k M$. By Lemma~\ref{lem:equiv-modules-resolution}, any $M \in \Mod^H(A)$ is a quotient of an object of $\Mod^H(A)$ which is flat as an $A$-module. (Moreover, in case $M$ is finitely generated, this module can be taken to be finitely generated too.) It follows that the functor~\eqref{eqn:ext-scalars-functor} has a derived functor
\begin{equation*}
D^- \Mod^H(A) \to D^- \Mod^{H'}(A')
\end{equation*}
such that the diagram
\[
\xymatrix@C=2cm{
D^- \Mod^H(A) \ar[r] \ar[d]_-{\For^H} & D^- \Mod^{H'}(A') \ar[d]^-{\For^H} \\
D^- \Mod(A) \ar[r]^{A' \lotimes_A (-)} & D^- \Mod(A')
}
\]
commutes. This functor will therefore also be denoted $A' \lotimes_A (-)$. It restricts to a functor
\[
D^- \Modfg^H(A) \to D^- \Modfg^{H'}(A').
\]
Since $A$ is flat over $k$ (see Assumption~\eqref{it:assumption-ext-scalars-00}), the underlying complex of $k'$-modules of $A' \lotimes_A M$ is $k' \lotimes_k M$; by our assumption~\eqref{it:assumption-ext-scalars-1} this implies that the functor above restricts to a functor
\[
\Db \Mod^H(A) \to \Db \Mod^{H'}(A'),
\]
hence also to a functor
\[
\Db \Modfg^H(A) \to \Db \Modfg^{H'}(A').
\]

Consider now the bifunctor~\eqref{eqn:RHom-equiv}, and its analogue for $A'$.

\begin{lem}
\label{lem:RHom-equiv-ext-scalars}
For any $M \in \Db \Modfg^H(A)$ and $N \in D^+ \Mod^H(A)$, 
we have a canonical isomorphism
\[
k' \lotimes_k R\Hom_{\Mod^H(A)}(M,N) \simto R\Hom_{\Mod^{H'}(A')}(A' \lotimes_A M, A' \lotimes_A N).
\]
\end{lem}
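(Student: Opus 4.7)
The plan is to apply Lemma~\ref{lem:RHom-equiv} to both sides of the desired isomorphism, thereby reducing the statement to a base-change compatibility for $R\Inv^H$ together with an explicit computation of $R\Hom_A$ from a suitable resolution of $M$. Iterating Lemma~\ref{lem:equiv-modules-resolution} and invoking assumption~\eqref{it:assumption-ext-scalars-2}, I would first construct a bounded above resolution $P^\bullet \to M$ in $\Modfg^H(A)$ in which each $P^n$ has the form $A \otimes_k V^n$ for some $V^n \in \Rep(H)$ which is flat, and hence (being finitely generated over the noetherian ring $k$) projective, over $k$. Each $P^n$ is then projective as a plain $A$-module, so
\[
R\Hom_A(M,N) \cong \Hom^\bullet_A(P^\bullet,N) \cong \Hom_k(V^\bullet, N) \cong (V^\bullet)^* \otimes_k N
\]
canonically in $D^+ \Rep^\infty(H)$, the last identification using projectivity of $V^n$ over $k$. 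Combined with Lemma~\ref{lem:RHom-equiv} this yields a canonical isomorphism $R\Hom_{\Mod^H(A)}(M,N) \cong R\Inv^H\bigl( (V^\bullet)^* \otimes_k N \bigr)$.

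Next, $A' \lotimes_A P^\bullet$ is a bounded above resolution of $A' \lotimes_A M$ by projective $A'$-modules $A' \otimes_{k'} (k' \otimes_k V^n)$, and the identification $A' \lotimes_A N \cong k' \lotimes_k N$ (valid because $A$ is flat over $k$) lets the same computation yield
\[
R\Hom_{\Mod^{H'}(A')}(A' \lotimes_A M, A' \lotimes_A N) \cong R\Inv^{H'}\bigl( k' \otimes_k ((V^\bullet)^* \otimes_k F^\bullet) \bigr),
\]
where $F^\bullet \to N$ is a resolution of $N$ by objects of $\Mod^H(A)$ flat over $k$, produced by another iterated application of Lemma~\ref{lem:equiv-modules-resolution} together with assumption~\eqref{it:assumption-ext-scalars-2}. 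The same resolution identifies $R\Inv^H\bigl( (V^\bullet)^* \otimes_k N \bigr)$ with $R\Inv^H\bigl( (V^\bullet)^* \otimes_k F^\bullet \bigr)$, since tensoring with the $k$-flat complex $(V^\bullet)^*$ preserves quasi-isomorphisms. Hence the lemma reduces to establishing the flat base change
\[
k' \lotimes_k R\Inv^H(W^\bullet) \simto R\Inv^{H'}\bigl( k' \otimes_k W^\bullet \bigr)
\]
for $W^\bullet := (V^\bullet)^* \otimes_k F^\bullet$, which is term-by-term flat over $k$.

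For this last step I would invoke the Hochschild-complex model of~\cite[\S I.4.14]{jantzen}: when $W$ is $k$-flat, $R\Inv^H(W)$ is computed by the totalization of $W \otimes_k \scO(H)^{\otimes \bullet}$, and the identification $\scO(H') = k' \otimes_k \scO(H)$ gives a term-by-term isomorphism with the Hochschild complex computing $R\Inv^{H'}(k' \otimes_k W)$. The term-by-term $k$-flatness of $W^\bullet$ ensures that $k' \lotimes_k$ agrees with $k' \otimes_k$ on the totalization, producing the desired base-change isomorphism; boundedness of all complexes involved follows from finite global dimension of $k$ (assumption~\eqref{it:assumption-ext-scalars-1}) together with $M \in \Db \Modfg^H(A)$, $N \in D^+ \Mod^H(A)$. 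The main obstacle is the careful bookkeeping of flatness: one must simultaneously arrange the resolution $P^\bullet$ so that each term admits a well-behaved $k$-dual, and resolve $N$ by $H$-equivariant $A$-modules flat over $k$, so that the Hochschild model applies to $W^\bullet$; both tasks are handled through iterated use of Lemma~\ref{lem:equiv-modules-resolution} powered by assumption~\eqref{it:assumption-ext-scalars-2}.
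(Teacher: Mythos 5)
Your outline matches the paper's proof closely: both start from Lemma~\ref{lem:RHom-equiv}, reduce to a base change for $R\Hom_A$ together with a base change for $R\Inv^H$, and then reassemble. The difference is one of packaging. The paper cites general results — the Stacks project for the underived-$H$ base change $k' \lotimes_k R\Hom_A(M,N) \cong R\Hom_{A'}(A' \lotimes_A M, A' \lotimes_A N)$ in $D^+\Rep^\infty(H')$, and~\cite[Proposition~A.8]{mr1} for the commutation $k' \lotimes_k \Inv^H(-) \cong \Inv^{H'}(k' \lotimes_k -)$ — while you re-derive both explicitly from the resolution $P^\bullet \to M$ with $P^n = A \otimes_k V^n$ and from the Hochschild-complex model of $R\Inv^H$. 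The explicit route buys transparency (one sees exactly where assumptions~\eqref{it:assumption-ext-scalars-00}--\eqref{it:assumption-ext-scalars-2} are used), at the price of more bookkeeping. Your use of Lemma~\ref{lem:equiv-modules-resolution} for $P^\bullet$ is exactly in line with the proof of that lemma, and the computation $\Hom_A(A \otimes_k V^n, N) \cong (V^n)^* \otimes_k N$ for $V^n$ finitely generated projective over $k$ is correct.

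One step is glossed over and deserves attention: the resolution $F^\bullet \to N$ by objects of $\Mod^H(A)$ flat over $k$, with $F^\bullet$ bounded below. Lemma~\ref{lem:equiv-modules-resolution} as stated produces surjections onto a given module, so iterating it gives a \emph{bounded above} complex of $k$-flat objects resolving a single module, not a bounded below replacement of a bounded below complex. To get what you actually need, you should argue as follows: assumption~\eqref{it:assumption-ext-scalars-1} (finite global dimension $d$ of $k$) guarantees that any $k$-module has a length-$d$ flat resolution, so for each cohomology object of $N$ one can produce a finite left resolution by $k$-flat objects of $\Mod^H(A)$ using Lemma~\ref{lem:equiv-modules-resolution} $d$ times and the syzygy trick; a standard Cartan--Eilenberg argument then assembles these into a bounded below, termwise $k$-flat quasi-isomorphic replacement $F^\bullet \to N$. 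Alternatively you can avoid $F^\bullet$ entirely by following the paper's ordering of reductions: first commute $k' \lotimes_k$ past $\Inv^H$ (this is~\cite[Proposition~A.8]{mr1}, whose content is precisely your Hochschild-complex base change, packaged to apply directly to an arbitrary object of $D^+\Rep^\infty(H)$), and only then apply the non-equivariant base change for $R\Hom_A$, which does not require any flat replacement of $N$ because it is stated for $N$ in $D^+$. With that adjustment your argument goes through.
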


\begin{proof}
By Lemma~\ref{lem:RHom-equiv}, for $M,N$ as in the lemma there exists a canonical isomorphism
\[
k' \lotimes_k R\Hom_{\Mod^H(A)}(M,N) \cong k' \lotimes_k \left( \Inv^H \circ R\Hom_{A}(M,N) \right).
\]
Using~\cite[Proposition~A.8]{mr1} we deduce a canonical isomorphism
\[
k' \lotimes_k R\Hom_{\Mod^H(A)}(M,N) \cong \Inv^{H'}  \left( k' \lotimes_k  R\Hom_{A}(M,N) \right),
\]
where in the right-hand side we consider the derived extension-of-scalars functor for $H$-modules (which is well defined thanks to our assumption~\eqref{it:assumption-ext-scalars-2}). Now, as in~\cite[\href{https://stacks.math.columbia.edu/tag/0A6A}{Tag 0A6A}]{stacks-project} we have a canonical isomorphism
\[
k' \lotimes_k  R\Hom_{A}(M,N) \cong  R\Hom_{A'}(A' \lotimes_A M, A' \lotimes_A N)
\]
in $D^+ \Rep^\infty(H')$. 
The claim then follows from another application of Lemma~\ref{lem:RHom-equiv}.
\end{proof}

\end{document}